\newcommand{\DESCRIPTION@original@item}{}
\let\DESCRIPTION@original@item\item
\newcommand*{\DESCRIPTION@envir}{DESCRIPTION}
\newlength{\DESCRIPTION@totalleftmargin}
\newlength{\DESCRIPTION@linewidth}
\newcommand{\DESCRIPTION@makelabel}[1]{\llap{#1}}%
\newcommand{\DESCRIPTION@item}[1][]{%
  \setlength{\@totalleftmargin}%
       {\DESCRIPTION@totalleftmargin+\widthof{\textbf{#1 }}-\leftmargin}%
  \setlength{\linewidth}
       {\DESCRIPTION@linewidth-\widthof{\textbf{#1 }}+\leftmargin}%
  \par\parshape \@ne \@totalleftmargin \linewidth
  \DESCRIPTION@original@item[\textbf{#1}]%
}
\newenvironment{DESCRIPTION}
  {\list{}{\setlength{\labelwidth}{0cm}%
           \let\makelabel\DESCRIPTION@makelabel}%
   \setlength{\DESCRIPTION@totalleftmargin}{\@totalleftmargin}%
   \setlength{\DESCRIPTION@linewidth}{\linewidth}%
   \renewcommand{\item}{\ifx\@currenvir\DESCRIPTION@envir
                           \expandafter\DESCRIPTION@item
                        \else
                           \expandafter\DESCRIPTION@original@item
                        \fi}}
  {\endlist}
\DeclarePairedDelimiter\floor{\lfloor}{\rfloor}
\DeclarePairedDelimiter\ceil{\lceil}{\rceil}
\DeclareMathOperator{\Var}{Var}
  \pgfplotsset{compat=newest}
    \tikzset{
    elli/.style args={#1:#2and#3}{
        draw,
        shape=ellipse,
        rotate=#1,
        minimum width=2*#2,
        minimum height=2*#3,
        outer sep=0pt,
    },
    /pgf/decoration/raise/.append code={
        \def\tikzdecorationsbrace{#1}
    },
    elli node/.style={
        circle,
        black,
        draw=none,
        midway,
        anchor=#1-90,
        inner sep=0pt,
        shift=(#1+90:\tikzdecorationsbrace+\pgfdecorationsegmentamplitude)
    },
    eigen/.style 2 args={
        decorate,
        decoration={
            brace,
            amplitude=#1,
            mirror,
            raise=#2,
        },
    },
    eigen/.default={15pt}{4pt},
    axis/.style={
        line width=.5mm,
        ->,
    },
    normal axis/.style={
        axis,
        dashed,
    }
}
\pgfplotsset{compat=1.12}
\newcommand\pgfmathsinandcos[3]{%
  \pgfmathsetmacro#1{sin(#3)}%
  \pgfmathsetmacro#2{cos(#3)}%
}
\newcommand\LatitudePlane[3][current plane]{%
  \pgfmathsinandcos\sinEl\cosEl{#2} 
  \pgfmathsinandcos\sint\cost{#3} 
  \pgfmathsetmacro\yshift{\cosEl*\sint}
  \tikzset{#1/.style={cm={\cost,0,0,\cost*\sinEl,(0,\yshift)}}} %
}
\newcommand\DrawBackLatitudeCircle[2][1]{
  \LatitudePlane{\angEl}{#2}
  \tikzset{current plane/.prefix style={scale=#1}}
  \pgfmathsetmacro\sinVis{sin(#2)/cos(#2)*sin(\angEl)/cos(\angEl)}
  \pgfmathsetmacro\angVis{asin(min(1,max(\sinVis,-1)))}
  \draw[current plane,dashed,black, ultra thick] (180-\angVis:1) arc (180-\angVis:\angVis:1);
}
\newcommand\DrawFrontLatitudeCircle[2][1]{
  \LatitudePlane{\angEl}{#2}
  \tikzset{current plane/.prefix style={scale=#1}}
  \pgfmathsetmacro\sinVis{sin(#2)/cos(#2)*sin(\angEl)/cos(\angEl)}
  \pgfmathsetmacro\angVis{asin(min(1,max(\sinVis,-1)))}
  \draw[current plane,black, ultra thick] (\angVis:1) arc (\angVis:-\angVis-180:1);
}
\tikzset{%
  >=latex,
  inner sep=0pt,%
  outer sep=2pt,%
  mark coordinate/.style={inner sep=0pt,outer sep=0pt,minimum size=3pt,
    fill=black,circle}%
    }
\theoremstyle{definition}
\newtheorem{theorem}{Theorem}
\newtheorem{corollary}{Corollary}
\newtheorem{lemma}{Lemma}
\newtheorem{proposition}{Proposition}
\newtheorem{definition}{Definition}
\theoremstyle{remark}
\theoremstyle{remark}
\definecolor{mygreen}{RGB}{156, 32, 32}
\newlist{inlinelist}{enumerate*}{1}
\setlist*[inlinelist,1]{%
  label=(\arabic*),
}
\DeclareMathOperator{\trace}{trace}
\newcommand{\vertiii}[1]{{\left\vert\kern-0.25ex\left\vert\kern-0.25ex\left\vert #1 
    \right\vert\kern-0.25ex\right\vert\kern-0.25ex\right\vert}}
\begin{document}

\title{CYCLIC STOCHASTIC OPTIMIZATION:\\ GENERALIZATIONS, CONVERGENCE,\\ AND APPLICATIONS IN MULTI-AGENT SYSTEMS}
\author{Karla Hern\'andez Cuevas}
\degreemonth{August}
\degreeyear{2017} 
\dissertation
\doctorphilosophy
\copyrightnotice


\begin{frontmatter}

\maketitle

\begin{abstract}

Stochastic approximation (SA) is a powerful class of iterative algorithms for nonlinear root-finding that can be used for minimizing a  loss function, $L(\bm\uptheta)$, with respect to a parameter vector $\bm\uptheta$, when only noisy observations of $L(\bm\uptheta)$ or its gradient are available (through the natural connection between root-finding and minimization); SA algorithms can be thought of as stochastic line search methods where the entire parameter vector is updated at each iteration. 
The cyclic approach to SA is a variant of SA procedures where $\bm\uptheta$ is divided into multiple subvectors that are updated one at a time in a cyclic manner. 

This dissertation focuses on studying the asymptotic properties of cyclic SA and of the generalized cyclic SA (GCSA) algorithm, a variant of cyclic SA where the subvector to update may be selected according to a random variable or according to a predetermined pattern, and where the noisy update direction can be based on the updates of any SA algorithm (e.g., stochastic gradient, Kiefer--Wolfowitz, or simultaneous perturbation SA). The convergence of GCSA, asymptotic normality of GCSA (related to rate of convergence), and efficiency of GCSA relative to its non-cyclic counterpart are investigated both analytically and numerically. Specifically, conditions are obtained for the convergence with probability one of the GCSA iterates and for the asymptotic normality of the normalized iterates of a special case of GCSA. Further, an analytic expression is given for the asymptotic relative efficiency (when efficiency is defined in terms of mean squared error) between a special case of GCSA and its non-cyclic counterpart. Finally, an application of the cyclic SA scheme to a multi-agent stochastic optimization problem is investigated.

This dissertation also contains two appendices. The first appendix generalizes Theorem 2.2 in Fabian (1968) (a seminal paper in the SA literature that derives general conditions for the asymptotic normality of SA procedures) to make the result more applicable to some modern applications of SA including (but not limited to) the GCSA algorithm, certain root-finding SA algorithms, and certain second-order SA algorithms. The second appendix considers the problem of determining the presence and location of a static object within an area of interest by combining information from multiple sensors using a maximum-likelihood-based approach.

\vfill
\noindent {\bf{Primary Reader and Advisor:}} James C. Spall\\
{\bf{Second Reader:}} Raman Arora
\end{abstract}

\begin{dedication}
 \begin{center}
{\it{To my parents.}}
\end{center}

\end{dedication}

\begin{acknowledgment}

 I would first like to thank my advisor, James Spall, for his invaluable guidance throughout my graduate studies. His dedication to his students, patience, admirable work ethic, and vast knowledge were instrumental in making my experience as a graduate student fruitful, enjoyable, and enriching.

I would also like to give special thanks to Stephen Lee, my husband and best friend, for his friendship, support, and for the {\it{many}} interesting discussions we've shared (of a mathematical and non-mathematical nature) throughout our time together. I would also like to mention that the proof of Lemma \ref{lem:devil} was the result of a discussion with Stephen. 

I would also like to thank fellow graduate student Jingyi Zhu (currently a Ph.D. student at the department of Applied Mathematics \& Statistics at Johns Hopkins) for reading Chapter \ref{sec:cyclicseesaw} of my dissertation and for her detailed feedback which helped me improve the presentation of the chapter.

I would also like to express my gratitude to Raman Arora, Danial Naiman, Daniel Robinson, Maxim Bichuch, and John Wierman, the members of my dissertation defense committee, for their time with special thanks to Raman Arora, my second reader; reviewing such a lengthy dissertation is no small task and I sincerely appreciate his help.

I also want to thank all the professors whose courses I've had the pleasure of taking. Although I have taken several courses during my studies at Hopkins, the courses taught by professors James Fill, James Spall, and John Wierman stand out as having been instrumental to my research.

Last but not least, I would like to thank my family with special thanks to my parents. Without their support I would not be where I am today.

This work was financially supported by the National Council of Science and Technology (CONACYT) of Mexico, the Office of Naval Research via Navy contract N00024-13-D6400, the Acheson J. Duncan Fund for the Advancement of Research in Statistics, and Dr. James Spall's JHU/APL sabbatical professorship
at Johns Hopkins' Whiting School of Engineering.
\end{acknowledgment}

\tableofcontents

\listoftables

\listoffigures

\end{frontmatter}


\chapter{Introduction}
\label{chap:intro}

\section{Motivation}

The objective of unconstrained optimization problems is to minimize a real-valued loss function $L(\bm\uptheta)$ with respect to a parameter vector $\bm\uptheta\in \mathbb{R}^p$. 
An important limitation in many practical problems is the fact that the loss function itself is unknown in the sense that $L(\bm\uptheta)$ may only be observable in the presence of noise. For example, suppose that the loss function represents the expected output of a complex stochastic system that depends on $\bm\uptheta$. In this case, obtaining a closed form expression for $L(\bm\uptheta)$ (or even evaluating $L(\bm\uptheta)$ at a given $\bm\uptheta$) may be impossible since computing the expected output would require detailed knowledge of the stochastic process governing the output of the system. In this light, this dissertation makes a distinction between deterministic optimization, where the loss function is known and the optimization process is entirely deterministic, and stochastic optimization, where the optimization process involves some type of randomness (e.g., the optimization algorithm uses measurements of $L(\bm\uptheta)$ that are corrupted by random noise or there is a random choice made in the search direction as the algorithm iterates towards a solution). Stochastic approximation (SA) is a powerful class of iterative algorithms for nonlinear root-finding. These algorithms can also be used for stochastic optimization (through the natural connection between root-finding and minimization). The cyclic approach to SA is a particular variant of these iterative procedures in which only a subset of the parameter vector (referred to as a {\it{subvector}}) is updated at any given time. This dissertation focuses on studying the asymptotic properties (e.g., convergence and rate of convergence) of cyclic SA (and generalizations) for stochastic optimization.

  In the cyclic approach, the full parameter vector is divided into two or more subvectors and the process proceeds by sequentially updating each of the subvectors, while holding the remaining parameters at their most recent values. Thus, the cyclic approach is iterative in nature and at each iteration an estimate for the minimizer of $L(\bm\uptheta)$ is obtained. To illustrate the idea behind the cyclic approach, let $\hat{\bm{\uptheta}}_k$ denote the estimate obtained during the $k$th iteration. Furthermore, consider the special case of the cyclic approach where there are only two subvectors, $\hat{\bm{\uptheta}}_k^{[1]}$ and $\hat{\bm{\uptheta}}_k^{[2]}$, so that $\hat{\bm{\uptheta}}_k=[(\hat{\bm{\uptheta}}_k^{[1]})^\top,(\hat{\bm{\uptheta}}_k^{[2]})^\top]^\top$. Additionally, for simplicity assume the two subvectors are updated in a strictly-alternating manner.
Here, the first step of an iteration of the cyclic approach would consist of updating $\hat{\bm{\uptheta}}_k^{[1]}$ while keeping $\hat{\bm{\uptheta}}_k^{[2]}$ fixed. This would give rise to the vector $[(\hat{\bm{\uptheta}}_{k+1}^{[1]})^\top,(\hat{\bm{\uptheta}}_k^{[2]})^\top]^\top$. In the second step $\hat{\bm{\uptheta}}_k^{[2]}$ would be updated while holding $\hat{\bm{\uptheta}}_{k+1}^{[1]}$ fixed, this would give rise to the vector $\hat{\bm{\uptheta}}_{k+1}=[(\hat{\bm{\uptheta}}_{k+1}^{[1]})^\top,(\hat{\bm{\uptheta}}_{k+1}^{[2]})^\top]^\top$.

  Independent of whether an optimization algorithm is stochastic or deterministic, cyclic schemes can help reduce the problem's complexity by focusing only on a subset of the parameter vector at any given time. For example, conditional optimization with respect to subsets of $\bm\uptheta$ may prove more tractable than
simultaneous (unconditional) optimization. Lee and Park (2008)\nocite{leenpark2008}, for example, use cyclic optimization to approach a structure-from-motion problem (presented as a deterministic optimization problem), where minimizing $L(\bm\uptheta)$ is difficult but where it is possible to obtain closed form expressions for the minimizers $L(\bm\uptheta)$ with respect to different subsets of $\bm\uptheta$. Lee and Park (2008)\nocite{leenpark2008} propose an algorithm that consists of sequentially minimizing the loss function with respect to the different subsets of $\bm\uptheta$. Although the authors do not provide any theoretical guarantees, the algorithm appears to exhibit good numerical performance. Another example lies in the area of communication networks. Here, one may be interested in jointly optimizing congestion control and scheduling (see, for example, Andrews 2006)\nocite{andrews2006congestion} and it may be the case that conditional solutions are more readily available.

 In the cyclic approach, the full parameter vector is divided into two or more subvectors and the process proceeds by sequentially optimizing each of the subvectors, while holding the remaining parameters at their most recent values. The cyclic approach is of special interest where there is need to extend a model to include new parameters since it allows for the preservation of resources dedicated to optimizing the original problem (e.g., the preservation of expensive software). In theory, one could alternate between updating the new parameters and the parameters from the original problem (existing resources could be reused with this approach).

A convenient feature of cyclic line search methods like block coordinate descent is that computing partial update directions (e.g., computing only a part of the gradient when the update directions are gradient-based) is often significantly less expensive than computing the full update direction. In this vein, Wright (2015)\nocite{wright2015} discusses how coordinate descent algorithms have grown in popularity because of their usefulness in data analysis, and machine learning. Wright (2015)\nocite{wright2015} gives a useful literature review of coordinate descent methods that are mostly limited to deterministic optimization algorithms, with the exception of randomized coordinate descent methods which can be seen as a special case of the stochastic gradient algorithm (a type of SA algorithm).

In the area of multi-agent optimization, the vector $\bm\uptheta$ can be divided into different subsets each of which is updated by a different agent (see, for example, Peterson et al. 2014\nocite{spie2014} and Botts et al. 2016, where $\bm\uptheta$ is a vector related to the positions of the agents)\nocite{bottsetal2016}. In general, agents may operate in a synchronous manner (i.e., agents synchronize the timing of their respective updates in some way, such as by taking turns performing updates) or in an asynchronous manner (i.e., agents do not synchronize the timing of their respective updates in any way). Cyclic optimization algorithms apply given
 the manner in which agents operate and, under appropriate conditions, allow for rigorous theoretical analysis of multi-agent optimization algorithms. 

Despite the aforementioned desirable properties of cyclic optimization, convergence to a minimizer of $L(\bm\uptheta)$ (either global or local) is not generally guaranteed for cyclic algorithms. To illustrate this fact, consider an example where $L(\bm\uptheta)=-(\uptau_1+\uptau_2)$ and $\bm\uptheta\in \mathcal{S}\equiv \{[\uptau_1,\uptau_2]^\top{\text{such that $\uptau_1\in [0,1]$ and $\uptau_2\in[0,2(1-\uptau_1)]$}}\}$ (this set defines a region bounded by a right triangle with base corresponding to the interval $[0,1]$ on the $\uptau_1$-axis, right angle at $[0,0]^\top$, and height 2). This constrained optimization problem has a unique global minimum at $\bm\uptheta^\ast=[0,2]^\top$. Suppose now that a cyclic algorithm is implemented given that one alternates between minimizing $L(\bm\uptheta)$ with respect to $\uptau_1$ and $\uptau_2$, beginning with $\uptau_1$. Furthermore, suppose the algorithm is initialized at $\hat{\bm{\uptheta}}_0=[0,0]^\top$. In its first step, the algorithm will arrive at the point $\hat{\bm{\uptheta}}_1=[1,0]^\top$. Afterwards, the algorithm becomes ``stuck'' at this point since no update in the value of $\uptau_2$ can further educe the loss function value. Thus, this cyclic algorithm would never reach the global minimum at $\bm\uptheta^\ast$. Moreover, the point $\hat{\bm{\uptheta}}_1=[1,0]^\top$ is not even a {\it{local}} minimum. This can be seen by noting that the point $\bm\uptheta_\upepsilon= [1-\upepsilon, 2\upepsilon]^\top$ with $\upepsilon\in [0,1]$ is in the set $\mathcal{S}$
 and that $L(\bm\uptheta_\upepsilon)=-(1+\upepsilon)< -1=L(\hat{\bm{\uptheta}}_1)$. For both deterministic- and stochastic optimization algorithms, knowing that a non-cyclic algorithm converges to a minimizer (either local or global) of $L(\bm\uptheta)$ does not automatically imply that their cyclic counterparts also converge to a minimizer or $L(\bm\uptheta)$.

This dissertation introduces the generalized cyclic SA (GCSA) algorithm, a cyclic algorithm where the subvectors are updated using SA. Loosely speaking, at each time increment a subvector of the parameter vector is selected and updated 
according to a direction that
is obtained using SA (this includes many popular stochastic update directions including the those used in the Robbins--Monro, Kiefer--Wolfowitz, and simultaneous perturbation SA algorithms). In the GCSA algorithm the subvector to update is not necessarily selected following a deterministic pattern (the term ``generalized'' in ``generalized cyclic SA'' is used precisely to emphasize this fact). A special case of the GCSA algorithm, for example, allows the subvector that is to be updated
to be selected according to a random variable that may depend on the iteration number. The following section reviews the literature for existing results on cyclic optimization. We cover both deterministic and stochastic implementations.

Before surveying the literature we first review the basic form of SA algorithms for nonlinear root-finding. This allows us to be more specific when comparing existing results to the results developed in this dissertation.
Given a vector $\bm\uptheta$ and a vector-valued function $\bm{f}(\bm\uptheta)$, the  basic SA algorithm for nonlinear root-finding attempts to find a solution to $\bm{f}(\bm\uptheta)=\bm{0}$ in an iterative manner using the recursion $\hat{\bm{\uptheta}}_{k+1}=\hat{\bm{\uptheta}}_k-a_k{\bm{Y}}_k(\hat{\bm{\uptheta}}_k)$, where $a_k>0$ is the gain sequence and $-{\bm{Y}}_k(\hat{\bm{\uptheta}}_k)$ is a vector-valued random variable representing a noisy observation of $-\bm{f}(\hat{\bm{\uptheta}}_k)$, the desired update direction. In the analysis of SA algorithms it is often useful to express the vector ${\bm{Y}}_k(\hat{\bm{\uptheta}}_k)$ as follows:
\begin{align}
\label{eq:imastandardupdaedirection}
{\bm{Y}}_k(\hat{\bm{\uptheta}}_k)= {\bm{f}}(\hat{\bm{\uptheta}}_k)+{\bm{\upbeta}}_k(\hat{\bm{\uptheta}}_k)+{\bm{\upxi}}_k(\hat{\bm{\uptheta}}_k).
\end{align}
Typically, $\bm\upbeta_k(\hat{\bm{\uptheta}}_k)\equiv E\big[{\bm{Y}}_k(\hat{\bm{\uptheta}}_k)-\bm{f}(\hat{\bm{\uptheta}}_k)\big|\mathcal{F}_k\big]$, ${\bm{\upxi}}_k(\hat{\bm{\uptheta}}_k)\equiv {\bm{Y}}_k(\hat{\bm{\uptheta}}_k)-E\big[{\bm{Y}}_k(\hat{\bm{\uptheta}}_k)\big|\mathcal{F}_k\big]$, $\mathcal{F}_k$ is some representation of the history of the process, and $E[\mathcal{X}]$ represents the expected value of the random variable $\mathcal{X}$.
In the special case where $\bm{f}(\bm\uptheta)=\bm{g}(\bm\uptheta)$ is the gradient of $L(\bm\uptheta)$, that is when SA is used for stochastic optimization via nonlinear root-finding, $\bm{Y}_k(\hat{\bm{\uptheta}}_k)$ denotes a noisy estimate of the gradient. Here, it is common to replace the notation $\bm{Y}_k(\hat{\bm{\uptheta}}_k)$ with $\hat{\bm{g}}_k(\hat{\bm{\uptheta}}_k)$. Therefore, we write $\hat{\bm{\uptheta}}_{k+1}=\hat{\bm{\uptheta}}_k-a_k\hat{\bm{g}}_k(\hat{\bm{\uptheta}}_k)$,
(Section \ref{sec:dummieyouask} discusses SA in greater detail).

\section{Literature Review}
\label{sec:literaturesurveyrev}

In the 1980s, 1990s, and early 2000s,  Bertsekas and Tsitsiklis (1989)\nocite{bertsekasandtsitsiklis1989}, Luo and Tseng (1992)\nocite{luptseng1992}, Luo and Tseng (1993)\nocite{luptseng1993}, and Tseng (2001)\nocite{tseng2001} made important contributions to understanding the convergence properties of cyclic optimization procedures. However, their analysis focused on deterministic optimization problems, not on the stochastic optimization setting considered in this dissertation. A few more recent references investigating cyclic implementations in the area of deterministic optimization are Tseng and Yung (2009)\nocite{tsengandyun2009}, who investigate the convergence of block-coordinate gradient descent methods for linearly constrained non-smooth separable loss functions, and Spall (2012)\nocite{spallcyclicseesaw2012}, who investigates the convergence properties of general cyclic algorithms. The interested reader can also find a useful literature review of coordinate descent methods (largely for deterministic optimization) in Wright (2015)\nocite{wright2015}. 
 A few applications of cyclic procedures for deterministic optimization can be found in the papers by Canutescu and Dunbrack (2003)\nocite{canutescu2003}, who use cyclic procedures for a control problem in robotics, Lee and Park (2008)\nocite{leenpark2008}, who use cyclic optimization for a problem in computer vision, Li and Osher (2009)\nocite{liandosher2009}, who consider a compressed sensing problem, and Li and Petropulu (2014)\nocite{liandpetriouli2014}, who use the alternating directions method of multipliers (a cyclic procedure) for target location estimation. 

The previous references are concerned with cyclic schemes for solving deterministic optimization problems. In the area of stochastic optimization, one class of SA algorithms which at first appears to be related to GCSA is the class of two time-scales SA algorithm studied in Borkar (1997)\nocite{borkar1997} (see also Konda and Tsitsiklis 2004\nocite{kondaandtsitsiklis2004} for a similar setting). Here, a parameter vector, $\bm{X}(k)$, is updated using SA. In Borkar's (1997)\nocite{borkar1997} formulation, obtaining the SA update direction requires a second SA step. Specifically, using Borkar's (1997)\nocite{borkar1997} notation, there exists a vector $\bm{U}(k)$ such that:
\begin{subequations}
\begin{align}
\bm{X}(k+1)&=\bm{X}(k)+ a(k)\left[\bm{R}(\bm{X}(k),\bm{U}(k))+\bm{M}(k+1)\right],\label{eq:sandhoneygrow0}\\
\bm{U}(k+1)&=\bm{U}(k)+b(k)\left[\bm{G}(\bm{X}(k),\bm{U}(k))+\bm{N}(k+1)\right],
\label{eq:sandhoneygrow}
\end{align}
\end{subequations}
where $a(k)$ and $b(k)$ are real sequences, $\bm{R}(\cdot,\cdot)$ and $\bm{G}(\cdot,\cdot)$ are deterministic vector valued functions, and $\bm{M}(k+1)$ and $\bm{N}(k+1)$ are random noise terms. Thus, the terms in square brackets in (\ref{eq:sandhoneygrow0},b) are SA update directions for $\bm{X}(k)$ and $\bm{U}(k)$. 
By letting $\bm\uptheta=[\bm{X}^\top,\bm{U}^\top]^\top$ and setting $\bm{R}(\cdot,\cdot)=\bm{G}(\cdot,\cdot)$ it may appear that any strictly alternating cyclic SA algorithm with disjoint subvectors is a special case of (\ref{eq:sandhoneygrow0},b), but this is not true.
To see that (\ref{eq:sandhoneygrow0},b) is not a cyclic recursion, note that the recursion in (\ref{eq:sandhoneygrow0},b) implies that the values of $\bm{X}(k+1)$ and $\bm{U}(k+1)$ are both obtained based on the vector $[\bm{X}(k)^\top,\bm{U}(k)^\top]^\top$. In a cyclic algorithm, however, the values of $\bm{X}(k+1)$ and $\bm{U}(k+1)$ would be obtained based on the vectors $[\bm{X}(k)^\top,\bm{U}(k)^\top]^\top$ and $[\bm{X}(k)^\top,\bm{U}(k+1)^\top]^\top$, respectively (assuming $\bm{X}(k)$ is updated first). Therefore, the pair in (\ref{eq:sandhoneygrow0},b) is not cyclic.  The remainder of this section discusses the results that most closely resemble GCSA.

Tsitsiklis (1994)\nocite{tsitsiklis1994} considers an asynchronous SA algorithm for finding a solution to $\bm{D}(\bm{x})=\bm{x}$, where $\bm{x}\in \mathbb{R}^p$ is a vector and $\bm{D}(\cdot)$ is a vector valued function, using possibly outdated information. Specifically, using Tsitsiklis' notation, the $i$th entry of $\bm{x}$, denoted by $x_i$, is updated according to the following recursion:
\begin{align}
x_i(t+1)&=x_i(t)-\upalpha_i(t)[D_i(\bm{x}^{i}(t))-x_i(t)+w_i(t)] \text{\ \ \ for $t\in T^i$},\notag\\
x_i(t+1)&=x_i(t)\text{\ \ \ for $t\notin T^i$},
\label{eq:singoayatailinchin}
\end{align}
where $T^{i}$ is a random set of (nonnegative integer) times at which $x_i$ is updated, $\upalpha_i(t)\in [0,1]$, $w_i(t)$ is a mean-zero noise term, $D_i$ denotes the $i$th entry of $\bm{D}$, and $\bm{x}^{i}(t)$ is a vector of (possibly) outdated components of $\bm{x}$, that is $\bm{x}^i=[x_1(\uptau_1^i(t)),\dots,x_p(\uptau_p^i(t))]^\top$, where $\uptau_j^i(t)\leq t$ is a nonnegative integer. If no information is outdated then $\bm{x}^i(t)=\bm{x}(t)$. It is in the case of delayed information that the resemblance to cyclic optimization becomes apparent. To see this, consider the special case of (\ref{eq:singoayatailinchin}) where $p$ is even, 
$T^{i}$ is the set of nonnegative odd numbers for $i\leq p/2$, $T^{i}$ is the set of even numbers for $i>p/2$, $\bm{x}^i(t)=\bm{x}(t)$ for $i\leq p/2$, and $\bm{x}^i(t)=[x_1(t+1), \dots, x_{p/2}(t+1), x_{p/2+1}(t),\dots, x_p(t)]^\top$ for $i>p/2$. The resulting algorithm is a special case of cyclic SA in which two subvectors of $\bm{x}(t)$ are updated in a strictly alternating manner.

One convenient property of (\ref{eq:singoayatailinchin}) is that it may be implemented in an asynchronous manner, that is the entries of $\bm{x}(t)$ that are updated at time $t$ can each be updating using outdated parameter vectors collected at possibly different times. The GCSA algorithm, in contrast, is synchronous in nature since the entries of the parameter vector that are updated at any given time must all be updated using the latest value of the parameter vector. Despite the advantage that an asynchronous algorithm presents over a synchronous algorithm, there are a few reasons why the theory regarding the convergence of (\ref{eq:singoayatailinchin}) does not apply to GCSA as is discussed next.

One of the conditions that Tsitsiklis (1994)\nocite{tsitsiklis1994} imposes for the convergence of (\ref{eq:singoayatailinchin}) requires the second moment of $w_i(t)$ to be bounded above by a function that depends on the magnitude of $\bm{x}(t)$ (see Assumption 2.e in Tsitsiklis 1992)\nocite{tsitsiklis1994}. When $\bm{x}(t)$ converges to some finite vector w.p.1 (e.g., to the solution of $\bm{F}(\bm{x})=\bm{x}$), Tsitsiklis' (1994)\nocite{tsitsiklis1994} assumption requires $w_i(t)$ to have bounded variance w.p.1. However, because 
(\ref{eq:singoayatailinchin}) can be thought of as a special case of (\ref{eq:imastandardupdaedirection}):
 \begin{align}
 \label{eq:latesincity}
&{\text{Update direction in first line of (\ref{eq:singoayatailinchin})}}= D_i(\bm{x}^{i}(t))-x_i(t)+w_i(t)\notag\\
&= {\underbrace{D_i(\bm{x}(t))-x_i(t)}_{\text{$i$th entry of $\bm{f}$ term in (\ref{eq:imastandardupdaedirection})}}}+{\underbrace{D_i(\bm{x}^{i}(t))-D_i(\bm{x}(t))+w_i(t)}_{{\text{$i$th entry of $\bm\upbeta+\bm\upxi$ term in (\ref{eq:imastandardupdaedirection})}}}},
 \end{align}
requiring $w_i(t)$ to have bounded second moment is equivalent to requiring the diagonal entries of the second moment matrix of ${\bm{\upbeta}}_k(\hat{\bm{\uptheta}}_k)+{\bm{\upxi}}_k(\hat{\bm{\uptheta}}_k)$ in (\ref{eq:imastandardupdaedirection}) to have bounded magnitude, an  assumption that is incompatible with many SA algorithms (such as the simultaneous perturbation SA algorithm) for which the variance of the noise can increase with time.
 In this dissertation we allow the variance of the noise to increase as the iteration number increases in order to accommodate a more general class of SA update directions. 
 Another observation we make is that the vector ${\bm{\upbeta}}_k(\hat{\bm{\uptheta}}_k)+{\bm{\upxi}}_k(\hat{\bm{\uptheta}}_k)$ from (\ref{eq:imastandardupdaedirection}) has a very specific form in (\ref{eq:latesincity}).  In the GCSA algorithm, ${\bm{\upbeta}}_k(\hat{\bm{\uptheta}}_k)+{\bm{\upxi}}_k(\hat{\bm{\uptheta}}_k)$ term is allowed to have a form that is more appropriate for general SA procedures. 
 In conclusion, the assumptions imposed by Tsitsiklis (1994)\nocite{tsitsiklis1994} imply that the class of algorithms studied in Tsitsiklis (1994)\nocite{tsitsiklis1994} intersects the class of algorithms that fit into the GCSA framework although neither class of algorithms contains the other. Moreover, Tsitsiklis (1994)\nocite{tsitsiklis1994} does not discuss rate of convergence or provide results on asymptotic normality, both contributions of this dissertation.

Another algorithm closely related to GCSA appears in Borkar (1998)\nocite{borkar1998}. Here, the author investigates the asymptotic behavior of a distributed, asynchronous SA scheme in terms of a limiting
 differential equation. 
One important assumption made by Borkar (1998)\nocite{borkar1998} is that the update direction for the $i$th entry of the parameter vector is an unbiased estimate of the $i$th entry of $\bm{f}(\bm\uptheta)$, the function whose root is to be found (see equation 2.9 in Borkar 1998)\nocite{borkar1998}. This assumption is usually only valid for certain SA algorithms (e.g., the stochastic gradient algorithm) or deterministic optimization problems. The theory in this dissertation does not require the availability of unbiased estimates of the entries of $\bm{f}(\bm\uptheta)$. Another assumption made by Borkar (1998)\nocite{borkar1998} requires $\bm{f}(\bm\uptheta)$ to be Lipschitz continuous. In our setting (which focuses on stochastic optimization via root-finding so that $\bm{f}(\bm\uptheta)=\bm{g}(\bm\uptheta)$) this would be equivalent to requiring the gradient of $L(\bm\uptheta)$ to be Lipschitz continuous. We do not make such an assumption in the theory for convergence w.p.1 of the GCSA iterates. Additionally, Borkar (1998)\nocite{borkar1998} does not discuss rate of convergence or provide results on asymptotic normality, both contributions of this dissertation.

In this dissertation, the theory behind the GCSA algorithms allows the noisy update directions to be biased estimates of $\bm{f}(\bm\uptheta)$ and allows the noise term in (\ref{eq:imastandardupdaedirection}) (i.e., ${\bm{\upxi}}_k(\hat{\bm{\uptheta}}_k)$) to have a second moment matrix whose entries can increase in magnitude as a function of the iteration number. Algorithms that are related to GCSA but require the availability of unbiased estimates of $\bm{f}(\bm\uptheta)$ can be found in the papers by Borkar and Meyn (2000)\nocite{borkarmeyn2000}, Aboundai et al. (2002)\nocite{aboundaietal2002}, Bhatnagar (2011)\nocite{bhatnagar2011}, Bianchi and Jakubowicz (2013)\nocite{bianchiandjiakubowicz2013}, and Singh et al. (2014)\nocite{singhetal2014} (technically, the estimate for $\bm{f}(\bm\uptheta)$ in this last reference is and unbiased estimate of a vector that has the same roots as $\bm{f}(\bm\uptheta)$). Algorithms that are related to GCSA but require the term ${\bm{\upxi}}_k(\hat{\bm{\uptheta}}_k)$ in (\ref{eq:imastandardupdaedirection}) to have bounded variance can be found in the papers by Tsitsiklis (1984)\nocite{tsitsiklis1984}, Tsitsiklis et al. (1986)\nocite{tsitsiklisetal1986}, Solodov and Zavriev (1998)\nocite{solodovzavriev1998}, Ram et al. (2009b)\nocite{rametal2009b}, Nedi\'c and Bertsekas (2010)\nocite{nedicandbertsekas2010}, Ram et al. (2010)\nocite{rametal2010}, Xu and Yin (2015)\nocite{xyandyin2015}, and in Exercise 1.7.1 on p. 34 in Benveniste et al. (1990)\nocite{benveniste1990}. The distributed algorithms in Chapter 12 of Kushner and Yin (1997)\nocite{kushnyin1997} are also closely related to GCSA. However, the results in said chapter require the sequence of noisy update directions (i.e., the sequence ${\bm{Y}}_k(\hat{\bm{\uptheta}}_k)$) to be uniformly integrable (see Kushner and Yin 1997, Chapter 12, Assumptions A3.1 and A3.1$'$), an assumption that is too strong for some SA algorithms such as finite difference SA where the noise in the update direction generally makes the ${\bm{Y}}_k(\hat{\bm{\uptheta}}_k)$ vectors not uniformly integrable. This dissertation does not make the assumption of uniform integrability of the noisy update directions. 
 Lastly, a few results related to GCSA concerned with optimizing convex functions can be found in the papers by Ram et al. (2009a)\nocite{rametal2009a}, Necoara (2013)\nocite{necoara2013}, and Necoara and Petrascu (2014)\nocite{recoaraandpetrascu2014} (the theory in this dissertation does not assume convexity).

\section{This Work's Contribution}
This work's main contribution can be summarized by the following points:
\begin{enumerate}
\item	We derive conditions for the convergence with probability one of the GCSA algorithm to a root of the gradient of $L(\bm\uptheta)$.  The main convergence result is stated in Theorem \ref{thm:hoeshoo} of Section \ref{sec:convo}. A few corollaries based on special cases of GCSA are derived in the same section. Numerical results supporting the theory on convergence are provided in Section \ref{sec:convergencenumericos}.
\item We provide a generalization to Theorem 2.2 in Fabian (1968) (see Theorem \ref{thm:generalizefabian}) that allows us to show asymptotic normality for a special case of GCSA. Appendix \ref{sec:fabiansecgeneralize} explains how the generalization to Fabian's theorem also extends the theorem's result to include a broader range of SA algorithms of practical interest including certain second order SA and root-finding SA algorithms. 
\item We show the asymptotic normality of the normalized iterates of a special case of GCSA in which the subvector to update is selected according to a deterministic pattern (see Theorem \ref{thm:fnogg}). The result on asymptotic normality helps us define the asymptotic rate of convergence for the iterates of this special case of GCSA. Numerical results supporting the theory on asymptotic normality are provided in Section \ref{sec:normalitycenumericos}.
\item We discuss the importance of defining the cost of implementation when comparing the performances of two optimization algorithms. When cost is a measure of the number of basic arithmetic computations required, we discuss the type of arithmetic operations that can result in a significant difference between the per-iteration cost of implementing an algorithm
 and the per-iteration cost of implementing its cyclic counterpart. 
\item We provide an analytical estimate for the asymptotic efficiency of a special case of GCSA relative to the efficiency of its non-cyclic counterpart after taking into consideration the per-iteration costs of implementation. Here, efficiency is defined in terms of the mean-squared estimation errors. We show how the expression for asymptotic relative efficiency implies that either algorithm (cyclic or non-cyclic) can be more efficient. Numerical experiments computing the relative efficiency between cyclic and non-cyclic algorithms under different definitions of cost are provided in Section \ref{sec:Numericssimples}. 
\item In Chapter \ref{chap:multiagent} we apply the cyclic SA approach to a multi-agent optimization problem for tracking and surveillance. 
\item This dissertation also contains two appendices. Appendix \ref{sec:fabiansecgeneralize} contains a more detailed proof of Theorem \ref{thm:generalizefabian}, the theorem in Chapter \ref{sec:ROC} that generalizes Theorem 2.2 in Fabian (1968), and discusses a few applications of the generalized theorem. Appendix \ref{sec:ROC} considers the problem of determining the presence and location of a static object within an area of interest by combining information from multiple sensors using a maximum-likelihood-based approach.

\end{enumerate}
Two noteworthy assumptions made throughout this dissertation are that the loss function is differentiable and that the optimization problem is unconstrained. Future work could focus on investigating the constrained- and non-differentiable settings.

\section{Overview of Contents}
The main part of this work (Chapters \ref{chap:prems8}--\ref{chap:multiagent}) is organized as follows. First, Chapter \ref{chap:prems8} reviews the general stochastic optimization setting as well as the idea behind SA algorithms for stochastic optimization. 
Chapter \ref{chap:descriptionofGCSA} introduces the cyclic seesaw SA algorithm and the generalized cyclic SA algorithm (GCSA) that is the focus of this work.
 Chapter \ref{sec:cyclicseesaw} focuses on deriving conditions for the convergence with probability one of the GCSA algorithm's iterates to a root of the gradient of the function to minimize. 
  Chapter \ref{sec:ROC} generalizes a well-known result in the SA literature (Fabian 1968, Theorem 2.2) and uses the resulting generalization to prove asymptotic normality of the scaled iterates for a special case of GCSA. 
  Chapter \ref{chap:imefficient} is concerned with computing the asymptotic efficiency (defined as the asymptotic mean-squared-error) of a special case of GCSA relative to that of its non-cyclic counterpart. The chapter begins by discussing the importance of defining the cost of implementation before attempting to compare any two algorithms. The chapter goes on to provide a comparison of the cost of implementing an SA algorithm in a cyclic manner versus the cost if implemented in a non-cyclic manner; a few definitions of cost are considered. Using the asymptotic normality result from Chapter \ref{sec:ROC}, Chapter \ref{chap:imefficient} computes the asymptotic relative efficiency between a special case of GCSA and its non-cyclic counterpart. Chapter \ref{chap:numericschap} contains numerical results that illustrate the theory of Chapters \ref{sec:cyclicseesaw}--\ref{chap:imefficient}. Chapter \ref{chap:multiagent} applies the cyclic approach to a multi-agent optimization problem where the loss function is time-varying and corrupted by noise. While the theory from Chapters \ref{sec:cyclicseesaw} and \ref{sec:ROC} (which is concerned with the optimization of a loss function that is not time-varying) is not fully applicable to this multi-agent problem (the main condition in Chapters \ref{sec:cyclicseesaw}--\ref{sec:ROC} not satisfied is the decaying gain sequence of SA; as a tracking problem the gains are not allowed to decay to zero), the purpose of the numerical example in this chapter is threefold. First, it addresses an important area surveillance and tracking problem. Second, it studies the performance of a cyclic implementation when the conditions from Chapters \ref{sec:cyclicseesaw} and \ref{sec:ROC} do not fully hold. Third, it serves to motivate directions for future work.

  Following Chapter \ref{chap:multiagent} this work is organized as follows. Appendix \ref{sec:fabiansecgeneralize} contains a more detailed proof of Theorem \ref{thm:generalizefabian}, a theorem in Chapter \ref{sec:ROC} that generalizes Theorem 2.2 in Fabian (1968). Appendix \ref{sec:fabiansecgeneralize} also discusses how the generalization makes Theorem 2.2 in Fabian (1968) applicable to a broader range of SA algorithms that extend beyond cyclic SA. In contrast to the content of Chapters \ref{chap:intro}--\ref{chap:multiagent} and Appendix \ref{sec:fabiansecgeneralize}, Appendix \ref{appen:systemid} is concerned with a topic that is unrelated to SA: it considers the problem of determining the presence and location of a static object within an area of interest by combining information from multiple sensors. A simple maximum-likelihood-based approach is investigated. Lastly, a list of frequently used notation is included at the end of this dissertation.


\chapter{Preliminaries}
\label{chap:prems8}

This chapter lays the groundwork for our study of stochastic optimization based on nonlinear root-finding stochastic approximation (SA) algorithms. Section \ref{sec:stafff} formally introduces the general stochastic optimization setting and briefly reviews the relationship between root-finding and optimization. Section \ref{sec:dummieyouask} describes the basic form of SA algorithms for nonlinear root-finding. Lastly, Sections \ref{sec:sgfosa} and \ref{sec:coffee100} give three examples of well-known SA algorithms: stochastic gradient, finite difference SA, and simultaneous perturbation SA.
\section{Stochastic Optimization}
\label{sec:stafff}

The idea behind {\it{unconstrained optimization}} problems is the minimization of a real-valued loss function $L(\bm\uptheta)$ with respect to a parameter vector $\bm\uptheta$. When $L(\bm\uptheta)$ is a smooth function, its gradient is denoted by $\bm{g}(\bm\uptheta)\equiv \partial L(\bm\uptheta)/\partial \bm\uptheta$ and, in this case, the optimization problem can be reformulated as a root-finding problem where one attempts to solve $\bm{g}(\bm\uptheta)=\bm{0}$ (this equation is referred to as the {\it{gradient equation}}  and any solution is referred to as a {\it{root of the gradient}}). Here, the usual caveat applies: the set of all roots of the gradient may contain vectors other than global minimizers of $L(\bm\uptheta)$. Still, regardless of this caveat, the close relationship between minimization and root-finding implies many optimization algorithms rely on being able to evaluate either $L(\bm\uptheta)$ or $\bm{g}(\bm\uptheta)$. Take the steepest descent algorithm (e.g., Nocedal and Wright 2006, Chapter 2)\nocite{nocedalnwright2006}, for example. This algorithm is an iterative line-search algorithm in which an estimate, $\hat{\bm{\uptheta}}_k$, for a solution to the gradient equation is obtained during the $k$th iteration
according to the recursion:
\begin{align}
\label{eq:exescriss}
\hat{\bm{\uptheta}}_{k+1}=\hat{\bm{\uptheta}}_k-a_k{\bm{g}}(\hat{\bm{\uptheta}}_k),
\end{align}
where $a_k$ is a strictly-positive scalar often referred to as the {\it{gain sequence}} (one valid choice for the gain sequence in the steepest descent algorithm is setting $a_k$ equal to a constant that does not depend on $k$). Under certain assumptions, $\hat{\bm{\uptheta}}_k$ converges to $\bm\uptheta^\ast$, a root of the gradient. It is apparent from (\ref{eq:exescriss}) that being able to evaluate $\bm{g}(\bm\uptheta)$ at $\{\hat{\bm{\uptheta}}_k\}_{k\geq0}$ is essential to the implementation of the steepest-descent algorithm. Similarly, many optimization algorithms rely on being able to evaluate $L(\bm\uptheta)$. Simple examples of this type of algorithm are the random search algorithms in Section 2.2 of Spall (2003). 

In practice, it is often the case that evaluating the loss function or its gradient is difficult. As a simple example, consider a setting where there exists a complex stochastic model whose output depends on a set of parameters denoted by $\bm\uptheta$. Furthermore, suppose one wishes to find the value of $\bm\uptheta$ that minimizes the expected output of the model. Here, finding a closed form expression for $L(\bm\uptheta)$ would require computing the expected output for each $\bm\uptheta$, which might be {{infeasible}} due to the complexity of the model. When dealing with physical processes in the optimization process, rather than mathematical models, computing the expected value of the measurement/output for any given $\bm\uptheta$ could easily be {{impossible}} (rather than simply infeasible) since physical processes are often governed by rules unknown to the observer. While traditional optimization techniques cannot be implemented when neither $L(\bm\uptheta)$ nor $\bm{g}(\bm\uptheta)$ are known, stochastic optimization algorithms can use noisy measurements of either the loss function or its gradient in the minimization process. Hereafter, the term ``{\it{stochastic optimization}}'' will be used to refer to optimization problems where there is random noise in the measurements of $L(\bm\uptheta)$ or $\bm{g}(\bm\uptheta)$, or where there is a random choice made in the search direction as the algorithm iterates towards a solution. The term ``{\it{deterministic optimization}}'' will be used to refer to classical optimization problems, like steepest descent or Newton's method, where the minimization process is entirely deterministic.

%
  
  As mentioned in the previous paragraph, one type of stochastic optimization problem pertains to the case where one wishes to minimize a function $L(\bm\uptheta)$ when only noisy measurements of this function are available. In other words, it is assumed that the loss function is unknown but that it is possible to obtain measurements of a random variable $Q(\bm\uptheta,\bm{V})$ such that:
\begin{align}
\label{eq:chanchan}
Q(\bm\uptheta,{\bm{V}})=L(\bm\uptheta)+\upvarepsilon(\bm\uptheta,{\bm{V}}),
\end{align}
where $\bm{V}$ denotes a multivariate random variable and $\upvarepsilon(\bm\uptheta,{\bm{V}})$ is a noise term. The term $\upvarepsilon(\bm\uptheta,{\bm{V}})$ can then be interpreted as the error in measuring the function to minimize. In the special case where the expected value of $\upvarepsilon(\bm\uptheta,\bm{V})$ at $\bm\uptheta$ is equal to zero, a consequence of the law of large numbers is that it is possible to approximate $L(\bm\uptheta)$ at any given $\bm\uptheta$ by averaging several independent and identically distributed (i.i.d.) measurements of $Q(\bm\uptheta,\bm{V)}$. In theory, one could average several i.i.d. noisy loss function measurements to obtain $\bar{L}(\bm\uptheta_1),\dots, \bar{L}(\bm\uptheta_N)$, a set of estimates for $L(\bm\uptheta)$ at the points $\bm\uptheta=\bm\uptheta_1,\dots,\bm\uptheta_N$ (selected from the domain of $L(\bm\uptheta)$ via some deterministic or random scheme).
Then, an approximation to $L(\bm\uptheta)$ for all $\bm\uptheta$, denoted by $\bar{L}(\bm\uptheta)$, could be obtained by interpolating the values of $\bar{L}(\bm\uptheta_i)$ (obtaining this interpolation may be a nontrivial task). Afterwards, one could attempt to minimize $\bar{L}(\bm\uptheta)$ using deterministic optimization algorithms with the hope that the minimizer of $\bar{L}(\bm\uptheta)$ is close to the minimizer or $L(\bm\uptheta)$ (the response surface methodology strategy, introduced by Box and Wilson in 1951\nocite{boxwilson1951}, is a more sophisticated variant of this approach). Such an approach, however, is not always practical since obtaining a good approximation to the loss function (an approximation to the loss function would be considered ``good'' if its minimizer is close to the minimizer of $L(\bm\uptheta)$) could require a prohibitive number of noisy function measurements. The following section describes the stochastic approximation setting, a general framework for stochastic nonlinear root-finding that is more appropriate for minimizing (\ref{eq:chanchan}) than the deterministic optimization approach discussed above. 

\section[Stochastic Approximation (SA)]{Stochastic Approximation}
\label{sec:dummieyouask}

There exist many stochastic optimization algorithms. Random search, genetic algorithms, simulated annealing, stochastic gradient, and simultaneous perturbation SA, are a few examples.
This work will focus on {\it{stochastic approximation}} (SA) algorithms for stochastic optimization via nonlinear root-finding. SA algorithms are closely related to line-search methods and  can be used for stochastic optimization. This section formally defines SA and presents three important examples of SA algorithms: the {\it{stochastic gradient}} (SG) form of SA, {\it{finite difference}} SA (FDSA), and {\it{simultaneous perturbation}} SA (SPSA). Throughout the remainder of this work it is assumed that $\bm\uptheta=[\uptau_1,\dots,\uptau_p]^\top\in \mathbb{R}^p$ and $\bm\uptheta^\ast$ will denote a solution to $\bm{g}(\bm\uptheta)=\bm{0}$.

The basic SA algorithm for nonlinear root-finding is known as the Robbins--Monro algorithm (Robbins and Monro 1951)\nocite{robbinsmonro1951}.  Given a vector-valued function $\bm{f}(\bm\uptheta)$, the Robbins--Monro algorithm attempts to find a solution to $\bm{f}(\bm\uptheta)=\bm{0}$ in an iterative manner using the following recursion:
\begin{align}
\label{eq:formof}
\hat{\bm{\uptheta}}_{k+1}=\hat{\bm{\uptheta}}_k-a_k{\bm{Y}}_k(\hat{\bm{\uptheta}}_k),
\end{align}
where ${\bm{Y}}_k(\hat{\bm{\uptheta}}_k)$ is a vector-valued random variable representing a noisy observation of $\bm{f}(\hat{\bm{\uptheta}}_k)$, and $a_k>0$ is the gain sequence (step size). Unlike most deterministic optimization algorithms, the gain sequence in (\ref{eq:formof}) typically satisfies $a_k\rightarrow 0$. This dissertation is concerned with the special case where $\bm{f}(\bm\uptheta)=\bm{g}(\bm\uptheta)$. In this context, $\bm{Y}_k(\hat{\bm{\uptheta}}_k)$ denotes a noisy estimate of the gradient and it is common to replace the notation $\bm{Y}_k(\hat{\bm{\uptheta}}_k)$ in (\ref{eq:formof}) with $\hat{\bm{g}}_k(\hat{\bm{\uptheta}}_k)$. Therefore, we write:
\begin{align}
\label{eq:youknownothing}
\hat{\bm{\uptheta}}_{k+1}=\hat{\bm{\uptheta}}_k-a_k\hat{\bm{g}}_k(\hat{\bm{\uptheta}}_k),
\end{align}
where $\hat{\bm{g}}(\hat{\bm{\uptheta}}_k)$ is a noisy gradient measurement. 

Because $\hat{\bm{g}}(\hat{\bm{\uptheta}}_k)$ can be thought of as an estimate of $\bm{g}(\hat{\bm{\uptheta}}_k)$, the theory of SA for stochastic optimization often relies on rewriting the vector $\hat{\bm{g}}(\hat{\bm{\uptheta}}_k)$ as:
\begin{align}
\hat{\bm{g}}_k(\hat{\bm{\uptheta}}_k)&= {\bm{g}}(\hat{\bm{\uptheta}}_k)+{\bm{\upbeta}}_k(\hat{\bm{\uptheta}}_k)+{\bm{\upxi}}_k(\hat{\bm{\uptheta}}_k),
\label{eq:tennisey}
\end{align}
a special case of (\ref{eq:imastandardupdaedirection}), where $\bm\upbeta_k(\hat{\bm{\uptheta}}_k)= E\big[\hat{\bm{g}}_k(\hat{\bm{\uptheta}}_k)-\bm{g}(\hat{\bm{\uptheta}}_k)\big|\mathcal{F}_k\big]$, ${\bm{\upxi}}_k(\hat{\bm{\uptheta}}_k)= \hat{\bm{g}}_k(\hat{\bm{\uptheta}}_k)-E\big[\hat{\bm{g}}_k(\hat{\bm{\uptheta}}_k)\big|\mathcal{F}_k\big]$, $\mathcal{F}_k$ is some representation of the history of the process (the precise definition of $\mathcal{F}_k$ may vary from algorithm to algorithm), and $E[\mathcal{X}]$ represents the expected value of the random variable $\mathcal{X}$. One common choice of $\mathcal{F}_k$ is $\mathcal{F}_k=\hat{\bm{\uptheta}}_0,\dots,\hat{\bm{\uptheta}}_k$. In this case, $\bm\upbeta_k(\hat{\bm{\uptheta}}_k)$ represents the {\it{bias}} of $\hat{\bm{g}}_k(\hat{\bm{\uptheta}}_k)$ (e.g., Bickel and Doksum 2007)\nocite{bickelndoksum2007} as an estimator of $\bm{g}(\hat{\bm{\uptheta}}_k)$ and the vector ${\bm{\upxi}}_k(\hat{\bm{\uptheta}}_k)$ is often referred to as the {\it{noise}} term. In the special case where $E\big[\hat{\bm{g}}_k(\hat{\bm{\uptheta}}_k)\big|\hat{\bm{\uptheta}}_k\big]= \bm{g}(\hat{\bm{\uptheta}}_k)$, $\hat{\bm{g}}_k(\hat{\bm{\uptheta}}_k)$ is said to be an unbiased estimate of the gradient at $\hat{\bm{\uptheta}}_k$. It is important to mention that the decomposition in (\ref{eq:tennisey}) is used only for theoretical purposes and, in practice, the bias and noise terms are never computed. Sections \ref{sec:sgfosa} and \ref{sec:coffee100} discuss special cases of (\ref{eq:youknownothing}) that differ in the way $\hat{\bm{g}}_k(\hat{\bm{\uptheta}}_k)$ is computed.

\section[Stochastic Gradient (SG) Form of SA]{Stochastic Gradient Form of SA}
\label{sec:sgfosa}

The {\it{stochastic gradient}} (SG) algorithm for stochastic approximation (e.g., Spall 2003, Chapter 5) is a special case of the Robbins-Monro algorithm that requires the availability of a vector $\hat{\bm{g}}_k^{\text{SG}}(\hat{\bm{\uptheta}}_k)$ such that $E\big[\hat{\bm{g}}_k^{\text{SG}}(\hat{\bm{\uptheta}}_k)\big|\hat{\bm{\uptheta}}_k\big]=\bm{g}(\hat{\bm{\uptheta}}_k)$ (i.e., the noisy gradient measurement must be an unbiased measurement of the gradient at $\hat{\bm{\uptheta}}_k$). The recursion defining $\hat{\bm{\uptheta}}_k$ in the SG algorithm is as follows:
\begin{align*}
\hat{\bm{\uptheta}}_{k+1}=\hat{\bm{\uptheta}}_k-a_k\hat{\bm{g}}_k^{\text{SG}}(\hat{\bm{\uptheta}}_k),
\end{align*}
a special case of (\ref{eq:youknownothing}). Because $\hat{\bm{g}}_k^{\text{SG}}(\hat{\bm{\uptheta}}_k)$ is an unbiased estimate of $\bm{g}(\hat{\bm{\uptheta}}_k)$ we write $\hat{\bm{g}}_k^{\text{SG}}(\hat{\bm{\uptheta}}_k)=\bm{g}(\hat{\bm{\uptheta}}_k)+{\bm{\upxi}}_k^{\text{SG}}(\hat{\bm{\uptheta}}_k)$, where $E[{\bm{\upxi}}_k^{\text{SG}}(\hat{\bm{\uptheta}}_k)|\mathcal{F}_k]=\bm{0}$. In the area of simulation-based optimization, two popular algorithms that are special cases of the SG algorithm are the pure infinitesimal perturbation analysis (IPA) algorithm and the pure likelihood ratio function algorithm (Spall 2003, p. 418)\nocite{ISSO}. For both these algorithms, the random vector $\hat{\bm{g}}_k^{\text{SG}}(\hat{\bm{\uptheta}}_k)$ can sometimes be obtained by assuming that is is possible to
differentiate $Q(\bm\uptheta,\bm{V})$ with respect to $\bm\uptheta$ (e.g., Spall 2003, p. 134\nocite{ISSO}).
 This approach is formally discussed next.

First note that (\ref{eq:chanchan}) implies $E[Q(\bm\uptheta,\bm{V})|\bm\uptheta]=L(\bm\uptheta)+E[\upvarepsilon(\bm\uptheta,{\bm{V}})|\bm\uptheta]$ so that:
\begin{align}
\label{eq:goodworkteak}
\frac{\partial}{\partial \bm\uptheta}E[Q(\bm\uptheta,\bm{V})|\bm\uptheta]={\bm{g}}(\bm\uptheta)+\frac{\partial E[\upvarepsilon(\bm\uptheta,{\bm{V}})|\bm\uptheta]}{\partial \bm\uptheta}.
\end{align}
 If $E[\bm\upvarepsilon(\bm\uptheta,\bm{V})|\bm\uptheta]$ does not depend on $\bm\uptheta$ (e.g., if $\bm\upvarepsilon$ is i.i.d. noise) then (\ref{eq:goodworkteak}) implies:
\begin{align}
\label{eq:chungie}
{\bm{g}}(\bm\uptheta)=\frac{\partial}{\partial \bm\uptheta}E[Q(\bm\uptheta,\bm{V})|\bm\uptheta]=\frac{\partial}{\partial \bm\uptheta}\int_{\Omega_{\bm{V}}}Q(\bm\uptheta,\bm{v})\ dP,
\end{align}
where $P$ denotes a probability measure and $\Omega_{\bm{V}}$ is the domain of $\bm{V}$. Consider the case where $\bm{V}$ is a continuous random variable with probability density function (pdf) $p_{{\bm{V}}}({\bm{v}}|\bm\uptheta)$ that may depend on $\bm\uptheta$. 
Then, (\ref{eq:chungie}) becomes:
\begin{align*}
{\bm{g}}(\bm\uptheta)=\frac{\partial}{\partial \bm\uptheta}\int_{\Omega_{\bm{V}}}Q(\bm\uptheta,\bm{v})p_{\bm{V}}(\bm{v}|\bm\uptheta)\ d\bm{v}.
\end{align*}
If the interchange of differentiation and integration is justified we obtain:
\begin{align}
{\bm{g}}(\bm\uptheta)=
\int_{\Omega_{\bm{V}}}\Bigg[\frac{\partial Q(\bm\uptheta,{\bm{v}})}{\partial \bm\uptheta}+Q(\bm\uptheta,{\bm{v}})\frac{\partial \log(p_{{\bm{V}}}({\bm{v}}|\bm\uptheta))}{\partial \bm\uptheta}\Bigg]p_{{\bm{V}}}({\bm{v}}|\bm\uptheta)\ d{\bm{v}}.\label{eq:bias}
\end{align}
\noindent From  (\ref{eq:bias}) we see that
\begin{align}
\label{eq:dieforyou}
\hat{\bm{g}}_k^{\text{SG}}(\hat{\bm{\uptheta}}_k)=\frac{\partial Q(\bm\uptheta,{\bm{V}})}{\partial \bm\uptheta}\Big|_{\bm\uptheta=\hat{\bm{\uptheta}}_k}+Q(\hat{\bm{\uptheta}}_k,{\bm{V}})\frac{\partial \log[p_{{\bm{V}}}({\bm{V}}|\bm\uptheta)]}{\partial \bm\uptheta}\Big|_{\bm\uptheta=\hat{\bm{\uptheta}}_k},
\end{align}
 is an unbiased measurement of $\bm{g}(\hat{\bm{\uptheta}}_k)$ by construction. 
The following condition is sufficient for the validity of the interchange of differentiation and integration used in (\ref{eq:bias}).

 \begin{theorem}[{\bf{Interchange of Differentiation and Integration}}](Spall 2003, Theorem 15.1)\nocite{ISSO}. Assume $Q(\bm\uptheta,{\bm{v}})p_{{\bm{V}}}({\bm{v}}|\bm\uptheta)$ and $\partial [Q(\bm\uptheta,{\bm{v}})p_{{\bm{V}}}({\bm{v}}|\bm\uptheta)]/\partial \bm\uptheta$ are continuous on $\mathbb{R}^{p}\times \Omega_{\bm{V}}$. Suppose there exist  two nonnegative integrable functions $q_{0}({\bm{v}})$ and $q_{1}({\bm{v}})$ such that for all pairs $(\bm\uptheta,{\bm{v}})\in \mathbb{R}^p\times \Omega_{\bm{V}}$:
\vspace{-.05in}
\begin{align*}
\left|Q(\bm\uptheta,{\bm{v}})p_{{\bm{V}}}({\bm{v}}|\bm\uptheta)\right|\leq q_{0}({\bm{v}});\ \ \  \left\|\frac{\partial [Q(\bm\uptheta,{\bm{v}})p_{{\bm{V}}}({\bm{v}}|\bm\uptheta)]}{\partial \bm\uptheta}\right\|\leq q_{1}({\bm{v}}).
\end{align*}
\noindent Then:
\begin{align*}
{\bm{g}}(\bm\uptheta)=\frac{\partial}{\partial \bm\uptheta}\int_{\Omega_{\bm{V}}}Q(\bm\uptheta,{\bm{V}})p_{{\bm{V}}}({\bm{v}}|\bm\uptheta)\ d{\bm{v}}=\int_{\Omega_{\bm{V}}}\frac{\partial}{\partial \bm\uptheta}\left[Q(\bm\uptheta,{\bm{V}})p_{{\bm{V}}}({\bm{v}}|\bm\uptheta)\right]\ d{\bm{v}}.
\end{align*}
\end{theorem}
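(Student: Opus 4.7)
The plan is to prove this as a textbook application of the dominated convergence theorem (DCT) combined with the mean value theorem (MVT). The first equality $\bm{g}(\bm\uptheta)=\partial/\partial\bm\uptheta\int Q\,p_{\bm{V}}\,d\bm{v}$ is just equation (\ref{eq:chungie}) specialized to the continuous-$\bm{V}$ setting and carries no substance; the real content of the theorem is the second equality, i.e., the interchange of differentiation with the integral, and that is what I would focus on.

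First I would fix $\bm\uptheta\in\mathbb{R}^p$ and a coordinate index $i\in\{1,\dots,p\}$, let $\bm{e}_i$ denote the $i$th standard basis vector, and set $F(\bm\uptheta)\equiv\int_{\Omega_{\bm{V}}}Q(\bm\uptheta,\bm{v})\,p_{\bm{V}}(\bm{v}|\bm\uptheta)\,d\bm{v}$; this is finite by the $q_0$ majorization. The $i$th partial derivative of $F$ at $\bm\uptheta$ is, by definition, the $h\to 0$ limit of
\[
\frac{1}{h}\int_{\Omega_{\bm{V}}}\Bigl[Q(\bm\uptheta+h\bm{e}_i,\bm{v})\,p_{\bm{V}}(\bm{v}|\bm\uptheta+h\bm{e}_i)-Q(\bm\uptheta,\bm{v})\,p_{\bm{V}}(\bm{v}|\bm\uptheta)\Bigr]d\bm{v},
\]
and the goal is to move the limit inside and identify it with $\int\partial[Q\,p_{\bm{V}}]/\partial\uptau_i\,d\bm{v}$. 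To dominate the integrand, I would apply the MVT to the scalar map $t\mapsto Q(\bm\uptheta+t\bm{e}_i,\bm{v})\,p_{\bm{V}}(\bm{v}|\bm\uptheta+t\bm{e}_i)$, which is differentiable in $t$ by the assumed joint continuity of $\partial[Q\,p_{\bm{V}}]/\partial\bm\uptheta$ on $\mathbb{R}^p\times\Omega_{\bm{V}}$; this yields an intermediate $h^\ast(\bm{v},h)$ with $|h^\ast|\le|h|$ such that $\partial[Q\,p_{\bm{V}}]/\partial\uptau_i$ evaluated at $\bm\uptheta+h^\ast\bm{e}_i$ equals the difference quotient at $(\bm\uptheta,\bm{v})$. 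The hypothesis $\|\partial[Q\,p_{\bm{V}}]/\partial\bm\uptheta\|\le q_1(\bm{v})$ then supplies a single integrable majorant, uniform in small $h$, for the magnitude of the integrand.

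DCT therefore lets me pass to the limit inside the integral, and the continuity of $\partial[Q\,p_{\bm{V}}]/\partial\bm\uptheta$ in its first argument identifies the pointwise limit with $\partial[Q\,p_{\bm{V}}]/\partial\uptau_i$ evaluated at $\bm\uptheta$. Assembling the result for each $i$ gives the claimed vector identity. No deep obstacle is expected, because the hypotheses are tailored precisely to activate DCT: the $q_0$ bound makes $F$ well-defined, and the $q_1$ bound, combined with the MVT, provides the integrable envelope for the difference quotients. The only bookkeeping care required is in applying the MVT to the correct scalar function of $t$ on a closed interval where differentiability genuinely holds, which is guaranteed by the stated joint continuity of the derivative.
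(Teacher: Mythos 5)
Your proposal is correct, and it is the standard argument. The paper itself offers no proof of this statement---it is quoted verbatim as Theorem 15.1 of Spall (2003)---and the mean-value-theorem-plus-dominated-convergence argument you give, with the $q_1$ bound serving as the integrable envelope for the coordinatewise difference quotients and joint continuity of $\partial[Q\,p_{\bm{V}}]/\partial\bm\uptheta$ identifying the pointwise limit, is precisely the canonical proof of that cited result. Your observation that the first equality carries no content beyond what the paper establishes in (\ref{eq:chungie}) (under the assumption that $E[\upvarepsilon(\bm\uptheta,\bm{V})|\bm\uptheta]$ does not depend on $\bm\uptheta$) is also accurate.
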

 
 When $\bm{V}$ is a discrete random variable taking the values $1,\dots, N$, a relationship analogous to (\ref{eq:bias}) can be obtained. To see this, let $p_{\bm{V}}(\bm{v}|\bm\uptheta)$ denote the probability mass function (pmf) of $\bm{V}$. In this case, 
 \begin{align}
 \label{eq:frordddas}
{\bm{g}}(\bm\uptheta)=\frac{\partial}{\partial \bm\uptheta}\sum_{\bm{v}=1}^{N}Q(\bm\uptheta,\bm{v})p_{\bm{V}}(\bm{v}|\bm\uptheta).
\end{align}
If $N<\infty$, the interchange of change of summation and differentiation in (\ref{eq:frordddas}) is justified (provided the required derivatives exist). This may not be the case, however, when $N=\infty$ (the sum of the gradients of the individual summands may fail to converge or may converge to something other than $\bm{g}(\bm\uptheta)$). When $N=\infty$, sufficient conditions for the interchange of  differentiation and summation can easily be obtained using Theorem 7.17 in Rudin (1976)\nocite{Rudin1976} (although this theorem pertains only to the case where $\bm\uptheta$ is real, the result of the theorem is easily generalized to the multi-dimensional case by applying the theorem to each entry of the gradient vectors). When the exchange of differentiation and summation in (\ref{eq:frordddas}) is valid,
 \begin{align*}
 {\bm{g}}(\bm\uptheta)=  \sum_{\bm{v}=1}^{N}\Bigg[\frac{\partial Q(\bm\uptheta,{\bm{v}})}{\partial \bm\uptheta}+Q(\bm\uptheta,{\bm{v}})\frac{\partial \log[p_{{\bm{V}}}({\bm{v}}|\bm\uptheta)]}{\partial \bm\uptheta}\Bigg]p_{{\bm{V}}}({\bm{v}}|\bm\uptheta).
 \end{align*}
From this,  we deduce that (\ref{eq:dieforyou}) once again represents an unbiased measurement of the gradient.
 
While (\ref{eq:dieforyou}) gives a theoretically valid expression for $\hat{\bm{g}}_k^{\text{SG}}(\hat{\bm{\uptheta}}_k)$, it requires information that may not be available. For example, if $p_{{\bm{V}}}({\bm{v}}|\bm\uptheta)$ is unknown we would not be able to compute ${\partial \log{[p_{{\bm{V}}}({\bm{V}}|\bm\uptheta)]}}/{\partial \bm\uptheta}$. However, if $p_{{\bm{V}}}({\bm{v}}|\bm\uptheta)$ is independent of $\bm\uptheta$ (a common assumption) then (\ref{eq:dieforyou}) simplifies to:
\begin{align}
\label{eq:imanIPA}
\hat{\bm{g}}_k^{\text{SG}}(\hat{\bm{\uptheta}}_k)= \frac{\partial Q(\bm\uptheta,{\bm{V}})}{\partial \bm\uptheta }\Big|_{\bm\uptheta=\hat{\bm{\uptheta}}_k}.
\end{align}
Then, $\hat{\bm{g}}_k^{\text{SG}}(\hat{\bm{\uptheta}}_k)$ could be obtained through direct measurement of $\partial Q(\bm\uptheta,{\bm{V}})/\partial \bm\uptheta$ evaluated at ${\bm\uptheta=\hat{\bm{\uptheta}}_k}$. In the area of simulation-based optimization, the Robbins--Monro algorithm with $\hat{\bm{g}}_k(\hat{\bm{\uptheta}}_k)=\hat{\bm{g}}_k^{\text{SG}}(\hat{\bm{\uptheta}}_k)$ where $\hat{\bm{g}}_k^{\text{SG}}(\hat{\bm{\uptheta}}_k)$ is as in (\ref{eq:imanIPA}) is commonly referred to as the pure IPA algorithm. 

\section[Simultaneous Perturbation SA (SPSA)]{Simultaneous Perturbation SA}
\label{sec:coffee100}

SA is a powerful tool for stochastic optimization. Section \ref{sec:sgfosa} discussed the SG algorithm which used a noisy but unbiased measurement of $\bm{g}(\hat{\bm{\uptheta}}_k)$ to update $\hat{\bm{\uptheta}}_k$. Section \ref{sec:sgfosa} also gave conditions under which a direct measurement of $\partial Q(\bm\uptheta,\bm{V})/\partial \bm\uptheta$ could be used as the noisy unbiased measurement of the gradient. Direct measurement of $\partial Q(\bm\uptheta,\bm{V})/\partial \bm\uptheta$ is certainly a feasible approach in some applications (e.g., Widrow and Stearns 1985). However, measuring $\partial Q(\bm\uptheta,\bm{V})/\partial \bm\uptheta$ is not always possible and is particularly a problem in black-box settings where the form of $Q(\bm\uptheta,\bm{V})$ is unknown. With this motivation, several SA approximate $\bm{g}(\bm\uptheta)$ only using noisy measurements of $Q(\bm\uptheta,\bm{V})$ and, for this reason, these SA algorithms are often said to be {\it{gradient-free}}.

Let us briefly discuss the oldest gradient-free SA method: {\it{finite difference stochastic approximation}} (FDSA). See, for example, Dennis and Schnabel (1989)\nocite{dennisnschnabel1989}. FDSA requires measuring $Q(\bm\uptheta,\bm{V})$ at different values of $\bm\uptheta$. Specifically,
$\hat{\bm{\uptheta}}_{k+1}=\hat{\bm{\uptheta}}_k-a_k \hat{\bm{g}}^{\text{FD}}_k(\hat{\bm{\uptheta}}_k)$,
where the random vector $\hat{\bm{g}}^{\text{FD}}_k(\hat{\bm{\uptheta}}_k)$ is defined as follows:
\begin{align}
\label{eq:marcella}
\hat{\bm{g}}^{\text{FD}}(\hat{\bm{\uptheta}}_k)\equiv \left[\begin{array}{c}\displaystyle{\frac{Q(\hat{\bm\uptheta}_k+c_k\bm{e}_1,\bm{V}^{1+})-Q(\hat{\bm\uptheta}_k-c_k\bm{e}_1,\bm{V}^{1-})}{2c_k}} \\\vdots \\\displaystyle{\frac{Q(\hat{\bm\uptheta}_k+c_k\bm{e}_p,\bm{V}^{p+})-Q(\hat{\bm\uptheta}_k-c_k\bm{e}_p,\bm{V}^{p-})}{2c_k}}\end{array}\right],
\end{align}
where $\bm{e}_i$ denotes the $i$th standard-basis vector in $\mathbb{R}^p$, $c_k>0$ satisfies $c_k\rightarrow 0$, and where $\bm{V}^{1+},\dots, \bm{V}^{p+}, \bm{V}^{1-},\dots, \bm{V}^{p-}$ denote $2p$ different realizations of $\bm{V}$. The update direction in (\ref{eq:marcella}) is referred to as the two-sided FDSA update direction and requires a total of $2p$ noisy loss function measurements at each iteration (recall that $p$ is the dimension of the parameter space). When $p$ is large and if obtaining noisy loss function measurements is costly, requiring $2p$ noisy loss function measurements per-iteration could be prohibitive (this is also an issue for a one-sided version of the FDSA algorithm requiring $p+1$ noisy loss measurements per-iteration).

The {\it{simultaneous perturbation stochastic approximation}} (SPSA) algorithm (e.g., Spall 1992\nocite{spall1992}; Bhatnagar et al. 2013\nocite{bhatnagaretal2013}) is similar to FDSA but requires only two noisy loss function measurements per iteration (independently of $p$). In SPSA, the gradient approximation in (\ref{eq:marcella}) is replaced with the random vector:
\begin{align}
\label{eq:pocres}
\hat{\bm{g}}^{\text{SP}}_k(\hat{\bm{\uptheta}}_k)\equiv \left[\begin{array}{c}\displaystyle{\frac{Q(\hat{\bm\uptheta}_k+c_k\bm\Delta_k,\bm{V}^+)-Q(\hat{\bm\uptheta}_k-c_k\bm\Delta_k,\bm{V}^-)}{2c_k\Delta_{k1}}} \\\vdots \\\displaystyle{\frac{Q(\hat{\bm\uptheta}_k+c_k\bm\Delta_k,\bm{V}^+)-Q(\hat{\bm\uptheta}_k-c_k\bm\Delta_k,\bm{V}^-)}{2c_k\Delta_{kp}}}\end{array}\right],
\end{align}
where $\bm\Delta_k=[\Delta_{k1},\dots,\Delta_{kp}]^\top$ is a random vector, $c_k>0$ is a sequence satisfying $c_k\rightarrow 0$, and $\bm{V}^+$ and $\bm{V}^-$ are two different realizations of $\bm{V}$. The SPSA recursion governing $\{\hat{\bm{\uptheta}}_k\}_{k\geq 0}$ is then
$\hat{\bm{\uptheta}}_{k+1}=\hat{\bm{\uptheta}}_k-a_k\hat{\bm{g}}^{\text{SP}}_k(\hat{\bm{\uptheta}}_k)$.
By defining $\mathcal{F}_k\equiv \{\hat{\bm{\uptheta}}_0,\dots,\hat{\bm{\uptheta}}_k,\bm\Delta_0,\dots,\bm\Delta_{k-1}\}$, $\bm\upbeta_k^{\text{SP}}(\hat{\bm{\uptheta}}_k)\equiv E[\hat{\bm{g}}^{\text{SP}}_k(\hat{\bm{\uptheta}}_k)-\bm{g}(\hat{\bm{\uptheta}}_k)|\mathcal{F}_k]$, and $\bm\upxi_k^{\text{SP}}(\hat{\bm{\uptheta}}_k)\equiv \hat{\bm{g}}^{\text{SP}}_k(\hat{\bm{\uptheta}}_k)-\bm{g}(\hat{\bm{\uptheta}}_k)-\bm\upbeta_k^{\text{SP}}(\hat{\bm{\uptheta}}_k)$,  the connection between the SPSA algorithm and (\ref{eq:youknownothing})--(\ref{eq:tennisey}) becomes apparent. 
%
%

Let us briefly discuss an important property of $\bm\upbeta_k^{\text{SP}}(\hat{\bm{\uptheta}}_k)$ and $\bm\upxi_k^{\text{SP}}(\hat{\bm{\uptheta}}_k)$, the bias and noise terms associated with the SPSA algorithm. In (\ref{eq:pocres}), the vector $\hat{\bm{g}}^{\text{SP}}_k(\hat{\bm{\uptheta}}_k)$ was obtained by adding a perturbation to $\hat{\bm{\uptheta}}_k$ in two opposite directions and collecting a noisy loss measurement at these new locations. 
 Spall (1992)\nocite{spall1992} derives conditions under which the bias of the gradient estimate decreases w.p.1 as $c_k$ decreases. Spall (1992)\nocite{spall1992} also derives conditions under which 
 $\bm\upbeta_k(\hat{\bm{\uptheta}}_k^{\text{SP}})=O(c_k^2) $ w.p.1, where $O(\cdot)$ is the standard big-$O$ notation. This result implies that the bias must decrease at least as fast as $c_k^2$ w.p.1. In other words, as the magnitude of the perturbation decreases, the expected update direction approaches the negative gradient of $L(\bm\uptheta)$. In contrast, the noise term $\bm\upxi_k^{\text{SP}}(\hat{\bm{\uptheta}}_k)$ tends to grow as $c_k$ decreases. More specifically, the magnitude of the covariance matrix of $\bm\upxi_k^{\text{SP}}(\hat{\bm{\uptheta}}_k)$ often increases at least as fast as $1/c_k^2$ (e.g., p. 391 in Spall 2003). Here, having $a_k\rightarrow 0$ serves to dampen the effect of the noise's growing variance.

 \section{Concluding Remarks}
 
 This chapter formally described the stochastic optimization problem and presented the general form of SA algorithms. Through the connection between root-finding and optimization, SA algorithms can be used for stochastic optimization. Although this dissertation focuses on stochastic optimization, the results of Chapters \ref{sec:cyclicseesaw}--\ref{chap:imefficient} also apply to the general root-finding setting (where the update directions are of the form in (\ref{eq:imastandardupdaedirection})) to the extent that the assumptions in Chapters \ref{sec:cyclicseesaw}--\ref{chap:imefficient} remain valid. The following chapter introduces the SA-based algorithm to be the focus of this dissertation.


\chapter{Basic and Generalized Cyclic SA}
\label{chap:descriptionofGCSA}

In the basic cyclic optimization algorithm the parameter vector $\bm\uptheta$ is represented in terms of two subvectors:
\begin{align}
\setstretch{1.5} 
\bm\uptheta=\left[\begin{array}{c}\bm\uptheta^{[{1}]} \\\bm\uptheta^{[{2}]}\end{array}\right]\in \mathbb{R}^{p},
\label{eq:analgoysDRAGON}
\end{align}
\noindent where subvector $\bm\uptheta^{[{1}]}$ has length $p'$ with $0<p'<p$. In other words, if $\bm\uptheta=[\uptau_1,\dots,\uptau_p]^\top$ then $\bm\uptheta^{[{1}]}=[\uptau_1,\dots,\uptau_{p'}]^\top$ and $\bm\uptheta^{[{2}]}=[\uptau_{p'+1},\dots,\uptau_p]^\top$. The idea behind cyclic optimization methods is to alternate between updating one of the subvectors while holding the other fixed, the iterative process continues until a stopping criterion has been satisfied. Since $\bm\uptheta$ is divided into two subvectors we refer to this process as cyclic {\it{seesaw}} optimization due to its back-and-forth nature. In this chapter we introduce the {{cyclic seesaw SA}} algorithm, a cyclic implementation of SA procedures for nonlinear root-finding. This chapter also introduces the generalization to cyclic seesaw SA which will be the focus of this work (see Section \ref{sec:wunderbar}). One extension permitted by the generalization pertains to a version of the algorithm that is not necessarily strictly alternating (not cyclic seesaw). Here, there may be more than two subvectors possibly with shared components and it is known that each will be updated infinitely often as the iteration count grows to infinity. A special case of this extension arises, for example, when at each iteration a random variable dictates which subvector to update. Section \ref{sec:keatonmodasd} formally describes the cyclic seesaw SA algorithm (with two special cases given in Sections \ref{sec:ay} and \ref{sect:cspsa}) and Section \ref{sec:wunderbar} describes the generalized cyclic SA algorithm. Section \ref{sec:cocnremarksGCSA} contains concluding remarks.

\section{Cyclic Seesaw SA}
\label{sec:keatonmodasd}  

We first introduce the following notation: given any vector $\bm{v}\in \mathbb{R}^p$, let the vectors ${\bm{v}}^{(1)}$ and ${\bm{v}}^{(2)}$ be the result of replacing the last $p-p'$ coordinates or the first $p'$ coordinates of ${\bm{v}}$ with zeros, respectively. Specifically,
 \begin{align}
 \setstretch{1.5} 
  \label{eq:electricpulsar}
 \bm{v}^{(1)}=\left[\begin{array}{c}v_1  \\\vdots \\v_{p'}\\{\bm{0}}\end{array}\right]\in \mathbb{R}^p, \ \ \ \bm{v}^{(2)}=\left[\begin{array}{c}{\bm{0}} \\v_{p'+1}  \\\vdots \\v_{p}\end{array}\right]\in\mathbb{R}^p,
 \end{align} 
where $v_i$ denotes the $i$th entry of $\bm{v}$ and $\bm{0}$ denotes a vector of zeros. Thus, for example, $\bm{g}(\bm\uptheta)=\bm{g}^{(1)}(\bm\uptheta)+\bm{g}^{(2)}(\bm\uptheta)$. Using the notation in (\ref{eq:electricpulsar}), the following recursion defines cyclic seesaw SA:
\begin{align}
\bm{{\hat{\uptheta}}}_{k}^{(I)}=\bm{{\hat{\uptheta}}}_{k}-a_{k}^{(1)}\hat{\bm{g}}^{(1)}_k(\hat{\bm\uptheta}_k),\ \ \ \bm{{\hat{\uptheta}}}_{k+1}=\bm{{\hat{\uptheta}}}_{k}^{(I)}-a_{k}^{(2)}\hat{\bm{g}}^{(2)}_k(\hat{\bm\uptheta}_k^{(I)}),\label{eq:propoto2}
\end{align}
where $a_{k}^{(1)}$ and $a_{k}^{(2)}$ are sequences of positive scalars and $\hat{\bm{g}}^{(1)}_k(\hat{\bm\uptheta}_k)$ and $\hat{\bm{g}}^{(2)}_k(\hat{\bm\uptheta}_k^{(I)})$ denote noisy approximations of $\bm{g}^{(1)}(\hat{\bm{\uptheta}}_k)$ and $\bm{g}^{(2)}(\hat{\bm{\uptheta}}_k^{(I)})$, respectively. 
Throughout our description of the cyclic seesaw algorithm we use the superscript ``$(I)$'' to denote the intermediate step of an iteration, this is done to emphasize the seesaw nature of the algorithm. Using (\ref{eq:tennisey}), (\ref{eq:propoto2}) may also be rewritten as:
\begin{subequations}
\begin{align}
\bm{{\hat{\uptheta}}}_{k}^{(I)}&=\bm{{\hat{\uptheta}}}_{k}-a_{k}^{(1)}\left[{\bm{g}}^{(1)}(\bm{{\hat{\uptheta}}}_{k})+{\bm{\upbeta}}_k^{(1)}(\bm{{\hat{\uptheta}}}_{k})+{\bm{\upxi}}_k^{(1)}(\bm{{\hat{\uptheta}}}_{k})\right],\label{eq:byestablishing0}\\
\bm{{\hat{\uptheta}}}_{k+1}&=\bm{{\hat{\uptheta}}}_{k}^{(I)}-a_{k}^{(2)}\left[{\bm{g}}^{(2)}(\bm{{\hat{\uptheta}}}_{k}^{(I)})+{\bm{\upbeta}_k^{(2)}}(\bm{{\hat{\uptheta}}}_{k}^{(I)})+{\bm{\upxi}_k^{(2)}}(\bm{{\hat{\uptheta}}}_{k}^{(I)})\right].\label{eq:byestablishing}
\end{align}
\end{subequations}
The following section describes the cyclic seesaw SG algorithm, a special case of (\ref{eq:byestablishing0},b) where the gradient estimates are unbiased (see Section \ref{sec:sgfosa}).

\section{Cyclic Seesaw SG}
\label{sec:ay}

The cyclic seesaw SG algorithm produces its updates according to (\ref{eq:propoto2}) by using unbiased noisy gradient measurements to obtain the update directions $\hat{\bm{g}}^{(1)}_k(\hat{\bm\uptheta}_k)$ and $\hat{\bm{g}}^{(2)}_k(\hat{\bm\uptheta}_k^{(I)})$. Specifically, 
cyclic seesaw SG recursion is as follows:
\begin{align}
\bm{{\hat{\uptheta}}}_{k}^{(I)}=\bm{{\hat{\uptheta}}}_{k}-a_{k}^{(1)}\left[\hat{\bm{g}}^{\text{SG}}(\bm{{\hat{\uptheta}}}_{k})\right]^{(1)},\ \ \ \bm{{\hat{\uptheta}}}_{k+1}=\bm{{\hat{\uptheta}}}_{k}^{(I)}-a_{k}^{(2)}\left[\hat{\bm{g}}^{\text{SG}}(\bm{{\hat{\uptheta}}}_{k}^{(I)})\right]^{(2)},
 \label{eq:goonnowharderdinosaur}
\end{align}
where $\hat{\bm{g}}^{\text{SG}}(\bm{{\hat{\uptheta}}}_{k})$ and $\hat{\bm{g}}^{\text{SG}}(\bm{{\hat{\uptheta}}}_{k}^{(I)})$ are the SG estimates of $\bm{g}(\bm\uptheta)$ at $\hat{\bm{\uptheta}}_k$ and $\hat{\bm{\uptheta}}_k^{(I)}$, respectively (see Section \ref{sec:sgfosa}). Because the SG estimates are unbiased (by assumption), there exist vectors ${\bm{\upxi}}_k^{\text{SG}}(\bm{{\hat{\uptheta}}}_{k})$ and ${\bm{\upxi}}_k^{\text{SG}}(\bm{{\hat{\uptheta}}}_{k}^{(I)})$ with $E[{\bm{\upxi}}_k^{\text{SG}}(\bm{{\hat{\uptheta}}}_{k})|\hat{\bm{\uptheta}}_k]=\bm{0}$ and $E[{\bm{\upxi}}_k^{\text{SG}}(\bm{{\hat{\uptheta}}}_{k}^{(I)})|\hat{\bm{\uptheta}}_k^{(I)}]=\bm{0}$ such that:
\begin{align}
\hat{\bm{g}}^{\text{SG}}(\bm{{\hat{\uptheta}}}_{k})={\bm{g}}(\bm{{\hat{\uptheta}}}_{k})+{\bm{\upxi}}_k^{\text{SG}}(\bm{{\hat{\uptheta}}}_{k}), \ \ \  \hat{\bm{g}}^{\text{SG}}(\bm{{\hat{\uptheta}}}_{k}^{(I)})={\bm{g}}(\bm{{\hat{\uptheta}}}_{k}^{(I)})+{\bm{\upxi}}_k^{\text{SG}}(\bm{{\hat{\uptheta}}}_{k}^{(I)}).
 \label{eq:uglypeppym}
\end{align}
Cyclic seesaw SG is then a special case of (\ref{eq:byestablishing0},b) in which the bias terms are zero. It is important to note that the vectors $\hat{\bm{g}}^{\text{SG}}(\bm{{\hat{\uptheta}}}_{k})$ and $\hat{\bm{g}}^{\text{SG}}(\bm{{\hat{\uptheta}}}_{k}^{(I)})$ in (\ref{eq:uglypeppym}) are defined only for theoretical purposes and, in practice, (\ref{eq:goonnowharderdinosaur}) implies it is unnecessary to compute all the entries in $\hat{\bm{g}}^{\text{SG}}(\bm{{\hat{\uptheta}}}_{k})$ and $\hat{\bm{g}}^{\text{SG}}(\bm{{\hat{\uptheta}}}_{k}^{(I)})$ since several entries are replaced with zeros in (\ref{eq:goonnowharderdinosaur}). The following section introduces the cyclic seesaw SPSA algorithm, another special case of cyclic seesaw SA.


\section{Cyclic Seesaw SPSA}
\label{sect:cspsa}

In the SPSA algorithm, two noisy loss function measurements are used to update the vector $\hat{\bm\uptheta}_k$ (see Section \ref{sec:coffee100}). To implement SPSA in a cyclic seesaw manner, one possibility is to replace the noisy gradient estimates in (\ref{eq:propoto2}) with gradient approximations of the form in (\ref{eq:pocres}). In other words, cyclic SPSA could be implemented as follows:
\begin{align}
\bm{{\hat{\uptheta}}}_{k}^{(I)}=\bm{{\hat{\uptheta}}}_{k}-a_{k}^{(1)}[\hat{\bm{g}}^{\text{SP}}_k(\hat{\bm\uptheta}_k)]^{(1)},\ \ \ \bm{{\hat{\uptheta}}}_{k+1}=\bm{{\hat{\uptheta}}}_{k}^{(I)}-a_{k}^{(2)}[\hat{\bm{g}}^{\text{SP}}_k(\hat{\bm\uptheta}_k^{(I)})]^{(2)},
\label{eq:moumousika}
\end{align}
where $\hat{\bm{g}}^{\text{SP}}_k(\hat{\bm\uptheta}_k)$ and $\hat{\bm{g}}^{\text{SP}}_k(\hat{\bm\uptheta}_k^{(I)})$ are noisy estimates of $\bm{g}(\hat{\bm{\uptheta}}_k)$ and $\bm{g}(\hat{\bm{\uptheta}}_k^{(I)})$, respectively, obtained using SPSA (see Section \ref{sec:coffee100}). Specifically,
%
%
\begin{align}
\hat{\bm{g}}^{\text{SP}}_k(\hat{\bm{\uptheta}}_k)&= \left[\begin{array}{c}\displaystyle{\frac{Q(\hat{\bm\uptheta}_k+c_k^{(1)}\bm\Delta_k, \bm{V}^{+}_k)-Q(\hat{\bm\uptheta}_k-c_k^{(1)}\bm\Delta_k, \bm{V}^{-}_k)}{2c_k^{(1)}\Delta_{k1}}} \\\vdots \\ \displaystyle{\frac{Q(\hat{\bm\uptheta}_k+c_k^{(1)}\bm\Delta_k,\bm{V}^{+}_k)-Q(\hat{\bm\uptheta}_k-c_k^{(1)}\bm\Delta_k,\bm{V}^{-}_k)}{2c_k^{(1)}\Delta_{kp}}}\end{array}\right],
\label{eq:shmasdfg}
\end{align}
for some sequence $\{c_k^{(1)}\}_{k\geq0}$ and random vector $\bm\Delta_k=[\Delta_{k1},\dots,\Delta_{kp}]^\top$, and
\begin{align}
\hat{\bm{g}}^{\text{SP}}_k(\hat{\bm{\uptheta}}_k^{(I)})&= \left[\begin{array}{c}\displaystyle{\frac{Q(\hat{\bm\uptheta}_k^{(I)}+c_k^{(2)}\bm\Delta_k^{(I)},\bm{V}^{(I)+}_k)-Q(\hat{\bm\uptheta}_k^{(I)}-c_k^{(2)}\bm\Delta_k^{(I)},\bm{V}^{(I)-}_k)}{2c_k^{(2)}\Delta_{k1}^{(I)}}} \\\vdots \\ \displaystyle{\frac{Q(\hat{\bm\uptheta}_k^{(I)}+c_k^{(2)}\bm\Delta_k^{(I)},\bm{V}^{(I)+}_k)-Q(\hat{\bm\uptheta}_k^{(I)}-c_k^{(2)}\bm\Delta_k^{(I)},\bm{V}^{(I)-}_k)}{2c_k^{(2)}\Delta_{kp}^{(I)}}}\end{array}\right],\label{eq:splice}
\end{align}
for a sequence $\{c_k^{(2)}\}_{k\geq0}$ (not necessarily equal to $\{c_k^{(1)}\}_{k\geq0}$) and a random vector $\bm\Delta_k^{(I)}=[\Delta_{k1}^{(I)},\dots,\Delta_{kp}^{(I)}]^\top$, where $\bm{V}^{+}_k, \bm{V}^{-}_k, \bm{V}^{(I)+}_k, \bm{V}^{(I)-}_k$ denote four different realizations of $\bm{V}$. Note that the definitions of $\hat{\bm{g}}^{\text{SP}}_k(\hat{\bm{\uptheta}}_k)$ and $\hat{\bm{g}}^{\text{SP}}_k(\hat{\bm{\uptheta}}_k^{(I)})$ given in 
%
%
(\ref{eq:shmasdfg}) and (\ref{eq:splice}) 
 allow adding a small perturbation to {\it{all}} elements of the vectors $\hat{\bm{\uptheta}}_k$ and $\hat{\bm{\uptheta}}_k^{(I)}$, respectively.  An alternative definition of cyclic seesaw SPSA can be obtained by only perturbing the entries to be updated. This is discussed next.
 
From (\ref{eq:moumousika}) it follows that there is no need to estimate the last $p'$ entries of $\hat{\bm{g}}^{\text{SP}}_k(\hat{\bm\uptheta}_k)$ or the first $p-p'$ entries of $\hat{\bm{g}}^{\text{SP}}_k(\hat{\bm\uptheta}_k^{(I)})$. 
%
%
 Thus, an alternative definition of cyclic seesaw SPSA can be obtained by setting: 
\begin{align}
\hat{\bm{g}}^{\text{SP}}_k(\hat{\bm{\uptheta}}_k)= \left[\begin{array}{c}\displaystyle{\frac{Q(\hat{\bm\uptheta}_k+c_k^{(1)}\bm\Delta_k,\bm{V}_k^+)-Q(\hat{\bm\uptheta}_k-c_k^{(1)}\bm\Delta_k, \bm{V}_k^-)}{2c_k^{(1)}\Delta_{k1}}} \\\vdots \\\displaystyle{\frac{Q(\hat{\bm\uptheta}_k+c_k^{(1)}\bm\Delta_k,\bm{V}_k^+)-Q(\hat{\bm\uptheta}_k-c_k^{(1)}\bm\Delta_k,\bm{V}_k^-)}{2c_k^{(1)}\Delta_{kp'}}}\\ \bm{0}\end{array}\right],\label{eq:howeardspos1}
\end{align}
where $\bm\Delta_k=[\Delta_{k1},\dots,\Delta_{kp'},0,\dots,0]^\top$, and:
\begin{align}
\hat{\bm{g}}^{\text{SP}}_k(\hat{\bm{\uptheta}}_k^{(I)})= \left[\begin{array}{c}\bm{0}\\ \displaystyle{\frac{Q(\hat{\bm\uptheta}_k^{(I)}+c_k^{(2)}\bm\Delta_k^{(I)},\bm{V}_k^{(I)+})-Q(\hat{\bm\uptheta}_k^{(I)}-c_k^{(2)}\bm\Delta_k^{(I)},\bm{V}_k^{(I)-})}{2c_k^{(2)}\Delta_{k(p'+1)}^{(I)}}} \\\vdots \\\displaystyle{\frac{Q(\hat{\bm\uptheta}_k^{(I)}+c_k^{(2)}\bm\Delta_k^{(I)},\bm{V}_k^{(I)+})-Q(\hat{\bm\uptheta}_k^{(I)}-c_k^{(2)}\bm\Delta_k^{(I)},\bm{V}_k^{(I)-})}{2c_k^{(2)}\Delta_{kp}^{(I)}}}\end{array}\right],\label{eq:howeardspos2}
\end{align}
where $\bm\Delta_k^{(I)}=[0,\dots,0,\Delta_{k(p'+1)}^{(I)},\dots,\Delta_{kp}^{(I)}]^\top$. 

%
Regardless of whether $\hat{\bm{g}}^{\text{SP}}_k(\hat{\bm\uptheta}_k)$ and $\hat{\bm{g}}^{\text{SP}}_k(\hat{\bm\uptheta}_k^{(I})$ in  (\ref{eq:moumousika}) are obtained according to  (\ref{eq:shmasdfg})--(\ref{eq:splice}) or according to (\ref{eq:howeardspos1})--(\ref{eq:howeardspos2}), the resulting algorithm is a special case of (\ref{eq:byestablishing0},b). To see this, define $\mathcal{F}_k\equiv \{\hat{\bm{\uptheta}}_0,\dots,\hat{\bm{\uptheta}}_k,\bm\Delta_0,\dots,\bm\Delta_{k-1},\bm\Delta_0^{(I)},\dots,\bm\Delta_{k-1}^{(I)}\}$ and $\mathcal{F}_k^{(I)}\equiv \{\mathcal{F}_k, \hat{\bm{\uptheta}}_k^{(I)} , \bm\Delta_{k}\}$.
 Then, define the bias terms in (\ref{eq:byestablishing0},b) as follows:
\begin{subequations}
\begin{align}
{\bm{\upbeta}}^{(1)}_k(\hat{\bm{\uptheta}}_k)&=E\left[\hat{\bm{g}}^{\text{SP}}_k(\hat{\bm{\uptheta}}_k)-{\bm{g}}(\hat{\bm{\uptheta}}_k)\Big|\mathcal{F}_k\right]^{(1)},\label{eq:savemebias0}\\
 {\bm{\upbeta}}^{(2)}_k(\hat{\bm{\uptheta}}_k^{(I)})&=E\left[\hat{\bm{g}}^{\text{SP}}_k(\hat{\bm{\uptheta}}_k^{(I)})-{\bm{g}}^{(2)}(\hat{\bm{\uptheta}}_k^{(I)})\Big|\mathcal{F}_k^{(I)}\right]^{(2)},
\label{eq:savemebias}
 \end{align}
 \end{subequations}
and define the noise terms in (\ref{eq:byestablishing0},b) as follows:
 \begin{subequations}
 \begin{align}
 {\bm{\upxi}}^{(1)}_k(\hat{\bm{\uptheta}}_k)&=\left(\hat{\bm{g}}^{\text{SP}}_k(\hat{\bm{\uptheta}}_k)-E[\hat{\bm{g}}^{\text{SP}}_k(\hat{\bm{\uptheta}}_k)\Big|\mathcal{F}_k]\right)^{(1)}, \label{eq:savemenoise0}\\
   {\bm{\upxi}}^{(2)}_k(\hat{\bm{\uptheta}}_k^{(I)})&=\left(\hat{\bm{g}}^{\text{SP}}_k(\hat{\bm{\uptheta}}_k^{(I)})-E\left[\hat{\bm{g}}^{\text{SP}}_k(\hat{\bm{\uptheta}}_k^{(I)})\Big|\mathcal{F}_k^{(I)}\right]\right)^{(2)}.
   \label{eq:savemenoise}
\end{align}
 \end{subequations}
We may then write $[\hat{\bm{g}}^{\text{SP}}_k(\hat{\bm{\uptheta}}_k)]^{(1)}={\bm{g}}^{(1)}(\bm{{\hat{\uptheta}}}_{k})+{\bm{\upbeta}}_k^{(1)}(\bm{{\hat{\uptheta}}}_{k})+{\bm{\upxi}}_k^{(1)}(\bm{{\hat{\uptheta}}}_{k})$ and, similarly, $[\hat{\bm{g}}^{\text{SP}}_k(\hat{\bm{\uptheta}}_k^{(I)})]^{(2)}={\bm{g}}^{(2)}(\bm{{\hat{\uptheta}}}_{k}^{(I)})+{\bm{\upbeta}_k^{(2)}}(\bm{{\hat{\uptheta}}}_{k}^{(I)})+{\bm{\upxi}_k^{(2)}}(\bm{{\hat{\uptheta}}}_{k}^{(I)})$. Therefore, cyclic seesaw SPSA is a special case of (\ref{eq:byestablishing0},b). The following section introduces the generalized cyclic SA algorithm, the algorithm to be the focus of this dissertation.

\section[Generalized Cyclic SA (GCSA)]{Generalized Cyclic SA}
\label{sec:wunderbar}

While the cyclic seesaw algorithm alternates between updating each of two subvectors of $\bm\uptheta$, it is easy to conceive of an algorithm that does not exhibit this strictly alternating nature. For example, suppose that the vector to update is chosen according to a Bernoulli (i.e., binary) random variable with parameter $q_k$ (``success'' in the Bernoulli trial could mean the first subvector is updated in which case ``failure'' would mean the second subvector is updated). Formally,
\begin{align}
&{\text{Let $\mathcal{X}_k\sim \text{Bernoulli}(q_k)$}},  \notag \\
&{\text{Let $j=1$ if $\mathcal{X}_k=1$ and $j=2$ otherwise}},\notag \\
&\bm{{\hat{\uptheta}}}_{k+1}=\bm{{\hat{\uptheta}}}_{k}-\tilde{a}_{k}^{(j)}\hat{\bm{g}}^{(j)}_k(\hat{\bm\uptheta}_k)\label{eq:beer149}
\end{align}
where $\tilde{a}_k^{(j)}$ is the first unused element of a sequence $\{a_i^{(j)}\}_{i\geq0}$ with $a_i^{(j)}>0$, $q_k$ is a value in the interval $(0,1)$, and where $\mathcal{X}_k\sim \text{Bernoulli}(q_k)$ implies $\mathcal{X}_k$ has a Bernoulli distribution with success probability $q_k$.

Let us draw attention to two important differences between (\ref{eq:propoto2}) and (\ref{eq:beer149}). First, because there is no intermediate step per se in (\ref{eq:beer149}), the superscript ``$(I)$'' is avoided altogether. Second, the deterministic gain $a_k^{(j)}$ in (\ref{eq:propoto2}) has been replaced by the random gain $\tilde{a}_k^{(j)}$. Since the convergence theory for SA procedures typically requires $a_k\rightarrow 0$, using $\tilde{a}_k^{(j)}$ in place of $a_k^{(j)}$ is done with the intention of ensuring that the gain sequence does not become too small too soon. Suppose, for example, that $\mathcal{X}_k=1$ for $k=0,\cdots,99$ and $\mathcal{X}_{100}=0$ so that the first time the second subvector is updated is during iteration 101. 
Setting
\begin{align*}
\bm{{\hat{\uptheta}}}_{101}=\bm{{\hat{\uptheta}}}_{100}-{a}_{100}^{(2)}\ \hat{\bm{g}}^{(2)}_{100}(\hat{\bm\uptheta}_{100})
\end{align*}
in this case might prove disadvantageous since $a_{100}^{(2)}$ could be very small, thus resulting in only a very small modification to the second subvector even when $\|\hat{\bm{g}}^{(2)}_{100}(\hat{\bm\uptheta}_{100})\|$ is not close to zero. This is less of an issue when using $\tilde{a}_k^{(j)}$ since $\tilde{a}_{100}^{(2)}=a_0^{(2)}$, a gain sequence value that is likely much larger than $a_{100}^{(2)}$.


As another example of how the cyclic seesaw algorithm might be generalized, the subvectors could be updated according to some predetermined pattern (cyclic seesaw is a particular case). For example, suppose that at each iteration the parameter vector is updated according to the following repeating pattern: two updates to $\bm\uptheta^{[1]}$ followed by a single update to $\bm\uptheta^{[2]}$ and, lastly, two updates to $\bm\uptheta^{[1]}$ (see (\ref{eq:analgoysDRAGON}) for the definition of $\bm\uptheta^{[j]}$). This algorithm could be written as:
\begin{subequations}
\begin{align}
&\bm{{\hat{\uptheta}}}_{k}^{(I_{1,1})}\equiv\bm{{\hat{\uptheta}}}_{k}-{a}_{k}^{(1)}\ \hat{\bm{g}}^{(1)}_k(\hat{\bm\uptheta}_k),\label{eq:believeeyes0}\\
&\bm{{\hat{\uptheta}}}_{k}^{(I_{1,2})}\equiv\bm{{\hat{\uptheta}}}_{k}^{(I_{1,1})}-{a}_{k}^{(1)}\ \hat{\bm{g}}^{(1)}_k\left(\hat{\bm\uptheta}_k^{(I_{1,1})}\right),\label{eq:believeeyes1}\\
&\bm{{\hat{\uptheta}}}_{k}^{(I_{2,1})}\equiv\bm{{\hat{\uptheta}}}_{k}^{(I_{1,2})}-{a}_{k}^{(2)}\ \hat{\bm{g}}^{(2)}_k\left(\hat{\bm\uptheta}_k^{(I_{1,2})}\right),\label{eq:believeeyes2}\\
&\hat{\bm\uptheta}_{k}^{(I_{3,1})}\equiv\bm{{\hat{\uptheta}}}_{k}^{(I_{2,1})}-{a}_{k}^{(1)}\ \hat{\bm{g}}^{(1)}_k\left(\hat{\bm\uptheta}_k^{(I_{2,1})}\right),\label{eq:believeeyes3}\\
&\hat{\bm\uptheta}_{k}^{(I_{3,2})}\equiv\bm{{\hat{\uptheta}}}_{k}^{(I_{3,1})}-{a}_{k}^{(1)}\ \hat{\bm{g}}^{(1)}_k\left(\hat{\bm\uptheta}_k^{(I_{3,1})}\right),\label{eq:believeeyes4}\\
&\hat{\bm{\uptheta}}_{k+1}= \hat{\bm\uptheta}_{k}^{(I_{3,2})}.
\label{eq:believeeyes}
\end{align}
\end{subequations}
Another way to visualize (\ref{eq:believeeyes0}--f) is given below:
\begin{align}
\hat{\bm{\uptheta}}_k\overset{(1)}{\xrightarrow{\hspace*{0.8cm}}} {{\hat{\bm\uptheta}_k^{(I_{1,1})}}}\overset{(1)}{\xrightarrow{\hspace*{0.8cm}}} \hat{\bm\uptheta}_k^{(I_{1,2})}\overset{(2)}{\xrightarrow{\hspace*{0.8cm}}}\hat{\bm\uptheta}_k^{(I_{2,1})}\overset{(1)}{\xrightarrow{\hspace*{0.8cm}}} \hat{\bm\uptheta}_k^{(I_{3,1})}\overset{(1)}{\xrightarrow{\hspace*{0.8cm}}} {\underbrace{\hat{\bm\uptheta}_{k+1}}_{=\hat{\bm\uptheta}_k^{(I_{3,2})}}},\label{eq:lifetime}
\end{align}
where the numbers above the arrows indicate which subvector was updated. In (\ref{eq:lifetime}) there are three main update ``blocks'': the first block consists of two updates to the first subvector (lines (\ref{eq:believeeyes0},b)), the second block consists of a single update to the second subvector (line (\ref{eq:believeeyes2})), and the third block consists of two more updates to the second subvector (lines (\ref{eq:believeeyes3},e)). Next we provide a slight generalization of  (\ref{eq:believeeyes0}--f) to allow for any general (deterministic) pattern for selecting the subvector to update. We begin by giving the following formal definition of what constitutes a ``block'' of subvector updates.

\begin{definition}
\label{def:block}
 Let a block of subvector updates be defined as a {\it{maximal consecutive sequence of updates on the same subvector}}. 
 \end{definition}
 
Let $s\geq 1$ denote the number of blocks in a single iteration (in the previous example we have $s=3$) and let $n(m)\geq 1$ denote the number of updates in the $m$th block for $m=1,\dots,s$ (in the previous example, $n(1)=2$, $n(2)=1$, and $n(3)=2$). Then, the vectors of the form $\hat{\bm{\uptheta}}_k^{(I_{m,i})}$ in (\ref{eq:believeeyes0}--f) can be interpreted as the vector obtained during the $m$th block of the $(k+1)$st iteration after having performed exactly $i\geq 0$ updates within the block. Note that this interpretation 
 implies $\hat{\bm{\uptheta}}_k^{(I_{m,n(m)})}=\hat{\bm{\uptheta}}_k^{(I_{m+1,0})}$ when $m<s$.  Including this observation in (\ref{eq:lifetime}):\label{def:intermediate}
 \begin{align*}
{\underbrace{\hat{\bm{\uptheta}}_k}_{=\hat{\bm{\uptheta}}_k^{(I_{1,0})}}}\overset{(1)}{\xrightarrow{\hspace*{0.8cm}}} {{\hat{\bm\uptheta}_k^{(I_{1,1})}}}\overset{(1)}{\xrightarrow{\hspace*{0.8cm}}} {\underbrace{\hat{\bm\uptheta}_k^{(I_{1,2})}}_{=\hat{\bm{\uptheta}}_k^{(I_{2,0})}}}\overset{(2)}{\xrightarrow{\hspace*{0.8cm}}}{\underbrace{\hat{\bm\uptheta}_k^{(I_{2,1})}}_{=\hat{\bm{\uptheta}}_k^{(I_{3,0})}}}\overset{(1)}{\xrightarrow{\hspace*{0.8cm}}} \hat{\bm\uptheta}_k^{(I_{3,1})}\overset{(1)}{\xrightarrow{\hspace*{0.8cm}}} {\underbrace{\hat{\bm\uptheta}_{k+1}}_{=\hat{\bm\uptheta}_k^{(I_{3,2})}}}.
\end{align*}
While at first it may seem convoluted to assign two different labels to the same vector, having $\hat{\bm{\uptheta}}_k^{(I_{m,n(m)})}=\hat{\bm{\uptheta}}_k^{(I_{m+1,0})}$ for $m<s$ serves to significantly simplify the presentation of the generalized cyclic SA algorithm of which (\ref{eq:beer149}) and (\ref{eq:believeeyes0}--f) are special cases.

An alternative way to write the algorithm in (\ref{eq:believeeyes0}--f) is as follows:
\begin{align}
&\bm{F}_k\equiv \left[{a}_{k}^{(1)}\ \hat{\bm{g}}^{(1)}_k(\hat{\bm\uptheta}_k)+{a}_{k}^{(2)}\ \hat{\bm{g}}^{(2)}_k\left(\hat{\bm\uptheta}_k^{(I_{1,2})}\right)+\sum_{i=1}^3{a}_{k}^{(1)}\ \hat{\bm{g}}^{(1)}_k\left(\hat{\bm\uptheta}_k^{(I_{i,1})}\right)\right]/a_k,\notag \\
&\hat{\bm\uptheta}_{k+1}=\hat{\bm\uptheta}_k-a_k\bm{F}_k,\label{eq:william}
\end{align}
for some sequence $\{a_i\}_{i\geq 0}$ with $a_i>0$.
Similarly, the algorithm in (\ref{eq:beer149}) can also be written in the form $\hat{\bm\uptheta}_{k+1}=\hat{\bm\uptheta}_k-a_k\bm{F}_k$ after redefining the vector $\bm{F}_k$. Specifically, the algorithm in (\ref{eq:beer149}) can be written as follows:
\begin{align}
&{\text{Let $\mathcal{X}_{k}\sim \text{Bernoulli}(q_k)$}},\notag \\
&{\text{Let $j=1$ if $\mathcal{X}_{k}=1$ and $j=2$ otherwise}},\notag \\
&\bm{F}_k\equiv \left[\tilde{a}_{k}^{(j)}\hat{\bm{g}}^{(j)}_k(\hat{\bm\uptheta}_k)\right]/a_k,\notag\\
&\bm{{\hat{\uptheta}}}_{k+1}=\bm{{\hat{\uptheta}}}_{k}-a_k\bm{F}_k.\label{eq:beerabove}
\end{align}
In fact, (\ref{eq:beerabove}) is a variant of (\ref{eq:william}) in which $s=1$ (there is only one block per iteration), $n(1)=1$ (there is only one update within the block), and the subvector to update within the block is selected at random. 

In general, it is possible to conceive of a vector $\bm{F}_k$ with a more general form than that in (\ref{eq:william}) or (\ref{eq:beerabove}). The number of blocks in an iteration of (\ref{eq:believeeyes0}--f), for example, could be a random variable, $s_k$, depending on the iteration number. The number of updates within a block could also be a random variable, $n_k(m)$, depending on the iteration number as well as on the block number. Moreover, the choice of which subvector to update during the $m$th block could also be a random variable, $j_k(m)$, depending on $k$. These generalizations are captured by the generalized cyclic SA algorithm introduced next. 

Let $\{\mathcal{S}_j\}_{j=1}^d$ be (not necessarily disjoint) subsets of $\mathcal{S}\equiv \{1,\dots,p\}$ satisfying:
\begin{align}
\label{eq:notexclusive}
\bigcup_{j=1}^d\mathcal{S}_j=\mathcal{S}.
\end{align}
Next, for $j=1,\dots,d$ and any vector $\bm{v}=[v_1,\dots,v_p]^\top\in \mathbb{R}^p$ define the vector $\bm{v}^{(j)}$, with $i$th entry denoted by ${v}^{(j)}_i$, as follows:
\begin{align}
\label{eq:alonesong}
{v}^{(j)}_i=
\begin{cases} v_i &\mbox{if } i \in \mathcal{S}_j,  \\ 
0 & \mbox{otherwise, }  \end{cases} 
\end{align}
this is simply a generalization of the notation used in (\ref{eq:electricpulsar}) to the case where there are $d$ subvectors possibly with shared components. Note that it is possible that $\sum_{j=1}^d\bm{g}^{(j)}(\bm\uptheta)\neq\bm{g}(\bm\uptheta)$ since $\{\mathcal{S}_j\}_{j=1}^d$ are not necessarily disjoint. The generalized cyclic stochastic approximation (GCSA) algorithm is described next.

\begin{algorithm}[p]                     
\caption{Generalized Cyclic Stochastic Approximation (GCSA)}          
\label{findme}                          
\begin{algorithmic} [1]                   
\setstretch{1.5} 
\REQUIRE    $\hat{\bm{\uptheta}}_0$ and $\{a_k^{(j)}\}_{k\geq 0}$ for $j=1,\dots,d$. Set $k=0$.
\WHILE{stopping criterion has not been reached}
         \STATE{Let $s_k\in \mathbb{Z}^+$ be a random variable, where $\mathbb{Z}^+$ denotes the positive integers.
          }
         \FOR{$m=1,\dots,s_k$}
         \STATE{Let $j_k(m)\in \{1,\dots, d\}$ be a random variable, where $d$ is defined above (\ref{eq:notexclusive}) (recall $d$ is the number of subvectors that can be updated).
         } 
         \STATE{Let $n_k(m)\in \mathbb{Z}^+$ be a random variable.}
	\FOR{$i=1,\dots,n_k(m)$}\label{line:lineme}
	\STATE{Define:\label{eq:indianindinorange}
	\begin{align*}
	\hat{\bm\uptheta}_k^{(I_{m,i})}\equiv \hat{\bm\uptheta}_k&- \sum_{z=1}^{m-1}\sum_{\ell=0}^{n_k(z)-1} \left[\tilde{A}_{k}(z)\ \hat{\bm{g}}^{(j_k(z))}_k\left(\hat{\bm\uptheta}_k^{(I_{z,\ell})}\right)\right]-\sum_{\ell=0}^{i-1} \left[\tilde{A}_{k}(m)\ \hat{\bm{g}}^{(j_k(m))}_k\left(\hat{\bm\uptheta}_k^{(I_{m,\ell})}\right)\right],
	\end{align*}
	where $\tilde{A}_{k}(m)\equiv\sum_{j=1}^d\chi\{j_k(m)=j\}\tilde{a}_k^{(j)}$\label{eq:attaritapo}, $\tilde{a}_k^{(j)}$ is the first unused element of the predetermined sequence $\{a_k^{(j)}\}_{k\geq0}$, $\chi \{\mathcal{E}\}$ denotes the indicator function of the event $\mathcal{E}$, and $\sum_{i=a}^b (\cdot)_i=0$ whenever $b<a$.   A mathematical definition of the random sequence $\tilde{a}_k^{(j)}$ is given in (\ref{eq:saropian}) and Figure \ref{fig:answerscully} gives a visual representation of the process through which $\tilde{a}_k^{(j)}$ is updated.  
	}
	\ENDFOR \label{line:lineme2}
	 \ENDFOR
	 \STATE{Let:
	 \begin{align*}
	 \hat{\bm{\uptheta}}_{k+1}=\hat{\bm{\uptheta}}_k^{(I_{m,i})}{\text{ with $m=s_k$ and $i=n_k(s_k)$}}.
	 \end{align*}
	 }
	\STATE{set $k=k+1$}
\ENDWHILE

\end{algorithmic}

\end{algorithm}

Using the notation in (\ref{eq:alonesong}), the idea behind GCSA is as follows: given the current parameter estimate, $\hat{\bm{\uptheta}}_k$, a decision is first made on which of its coordinates to update by selecting the coordinates in one of the sets $\mathcal{S}_1,\dots,\mathcal{S}_d$ according to a random variable. The number of blocks, number of updates within each block, and the subvector to update within each block are also random variables. After $\hat{\bm{\uptheta}}_{k+1}$ has been obtained, the process is repeated until a stopping criterion is reached. The process needed to update $\hat{\bm{\uptheta}}_k$ to $\hat{\bm{\uptheta}}_{k+1}$ will constitute an {\it{iteration}} of GCSA. Figure \ref{eq:rextickles} gives a representation of the components of an iteration of GCSA and Algorithm \ref{findme} provides an outline of GCSA. 

%
\begin{figure}[t]
\centering
\begin{tikzpicture}
  \definecolor{mynewcoloring}{RGB}{102, 128, 153}

\node[inner sep=0pt] (russell) at (0,0)
    {\includegraphics[width=12mm]{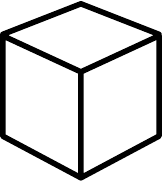}};
    \draw[->, ultra thick] (0.95,0)--(1.7,0);
    \node at (0,1.2) {\small{Block 1}};
    
	\begin{scope}[xshift=+75pt]
 	  \node[inner sep=0pt] (russell) at (0,0)
  	  {\includegraphics[width=12mm]{Graphics/cubewhitethin-eps-converted-to.pdf}};
	 \draw[->, dotted, ultra thick] (0.95,0)--(2.25,0);
	 \node at (0,1.2) {\small{Block 2}};
	 
		\begin{scope}[xshift=+90pt]
 		\node[inner sep=0pt] (russell) at (0,0)
  		{\includegraphics[width=12mm]{Graphics/cubewhitethin-eps-converted-to.pdf}};
		\draw[->, dotted, ultra thick] (0.95,0)--(2.25,0);
		\node at (0,1.2) {\small{Block $m$}};
		\draw[->, ultra thick] (0,-0.85)--(0,-1.85);
		\node[align=center] at (0,-2.9) {{Update the entries of $\bm\uptheta$}\\ {with indices in the set $\mathcal{S}_{j_k(m)}$}\\ {a total of $n_k(m)$ times.}\\};
		
			\begin{scope}[xshift=+90pt]
 			\node[inner sep=0pt] (russell) at (0,0)
  			{\includegraphics[width=12mm]{Graphics/cubewhitethin-eps-converted-to.pdf}};
			\node at (0,1.2) {\small{Block $s_k$}};
			
			\end{scope} 
		
   		\end{scope}  
		
	\end{scope}  

\end{tikzpicture}
\caption[The components of an iteration of GCSA]{The components of the $(k+1)$st iteration of the GCSA algorithm (Algorithm \ref{findme}). The numbers $j_k(m)$, $n_k(m)$, and $s_k$ are allowed to be random variables. Once the value of $j_k(m)$ has been obtained, the entries to update are determined by the indices in $\mathcal{S}_{j_k(m)}$. The updates performed within the $m$th block correspond to lines \ref{line:lineme}--\ref{line:lineme2} of Algorithm \ref{findme}. 
}
\label{eq:rextickles}
\end{figure}

Let us define the sequence $\tilde{a}_k^{(j)}$ in Algorithm \ref{findme} more precisely.
First, define:
\begin{align}
\label{eq:wheretheasaredefinedd}
\upvarphi_k^{(j)}\equiv\sum_{i=0}^k \chi\left\{\left(\sum_{m=1}^{s_i}\chi \{j_i(m)=j\}\right)>0\right\}-1,
\end{align}
%
%
%
%
 for each $j$. In other words, to compute $\upvarphi_k^{(j)}$ one must subtract one from the number of iterations up to and including iteration $k+1$ that use the sequence $\{a_i^{(j)}\}_{i\geq0}$.
 %
\begin{figure}[!t]
\centering
 \begin{tikzpicture}

  \definecolor{newblue}{RGB}{41, 67, 86}
 
  \definecolor{mycoloring}{RGB}{183,208,225}
   \definecolor{mynewcoloring}{RGB}{215,230,244}
     \definecolor{mygrey}{RGB}{238, 243, 246}

      
\fill[mynewcoloring] (-2.4,-2.4) rectangle (2.4,1.2);

\fill[mycoloring] (-1.2,-2.4) rectangle (1.2,-1.2);

\fill[mygrey] (-1.2,-4.8) rectangle (0,-2.4);
\fill[mygrey] (0,-3.6) rectangle (1.2,-2.4);
\fill[mygrey] (1.2,-6) rectangle (2.4,-2.4);

\draw[step=1.2cm,white,  line width=3pt] (-2.4,-6) grid (2.4,1.2);

\node[align=center] at (0,2.7) {Entries $\mathcal{S}_2$ and $\mathcal{S}_3$ are updated\\ when obtaining $\hat{\bm{\uptheta}}_{k+1}$ from $\hat{\bm{\uptheta}}_k$.};

\draw[->,ultra thick] (-0.6,1.95) -- (-0.6,1.45);
\draw[->,ultra thick] (0.6,1.95) -- (0.6,1.45);

\draw[newblue, ultra thick] (-2.63,-2.35) rectangle (2.6,-1.25);

\draw[->,ultra thick] (-3.4,-1.8) -- (-2.9,-1.8);

\node at (-3.85,-1.75) {$\tilde{a}_k^{(j)}$};

\node at (-1.8,-1.8) {$a_0^{(1)}$};
\node at (-0.6,-1.8) {$a_2^{(2)}$};
\node at (0.6,-1.8) {$a_1^{(3)}$};
\node at (1.8,-1.8) {$a_3^{(4)}$};

\node at (-0.6,-3) {${a_1^{(2)}}$};
\node at (-0.6,-4.2) {${a_0^{(2)}}$};

\node at (0.6,-3) {${a_0^{(3)}}$};

\node at (1.8,-3) {${a_2^{(4)}}$};
\node at (1.8,-4.2) {${a_1^{(4)}}$};
\node at (1.8,-5.4) {${a_0^{(4)}}$};

\node at (-1.8,-0.6) {$a_1^{(1)}$};
\node at (-1.8,0.6) {$\vdots$};

\node at (-0.6,-0.6) {$a_3^{(2)}$};
\node at (-0.6,0.6) {$\vdots$};

\node at (0.6,-0.6) {$a_2^{(3)}$};
\node at (0.6,0.6) {$\vdots$};

\node at (1.8,-0.6) {$a_4^{(4)}$};
\node at (1.8,0.6) {$\vdots$};


\begin{scope}[xshift=10pt]
\fill[mynewcoloring] (3.6,-2.4) rectangle (8.4,1.2);

\fill[mygrey] (4.8,-6) rectangle (6,-2.4);
\fill[mygrey] (6,-4.8) rectangle (7.2,-2.4);
\fill[mygrey] (7.2,-6) rectangle (8.4,-2.4);


\draw[step=1.2cm, white, line width=3pt] (3.6,-6) grid (8.4,1.2);

\draw[newblue, ultra thick] (3.37,-2.35) rectangle (8.6,-1.25);

\draw[->,ultra thick] (9.35,-1.8) -- (8.85,-1.8);

\node at (10,-1.75) {$\tilde{a}_{k+1}^{(j)}$};

\node at (4.2,-1.8) {$a_0^{(1)}$};
\node at (5.4,-1.8) {$a_3^{(2)}$};
\node at (6.6,-1.8) {$a_2^{(3)}$};
\node at (7.8,-1.8) {$a_3^{(4)}$};

\node at (5.4,-3) {${a_2^{(2)}}$};
\node at (5.4,-4.2) {${a_1^{(2)}}$};
\node at (5.4,-5.4) {${a_0^{(2)}}$};

\node at (6.6,-3) {${a_1^{(3)}}$};
\node at (6.6,-4.2) {${a_0^{(3)}}$};

\node at (7.8,-3) {${a_2^{(4)}}$};
\node at (7.8,-4.2) {${a_1^{(4)}}$};
\node at (7.8,-5.4) {${a_0^{(4)}}$};

\node at (4.2,-0.6) {$a_1^{(1)}$};
\node at (4.2,0.6) {$\vdots$};

\node at (5.4,-0.6) {$a_4^{(2)}$};
\node at (5.4,0.6) {$\vdots$};

\node at (6.6,-0.6) {$a_3^{(3)}$};
\node at (6.6,0.6) {$\vdots$};

\node at (7.8,-0.6) {$a_4^{(4)}$};
\node at (7.8,0.6) {$\vdots$};


\node[align=center] at (6,3) {Once $\hat{\bm{\uptheta}}_{k+1}$ has been obtained,\\ shift queues 2 and 3 down\\ and begin the new iteration.};

\draw[->,ultra thick] (5.4,1.95) -- (5.4,1.45);
\draw[->,ultra thick] (6.6,1.95) -- (6.6,1.45);
\end{scope}

 \end{tikzpicture}
 \caption[The process through which $\tilde{a}_k^{(j)}$ is updated]{This is an illustration of the process through which $\tilde{a}_k^{(j)}$ is updated. For $j=1,\dots, 4$, the $j$th column of each of the two ``tables'' above represents a queue containing the used and unused elements of $\{a_i^{(j)}\}_{i\geq0}$. 
 In the example above, initially  $\tilde{a}_k^{(1)}=a_0^{(1)}$, $\tilde{a}_k^{(2)}=a_2^{(2)}$, $\tilde{a}_k^{(3)}=a_1^{(3)}$, $\tilde{a}_k^{(4)}=a_3^{(4)}$. Then, the subvectors with entries in the sets $\mathcal{S}_2$ and $\mathcal{S}_3$ are updated when obtaining $\hat{\bm{\uptheta}}_{k+1}$ from $\hat{\bm{\uptheta}}_k$. Consequently,
 $\tilde{a}_{k+1}^{(1)}=a_0^{(1)}$, $\tilde{a}_{k+1}^{(2)}=a_3^{(2)}$, $\tilde{a}_{k+1}^{(3)}=a_2^{(3)}$, and $\tilde{a}_{k+1}^{(4)}=a_3^{(4)}$. 
  }
  \label{fig:answerscully}
\end{figure}
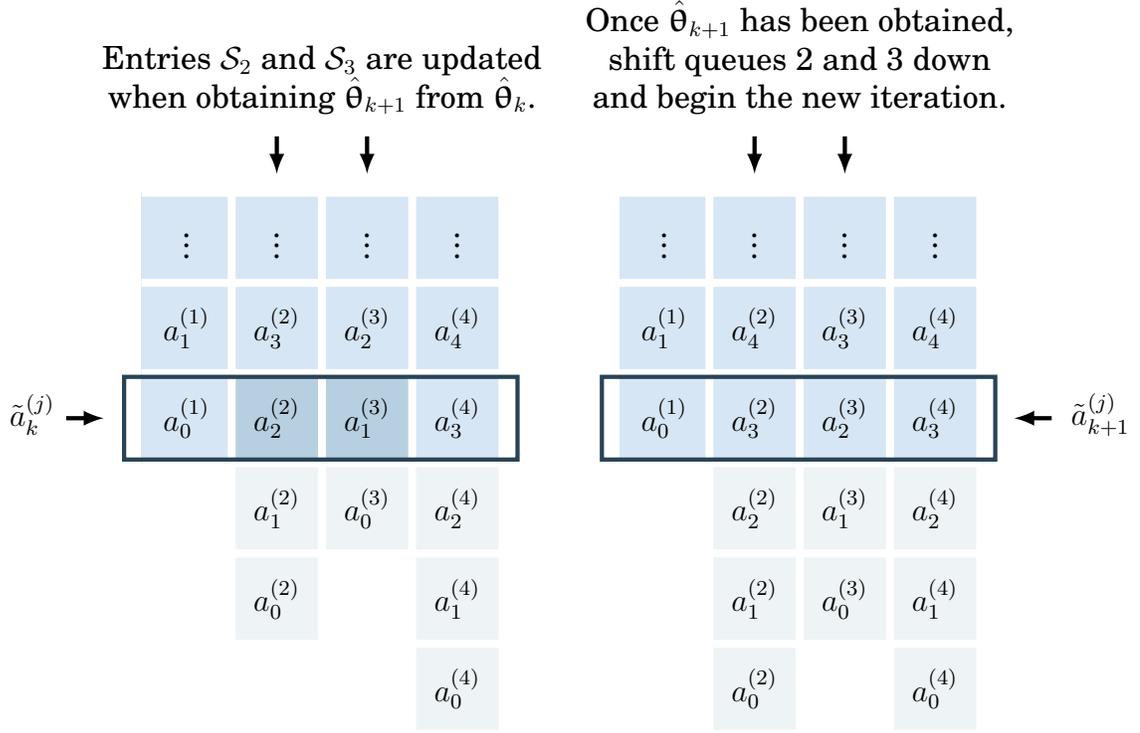
Then, for $k+1\geq 0$ the value of $\tilde{a}_{k+1}^{(j)}$ satisfies the following equation:
\begin{align}
\label{eq:saropian}
\tilde{a}_{k+1}^{(j)}&=a_0^{(j)}+\sum_{i=0}^k \chi\{\upvarphi_k^{(j)}\geq i\}(a_{i+1}^{(j)}-a_{i}^{(j)}).
\end{align}
Note that $\tilde{a}_0^{(j)}=a_0^{(j)}$ for all $j$ since $\sum_{a}^b(\cdot)=0$ if $a>b$. Note also that if the sequence $\{a_i^{(j)}\}_{i\geq 0}$ is not used during the $(k+1)$st iteration, that is if $j_k(m)\neq j$ for all $m$, then $\tilde{a}_{k+1}^{(j)}$ will be equal to $\tilde{a}_k^{(j)}$. This shows that $\tilde{a}_{k+1}^{(j)}$ and the indicator function $\chi\{j_k(m)\neq j\}$ may be {\it{dependent random variables}} (with the exception of the case where $\tilde{a}_{k}^{(j)}$ is a deterministic function of $k$, as in the case of (\ref{eq:william})).

The expression for $\hat{\bm{\uptheta}}_{k+1}=\hat{\bm{\uptheta}}_k^{(I_{s_k,n_k(s_k)})}$ in the GCSA algorithm (Algorithm \ref{findme}) is equivalent to the recursion:
\begin{align}
\hat{\bm\uptheta}_{k+1}= \hat{\bm\uptheta}_k-a_k\bm{F}_k,\text{ where }\bm{F}_k\equiv 
\left[ \sum_{z=1}^{s_k}\sum_{\ell=0}^{n_{k}(z)-1} \tilde{A}_{k}(z)\ \hat{\bm{g}}^{(j_k(z))}_k\left(\hat{\bm\uptheta}_k^{(I_{z,\ell})}\right)\right]/a_k,\label{eq:hand}
\end{align}
and where $a_k>0$. Therefore, an iteration of the GCSA algorithm can be written in the general form $\hat{\bm\uptheta}_{k+1}=\hat{\bm\uptheta}_k-a_k\bm{F}_k$ (the two algorithms defined by (\ref{eq:william}) and (\ref{eq:beerabove}) are special cases of (\ref{eq:hand})). Although (\ref{eq:hand}) resembles the general SA update in (\ref{eq:formof}), existing results on convergence of SA algorithms are not directly applicable to GCSA due to the increased complexity of $\bm{F}_k$ over $\bm{Y}_k$, the typical SA update direction.

%
%

\section{Concluding Remarks}
\label{sec:cocnremarksGCSA}

This chapter described the GCSA algorithm for stochastic optimization via SA, the  iterative algorithm for updating the parameter vector $\hat{\bm{\uptheta}}_k$ that will be the focus of this dissertation. The basic idea behind GCSA is to divide the vector of parameters into $d$ (possibly overlapping) subvectors. Then, at each time a decision is made regarding which subvector to update (this decision can be made according to a random variable or may be governed by a deterministic selection pattern). The subvector updates are performed using SA-based update directions (in general, it is impossible to guarantee that the update direction is a descent direction due to the presence of the noise and bias terms described in Section \ref{sec:dummieyouask}). In the GCSA algorithm, each subvector has an associated gain sequence that is a deterministic function of $k$ and is used to scale the update direction. Although the gain sequences for the different subvectors may be different, the convergence theory in the following chapter will require that all gain sequences converge to zero at the same rate.


\chapter{Convergence of GCSA}
\label{sec:cyclicseesaw}

This chapter derives conditions for the convergence w.p.1 of the GCSA iterates. Section \ref{sec:rewrite} first rewrites the GCSA algorithm as a stochastic time-dependent process. Section \ref{sec:12or15miles} then provides a detailed analysis of the process from Section \ref{sec:rewrite}. Section \ref{sec:convo} states the main theorem for this chapter (Theorem \ref{thm:hoeshoo}). Section \ref{sec:specialitay} states two corollaries regarding special cases of GCSA. Section \ref{sec:discussconvergence} discusses the validity of the assumptions of Theorem \ref{thm:hoeshoo}. Lastly, Section \ref{sec:concremarksconvergence} contains concluding remarks.

\section{Rewriting the GCSA Recursion}
\label{sec:rewrite}
The theory behind the convergence of GCSA relies on rewriting the algorithm as a stochastic time-dependent process. Loosely speaking, this section first rewrites
a realization of GCSA
 as a multi-dimensional, {{continuous}}, time-dependent function.
This continuous time-dependent function is then shown to satisfy a time-dependent version of the GCSA recursion in (\ref{eq:hand}), a fact which
will play a crucial role in the development of the convergence theory for GCSA in Sections \ref{sec:12or15miles} and \ref{sec:convo}.  The aforementioned time-dependent {\it{continuous function}} and a related time-dependent {\it{step function}} are constructed next.
%
%

We begin by defining the following time-dependent {\it{step function}}:
\begin{align}
\bar{\bm{Z}}_{0}(t)\equiv \begin{cases}  \hat{\bm{\uptheta}}_k &\mbox{if } t\in[t_{k},t_{k+1}){\text{ for }} k\geq 0,  \\ 
{\bar{\bm{Z}}_{0}}(t_0) &\mbox{if } t\leq t_0,
 \end{cases} 
 \label{eq:daviddaas}
\end{align}
 where $t_{k}\equiv \sum_{i=0}^{k-1}a_{i}$ for $k\geq 0$ and where $a_k>0$ is a deterministic sequence satisfying $a_k\rightarrow 0$ (Section \ref{sec:12or15miles} imposes other assumptions on $a_k$ relating it to the gain sequences $a_k^{(j)}$ from Algorithm \ref{findme}).
  Now, the time-dependent {\it{continuous function}} discussed in this section's opening paragraph
  will be defined as:
\begin{align}
\label{eq:anlogouskartmannedg}
\bm{Z}_0(t)&\equiv \frac{(t_{k+1}-t)}{a_{k}}\bar{\bm{Z}}_0(t_k)+\frac{(t-t_k)}{a_{k}}\bar{\bm{Z}}_0(t_{k+1})
\end{align}
 for $t\in[t_k,t_{k+1}]$, and $\bm{Z}_0(t)\equiv\bm{Z}_0(t_0)$ for $t\leq t_0$. The function $\bm{Z}_0(t)$ is then simply an interpolation of $\bar{\bm{Z}}_0(t)$ at the interpolation points $\{t_k\}_{k\geq 0}$ (see Figure \ref{fig:zeevszeebar}). The subindex ``0'' is used in anticipation of modified versions of $\bar{\bm{Z}}_0(t)$ and $\bm{Z}_0(t)$ to be introduced in Section \ref{sec:12or15miles}. Next we rewrite the GCSA recursion in (\ref{eq:hand}) by decomposing $\bm{F}_k$ into several terms, after which the resulting expression is used to construct a time-dependent analogue to (\ref{eq:hand}) involving $\bm{Z}_0(t)$.

By adding and subtracting select terms (effectively adding zero) to the definition of $\bm{F}_k$ in (\ref{eq:hand}) and using the fact that $\hat{\bm{g}}_k(\bm\uptheta)= {\bm{g}}(\bm\uptheta)+{\bm{\upbeta}}_k(\bm\uptheta)+{\bm{\upxi}}_k(\bm\uptheta)$ (see Section \ref{sec:dummieyouask}), the recursion in (\ref{eq:hand}) can first be rewritten as:
\begin{align}
\label{eq:paved}
\hat{\bm\uptheta}_{k+1}=\hat{\bm\uptheta}_k&-\sum_{m=1}^{s_k}\sum_{i=0}^{n_k(m)-1} \tilde{A}_{k}(m)\left[\bm\upbeta^{(j_k(m))}_k\left(\hat{\bm\uptheta}_k^{(I_{m,i})}\right)+\bm\upxi^{(j_k(m))}_k\left(\hat{\bm\uptheta}_k^{(I_{m,i})}\right)\right]\notag\\
&-\sum_{m=1}^{s_k}\sum_{i=0}^{n_k(m)-1} \tilde{A}_{k}(m)\left[ {\bm{g}}^{(j_k(m))}\left(\hat{\bm\uptheta}_k^{\left(I_{m,i}\right)}\right)- {\bm{g}}^{(j_k(m))}(\hat{\bm\uptheta}_k)\right]\notag\\
&-\sum_{m=1}^{s_k}\sum_{i=0}^{n_k(m)-1} \tilde{A}_{k}(m)\ {\bm{g}}^{(j_k(m))}(\hat{\bm\uptheta}_k).
\end{align}
Next,
%
\begin{figure}[!t]
\centering
\begin{tikzpicture}
  
  \definecolor{mynewcoloring}{RGB}{102, 128, 153}
  \definecolor{mygray2}{gray}{0.8}
    
    \draw[mygray2, ultra thick] (-.5,0)  -- (5.8,0);
    \draw[mygray2, ultra thick] (0,-.5) -- (0,4.5);
    \node at (0.01,4.8) {$\uptheta$};
    \node at (6.95,0) {$t$};
     \node at (-0.70,0.3) {$\uptheta={0}$};
    
    \draw[->,ultra thick] (5.75,0) -- (6.25,0) -- (6.75,0) ;
    
     \node at (.75,-.3) {$t_0=0$};
     \node at (2.5,-.3) {$t_1$};
     \node at (4,-.3) {$t_2$};
     \node at (4.8,-.3) {$t_3$};
     \node at (5.3,-.3) {$t_4$};
     
     \draw[help lines, mynewcoloring] (2.5,0) -- (2.5,4);

     \draw[ultra thick,dotted, mynewcoloring] (-.5,1)  -- (0,1); 
     \draw[ultra thick, dotted, mynewcoloring] (0,1)  -- (2.5,4);
     \draw[ultra thick, dotted, mynewcoloring] (2.5,4)  -- (4,3);
     \draw[ultra thick, dotted, mynewcoloring] (4,3)  -- (4.8,2);
     \draw[ultra thick, dotted, mynewcoloring] (4.8,2)  -- (5.3,4);

     \draw[ultra thick, mynewcoloring] (-.25,1)  -- (2.5,1); %
     \draw[ultra thick, mynewcoloring] (2.5,4) -- (4,4); %
     \draw[ultra thick, mynewcoloring] (4,3) -- (4.8,3); %
      \draw[ultra thick, mynewcoloring] (4.8,2) -- (5.3,2); %
      \draw[ultra thick, mynewcoloring] (5.3,4)  -- (5.75,4); %

     \draw[help lines, mynewcoloring] (4,0) -- (4,3);     
     \draw[help lines, mynewcoloring] (4.8,0) -- (4.8,2);
     \draw[help lines, mynewcoloring] (5.3,0) -- (5.3,4);   
      
      \node at (3.25,4.4) {$\bar{Z}_0(t)$};
      \node[rotate=51] at (1,2.8) {$Z_0(t)$};
   
   \draw[->] (-.6,1.6)--(-.1,1.1);
   \node at (-0.9,1.9) {$\hat{{\uptheta}}_0$};
   
   \draw[->] (1.9,4.6)--(2.4,4.1);
   \node at (1.6,4.9) {$\hat{{\uptheta}}_1$};
   
   \draw[->] (4.7,4.6)--(5.2,4.1);
   \node at (4.4,4.9) {$\hat{{\uptheta}}_4$};
    
    \node at (0,1) {\textbullet};
    \node at (2.5,4) {\textbullet};
    \node at (4,3) {\textbullet};
    \node at (4.8,2) {\textbullet};
    \node at (5.3,4) {\textbullet};

  \end{tikzpicture}
\caption[The difference between $\bar{\bm{Z}}_0(t)$ and $\bm{Z}_0(t)$.]{The difference between $\bar{Z}_0(t)$ and $Z_0(t)$, special cases of $\bar{\bm{Z}}_0(t)$ and $\bm{Z}_0(t)$ where both functions are real-valued. In general, $\bar{\bm{Z}}_0(t_k)=\bm{Z}_0(t_k)=\hat{\bm{\uptheta}}_k$.}
\label{fig:zeevszeebar}
\end{figure}
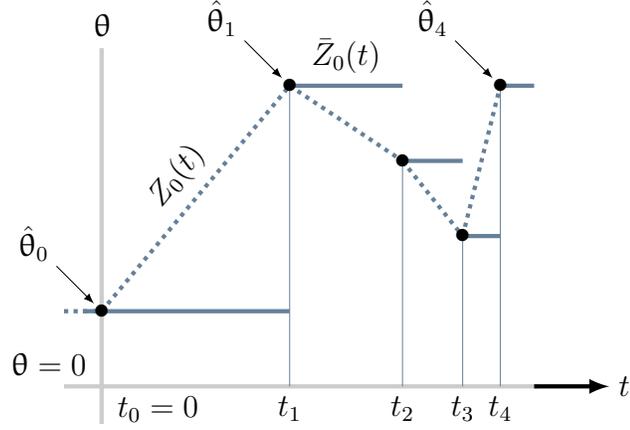
for $j=1,\dots, d$ define the random variables:
\begin{align}
\label{eq:tinkelpan}
x_k(j)\equiv \left(\frac{\tilde{a}_{k}^{(j)} }{a_{k}}\right)\sum_{m=1}^{s_k}\chi\{j_k(m)=j\}n_k(m).
\end{align}
 Assuming $\upmu_k(j)\equiv E[x_k(j)]$ exists 
and is finite and that $\upmu(j)\equiv \lim_{k\rightarrow \infty} \upmu_k(j)$ exists and is finite (these assumptions will be discussed in Section \ref{sec:discussconvergence}), define:
\begin{align}
\bm{h}_k(\bm\uptheta)\equiv \sum_{j=1}^d \upmu_k(j)\bm{g}^{(j)}(\bm\uptheta), \ \ \ \bm{h}(\bm\uptheta)\equiv \sum_{j=1}^d \upmu(j)\bm{g}^{(j)}(\bm\uptheta).\label{eq:unrealantartica}
\end{align}
By adding and subtracting $a_k\bm{h}(\hat{\bm{\uptheta}}_k)$ from the right-hand side of (\ref{eq:paved}):
\begin{align}
\hat{\bm\uptheta}_{k+1}=\hat{\bm\uptheta}_k&-a_k\bm{h}(\hat{\bm{\uptheta}}_k)-a_k\Bigg[\sum_{m=1}^{s_k}\sum_{i=0}^{n_k(m)-1} \left(\frac{\tilde{A}_{k}(m) }{a_{k}}\right){\bm{g}}^{(j_k(m))}(\hat{\bm\uptheta}_k)-\bm{h}(\hat{\bm{\uptheta}}_k)\Bigg]\notag\\
&-a_k\sum_{m=1}^{s_k}\sum_{i=0}^{n_k(m)-1} \left(\frac{\tilde{A}_{k}(m) }{a_{k}}\right)\left[\bm\upbeta_k^{(j_k(m))}\left(\hat{\bm\uptheta}_k^{(I_{m,i})}\right)+\bm\upxi_k^{(j_k(m))}\left(\hat{\bm\uptheta}_k^{(I_{m,i})}\right)\right]\notag\\
&-a_{k}\sum_{m=1}^{s_k}\sum_{i=0}^{n_k(m)-1}\left(\frac{\tilde{A}_{k}(m)}{a_{k}}\right)\left[ {\bm{g}}^{(j_k(m))}\left(\hat{\bm\uptheta}_k^{\left(I_{m,i}\right)}\right)- {\bm{g}}^{(j_k(m))}(\hat{\bm\uptheta}_k)\right].
\label{eq:pandabear}
\end{align}
Thus, when $\upmu_k(j)$ and $\upmu(j)$ exist and are finite, (\ref{eq:pandabear}) is equivalent to the GCSA recursion from (\ref{eq:hand}). Next we derive a time-dependent version of (\ref{eq:pandabear}).

We begin by defining the following terms:
\begin{subequations}
\begin{align}
\bar{\bm{B}}_0(t_k)\equiv&\ \sum_{r=0}^{k-1} a_r\sum_{m=1}^{s_r}\sum_{i=0}^{n_r(m)-1} \left(\frac{\tilde{A}_{r}(m) }{a_{r}}\right)\bm\upbeta_r^{(j_r(m))}\left(\hat{\bm\uptheta}_r^{(I_{m,i})}\right),\label{eq:pulcritud0}\\
\bar{\bm{M}}_0(t_k)\equiv&\ \sum_{r=0}^{k-1} a_r\sum_{m=1}^{s_r}\sum_{i=0}^{n_r(m)-1} \left(\frac{\tilde{A}_{r}(m) }{a_{r}}\right)\bm\upxi_r^{(j_r(m))}\left(\hat{\bm\uptheta}_r^{(I_{m,i})}\right),\label{eq:pulcritud1}\\
{\bar{\bm{N}}}_0(t_k)\equiv&\ \sum_{r=0}^{k-1} a_r\Bigg[\sum_{m=1}^{s_r}\sum_{i=0}^{n_r(m)-1} \left(\frac{\tilde{A}_{r}(m) }{a_{r}}\right){\bm{g}}^{(j_r(m))}(\hat{\bm\uptheta}_r)-h(\hat{\bm{\uptheta}}_r)\Bigg],\label{eq:pulcritud2}\\
\bar{\bm{W}}_0(t_k)\equiv&\ \sum_{r=0}^{k-1} a_{r}\sum_{m=1}^{s_r}\sum_{i=0}^{n_r(m)-1}\left(\frac{\tilde{A}_{r}(m)}{a_{r}}\right)\left[ {\bm{g}}^{(j_r(m))}\left(\hat{\bm\uptheta}_r^{\left(I_{m,i}\right)}\right)- {\bm{g}}^{(j_r(m))}(\hat{\bm\uptheta}_r)\right].\label{eq:pulcritud}
\end{align}
\end{subequations}
Then, using the notation from  (\ref{eq:pulcritud0}--d), an equivalent way to write (\ref{eq:pandabear}) is:
\begin{align}
\bar{\bm{Z}}_0(t_{k+1})=&\ \bar{\bm{Z}}_0(t_{0})-\bar{\bm{B}}_0(t_{k+1})-\bar{\bm{M}}_0(t_{k+1})\notag\\
&-\bar{\bm{N}}_0(t_{k+1})-\bar{\bm{W}}_0(t_{k+1})-\sum_{i=0}^ka_i\bm{h}(\bar{\bm{Z}}_0(t_i)).\label{eq:excited}
\end{align}
The expression in (\ref{eq:excited}) lays the foundation for constructing the continuous and time-dependent version of the GCSA recursion (\ref{eq:pandabear}). Specifically,
in a manner analogous to (\ref{eq:daviddaas}) let $\bar{\bm{B}}_0(t)$, $\bar{\bm{M}}_0(t)$, $\bar{\bm{N}}_0(t)$, and $\bar{\bm{W}}_0(t)$ be step functions on the interval $[t_k,t_{k+1})$ and, in a manner analogous to (\ref{eq:anlogouskartmannedg}),
let $\bm{B}_0(t)$, $\bm{M}_0(t)$, $\bm{N}_0(t)$, and $\bm{W}_0(t)$ denote their respective interpolation functions.
Then, using $t_0=0$:
\begin{align}
\label{eq:sandsss}
{\bm{Z}}_0(t)=&\ {\bm{Z}}_0(0)-{\bm{B}}_0(t)-{\bm{M}}_0(t)-{\bm{N}}_0(t)-{\bm{W}}_0(t)
-\int_{0}^t{\bm{h}}(\bar{\bm{Z}}_0(s))\ ds.
\end{align}
Alternatively, replacing the integrand (\ref{eq:sandsss}) with $\bm{h}(\bm{Z}_0(s))$ we obtain:
\begin{align}
\label{eq:levelred222}
{\bm{Z}}_0(t)=&\ {\bm{Z}}_0(0)-{\bm{B}}_0(t)-{\bm{M}}_0(t)-{\bm{N}}_0(t)-{\bm{W}}_0(t)
-\int_{0}^t{\bm{h}}({\bm{Z}}_0(s))\ ds +{\bm{\upzeta}}_{0}(t),
\end{align}
where $\bm\upzeta_0(t)$ is a vector representing the error introduced by using ${\bm{h}}({\bm{Z}}_0(s))$ in place of ${\bm{h}}(\bar{\bm{Z}}_0(s))$. Equation (\ref{eq:levelred222}) represents a continuous and time-dependent version of the GCSA recursion.
The following section proves a few lemmas regarding the GCSA algorithm when treated as the stochastic time-dependent, continuous process in (\ref{eq:levelred222}). 

\section{Analyzing the GCSA Recursion}
\label{sec:12or15miles}

This section proves several results regarding a set of shifted versions of (\ref{eq:sandsss}) and (\ref{eq:levelred222}). Specifically, 
define the {\it{shift functions}}:
\begin{align}
\label{eq:gochoire}
 \bar{\bm{Z}}_k(t)\equiv{\bar{\bm{Z}}}_0(t_k+t), \ \ \ \bm{Z}_k(t)\equiv{\bm{Z}}_0(t_k+t), 
 \end{align}
 and the {\it{shift-increment functions}}:
 \begin{subequations}
 \begin{align} 
\bm{B}_k(t)\equiv{\bm{B}}_0(t_k+t)-{\bm{B}}_0(t_k),\ \ \ \bm{M}_k(t)\equiv{\bm{M}}_0(t_k+t)-{\bm{M}}_0(t_k),\label{eq:easychair0}\\
\bm{N}_k(t)\equiv{\bm{N}}_0(t_k+t)-{\bm{N}}_0(t_k),\ \ \ \bm{W}_k(t)\equiv{\bm{W}}_0(t_k+t)-{\bm{W}}_0(t_k).\label{eq:easychair}
\end{align}
\end{subequations}
%
\begin{figure}[!t]
\centering
 \begin{tikzpicture}
  
  \definecolor{mynewcoloring}{RGB}{102, 128, 153}
  \definecolor{mygray2}{gray}{0.8}
  \definecolor{mygreen}{RGB}{80, 135, 96}

    \draw[mygray2, ultra thick] (-0.5,0)  -- (4,0);
    \draw[mygray2, ultra thick] (0,-.5) -- (0,4.5);
    
    
     \draw[ultra thick, mynewcoloring] (-.5,1)  -- (0,1); 
     \draw[ultra thick, mynewcoloring] (0,1)  -- (2,4);
      \draw[ultra thick, mynewcoloring] (2,4)  -- (3,3);
      \draw[ultra thick, mynewcoloring] (3,3)  -- (3.5,3.5);
      
      \draw[->,ultra thick] (3.5,0) -- (4.5,0) ;
            
     \draw[help lines, mynewcoloring] (2,0) -- (2,4);
     \draw[help lines, mynewcoloring] (3,0) -- (3,3);
     \draw[help lines, mynewcoloring] (3,0) -- (3,3);
   
     \node at (0.01,4.8) {$\uptheta$};
     \node at (4.75,0) {$t$};
         \node at (-0.70,0.3) {$\uptheta={0}$};
     \node at (.75,-.3) {$t_0=0$};
     \node at (2,-.3) {$t_1$};
     \node at (3,-.3) {$t_2$};
     \node at (-2.75,1.5) {${Z}_1(t)$};
     \node at (4.25,3.65) {$Z_0(t)$}; 
      \node at (-2.75,0) {${B}_1(t)$};  
      \node at (4.25,1.9) {$B_0(t)$};

     \draw[ultra thick,dotted, mynewcoloring] (-2.5,1)  -- (-2,1); 
     \draw[ultra thick, dotted,mynewcoloring] (-2,1)  -- (0,4);
     \draw[ultra thick, dotted, mynewcoloring] (0,4)  -- (1,3);     
     \draw[ultra thick, dotted, mynewcoloring] (1,3)  -- (1.5,3.4);

    \draw[->] (-0.6,4.6)--(-0.1,4.1);
    \node at (-0.9,4.9) {$\hat{{\uptheta}}_1$};
    
    \draw[->] (1.4,4.6)--(1.9,4.1);
    \node at (1.1,4.9) {$\hat{{\uptheta}}_1$};
     
     
      \draw[ultra thick, mygreen] (-.5,0)  -- (0,0); 
      \draw[ultra thick, mygreen] (0,0)  -- (2,0.5); 
      \draw[ultra thick, mygreen] (2,0.5)  -- (3,2.5); 
      \draw[ultra thick, mygreen] (3,2.5)  -- (3.5,2); 
      
      \draw[ultra thick,dotted,mygreen] (-2.5,-0.5)  -- (-2,-0.5);
      \draw[ultra thick,dotted,mygreen] (-2,-0.5)  -- (0,0);
      \draw[ultra thick,dotted,mygreen] (0,0)  -- (1,2);
      \draw[ultra thick,dotted,mygreen] (1,2)  -- (1.5,1.5);
        
     \node at (0,1) {\textbullet};
     \node at (2,4) {\textbullet}; 
     \node at (3,3) {\textbullet};
          
     \node at (-2,1) {\textbullet};
     \node at (0,4) {\textbullet}; 
     \node at (1,3) {\textbullet};

      \node at (2,0.5) {\textbullet};
      \node at (3,2.5) {\textbullet};
      
      \node at (-2,-0.5) {\textbullet};
      \node at (0,0) {\textbullet};
      \node at (1,2) {\textbullet};
      
  \end{tikzpicture}
  \caption[Comparing $\bm{Z}_0(t)$ to $\bm{Z}_1(t)$ and $\bm{B}_0(t)$ to $\bm{B}_1(t)$.]{Comparing $\bm{Z}_0(t)$ to $\bm{Z}_1(t)$ and $\bm{B}_0(t)$ to $\bm{B}_1(t)$ for the case where all functions have real-valued output. The variables in (\ref{eq:easychair0},b) are obtained by shifting a function up/down and to the left so that $\bm{B}_k(t_0)=\bm{M}_k(t_0)=\bm{N}_k(t_0)=\bm{W}_k(t_0)=\bm{0}$. In contrast, the functions in (\ref{eq:gochoire}) are obtained solely via left-shifts so that $ \bar{\bm{Z}}_k(t_0)=\bm{Z}_k(t_0)=\hat{\bm{\uptheta}}_k$.}
  \label{fig:ufosite}
\end{figure}
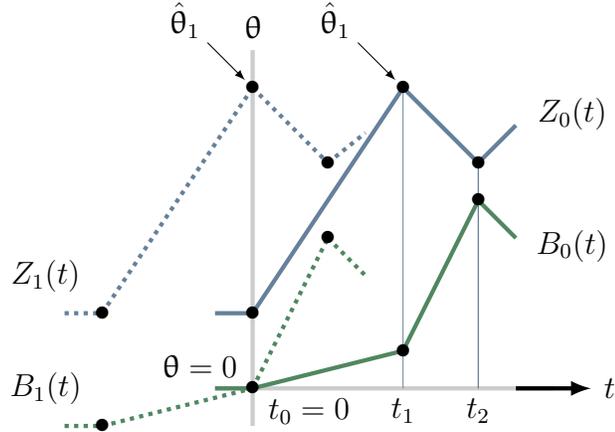 
(Figure \ref{fig:ufosite} illustrates the difference between a shift function and a shift-increment function.)
Using the notation from (\ref{eq:gochoire}) and (\ref{eq:easychair0},b), ${\bm{Z}}_k(t)$ satisfies:
\begin{align}
\label{eq:bondok}
{\bm{Z}}_k(t)=&\ {\bm{Z}}_k(0)-{\bm{B}}_k(t)-{\bm{M}}_k(t)-{\bm{N}}_k(t)-{\bm{W}}_k(t)
-\int_{0}^t{\bm{h}}(\bar{\bm{Z}}_k(s))\ ds.
\end{align}
Alternatively, replacing the integrand (\ref{eq:bondok}) with $\bm{h}(\bm{Z}_k(s))$:
\begin{align}
\label{eq:levelred}
{\bm{Z}}_k(t)=&\ {\bm{Z}}_k(0)-{\bm{B}}_k(t)-{\bm{M}}_k(t)-{\bm{N}}_k(t)-{\bm{W}}_k(t)
-\int_{0}^t{\bm{h}}({\bm{Z}}_k(s))\ ds +{\bm{\upzeta}}_{k}(t),
\end{align}
for a vector $\bm\upzeta_k(t)$ representing the error introduced by using ${\bm{h}}({\bm{Z}}_k(s))$ in place of ${\bm{h}}(\bar{\bm{Z}}_k(s))$.  Equation (\ref{eq:levelred}) is similar to equation (2.3.3) in Kushner and Clark (1978)\nocite{kushnclark1978}, though (\ref{eq:levelred}) is significantly more complex.
This section proves several lemmas pertaining to terms appearing in (\ref{eq:bondok}) and (\ref{eq:levelred}). These lemmas will then be used in Section \ref{sec:convo} to prove this chapter's main convergence theorem. Pages \pageref{chap:fun}-- \pageref{def:lastpage} contain a compilation of frequently used notation (including a section with GCSA-specific notation) which may be used as a quick reference.

The results in this section make use of Kolmogorov's Extension Theorem (\O ksendal 2003, Theorem 2.1.5)\nocite{oksendal2003}, which guarantees that any realization of GCSA can be seen as a random variable on a probability space $(\Omega,\mathcal{F},P)$, where $\Omega$ is the sample space, $\mathcal{F}$ is the $\upsigma$-field and $P$ is a probability measure. In other words, each $\upomega\in \Omega$ is assumed to determine (via some unknown function) an entire realization of GCSA. Thus, for example, the random sets $\{\tilde{A}_k^{(j_k(m))}\}_{k}$, $\{\hat{\bm\uptheta}_k^{(I_{m,i})}\}_{k,m,i}$, $\{\bm\upxi_k^{(j_k(m))}(\hat{\bm\uptheta}_k^{(I_{m,i})})\}_{k,m,i}$, and $\{\bm\upbeta_k^{(j_k(m))}(\hat{\bm\uptheta}_k^{(I_{m,i})})\}_{k,m,i}$ are fully determined by $\upomega$. 
  Next, we introduce a set of assumptions to be used throughout this section. Throughout these assumptions let $\upomega$ be as defined above.

\begin{DESCRIPTION}
\item[A0] Let $a_{k}>0$ and $a_k^{(j)}>0$ for all $k$ and $j$. Let $\sum_{k=0}^{\infty}a_{k}=\infty$ and $\sum_{k=0}^{\infty}a_{k}^2<\infty$. Next, let $E[\tilde{a}_k^{(j)}/a_k]\rightarrow r_j$ with $0\leq r_j<\infty$. Additionally, let there exist a set $\Omega_0\subset\Omega$ with $P(\Omega_0)=1$ such that $\tilde{a}_k^{(j)}/a_k\rightarrow r_j$ for all $\upomega\in\Omega_0$.  
\item [A1]  Let $s_k$ and $n_k(m)$ be bounded uniformly over $k$, $m$, and $\upomega$.  \label{cond:conditiona1}
\item[A2] Assume $\upmu_k(j)= E[x_k(j)]$ exists for all $k$ and $j$ and is finite (i.e., $\upmu_k(j)<\infty$) and that $\upmu(j)= \lim_{k\rightarrow \infty} \upmu_k(j)$ exists and is finite  ($x_k(j)$, $\upmu_k(j)$, and $\upmu(j)$ were first introduced in pp. \pageref{eq:tinkelpan}--\pageref{eq:unrealantartica}).
Additionally, define:
\begin{align*}
C_k(j)\equiv \sum_{m=1}^{s_k}\chi\{j_k(m)=j\}n_k(m),\ \ \ S_k\equiv \sum_{j=1}^dE\left[\frac{\tilde{a}_k^{(j)}}{a_k}\right](C_k(j)-E[C_k(j)]),
\end{align*}
and let $E[S_kS_\ell]=0$ for $k\neq \ell$. Furthermore, let $\tilde{a}_k^{(j)}$ and $C_k(j)$ be independent.
\item[A3] Let ${\bm{g}}(\bm\uptheta)$ be a continuous  function (i.e., $L(\bm\uptheta)$ is continuously differentiable).
\item[A4] For $k\geq0$, $m\leq s_k$, $i\leq n_k(m)$, let there exist a set $\Omega_1\subset\Omega$ with $P(\Omega_1)=1$ and a scalar $0<R_1(\upomega)<\infty$ such that the set $\{\hat{\bm\uptheta}_k^{(I_{m,i})}\}_{k,m,i}$ is contained within a $p$-dimensional ball of radius $R_1(\upomega)$ centered at the origin for all $\upomega\in\Omega_1$.  \label{cond:conditiona4}
\item[A5] For $k\geq0$, $m\leq s_k$, $i\leq n_k(m)$, let there exist a set $\Omega_2\subset\Omega$ with $P(\Omega_2)=1$ and a scalar $0<R_2(\upomega)<\infty$ such that the set $\{\bm\upbeta_k^{(j_k(m))}(\hat{\bm\uptheta}_k^{(I_{m,i})})\}_{k,m,i}$ is contained within a $p$-dimensional ball of radius $R_2(\upomega)$ centered at the origin for all $\upomega\in\Omega_2$.
Additionally, let $\bm\upbeta_k^{(j_k(m))}(\hat{\bm\uptheta}_k^{(I_{m,i})})$ converge to ${\bm{0}}$ w.p.1 as $k\rightarrow\infty$.
\item[A6] Define $\bm{D}_r\equiv \sum_{m=1}^{s_r}\sum_{i=0}^{n_r(m)-1} \left(\frac{\tilde{A}_{r}(m) }{a_{r}}\right)\bm\upxi^{(j_r(m))}_r\left(\hat{\bm\uptheta}_r^{(I_{m,i})}\right)$.
Assume for all $\upvarepsilon>0$ we have $ \lim_{k\rightarrow \infty}P\left(\sup_{m\geq k}\Big\|\sum_{r=k}^{m}a_{r}\bm{D}_{r}\Big\|\geq \upvarepsilon\right)=0$.
\item [A7] Let $\bm{\uptheta^{\ast}}$ be a locally asymptotically stable (in the sense of Lyapunov) solution of the differential equation:
\begin{align}
\label{eq:ode}
\dot{\bm{Z}}(t)\equiv\frac{d {\bm{Z}}(t)}{d t}=- \sum_{j=1}^d\upmu(j)\bm{g}^{(j)}(\bm{Z}(t))
\end{align}
with domain of attraction $DA(\bm{\uptheta^{\ast}})$. 
\item[A8] There is a compact subset $A$ of $DA(\bm{\uptheta^{\ast}})$ such that $\bm{{\hat{\uptheta}}}_{k} \in A$ infinitely often (here compactness is defined using the Euclidean topology).
\end{DESCRIPTION}

The validity of A0--A8 will be discussed 
in Sections \ref{sec:specialitay} and \ref{sec:discussconvergence}. We note that while $a_k$ is not used by the GCSA algorithm, it serves as a representation of the rate at which ${a}_k^{(j)}$ decreases. Following are several lemmas regarding the set of functions $\{{\bm{Z}}_{k}(t)\}_{k\geq0}$.  Throughout all proofs
 any unspecified probabilistic arguments are meant to hold w.p.1.

\begin{lemma}
\label{claim:brey}
Assume A0 and A1 hold. Using the definition of $x_k(j)$ in (\ref{eq:tinkelpan}) let: 
\begin{align}
\label{eq:apoint}
X_k\equiv \sum_{j=1}^dx_k(j).
\end{align}
 Then, there exists a constant $0<R_3(\upomega)<\infty$ such that $|X_k|<R_3(\upomega)$ for all $\upomega\in \Omega_3$ and all $k\geq 0$.
\end{lemma}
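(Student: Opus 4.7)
The plan is to verify the uniform bound directly from assumptions A0 and A1, with $\Omega_3$ chosen to be $\Omega_0$ (the full-measure set on which $\tilde{a}_k^{(j)}/a_k$ converges to $r_j$). The proof has essentially no technical obstacle: it is a routine combination of a deterministic uniform bound on the combinatorial factors with an $\upomega$-dependent bound on the ratios of gain sequences.

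First I would invoke A1 to fix finite constants $S$ and $N$ with $s_k \leq S$ and $n_k(m) \leq N$ for all $k$, $m$, and $\upomega$. For each $j \in \{1,\dots,d\}$ and every $k$ this yields the deterministic estimate
\begin{equation*}
\sum_{m=1}^{s_k}\chi\{j_k(m)=j\}\,n_k(m) \;\leq\; s_k \cdot \max_{m\leq s_k} n_k(m) \;\leq\; S\,N.
\end{equation*}

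Next I would use A0. For every $\upomega \in \Omega_0$ and every $j$, the sequence $\{\tilde{a}_k^{(j)}/a_k\}_{k\geq 0}$ converges to the finite limit $r_j$; since every convergent real sequence is bounded, there exists a finite $M_j(\upomega)$ with $\tilde{a}_k^{(j)}/a_k \leq M_j(\upomega)$ for all $k\geq 0$ (note $\tilde{a}_k^{(j)}/a_k > 0$ by A0, so no absolute value is needed). Combining this with the previous step and the definition of $x_k(j)$ in \eqref{eq:tinkelpan} gives
\begin{equation*}
|x_k(j)| \;=\; \left(\frac{\tilde{a}_k^{(j)}}{a_k}\right)\sum_{m=1}^{s_k}\chi\{j_k(m)=j\}\,n_k(m) \;\leq\; M_j(\upomega)\,S\,N
\end{equation*}
for every $k\geq 0$ and every $\upomega\in\Omega_0$.

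Finally, summing over $j$ via the definition \eqref{eq:apoint} of $X_k$ yields
\begin{equation*}
|X_k| \;\leq\; \sum_{j=1}^d |x_k(j)| \;\leq\; S\,N \sum_{j=1}^d M_j(\upomega) \;=:\; R_3(\upomega),
\end{equation*}
which is finite since each $M_j(\upomega)$ is finite and the sum is over finitely many $j$. Setting $\Omega_3 \equiv \Omega_0$ (so $P(\Omega_3)=1$) completes the proof. The key structural point is that $X_k$ is a sum over at most $S$ blocks of at most $N$ updates each, weighted by the quantities $\tilde{a}_k^{(j)}/a_k$ whose boundedness is supplied by the convergence postulated in A0; no cancellation or delicate estimate is required.
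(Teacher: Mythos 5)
Your proof is correct and follows essentially the same route as the paper's: bound the combinatorial factor $\sum_m \chi\{j_k(m)=j\} n_k(m)$ uniformly via A1, bound $\tilde{a}_k^{(j)}/a_k$ on $\Omega_0$ via its convergence from A0, and sum over the finitely many $j$. The only difference is that you spell out the constants explicitly and identify $\Omega_3 = \Omega_0$, which the paper leaves implicit.
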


\begin{proof}
 By A1 it follows that $s_k$ and $n_k(m)$ are bounded (uniformly in $k$, $m$, and $\upomega$). Next, A0 (using the fact that $\tilde{a}_k^{(j)}/a_k\rightarrow r_j$ w.p.1 where $0\leq r_j<\infty$) implies that for $\upomega\in \Omega_0$ the sequence $\{\tilde{a}_k^{(j)}/a_k\}_{k\geq 0}$ is bounded in magnitude by a constant that depends on $\upomega$ but is independent of $k$. Combining this with the definition of $x_k(j)$ given in (\ref{eq:tinkelpan}) and the fact that $d<\infty$ yields the desired result.
\end{proof}

\begin{lemma}
\label{claim:trompeta} Assume condition A4 holds. Then, w.p.1 the set $\{\bm{Z}_k(t)\}_{k\geq0}$ is a set of continuous functions that are bounded over $t$ uniformly in $k$.
\end{lemma}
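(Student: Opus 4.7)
The plan is to leverage the fact that each $\bm{Z}_k(t)$ is a shift of $\bm{Z}_0(t)$, so it suffices to show that $\bm{Z}_0(t)$ itself is continuous and bounded w.p.1 by a constant independent of $t$. Continuity of $\bm{Z}_0(t)$ is immediate from its construction in \eqref{eq:anlogouskartmannedg}: on each interval $[t_k,t_{k+1}]$ it is the linear interpolant between $\bar{\bm{Z}}_0(t_k)=\hat{\bm{\uptheta}}_k$ and $\bar{\bm{Z}}_0(t_{k+1})=\hat{\bm{\uptheta}}_{k+1}$, and the interpolant matches at the endpoints $t_k$ so no jumps appear; for $t\le t_0$ it is constant at $\bm{Z}_0(t_0)$. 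Hence $\bm{Z}_0(t)$ is continuous on $\mathbb{R}$, and therefore so is its left-shift $\bm{Z}_k(t)=\bm{Z}_0(t_k+t)$ for every $k$.

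For boundedness, the key observation is that the iterates $\{\hat{\bm{\uptheta}}_k\}_{k\geq 0}$ form a subset of $\{\hat{\bm{\uptheta}}_k^{(I_{m,i})}\}_{k,m,i}$ (take $m=1$, $i=0$, since $\hat{\bm{\uptheta}}_k=\hat{\bm{\uptheta}}_k^{(I_{1,0})}$ by the convention established on p.~\pageref{def:intermediate}). By assumption A4, there exists $\Omega_1\subset\Omega$ with $P(\Omega_1)=1$ and a finite $R_1(\upomega)>0$ such that $\|\hat{\bm{\uptheta}}_k\|\le R_1(\upomega)$ for all $k\geq 0$ and all $\upomega\in\Omega_1$. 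On any interval $[t_k,t_{k+1}]$, the point $\bm{Z}_0(t)$ is a convex combination of $\hat{\bm{\uptheta}}_k$ and $\hat{\bm{\uptheta}}_{k+1}$, since the weights $(t_{k+1}-t)/a_k$ and $(t-t_k)/a_k$ are nonnegative and sum to one. Therefore, by the triangle inequality,
\begin{align*}
\|\bm{Z}_0(t)\|\le \tfrac{t_{k+1}-t}{a_k}\|\hat{\bm{\uptheta}}_k\|+\tfrac{t-t_k}{a_k}\|\hat{\bm{\uptheta}}_{k+1}\|\le R_1(\upomega)
\end{align*}
for all $t\in[t_k,t_{k+1}]$ and all $\upomega\in\Omega_1$. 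For $t\le t_0$ we have $\bm{Z}_0(t)=\bm{Z}_0(t_0)=\hat{\bm{\uptheta}}_0$, whose norm is also bounded by $R_1(\upomega)$. Since every $t\in\mathbb{R}$ lies in some such interval or in $(-\infty,t_0]$, this gives $\sup_{t\in\mathbb{R}}\|\bm{Z}_0(t)\|\le R_1(\upomega)$ for all $\upomega\in\Omega_1$.

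Finally, because $\bm{Z}_k(t)=\bm{Z}_0(t_k+t)$, the uniform bound on $\bm{Z}_0$ immediately yields $\sup_{k\ge 0}\sup_{t\in\mathbb{R}}\|\bm{Z}_k(t)\|\le R_1(\upomega)$ for all $\upomega\in\Omega_1$, proving boundedness of the family $\{\bm{Z}_k(t)\}_{k\geq 0}$ uniformly in both $t$ and $k$ on the probability-one event $\Omega_1$. This argument is almost entirely bookkeeping; no substantive obstacle arises, since assumption A4 has already absorbed all the difficulty of bounding the GCSA iterates. The only point that warrants care is verifying that $\hat{\bm{\uptheta}}_k\in \{\hat{\bm{\uptheta}}_k^{(I_{m,i})}\}_{k,m,i}$ so that A4 indeed covers the vertices of the interpolation, which follows directly from the convention $\hat{\bm{\uptheta}}_k=\hat{\bm{\uptheta}}_k^{(I_{1,0})}$ set up in Section~\ref{sec:wunderbar}.
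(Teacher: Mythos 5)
Your proof is correct and follows essentially the same route as the paper's: condition A4 bounds the iterates $\{\hat{\bm{\uptheta}}_k\}$ (as a subset of the $\hat{\bm{\uptheta}}_k^{(I_{m,i})}$), the piecewise-linear interpolation is a convex combination so the bound transfers to $\bm{Z}_0(t)$, left-shifts preserve it, and continuity is immediate from the construction. Your write-up merely makes explicit the convex-combination step and the identification $\hat{\bm{\uptheta}}_k=\hat{\bm{\uptheta}}_k^{(I_{1,0})}$, which the paper leaves implicit.
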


\begin{proof}
A direct consequence of condition A4 is that $\{\hat{\bm\uptheta}_k\}_{k\geq 0}$ must be a bounded set (although the magnitude of the bound may be a function of $\upomega$). Since $\bm{Z}_0(t)$ was obtained by interpolating the vectors $\{\hat{\bm\uptheta}_k\}_{k\geq 0}$ then $\bm{Z}_0(t)$ must be bounded uniformly over $t\in\mathbb{R}$ w.p.1. Since $\bm{Z}_k(t)$ is obtained by performing a sequence of left-shifts on $\bm{Z}_0(t)$, all functions in $\{\bm{Z}_k(t)\}_{k\geq 0}$ must be bounded uniformly in $k$ and $t$ w.p.1. Additionally, these functions are clearly continuous by construction (the functions are linear interpolations of bounded vectors). 
\end{proof}

The next lemma relies on the concept of the ``equicontinuity'' of set of functions. This concept  and two related concepts are defined next.

\begin{definition}
\label{def:newspaperchaingg}
A set $\{\bm{\uprho}_k(t)\}_{k\geq0}$ of functions from $\mathbb{R}$ to $\mathbb{R}^p$ is said to be {\it{equicontinuous}} at $t$ if for all $k$ and $\upepsilon>0$ there exists a $\updelta(t,\upepsilon)$ such that $\|\bm{\uprho}_k(t)-\bm{\uprho}_k(s)\|<\upepsilon$ if $|t-s|<\updelta(t,\upepsilon)$.  If the set of functions is equicontinuous at every $t$ it is said to be {\it{point-wise equicontinuous}}. The set of functions is said to be {\it{uniformly equicontinuous}} over a set $\mathcal{S}\subset \mathbb{R}$ if for all $k\geq 0$, $t\in \mathcal{S}$, and $\upepsilon>0$ there exists a $\updelta(\upepsilon)$ such that $\|\bm{\uprho}_k(t)-\bm{\uprho}_k(s)\|<\upepsilon$ whenever $|t-s|<\updelta(\upepsilon)$ and $s\in \mathcal{S}$ (note that $\updelta(\upepsilon)$ depends neither on $k$ nor on $t$).
\end{definition}

\begin{lemma}
\label{claim:loan}
 Assume conditions A0, A1, and A5 hold and for any finite $T>0$ define $I_{T}\equiv[-T,T]$. Then, the following statements hold for all $\upomega$ in a set w.p.1 and any finite $T>0$. The set $\{{\bm{B}}_{k}(t)\}_{k\geq0}$ is a set of functions, each of which is uniformly continuous over $ I_T$ (note that at this point we are not claiming equicontinuity). Additionally, the functions in $\{{\bm{B}}_{k}(t)\}_{k\geq0}$ are bounded over $t\in I_T$ uniformly in $k$. Furthermore, $\bm{B}_{k}(t)\rightarrow \bm{0}$ uniformly over $t\in I_T$ as $k\rightarrow \infty$. The former statements imply $\{{\bm{B}}_{k}(t)\}_{k\geq0}$ is uniformly equicontinuous over $ I_T$.
\end{lemma}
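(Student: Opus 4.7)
The plan is to exploit the piecewise linear structure of $\bm{B}_0(t)$ and to transfer bounds to its shifted increment $\bm{B}_k(t)$. Define
\begin{align*}
\bm{b}_r \equiv \sum_{m=1}^{s_r}\sum_{i=0}^{n_r(m)-1}\frac{\tilde{A}_r(m)}{a_r}\bm\upbeta_r^{(j_r(m))}\left(\hat{\bm\uptheta}_r^{(I_{m,i})}\right),
\end{align*}
so that (\ref{eq:pulcritud0}) gives $\bar{\bm{B}}_0(t_{r+1})-\bar{\bm{B}}_0(t_r)=a_r\bm{b}_r$, and $\bm{B}_0$ is piecewise linear with slope $\bm{b}_r$ on $[t_r,t_{r+1}]$ (and identically zero on $(-\infty,0]$). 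Condition A0 ensures that $\tilde{A}_r(m)/a_r=\tilde{a}_r^{(j_r(m))}/a_r$ is uniformly bounded in $r$ and $m$ on a set of full measure, A1 bounds $s_r$ and $n_r(m)$, and the first half of A5 bounds each bias by $R_2(\upomega)$; together these yield a finite $M(\upomega)$ with $\|\bm{b}_r\|\leq M(\upomega)$ for every $r$ w.p.1.

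With this slope bound in hand the first two claims are essentially immediate. Each $\bm{B}_k$ is continuous (a translation of a piecewise linear function) on the compact interval $I_T$, hence uniformly continuous. For uniform boundedness over $I_T$, whenever $t_k+t\geq 0$ the slope bound gives $\|\bm{B}_k(t)\|=\|\bm{B}_0(t_k+t)-\bm{B}_0(t_k)\|\leq M(\upomega)|t|\leq M(\upomega)T$, while the boundary case $t_k+t<0$ (possible only for the finitely many $k$ with $t_k<T$, since $t_k\to\infty$ by $\sum a_r=\infty$) reduces to $\bm{B}_k(t)=-\bm{B}_0(t_k)$, whose norm is again bounded by $M(\upomega)t_k\leq M(\upomega)T$.

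To prove $\bm{B}_k(t)\to\bm{0}$ uniformly on $I_T$, I would invoke the second half of A5: since $\bm\upbeta_r^{(j_r(m))}(\hat{\bm\uptheta}_r^{(I_{m,i})})\to\bm 0$ w.p.1 and the coefficients in $\bm{b}_r$ are uniformly bounded, $\|\bm{b}_r\|\to 0$ w.p.1. Given $\upvarepsilon>0$, pick $K(\upvarepsilon,\upomega)$ large enough that $\|\bm{b}_r\|<\upvarepsilon/(2T)$ for $r\geq K$ and $t_K\geq T$; then for every $k\geq K$ and $t\in I_T$, the integration window $[t_k-T,t_k+T]$ lies in $[0,\infty)$ and consists entirely of intervals on which $\bm{B}_0$ has slope of norm at most $\upvarepsilon/(2T)$, so $\|\bm{B}_k(t)\|\leq \upvarepsilon$ uniformly in $t\in I_T$.

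Finally, the uniform equicontinuity of $\{\bm{B}_k\}$ on $I_T$ follows by a standard compactness argument combining the three prior properties: given $\upvarepsilon>0$, pick $N$ with $\|\bm{B}_k\|_\infty<\upvarepsilon/3$ for $k\geq N$ (so that $\|\bm{B}_k(s)-\bm{B}_k(t)\|<2\upvarepsilon/3$ holds trivially for those $k$ and any $s,t$), and take the minimum of the finitely many moduli of uniform continuity corresponding to the uniformly continuous functions $\bm{B}_0,\ldots,\bm{B}_{N-1}$ to obtain a single $\updelta(\upvarepsilon)>0$ that works for all $k$. The main obstacle is not mathematical depth but the careful bookkeeping for the boundary case $t_k+t<0$; once observed that this occurs only for finitely many $k$, the remainder is a direct application of the uniform slope bound furnished by A0, A1, and A5.
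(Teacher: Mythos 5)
Your proposal is correct and follows essentially the same route as the paper's proof: bound the per-interval increment of $\bar{\bm{B}}_0$ by a constant times $a_r$ (your $M(\upomega)$ plays the role of the paper's $R_2(\upomega)R_3(\upomega)$, with the coefficient bound coming from A0/A1 exactly as in Lemma \ref{claim:brey}), transfer this to the interpolation to get $\|\bm{B}_k(t)\|\leq M(\upomega)|t|$, use the vanishing of the bias together with $\sum_k a_k=\infty$ for the uniform convergence to zero, and conclude equicontinuity via the standard Rudin Theorem 7.24 argument. Your explicit treatment of the boundary case $t_k+t<0$ is a minor refinement the paper leaves implicit, but it changes nothing of substance.
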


\begin{proof}
First, we prove that $\{\bm{B}_k(t)\}_{k\geq 0}$ is bounded over $t\in I_T$ uniformly in $k$. First and foremost, using the fact that all gain sequences are nonnegative (condition A0), the definition of $\tilde{A}_{r}(m)$ given in Algorithm \ref{findme}, and the definition of $\bar{\bm{B}}_0(t_k)$ in (\ref{eq:pulcritud0}) along with the triangle inequality gives rise to the following inequality:
\begin{align}
\label{eq:mussel}
\|\bar{\bm{B}}_0(t_k)\|\leq \sum_{r=0}^{k-1} a_r\sum_{m=1}^{s_r}\sum_{i=0}^{n_r(m)-1} \left(\frac{\tilde{A}_{r}(m) }{a_{r}}\right)\left\|\bm\upbeta_r^{(j_r(m))}\left(\hat{\bm\uptheta}_r^{(I_{m,i})}\right)\right\|.
\end{align}
Under A5, for all $\upomega\in\Omega_2$  there exists a constant $R_2(\upomega)$ such that the term $\|\bm\upbeta_k^{(j_k(m))}\left(\hat{\bm\uptheta}_k^{(I_{m,i})}\right)\|$ is bounded above by $R_2(\upomega)$.
Then, from (\ref{eq:mussel}) we have:
\begin{align*}
\|\bar{\bm{B}}_0(t_{k+1})-\bar{\bm{B}}_0(t_{k})\|&\leq a_k\sum_{m=1}^{s_k}\sum_{i=0}^{n_k(m)-1} \left(\frac{\tilde{A}_{k}(m) }{a_{k}}\right)R_2(\upomega)\notag\\
&= a_kX_kR_2(\upomega)
\end{align*}
for all $\upomega\in\Omega_2$, where $X_k$ was defined in (\ref{eq:apoint}).
 Using the result from Lemma \ref{claim:brey} we obtain the bound $|X_k|<R_3(\upomega)$ for $\upomega \in \Omega_3$, which implies:
\begin{align}
\label{eq:soeur}
\|\bar{\bm{B}}_0(t_{k+1})-\bar{\bm{B}}_0(t_{k})\|\leq a_kR_2(\upomega)R_3(\upomega)
\end{align}
for all $\upomega\in\Omega_2\cap\Omega_3$ (note that $P({\Omega_2}\cap{\Omega_3})=1$).
Because $\bm{B}_0(t)$ is a piecewise-linear interpolation of $\bar{\bm{B}}_0(t)$ at the points $\{t_k\}_{k\geq0}$,
it follows from (\ref{eq:soeur}) that for $t\in\mathbb{R}$:
\begin{align}
\label{eq:santiagokorman}
\|\bm{B}_k(t)\|=\|\bm{B}_0(t_k+t)-\bm{B}_0(t_k)\|\leq |t|R_2(\upomega)R_3(\upomega).
\end{align}
The inequality in (\ref{eq:santiagokorman}) implies that each function in $\{\bm{B}_k(t)\}_{k\geq0}$ is bounded in magnitude by $TR_2(\upomega)R_3(\upomega)$ for $t\in I_T$ and $\upomega\in\Omega_2\cap\Omega_3$ (independently of $k$).

Observe now that $\bm{B}_k(t)$ is continuous by construction. Since any continuous function on a compact set is uniformly continuous on that set, for each $k$ the function ${\bm{B}}_k(t)$ is uniformly continuous on $I_T$ (Rudin 1976, Theorem 4.19)\nocite{Rudin1976}. So far, we have proven that the functions in the set $\{\bm{B}_k(t)\}_{k\geq 0}$ are bounded uniformly (in $k$) for $t\in I_T$ and that, for each $k$, the function $\bm{B}_k(t)$ is uniformly continuous over $t\in I_T$. Next, we show $\bm{B}_k(t)$ converges to the zero vector uniformly over $t\in I_T$. 

By the last part of A5 (the convergence of the bias vector to zero), there exists a set $\Omega_4\subset\Omega$ with $P(\Omega_4)=1$ such that for any $\upomega\in\Omega_4$ and any $\upepsilon>0$ there exists a finite constant $K_1(\upomega,\upepsilon)$ such that $\|\bm\upbeta_k^{(j_k(m))}(\hat{\bm\uptheta}_k^{(I_{m,i})})\|\leq \upepsilon$
whenever $k\geq K_1(\upomega,\upepsilon)$. Therefore, using (\ref{eq:santiagokorman}) with $R_2(\upomega)$ replaced by $\upepsilon$ yields the bound:
\begin{align}
\label{eq:danceheart}
\|\bm{B}_k(t)\|=\|\bm{B}_0(t_k+t)-\bm{B}_0(t_k)\|\leq \upepsilon|t|R_3(\upomega),
\end{align}
provided $t_k+t > t_{K_1(\upomega,\upepsilon)}$. Note that having $k> K_1(\upomega, \upepsilon)$ does not guarantee that $t_k+t > t_{K_1(\upomega,\upepsilon)}$ since $t$ may be negative. However, by condition A0 (using the fact that $\sum_{k=0}^\infty a_k=\infty$) there exists a finite constant $K_2(T,K_1(\upomega,\upepsilon))$ with $K_2(T,K_1(\upomega,\upepsilon))\geq K_1(\upomega,\upepsilon)$ such that for $k\geq K_2(T,K_1(\upomega,\upepsilon))$ and $t\in I_T$ we have $t_k+t\geq t_k-T\geq t_{K_1(\upomega,\upepsilon)}$.
In other words, if $k$ is large enough (at least as large as $K_2(T,K_1)$), the value $t_k+t$ can also be made large enough so that the interpolated bias term at time $t_k+t$ is arbitrarily small. Let $k\geq K_2(T,K_1)$ and $t\in I_T$. Then, (\ref{eq:danceheart}) with $|t|$ replaced by $T$ implies:
\begin{align}
\label{eq:blonsky}
\left\|{\bm{B}}_k(t)\right\|=\left\|{\bm{B}}_0(t_k+t)-{\bm{B}}_0(t_k)\right\|\leq  \upepsilon TR_3(\upomega)
\end{align}
provided $\upomega \in \Omega_3\cap\Omega_4$. Since $K_1(\upomega,\upepsilon)$ and $K_2(T,K_1(\upomega,\upepsilon))$ do not depend on $t$ and since $\upepsilon$ can be taken to be as small as desired, ${\bm{B}}_k(t)$ converges uniformly (in $t$) to the zero vector for $t\in I_T$ and $\upomega\in\Omega_3\cap\Omega_4$ (note that $P(\Omega_3\cap\Omega_4)=1$). The fact that $\{{\bm{B}}_{k}(t)\}_{k\geq0}$ is uniformly equicontinuous for $t\in I_T$ now follows in a similar manner to the proof of Theorem 7.24 in Rudin (1976)\nocite{Rudin1976}.
\end{proof}

      The following lemma will be useful for 
      establishing some important properties regarding 
     the set $\{\bm{M}_k(t)\}_{k\geq0}$.

\begin{lemma}[Kushner and Clark 1978, Lemma 2.2.1]
\label{lem:opportunityxyz}
\nocite{kushnclark1978}
Let $\bm{\uprho}_k$ be a vector-valued random variable. For $t_k=\sum_{r=0}^{k-1}a_r$ define
$\bm{R}(t_k)\equiv \sum_{r=0}^{k-1}a_r\bm{\uprho}_r$ . For $t\in[0,\infty)$ let $\bm{R}(t)$ be the piecewise linear interpolation of $\{\bm{R}(t_k)\}_{k\geq 0}$. Additionally, for $t\in(-\infty,0]$ let $\bm{R}(t)\equiv \bm{R}(t_0)$. Assume:
\begin{align*}
 \lim_{k\rightarrow \infty}P\left(\sup_{m\geq k}\Big\|\sum_{r=k}^{m}a_{r}\bm{\uprho}_{r}\Big\|\geq \upvarepsilon\right)=0.
\end{align*}
Then, $\bm{R}(t)$ is uniformly continuous over $t\in \mathbb{R}$ w.p.1 and $\sum_{k=0}^\infty a_k\bm{\uprho}_k<\infty$ w.p.1.
\end{lemma}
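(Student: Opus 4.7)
The plan is to split the claim into its two conclusions and handle them in tandem by upgrading the hypothesized control on the tail partial sums to an almost-sure Cauchy statement. Writing $S_n \equiv \sum_{r=0}^{n-1} a_r \bm{\uprho}_r$, the hypothesized tail $\sum_{r=k}^{m} a_r \bm{\uprho}_r$ equals $S_{m+1}-S_k$, so the assumption says $\sup_{n\geq k}\|S_n-S_k\|\to 0$ in probability. Once I have an almost-sure Cauchy property for $\{S_n\}$, the series convergence is immediate, and the piecewise-linear interpolation $\bm{R}$ will inherit uniform continuity on the tail where the partial sums have essentially stopped moving.

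For the almost-sure Cauchy property I will pass from the one-sided tail to the symmetric oscillation $V_k \equiv \sup_{n,m\geq k}\|S_n-S_m\|$, using the bound $V_k \leq 2\sup_{n\geq k}\|S_n-S_k\|$ to conclude that $V_k\to 0$ in probability. The key observation is that $V_k$ is nonincreasing in $k$ (the sup is taken over a shrinking index set), so $V_k$ converges almost surely to some $V\geq 0$; uniqueness of the probability limit then forces $V=0$ almost surely, making $\{S_n\}$ almost surely Cauchy in $\mathbb{R}^p$ and yielding $\sum_k a_k\bm{\uprho}_k < \infty$ w.p.1.

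For uniform continuity I will fix $\omega$ in this full-measure set and let $\varepsilon>0$. Choosing $N=N(\omega,\varepsilon)$ with $\|S_n-S_m\|<\varepsilon/3$ whenever $n,m\geq N$, and recalling that $\bm{R}$ is piecewise-linear with nodes $\{(t_k,S_k)\}$, for $s\in[t_k,t_{k+1}]$ and $t\in[t_\ell,t_{\ell+1}]$ with $N\leq k\leq \ell$ the decomposition through $S_k$ and $S_\ell$ gives $\|\bm{R}(t)-\bm{R}(s)\|\leq \|S_{k+1}-S_k\|+\|S_\ell-S_k\|+\|S_{\ell+1}-S_\ell\|<\varepsilon$, so the oscillation of $\bm{R}$ on $[t_N,\infty)$ is bounded by $\varepsilon$ regardless of $|t-s|$. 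Since $\bm{R}$ is constant on $(-\infty,0]$ and continuous on the compact set $[0,t_{N+1}]$, it is uniformly continuous on $(-\infty,t_{N+1}]$ with some modulus $\delta_1$; taking $\delta\equiv \min(\delta_1, a_N)$ and splitting the cases $s\geq t_N$ versus $s<t_N$ (in the latter $|t-s|<a_N$ forces $t<t_{N+1}$, so both points land in $(-\infty,t_{N+1}]$) yields uniform continuity on all of $\mathbb{R}$.

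The main obstacle is the passage from the probability-only hypothesis to an almost-sure statement, which drives both conclusions; the monotonicity trick applied to $V_k$ is the essential device, since without it one cannot guarantee that a single $\omega$-dependent choice of $N$ simultaneously controls all tail increments needed for pathwise uniform continuity. Everything else is bookkeeping with the triangle inequality and the piecewise-linear form of $\bm{R}$.
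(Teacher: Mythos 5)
The paper does not actually prove this statement: it is imported verbatim as Lemma 2.2.1 of Kushner and Clark (1978), so there is no in-paper argument to compare yours against. Judged on its own, your proof is correct and is essentially the classical argument. The central device — passing from the one-sided tail $\sup_{n\ge k}\|S_n-S_k\|$ to the symmetric oscillation $V_k=\sup_{n,m\ge k}\|S_n-S_m\|$, and then exploiting that $V_k$ is nonincreasing so that convergence in probability to $0$ upgrades to almost-sure convergence (indeed $P(V\ge\varepsilon)\le P(V_k\ge\varepsilon)\to 0$ directly, since $V\le V_k$) — is exactly what makes the lemma work, and it correctly delivers both the a.s.\ Cauchy property of the partial sums and, via the node decomposition $\|\bm{R}(t)-\bm{R}(s)\|\le\|S_{k+1}-S_k\|+\|S_\ell-S_k\|+\|S_{\ell+1}-S_\ell\|$, the uniform $\varepsilon$-oscillation of the interpolant on $[t_N,\infty)$. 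The case split at $t_N$ with $\delta=\min(\delta_1,a_N)$ is also handled correctly. The only point worth flagging is an implicit hypothesis: for $[t_N,\infty)$ to be covered by the node intervals $[t_k,t_{k+1}]$ (and indeed for $\bm{R}$ to be defined on all of $[0,\infty)$ by interpolation) you need $t_k\to\infty$, i.e.\ $\sum_r a_r=\infty$. This is part of the standing assumptions under which the lemma is invoked in the paper (condition A0) and of Kushner--Clark's original setting, but it should be stated since your argument uses it silently.
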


The statement of the following Lemma is similar to the statement of Lemma \ref{claim:loan} with the exception that it pertains to $\{{\bm{M}}_{k}(t)\}_{k\geq0}$. The proof of the following lemma, however, is fundamentally different from the proof of Lemma \ref{claim:loan}.


\begin{lemma}
\label{claim:travaille}
 Assume conditions A0 and A6 hold and for any finite $T>0$ let $ I_T$ be defined as in Lemma \ref{claim:loan}. Then, the following statements hold for all $\upomega$ in a set w.p.1 and any finite $T>0$. The set $\{{\bm{M}}_{k}(t)\}_{k\geq0}$ is a set of functions each of which is uniformly continuous over $ I_T$. Additionally, the functions in $\{{\bm{M}}_{k}(t)\}_{k\geq0}$ are bounded over $t\in I_T$ uniformly in $k$. Furthermore, $\bm{M}_{k}(t)\rightarrow \bm{0}$ uniformly over $t\in I_T$ as $k\rightarrow \infty$. The former statements imply $\{{\bm{M}}_{k}(t)\}_{k\geq0}$ is uniformly equicontinuous over $ I_T$.
\end{lemma}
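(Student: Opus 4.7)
The plan is to recognize that $\bar{\bm{M}}_0(t_k)$ has exactly the form required by Lemma \ref{lem:opportunityxyz} with the noise vector $\bm{D}_r$ from A6. Specifically, the definition in (\ref{eq:pulcritud1}) gives $\bar{\bm{M}}_0(t_k) = \sum_{r=0}^{k-1} a_r \bm{D}_r$, and $\bm{M}_0(t)$ is the piecewise linear interpolation of these values. Since A6 is precisely the tail-probability hypothesis of Lemma \ref{lem:opportunityxyz}, I would invoke it directly to obtain, on a single set $\Omega_5 \subset \Omega$ of probability one, two facts: $\bm{M}_0(t)$ is uniformly continuous on $\mathbb{R}$, and the series $\sum_{r=0}^\infty a_r \bm{D}_r$ converges to a finite limit.

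From convergence of the series, the sequence $\{\bar{\bm{M}}_0(t_k)\}_{k\geq 0}$ converges to some finite vector $\bm{M}_\infty$ on $\Omega_5$. Since $\bm{M}_0(t)$ on $[t_k,t_{k+1}]$ is a convex combination of $\bar{\bm{M}}_0(t_k)$ and $\bar{\bm{M}}_0(t_{k+1})$, it follows that $\bm{M}_0(t)\to\bm{M}_\infty$ as $t\to\infty$; hence $\bm{M}_0$ is bounded on $\mathbb{R}$ on $\Omega_5$. Boundedness of $\bm{M}_k(t) = \bm{M}_0(t_k+t) - \bm{M}_0(t_k)$ uniformly in $k$ on $I_T$ is then immediate, and each $\bm{M}_k$ inherits uniform continuity on $I_T$ as a shift of a globally uniformly continuous function.

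The critical step is uniform convergence of $\bm{M}_k(t)$ to $\bm{0}$ on $I_T$. Given $\upvarepsilon>0$, the plan is to choose $T_0 = T_0(\upvarepsilon,\upomega)$ such that $\|\bm{M}_0(s) - \bm{M}_\infty\| < \upvarepsilon/2$ for all $s \geq T_0$, using the limit established above. Then, exploiting A0 (namely $\sum_k a_k = \infty$, which forces $t_k \to \infty$), I would pick $K = K(T,\upvarepsilon,\upomega)$ with $t_K \geq T_0 + T$. For any $k\geq K$ and any $t \in I_T$, both $t_k + t \geq T_0$ and $t_k \geq T_0$, so the triangle inequality through $\bm{M}_\infty$ gives $\|\bm{M}_k(t)\| < \upvarepsilon$, uniformly in $t \in I_T$. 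Uniform equicontinuity of $\{\bm{M}_k(t)\}$ on $I_T$ then follows by a standard argument: uniform convergence to zero makes any $\updelta>0$ work for $k$ sufficiently large, while the finitely many remaining $\bm{M}_k$ share a common modulus of continuity on the compact interval $I_T$.

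The main obstacle is bookkeeping the single $\upomega$-set of full probability across the different conclusions, independently of $T$. Fortunately, Lemma \ref{lem:opportunityxyz} produces one set $\Omega_5$ of probability one on which both the uniform continuity of $\bm{M}_0$ and the convergence of the series hold globally, and all derived properties of $\bm{M}_0$ (boundedness on $\mathbb{R}$, existence of a limit at infinity, common modulus of continuity) are intrinsic to $\bm{M}_0$ on $\mathbb{R}$. The parameter $T$ enters only through the choice of constants $T_0$ and $K$, not through the underlying set of sample points, so the same $\Omega_5$ services every finite $T>0$, as required.
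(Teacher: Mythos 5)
Your proposal is correct and follows essentially the same route as the paper: both hinge on applying Lemma \ref{lem:opportunityxyz} with $\bm{\uprho}_r=\bm{D}_r$ to get uniform continuity of $\bm{M}_0$ and convergence of $\sum_{r}a_r\bm{D}_r$, and both then use $\sum_k a_k=\infty$ to push $t_k+t$ past a threshold where the increments of $\bm{M}_0$ are uniformly small. The only (immaterial) difference is that you phrase the key estimate via the limit $\bm{M}_\infty$ and a triangle inequality, while the paper works directly with the Cauchy criterion on the partial sums $\sum_{r=n_1}^{n_2-1}a_r\bm{D}_r$.
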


\begin{proof} 
By letting $\bm{\uprho}_r=\bm{D}_r$ and $\bm{R}(t)=\bm{M}_0(t)$ in Lemma \ref{lem:opportunityxyz}, condition A6 
implies that $\bm{M}_0(t)$ is uniformly continuous for $t\in\mathbb{R}$. Moreover, due to the manner in which $\bm{M}_k(t)$ can be obtained from $\bm{M}_0(t)$ (i.e., via shifting) we obtain the uniform continuity of $\bm{M}_k(t)$ for $t\in\mathbb{R}$ and, therefore, over $t\in I_T$. The fact that $\{\bm{M}_{k}(t)\}_{k\geq 0}$ is bounded over $t\in I_T$ uniformly in $k$ also follows from Lemma \ref{lem:opportunityxyz} as is shown next.

First, using the definition of $\bar{\bm{M}}_0(t_k)$ (see (\ref{eq:pulcritud1})) along with the definition of $\bm{M}_k(t)$ (see (\ref{eq:easychair0})) it follows that for any two nonnegative integers $n_1$ and $n_2$ satisfying $n_2\geq n_1+1$ the following holds:
\begin{align*}
\bm{M}_0(t_{n_2})-\bm{M}_0(t_{n_1})=\bar{\bm{M}}_0(t_{n_2})-\bar{\bm{M}}_0(t_{n_1})= \sum_{r=n_1}^{n_2-1}a_r\bm{D}_r
\end{align*}
(recall that $\bm{D}_r$ was defined in condition A6). 
Next, condition A6, Lemma \ref{lem:opportunityxyz}, and the Cauchy criterion for convergence (Rudin 1976, Theorem 3.11) imply that for any $\upepsilon>0$ there exists a constant $K_1(\upomega,\upepsilon)$ such that for $n_1, n_2\geq K_1(\upomega,\upepsilon)$:
 \begin{align}
 \label{eq:whowbuabuu}
 \Big\| \sum_{r=n_1}^{n_2-1}a_r\bm{D}_r\Big\|<\upepsilon
 \end{align}
 for all $\upomega$ in a set of probability one. Therefore, $\|\bm{M}_0(t_{n_2})-\bm{M}_0(t_{n_1})\|<\upepsilon$.
 In general, 
 for each $t\in I_T$ it follows from condition A0 (using the fact that $\sum_{k=0}^\infty a_k=\infty$) that
there exists a constant $K_2(\upomega,\upepsilon,T)$ such that if $k\geq K_2(\upomega,\upepsilon,T)$ then:
\begin{align}
\label{eq:spyplanesssarc}
t_{n}\leq t_k+t\leq t_{n+1}
\end{align}
 for some $n\geq K_1(\upomega,\upepsilon)$ (the value of $n$ depends on $k$ and on $t$ although this dependance has been omitted for simplicity since it does not impact our arguments). Next, note that (\ref{eq:whowbuabuu}) with $n_1=n$ and $n_2=n+1$ implies that $\|\bm{M}_0(t_{n+1})-\bm{M}_0(t_{n})\|<\upepsilon$. 
 Combining this with (\ref{eq:spyplanesssarc}) it follows that $\|\bm{M}_k(t)\|=\|\bm{M}_0(t_k+t)-\bm{M}_0(t_n)\|<\upepsilon$ whenever $t\in I_T$ and $k\geq K_2(\upomega,\upepsilon,T)$.
%
%
 Since $K_2(\upomega,\upepsilon,T)$ does not depend on $t$, $\bm{M}_k(t)$ must be bounded over $t\in I_T$ uniformly in $k$. Furthermore, since $\upepsilon$ can be arbitrarily small we also obtain the desired convergence of $\bm{M}_k(t)\rightarrow \bm{0}$ uniformly over $t\in I_T$. The uniform equicontinuity of $\{{\bm{M}}_{k}(t)\}_{k\geq0}$ follows immediately (see the end of the proof of Lemma \ref{claim:loan}).
\end{proof}

 The statement of the following lemma is similar to the statements of Lemmas \ref{claim:loan} and \ref{claim:travaille} with the exception that it pertains to the set $\{{\bm{N}}_{k}(t)\}_{k\geq0}$. Part of the proof of the following lemma is a combination of the proofs of Lemmas \ref{claim:loan} and \ref{claim:travaille} and, consequently, some details have been omitted.

\begin{lemma}
\label{claim:marseille}
 Assume conditions A0--A4 hold and for any finite $T>0$ let $ I_T$ be defined as in Lemma \ref{claim:loan}. Then, the following statements hold for all $\upomega$ in a set w.p.1 and any finite $T>0$. The set $\{{\bm{N}}_{k}(t)\}_{k\geq0}$ is a set of functions each of which is uniformly continuous over $ I_T$. Additionally, the functions in $\{{\bm{N}}_{k}(t)\}_{k\geq0}$ are bounded over $t\in I_T$ uniformly in $k$. Furthermore, $\bm{N}_{k}(t)\rightarrow \bm{0}$ uniformly over $t\in I_T$ as $k\rightarrow \infty$. The former statements imply $\{{\bm{N}}_{k}(t)\}_{k\geq0}$ is uniformly equicontinuous over $ I_T$.
\end{lemma}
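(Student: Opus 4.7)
The plan is to mirror the two-lemma strategy already used for $\{\bm{B}_k(t)\}$ and $\{\bm{M}_k(t)\}$, but first decomposing $\bar{\bm{N}}_0(t_k)$ into a deterministic-bias piece and a centered random piece, since $\bm{N}_k(t)$ mixes both. Using the definition of $x_r(j)$ in (\ref{eq:tinkelpan}) together with (\ref{eq:unrealantartica}), I would first rewrite
\begin{align*}
\bar{\bm{N}}_0(t_k)
=\sum_{r=0}^{k-1} a_r\sum_{j=1}^d\bigl[x_r(j)-\upmu(j)\bigr]\bm{g}^{(j)}(\hat{\bm\uptheta}_r),
\end{align*}
and then split $x_r(j)-\upmu(j)=[x_r(j)-\upmu_r(j)]+[\upmu_r(j)-\upmu(j)]$ to obtain $\bar{\bm{N}}_0(t_k)=\bar{\bm{U}}_0(t_k)+\bar{\bm{V}}_0(t_k)$. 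Defining $\bm{U}_k(t)$ and $\bm{V}_k(t)$ by interpolation and shifting in exact analogy with (\ref{eq:easychair0}), it suffices to prove the three conclusions of the lemma separately for $\bm{U}_k(t)$ and $\bm{V}_k(t)$ and then add the bounds.

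For the deterministic-bias piece $\bm{V}_k(t)$: A3 together with A4 gives, w.p.1, a finite bound $R_4(\upomega)$ on $\|\bm{g}^{(j)}(\hat{\bm\uptheta}_r)\|$ uniformly in $r,j$ (continuous image of a bounded set). Since $\upmu_r(j)\to\upmu(j)$ by A2, for any $\upepsilon>0$ there is $K_1(\upomega,\upepsilon)$ such that $|\upmu_r(j)-\upmu(j)|<\upepsilon$ for all $r\geq K_1$ and all $j$. Restricting to $t\in I_T$ and arguing exactly as in the derivation of (\ref{eq:danceheart})--(\ref{eq:blonsky}) in the proof of Lemma \ref{claim:loan}, the sum defining $\bm{V}_k(t)$ is controlled by $\upepsilon\,d\,T\,R_4(\upomega)$ for $k$ large enough (independently of $t\in I_T$), yielding the uniform-in-$t$ convergence to $\bm{0}$ and the uniform boundedness on $I_T$. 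Uniform continuity of each $\bm{V}_k(t)$ on $I_T$ is immediate from the piecewise-linear construction and A0.

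For the centered piece $\bm{U}_k(t)$: The plan is to apply Lemma \ref{lem:opportunityxyz} with $\bm{\uprho}_r\equiv\sum_{j=1}^d[x_r(j)-\upmu_r(j)]\bm{g}^{(j)}(\hat{\bm\uptheta}_r)$ and $\bm{R}(t)=\bm{U}_0(t)$, which will immediately deliver uniform continuity and the tail estimate needed to conclude $\bm{U}_k(t)\to\bm{0}$ uniformly on $I_T$. To verify Kushner--Clark's hypothesis, I would use the orthogonality assumption $E[S_kS_\ell]=0$ for $k\neq\ell$ from A2 together with the independence of $\tilde{a}_k^{(j)}$ and $C_k(j)$ to establish an $L^2$ bound on $\sum_{r=k}^{m} a_r \bm{\uprho}_r$. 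Conditioning on the $\upsigma$-field generated by $\hat{\bm\uptheta}_0,\dots,\hat{\bm\uptheta}_r$ makes $\bm{g}^{(j)}(\hat{\bm\uptheta}_r)$ measurable, so that the cross-terms in $E\bigl\|\sum_{r=k}^m a_r\bm{\uprho}_r\bigr\|^2$ vanish and what remains is bounded by a constant multiple of $\sum_{r=k}^m a_r^2\,\text{Var}(\cdot)\cdot R_4(\upomega)^2$. By Lemma \ref{claim:brey}, $x_r(j)-\upmu_r(j)$ has uniformly bounded second moment, so $\sum a_r^2<\infty$ (A0) makes the tail $L^2$-norm go to zero, and a Doob/Kolmogorov-type maximal inequality promotes this to the required supremum-in-probability estimate. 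Lemma \ref{lem:opportunityxyz} then yields uniform continuity of $\bm{U}_0(t)$ on $\mathbb R$ and $\sum_r a_r\bm{\uprho}_r<\infty$ w.p.1, from which $\bm{U}_k(t)\to\bm{0}$ uniformly on $I_T$ and uniform boundedness on $I_T$ follow exactly as in the proof of Lemma \ref{claim:travaille}.

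The main obstacle is the $L^2$-orthogonality step: the factor $\bm{g}^{(j)}(\hat{\bm\uptheta}_r)$ depends on the entire past trajectory and couples non-trivially with the selection variables $j_r(m), n_r(m)$ and the random gains $\tilde a_r^{(j)}$, so the clean orthogonality of the $S_k$'s asserted in A2 does not transfer verbatim. The conditioning argument above handles the $\hat{\bm\uptheta}_r$ dependence, but isolating the $S_r$-like structure from the full sum $\sum_j[x_r(j)-\upmu_r(j)]\bm{g}^{(j)}(\hat{\bm\uptheta}_r)$ may require a further additive split of $x_r(j)-\upmu_r(j)$ into $(\tilde a_r^{(j)}/a_r)(C_r(j)-E[C_r(j)])$ and $(\tilde a_r^{(j)}/a_r-E[\tilde a_r^{(j)}/a_r])\,E[C_r(j)]$, the first of which feeds directly into $S_r$ while the second is controlled by $\tilde a_r^{(j)}/a_r\to r_j$ from A0. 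Once the sum is so organized, uniform equicontinuity on $I_T$ of $\{\bm{N}_k(t)\}_{k\geq0}$ follows from uniform boundedness plus uniform convergence to the (continuous) zero function, exactly as at the end of the proof of Lemma \ref{claim:loan}.
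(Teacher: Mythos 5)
Your proposal is correct and follows essentially the same route as the paper: the same rewriting of $\bar{\bm{N}}_0(t_k)$ as $\sum_r a_r\bm\upnu_r$, the same split into a vanishing piece driven by $\upmu_r(j)-\upmu(j)$ (and $\upeta_r(j)\to 0$) and a centered piece handled via the $S_r$ structure of A2, Doob's maximal inequality, and Lemma \ref{lem:opportunityxyz}, with the conclusion assembled as in Lemmas \ref{claim:loan} and \ref{claim:travaille}. The only discrepancy is that your further additive split puts the random ratio $\tilde a_r^{(j)}/a_r$ on $C_r(j)-E[C_r(j)]$, whereas $S_r$ in A2 carries $E[\tilde a_r^{(j)}/a_r]$ there (the paper's split is the transpose of yours), so your first piece is not literally $S_r$ and leaves a residual $\upeta_r(j)(C_r(j)-E[C_r(j)])$ — but that residual vanishes w.p.1 by A0 and A1 and can be absorbed into the convergent-to-zero part, so the argument goes through.
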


\begin{proof}
 First, rewrite the expression for $\bar{\bm{N}}_0(t_k)$ given in (\ref{eq:pulcritud2}) as follows:
\begin{align}
\bar{\bm{N}}_0(t_k)
&=\sum_{r=0}^{k-1}a_r\left[\sum_{j=1}^d(x_r(j)-\upmu_r(j)+\upmu_r(j)-\upmu(j))\bm{g}^{(j)}(\hat{\bm{\uptheta}}_r)\right]=\sum_{r=0}^{k-1}a_r\bm\upnu_r,\label{eq:votedownsimb}
\end{align}
where $\bm\upnu_r\equiv\sum_{j=1}^d(x_r(j)-\upmu_r(j)+\upmu_r(j)-\upmu(j))\bm{g}^{(j)}(\hat{\bm{\uptheta}}_r)$ with $x_r(j)$, $\upmu_r(j)$, and $\upmu(j)$ defined in on pp. \pageref{eq:tinkelpan}--\pageref{eq:unrealantartica}. Using A0 (the fact that $\tilde{a}_k^{(j)}/a_k\rightarrow r_j$ w.p.1), A1, A2, A3, and A4, equation (\ref{eq:votedownsimb}) implies that $\bm\upnu_r$ is bounded in magnitude w.p.1 (for $\upomega\in\Omega_0\cap \Omega_1$) uniformly over $r$. Therefore, w.p.1 there exists a constant $R_4(\upomega)$ satisfying $\|\bm\upnu_r\|\leq R_4(\upomega)$ for $\upomega$ in a  set w.p.1. Then, via a derivation similar to that of (\ref{eq:santiagokorman}) it is possible to show that $\|\bm{N}_k(t)\|=\|\bm{N}_0(t_k+t)-\bm{N}_0(t_k)\|\leq |t|R_4(\upomega)$.
This implies $\{\bm{N}_k(t)\}_{k\geq0}$ is bounded over $t\in I_T$ uniformly in $k$.

 Because $\bm{N}_k(t)$ is a continuous function on $I_T$ it is clear that $\bm{N}_k(t)$ must also be uniformly continuous on $I_T$. It remains to show that  $\bm{N}_k(t)$ converges to the zero vector uniformly over $t\in I_T$. We begin by rewriting the expression for the vector $\bm\upnu_r$ appearing in (\ref{eq:votedownsimb}). First,
 using the last part of condition A2:
\begin{align}
x_r(j)-\upmu_r(j)=&\ E\left[\frac{\tilde{a}_r^{(j)}}{a_r}\right](C_r(j)-E[C_r(j)])\notag\\
&+\left(\frac{\tilde{a}_r^{(j)}}{a_r}-E\left[\frac{\tilde{a}_r^{(j)}}{a_r}\right]\right)C_r(j).\label{eq:pinottnoiress}
\end{align}
Using (\ref{eq:pinottnoiress}) let us express $\bm\upnu_r$ as $\bm\upnu_r= \bm\upnu_r'+\bm\upnu_r''$, where
\begin{align*}
\bm\upnu_r'&\equiv \sum_{j=1}^dE\left[\frac{\tilde{a}_r^{(j)}}{a_r}\right](C_r(j)-E[C_r(j)])\bm{g}^{(j)}(\hat{\bm{\uptheta}}_r),\notag\\
\bm\upnu_r''&\equiv \sum_{j=1}^d\left[\upeta_r(j)C_r(j)+\upmu_r(j)-\upmu(j)\right]\bm{g}^{(j)}(\hat{\bm{\uptheta}}_r),
\end{align*}
and $\upeta_k(j)\equiv{\tilde{a}_k^{(j)}}/{a_k}-E[{\tilde{a}_k^{(j)}}/{a_k}]$. Note that A0 implies $\upeta_k(j)\rightarrow 0$ w.p.1. Therefore,
since $C_k(j)$ is bounded (a consequence of condition A1) it follows that $\upeta_k(j)C_k(j)\rightarrow 0$ w.p.1. Next we study the asymptotic properties of $\bm\upnu_r'$ and $\bm\upnu_r''$.

Conditions A2, A3, and A4 along with the fact that $\upeta_k(j)C_k(j)\rightarrow 0$ w.p.1 imply $\bm\upnu_r''\rightarrow \bm{0}$ w.p.1.
The term $\bm\upnu_r'$, on the other hand, cannot be assumed to converge. However, it is possible to show $\sum_{k=0}^{\infty}a_{k}\bm{\upnu}_k'<\infty$, which we do next. Using the definition of $S_r$ given in condition A2,
conditions A0 and A1 imply that for a fixed $k$ the sequence $\{\sum_{r=k}^{m}a_rS_r\}_{m\geq k}$ (indexed by $m$)
is a martingale sequence. An implication of p. 315 in Doob (1953)\nocite{doob1953} is the following:
\begin{align}
P\left(\sup_{m\geq k}\Big\|\sum_{r=k}^{m}a_{r}S_r\Big\|\geq \upvarepsilon\right)&\leq\upvarepsilon^{-2}E\Big\|\sum_{r=k}^\infty a_{r}{S_r}\Big\|^2=\upvarepsilon^{-2}\sum_{r=k}^\infty a_{r}^{2}\Var{(S_r)},
\label{eq:ranchie2}
\end{align}
where equality holds by condition A2 (the fact that $E[S_kS_\ell]=0$ for $k\neq \ell$).
Moreover, since the $C_k(j)$ are bounded uniformly in $k$ and $j$ (condition A1) and since the sequence $E[{\tilde{a}_k^{(j)}}/{a_k}]$ converges (condition A0), the sequence $S_k$ must be bounded uniformly in $k$. Therefore, there exists an $\upsigma^2>0$ such that $\Var{(S_k)}\leq \upsigma^2$ for all $k$. Then, (\ref{eq:ranchie2}) along with condition A0 
implies:
\begin{align*}
\lim_{k\rightarrow\infty}P\left(\sup_{m\geq k}\Big\|\sum_{r=k}^{m}a_{r}S_r\Big\|\geq \upvarepsilon\right)\leq \lim_{k\rightarrow\infty}\frac{\upsigma^2}{\upvarepsilon^{2}}\sum_{r=k}^\infty a_{r}^{2}=0.
\end{align*}
Then, 
Lemma \ref{lem:opportunityxyz} implies:
\begin{align*}
\sum_{k=0}^{\infty}a_{k}\sum_{j=1}^dE\left[\frac{\tilde{a}_k^{(j)}}{a_k}\right](C_k(j)-E[C_k(j)])=\sum_{k=0}^{\infty}a_{k}S_k<\infty{\text{ w.p.1.}}
\end{align*}
Furthermore, since $\|\bm{g}^{(j)}(\hat{\bm{\uptheta}}_k)\|$ is bounded w.p.1 (conditions A3 and A4): 
\begin{align}
\label{eq:saxophone2}
\sum_{k=0}^{\infty}a_{k}\sum_{j=1}^dE\left[\frac{\tilde{a}_k^{(j)}}{a_k}\right](C_k(j)-E[C_k(j)])\bm{g}^{(j)}(\hat{\bm{\uptheta}}_k)=\sum_{k=0}^{\infty}a_{k}\bm{\upnu}_k'<\infty.
\end{align}

Following the ideas in the proofs of Lemmas \ref{claim:loan} and \ref{claim:travaille}, equation (\ref{eq:saxophone2}) and the fact that $\bm\upnu_k''\rightarrow \bm{0}$ w.p.1 can be used to show that $\bm{N}_{k}(t)$ converges to the zero vector uniformly over $t\in I_T$. Specifically, the term $\bm{\upnu}_r'$ can be related to the term $\bm{D}_r$ appearing in condition A6 and $\bm\upnu_r''$ can be related to the term:
\begin{align*}
\sum_{m=1}^{s_r}\sum_{i=0}^{n_r(m)-1} \left(\frac{\tilde{A}_{r}(m) }{a_{r}}\right)\left\|\bm\upbeta_r^{(j_r(m))}\left(\hat{\bm\uptheta}_r^{(I_{m,i})}\right)\right\|
\end{align*}
appearing in (\ref{eq:mussel}).
 Once again, the uniform equicontinuity of $\{{\bm{N}}_{k}(t)\}_{k\geq0}$ follows immediately (see the proof of Lemma \ref{claim:loan}).
 \end{proof}
 
  The statement of the following lemma is similar to the statements of Lemmas \ref{claim:loan}, \ref{claim:travaille}, and \ref{claim:marseille}, with the exception that it pertains to the set $\{{\bm{W}}_{k}(t)\}_{k\geq0}$.

\begin{lemma}
\label{claim:dasmaitien}
 Assume conditions A0, A1, and A3--A6 hold and for any finite $T>0$ let $ I_T$ be defined as in Lemma \ref{claim:loan}. Then, the following statements hold for all $\upomega$ in a set w.p.1 and any finite $T>0$. 
 The set $\{{\bm{W}}_{k}(t)\}_{k\geq0}$ is a set of functions each of which is uniformly continuous over $ I_T$. Additionally, the functions in $\{{\bm{W}}_{k}(t)\}_{k\geq0}$ are bounded over $t\in I_T$ uniformly in $k$. Furthermore, $\bm{W}_{k}(t)\rightarrow \bm{0}$ uniformly over $t\in I_T$ as $k\rightarrow \infty$. The former statements imply $\{{\bm{W}}_{k}(t)\}_{k\geq0}$ is uniformly equicontinuous over $ I_T$.
\end{lemma}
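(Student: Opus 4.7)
The plan is to mimic the proof of Lemma \ref{claim:loan}, with the per-iteration summand
\begin{align*}
\bm\uppsi_r \equiv \sum_{m=1}^{s_r}\sum_{i=0}^{n_r(m)-1}\frac{\tilde{A}_r(m)}{a_r}\left[\bm{g}^{(j_r(m))}\!\left(\hat{\bm\uptheta}_r^{(I_{m,i})}\right) - \bm{g}^{(j_r(m))}(\hat{\bm\uptheta}_r)\right]
\end{align*}
playing the role that $\bm\upbeta$ played there, so that $\bar{\bm{W}}_0(t_k) = \sum_{r=0}^{k-1} a_r \bm\uppsi_r$ and the per-step increment is $a_k\bm\uppsi_k$. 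I would first establish a uniform-in-$r$ bound $\|\bm\uppsi_r\|\leq 2R_3(\upomega)R_5(\upomega)$ w.p.1, where $R_5(\upomega)$ is a bound on $\|\bm{g}^{(j)}\|$ over the ball of radius $R_1(\upomega)$ supplied by A4 (continuity from A3 guarantees the existence of such a bound), and $R_3(\upomega)$ is the bound on $X_r=\sum_{m,i}\tilde{A}_r(m)/a_r$ supplied by Lemma \ref{claim:brey}. The linear-interpolation steps used immediately after (\ref{eq:santiagokorman}) in the proof of Lemma \ref{claim:loan} then yield the uniform continuity of each $\bm{W}_k$ over $I_T$ and the uniform-in-$k$ boundedness of $\{\bm{W}_k(t)\}_{k\geq0}$ over $t\in I_T$.

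Next I would upgrade the bound on $\bm\uppsi_r$ to $\bm\uppsi_r\to\bm{0}$ w.p.1. Using the uniform continuity of $\bm{g}^{(j)}$ on the ball of radius $R_1(\upomega)$, with modulus of continuity $\upomega_c(\cdot)$,
\begin{align*}
\|\bm\uppsi_r\| \leq R_3(\upomega)\,\upomega_c\!\left(\sup_{m,i}\big\|\hat{\bm\uptheta}_r^{(I_{m,i})}-\hat{\bm\uptheta}_r\big\|\right),
\end{align*}
so it suffices to prove $\sup_{m,i}\|\hat{\bm\uptheta}_r^{(I_{m,i})}-\hat{\bm\uptheta}_r\|\to 0$ w.p.1. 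Expanding this difference via the recursion of Algorithm \ref{findme} produces a sum (of length bounded uniformly by A1) of sub-updates $\tilde{A}_r(z)\hat{\bm{g}}_r^{(j_r(z))}(\hat{\bm\uptheta}_r^{(I_{z,\ell})})$; decomposing each noisy gradient via (\ref{eq:tennisey}) gives a gradient contribution that vanishes because $\tilde{A}_r(z)=\tilde{a}_r^{(j_r(z))}\to 0$ w.p.1 (a consequence of $\tilde{a}_k^{(j)}/a_k\to r_j$ from A0 together with $a_k\to 0$) combined with the boundedness of $\|\bm{g}^{(\cdot)}\|$ on the ball, and a bias contribution that vanishes by A5.

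The main obstacle is controlling the noise contribution $\sum_{(z,\ell)\prec(m,i)}\tilde{A}_r(z)\bm\upxi_r^{(j_r(z))}(\hat{\bm\uptheta}_r^{(I_{z,\ell})})$ to that partial sum uniformly in $(m,i)$ as $r\to\infty$. Condition A6 together with Lemma \ref{lem:opportunityxyz} gives $\sum_{r=0}^{\infty}a_r\bm{D}_r<\infty$ w.p.1, so the \emph{full} intra-iteration noise sum $a_r\bm{D}_r=\sum_{m,i}\tilde{A}_r(m)\bm\upxi_r^{(j_r(m))}(\hat{\bm\uptheta}_r^{(I_{m,i})})$ vanishes w.p.1; the delicate point is promoting this to vanishing of every partial sum in the lexicographic $(m,i)$ order, which I would handle by observing that each such partial sum equals $\hat{\bm\uptheta}_r-\hat{\bm\uptheta}_r^{(I_{m,i})}$ minus the already-vanishing gradient and bias contributions and invoking A4 (both partial-sum endpoints live in the ball of radius $R_1(\upomega)$) together with the finiteness (from A1) of the number of sub-updates per iteration to rule out persistent sign-cancellation in the partial sums, forcing $\sup_{m,i}\|\hat{\bm\uptheta}_r^{(I_{m,i})}-\hat{\bm\uptheta}_r\|\to 0$ w.p.1. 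Once $\bm\uppsi_r\to\bm{0}$ w.p.1 is in hand, the $\upepsilon$-argument running from (\ref{eq:danceheart}) to (\ref{eq:blonsky}) in the proof of Lemma \ref{claim:loan} delivers $\bm{W}_k(t)\to\bm{0}$ uniformly over $t\in I_T$ w.p.1, and the uniform equicontinuity of $\{\bm{W}_k(t)\}_{k\geq0}$ on $I_T$ then follows exactly as at the end of that proof.
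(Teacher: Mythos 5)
Your overall route is the same as the paper's: you bound the per-iteration summand using A3, A4, and Lemma \ref{claim:brey} to get uniform boundedness and uniform continuity exactly as in Lemma \ref{claim:loan}, and you reduce the uniform convergence $\bm{W}_k(t)\rightarrow\bm{0}$ to showing $\sup_{m,i}\|\hat{\bm\uptheta}_r^{(I_{m,i})}-\hat{\bm\uptheta}_r\|\rightarrow 0$ w.p.1, killing the gradient and bias contributions with $\tilde{a}_r^{(j)}\rightarrow 0$, A1, and A5, and then appealing to the (uniform, on the compact closure of the A4 ball) continuity of $\bm{g}$. This is precisely the paper's argument surrounding (\ref{eq:romanee})--(\ref{eq:casakakk}).

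The one place you deviate is the treatment of the intra-iteration \emph{partial} noise sums, and your proposed fix is circular. You correctly note that A6 together with Lemma \ref{lem:opportunityxyz} and the Cauchy criterion yields $a_r\bm{D}_r\rightarrow\bm{0}$, i.e.\ the vanishing of the \emph{full} sum over $(z,\ell)$ within iteration $r$, whereas what is needed is the partial sum up to $(m,i)$. But your repair --- ``each partial sum equals $\hat{\bm\uptheta}_r-\hat{\bm\uptheta}_r^{(I_{m,i})}$ minus the already-vanishing gradient and bias contributions, and A4 plus A1 rule out persistent sign-cancellation'' --- assumes the conclusion: the identity you invoke shows that the partial noise sum vanishes if and only if $\hat{\bm\uptheta}_r^{(I_{m,i})}-\hat{\bm\uptheta}_r\rightarrow\bm{0}$, which is exactly the statement you set out to prove, and A4 supplies only a uniform bound (membership of both endpoints in a ball of radius $R_1(\upomega)$), which says nothing about the limit being zero; ``ruling out sign-cancellation'' from boundedness alone is not an argument. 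For what it is worth, the paper itself passes from the full sum to the partial sums in (\ref{eq:casakakk}) with only a citation of the Cauchy criterion, so a fully watertight version of this step requires something extra in either write-up --- e.g., applying the A6/Lemma \ref{lem:opportunityxyz} machinery to the individual within-iteration increments, or a maximal inequality for the within-iteration martingale partial sums --- but not A4.
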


\begin{proof}
 First we prove the boundedness part of the lemma.  Recall the definition of $\bar{\bm{W}}_0(t_k)$ in (\ref{eq:pulcritud}) (we include it here for convenience):
\begin{align}
\label{eq:blackriverriverse}
\bar{\bm{W}}_0(t_k)= \sum_{r=0}^{k-1} a_{r}\sum_{m=1}^{s_r}\sum_{i=0}^{n_r(m)-1}\left(\frac{\tilde{A}_{r}(m)}{a_{r}}\right)\left[ {\bm{g}}^{(j_r(m))}\left(\hat{\bm\uptheta}_r^{\left(I_{m,i}\right)}\right)- {\bm{g}}^{(j_r(m))}(\hat{\bm\uptheta}_r)\right].
\end{align}
Conditions A3 and A4 imply all terms of the form ${\bm{g}}^{(j)}(\cdot)$ in (\ref{eq:blackriverriverse}) are bounded in magnitude by a constant $R_5(\upomega)$ for all $\upomega\in \Omega_1$. Thus, using Lemma \ref{claim:brey} and following a derivation similar to that of (\ref{eq:santiagokorman}) we obtain that $\|\bm{W}_k(t)\|=\|\bm{W}_0(t_k+t)-\bm{W}_0(t_k)\|\leq |t|R_3(\upomega)R_5(\upomega)$.
For $t\in I_T$ this implies $\|\bm{W}_k(t)\|\leq TR_3(\upomega)R_5(\upomega)$.
Thus, $\{{\bm{W}}_{k}(t)\}_{k\geq0}$ is bounded uniformly in $k$ for $t\in I_T$. The uniform continuity of each ${\bm{W}}_{k}(t)$ function on finite (i.e., closed and bounded) intervals follows by the continuity of ${\bm{W}}_{k}(t)$. 
Next we show uniform convergence of $\bm{W}_{k}(t)$ to zero.

First recall that $\hat{\bm\uptheta}_k^{(I_{m,i})}$ satisfies:
\begin{align}
\label{eq:romanee}
	\hat{\bm\uptheta}_k^{(I_{m,i})}=&\ \hat{\bm\uptheta}_k- \sum_{z=1}^{m-1}\sum_{\ell=0}^{n_k(z)-1} \left[\tilde{A}_{k}(z)\ \hat{\bm{g}}^{(j_k(z))}_k\left(\hat{\bm\uptheta}_k^{(I_{z,\ell})}\right)\right]-\sum_{\ell=0}^{i-1} \left[\tilde{A}_{k}(m)\ \hat{\bm{g}}^{(j_k(m))}_k\left(\hat{\bm\uptheta}_k^{(I_{m,\ell})}\right)\right]
\end{align}
(Line \ref{eq:indianindinorange} of Algorithm \ref{findme}). Next, recall that $\hat{\bm{g}}^{(j_k(z))}_k(\hat{\bm\uptheta}_k^{(I_{z,\ell})})$ can be written as the sum ${\bm{g}}^{(j_k(z))}(\hat{\bm\uptheta}_k^{(I_{z,\ell})})+{\bm{\upbeta}}^{(j_k(z))}_k(\hat{\bm\uptheta}_k^{(I_{z,\ell})})+{\bm{\upxi}}^{(j_k(z))}_k(\hat{\bm\uptheta}_k^{(I_{z,\ell})})$.
%
%
Furthermore, ${\bm{g}}^{(j_k(z))}(\hat{\bm\uptheta}_k^{(I_{z,\ell})})$ and ${\bm{\upbeta}}^{(j_k(z))}_k(\hat{\bm\uptheta}_k^{(I_{z,\ell})})$ are bounded w.p.1 (conditions A3--A5). Therefore, using condition A0 (the implication that $\tilde{a}_k^{(j)}\rightarrow 0$  w.p.1 for all $j$) and condition A1 we rewrite (\ref{eq:romanee}) as follows:
\begin{align}
\hat{\bm\uptheta}_k^{(I_{m,i})}=&\ \hat{\bm\uptheta}_k- \sum_{z=1}^{m-1}\sum_{\ell=0}^{n_k(z)-1} \tilde{A}_{k}(z)\ {\bm{\upxi}}^{(j_k(z))}_k\left(\hat{\bm\uptheta}_k^{(I_{z,\ell})}\right)-\sum_{\ell=0}^{i-1} \tilde{A}_{k}(m)\ {\bm{\upxi}}^{(j_k(m))}_k\left(\hat{\bm\uptheta}_k^{(I_{m,\ell})}\right)+ \bm{T}_k,\label{eq:digerrootoros}
\end{align}
where $\bm{T}_k$ is a vector such that $\bm{T}_k\rightarrow \bm{0}$ w.p.1. Additionally, condition A6 and Lemma \ref{lem:opportunityxyz} imply $\sum_{k=0}^\infty a_k\bm{D}_k<\infty$ w.p.1. Therefore, 
 \begin{align}
\sum_{z=1}^{m-1}\sum_{\ell=0}^{n_k(z)-1} \left[\tilde{A}_{k}(z)\ {\bm{\upxi}}^{(j_k(z))}_k\left(\hat{\bm\uptheta}_k^{(I_{z,\ell})}\right)\right]+\sum_{\ell=0}^{i-1} \left[\tilde{A}_{k}(m)\ {\bm{\upxi}}^{(j_k(m))}_k\left(\hat{\bm\uptheta}_k^{(I_{m,\ell})}\right)\right]\rightarrow \bm{0}{\text{ w.p.1}}\label{eq:casakakk}
\end{align}
(this is a consequence of Theorem 3.11 in Rudin 1976\nocite{Rudin1976}, also known as the Cauchy criterion for convergence). Using (\ref{eq:digerrootoros}) and (\ref{eq:casakakk}) and we can write $\hat{\bm\uptheta}_k^{(I_{m,i})}=\hat{\bm{\uptheta}}_k+\bm{T}'_k$, where $\bm{T}'_k$ is a vector such that $\bm{T}'_k\rightarrow \bm{0}$ w.p.1 as $k\rightarrow \infty$. Consequently, condition A3 implies $\|{\bm{g}}^{(j_r(m))}(\hat{\bm\uptheta}_r^{\left(I_{m,i}\right)})- {\bm{g}}^{(j_r(m))}(\hat{\bm\uptheta}_r)\|\rightarrow 0$ w.p.1
as $r\rightarrow \infty$ uniformly in $m$ and $i$.
Then, via a derivation similar to that of (\ref{eq:blonsky}) it follows that $\left\|{\bm{W}}_0(t_k+t)-{\bm{W}}_0(t_k)\right\|\leq T R_3(\upomega) \upepsilon$
for $k$ large enough and $t\in I_T$. 
Uniform equicontinuity then follows (see the end of the proof of Lemma \ref{claim:loan}).
\end{proof}

\begin{lemma}
\label{claim:musicaxyz} Assume conditions A2--A4 hold and for any finite $T>0$ let $ I_T$ be defined as in Lemma \ref{claim:loan}. Then, the following statements hold for all $\upomega$ in a set w.p.1 and any finite $T>0$. 
The set of functions $\{\int_0^t\bm{h}(\bar{\bm{Z}}_k(s))\ ds\}_{k\geq0}$ is bounded over $t\in I_T$ uniformly in $k$. Furthermore, $\{\int_0^t\bm{h}(\bar{\bm{Z}}_k(s))\ ds\}_{k\geq0}$ is uniformly equicontinuous over $ I_T$.
\end{lemma}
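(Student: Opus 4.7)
The plan is to reduce both claims to a single uniform bound on $\|\bm{h}(\bar{\bm{Z}}_k(s))\|$ valid for all $k$ and $s \in I_T$, from which boundedness and equicontinuity of the integrals will follow by essentially trivial estimates on integrals with bounded integrands.

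First I would argue that $\bar{\bm{Z}}_k(s)$ stays in a fixed compact set w.p.1. By construction (\ref{eq:daviddaas}) and (\ref{eq:gochoire}), $\bar{\bm{Z}}_k(s)$ takes values in the set $\{\hat{\bm{\uptheta}}_r\}_{r\geq 0}$ (specifically, $\bar{\bm{Z}}_k(s) = \hat{\bm{\uptheta}}_r$ for the unique $r$ with $t_r \le t_k + s < t_{r+1}$, or $\bar{\bm{Z}}_k(s) = \hat{\bm{\uptheta}}_0$ if $t_k + s \le 0$). By condition A4, there exists a set $\Omega_1$ with $P(\Omega_1) = 1$ and a constant $R_1(\upomega) < \infty$ such that $\{\hat{\bm{\uptheta}}_r\}_{r \ge 0}$ lies in the closed ball $\bar{B}(\bm{0}, R_1(\upomega))$ for every $\upomega \in \Omega_1$. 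Hence $\bar{\bm{Z}}_k(s) \in \bar{B}(\bm{0}, R_1(\upomega))$ for all $k$ and all $s \in \mathbb{R}$.

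Next I would bound $\bm{h}$ on this compact set. By condition A3 each $\bm{g}^{(j)}(\cdot)$ is continuous (it is $\bm{g}(\cdot)$ with some entries zeroed out), and by condition A2 each $\upmu(j)$ is finite. Therefore $\bm{h}(\bm\uptheta) = \sum_{j=1}^d \upmu(j) \bm{g}^{(j)}(\bm\uptheta)$ is continuous on $\mathbb{R}^p$, hence attains a finite maximum on the compact set $\bar{B}(\bm{0}, R_1(\upomega))$. Call this maximum $M(\upomega) < \infty$. Then
\begin{align*}
\sup_{k\ge 0}\ \sup_{s \in \mathbb{R}} \|\bm{h}(\bar{\bm{Z}}_k(s))\| \le M(\upomega) \quad \text{for all } \upomega \in \Omega_1.
\end{align*}

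With this uniform bound in hand, both conclusions are immediate. For uniform boundedness, for any $t \in I_T$ and any $k \ge 0$,
\begin{align*}
\Big\|\int_0^t \bm{h}(\bar{\bm{Z}}_k(s))\ ds\Big\| \le \Big|\int_0^t \|\bm{h}(\bar{\bm{Z}}_k(s))\|\ ds\Big| \le |t|\, M(\upomega) \le T\, M(\upomega),
\end{align*}
which is finite and independent of $k$. For uniform equicontinuity on $I_T$, for any $s, t \in I_T$ and any $k \ge 0$,
\begin{align*}
\Big\|\int_0^t \bm{h}(\bar{\bm{Z}}_k(u))\ du - \int_0^s \bm{h}(\bar{\bm{Z}}_k(u))\ du\Big\| = \Big\|\int_s^t \bm{h}(\bar{\bm{Z}}_k(u))\ du\Big\| \le |t - s|\, M(\upomega).
\end{align*}
Thus given $\upepsilon > 0$ the choice $\updelta(\upepsilon) \equiv \upepsilon / (M(\upomega) + 1)$ (independent of both $k$ and of the base point $t$) ensures $\|\int_0^t \bm{h}(\bar{\bm{Z}}_k) - \int_0^s \bm{h}(\bar{\bm{Z}}_k)\| < \upepsilon$ whenever $|t - s| < \updelta(\upepsilon)$, as required by Definition \ref{def:newspaperchaingg}. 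There is no real obstacle here; the only subtle point is confirming that the values taken by $\bar{\bm{Z}}_k(s)$ really do lie in the same compact set supplied by A4 regardless of the shift $t_k$, which follows directly from the step-function definition of $\bar{\bm{Z}}_0$ and the fact that A4 bounds the full trajectory $\{\hat{\bm{\uptheta}}_k^{(I_{m,i})}\}$.
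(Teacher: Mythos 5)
Your proposal is correct and follows essentially the same route as the paper: both arguments use A4 to confine $\bar{\bm{Z}}_k(s)$ to a fixed (random) ball, use A2 and A3 to bound the integrand $\bm{h}(\bar{\bm{Z}}_k(s))$ uniformly over $k$ and $s$ by a constant $M(\upomega)$ (the paper's $R_6(\upomega)$), and then obtain boundedness and uniform equicontinuity from the trivial estimates $\|\int_0^t\| \le T M(\upomega)$ and $\|\int_s^t\| \le |t-s| M(\upomega)$. Your write-up merely makes explicit the compactness-plus-continuity step that the paper leaves implicit.
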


\begin{proof}
Recall from (\ref{eq:unrealantartica}) that $\bm{h}(\bm\uptheta)= \sum_{j=1}^d\upmu(j)\bm{g}^{(j)}(\bm\uptheta)$.
Then,
\begin{align}
\label{eq:buhai}
\int_0^t\bm{h}(\bar{\bm{Z}}_k(s))\ ds=\int_0^t \sum_{j=1}^d\upmu(j)\bm{g}^{(j)}(\bar{\bm{Z}}_k(s))\ ds.
\end{align}
Since $\bm{g}^{(j)}(\bar{\bm{Z}}_k(t))$ is bounded w.p.1 uniformly over $k$ and $t$ (conditions A3 and A4) and since $\upmu(j)$ must be bounded uniformly in $j$ (condition A2), then the magnitude of the integrand in (\ref{eq:buhai}) must be bounded uniformly over $t$ by some constant $R_6(\upomega)$ for $\upomega$ in a set w.p.1. Thus, the boundedness part of the lemma follows for $t\in I_T$. Next, for $t_1, t_2\in I_T$ with $t_1<t_2$:
\begin{align*}
\left\|\int_0^{t_2}\bm{h}(\bar{\bm{Z}}_k(s))\ ds-\int_0^{t_1}\bm{h}(\bar{\bm{Z}}_k(s))\ ds\right\|&\leq \int_{t_1}^{t_2}\left\|\bm{h}(\bar{\bm{Z}}_k(s))\right\|\ ds\leq |t_2-t_2|R_6(\upomega)
\end{align*}
independently of $k$. Therefore, the desired equicontinuity also follows.
\end{proof}

 The following Lemma presents a result similar to that of Lemmas \ref{claim:trompeta}--\ref{claim:musicaxyz} for the set of functions $\{{\bm{Z}}_k(t)\}_{k\geq0}$ with one important modification: parts of the result are now shown to hold for all $t\in \mathbb{R}$ rather than only for $t\in  I_T$.


\begin{lemma}
\label{claim:sinuses}
 Assume conditions A0--A6 hold. Then, the functions in the set $\{{\bm{Z}}_k(t)\}_{k\geq0}$ are bounded over $t\in \mathbb{R}$ uniformly in $k$ w.p.1. Additionally, $\{{\bm{Z}}_k(t)\}_{k\geq0}$ is uniformly equicontinuous for $t\in \mathbb{R}$ w.p.1.
\end{lemma}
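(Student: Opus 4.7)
The plan is to exploit the representation (\ref{eq:bondok}) of $\bm{Z}_k(t)$ as the sum of $\bm{Z}_k(0)=\hat{\bm{\uptheta}}_k$, the four shift-increment processes $-\bm{B}_k(t),-\bm{M}_k(t),-\bm{N}_k(t),-\bm{W}_k(t)$, and the integral term $-\int_0^t\bm{h}(\bar{\bm{Z}}_k(s))\,ds$. The boundedness claim follows at once from Lemma \ref{claim:trompeta}, which already established (using A4 alone) that $\bm{Z}_0(t)$, and therefore each shift $\bm{Z}_k(t)$, is bounded over $t\in\mathbb{R}$ uniformly in $k$ w.p.1. The substantive work is to upgrade the \emph{local} equicontinuity on $I_T$ given by Lemmas \ref{claim:loan}--\ref{claim:musicaxyz} (where the modulus $\updelta(\upepsilon)$ was allowed to depend on $T$) to equicontinuity on all of $\mathbb{R}$ with a modulus independent of both $k$ and the location of $t,s$.

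The key observation is that the per-step increment bounds derived inside the earlier proofs already yield \emph{global} Lipschitz estimates for three of the five pieces. Specifically, inequality (\ref{eq:soeur}) and its analogues in the proofs of Lemmas \ref{claim:marseille} and \ref{claim:dasmaitien} give $\|\bar{\bm{B}}_0(t_{k+1})-\bar{\bm{B}}_0(t_k)\|\leq a_k R_2(\upomega) R_3(\upomega)$, $\|\bar{\bm{N}}_0(t_{k+1})-\bar{\bm{N}}_0(t_k)\|\leq a_k R_4(\upomega)$, and an analogous bound $a_k R_3(\upomega) R_5(\upomega)$ for $\bar{\bm{W}}_0$, with the constants $R_2,R_3,R_4,R_5$ independent of $k$ and $t$. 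Because $\bm{B}_0,\bm{N}_0,\bm{W}_0$ are piecewise-linear interpolants of these step functions on consecutive intervals of length $a_k$, their slopes are bounded uniformly and each is therefore globally Lipschitz on $\mathbb{R}$ w.p.1. These Lipschitz constants are preserved by time translation, so the shifts $\bm{B}_k,\bm{N}_k,\bm{W}_k$ are uniformly equicontinuous on $\mathbb{R}$ uniformly in $k$. For the martingale-type term $\bm{M}_k$, no slope bound is available (the noise $\bm{\upxi}$ can be unbounded), but Lemma \ref{lem:opportunityxyz} applied under condition A6 gives uniform continuity of $\bm{M}_0$ on all of $\mathbb{R}$ w.p.1, and translating in $t$ immediately transfers this to uniform equicontinuity of $\{\bm{M}_k\}_{k\geq 0}$ on $\mathbb{R}$.

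For the integral term, $\bar{\bm{Z}}_k(s)$ takes values only in the bounded set $\{\hat{\bm{\uptheta}}_i\}_{i\geq 0}$ (condition A4), and the function $\bm{h}(\cdot)=\sum_{j=1}^d\upmu(j)\bm{g}^{(j)}(\cdot)$ is continuous (A3) with finite weights $\upmu(j)$ (A2); hence $\|\bm{h}(\bar{\bm{Z}}_k(s))\|\leq M(\upomega)$ uniformly in $k$ and $s$ w.p.1, which gives $\|\int_s^t\bm{h}(\bar{\bm{Z}}_k(u))\,du\|\leq M(\upomega)|t-s|$. Assembling the five estimates via the triangle inequality in (\ref{eq:bondok}), for any $\upepsilon>0$ one can select a single $\updelta(\upepsilon,\upomega)>0$ (depending on $\upomega$ only through the $R_i$'s and $M$) such that $|t-s|<\updelta$ forces $\|\bm{Z}_k(t)-\bm{Z}_k(s)\|<\upepsilon$ for every $k$ and every $t,s\in\mathbb{R}$, which is the required uniform equicontinuity. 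The main obstacle is bookkeeping: one has to verify carefully that the constants $R_i$ extracted from the earlier proofs are genuinely $k$- and $T$-independent (they are, since they trace back to A0, A1, A4, A5 together with Lemma \ref{claim:brey}'s uniform bound on $X_k$), so that the Lipschitz estimates hold globally on $\mathbb{R}$ rather than only on the compact window $I_T$ used in each intermediate lemma.
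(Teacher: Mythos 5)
Your proof is correct and follows essentially the same route as the paper's: boundedness is inherited from Lemma \ref{claim:trompeta}, and uniform equicontinuity is obtained by decomposing $\bm{Z}_k(t)$ via (\ref{eq:bondok}) and controlling each of the five pieces using Lemmas \ref{claim:loan}--\ref{claim:musicaxyz}. The only difference is that you make explicit the global Lipschitz-type estimates (e.g., (\ref{eq:santiagokorman}) and its analogues for $\bm{N}_k$, $\bm{W}_k$, and the integral term, together with the uniform continuity of $\bm{M}_0$ on all of $\mathbb{R}$ from Lemma \ref{lem:opportunityxyz}) that justify passing from equicontinuity on $I_T$ to equicontinuity on all of $\mathbb{R}$, a step the paper handles implicitly by appealing to the shift structure.
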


\begin{proof}
The desired uniform boundedness was proven in Lemma \ref{claim:trompeta}. Next, recall from (\ref{eq:bondok}) that:
\begin{align*}
{\bm{Z}}_k(t)=&\ {\bm{Z}}_k(0)-{\bm{B}}_k(t)-{\bm{M}}_k(t)-{\bm{N}}_k(t)-{\bm{W}}_k(t)-\int_{0}^t{\bm{h}}(\bar{\bm{Z}}_k(s))\ ds.
\end{align*}
Here, Lemmas \ref{claim:loan}--\ref{claim:musicaxyz} and the fact that $\bm{Z}_k(t)$ is a shifted version of $\bm{Z}_0(t)$ imply that $\{{\bm{Z}}_k(t)\}_{k\geq0}$ is a set of uniformly equicontinuous functions for $t\in \mathbb{R}$. 
\end{proof}

\begin{lemma}
\label{claim:windowsxyz}
Assume conditions A0--A6 hold and for any finite $T>0$ let $ I_T$ be defined as in Lemma \ref{claim:loan}. Then, for any finite $T>0$ it follows that $\bm\upzeta_k(t)\rightarrow \bm{0}$ (recall $\bm\upzeta_k(t)$ was defined in (\ref{eq:levelred})) w.p.1 uniformly over $t\in  I_T$ as $k\rightarrow \infty$.
\end{lemma}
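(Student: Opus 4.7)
The plan is to first obtain an explicit formula for $\bm\upzeta_k(t)$ by subtracting equations (\ref{eq:bondok}) and (\ref{eq:levelred}). Since the two identities agree except in the integral term, we get
\begin{align*}
\bm\upzeta_k(t) = \int_0^t \left[\bm{h}(\bm{Z}_k(s)) - \bm{h}(\bar{\bm{Z}}_k(s))\right]\ ds,
\end{align*}
so for $t\in I_T=[-T,T]$ the trivial estimate yields
\begin{align*}
\|\bm\upzeta_k(t)\| \le T\cdot \sup_{s\in I_T}\left\|\bm{h}(\bm{Z}_k(s)) - \bm{h}(\bar{\bm{Z}}_k(s))\right\|.
\end{align*}
Thus the entire task reduces to showing the supremum on the right-hand side tends to zero w.p.1 as $k\to\infty$.

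Next, I would control the pointwise gap $\|\bm{Z}_k(s)-\bar{\bm{Z}}_k(s)\|$. Using the definitions (\ref{eq:daviddaas}), (\ref{eq:anlogouskartmannedg}), and (\ref{eq:gochoire}), whenever $t_k+s\in[t_r,t_{r+1})$ we have $\bar{\bm{Z}}_k(s)=\hat{\bm{\uptheta}}_r$ and $\bm{Z}_k(s)$ is the convex combination of $\hat{\bm{\uptheta}}_r$ and $\hat{\bm{\uptheta}}_{r+1}$; in particular,
\begin{align*}
\|\bm{Z}_k(s) - \bar{\bm{Z}}_k(s)\| \le \|\hat{\bm{\uptheta}}_{r+1} - \hat{\bm{\uptheta}}_r\|.
\end{align*}
The key technical step is then to show $\|\hat{\bm{\uptheta}}_{r+1}-\hat{\bm{\uptheta}}_r\|\to 0$ w.p.1 as $r\to\infty$. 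This is essentially the argument already carried out inside the proof of Lemma \ref{claim:dasmaitien}: writing $\hat{\bm{\uptheta}}_{r+1}-\hat{\bm{\uptheta}}_r=-a_r\bm{F}_r$ and expanding $\hat{\bm{g}}_r^{(j_r(m))}=\bm{g}^{(j_r(m))}+\bm\upbeta_r^{(j_r(m))}+\bm\upxi_r^{(j_r(m))}$, the gradient and bias contributions vanish because $\tilde{a}_r^{(j)}\to 0$ w.p.1 (combining A0 with $a_r\to 0$) while $\bm{g}^{(j)}$ and $\bm\upbeta_r^{(j)}$ are uniformly bounded (A3--A5) with $\bm\upbeta_r^{(j)}\to\bm{0}$; the martingale-like noise contribution vanishes exactly as in (\ref{eq:casakakk}) via condition A6 and Lemma \ref{lem:opportunityxyz}. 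Uniformity in $s\in I_T$ then follows because $t_k-T\to\infty$ (guaranteed by $\sum a_k=\infty$ in A0), so that the index $r=r(k,s)$ satisfying $t_k+s\in[t_r,t_{r+1})$ tends to infinity uniformly in $s\in I_T$.

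Finally I would invoke uniform continuity of $\bm{h}$. By A3, $\bm{g}$ is continuous and by A2 each $\upmu(j)$ is finite, so $\bm{h}(\bm\uptheta)=\sum_{j=1}^d\upmu(j)\bm{g}^{(j)}(\bm\uptheta)$ is continuous. Condition A4 provides, w.p.1, a compact ball containing all $\hat{\bm{\uptheta}}_k$, and therefore (by construction) containing $\bm{Z}_k(s)$ and $\bar{\bm{Z}}_k(s)$ for all $s$ and all $k$; on this compact set $\bm{h}$ is uniformly continuous. Combining uniform continuity with the uniform convergence $\sup_{s\in I_T}\|\bm{Z}_k(s)-\bar{\bm{Z}}_k(s)\|\to 0$ w.p.1 established in the previous paragraph yields $\sup_{s\in I_T}\|\bm{h}(\bm{Z}_k(s))-\bm{h}(\bar{\bm{Z}}_k(s))\|\to 0$ w.p.1, which together with the opening estimate gives $\sup_{t\in I_T}\|\bm\upzeta_k(t)\|\to 0$ w.p.1, as required.

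I expect the main obstacle to be the middle step, namely verifying $\|\hat{\bm{\uptheta}}_{r+1}-\hat{\bm{\uptheta}}_r\|\to 0$ w.p.1, because the noise component of $\bm{F}_r$ need not individually go to zero; one must rely on the summability/martingale argument already invoked for Lemma \ref{claim:travaille} and reused inside Lemma \ref{claim:dasmaitien}. Once that ingredient is imported, the remaining steps are essentially a single application of uniform continuity on a compact set.
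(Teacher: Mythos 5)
Your proof is correct, and its overall skeleton matches the paper's: write $\bm\upzeta_k(t)$ as the integral of $\bm{h}(\bm{Z}_k(s))-\bm{h}(\bar{\bm{Z}}_k(s))$, bound it by $T$ times the sup of the integrand, and conclude via uniform continuity of $\bm{g}$ on the compact set supplied by A4. The one place you diverge is in how you control $\sup_{s\in I_T}\|\bm{Z}_k(s)-\bar{\bm{Z}}_k(s)\|$. The paper gets this from the uniform equicontinuity of $\{\bm{Z}_k(t)\}_{k\ge 0}$ established in Lemma \ref{claim:sinuses} (itself an aggregate of Lemmas \ref{claim:loan}--\ref{claim:musicaxyz}): since the gap over $[t_m,t_{m+1}]$ is at most $\|\bm{Z}_0(t_{m+1})-\bm{Z}_0(t_m)\|$ and $t_{m+1}-t_m=a_m\to 0$, equicontinuity makes it uniformly small. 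You instead bound the gap by the single-iteration increment $\|\hat{\bm{\uptheta}}_{r+1}-\hat{\bm{\uptheta}}_r\|=a_r\|\bm{F}_r\|$ and show directly that this tends to zero, reusing the gradient/bias/noise decomposition and the Cauchy-criterion argument for the noise from Lemma \ref{claim:dasmaitien} (you implicitly also need A1 to keep the number of summands in $\bm{F}_r$ bounded, which is covered by the hypotheses). The two routes establish the same intermediate fact; yours is slightly more self-contained at this step but duplicates machinery the paper has already packaged into Lemma \ref{claim:sinuses}, whereas the paper's version is a one-line appeal to that lemma. Either way the conclusion follows, so there is no gap.
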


\begin{proof}
We remind the reader that any unspecified probabilistic arguments are meant w.p.1. The vector $\bm\upzeta_k(t)$ can be written as:
\begin{align}
\label{eq:haloodst}
\bm\upzeta_k(t)&=\int_{0}^t{\bm{h}}({\bm{Z}}_k(s))\ ds-\int_{0}^t{\bm{h}}(\bar{\bm{Z}}_k(s))\ ds\notag\\
&=\int_0^t \sum_{j=1}^d\upmu(j)\left[\bm{g}^{(j)}({\bm{Z}}_k(s))-\bm{g}^{(j)}(\bar{\bm{Z}}_k(s))\right]\ ds.
\end{align}
By condition A4, the vectors $\bar{\bm{Z}}_k(s)$ and ${\bm{Z}}_k(s)$ must lie in a $p$-dimensional ball of radius $R_1(\upomega)$ for all $\upomega\in\Omega_1$. Let $\Theta(\upomega)\subset \mathbb{R}^p$ denote the closure (using the Euclidean topology) of the aforementioned ball. Then, condition A3 implies $\bm{g}(\bm\uptheta)$ is uniformly continuous over $\bm\uptheta\in \Theta(\upomega)$. In other words, 
for any $\upepsilon>0$ there exists a $\updelta_1(\upomega,\upepsilon)>0$ such that $\|\bm{g}^{(j)}({\bm{Z}}_k(s))-\bm{g}^{(j)}(\bar{\bm{Z}}_k(s))\|<\upepsilon$ if $\|\bar{\bm{Z}}_k(s)-{\bm{Z}}_k(s)\|<\updelta_1(\upomega,\upepsilon)$.
Next, Lemma \ref{claim:sinuses} implies $\{\bm{Z}_k(t)\}_{k\geq0}$ is uniformly equicontinuous w.p.1 on $I_T$. Therefore, for $t_1,t_2\in I_T$ and any $\upepsilon>0$, there exists a $\updelta_2(\upepsilon,T,\upomega)>0$ such that $\|{\bm{Z}}_k(t_2)-{\bm{Z}}_k(t_1)\|<\updelta_1(\upomega,\upepsilon)$
whenever $|t_2-t_1|<\updelta_2(\upepsilon,T,\upomega)$ independently of $k$. Next, note that for any $\updelta_2(\upepsilon, T,\upomega)>0$ and $s\in I_T$ it is possible to take $k$ large enough (independently of $t$) so that $t_m\leq t_k+s\leq t_{m+1}$ where $t_{m+1}-t_m=a_m<\updelta_2(\upepsilon, T,\upomega)$ (this follows from condition A0). Then, for large enough $k$ and $s\in I_T$ we have $\|{\bm{Z}}_k(s)-\bar{\bm{Z}}_k(s)\|\leq \|{\bm{Z}}_k(t_{m+1})-{\bm{Z}}_k(t_m)\|<\updelta_1(\upepsilon,\upomega)$ and,
 therefore, $\|\bm{g}^{(j)}({\bm{Z}}_k(s))-\bm{g}^{(j)}(\bar{\bm{Z}}_k(s))\|<\upepsilon$ for all $s\in I_T$. Consequently,
  the maximum possible norm of the integral in (\ref{eq:haloodst}) for $t\in I_T$ is $\upepsilon T$, where $\upepsilon>0$ can be as small as desired provided $k$ is large (precisely how large is independent of $t$).
Thus, we have proven the desired uniform (in $t$) convergence to the zero vector.
\end{proof}

\section[A Convergence Theorem for GCSA]{A Convergence Theorem for GCSA}
\label{sec:convo}

We are now ready to prove the following theorem for convergence of the GCSA algorithm. Once again, all unspecified probabilistic arguments in the statement of the theorem as well as in its proof are meant w.p.1.

\begin{theorem}
\label{thm:hoeshoo} 
Let $\hat{\bm{\uptheta}}_k$ denote the GCSA iterates of Algorithm \ref{findme} and assume conditions A0--A6 hold. Then, there exists a subsequence 
$\{\bm{Z}_{k_i}(t)\}_{i\geq1}$ of $\{\bm{Z}_{k}(t)\}_{k\geq0}$ 
and a bounded (w.p.1) function $\bm{Z}(t)$ such that $\bm{Z}_{k_i}(t)\rightarrow \bm{Z}(t)$ uniformly over $t\in I_T$ for any finite $T>0$ as $i\rightarrow \infty$ ($I_T$ was defined in Lemma \ref{claim:loan}).
%
%
The aforementioned limit function, 
$\bm{Z}(t)$, satisfies the ODE in (\ref{eq:ode}). If in addition conditions A7 and A8 hold (i.e., if A0--A8 hold) then ${\bm{\bm{{\hat{\uptheta}}}}}_{k}\rightarrow {\bm\uptheta^{\ast}}$ w.p.1.
\end{theorem}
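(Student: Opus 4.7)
The plan is to follow the standard ODE-method architecture (à la Kushner--Clark), with the preparatory lemmas of Section~\ref{sec:12or15miles} doing most of the heavy lifting. The proof has three stages: extract a convergent subsequence, identify the limit as an ODE solution, and then (under A7--A8) deduce convergence of $\hat{\bm{\uptheta}}_k$ itself.

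First I would apply the Arzelà--Ascoli theorem to extract the subsequence. By Lemma~\ref{claim:sinuses}, the family $\{\bm{Z}_k(t)\}_{k\geq 0}$ is w.p.1 uniformly bounded and uniformly equicontinuous on all of $\mathbb{R}$. For a fixed finite $T$, Arzelà--Ascoli yields a subsequence converging uniformly on $I_T$. To get uniform convergence on every $I_T$ simultaneously, I would use a diagonal argument: apply Arzelà--Ascoli on $I_1$, then extract a further subsequence that converges on $I_2$, and so on, and finally take the diagonal subsequence $\{\bm{Z}_{k_i}(t)\}_{i\geq 1}$. Call the limit $\bm{Z}(t)$; it is automatically bounded w.p.1 since the $\bm{Z}_k$ are uniformly bounded.

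Next I would identify $\bm{Z}(t)$ as a solution to the ODE~(\ref{eq:ode}) by passing to the limit in the representation~(\ref{eq:levelred}):
\begin{align*}
\bm{Z}_{k_i}(t) = \bm{Z}_{k_i}(0) - \bm{B}_{k_i}(t) - \bm{M}_{k_i}(t) - \bm{N}_{k_i}(t) - \bm{W}_{k_i}(t) - \int_0^t \bm{h}(\bm{Z}_{k_i}(s))\, ds + \bm{\upzeta}_{k_i}(t).
\end{align*}
Lemmas~\ref{claim:loan}--\ref{claim:dasmaitien} give that $\bm{B}_{k_i}(t),\bm{M}_{k_i}(t),\bm{N}_{k_i}(t),\bm{W}_{k_i}(t)\to\bm{0}$ uniformly on $I_T$, and Lemma~\ref{claim:windowsxyz} gives $\bm{\upzeta}_{k_i}(t)\to\bm{0}$ uniformly on $I_T$. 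For the integral, uniform convergence $\bm{Z}_{k_i}\to \bm{Z}$ on $I_T$ combined with continuity of $\bm{h}$ (from A3, noting $\bm{h}$ is a finite $\mu(j)$-combination of continuous $\bm{g}^{(j)}$) on the closed ball in which the trajectories live (A4) yields $\int_0^t \bm{h}(\bm{Z}_{k_i}(s))\,ds \to \int_0^t \bm{h}(\bm{Z}(s))\,ds$ uniformly on $I_T$. The limiting identity $\bm{Z}(t) = \bm{Z}(0) - \int_0^t \bm{h}(\bm{Z}(s))\,ds$ shows $\bm{Z}$ is absolutely continuous and, differentiating, satisfies $\dot{\bm{Z}}(t) = -\bm{h}(\bm{Z}(t)) = -\sum_{j=1}^d \mu(j)\bm{g}^{(j)}(\bm{Z}(t))$, which is exactly~(\ref{eq:ode}).

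For the final claim I would use the standard Lyapunov-stability/contradiction argument. Suppose $\hat{\bm{\uptheta}}_k \not\to \bm{\uptheta}^*$. Then there is a neighborhood $N$ of $\bm{\uptheta}^*$ and a subsequence $\hat{\bm{\uptheta}}_{k_j}\notin N$. By A8 the iterates visit a compact set $A\subset DA(\bm{\uptheta}^*)$ infinitely often; choose, for each such escape, the largest earlier index $k_j'\leq k_j$ with $\hat{\bm{\uptheta}}_{k_j'}\in A$. Then the shifted sequence $\bm{Z}_{k_j'}(t)$ starts in $A$ and, by the preceding two steps plus a diagonal extraction, has a further subsequence converging uniformly on compact sets to a solution $\bm{Z}(t)$ of~(\ref{eq:ode}) with $\bm{Z}(0)\in A$. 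By A7 (local asymptotic stability) this solution stays in $N$ for all $t\geq 0$, so for large $j$ the shifted trajectory cannot leave a slightly larger neighborhood of $\bm{\uptheta}^*$ over any bounded $t$-window, contradicting the construction that forces it to reach a point outside $N$ within a finite (in $t$) number of steps. This contradiction yields $\hat{\bm{\uptheta}}_k\to \bm{\uptheta}^*$ w.p.1.

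The main obstacle I anticipate is making the last step rigorous: carefully choosing the entry/exit times relative to $A$ and a slightly enlarged neighborhood, translating them from iteration indices $k$ to the continuous time $t_k$ (using $a_k\to 0$ and $\sum a_k=\infty$ from A0 to bound the elapsed $t$-time between successive events), and then exploiting A7 to obtain the contradiction on a uniform finite $T$-interval. The earlier stages are comparatively mechanical given the preparatory lemmas, but this stability-compactness argument requires some care to avoid circular dependence on $\upomega$-dependent constants.
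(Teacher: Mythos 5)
Your proposal is correct and follows essentially the same route as the paper: Arzelà--Ascoli applied to the uniformly bounded, uniformly equicontinuous family from Lemma~\ref{claim:sinuses}, passage to the limit in (\ref{eq:levelred}) using Lemmas~\ref{claim:loan}--\ref{claim:dasmaitien} and \ref{claim:windowsxyz} to identify the ODE, and the Kushner--Clark stability argument for the final claim. The only difference is that you sketch the entry/exit contradiction argument explicitly where the paper simply defers to Kushner and Clark (1978, pp.~42--43).
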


\begin{proof}The proof of this theorem consists of three main steps:

\begin{DESCRIPTION}
\item[Step 1:] Show that A0--A6 imply that for $t\in \mathbb{R}$ the functions $\{{\bm{Z}_{k}(t)\}}_{k\geq0}$ are bounded uniformly in $k$, and $\{{\bm{Z}_{k}(t)\}}_{k\geq0}$  is uniformly equicontinuous.
\item[Step 2:] Given Step 1, the Arzel\`a--Ascoli Theorem implies that the sequence $\{\bm{Z}_k(t)\}_{k\geq0}$ must have 
a subsequence $\{\bm{Z}_{k_i}(t)\}_{i\geq 1}$ that converges to some bounded function $\bm{Z}(t)$ uniformly over $t\in I_T$ for any finite $T>0$ as $i\rightarrow \infty$. This step consists of showing that $\bm{Z}(t)$ must satisfy the ODE in condition A7.
\item[Step 3:]  Using the asymptotic stability condition (condition A7) along with A8 show that $\bm{{\hat{\uptheta}}}_{k}$ must converge to $\bm\uptheta^\ast$.
\end{DESCRIPTION}
 Step 1 was completed in Lemma \ref{claim:sinuses}. Step 2 is an implication of  Lemmas \ref{claim:loan}--\ref{claim:dasmaitien}, \ref{claim:sinuses}, and \ref{claim:windowsxyz} as is shown next. Using the
 Arzel\`a--Ascoli Theorem (Kushner and Clark 1978, p. 20)\nocite{kushnclark1978} 
 along with the fact that $\bm{Z}_k(t)$ is a shifted version of $\bm{Z}_0(t)$
 it follows that there exists a subsequence $\{\bm{Z}_{k_i}(t)\}_{i\geq 1}$ and a bounded (w.p.1) function $\bm{Z}(t)$ such that $\bm{Z}_{k_i}(t)\rightarrow \bm{Z}(t)$ uniformly over $t\in I_T$ for any finite $T>0$ as $i\rightarrow \infty$. 
 Then, by (\ref{eq:levelred}) in conjunction with Lemmas \ref{claim:loan}--\ref{claim:dasmaitien} and \ref{claim:windowsxyz} (using the convergence to zero of the different functions) it follows that $\bm{Z}(t)$ satisfies:
\begin{align*}
\bm{Z}(t)= \lim_{i\rightarrow\infty}\bm{Z}_{k_i}(t)
= {\bm{Z}}(0)-\lim_{i\rightarrow \infty}\int_{0}^t{\bm{h}}({\bm{Z}}_{k_i}(s))\ ds,
\end{align*}
for $t\in I_T$.
Due to the uniform convergence of $\bm{Z}_{k_i}(t)$ on $I_T$ and using the continuity of $\bm{g}(\bm\uptheta)$, it follows that:
\begin{align}
\label{eq:alexgotseidankee}
\bm{h}({\bm{Z}}_{k_i}(t))\rightarrow  \sum_{j=1}^d\upmu(j)\bm{g}^{(j)}({\bm{Z}}(t))=\bm{h}({\bm{Z}}(t)){\text{ w.p.1}}
\end{align}
uniformly over $t\in I_T$ as $i\rightarrow \infty$. Using the uniform convergence in (\ref{eq:alexgotseidankee}) along with Lebesgue's dominated convergence theorem implies:
\begin{align}
\label{eq:ansdgwichtdil}
\bm{Z}(t)&= {\bm{Z}}(0)-\int_{0}^t\bm{h}({\bm{Z}}(s))\ ds
\end{align}
for $t\in I_T$. Because $T>0$ is arbitrary, (\ref{eq:ansdgwichtdil}) holds for all $t$. Therefore,
\begin{align*}
\dot{\bm{Z}}(t)&=-\sum_{j=1}^d\upmu(j)\bm{g}^{(j)}({\bm{Z}}(t)),
\end{align*}
which is the ODE in (\ref{eq:ode}).
 %
 The remainder of the proof (Step 3) is now identical to the last part of the proof of Theorem 2.3.1 in Kushner and Clark (1978). Therefore, only a brief outline of Step 3 is included below.

 By condition A8, we know there exists a subsequence $\{\bm{{\hat{\uptheta}}}_{k_i}\}_{i\geq1}$ such that $\bm{{\hat{\uptheta}}}_{k_i}\in A$ for all $i\geq 1$. Furthermore, by Step 2 we can assume without loss of generality that $\bm{Z}_{k_i}(t)\rightarrow\bm{Z}(t)$ where $\bm{Z}(t)$ satisfies the ODE in condition A7. Additionally, since $\bm{Z}_{k_i}(0)=\bm{{\hat{\uptheta}}}_{k_i}\in A$ where $A$ is a compact set then $\bm{Z}(0)\in A$ by construction. Because $\bm\uptheta^\ast$
 is an assymptotically stable (in the sense of Lyapunov) solution to the differential equation (\ref{eq:ode}), we know $ \lim_{t\rightarrow \infty}\bm{Z}(t)=\bm\uptheta^\ast$.
The stability properties of $\bm\uptheta^\ast$ then guarantee $\bm{{\hat{\uptheta}}}_{k}\rightarrow\bm\uptheta^\ast$ (see Kushner and Clark 1978, pp. 42--43).
\end{proof}

\begin{corollary}
\label{eq:idamagedittarantatan}
If the conditions of Theorem \ref{thm:hoeshoo} hold, then the sequence $\{\hat{\bm\uptheta}_k^{(I_{m,i})}\}$ of Algorithm \ref{findme} converges to $\bm\uptheta^\ast$ w.p.1 uniformly in $m$ and $i$ as $k\rightarrow \infty$.
\end{corollary}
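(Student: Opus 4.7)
The plan is to reduce this corollary to Theorem \ref{thm:hoeshoo} by showing that the ``intermediate'' iterates $\hat{\bm{\uptheta}}_k^{(I_{m,i})}$ produced within the $(k+1)$st iteration differ from $\hat{\bm{\uptheta}}_k$ by a quantity that tends to zero w.p.1 uniformly in $m$ and $i$. Since Theorem \ref{thm:hoeshoo} already delivers $\hat{\bm{\uptheta}}_k \to \bm{\uptheta}^\ast$ w.p.1, combining these two facts yields the corollary.

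More concretely, the key reduction has effectively been carried out inside the proof of Lemma \ref{claim:dasmaitien}: starting from the explicit formula for $\hat{\bm{\uptheta}}_k^{(I_{m,i})}$ in line \ref{eq:indianindinorange} of Algorithm \ref{findme}, substituting the decomposition $\hat{\bm{g}}_k^{(j_k(z))}(\hat{\bm{\uptheta}}_k^{(I_{z,\ell})}) = \bm{g}^{(j_k(z))}(\hat{\bm{\uptheta}}_k^{(I_{z,\ell})}) + \bm{\upbeta}_k^{(j_k(z))}(\hat{\bm{\uptheta}}_k^{(I_{z,\ell})}) + \bm{\upxi}_k^{(j_k(z))}(\hat{\bm{\uptheta}}_k^{(I_{z,\ell})})$, and using the uniform boundedness of $\bm{g}$ (A3, A4) and of the bias (A5) together with $\tilde{a}_k^{(j)} \to 0$ w.p.1 (A0) and the uniform bound on $s_k$ and $n_k(m)$ (A1) gives exactly
\begin{align*}
\hat{\bm{\uptheta}}_k^{(I_{m,i})} = \hat{\bm{\uptheta}}_k &- \sum_{z=1}^{m-1}\sum_{\ell=0}^{n_k(z)-1}\tilde{A}_k(z)\bm{\upxi}_k^{(j_k(z))}\!\left(\hat{\bm{\uptheta}}_k^{(I_{z,\ell})}\right) \\
&- \sum_{\ell=0}^{i-1}\tilde{A}_k(m)\bm{\upxi}_k^{(j_k(m))}\!\left(\hat{\bm{\uptheta}}_k^{(I_{m,\ell})}\right) + \bm{T}_k,
\end{align*}
with $\bm{T}_k\to \bm{0}$ w.p.1 uniformly in $m$ and $i$ (the uniformity comes from the fact that A1 caps the number of summands independent of $\upomega$ and of the block/step indices).

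Next I invoke the noise tail bound from condition A6 together with Lemma \ref{lem:opportunityxyz}, which give $\sum_{k=0}^\infty a_k \bm{D}_k < \infty$ w.p.1; by the Cauchy criterion, the tail sums $\sum_{r=k}^{\infty} a_r \bm{D}_r$ converge to zero w.p.1, and because each inner sum in the displayed expression above is a partial sum of the $(k+1)$st summand $a_k \bm{D}_k$ (truncated at block $m-1$ plus $i$ further noise terms within block $m$), and because A1 bounds the number of such summands uniformly, these noise corrections vanish w.p.1 uniformly in $m$ and $i$ as $k\to\infty$ (this is exactly equation (\ref{eq:casakakk})). Thus $\hat{\bm{\uptheta}}_k^{(I_{m,i})} - \hat{\bm{\uptheta}}_k \to \bm{0}$ w.p.1 uniformly in $m$ and $i$.

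Combining this with Theorem \ref{thm:hoeshoo}, the triangle inequality
\[
\left\|\hat{\bm{\uptheta}}_k^{(I_{m,i})} - \bm{\uptheta}^\ast\right\| \leq \left\|\hat{\bm{\uptheta}}_k^{(I_{m,i})} - \hat{\bm{\uptheta}}_k\right\| + \left\|\hat{\bm{\uptheta}}_k - \bm{\uptheta}^\ast\right\|
\]
yields the conclusion, since the right-hand side is bounded by a quantity depending only on $k$ that tends to zero w.p.1. The only subtlety that requires attention, and which I view as the main thing to check carefully rather than a deep obstacle, is verifying that the ``uniform in $m$ and $i$'' qualifier really is inherited throughout: this holds because A1 gives $m \leq s_k$ and $i \leq n_k(m)$ with both bounded uniformly in $k$ and $\upomega$, so each correction term is a sum of at most a fixed (in $\upomega$) number of quantities that individually tend to zero w.p.1.
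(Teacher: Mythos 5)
Your proposal is correct and follows essentially the same route as the paper: the paper's own proof simply cites Theorem \ref{thm:hoeshoo} together with the observation below (\ref{eq:casakakk}) (established inside the proof of Lemma \ref{claim:dasmaitien}) that $\hat{\bm\uptheta}_k^{(I_{m,i})}=\hat{\bm{\uptheta}}_k+\bm{T}'_k$ with $\bm{T}'_k\rightarrow \bm{0}$ w.p.1, which is exactly the reduction you carry out. Your write-up merely makes explicit the uniformity in $m$ and $i$ (via the uniform bounds in A1) and the triangle-inequality step that the paper leaves implicit.
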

 \begin{proof}
 This result follows immediately from Theorem \ref{thm:hoeshoo} along with the comment below (\ref{eq:casakakk}).
 \end{proof}
 The following section obtains Corollaries to Theorem \ref{thm:hoeshoo} pertaining to two special cases of the GCSA algorithm.
 
 \section[Two Special Cases of GCSA]{Two Special Cases of GCSA}
 \label{sec:specialitay}

 Theorem \ref{thm:hoeshoo} gave convergence conditions for the GCSA algorithm, two special cases of which were discussed in Section \ref{sec:wunderbar} (see (\ref{eq:beer149}) and (\ref{eq:beerabove})). In (\ref{eq:beerabove}), for example, the parameter vector was partitioned into two subvectors; at each iteration the subvector to update was chosen at random. A generalization of (\ref{eq:beerabove}) is obtained by partitioning $\bm\uptheta$ into $d$ subvectors. The resulting generalization of (\ref{eq:beerabove}) is presented in Algorithm \ref{beastwasdone} below. For simplicity, it is assumed the sets $\mathcal{S}_1,\dots,\mathcal{S}_d$ in (\ref{eq:notexclusive}) satisfy:
\begin{align}
\label{eq:scurry}
\bigcup_{i=1}^d\mathcal{S}_i=\mathcal{S}, \ \ \ \bigcap_{i=1}^d\mathcal{S}_i=\emptyset.
\end{align}
In order to study the behavior of Algorithm \ref{beastwasdone} we first prove two lemmas that are useful for understanding the behavior of the sequence $\tilde{a}_k^{(j)}/a_k$, a sequence whose relevance stems from condition A0 in Theorem \ref{thm:hoeshoo} (also note that this ratio affects the value of $\bm{h}(\bm\uptheta)$ through (\ref{eq:tinkelpan})). The following lemma gives sufficient conditions for the convergence w.p.1 of $\tilde{a}_k^{(j)}/a_k$.
 \begin{algorithm}[!t]                     
\caption{Randomized Subvector Selection}          
\label{beastwasdone}                          
\begin{algorithmic} [1]                   
\setstretch{1.5} 
\REQUIRE    $\hat{\bm{\uptheta}}_0$, $\{a_k^{(j)}\}_{k\geq 0}$ for $j=1,\dots,d$, and let $\mathcal{S}_1,\dots, \mathcal{S}_d$ be such that (\ref{eq:scurry}) holds. Set $k=0$.
\WHILE{stopping criterion has not been reached}
  \STATE{Let $j_k$ be a random variable with $P(j_k=j)=q(j)\neq0$ and $\sum_{j=1}^d q(j)=1$.}
	\STATE{Define:
	\begin{align*}
	\hat{\bm\uptheta}_{k+1}\equiv&\ \hat{\bm\uptheta}_k-\tilde{a}_k^{(j_{k})}\hat{\bm{g}}^{(j_{k})}(\hat{\bm{\uptheta}}_k).
	\end{align*}}
	\STATE{set $k=k+1$}
\ENDWHILE
\end{algorithmic}
\end{algorithm}

\begin{lemma}
\label{lem:devil}
Consider the setting of Algorithm \ref{beastwasdone}. Assume $a_k>0$ and that $a_k^{(j)}>0$ is a monotonically decreasing sequence with $a_k^{(j)}\rightarrow 0$. In addition, assume there exists a function $\uprho(j,q)$ with domain $\{1,\dots,j\}\times [0,1]$ and range in $\mathbb{R}$ such that if $\upepsilon>0$ is small enough that $0\leq q(j)\pm\upepsilon\leq 1$ then:
\begin{align*}
\lim_{k\rightarrow \infty}\frac{a^{(j)}_{\floor{k(q(j)-\upepsilon)}}}{a_{k}}=\uprho (j,q(j)-\upepsilon), \ \ \ \lim_{k\rightarrow \infty}\frac{a^{(j)}_{\ceil{k(q(j)+\upepsilon)}}}{a_{k}}=\uprho (j,q(j)+\upepsilon),
\end{align*}
 ($\floor{\cdot}$ and $\ceil{\cdot}$ denote the floor and ceiling functions, respectively). Assume there exist a constant $C<\infty$ such that $0<\uprho (j, q(j)\pm \upepsilon)<C$. Finally, let:
\begin{align*}
\lim_{\upepsilon\rightarrow 0}\uprho (j, q(j)\pm \upepsilon)=\uprho (j, q(j))\in [\uprho (j, q(j)+\upepsilon),\uprho (j, q(j)-\upepsilon)].
\end{align*}
Then, $\tilde{a}_k^{(j)}/a_k\rightarrow \uprho (j,q(j))$ w.p.1 with $0<\uprho (j,q(j))<C$.
\end{lemma}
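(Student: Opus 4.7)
The plan is to identify $\tilde{a}_k^{(j)}$ with an element of the deterministic sequence $\{a_i^{(j)}\}$ indexed by a counting random variable, apply the strong law of large numbers to control that index, and then use monotonicity together with the hypothesized limits $\rho(j,q(j)\pm\upepsilon)$ to sandwich the ratio $\tilde{a}_k^{(j)}/a_k$.

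First I would observe that in Algorithm \ref{beastwasdone} exactly one subvector is updated per iteration, so (\ref{eq:wheretheasaredefinedd})--(\ref{eq:saropian}) reduce to
\begin{align*}
\tilde{a}_k^{(j)}=a_{N_k^{(j)}}^{(j)},\quad\text{where}\quad N_k^{(j)}\equiv\sum_{i=0}^{k-1}\chi\{j_i=j\}.
\end{align*}
Since $\{\chi\{j_i=j\}\}_{i\geq 0}$ is an i.i.d.\ Bernoulli$(q(j))$ sequence, the strong law of large numbers gives $N_k^{(j)}/k\to q(j)$ w.p.1. Hence, given any $\upepsilon>0$ small enough that $0\leq q(j)\pm\upepsilon\leq 1$, there exists a (random) $K(\upomega,\upepsilon)$ such that for all $k\geq K(\upomega,\upepsilon)$,
\begin{align*}
\floor{k(q(j)-\upepsilon)}\leq N_k^{(j)}\leq \ceil{k(q(j)+\upepsilon)}.
\end{align*}

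Next, since $\{a_i^{(j)}\}_{i\geq 0}$ is monotonically decreasing and positive, the previous bound on $N_k^{(j)}$ translates into
\begin{align*}
\frac{a_{\ceil{k(q(j)+\upepsilon)}}^{(j)}}{a_k}\leq \frac{\tilde{a}_k^{(j)}}{a_k}\leq \frac{a_{\floor{k(q(j)-\upepsilon)}}^{(j)}}{a_k}
\end{align*}
for all $k\geq K(\upomega,\upepsilon)$. Applying the two hypothesized limits to the left- and right-hand sides gives
\begin{align*}
\uprho(j,q(j)+\upepsilon)\leq\liminf_{k\to\infty}\frac{\tilde{a}_k^{(j)}}{a_k}\leq\limsup_{k\to\infty}\frac{\tilde{a}_k^{(j)}}{a_k}\leq\uprho(j,q(j)-\upepsilon),
\end{align*}
valid w.p.1 for every such $\upepsilon$. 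Letting $\upepsilon\to 0$ and invoking the assumption that $\uprho(j,q(j)\pm\upepsilon)\to\uprho(j,q(j))$, a squeeze yields $\tilde{a}_k^{(j)}/a_k\to\uprho(j,q(j))$ w.p.1. The strict bounds $0<\uprho(j,q(j))<C$ follow from $\uprho(j,q(j))\in[\uprho(j,q(j)+\upepsilon),\uprho(j,q(j)-\upepsilon)]$ together with the hypothesis $0<\uprho(j,q(j)\pm\upepsilon)<C$.

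The only mildly delicate step is verifying that the random threshold $K(\upomega,\upepsilon)$ can be chosen on a common probability-one event as $\upepsilon$ ranges through a countable sequence tending to zero (so that the sandwich and the subsequent $\upepsilon\to 0$ limit are valid on the same $\upomega$-set); this is immediate by intersecting the countable collection of probability-one events provided by the SLLN and taking $\upepsilon$ through a countable sequence such as $\{1/n\}_{n\geq 1}$.
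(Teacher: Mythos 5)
Your proposal is correct and follows essentially the same route as the paper's proof: both identify $\tilde{a}_k^{(j)}$ with an element of the deterministic sequence indexed by the count of past selections of subvector $j$ (the paper via the telescoping form of (\ref{eq:saropian}) and $\upvarphi_k^{(j)}$, you via $N_k^{(j)}$ directly), both invoke the strong law of large numbers to trap that index between $\floor{k(q(j)-\upepsilon)}$ and $\ceil{k(q(j)+\upepsilon)}$, and both use monotonicity of $a_k^{(j)}$ plus the hypothesized limits to squeeze $\tilde{a}_k^{(j)}/a_k$ into $[\uprho(j,q(j)+\upepsilon),\uprho(j,q(j)-\upepsilon)]$ before letting $\upepsilon\to 0$. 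Your explicit remark about intersecting the countably many probability-one events is a detail the paper leaves implicit, but it does not change the argument.
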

\begin{proof}
Throughout this proof all unspecified probabilistic arguments are meant to hold w.p.1. From (\ref{eq:saropian}) we know that for $k>0$:
\begin{align}
\tilde{a}_{k+1}^{(j)}
&=a_0^{(j)}+\sum_{i=0}^k \chi\left\{\frac{\upvarphi_k^{(j)}}{k}-q(j)\geq \frac{i}{k}-q(j)\right\}(a_{i+1}^{(j)}-a_{i}^{(j)}),\label{eq:pianofortee}
\end{align}
where $0\leq \upvarphi_k^{(j)}\leq k$ was defined in (\ref{eq:wheretheasaredefinedd}). Furthermore, the strong law of large numbers implies that $\upvarphi_k^{(j)}/k-q(j)\rightarrow 0$ w.p.1 as $k\rightarrow \infty$. This implies (w.p.1) that for all $\upepsilon>0$ there exists an $N(\upepsilon)$ such that $|{\upvarphi_k^{(j)}}/{k}-q(j)|<\upepsilon$
for $k\geq N(\upepsilon)$ (the constant $N(\upepsilon)$ depends on $\upomega$ although the dependence has been omitted for simplicity). Therefore,
\begin{align}
\label{eq:samuraislicemetan}
\chi\left\{\frac{\upvarphi_k^{(j)}}{k}-q(j)\geq \frac{i}{k}-q(j)\right\}=
\begin{cases} 
0 &\mbox{if } {\text{$k\geq N(\upepsilon)$ and $i\geq \ceil{k(\upepsilon+q(j))}$}},\\
1 & \mbox{if } {\text{$k\geq N(\upepsilon)$ and $i\leq \floor{k(q(j)-\upepsilon)}$}}.
 \end{cases} 
\end{align}
Equation (\ref{eq:samuraislicemetan}) implies $\tilde{a}_{k+1}^{(j)}\in[a^{(j)}_{ \ceil{k(\upepsilon+q(j))}}, a^{(j)}_{\floor{k(q(j)-\upepsilon)}+1}]$ for $k\geq N(\upepsilon)$. Therefore:
\begin{align}
\tilde{a}_{k}^{(j)}\in\left[a_{\ceil{k(q(j)+\upepsilon)-(q(j)+\upepsilon)}}^{(j)},a_{\floor{k(q(j)-\upepsilon)-(q(j)-\upepsilon)}+1}^{(j)}\right]\label{eq:jing}
\end{align}
for $k$ large. From (\ref{eq:jing}) it follows that for large $k$ and $\upepsilon>0$ small enough that $0\leq q(j)\pm\upepsilon\leq 1$:
\begin{align*}
\frac{\tilde{a}_{k}^{(j)}}{a_k}\in \left[\frac{a^{(j)}_{\ceil{k(q(j)+\upepsilon)}}}{a_k},\frac{a^{(j)}_{\floor{k(q(j)-\upepsilon)}}}{a_k}\right].
\end{align*}
Therefore, ${\tilde{a}_{k}^{(j)}}/{a_k}$ must approach the interval $I(\upepsilon)\equiv [\uprho (j, q(j)+\upepsilon),\uprho (j, q(j)-\upepsilon)]$ as $k$ grows (i.e., the distance between ${\tilde{a}_{k}^{(j)}}/{a_k}$ and its closest point in the interval $I(\upepsilon)$ approaches zero).
Because ${I}(\upepsilon)$ contains the point $\uprho (j, q(j))$ for all $\upepsilon>0$ small enough (by assumption) and because the length of ${I}(\upepsilon)$ decreases as $\upepsilon$ decreases,
the sequence $\tilde{a}_{k}^{(j)}/a_k$ must converge w.p.1 to $0<\uprho (j, q(j))<C$.
\end{proof}

Let us comment on the assumptions of Lemma \ref{lem:devil}. The first requirement is that $q(j)>0$ depends neither on $k$ nor on $m$. In other words, when selecting the coordinates to update from one of the sets $\mathcal{S}_1,\dots,\mathcal{S}_d$, we require the probability of selecting $\mathcal{S}_j$ to be independent of the iteration and block numbers.
 The remaining assumptions are satisfied for sequences of the form $a_k^{(j)}=a^{(j)}/(k+A^{(j)}+1)^\upalpha$ and $a_k=a/(k+1)^\upalpha$ where $a^{(j)}>0$, $a>0$, $A^{(j)}>0$, and $0<\upalpha$. To see this, note that $\lim_{k\rightarrow \infty} a^{(j)}_k/a_k=a^{(j)}/a$ and that:
\begin{align*}
\lim_{k\rightarrow \infty}\frac{a^{(j)}_{\floor{ck}}}{a_{k}^{(j)}}=\lim_{k\rightarrow \infty}\left(\frac{k+1}{\floor{ck}+A^{(j)}+1}\right)^\upalpha=\frac{1}{c^\upalpha},
\end{align*}
for $c>0$. The last limit can be computed by using $c_k$ as an upper bound for $\floor{ck}$, using $ck-1$ as a lower bound for $\floor{ck}$, and observing that both the upper and lower bounds result in the same limit: $1/c^\upalpha$. Therefore, $\lim_{k\rightarrow \infty}{a^{(j)}_{\floor{ck}}}/{a_{k}}=a^{(j)}/a c^\upalpha$. Similarly, it is possible to show $\lim_{k\rightarrow \infty}{a^{(j)}_{\ceil{ck}}}/{a_{k}}=a^{(j)}/(a c^\upalpha)$. In the notation of Lemma \ref{lem:devil}: $\uprho (j,q(j)-\upepsilon)=a^{(j)}/[a (q(j)-\upepsilon)^\upalpha]$ for small $\upepsilon>0$ and $\uprho (j,q(j)+\upepsilon)=a^{(j)}/[a (q(j)+\upepsilon)^\upalpha]$. Then, all the conditions of Lemma \ref{lem:devil} are satisfied with $0<\uprho (j,q(j))=a^{(j)}/(aq(j)^\upalpha)<\infty$.

Lemma \ref{lem:devil} gave conditions for the convergence w.p.1 of $\tilde{a}_k^{(j)}/a_k$. It is also of interest to derive conditions for the convergence of $E[\tilde{a}_k^{(j)}/a_k]$ (condition A0). For this we first define the concept of uniform integrability.

\begin{definition}
\label{def:unifintegrable}
[Chung 2001\nocite{chung2001}]
Given a sequence $\mathcal{X}_k$ of random variables, the set $\{\mathcal{X}_k\}_{k\geq0}$ is said to be {\it{uniformly integrable}} if and only if:
\begin{align*}
\lim_{n\rightarrow \infty}\int_{|\mathcal{X}_k|> n}|\mathcal{X}_k|\ dP=0{\text{ uniformly in $k$.}}
\end{align*}
\end{definition}


\begin{lemma}
\label{lem:lemmylime}
Assume $a_k^{(j)}>0$ and $a_k>0$ are both strictly monotonically decreasing sequences and that all the assumptions of Lemma \ref{lem:devil} hold. Furthermore, let $\upepsilon>0$ be such that $q(j)-\upepsilon>0$ and assume $e^{-2\upepsilon^2k}/a_k\rightarrow 0$ as $k\rightarrow \infty$ (this last assumption holds when $a_k$ satisfies condition A0). Then, $E[\tilde{a}_k^{(j)}/a_k]\rightarrow \uprho (j,q(j))$.
\end{lemma}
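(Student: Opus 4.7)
The plan is to bootstrap the almost sure convergence of $\tilde a_k^{(j)}/a_k \to \uprho(j,q(j))$ from Lemma \ref{lem:devil} up to convergence in expectation, by splitting the expectation on a high-probability ``good event'' on which the ratio is uniformly bounded, and a ``bad event'' whose probability decays exponentially fast enough to beat the $1/a_k$ blow-up.

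First I would exploit the simple structure of Algorithm \ref{beastwasdone}: since at each iteration $j_k$ is i.i.d.\ with $P(j_k=j)=q(j)$, we have $\upvarphi_{k-1}^{(j)}+1=\sum_{i=0}^{k-1}\chi\{j_i=j\}\sim\text{Binomial}(k,q(j))$. Hoeffding's inequality then yields, for every $\upepsilon$ small enough that $q(j)\pm\upepsilon\in(0,1)$,
\begin{align*}
P\left(\left|\upvarphi_{k-1}^{(j)}/k-q(j)\right|>\upepsilon\right)\leq 2e^{-2\upepsilon^2 k}.
\end{align*}
Let $A_k\equiv\{|\upvarphi_{k-1}^{(j)}/k-q(j)|\leq \upepsilon\}$ be the good event. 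Telescoping (\ref{eq:saropian}) shows $\tilde a_k^{(j)}=a_{\max(0,\upvarphi_{k-1}^{(j)}+1)}^{(j)}$, and monotonicity of $a_k^{(j)}$ gives the crude but crucial global bound $\tilde a_k^{(j)}\leq a_0^{(j)}$, together with the refined inequality
\begin{align*}
\frac{\tilde a_k^{(j)}}{a_k}\;\chi(A_k)\;\leq\;\frac{a^{(j)}_{\floor{k(q(j)-\upepsilon)}}}{a_k}\;\chi(A_k)
\end{align*}
valid for large $k$. The right-hand side converges to $\uprho(j,q(j)-\upepsilon)\leq C$ by the hypotheses of Lemma \ref{lem:devil}, so on $A_k$ the ratio is bounded by $C+1$ for all $k$ sufficiently large.

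Next I would split
\begin{align*}
E\!\left[\frac{\tilde a_k^{(j)}}{a_k}\right] = E\!\left[\frac{\tilde a_k^{(j)}}{a_k}\chi(A_k)\right] + E\!\left[\frac{\tilde a_k^{(j)}}{a_k}\chi(A_k^c)\right].
\end{align*}
The second term is bounded by $(a_0^{(j)}/a_k)\cdot 2e^{-2\upepsilon^2 k}$, which tends to zero by the standing assumption $e^{-2\upepsilon^2 k}/a_k\to 0$. For the first term, Lemma \ref{lem:devil} gives $\tilde a_k^{(j)}/a_k\to\uprho(j,q(j))$ w.p.1, and the summability $\sum_k 2e^{-2\upepsilon^2 k}<\infty$ together with Borel--Cantelli yields $\chi(A_k)\to 1$ w.p.1; hence the integrand converges almost surely to $\uprho(j,q(j))$, is dominated by the constant $C+1$ for large $k$, and the bounded convergence theorem gives $E[(\tilde a_k^{(j)}/a_k)\chi(A_k)]\to\uprho(j,q(j))$. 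Adding the two pieces completes the proof.

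The main obstacle is the absence of a uniform $L^\infty$ bound on $\tilde a_k^{(j)}/a_k$: since $a_k\to 0$, the crude bound $a_0^{(j)}/a_k$ diverges, so straightforward dominated convergence applied to the whole sample space fails. The role of the hypothesis $e^{-2\upepsilon^2 k}/a_k\to 0$ is precisely to guarantee that the Hoeffding tail decays faster than $1/a_k$ grows, which is exactly what is needed to discard the contribution from $A_k^c$; the remaining good-event piece is then handled by an ordinary bounded convergence argument.
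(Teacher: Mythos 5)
Your proof is correct and uses essentially the same ingredients as the paper's: the identification of $\upvarphi_{k-1}^{(j)}+1$ as a $\text{Binomial}(k,q(j))$ count, Hoeffding's inequality to bound the probability that $\tilde a_k^{(j)}$ exceeds $a^{(j)}_{\floor{k(q(j)-\upepsilon)}}$, and the hypothesis $e^{-2\upepsilon^2k}/a_k\rightarrow 0$ to kill the contribution of the event where only the crude bound $a_0^{(j)}/a_k$ is available. The only difference is packaging: the paper organizes these estimates into a proof that $\{\tilde a_k^{(j)}/a_k\}_{k\geq 0}$ is uniformly integrable and then invokes Theorem 4.5.4 of Chung (2001), whereas you carry out the equivalent good-event/bad-event split and bounded-convergence argument directly.
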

\begin{proof}
For each $j=1,\dots, d$ we will show that the set $\{\tilde{a}_k^{(j)}/a_k\}_{k\geq 0}$ is uniformly integrable. Then, the result will follow from  Lemma \ref{lem:devil} along with Theorem 4.5.4 in Chung (2001)\nocite{chung2001}, which states that $E[\tilde{a}_k^{(j)}/a_k]\rightarrow \uprho (j,q(j))$ if the set $\{\tilde{a}_k^{(j)}/a_k\}_{k\geq 0}$ is uniformly integrable and $\tilde{a}_k^{(j)}/a_k\rightarrow \uprho (j,q(j))$ w.p.1.

For each $j$ define $\mathcal{X}_k^{(j)}\equiv\tilde{a}_k^{(j)}/a_k>0$ and $n_i\equiv a_0^{(j)}/a_i$. Then, $n_i\rightarrow \infty$ as $i\rightarrow \infty$. Furthermore, for each $i\geq 0$ and any probability measure $P$:
\begin{align}
\label{eq:anothgerway}
\int_{\mathcal{X}_k^{(j)}> n_i} \mathcal{X}_k^{(j)}\ dP\leq \sup_{k\geq 0}\left\{\int_{\mathcal{X}_k^{(j)}> n_i} \mathcal{X}_k^{(j)}\ dP\right\}.
\end{align}
Using the fact that $a_k$ and $a_k^{(j)}$ are both monotonically-decreasing sequences of strictly positive numbers, for each $k$ we have:
\begin{align*}
\int_{\mathcal{X}_k^{(j)}> n_i} \mathcal{X}_k^{(j)}\ dP\leq P(\mathcal{X}_k^{(j)}> n_i)\frac{a_0^{(j)}}{a_k}.
\end{align*}
Combining this bound with (\ref{eq:anothgerway}):
\begin{align}
\int_{\mathcal{X}_k^{(j)}> n_i} \mathcal{X}_k^{(j)}\ dP&\leq  \sup_{k\geq 0}\left\{P(\mathcal{X}_k^{(j)}> n_i)\frac{a_0^{(j)}}{a_k}\right\}
= a_0^{(j)}\sup_{k\geq i}\left\{\frac{P(\mathcal{X}_k^{(j)}> n_i)}{a_k}\right\}.
\label{eq:lepainabongout}
\end{align}
%
%
Because $a_{\floor{k(q(j)-\upepsilon)}}^{(j)}/a_k\rightarrow \uprho (j,(q(j)-\upepsilon))$ (by assumption), for each $\updelta>0$ there exists an $N(\updelta)>0$ such that $a_{\floor{k(q(j)-\upepsilon)}}^{(j)}/a_k<\uprho (j,q(j)-\upepsilon)+\updelta$
whenever $k\geq N(\updelta)$. 
  Let $M(\updelta)>0$ be such that $\uprho (j,q(j)-\upepsilon)+\updelta<a_0^{(j)}/a_i$ for $i\geq M(\updelta)$ ($M(\updelta)$ exists because $a_i$ is monotonically decreasing). Then, for $k\geq i\geq \max\{M(\updelta),N(\updelta)\}$:
\begin{align*}
\frac{a_{\floor{k(q(j)-\upepsilon)}}^{(j)}}{a_k}<\uprho (j,q(j)-\upepsilon)+\updelta\leq \frac{a_0^{(j)}}{a_i}=n_i.
\end{align*}
We then have the following bound when $k\geq i\geq \max\{M(\updelta),N(\updelta)\}$: 
\begin{align}
\label{eq:hoeffdingsboundme}
P(\mathcal{X}_k^{(j)}> n_i)\leq P\left(\frac{\tilde{a}_k^{(j)}}{a_k}> \frac{a_{\floor{k(q(j)-\upepsilon)}}^{(j)}}{a_k}\right)=P(\tilde{a}_k^{(j)}> a^{(j)}_{\floor{k(q(j)-\upepsilon)}}).
\end{align}
The term $P(\tilde{a}_k^{(j)}> a^{(j)}_{\floor{k(q(j)-\upepsilon)}})$ can be computed exactly. We do this next.

From (\ref{eq:pianofortee}) we can see that $\tilde{a}_k^{(j)}$ will be strictly greater than $a^{(j)}_{\floor{k(q(j)-\upepsilon)}}$ if the sequence $\{a_k^{(j)}\}_{k\geq0}$ has been used strictly less than $\floor{k(q(j)-\upepsilon)}$ times by the time $\tilde{a}_k^{(j)}$ is computed (this is due to the strictly monotonically decreasing nature of $a_k^{(j)}$). 
The number of times $\{a_k^{(j)}\}_{k\geq0}$ has been used can be modeled using a Binomial random variable with ``success'' probability $q(j)$. Therefore,
\begin{align}
\label{eq:hoeffdingsboundmeNEXT}
P(\tilde{a}_k^{(j)}> a^{(j)}_{\floor{k(q(j)-\upepsilon)}})&=\sum_{i=0}^{\floor{k(q(j)-\upepsilon)}-1} {{k}\choose{i}}q(j)^i (1-q(j))^{k-i}.
\end{align}
 Since $\floor{k(q(j)-\upepsilon)}-1\leq k(q(j)-\upepsilon)$, Hoeffding's inequality (e.g., Hoeffding 1963\nocite{hoeffding}) allows us to further bound the probability in (\ref{eq:hoeffdingsboundmeNEXT}) as follows:
\begin{align}
\label{eq:elleguteleriz}
P(\tilde{a}_k^{(j)}> a^{(j)}_{\floor{k(q(j)-\upepsilon)}})\leq e^{-2\upepsilon^2k}.
\end{align}
Combining (\ref{eq:hoeffdingsboundme}) with (\ref{eq:elleguteleriz}) yields $P(\mathcal{X}_k^{(j)}> n_i)\leq e^{-2\upepsilon^2k}$ for  $i\geq \max\{M(\updelta),N(\updelta)\}$ and $k\geq i$. Combining this bound with (\ref{eq:lepainabongout}) yields (for large values of $i$ and $k$):
\begin{align*}
\int_{\mathcal{X}_k^{(j)}> n_i} \mathcal{X}_k^{(j)}\ dP&\leq  a_0^{(j)}\sup_{k\geq i}\left\{\frac{e^{-2\upepsilon^2k}}{a_k}\right\}.
\end{align*}
Since $e^{-2\upepsilon^2k}/a_k\rightarrow 0$ (by assumption), we know that $\sup_{k\geq i}\{{e^{-2\upepsilon^2k}}/{a_k}\}\rightarrow 0$ as $i\rightarrow \infty$. Therefore, the set $\{\mathcal{X}_k\}_{k\geq 0}$ is uniformly integrable and Theorem 4.5.4 in (Chung 2001) implies $E[\tilde{a}_k^{(j)}/a_k]\rightarrow \uprho (j,q(j))$ as desired.
\end{proof}



The following corollary gives sufficient conditions for the iterates of Algorithm \ref{beastwasdone} to converge w.p.1 to $\bm\uptheta^\ast$, that is for $\hat{\bm{\uptheta}}_k\rightarrow \bm\uptheta^\ast$ w.p.1.

\begin{corollary}
\label{thm:hoeshoo49488} 
Let ${\bm{\bm{{\hat{\uptheta}}}}}_{k}$ be defined by according to Algorithm \ref{beastwasdone} and assume the following conditions hold:
\begin{DESCRIPTION}
\item[A0$'$] Let $a_{k}>0$, $a_k^{(j)}>0$, $ \sum_{k=0}^{\infty}a_{k}=\infty$ and $ \sum_{k=0}^{\infty}a_{k}^2<\infty$. Additionally, assume the conditions of Lemmas {\ref{lem:devil}} and \ref{lem:lemmylime} hold with $0<\uprho (j,q(j))<\infty$.
\item[A4$'$] For $k\geq0$, let there exist a set $\Omega_1\subset\Omega$ with $P(\Omega_1)=1$ and a scalar $0<R_1(\upomega)<\infty$ such that the set $\{\hat{\bm\uptheta}_k\}_{k\geq 0}$ is contained within a $p$-dimensional ball of radius $R_1(\upomega)$ centered at the origin for all $\upomega\in\Omega_1$.
\item[A5$'$] For $k\geq0$, let there exist a set $\Omega_2\subset\Omega$ with $P(\Omega_2)=1$ and a scalar $0<R_2(\upomega)<\infty$ such that the set $\{\bm\upbeta_k^{(j_k)}(\hat{\bm\uptheta}_k)\}_{k}$ is contained within a $p$-dimensional ball of radius $R_2(\upomega)$ centered at the origin for all $\upomega\in\Omega_2$.
Furthermore, let $\bm\upbeta_k^{(j_k)}(\hat{\bm\uptheta}_k)\rightarrow {\bm{0}}$ w.p.1.
%
\item[A6$'$] $\bm{D}_r\equiv \left(\frac{\tilde{a}_r^{(j_r)}}{a_r}\right)\bm\upxi_r^{(j_r)}(\hat{\bm\uptheta}_r)$
and $\lim_{k\rightarrow \infty}P\left(\sup_{m\geq k}\Big\|\sum_{r=k}^{m}a_{r}\bm{D}_{r}\Big\|\geq \upvarepsilon\right)=0$ for $\upvarepsilon>0$.
\item [A7$'$] Let $\bm{\uptheta^{\ast}}$ be a locally asymptotically stable (in the sense of Lyapunov) solution of the differential equation:
\begin{align}
\label{eq:ode333}
\dot{\bm{Z}}(t)=- \sum_{j=1}^dq(j)\uprho (j,q(j))\bm{g}^{(j)}(\bm{Z}(t))=-\bm{\Lambda}\bm{g}(\bm{Z}(t)),
\end{align}
where $\bm\Lambda$ is a diagonal matrix with $i$th diagonal entry $\Lambda_{ii}=q(j)\uprho (j,q(j))$ where $j$ is such that $i\in \mathcal{S}_j$. Let (\ref{eq:ode333}) have domain of attraction $DA(\bm{\uptheta^{\ast}})$. 
\item[A3$'$ and A8$'$] The same as A3 and A8, respectively.
\end{DESCRIPTION}
 Then, ${\bm{\bm{{\hat{\uptheta}}}}}_{k}\rightarrow {\bm\uptheta^{\ast}}$ w.p.1.
\end{corollary}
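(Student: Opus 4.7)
The plan is to deduce the corollary from Theorem \ref{thm:hoeshoo} by recognizing Algorithm \ref{beastwasdone} as the special case of GCSA in which $s_k\equiv 1$, $n_k(1)\equiv 1$, and $j_k(1)=j_k$, and then verifying that the primed conditions A0$'$--A8$'$ imply the unprimed conditions A0--A8 in this special case. Once A0--A8 are established, Theorem \ref{thm:hoeshoo} gives $\hat{\bm\uptheta}_k\to\bm\uptheta^\ast$ w.p.1 directly.

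First I would dispose of the routine equivalences. The summability assumptions on $a_k$ and the positivity of $a_k$, $a_k^{(j)}$ are in A0$'$; the almost-sure limit $\tilde{a}_k^{(j)}/a_k\to \uprho(j,q(j))$ comes from Lemma \ref{lem:devil}, and the convergence of expectations comes from Lemma \ref{lem:lemmylime}, identifying $r_j=\uprho(j,q(j))\in(0,\infty)$. This gives A0. Condition A1 is trivial since $s_k=n_k(1)=1$ are deterministic constants. Conditions A3, A4, A5, A6, A8 are either identical to or immediate specializations of A3$'$, A4$'$, A5$'$, A6$'$, A8$'$ (the intermediate iterates $\hat{\bm\uptheta}_k^{(I_{m,i})}$ collapse to $\hat{\bm\uptheta}_k$).

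The substantive step is verifying A2. In the Algorithm \ref{beastwasdone} setting, $x_k(j)=(\tilde{a}_k^{(j)}/a_k)\chi\{j_k=j\}$ and $C_k(j)=\chi\{j_k=j\}$. The key observation is that $\tilde{a}_k^{(j)}$, as defined through (\ref{eq:saropian}) and (\ref{eq:wheretheasaredefinedd}), depends measurably only on $j_0,\dots,j_{k-1}$, whereas $C_k(j)$ depends only on $j_k$. Since $j_k$ is drawn independently of $j_0,\dots,j_{k-1}$, $\tilde{a}_k^{(j)}$ and $C_k(j)$ are independent, giving $\upmu_k(j)=E[\tilde{a}_k^{(j)}/a_k]\,q(j)$, which by Lemma \ref{lem:lemmylime} tends to $\upmu(j)=q(j)\uprho(j,q(j))<\infty$. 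For the orthogonality condition, write
\begin{equation*}
S_k=\sum_{j=1}^d E\!\left[\tfrac{\tilde{a}_k^{(j)}}{a_k}\right]\bigl(\chi\{j_k=j\}-q(j)\bigr),
\end{equation*}
which is a deterministic linear combination of centered indicators of $j_k$. Because $j_k$ and $j_\ell$ are independent for $k\neq\ell$ and each $S_k$ has mean zero, $E[S_kS_\ell]=0$ for $k\neq\ell$, as required.

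Finally, A7 reduces to A7$'$: substituting $\upmu(j)=q(j)\uprho(j,q(j))$ into the ODE (\ref{eq:ode}) of A7 yields $\dot{\bm Z}(t)=-\sum_{j=1}^d q(j)\uprho(j,q(j))\bm g^{(j)}(\bm Z(t))$, and the disjointness of $\mathcal{S}_1,\dots,\mathcal{S}_d$ from (\ref{eq:scurry}) lets us rewrite this right-hand side as $-\bm\Lambda\bm g(\bm Z(t))$ with $\bm\Lambda$ the diagonal matrix described in A7$'$. With A0--A8 now in place, Theorem \ref{thm:hoeshoo} yields the conclusion. The only genuinely delicate point is the independence argument in A2; everything else is bookkeeping.
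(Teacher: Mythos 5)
Your proposal is correct and follows essentially the same route as the paper: reduce Algorithm \ref{beastwasdone} to the GCSA framework with $s_k=n_k(1)=1$, verify that A0$'$ and A3$'$--A8$'$ imply A0--A8 (with the independence of $\tilde{a}_k^{(j)}$ and $\chi\{j_k=j\}$ and the mutual independence of the $j_k$ doing the work in A2), and invoke Theorem \ref{thm:hoeshoo}. Your justification of the A2 step is in fact slightly more explicit than the paper's, which simply asserts the independence.
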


\begin{proof}
 We will show that conditions A0$'$ and A3$'$--A8$'$ imply A0--A8, the result will then follow from Theorem \ref{thm:hoeshoo}. First, note that A0$'$ along with the results from Lemmas {\ref{lem:devil}} and \ref{lem:lemmylime} imply that A0 holds with $r_j=\uprho (j,q(j))$. Additionally, condition A1 is satisfied because $s_k=1$ and $n_k(m)=1$. Next we show that condition A2 is satisfied. First, note that $x_k(j)=\chi\{j_k=j\}(\tilde{a}_k^{(j)}/a_k)$. Therefore, $\upmu_k(j)=E[\chi\{j_k=j\}(\tilde{a}_k^{(j)}/a_k)]$. However, because $\chi\{j_k=j\}$ and $\tilde{a}_k^{(j)}$ are independent in Algorithm \ref{beastwasdone}, $\upmu_k(j)=q(j)E[\tilde{a}_k^{(j)}/a_k]$
so that $\lim_{k\rightarrow \infty} \upmu_k(j)= q(j)\uprho (j,q(j))$. Then, the first part of A2 holds with $\upmu(j)=q(j)\uprho (j,q(j))$. Next, for Algorithm \ref{beastwasdone} we have $S_r=\sum_{j=1}^dE[\tilde{a}_r^{(j)}/a_r](\chi\{j_r=j\}-q(j))$. Therefore, by the independence of the variables $\{j_k\}_{k\geq 0}$, we see that the rest of condition A2 holds. Since A3$'$--A8$'$ are simply versions of A3--A8 rewritten in terms of the notation of Algorithm \ref{beastwasdone}, it follows that conditions A3--A8 are automatically satisfied. Therefore, the conclusion of Theorem \ref{thm:hoeshoo} holds.
\end{proof}

  Another special case of the GCSA algorithm pertains to the case where $\hat{\bm{\uptheta}}_k$ is updated via a deterministic subvector update pattern. This is described in Algorithm \ref{kirkey}. The basic idea is that the number of updates made during an iteration is a constant, $s$, and the $m$th block consists of $n(m)$ updates to subvector $j(m)$. In Algorithm \ref{kirkey} the variables $s$, $n(m)$, and $j(m)$ are deterministic (cyclic seesaw is a special case). The following corollary gives conditions for the convergence of the iterates of Algorithm \ref{kirkey}.
 \begin{algorithm}[!t]                     
\caption{Deterministic Pattern for Coordinate Selection}      
\label{kirkey}                
\begin{algorithmic} [1]                  
\setstretch{1.5} 
\REQUIRE    $\hat{\bm{\uptheta}}_0$, $\{a_k^{(j)}\}_{k\geq 0}$ for $j=1,\dots,d$, and let $\mathcal{S}_1,\dots, \mathcal{S}_d$ be such that (\ref{eq:scurry}) holds. Set $k=0$.
\WHILE{stopping criterion has not been reached}
         \STATE{Let $s\in \mathbb{Z}^+$ be a finite integer-valued constant.}\label{line:brothermove}
         \FOR{$m=1,\dots,s$}
         \STATE{Let $j(m)\in\{1,\dots,d\}$ and let $n(m)\in \mathbb{Z}^+$ be a finite integer.}
	\FOR{$i=1,\dots,n(m)$}
	\STATE{Define:
	\begin{align*}
	\hat{\bm\uptheta}_k^{(I_{m,i})}\equiv&\ \hat{\bm\uptheta}_k- \sum_{z=1}^{m-1}\sum_{\ell=0}^{n(z)-1} \left[{a}_{k}^{(j(z))}\ \hat{\bm{g}}^{(j(z))}\left(\hat{\bm\uptheta}_k^{(I_{z,\ell})}\right)\right]-\sum_{\ell=0}^{i-1} \left[{a}_{k}^{(j(m))}\ \hat{\bm{g}}^{(j(m))}\left(\hat{\bm\uptheta}_k^{(I_{m,\ell})}\right)\right]
	\end{align*}
	}
	\ENDFOR
	 \ENDFOR
	 \STATE{Let $\hat{\bm{\uptheta}}_{k+1}\equiv\hat{\bm{\uptheta}}_k^{(I_{m,i})}$ {\text{ with $m=s$ and $i=n(s)$}}.
	 }
	\STATE{set $k=k+1$}
\ENDWHILE
\end{algorithmic}
\end{algorithm}

\begin{corollary}
\label{thm:hoeshooHarry} 
Let ${\bm{\bm{{\hat{\uptheta}}}}}_{k}$ be defined as in Algorithm \ref{kirkey}. Assume the following hold:
\begin{DESCRIPTION}
\item[A0$''$] Let $a_{k}>0$, $a_k^{(j)}>0$, $ \sum_{k=0}^{\infty}a_{k}=\infty$ and $ \sum_{k=0}^{\infty}a_{k}^2<\infty$. Additionally, assume ${a}_k^{(j)}/a_k\rightarrow r_j$ with $0<r_j<\infty$.
\item[A4$''$] For $k\geq0$, $m\leq s$ ($s$ is defined in Line \ref{line:brothermove} of Algorithm \ref{kirkey}), $i\leq n(m)$, let there exist a set $\Omega_1\in\Omega$ with $P(\Omega_1)=1$ and a scalar $0<R_1(\upomega)<\infty$ such that the set $\{\hat{\bm\uptheta}_k^{(I_{m,i})}\}_{k,m,i}$ is contained within a $p$-dimensional ball of radius $R_1(\upomega)$ centered at the origin for all $\upomega\in\Omega_1$.
\item[A5$''$] For $k\geq0$, $m\leq s$, $i\leq n(m)$, let there exist a set $\Omega_2\in\Omega$ with $P(\Omega_2)=1$ and a scalar $0<R_2(\upomega)<\infty$ such that the set $\{\bm\upbeta_k^{(j(m))}(\hat{\bm\uptheta}_k^{(I_{m,i})})\}_{k,m,i}$ is contained within a $p$-dimensional ball of radius $R_2(\upomega)$ centered at the origin for all $\upomega\in\Omega_2$.
Furthermore, let $\bm\upbeta_k^{(j(m))}(\hat{\bm\uptheta}_k^{(I_{m,i})})\rightarrow {\bm{0}}$ w.p.1 uniformly in $m$ and $i$.
\item[A6$''$] Define $\bm{D}_r\equiv \sum_{m=1}^{s}\sum_{i=0}^{n(m)-1} \left(\frac{{a}_{r}^{(j(m))} }{a_{r}}\right)\bm\upxi^{(j(m))}_r\left(\hat{\bm\uptheta}_r^{(I_{m,i})}\right)$.
Assume for all $\upvarepsilon>0$ we have $\lim_{k\rightarrow \infty}P\left(\sup_{m\geq k}\Big\|\sum_{r=k}^{m}a_{r}\bm{D}_{r}\Big\|\geq \upvarepsilon\right)=0$.
\item [A7$''$] $\bm{\uptheta^{\ast}}$ is a locally asymptotically stable (in the sense of Lyapunov) solution of:
\begin{align}
\label{eq:leghearhere}
\dot{\bm{Z}}(t)=- \sum_{j=1}^d\left[r_j\sum_{m=1}^s\chi\{j(m)=j\}n(m)\right]\bm{g}^{(j)}(\bm{Z}(t))=-\bm\Lambda\bm{g}(\bm{Z}(t)),
\end{align}
where the $i$th diagonal entry of $\bm\Lambda$ equals $\Lambda_{ii}=r_j\sum_{m=1}^s\chi\{j(m)=j\}n(m)$
\label{eq:professiedadadna}
where $j$ is such that $i\in \mathcal{S}_j$. Let (\ref{eq:leghearhere}) have domain of attraction $DA(\bm{\uptheta^{\ast}})$.
\item[A3$''$ and A8$''$] The same as A3 and A8, respectively.
\end{DESCRIPTION}
 Then, ${\bm{\bm{{\hat{\uptheta}}}}}_{k}\rightarrow {\bm\uptheta^{\ast}}$ w.p.1.
\end{corollary}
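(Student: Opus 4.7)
The plan is to mirror the strategy used for the preceding corollary: verify that conditions A0$''$ and A3$''$--A8$''$ imply conditions A0--A8 of Theorem \ref{thm:hoeshoo}, and then invoke that theorem directly. Because Algorithm \ref{kirkey} is fully deterministic in its block-selection pattern, the probabilistic machinery that required Lemmas \ref{lem:devil} and \ref{lem:lemmylime} in the proof of Corollary \ref{thm:hoeshoo49488} collapses to routine checks here.

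First I would handle A0 and A1. Condition A0$''$ directly supplies the gain-sequence requirements $a_k>0$, $a_k^{(j)}>0$, $\sum_{k}a_k=\infty$, $\sum_{k}a_k^2<\infty$, and $a_k^{(j)}/a_k\to r_j$; since the deterministic pattern consumes each sequence $\{a_k^{(j)}\}$ at a fixed rate per outer iteration, $\tilde{a}_k^{(j)}/a_k$ converges deterministically (hence w.p.1 and in expectation) to a finite positive constant proportional to $r_j$. Condition A1 is immediate because $s_k=s$ and $n_k(m)=n(m)$ are constants.

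Next I would verify A2. Substituting the deterministic values $j_k(m)=j(m)$, $n_k(m)=n(m)$, $s_k=s$ into the definition of $x_k(j)$ in (\ref{eq:tinkelpan}) produces a deterministic sequence, so $\mu_k(j)=(\tilde{a}_k^{(j)}/a_k)\sum_{m=1}^{s}\chi\{j(m)=j\}n(m)$ and $\mu(j)=r_j\sum_{m=1}^{s}\chi\{j(m)=j\}n(m)$, both finite. The term $C_k(j)=\sum_{m=1}^{s}\chi\{j(m)=j\}n(m)$ is deterministic, so $C_k(j)-E[C_k(j)]\equiv 0$ and hence $S_k\equiv 0$, yielding $E[S_kS_\ell]=0$ for $k\neq\ell$ trivially; the required independence of $\tilde{a}_k^{(j)}$ and $C_k(j)$ also holds trivially since both are deterministic. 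Conditions A3$''$--A6$''$ and A8$''$ are direct restatements of A3--A6 and A8 in the notation of Algorithm \ref{kirkey}, so they transfer verbatim. Finally, A7$''$ matches A7 once $\mu(j)$ is identified as above: because (\ref{eq:scurry}) makes the $\mathcal{S}_j$ disjoint, the diagonal matrix $\bm\Lambda$ defined in A7$''$ satisfies $\bm\Lambda\bm{g}(\bm{Z}(t))=\sum_{j=1}^{d}\mu(j)\bm{g}^{(j)}(\bm{Z}(t))$, so the ODE (\ref{eq:leghearhere}) coincides with (\ref{eq:ode}).

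With A0--A8 established, Theorem \ref{thm:hoeshoo} delivers $\hat{\bm{\uptheta}}_k\to\bm\uptheta^\ast$ w.p.1. I anticipate no substantive obstacle. The one piece of bookkeeping that warrants care is the difference between the indexing in Algorithm \ref{kirkey} (which writes $a_k^{(j(m))}$ directly) and that of Algorithm \ref{findme} (which uses the rearranged $\tilde{a}_k^{(j)}$); under a fixed deterministic pattern this amounts to a constant-rate re-indexing that preserves both the asymptotic ratio governing A0 and the coefficients appearing in the limiting ODE of A7, so it does not alter the argument.
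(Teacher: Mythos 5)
Your proposal is correct and follows essentially the same route as the paper's own proof: verify that A0$''$ and A3$''$--A8$''$ imply A0--A8 (with A1 from the constancy of $s$ and $n(m)$, and A2 from the fact that determinism of the pattern forces $S_r=0$ and makes $\upmu(j)=r_j\sum_{m=1}^{s}\chi\{j(m)=j\}n(m)$), then invoke Theorem \ref{thm:hoeshoo}. Your extra remark on the $\tilde{a}_k^{(j)}$ versus $a_k^{(j)}$ indexing is a harmless bit of added care that the paper handles implicitly by noting all gains are deterministic.
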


\begin{proof}
 Because all gain sequences are deterministic (not random), condition A0$''$ implies A0. Additionally, since $s_k=s<\infty$ and since $n_k(m)=n(m)$ is a bounded deterministic function of $m$, then condition A1 is also satisfied. Next, note that for Algorithm {\ref{kirkey}}:
\begin{align*}
\upmu_k(j)=\left(\frac{{a}_{k}^{(j)} }{a_{k}}\right)\sum_{m=1}^{s}\chi\{j(m)=j\}n(m).
\end{align*}
Here, $\upmu_k(j)$ is a deterministic quantity since the subvector to update (and hence the indicator random variables in the previous equation) are deterministic functions of $m$. Since $a_k^{(j)}/a_k\rightarrow r_j$ then $\upmu(j)=r_j\sum_{m=1}^{s}\chi\{j(m)=j\}n(m)$. Thus, the first part of A2 holds. The second part of A2 holds since $S_r=0$ for all $r$. Because A3$''$--A8$''$ are versions of A3--A8 that have been rewritten in terms of Algorithm \ref{kirkey}, we have shown conditions A0--A8 are satisfied. 
\end{proof}

The following section discusses the validity of conditions A0--A8.

 \section[On the Convergence Conditions]{On the Convergence Conditions}
 \label{sec:discussconvergence}

Sections \ref{sec:12or15miles}--\ref{sec:specialitay} presented conditions for the convergence of the GCSA algorithm (conditions A0--A8).
 It is worthwhile to note that conditions A0--A8 closely resemble those in Kushner and Clark (1978) for the convergence of SA procedures. This is not surprising given that the proof of Theorem \ref{thm:hoeshoo} is based on principles similar to those in the proof of Theorem 2.3.1 in Kusher and Clark (1978) \nocite{kushnclark1978}. This section comments on the validity of A0--A8.
 
 
{\underline{\it{Condition A0}}}. Requiring $a_k>0$ and $a_k^{(j)}>0$ is a reasonable assumption when one has complete control over the specific form of the gain sequences (in the deterministic steepest descent algorithm, having the gain sequences be strictly positive guarantee that the update direction is in fact a descent direction). Furthermore, the requirement that $\sum_{k=0}^\infty a_k=\infty$ and $\sum a_k^2<\infty$ is easily satisfied by sequences of the form $a_k=a/(1+k+A)^\upalpha$ where $a>0$ and $A>0$ and where $0.5< \upalpha\leq 1$ (the non-convergence and square summability for series associated with $a_k$ is a long-standing requirement in SA). 
%
%
 %

Another assumptions in condition A0 is that the sequence $\tilde{a}_k^{(j)}/a_k$ converges in expectation and w.p.1 of to some finite, strictly positive constant. Because $\tilde{a}_k^{(j)}/a_k$ must converge (in a stochastic sense) to a non-zero constant for all $j$, all gain sequences must converge to zero at the same rate (in a stochastic sense). For Algorithm \ref{beastwasdone} (randomized subvector selection), Lemmas \ref{lem:devil} and \ref{lem:lemmylime} give conditions under which $\tilde{a}_j^{(j)}/a_k$ converges w.p.1 and in expectation to a finite constant. For Algorithm \ref{kirkey}, it suffices to have $a_k^{(j)}=a^{(j)}/(k+A^{(j)}+1)^\upalpha$ and $a_k=a/(k+1)^\upalpha$, where $a^{(j)}>0$, $a>0$, $A^{(j)}>0$, and $0<\upalpha\leq 1$ ($\upalpha$ must be the same for all $j$). It is important to note that the sequence $a_k$ is not used by the GCSA algorithm. Rather, $a_k$ serves only as a representation of the rate at which ${a}_k^{(j)}$ decreases. It is important to note that the requirement in condition A0 that $r_j$ be strictly positive is introduced only to guarantee that $\sum_{j=1}^d\upmu(j)\bm{g}^{(j)}(\bm\uptheta)=\bm{0}$ only when $\bm{g}(\bm\uptheta)=\bm{0}$. This implies that the only way in which the ODE in (\ref{eq:ode}) is equal to zero is if $\bm{Z}(t)=\bm\uptheta^\ast$. Note that if the sets $\mathcal{S}_1,\dots,\mathcal{S}_d$ overlap, it is still possible that the only solution to $\sum_{j=1}^d\upmu(j)\bm{g}^{(j)}(\bm\uptheta)=\bm{0}$ occurs at $\bm\uptheta=\bm\uptheta^\ast$ even when some of the $\upmu(j)$ are equal to zero (this occurs provided $\sum_{j=1}^d\upmu(j)\bm{g}^{(j)}(\bm\uptheta)=\bm\Lambda\bm{g}(\bm\uptheta)$ for some positive definite diagonal matrix $\bm\Lambda$).

{\underline{\it{Conditions A1 and A2}}}. Condition A1 requires that the number of blocks within the $k$ iteration, $s_k$, and the number of updates within each block, $n_k(m)$, be uniformly bounded over $k$, $m$, and $\upomega$. A special case where A1 holds is when a hard bound is imposed on $s_k$ and $n_k(m)$, as in the case of Algorithms \ref{beastwasdone} and \ref{kirkey}. Additionally, both these algorithms were shown to satisfy condition A2, which imposes a restriction on how $s_j$ and $n_j(m)$ relate to $s_i$ and $n_i(m)$ for $i\neq j$.

\label{page:onconditionA4444}{\underline{\it{Conditions A3 and A4}}}. Despite the fact that the gradient of $L(\bm\uptheta)$ is not explicitly used by the GCSA algorithm, the convergence theory of Section \ref{sec:convo} requires the existence of $\bm{g}(\bm\uptheta)$ and its continuity, a fact which may be hard to verify. Condition A4 is also difficult to verify in practice. Here, it is assumed that the algorithm's iterates are bounded w.p.1 (see Figure \ref{fig:spheres}). While the boundedness of the iterates is a common assumption throughout the SA, it remains somewhat controversial 
(e.g., Benveniste et al. 1990, p. 46)\nocite{benveniste1990}. 
Kushner and Clark (1978)\nocite{kushnclark1978}, however, mention that this boundedness is not necessarily a strong assumption since one typically imposes bounds on $\bm\uptheta$ in practice.
 Projected versions of GCSA could be the subject of future work and, with this in mind, the following quote seems fitting: 
\begin{quote}
There is no general scheme for showing $P(Q) = 1$ [the iterates of a realization are bounded w.p.1]. There are, however, problem-specific techniques for special problem classes. . . . One way to escape the boundedness issue is to alter the algorithm by projecting the iterates back onto a prescribed, large bounded set whenever they exit from the same. The trade-off is that the limiting ODE becomes more complicated. It is now confined to the said set and thus involves a ``reflection at the boundary'' of the same in an appropriate sense. (Borkar 1998\nocite{borkar1998}, on an asynchronous SA algorithm related to GCSA).
\end{quote}
\label{page:quotepage}

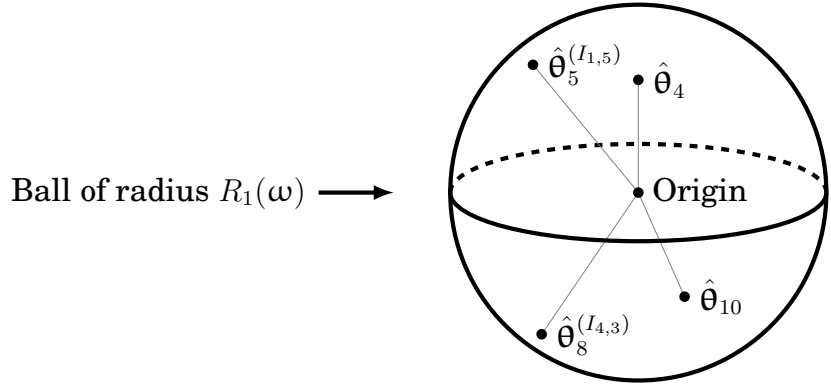
\begin{figure}[!t]
\centering
 \raisebox{1em}{\begin{tikzpicture}

\definecolor{mynewcoloring}{RGB}{102, 128, 153}
 \definecolor{mygreen}{RGB}{80, 135, 96}

\def\R{2.5} 
\def\angEl{15} 

\foreach \t in {0} { \DrawBackLatitudeCircle[\R]{\t} }


\coordinate (a) at (0,1.5);
\coordinate (b) at (1,-1,1);
\coordinate (c) at (-0.9,-1.5,1);
\coordinate (d) at (-1.4,1.7,0);

\coordinate (O) at (0,0);
\coordinate (r1) at (-3.25,0,0);
\coordinate (r2) at (-4.25,0,0);

\draw[->,ultra thick] (r2)--(r1);
\node at (-4.35,0,0)[left] {Ball of radius $R_1(\upomega)$};
\draw[help lines] (O)--(a);
\draw[help lines] (O)--(b);
\draw[help lines] (O)--(c);
\draw[help lines] (O)--(d);

\fill[black] (a) circle[radius=2pt] node[right] {\ $\hat{\bm{\uptheta}}_{4}$};
\fill[black] (b) circle[radius=2pt] node[right] {\ $\hat{\bm{\uptheta}}_{10}$};
\fill[black] (c) circle[radius=2pt] node[right] {\ $\hat{\bm{\uptheta}}_{8}^{(I_{4,3})}$};
\fill[black] (d) circle[radius=2pt] node[right] {\ $\hat{\bm{\uptheta}}_{5}^{(I_{1,5})}$};
\fill[black] (O) circle[radius=2pt] node[right] {\ Origin};

\foreach \t in {0} { \DrawFrontLatitudeCircle[\R]{\t} }

\draw[ultra thick] (0,0) circle (\R);
\end{tikzpicture}}
\caption[Condition A4 on the boundedness of the iterates]{Condition A4 states that the iterates produced by a single realization of GCSA must be contained within a ball of radius $R_1(\upomega)<\infty$ for $\upomega\in \Omega_1$}
\label{fig:spheres}
\end{figure}

{\underline{\it{Condition A5}}}. 
When the GCSA update directions are obtained using SG-type measurements, all bias terms are identically zero and condition A5 would automatically be satisfied. For the cyclic seesaw SPSA algorithm, Spall (1992)\nocite{spall1992} gives conditions under which the bias term is bounded (w.p.1) and converges to zero. In a manner analogous to Lemma 1 in Spall (1992)\nocite{spall1992} and under similar conditions (after a natural adaptation for the cyclic setting) it can be shown that the bias terms of the GCSA algorithm satisfy  A5.

{\underline{\it{Condition A6}}}. This condition restricts the relationship between $a_k$ and $\bm{D}_k$, a vector which represents the ``total noise'' of the $(k+1)$st iteration of GCSA (see the statement of condition A6 for a the precise definition of $\bm{D}_k$). Let us give an example of when condition A6 holds. Assume that for each $k\geq0$ the sequence  $\{\sum_{r=k}^m a_r \bm{D}_r\}_{m\geq k}$ (indexed by $m$) is a martingale sequence, that $E\|\bm{D}_k\|^2<\infty$, and that $E[\bm{D}_i^\top\bm{D}_j]=0$ for $i\neq j$. Then, by a relation on p. 315 in Doob (1953)\nocite{doob1953} we obtain the following inequality:
\begin{align}
\lim_{k\rightarrow\infty}P\left(\sup_{m\geq k}\Big\|\sum_{r=k}^{m}a_{r}\bm{D}_{r}\Big\|\geq \upvarepsilon\right)&\leq\lim_{k\rightarrow\infty}\upvarepsilon^{-2}\sum_{r=k}^\infty a_{r}^{2}E\|\bm{D}_{r}\|^2.\label{eq:ranch}
\end{align}
If in addition  we assume $\sum_{k=1}^\infty a_k^2<\infty$ and $E\|\bm{D}_{k}\|^2<\upsigma^2$ for some finite $\upsigma^2>0$, then the limit of the upper bound in (\ref{eq:ranch}) is equal to zero and A6 holds. Requiring the noise term to have a bounded variance could be a reasonable assumption when the update directions are obtained using SG-based gradient measurements. However, it is not always reasonable to assume the noise has bounded variance. For example, for the SPSA algorithm (which is a special case of GCSA) it is often the case that the variance of $\bm{D}_k$ grows at a rate proportional to $1/c_k^2$ (recall $c_k\rightarrow 0$). Therefore, in order for the limit in the upper bound of (\ref{eq:ranch}) to be zero, it would be necessary to have $a_k$ and $c_k$ satisfy $\sum_{k=1}^\infty a_k^2/c_k^2<\infty$ (see, for example, Spall 2003, p. 183)\nocite{ISSO}\label{page:anotherpagereffs}. 

{\underline{\it{Conditions A7 and A8}}}. Since the premise of stochastic optimization is that the function $L(\bm\uptheta)$ and its gradient are unknown, conditions A7 and A8 are likely to be impossible to verify in practice. Nevertheless, it is impractical to attempt to derive convergence conditions for an optimization algorithm without imposing restrictions on the properties of the function to minimize. Informally, conditions A7 and A8 force $\hat{\bm{\uptheta}}_k$ be inside the domain of attraction of $\bm\uptheta^\ast$ with enough frequency (for infinitely many $k$) that the iterates begin to benefit from the asymptotic stability of $\bm\uptheta^\ast$, which results in $\hat{\bm{\uptheta}}_k\rightarrow \bm\uptheta^\ast$ w.p.1. A few observations regarding the ODE in condition A7 are given next.

Understanding the ODE in (\ref{eq:ode}) requires understanding the terms $\upmu(j)$ (defined on p. \pageref{eq:tinkelpan}). When ${\tilde{a}_{k}^{(j)}}/{a_{k}}$ is independent of $s_k$, $n_k(m)$, and $j_k(m)$ (a valid assumption for Algorithms \ref{beastwasdone} and \ref{kirkey}),
$\upmu(j)$ can be rewritten as:
\begin{align*}
\upmu(j)=&\ r_j \times [\text{asymptotic average \# times entries $\mathcal{S}_j$ are updated per-iteration}].
\end{align*}
By construction, $\upmu(j)>0$ for all $j$. Thus, there exists a diagonal matrix $\bm\Lambda$ with strictly positive diagonal entries (determined by the $\upmu(j)$) such that:
\begin{align}
\label{eq:goodnewspillsconpaint}
\dot{\bm{Z}}(t)=- \bm\Lambda\bm{g}(\bm{Z}(t))
\end{align} 
(see p. \pageref{eq:ode333} and p. \pageref{eq:professiedadadna} for two special cases of $\bm\Lambda$). The ODE in (\ref{eq:goodnewspillsconpaint}) is a slight variation of the ODE that arises in the standard SA algorithm from (\ref{eq:formof}), where $\bm\Lambda=\bm{I}$ (see, for example, Spall 2003, Chapter 7)\nocite{ISSO}. 


At this point one might wonder whether $\bm\uptheta^\ast$, a minimizer of $L(\bm\uptheta)$, is necessarily a stable solution to (\ref{eq:goodnewspillsconpaint}). The answer, unfortunately, is no. While $\bm{Z}(t)=\bm\uptheta^\ast$ is certainly an {\it{equilibrium}} point of (\ref{eq:goodnewspillsconpaint}) (and hence a solution to (\ref{eq:goodnewspillsconpaint})), it is entirely possible for $\bm{Z}(t)=\bm\uptheta^\ast$ not to be a {\it{stable}} equilibrium point of (\ref{eq:goodnewspillsconpaint}) even when $L(\bm\uptheta)$ is differentiable for all degrees of differentiation) and $\bm\Lambda=\bm{I}$ (e.g., Absil and Kurdyka 2006, Proposition 2)\nocite{ansilandkurd2006}. Thus, condition A7 is not automatically satisfied when $\bm\uptheta^\ast$ is a minimizer of $L(\bm\uptheta)$. We will note, however, that if $\bm\uptheta^\ast$ is a Lyapunov stable solution to $\dot{\bm{Z}}(t)=-\bm{g}(\bm{Z}(t))$, if the entries of $\bm\Lambda$ have finite magnitudes, and if the smallest eigenvalue of $\bm\Lambda$ is strictly positive (more generally, if the smallest real part of an eigenvalue is strictly positive), then $\bm\uptheta^\ast$ is also a Lyapunov stable solution to (\ref{eq:goodnewspillsconpaint}) (e.g., Benveniste et al. 1990, pp. 111--112)\nocite{benveniste1990}. Therefore, asking for $\bm\uptheta^\ast$ to be a stable solution of (\ref{eq:goodnewspillsconpaint}) is not a stronger condition than the corresponding stability assumption when $\bm\Lambda=\bm{I}$.

 
 In general, it is impossible to know whether there is a unique solution to $\bm{g}(\bm\uptheta)=\bm{0}$. However, if $\bm\uptheta^{\ast,1}$ and $\bm\uptheta^{\ast,2}$ denote two distinct zeros of the gradient, it is impossible for both vectors to satisfy A7 and A8. Therefore, the GCSA iterates will converge to whichever of $\bm\uptheta^{\ast,1}$ and $\bm\uptheta^{\ast,2}$ (if any) satisfies both A7 and A8. Based on the idea in  Kushner and Clark (1978, p.39)\nocite{kushnclark1978}, however, condition A8 could be relaxed to allow all zeros of the gradient to be contained within some set that is stable (in the sense of Lyapunov), which would result in the GCSA algorithm's iterates converging to the aforementioned set w.p.1. This, however, would not guarantee convergence to a zero of $\bm{g}(\bm\uptheta)$.

\section{Concluding Remarks}
\label{sec:concremarksconvergence}

 This chapter gave a theorem for the convergence of the GCSA algorithm (Theorem \ref{thm:hoeshoo}) as well as two corollaries pertaining to special cases of GCSA (Corollaries \ref{thm:hoeshoo49488} and \ref{thm:hoeshooHarry}). The proof of Theorem \ref{thm:hoeshoo} is based on the proof of Theorem 2.3.1 in Kushner and Clark (1978)\nocite{kushnclark1978} which, in turn, is based on the ODE-based method for proving convergence of SA algorithms that was introduced by Ljung (1977)\nocite{ljung1977}. The time-dependent recursion in (\ref{eq:levelred}), which describes the evolution of the GCSA iterates, is similar to equation (2.3.3) in Kushner and Clark (1978)\nocite{kushnclark1978} although we point out that (\ref{eq:levelred}) is significantly more complex. As a result, some of the conditions of Theorem \ref{thm:hoeshoo} 
 resemble the conditions of Kushner and Clark's Theorem 2.3.1.\nocite{kushnclark1978}

 In contrast to this chapter's focus on convergence w.p.1 of the GCSA iterates, the following chapter focuses on the asymptotic normality of the normalized iterates from Algorithm \ref{kirkey}, a special case of GCSA in which the subvector to update is selected according to a deterministic pattern (see, for example, the algorithm specified by (\ref{eq:believeeyes0}--f), which is a special case of Algorithm \ref{kirkey}).


\chapter{Asymptotic Normality of GCSA}
\label{sec:ROC}

When applicable, asymptotic normality results can be used to construct approximate confidence regions for the SA iterates, to compute the relative efficiency between two SA algorithms by comparing their asymptotic mean squared errors (e.g., Spall 1992)\nocite{spall1992}, or even to define the rate of convergence of the vector-valued random sequence $\hat{\bm{\uptheta}}_k$.
Fabian (1968)\nocite{fabian1968}, a seminal paper in the SA literature, provides conditions for the asymptotic normality of the iterates of SA algorithms (after an appropriate centering and scaling). Fabian's theorem (Fabian 1968, Theorem 2.2)\nocite{fabian1968}, gives conditions under which there exists a constant $\upbeta>0$ such that $k^{\upbeta/2}(\hat{\bm{\uptheta}}_k-\bm\uptheta^\ast)$ has a limiting multivariate normal distribution with a theoretically computable mean and covariance. 
 One of the assumptions from Fabian's theorem, however, 
 is too strong for the GCSA algorithm. Specifically, Fabian (1968, Assumption 2.2.1)\nocite{fabian1968} requires a matrix that affects the mean and covariance matrix in the limiting distribution of $k^{\upbeta/2}(\hat{\bm{\uptheta}}_k-\bm\uptheta^\ast)$ to be symmetric. Section \ref{sec:genfabiangenfabian} generalizes Fabian's result by relaxing the symmetry assumption of the aforementioned matrix. The resulting generalization 
%
%
  expands the theorem's applicability to include a broader range of SA algorithms of practical interest (see Appendix \ref{sec:fabiansecgeneralize}) including a special case of GCSA.

The remainder of this chapter is organized as follows. First, Section \ref{sec:reviewfabian} reviews Fabian's theorem and its connection to SA algorithms. Then, Section \ref{subsec:primadonna} explains the nature of the incompatibility between Fabian's theorem and the GCSA algorithm. Section \ref{sec:genfabiangenfabian} provides a generalization of Fabian's theorem which is used in Section \ref{sec:upperpotomacshell} to derive an asymptotic normality result for a special case of GCSA. Section \ref{sec:bonestrailseason} comments on the validity of the assumptions in Section \ref{sec:upperpotomacshell}. Lastly, Section \ref{sec:dontmeanthings} contains concluding remarks.




\section[Reviewing Fabian's Theorem]{Reviewing Fabian's Theorem}
\label{sec:reviewfabian}

This section briefly reviews Fabian's result on asymptotic normality.


\begin{theorem}
[Fabian 1968, Theorem 2.2 or ``Fabian's Theorem''\nocite{fabian1968}]
\label{thm:fabian}
 For $k\geq 1$, let $\bm{V}_k$, $\bm{W}_k$, $\bm{T}_k$, and $\bm{T}$ be vectors in $\mathbb{R}^p$, and let $\bm\Gamma_k$, $\bm\Phi_k$, $\bm\Sigma$, $\bm\Gamma$, $\bm\Phi$, and $\bm{P}$ be matrices in $\mathbb{R}^{p\times p}$. Let $\bm{W}_k$ satisfy the recursion: 
\begin{align}
\label{eq:alien}
\bm{W}_{k+1}=(\bm{I}-k^{-\upalpha}\bm\Gamma_k)\bm{W}_k+\frac{\bm{T}_k}{k^{\upalpha+\upbeta/2}}+\frac{\bm\Phi_k\bm{V}_k}{k^{(\upalpha+\upbeta)/2}}.
\end{align}
Additionally, let $\mathcal{F}_k$ be a non-decreasing sequence of $\upsigma$-fields and assume there exists a set $\mathcal{S}$ such that $\mathcal{F}_k\subset \mathcal{S}$ for all $k$. Assume the following conditions hold:
\begin{DESCRIPTION}
\item[B0] $\bm\Gamma_k$, $\bm\Phi_{k-1}$, and $\bm{V}_{k-1}$ are $\mathcal{F}_k$-measurable. 
\item[B1]$\bm\Gamma_k\rightarrow\bm\Gamma$ w.p.1 where $\bm\Gamma=\bm{P}\bm\Lambda\bm{P}^\top$ for some real orthogonal matrix $\bm{P}$ and a diagonal matrix $\bm\Lambda$ with strictly positive eigenvalues (an important implication is that the matrix $\bm\Gamma$ must be symmetric).
\item[B2] $\bm\Phi_k\rightarrow \bm\Phi$ w.p.1.
\item[B3] Either $\bm{T}_k\rightarrow \bm{T}$ w.p.1 or $E\|\bm{T}_k-\bm{T}\|\rightarrow 0$.
\item[B4] $E[\bm{V}_k|\mathcal{F}_k]=\bm{0}$ and there exists $C$ such that $C>\|E[\bm{V}_k\bm{V}_k^\top|\mathcal{F}_k]-\bm\Sigma\|\rightarrow 0$.
\item[B5] $\upsigma_{k,r}^2\equiv E\chi\{\|\bm{V}_k\|^2\geq rk^\upalpha\}\|\bm{V}_k\|^2$ (recall $\chi\{\mathcal{E}\}$ denotes the indicator function of the event $\mathcal{E}$). For every $r>0$ assume that one of the following holds:
\begin{enumerate}[label=(\roman*)]
\item $\lim_{k\rightarrow \infty}\upsigma_{k,r}^2=0$.
\item $\upalpha=1$ and $\lim_{n\rightarrow \infty}n^{-1}\sum_{k=1}^n\upsigma^2_{k,r}=0$.
\end{enumerate}
\item[B6] Let $\uplambda\equiv \min_i{\{\Lambda_{ii}\}}$ where $\Lambda_{ii}$ is the $i$th diagonal entry of $\bm\Lambda$. Let $\upalpha$ and $\upbeta$ be constants such that $0<\upalpha\leq 1$ and $0\leq \upbeta$. Define $\upbeta_+\equiv\upbeta$ if $\upalpha=1$ and $\upbeta_+\equiv 0$ otherwise. Let $\upbeta_+<2\uplambda$.
\label{page:scratchless}
\end{DESCRIPTION}
 Then, the asymptotic distribution of $k^{\upbeta/2}\bm{W}_k$ is a multivariate normal random variable with mean $(\bm\Gamma-(\upbeta_+/2)\bm{I})^{-1}\bm{T}$ and covariance matrix $\bm{PQP}^\top$, where the $(i,j)$th entry of $\bm{Q}$ is equal to $(\bm{P}^\top\bm{\Phi}\bm\Sigma\bm\Phi^\top\bm{P})_{ij}(\Lambda_{ii}+\Lambda_{jj}-\upbeta_+)^{-1}$ with $(\bm{P}^\top\bm{\Phi}\bm\Sigma\bm\Phi^\top\bm{P})_{ij}$ denoting the $(i,j)$th entry of the matrix $\bm{P}^\top\bm{\Phi}\bm\Sigma\bm\Phi^\top\bm{P}$.
\end{theorem}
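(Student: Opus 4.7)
The plan is to iterate the recursion (\ref{eq:alien}) explicitly and then analyze the three resulting contributions (initial condition, deterministic drift, stochastic noise) separately. First, I would pass to the eigenbasis of the limit matrix $\bm\Gamma$ by setting $\bm W_k'=\bm P^\top\bm W_k$, $\bm T_k'=\bm P^\top\bm T_k$, $\bm\Phi_k'=\bm P^\top\bm\Phi_k$, and $\bm\Gamma_k'=\bm P^\top\bm\Gamma_k\bm P$. In these coordinates the limiting drift matrix is the diagonal $\bm\Lambda$, and since $\bm P$ is orthogonal, asymptotic normality of $k^{\upbeta/2}\bm W_k$ reduces to asymptotic normality of $k^{\upbeta/2}\bm W_k'$, with mean and covariance later rotated back by $\bm P$. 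Unrolling the recursion yields
\begin{align*}
\bm W_{k+1}=\bm\Psi_{k,0}\bm W_1+\sum_{j=1}^{k}\bm\Psi_{k,j}\,\frac{\bm T_j}{j^{\upalpha+\upbeta/2}}+\sum_{j=1}^{k}\bm\Psi_{k,j}\,\frac{\bm\Phi_j\bm V_j}{j^{(\upalpha+\upbeta)/2}},
\end{align*}
where $\bm\Psi_{k,j}\equiv\prod_{i=j+1}^{k}(\bm I-i^{-\upalpha}\bm\Gamma_i)$ (empty product equal to $\bm I$).

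The analytical heart of the argument is a sharp estimate for $\bm\Psi_{k,j}$. Using $\log(1-x)=-x+O(x^2)$ and the summability of $\sum i^{-2\upalpha}$ (when $\upalpha>1/2$; the case $\upalpha=1$ is immediate), I would show that in the rotated basis $\bm\Psi_{k,j}'$ is, up to $o(1)$ errors, the diagonal matrix with $i$th entry $\exp\!\bigl(-\Lambda_{ii}\sum_{r=j+1}^{k}r^{-\upalpha}\bigr)$. For $\upalpha=1$ this asymptotes to $(j/k)^{\Lambda_{ii}}$; for $\upalpha<1$ it decays super-polynomially unless $k-j$ is small, which makes the $\upbeta_+=0$ case easier. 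The replacement of $\bm\Gamma_i$ by $\bm\Gamma$ in this estimate is justified by B1 together with the fact that the perturbation $\bm\Gamma_i-\bm\Gamma\to \bm 0$ can be absorbed into the exponential via a Grönwall-type bound, since condition B6 keeps the relevant exponents in the regime where $\|\bm\Psi_{k,j}\|k^{\upbeta/2}j^{-\upalpha-\upbeta/2}$ is summable in $j$.

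Given these product estimates, I would handle each sum in turn. The initial term $k^{\upbeta/2}\bm\Psi_{k,0}\bm W_1\to\bm 0$ because the product contracts faster than $k^{-\upbeta/2}$ grows (this is exactly where B6 enters). For the drift term, using B3 to replace $\bm T_j$ by $\bm T$ and a Riemann-sum/Abel-summation argument on $k^{\upbeta/2}\sum_{j=1}^{k}\bm\Psi_{k,j}\,\bm T/j^{\upalpha+\upbeta/2}$, I would obtain the deterministic limit $(\bm\Gamma-(\upbeta_+/2)\bm I)^{-1}\bm T$; in the diagonalized basis this is the closed-form integral $\int_0^1 s^{\Lambda_{ii}-1-\upbeta_+/2}ds\cdot T_i'=(\Lambda_{ii}-\upbeta_+/2)^{-1}T_i'$. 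For the stochastic term, by B0 the summands form a martingale difference array with respect to $\mathcal{F}_j$; I would apply a martingale central limit theorem (e.g., Hall and Heyde). Condition B4 provides the predictable-variance convergence, and the Lindeberg condition is precisely B5 after checking that the scaling inside the indicator matches $k^\upalpha$. A direct computation of the asymptotic conditional covariance sum in the diagonal basis yields $(\Lambda_{ii}+\Lambda_{jj}-\upbeta_+)^{-1}(\bm P^\top\bm\Phi\bm\Sigma\bm\Phi^\top\bm P)_{ij}$, which is the announced $\bm Q$; rotating back by $\bm P$ produces the stated $\bm P\bm Q\bm P^\top$.

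The main obstacle, in my view, is the uniform-in-$j$ control of the matrix products $\bm\Psi_{k,j}$ when $\bm\Gamma_k$ has not yet reached its limit. Because $\bm\Gamma_k\to\bm\Gamma$ only w.p.1 (not uniformly), a naive exponential bound is too loose; I would need to split $j$ into the regime $j\le k^{1-\updelta}$ (where the product $\bm\Psi_{k,j}$ is tiny regardless of the early perturbations) and $j>k^{1-\updelta}$ (where $\bm\Gamma_j$ is already close to $\bm\Gamma$ w.p.1). This splitting, combined with the symmetry of $\bm\Gamma$ implicit in B1 that allows one to diagonalize once and for all, is what makes Fabian's original argument go through. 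The forthcoming generalization in Section \ref{sec:genfabiangenfabian} will relax precisely the symmetry so that the same splitting can still be done using a Jordan- or Schur-based decomposition of $\bm\Gamma$.
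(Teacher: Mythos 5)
Your proposal is correct in outline, but it takes a genuinely different route from the one the paper relies on. The paper does not prove Theorem \ref{thm:fabian} at all — it is quoted from Fabian (1968) — and the closest in-paper argument is the proof of the generalization, Theorem \ref{thm:generalizefabian} (Section \ref{sec:genfabiangenfabian} and Appendix \ref{sec:fabiansecgeneralize}). That argument never unrolls the recursion: it transforms to $\tilde{\bm W}_k=(k-1)^{\upbeta/2}\bm S^{-1}\bm W_k$, shows via Fabian's original lemmas that replacing $\bm T_k$, $\bm\Phi_k$, and $\bm\Gamma_k$ by their limits does not alter the asymptotic law, peels off the deterministic drift by applying Lemma 4.2 of Fabian (1967) to the difference of two coupled recursions, argues through characteristic functions that one may take the noise to be i.i.d.\ Gaussian w.l.o.g., and then reads off the limiting covariance by passing to the limit in the recursion for $\bm Q_k=E[\tilde{\bm W}_k'(\tilde{\bm W}_k')^\top]$ — again via Fabian's Lemma 4.2. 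Your route instead makes the solution explicit through the products $\bm\Psi_{k,j}$, extracts the drift by a Riemann-sum computation, and invokes a martingale CLT with B5 as the Lindeberg condition. What your approach buys is transparency: it is visible exactly where B6 kills the initial condition, where B4 supplies the predictable variance, and where B5 enters as Lindeberg. What it costs is the uniform-in-$j$ control of $\bm\Psi_{k,j}$ under merely almost-sure convergence of $\bm\Gamma_k$, which you correctly flag and which Fabian's recursive lemma sidesteps entirely; you would also need to check that the Cesàro-averaged variant B5(ii) and the $L^1$ variant of B3 still feed correctly into the martingale CLT and the drift limit, respectively, but both are routine. Your closing remark that the symmetry in B1 is what lets one diagonalize once and for all, and that the generalization replaces this with a Schur-type decomposition, matches exactly what the paper does with $\bm\Gamma=\bm S\bm U\bm S^{-1}$.
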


Let us discuss the connection between Fabian's theorem (Theorem \ref{thm:fabian}) and stochastic optimization.
 Consider a stochastic optimization algorithm that produces updates according to (\ref{eq:youknownothing}). Writing $\hat{\bm{g}}_k(\hat{\bm{\uptheta}}_k)$ as in (\ref{eq:tennisey}) 
and letting $\bm{W}_k= \hat{\bm{\uptheta}}_k-\bm\uptheta^\ast$, the algorithm in (\ref{eq:youknownothing}) can be rewritten as follows:
\begin{align}
\label{eq:husseinbolt}
\bm{W}_{k+1}=\bm{W}_k-a_k\left({\bm{g}}(\hat{\bm{\uptheta}}_k)+{\bm{\upbeta}}_k(\hat{\bm{\uptheta}}_k)+{\bm{\upxi}}_k(\hat{\bm{\uptheta}}_k)\right).
\end{align}
Asssume  $L(\bm\uptheta)$ is twice continuously differentiable and denote its Hessian matrix by
$\bm{H}(\bm\uptheta)$.
Then, using Taylor's theorem we may write $\bm{g}(\hat{\bm{\uptheta}}_k)=\tilde{\bm{H}}_k(\hat{\bm{\uptheta}}_k-\bm\uptheta^\ast)$,
where the $i$th row of $\tilde{\bm{H}}_k$ is equal to the $i$th row of $\bm{H}(\bm\uptheta)$ evaluated at $\bm\uptheta=(1-\uplambda_i)\hat{\bm{\uptheta}}_k+\uplambda_i\bm\uptheta^\ast$ for some $\uplambda_i\in[0,1]$ which depends on $\hat{\bm{\uptheta}}_k$.
By expanding $\bm{g}(\hat{\bm{\uptheta}}_k)$ around $\bm\uptheta^\ast$ in this way, we can rewrite (\ref{eq:husseinbolt}) in the following manner:\label{page:thispagehello}
\begin{align}
\bm{W}_{k+1}
&=\left(\bm{I}-k^{-\upalpha}k^{\upalpha}a_k\tilde{\bm{H}}_k\right)\bm{W}_k-\frac{k^{\upalpha+\upbeta/2}a_k\bm\upbeta_k(\hat{\bm{\uptheta}}_k)}{k^{\upalpha+\upbeta/2}}-\frac{k^{(\upalpha+\upbeta)/2}a_k\bm\upxi_k(\hat{\bm{\uptheta}}_k)}{k^{(\upalpha+\upbeta)/2}}.
\label{eq:crownuni}
\end{align}
Letting $\bm\Gamma_k=k^{\upalpha}a_k\tilde{\bm{H}}_k$, $\bm\Phi_k=\bm{I}$, $\bm{T}_k=-k^{\upalpha+\upbeta/2}a_k\bm\upbeta_k(\hat{\bm{\uptheta}}_k)$, and $\bm{V}_k=-k^{(\upalpha+\upbeta)/2}a_k\bm\upxi_k(\hat{\bm{\uptheta}}_k)$,
it follows that (\ref{eq:crownuni}), and therefore (\ref{eq:youknownothing}), can be rewritten in the form of (\ref{eq:alien}). Consequently, Fabian's theorem can be used to derive conditions for the asymptotic normality of $k^{\upbeta/2}(\hat{\bm{\uptheta}}_k-\bm\uptheta^\ast)$ when $\hat{\bm{\uptheta}}_k$ is obtained via the general SA recursion in  (\ref{eq:youknownothing}).
%

Undoubtedly, Fabian's theorem is already applicable to a variety of SA algorithms (see, for example, Zhou and Hu 2014,  Kar et al. 2013, Hu et al. 2012, and Zorin et al. 2000 to name a few recent applications). However, a critical assumption in Fabian's theorem is that $\bm\Gamma_k\rightarrow \bm\Gamma$ w.p.1 for some real, positive definite matrix $\bm\Gamma$ (condition B1). This assumption 
%
gives rise to an important complication
when attempting to write the GCSA algorithm in the form of (\ref{eq:alien}); the nature of this complication is explored in the following section. 

\section[A Limitation of Fabian's Theorem]{A Limitation of Fabian's Theorem}
\label{subsec:primadonna}
In this section we explore an issue that arises when attempting to write the recursion for the GCSA algorithm (see Line \ref{eq:indianindinorange} of Algorithm \ref{findme}) in the form of (\ref{eq:alien}), the recursion in Fabian's theorem. The nature of this issue is simple: the matrix $\bm\Gamma$ from condition B1 cannot generally be assumed to be symmetric for the GCSA algorithm, thus assumption B1 is not always satisfied and Fabian's theorem is not applicable. Using a simple example, we first explain why $\bm\Gamma$ cannot be assumed to be symmetric for the GCSA algorithm. Then, we discuss why it is not always possible to redefine $\bm\Gamma_k$, $\bm{T}_k$, $\bm{\Phi}_k$, and $\bm{V}_k$ in such a way that $\bm\Gamma$ (the limit w.p.1 of $\bm\Gamma_k$) is symmetric. Finally, we propose a slight generalization of Fabian's theorem that would allow for treatment of the special case of the GCSA algorithm where the subvector to update is selected according to a deterministic pattern (see, for example, the algorithm defined by (\ref{eq:william})).

Let us begin by rewriting the cyclic seesaw SA algorithm (a special case of GCSA) in the form of (\ref{eq:alien}). Recall that cyclic seesaw SA satisfies:
\begin{align}
\label{eq:santa}
\bm{{\hat{\uptheta}}}_{k+1}=\bm{{\hat{\uptheta}}}_{k}-a_{k}^{(1)}\hat{\bm{g}}_k^{(1)}(\hat{\bm\uptheta}_k)-a_{k}^{(2)}\hat{\bm{g}}_k^{(2)}(\hat{\bm\uptheta}_k^{(I)}),
\end{align}
where $\hat{\bm{\uptheta}}_k^{(I)}$ is defined in (\ref{eq:propoto2}). If $L(\bm\uptheta)$ is twice continuously differentiable then expanding $\bm{g}(\hat{\bm{\uptheta}}_k)$ around $\bm\uptheta^\ast$ and $\bm{g}(\hat{\bm{\uptheta}}_k^{(I)})$ around $\hat{\bm{\uptheta}}_k$:
\begin{align*}
\bm{g}^{(1)}(\hat{\bm{\uptheta}}_k)&=\bm{J}^{(1)}(\overline{\bm\uptheta}_k)(\hat{\bm{\uptheta}}_k-\bm\uptheta^\ast),\notag\\
\bm{g}^{(2)}(\hat{\bm{\uptheta}}_k)&=\bm{J}^{(2)}(\overline{\bm\uptheta}_k)(\hat{\bm{\uptheta}}_k-\bm\uptheta^\ast),\notag\\
\bm{g}^{(2)}(\hat{\bm{\uptheta}}_k^{(I)})&=\bm{g}^{(2)}(\hat{\bm{\uptheta}}_k)+\bm{J}^{(2)}(\overline{\bm\uptheta}_k^{(I)})(\hat{\bm{\uptheta}}_k^{(I)}-\hat{\bm{\uptheta}}_k),
\end{align*}
where $\bm{J}^{(j)}(\bm\uptheta)$ is the Jacobian of $\bm{g}^{(j)}(\bm\uptheta)$ with respect to $\bm\uptheta$,
  the $i$th row of $\bm{J}^{(j)}(\overline{\bm\uptheta}_k)$ is equal to the $i$th row of $\bm{J}^{(j)}(\bm\uptheta)$ evaluated at $\bm\uptheta=(1-\uplambda_i)\hat{\bm{\uptheta}}_k+\uplambda_i\bm\uptheta^\ast$ for some $\uplambda_i\in[0,1]$ which depends on $\hat{\bm{\uptheta}}_k$, and the $i$th row of $\bm{J}^{(2)}(\overline{\bm\uptheta}_k^{(I)})$ is the $i$th row of $\bm{J}^{(2)}(\bm\uptheta)$ with $\bm\uptheta=(1-\uplambda_i)\hat{\bm{\uptheta}}_k^{(I)}+\uplambda_i\hat{\bm{\uptheta}}_k$ for some $\uplambda_i\in[0,1]$ which depends on $\hat{\bm{\uptheta}}_k^{(I)}$. We now rewrite (\ref{eq:santa}) as:
\label{page:betweenersow}
\begin{align}
\bm{{\hat{\uptheta}}}_{k+1}=&\ \bm{{\hat{\uptheta}}}_{k}-a_{k}^{(1)}\bm{J}^{(1)}(\overline{\bm\uptheta}_k)(\hat{\bm{\uptheta}}_k-\bm\uptheta^\ast)-a_{k}^{(2)}\bm{J}^{(2)}(\overline{\bm\uptheta}_k)(\hat{\bm{\uptheta}}_k-\bm\uptheta^\ast)-a_{k}^{(2)}\bm{J}^{(2)}(\overline{\bm\uptheta}_k^{(I)})(\hat{\bm{\uptheta}}_k^{(I)}-\hat{\bm{\uptheta}}_k)\notag\\
&-a_k^{(1)}\left({\bm{\upbeta}}_k^{(1)}(\hat{\bm{\uptheta}}_k)+{\bm{\upxi}}_k^{(1)}(\hat{\bm{\uptheta}}_k)\right)-a_k^{(2)}\left({\bm{\upbeta}}_k^{(2)}(\hat{\bm{\uptheta}}_k^{(I)})+{\bm{\upxi}}_k^{(2)}(\hat{\bm{\uptheta}}_k^{(I)})\right).\label{eq:buckwild}
\end{align}
Using (\ref{eq:propoto2}) it follows that $-(\hat{\bm{\uptheta}}_k^{(I)}-\hat{\bm{\uptheta}}_k)/a_k^{(1)}=\bm{g}^{(1)}(\hat{\bm{\uptheta}}_k)+{\bm{\upbeta}}^{(1)}_k(\hat{\bm{\uptheta}}_k)+{\bm{\upxi}}^{(1)}_k(\hat{\bm{\uptheta}}_k)$. Combining this last observation with (\ref{eq:buckwild}) and using (\ref{eq:tennisey}), the cyclic seesaw SA algorithm can be written in the form of (\ref{eq:alien}) by letting $\bm{W}_k=\hat{\bm{\uptheta}}_k-\bm\uptheta^\ast$, 
\begin{subequations}
\begin{align}
\bm\Gamma_k=&\ {k^\upalpha}\left[a_k^{(1)}\bm{J}^{(1)}(\overline{\bm\uptheta}_k)+ a_k^{(2)}\bm{J}^{(2)}(\overline{\bm\uptheta}_k)\right],\label{eq:shmee0}\\
\bm{T}_k=& -{k^{\upalpha+\upbeta/2}}\Big[a_k^{(1)}\bm{\bm{\upbeta}}_k^{(1)}(\hat{\bm\uptheta}_k)+a_k^{(2)}\bm{\upbeta}_k^{(2)}(\hat{\bm\uptheta}_k^{(I)})-a_k^{(1)}a_k^{(2)}\bm{J}^{(2)}(\overline{\bm\uptheta}_k^{(I)})\bm{g}^{(1)}(\hat{\bm\uptheta}_k)\notag\\
&-a_k^{(1)}a_k^{(2)}\bm{J}^{(2)}(\overline{\bm\uptheta}_k^{(I)})\bm{\upbeta}_k^{(1)}(\hat{\bm\uptheta}_k)-a_k^{(1)}a_k^{(2)}E\Big(\bm{J}^{(2)}(\overline{\bm\uptheta}_k^{(I)})\bm{\upxi}_k^{(1)}(\hat{\bm\uptheta}_k)\Big|\mathcal{F}_k\Big)\Big],\label{eq:shmee1}\\
\bm{V}_k=& - k^{(\upalpha+\upbeta)/2}\Big[a_k^{(1)}\bm{\upxi}_k^{(1)}(\hat{\bm\uptheta}_k)+a_k^{(2)}\bm{\upxi}_k^{(2)}(\hat{\bm\uptheta}_k^{(I)})+a_k^{(1)}a_k^{(2)}E\Big(\bm{J}^{(2)}(\overline{\bm\uptheta}_k^{(I)})\bm{\upxi}_k^{(1)}(\hat{\bm\uptheta}_k)\Big|\mathcal{F}_k\Big)\notag\\
&-a_k^{(1)}a_k^{(2)}\bm{J}^{(2)}(\overline{\bm\uptheta}_k^{(I)})\bm{\upxi}^{(1)}_k(\hat{\bm\uptheta}_k)\Big],\label{eq:shmee}
\end{align}
\end{subequations}
where $\mathcal{F}_k\equiv \{\hat{\bm{\uptheta}}_0, \hat{\bm{\uptheta}}_0^{(I)},\hat{\bm{\uptheta}}_1, \hat{\bm{\uptheta}}_1^{(I)}, \dots, \hat{\bm{\uptheta}}_k\}$, and letting $\bm\Phi_k=\bm{I}$.

In general, the matrix $\bm\Gamma_k$ in (\ref{eq:shmee0}) cannot be assumed to converge to a symmetric matrix w.p.1. To see this consider the case where $k^\upalpha a_k^{(1)}\rightarrow a^{(1)}$, $k^\upalpha a_k^{(2)}\rightarrow a^{(2)}$, and $\bm{J}^{(j)}(\hat{\bm{\uptheta}}_k)\rightarrow \bm{J}^{(j)}(\bm\uptheta^\ast)$ w.p.1 (e.g., when $L(\bm\uptheta)$ is twice continuously differentiable and $\hat{\bm{\uptheta}}_k\rightarrow \bm\uptheta^\ast$ w.p.1). Then,
\begin{align}
\label{eq:lookatme}
\bm\Gamma_k\rightarrow \bm\Gamma=a^{(1)}\bm{J}^{(1)}(\bm\uptheta^\ast)+a^{(2)}\bm{J}^{(2)}(\bm\uptheta^\ast){\text{ w.p.1.}}
\end{align}
If $a^{(1)}=a^{(2)}=a>0$ then $\bm\Gamma=a\bm{H}(\bm\uptheta^\ast)$ is clearly a symmetric matrix. This is not generally the case, however, when $a^{(1)}\neq a^{(2)}$ (an exception being the case where $L(\bm\uptheta)$ is linearly separable in $\bm\uptheta^{(1)}$ and $\bm\uptheta^{(2)}$). Because there is no {\it{unique}} way to define the variables $\bm\Gamma_k$, $\bm{T}_k$, $\bm{\Phi}_k$, and $\bm{V}_k$ in (\ref{eq:alien}), it may be tempting to think that it is always possible to redefine these variables so that $\bm\Gamma$ is symmetric. Next we show that such a redefinition often leads to very strong assumptions on $\hat{\bm{\uptheta}}_k$.



Suppose an SA algorithm can be written in the form of (\ref{eq:alien}) and that all of the conditions of Fabian's theorem are satisfied {\it{with the exception that $\bm\Gamma$ is not symmetric}}. 
%
%
For simplicity, let us consider a special case where writing the algorithm in the form of (\ref{eq:alien}) can be done by letting $\upbeta\neq 0$, $\bm{T}_k=\bm{T}$, $\bm\Phi_k=\bm{I}$, and $\bm\Gamma_k=\bm\Gamma$ where $\bm\Gamma$ is not symmetric (note that we are assuming that $\bm{T}_k$, $\bm{\Phi}_k$, and $\bm{\Gamma}_k$ do not depend on $k$). 
Now, assume the matrices $\bm\Gamma_k'$, $\bm\Phi_k'$ and vectors $\bm{T}_k'$, $\bm{V}_k'$ provide an alternative way to write the SA algorithm in the form of (\ref{eq:alien}). Furthermore, assume that $\bm\Gamma_k'$ converges w.p.1 to a positive definite matrix. Then, since
\label{eq:anotherpagerefgoeshere}
\begin{align*}
\bm{W}_{k+1}=&\ (\bm{I}-k^{-\upalpha}\bm\Gamma_k')\bm{W}_k+\frac{\bm{T}}{k^{\upalpha+\upbeta/2}}+\frac{\bm{V}_k}{k^{(\upalpha+\upbeta)/2}}+k^{-\upalpha}(\bm\Gamma_k'-\bm\Gamma)\bm{W}_k,
\end{align*}
 it is known that either $\bm{T}_k'$ must depend on $(\bm\Gamma'_k-\bm\Gamma)\bm{W}_k$ or $\bm\Phi_k'\bm{V}_k'$ must depend on $(\bm\Gamma_k'-\bm\Gamma)\bm{W}_k$. However, the assumptions Fabian's theorem imposes on $\bm{T}_k'$, $\bm\Phi_k'$, and $\bm{V}_k'$ are typically incompatible with the term $k^{\upbeta/2}(\bm\Gamma'_k-\bm\Gamma)\bm{W}_k$. Say, for example, that we let $\bm\Phi_k'=\bm{I}$, $\bm{V}_k'=\bm{V}_k$, and $\bm{T}_k'=\bm{T}+k^{\upbeta/2}(\bm\Gamma'_k-\bm\Gamma)\bm{W}_k$.
 Here, having $\bm{T}_k'$ converge to some finite vector $\bm{T}'$ w.p.1 (as required by B3) would impose a priori conditions on the stochastic rate at which $\hat{\bm{\uptheta}}_k$ converges to $\bm\uptheta^\ast$. However, such a condition violates the very purpose of Fabian's theorem, which is to establish such a rate of convergence.
 Alternatively, having $\bm\Phi_k'\bm{V}_k'=\bm{V}_k+k^{(\upbeta-\upalpha)/2}(\bm\Gamma_k'-\bm\Gamma)\bm{W}_k$ does not lead to an appropriate definition of $\bm\Phi_k'$ and $\bm{V}_k'$
 given the restriction that $\bm{V}_k'$ must have mean zero conditionally on $\mathcal{F}_k$. 
By simply generalizing the theorem to relax the symmetry condition on $\bm\Gamma$ we avoid the need for imposing additional restrictions on $\hat{\bm{\uptheta}}_k$. Next we propose a generalization of Fabian's theorem that will allow us to show asymptotic normality for a special case of the GCSA algorithm.

Note that in (\ref{eq:lookatme}) we have
  $\bm\Gamma=\bm{A}\bm{H}(\bm\uptheta^\ast)$, where $\bm{A}$ is a diagonal matrix with $i$th diagonal entry equal to $a^{(1)}$ if $i\leq p'$ and with $i$th diagonal entry equal to $a^{(2)}$ if $i>p'$. While $\bm\Gamma=\bm{A}\bm{H}(\bm\uptheta^\ast)$ is not generally real symmetric and positive definite as required by Fabian's condition B1 (an exception being the case where $L(\bm\uptheta)$ is linearly separable in $\bm\uptheta^{(1)}$ and $\bm\uptheta^{(2)}$), it is entirely possible for this matrix to have strictly positive eigenvalues and be real-diagonalizable (i.e., $\bm{S}^{-1}\bm{AH(\bm\uptheta^\ast)S}=\bm\Lambda$ for a nonsingular real matrix $\bm{S}$ and a positive definite diagonal matrix $\bm\Lambda$); the following proposition formalizes this observation.

\begin{proposition}
\label{prop:sandpipercrossing}
Let $\bm{A}$ and $\bm{H}(\bm\uptheta^\ast)$ be real square matrices. Additionally, let $\bm{A}$ be a diagonal matrix with strictly positive diagonal entries. Finally, let  $\bm{H}(\bm\uptheta^\ast)$ be symmetric and positive definite (a common assumption in minimization problems). Then, there exist a nonsingular real matrix $\bm{S}$ and a positive definite diagonal matrix $\bm\Lambda$ such that $\bm{S}^{-1}\bm{AH(\bm\uptheta^\ast)S}=\bm\Lambda$.
\end{proposition}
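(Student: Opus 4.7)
The plan is to exploit the fact that $\bm{A}$ being diagonal with strictly positive entries admits a well-defined real symmetric positive definite square root $\bm{A}^{1/2}$ (simply the diagonal matrix whose entries are the positive square roots of those of $\bm{A}$). My strategy is to symmetrize the product $\bm{AH}(\bm\uptheta^\ast)$ via a similarity transformation, then appeal to the spectral theorem.

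First, I would consider the auxiliary matrix $\bm{M} \equiv \bm{A}^{1/2}\bm{H}(\bm\uptheta^\ast)\bm{A}^{1/2}$. Because $\bm{A}^{1/2}$ is real symmetric and $\bm{H}(\bm\uptheta^\ast)$ is real symmetric, $\bm{M}$ is real symmetric. Moreover, for any nonzero $\bm{x}\in\mathbb{R}^p$ the vector $\bm{y}\equiv\bm{A}^{1/2}\bm{x}$ is also nonzero (since $\bm{A}^{1/2}$ is nonsingular), so
\begin{align*}
\bm{x}^\top \bm{M}\bm{x} = \bm{y}^\top\bm{H}(\bm\uptheta^\ast)\bm{y}>0,
\end{align*}
showing $\bm{M}$ is positive definite. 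By the spectral theorem, there exists a real orthogonal matrix $\bm{Q}$ and a diagonal matrix $\bm\Lambda$ with strictly positive diagonal entries such that $\bm{Q}^\top\bm{M}\bm{Q}=\bm\Lambda$.

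Next, I would write $\bm{AH}(\bm\uptheta^\ast) = \bm{A}^{1/2}\bm{M}\bm{A}^{-1/2}$, which exhibits $\bm{AH}(\bm\uptheta^\ast)$ as similar to $\bm{M}$. Defining $\bm{S}\equiv \bm{A}^{1/2}\bm{Q}$, which is nonsingular and real because both $\bm{A}^{1/2}$ and $\bm{Q}$ are, I would compute
\begin{align*}
\bm{S}^{-1}\bm{AH}(\bm\uptheta^\ast)\bm{S} = \bm{Q}^\top\bm{A}^{-1/2}\bm{A}^{1/2}\bm{M}\bm{A}^{-1/2}\bm{A}^{1/2}\bm{Q} = \bm{Q}^\top\bm{M}\bm{Q} = \bm\Lambda,
\end{align*}
which is the desired decomposition.

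There is no substantial obstacle here; the only subtlety worth flagging is that $\bm{AH}(\bm\uptheta^\ast)$ itself is not generally symmetric, so one cannot invoke the spectral theorem on it directly. The symmetrization step via $\bm{A}^{1/2}$ is the essential idea, and it works precisely because $\bm{A}$ is diagonal with positive entries, which makes $\bm{A}^{1/2}$ both real and symmetric. Note also that the proof yields a strictly real construction of $\bm{S}$ and $\bm\Lambda$, matching the requirement that $\bm\Lambda$ be (real) positive definite diagonal.
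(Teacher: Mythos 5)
Your proof is correct and follows essentially the same route as the paper: both conjugate $\bm{AH}(\bm\uptheta^\ast)$ by $\bm{A}^{1/2}$ to reach the symmetric positive definite matrix $\bm{A}^{1/2}\bm{H}(\bm\uptheta^\ast)\bm{A}^{1/2}$ (which the paper realizes as the Gram matrix $\bm{M}^\top\bm{M}$ with $\bm{M}=\bm{H}(\bm\uptheta^\ast)^{1/2}\bm{A}^{1/2}$), apply the spectral theorem, and set $\bm{S}=\bm{A}^{1/2}$ times the resulting orthogonal matrix. Your version is marginally cleaner in that it verifies positive definiteness directly from the quadratic form and so never needs the square root of $\bm{H}(\bm\uptheta^\ast)$.
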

\begin{proof}
Because $\bm{A}$ and $\bm{H}(\bm\uptheta^\ast)$ are both real and positive definite, Corollary 2.5.14 in Horn and Johnson (2010) implies there exist real, symmetric, positive definite matrices $\bm{A}^{1/2}$ and $\bm{H}(\bm\uptheta^\ast)^{1/2}$ such that $\bm{A}=\bm{A}^{1/2}\bm{A}^{1/2}$ and $\bm{H}(\bm\uptheta^\ast)=\bm{H}(\bm\uptheta^\ast)^{1/2}\bm{H}(\bm\uptheta^\ast)^{1/2}$. Therefore, 
$\bm{AH(\bm\uptheta^\ast)}=\bm{A}^{1/2}\bm{A}^{1/2}\bm{H}(\bm\uptheta^\ast)^{1/2}\bm{H}(\bm\uptheta^\ast)^{1/2}$. By multiplying the previous equation on the right by $\bm{I}=\bm{A}^{1/2}\bm{A}^{-1/2}$, we have:
\begin{align*}
\bm{AH(\bm\uptheta^\ast)}&=\bm{A}^{1/2}[\bm{H}(\bm\uptheta^\ast)^{1/2}\bm{A}^{1/2}]^\top[\bm{H}(\bm\uptheta^\ast)^{1/2}\bm{A}^{1/2}]\bm{A}^{-1/2}.
\end{align*}
Define $\bm{M}\equiv \bm{H}(\bm\uptheta^\ast)^{1/2}\bm{A}^{1/2}$. Then, $\bm{M}^\top\bm{M}$ is a positive definite real symmetric matrix. Thus,  Corollary 2.5.14 in Horn and Johnson (2010) once again implies that
we can write $\bm{M}=\bm{P}_0\bm\Lambda\bm{P}_0^{-1}$ for a nonsingular real orthogonal matrix $\bm{P}_0$ and a diagonal matrix, $\bm\Lambda$, with strictly positive eigenvalues. Then, $\bm{AH(\bm\uptheta^\ast)}= \bm{S\Lambda S}^{-1}$ with $\bm{S}=\bm{A}^{1/2}\bm{P}_0$. Note that while $\bm{S}$ is a real matrix, it is not necessarily an orthogonal matrix (i.e., $\bm{S}^{-1}$ is not necessarily equal to $\bm{S}^\top$), this follows from the fact that $\bm{S}^{-1}=\bm{P}_0^\top\bm{A}^{-1/2}$ and $\bm{S}^\top=\bm{P}_0^\top\bm{A}^{1/2}$ so that having $\bm{S}^{-1}=\bm{S}^\top$ would imply $\bm{A}^{-1/2}=\bm{A}^{1/2}$, which is only true of $\bm{A}=\bm{I}$.
\end{proof}

It follows from Proposition \ref{prop:sandpipercrossing} that generalizing Condition B1 in Fabian's theorem to allow $\bm\Gamma$ to be any real diagonalizable matrix with strictly positive eigenvalues (i.e., $\bm{S}^{-1}\bm{AH(\bm\uptheta^\ast)S}=\bm\Lambda$ with $\bm{S}$ and $\bm\Lambda$ defined as in Proposition \ref{prop:sandpipercrossing}) would allow for treatment of the cyclic seesaw SA algorithm. Furthermore, Section \ref{sec:upperpotomacshell} shows that the proposed generalization to Fabian's theorem would also allow for treatment of Algorithm \ref{kirkey} (deterministic pattern for coordinate selection), a special case of the GCSA algorithm (Algorithm \ref{findme}).

 In the following section we provide a generalization of Fabian's theorem which is slightly more general than the extension suggested by Proposition \ref{prop:sandpipercrossing}. The generalization only requires that $\bm\Gamma=\bm{SUS}^{-1}$ for a real non-singular matrix $\bm{S}$ and a real {{upper triangular}} matrix $\bm{U}$ with strictly positive diagonal entries (here $\bm\Gamma$ is said to be upper-triangularizable). Appendix \ref{sec:fabiansecgeneralize} discusses how the generalization to Fabian's theorem expands the theorem's applicability to include other SA algorithms (aside from Algorithm \ref{kirkey}) of practical interest.

\section{Generalizing Fabian's Theorem}
\label{sec:genfabiangenfabian}

This section contains
 a generalization to Fabian's theorem derived by replacing condition B1 with a slightly weaker assumption.  Specifically, for $\bm{W}_k$ and $\upbeta$ defined as in (\ref{eq:alien}) we show $k^{\upbeta/2}\bm{W}_k$ converges in distribution to a multivariate normal random variable under conditions B0, B2--B6, and a relaxed version of B1. Following the proof of Fabian's theorem (Fabian 1968), we begin by showing that $k^{\upbeta/2}\bm{W}_k$ is asymptotically normally distributed if and only if a much simpler process is also asymptotically normally distributed. After showing that the simpler process does, in fact, converge in distribution to a multivariate normal random variable, the parameters of its asymptotic distribution will uniquely determine the parameters of the asymptotic distribution of $k^{\upbeta/2}\bm{W}_k$. Next we introduce some notation and the generalized version of condition B1.

  Throughout this Chapter $M_{ij}$ and $M_{k(i,j)}$ denote the $(i,j)$th entries of the matrices $\bm{M}$ and $\bm{M}_k$, respectively, and ${v}_i$ and $v_{k(i)}$ denote the $i$th entries of the vectors $\bm{v}$ and $\bm{v}_k$, respectively. Furthermore, $k\geq 1$ denotes a strictly positive integer; $\xrightarrow{\text{\ dist\ }}$ means convergence in distribution; $\bm{V}_k$, $\bm{W}_k$, $\bm{T}_k$, and $\bm{T}$ are vectors in $\mathbb{R}^p$; $\bm\Gamma_k$, $\bm\Phi_k$, $\bm\Sigma$, $\bm\Gamma$, $\bm\Phi$, and $\bm{P}$ are matrices in $\mathbb{R}^{p\times p}$; and $\mathcal{N}(\bm\upmu,\bm{M})$ denotes a multivariate normal random variable with mean $\bm\upmu$ and covariance $\bm{M}$. Furthermore, throughout this section we assume the recursion for ${\bm{W}}_k$ given in (\ref{eq:alien}) satisfies the following conditions:
  \begin{DESCRIPTION}
\label{apge:definematrixindices}
\item[B1$'$] There exists an {\it{upper triangular}} matrix $\bm{U}\in \mathbb{R}^{p\times p}$ with strictly positive eigenvalues and a nonsingular $\bm{S}\in \mathbb{R}^{p\times p}$ such that $\bm\Gamma_k\rightarrow \bm\Gamma=\bm{S}\bm{U}\bm{S}^{-1}$ w.p.1.
\item[B6$'$] Define $\uplambda\equiv \min_i{\{U_{ii}\}}$. Let $\upalpha$ and $\upbeta$ be constants such that $0<\upalpha\leq 1$ and $0\leq \upbeta$. Define $\upbeta_+\equiv\upbeta$ if $\upalpha=1$ and $\upbeta_+\equiv 0$ otherwise. Then, $\upbeta_+<2\uplambda$.
\item[B0$'$ \& B2$'$--B5$'$]  The same as B0 and B2--B5, respectively.
\end{DESCRIPTION}
Conditions B0$'$ and B2$'$--B6$'$ are identical to the conditions of Fabian's theorem (note that B6$'$ is obtained by rewriting B6 in the notation of B1$'$). On the other hand, B1$'$ is the relaxed version of Fabian's  corresponding condition (condition B1 in Theorem \ref{thm:fabian}) requiring symmetry of $\bm\Gamma$. As discussed in the comment following Proposition \ref{prop:sandpipercrossing}, any real square matrix with real eigenvalues satisfies B1$'$. The following Theorem is our generalization to Fabian's theorem based on conditions B1$'$--B6$'$.

  \begin{theorem}[A Generalization of Fabian's Theorem]
  \label{thm:generalizefabian} 
  Assume the recursion for ${\bm{W}}_k$ given in (\ref{eq:alien}) satisfies B0$'$--B6$'$. Then, the asymptotic distribution of $k^{\upbeta/2}\bm{W}_k$ is a multivariate normal random variable with mean $\bm{S}\bm\upnu$ and covariance matrix $\bm{SQS}^\top$, where the entries of $\bm\upnu$ are the unique solution to:
\begin{align}
 \label{eq:okayfine}
 \upnu_i\equiv (\tilde{{U}}_{ii})^{-1}\tilde{{T}}_i-\Bigg[(\tilde{{U}}_{ii})^{-1}\sum_{j=i+1}^p \tilde{U}_{ij}\upnu_j\Bigg]
 \end{align}
with  $\tilde{\bm{T}}\equiv \bm{S}^{-1}\bm{T}$ and $\tilde{\bm{U}}\equiv \bm{U}-(\upbeta_+/2)\bm{I}$; and the entries of $\bm{Q}$ are the unique solution to:
\begin{align}
\label{eq:nolookback}
Q_{ij}=\frac{[\bm{S}^{-1}\bm{\Phi\Sigma\Phi}^\top(\bm{S}^{-1})^\top]_{ij}}{\tilde{U}_{ii}+\tilde{U}_{jj}}-\Bigg[\frac{\sum_{\ell=j+1}^{p} \tilde{U}_{j\ell}\ Q_{i\ell}+\sum_{\ell=i+1}^{p} \tilde{U}_{i\ell}\ Q_{\ell j}}{\tilde{U}_{ii}+\tilde{U}_{jj}}\Bigg].
\end{align}
 Using (\ref{eq:nolookback}), $Q_{ij}$ is a function of the elements of the set $\{Q_{mn}\}_{(m,n)\in \mathcal{G}}$ where $\mathcal{G}$ is the set of $(m,n)$ tuples such that either $m\geq i+1$ and $n\geq j+1$, $m=i$ and $n\geq j+1$, or $m\geq i+1$ and $n=j$. Therefore, the entries of $\bm{Q}$ can be computed sequentially beginning with $Q_{pp}$, for which (\ref{eq:nolookback}) gives a solution. Similarly, the entries of $\bm\upnu$ can be computed beginning with $\upnu_p$, for which (\ref{eq:okayfine}) gives a solution.
\end{theorem}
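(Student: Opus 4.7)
The plan is to reduce to the upper triangular case via a linear change of variables, then adapt Fabian's (1968) original proof strategy. The key observation is that an upper triangular $\bm{U}$, while not diagonal, still admits a backward-recursive analysis because its last row only couples to its last column, its second-to-last row only to its last two columns, and so on. I would first define $\tilde{\bm{W}}_k \equiv \bm{S}^{-1}\bm{W}_k$, where $\bm{S}$ is from condition B1$'$. Pre-multiplying (\ref{eq:alien}) by $\bm{S}^{-1}$ gives
\begin{align*}
\tilde{\bm{W}}_{k+1}=(\bm{I}-k^{-\upalpha}\tilde{\bm{\Gamma}}_k)\tilde{\bm{W}}_k+k^{-\upalpha-\upbeta/2}\tilde{\bm{T}}_k+k^{-(\upalpha+\upbeta)/2}\tilde{\bm{\Phi}}_k\bm{V}_k,
\end{align*}
with $\tilde{\bm{\Gamma}}_k\equiv\bm{S}^{-1}\bm{\Gamma}_k\bm{S}$, $\tilde{\bm{T}}_k\equiv\bm{S}^{-1}\bm{T}_k$, $\tilde{\bm{\Phi}}_k\equiv\bm{S}^{-1}\bm{\Phi}_k$. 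Because $\bm{S}$ is deterministic and nonsingular, conditions B0$'$--B6$'$ transfer directly, and $\tilde{\bm{\Gamma}}_k\rightarrow\bm{U}$ w.p.1 with $\bm{U}$ upper triangular and $U_{ii}>0$. If I can show $k^{\upbeta/2}\tilde{\bm{W}}_k\xrightarrow{\text{dist}}\mathcal{N}(\bm\upnu,\bm{Q})$ with $\bm\upnu$ and $\bm{Q}$ as in (\ref{eq:okayfine})--(\ref{eq:nolookback}), then the continuous mapping theorem yields $k^{\upbeta/2}\bm{W}_k=\bm{S}(k^{\upbeta/2}\tilde{\bm{W}}_k)\xrightarrow{\text{dist}}\mathcal{N}(\bm{S}\bm\upnu,\bm{SQS}^\top)$ as desired.

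Second, I would analyze the transformed recursion by processing the components of $\tilde{\bm{W}}_k$ in reverse order $i=p,p-1,\dots,1$. The $p$-th component satisfies, asymptotically, a one-dimensional recursion of the form $\tilde{W}_{k+1(p)}=(1-k^{-\upalpha}\tilde{\Gamma}_{k(pp)})\tilde{W}_{k(p)}+k^{-\upalpha-\upbeta/2}\tilde{T}_{k(p)}+k^{-(\upalpha+\upbeta)/2}[\tilde{\bm{\Phi}}_k\bm{V}_k]_{(p)}+o(\cdot)$, since $\tilde{\Gamma}_{k(pj)}\to 0$ for $j<p$ renders the off-diagonal coupling $-k^{-\upalpha}\sum_{j<p}\tilde{\Gamma}_{k(pj)}\tilde{W}_{k(j)}$ lower-order once one establishes $\tilde{W}_{k(j)}$ is appropriately tight. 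Fabian's original one-dimensional theorem then yields $k^{\upbeta/2}\tilde{W}_{k(p)}\xrightarrow{\text{dist}}\mathcal{N}(\upnu_p,Q_{pp})$ with $\upnu_p=\tilde{T}_p/\tilde{U}_{pp}$ and $Q_{pp}=[\bm{S}^{-1}\bm{\Phi\Sigma\Phi}^\top(\bm{S}^{-1})^\top]_{pp}/(2\tilde{U}_{pp})$, matching (\ref{eq:okayfine})--(\ref{eq:nolookback}) at $(i,j)=(p,p)$. Inductively, having analyzed components $i+1,\dots,p$, the $i$-th component satisfies a scalar recursion whose forcing depends on the already-analyzed $\tilde{W}_{k(j)}$ (for $j>i$) through $-k^{-\upalpha}\sum_{j>i}\tilde{U}_{ij}\tilde{W}_{k(j)}$, which contributes to the asymptotic bias via $-\tilde{U}_{ij}\upnu_j/\tilde{U}_{ii}$, giving (\ref{eq:okayfine}), and to the asymptotic variance via the cross-covariance terms in (\ref{eq:nolookback}).

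Third, to obtain joint (not merely marginal) asymptotic normality with the prescribed covariance $\bm{Q}$, I would carry out the variation-of-parameters representation
\begin{align*}
\tilde{\bm{W}}_k=\Bigg[\prod_{j=1}^{k-1}(\bm{I}-j^{-\upalpha}\tilde{\bm{\Gamma}}_j)\Bigg]\tilde{\bm{W}}_1+\sum_{j=1}^{k-1}\Bigg[\prod_{\ell=j+1}^{k-1}(\bm{I}-\ell^{-\upalpha}\tilde{\bm{\Gamma}}_\ell)\Bigg]\Big[j^{-\upalpha-\upbeta/2}\tilde{\bm{T}}_j+j^{-(\upalpha+\upbeta)/2}\tilde{\bm{\Phi}}_j\bm{V}_j\Big],
\end{align*}
note that since $\tilde{\bm{\Gamma}}_j\to\bm{U}$ with $\bm{U}$ upper triangular, the fundamental product remains asymptotically upper triangular with diagonal decay rates $\exp(-U_{ii}\cdot)$, and then apply the martingale CLT (as in Fabian) to the noise sum and a Kronecker-type bound to the bias sum. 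The resulting asymptotic covariance $\bm{Q}$ is characterized as the unique solution of the Lyapunov equation $\tilde{\bm{U}}\bm{Q}+\bm{Q}\tilde{\bm{U}}^\top=\bm{S}^{-1}\bm{\Phi\Sigma\Phi}^\top(\bm{S}^{-1})^\top$, and writing out its $(i,j)$-entry using the upper-triangularity of $\tilde{\bm{U}}$ yields (\ref{eq:nolookback}) directly.

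The main obstacle will be accounting exactly for the cross-terms generated by the above-diagonal entries of $\bm{U}$ and by the transients $\tilde{\bm{\Gamma}}_k-\bm{U}$. In Fabian's diagonal case these cross-terms vanish and components decouple, making the recursive formulas trivial; in the present upper-triangular setting, verifying that the Lyapunov-type equation has the exact solution (\ref{eq:nolookback}) and that all transient contributions are $o(1)$ requires careful bookkeeping. This is, however, finite-dimensional linear algebra once the asymptotic normality of the noise integral is in hand, so I expect the analytic heart of the argument to remain Fabian's martingale CLT, with the novelty residing in the recursive identification of $\bm\upnu$ and $\bm{Q}$.
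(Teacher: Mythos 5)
Your plan is correct in substance and shares the paper's skeleton---transform by $\bm{S}^{-1}$, lean on Fabian's reduction arguments for the analytic core, and exploit upper-triangularity to make the mean and covariance backward-solvable---but the execution differs in two ways worth noting. First, the paper folds the scaling into the transformed process, defining $\tilde{\bm{W}}_k=(k-1)^{\upbeta/2}\bm{S}^{-1}\bm{W}_k$, so that the transformed recursion is again of the form (\ref{eq:alien}) with a drift matrix converging to $\tilde{\bm{U}}=\bm{U}-(\upbeta_+/2)\bm{I}$; the $\upbeta_+$ correction then appears automatically rather than having to be tracked through a separate scalar application of Fabian's theorem as in your component-wise step. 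Second, the paper does not argue marginally component by component: it isolates the mean by showing $\tilde{\bm{W}}_k-\tilde{\bm{W}}_k'\rightarrow\bm\upnu$ w.p.1 (where $\tilde{\bm{W}}_k'$ is the recursion with the $\bm{T}$ term deleted), applying Fabian's (1967) scalar Lemma 4.2 to the entries in reverse order, and then computes the exact covariance recursion $\bm{Q}_{k+1}$ of the Gaussian-reduced process $\tilde{\bm{W}}_k'$ and passes to the limit entrywise, again backward from $(p,p)$. Your Lyapunov-equation characterization $\tilde{\bm{U}}\bm{Q}+\bm{Q}\tilde{\bm{U}}^\top=\bm{S}^{-1}\bm{\Phi}\bm{\Sigma}\bm{\Phi}^\top(\bm{S}^{-1})^\top$ is equivalent to (\ref{eq:nolookback}) once you expand it using the vanishing sub-diagonal entries of $\tilde{\bm{U}}$, so that identification is fine and arguably cleaner conceptually.

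One caution on your second step: analyzing the $p$-th component first requires the coupling term $-k^{-\upalpha}\sum_{j<p}\tilde{\Gamma}_{k(pj)}\tilde{W}_{k(j)}$ to be negligible, which presupposes tightness of $k^{\upbeta/2}\tilde{W}_{k(j)}$ for the components you have not yet analyzed---a circularity if taken literally. The paper sidesteps this by first invoking Fabian's argument that $\tilde{\bm\Gamma}_k$ may be replaced w.l.o.g. by its limit $\tilde{\bm{U}}$ in the full vector recursion, after which the sub-diagonal coupling is exactly zero and the backward decoupling is exact rather than asymptotic. Your third step (variation of parameters plus the martingale CLT applied to the full vector) would also resolve it, but then the marginal induction of step two becomes redundant; you should commit to one of the two routes rather than both.
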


\begin{proof}
(Although the following proof is complete, a more detailed proof of the theorem can be found in Appendix \ref{sec:fabiansecgeneralize}). In order to compute the asymptotic distribution of $k^{\upbeta/2}\bm{W}_k$ we begin by constructing the following process:
\begin{align*}
\tilde{\bm{W}}_{k}=(k-1)^{\upbeta/2}\bm{S}^{-1}\bm{W}_k,
\end{align*}
where $\bm{S}$ is the matrix from B1$'$. Here, using Slutsky's theorem it follows that if $\tilde{\bm{W}}_k\xrightarrow{\text{\ dist\ }}\mathcal{N}(\bm\upmu,\bm{M})$,
 then $k^{\upbeta/2}\bm{W}_k\xrightarrow{\text{\ dist\ }}\mathcal{N}(\bm{S}\bm\upmu,\bm{SM S}^\top)$.
   Thus, proving that the process $\tilde{\bm{W}}_k$ is asymptotically normally distributed (with certain mean vector and covariance matrix) is sufficient for computing the asymptotic distribution of $k^{\upbeta/2}\bm{W}_k$. Moreover, after some algebraic manipulation it can be shown that:
\begin{align}
\label{eq:graham3884}
 \tilde{\bm{W}}_{k+1}=&\ (\bm{I}-k^{-\upalpha}\tilde{\bm{\Gamma}}_k)\tilde{\bm{W}}_k+\frac{\bm{S}^{-1}\bm{T}_k}{k^\upalpha}+\frac{\bm{S}^{-1}\bm\Phi_k\bm{V}_k}{k^{\upalpha/2}},
\end{align}
 where $\tilde{\bm{W}}_{1}=\bm{0}$, 
\begin{align*}
\tilde{\bm\Gamma}_k\equiv \left(\frac{k}{k-1}\right)^{\upbeta/2}\bm{S}^{-1}\bm\Gamma_k\bm{S}-\left[\left(\frac{k}{k-1}\right)^{\upbeta/2}-1\right]k^\upalpha\bm{I},
\end{align*}
and $\tilde{\bm\Gamma}_k\rightarrow \tilde{\bm{U}}= \bm{U}-(\upbeta_+/2)\bm{I}$ w.p.1 so that $\tilde{\bm{W}}_k$ is a special case of (\ref{eq:alien}). Using the same arguments as those in Fabian (1968, proof of Theorem 2.2) it can be shown that replacing ${\bm{T}}_k$ with ${\bm{T}}$ and $\bm\Phi_k$ with $\bm\Phi$  in (\ref{eq:graham3884}) does not change the asymptotic distribution of $\tilde{\bm{W}}_k$ (this result is not immediate due to the recursive nature of $ \tilde{\bm{W}}_{k}$). Therefore, in order to show that $\tilde{\bm{W}}_k$ is asymptotically normally distributed we may assume, without loss of generality (w.l.o.g.), that ${\bm{T}}_k={\bm{T}}$ and $\bm\Phi_k=\bm\Phi$ so that:
\begin{align}
\label{eq:jaredboothsssd}
 \tilde{\bm{W}}_{k+1}=(\bm{I}-k^{-\upalpha}\tilde{\bm{\Gamma}}_k)\tilde{\bm{W}}_k+k^{-\upalpha}\tilde{\bm{T}}+k^{-\upalpha/2}\tilde{\bm{V}}_k,
\end{align}
where $\tilde{\bm{T}}= \bm{S}^{-1}\bm{T}$ and $\tilde{\bm{V}}_k\equiv \bm{S}^{-1}\bm{\Phi V}_k$. Next, we relate the asymptotic distribution of $\tilde{\bm{W}}_k$, as described by (\ref{eq:jaredboothsssd}), to that of an even simpler process. 

Consider the process:
\begin{align}
\label{eq:norahjoness772}
 \tilde{\bm{W}}_{k+1}'=(\bm{I}-k^{-\upalpha}\tilde{\bm{\Gamma}}_k)\tilde{\bm{W}}_k'+k^{-\upalpha/2}\tilde{\bm{V}}_k,
\end{align}
obtained by removing the term $k^{-\upalpha}\tilde{\bm{T}}$ from (\ref{eq:jaredboothsssd}). Lemma 4.2 (Fabian 1967) implies $\tilde{\bm{W}}_k-\tilde{\bm{W}}_k'\rightarrow\bm\upnu$ w.p.1, where the entries of $\bm\upnu$ are given in (\ref{eq:okayfine}).
The significance of this observation is that if $\tilde{\bm{W}}_{k}'\xrightarrow{\text{\ dist\ }}\mathcal{N}(\bm\upmu,\bm{M})$,
 then $\tilde{\bm{W}}_{k}\xrightarrow{\text{\ dist\ }}\mathcal{N}(\bm\upmu+\bm\upnu,\bm{M})$. 
%
 At this point, the same arguments as those in Fabian (1968, proof of Theorem 2.2) can be used to show the following two results:
 \begin{enumerate}
 \item Replacing $\tilde{\bm{\Gamma}}_k$ with $\tilde{\bm{U}}$ in (\ref{eq:norahjoness772}) does not change the asymptotic distribution of $\tilde{\bm{W}}_k'$ which depends on the limit (w.p.1) of $\tilde{\bm{\Gamma}}_k$ but not on $\tilde{\bm{\Gamma}}_k$ itself. Therefore, w.l.o.g. we may assume that $\tilde{\bm{\Gamma}}_k=\tilde{\bm{U}}$ so that:
 \begin{align}
\label{eq:utildenew}
 \tilde{\bm{W}}_{k+1}'=(\bm{I}-k^{-\upalpha}\tilde{\bm{U}})\tilde{\bm{W}}_k'+k^{-\upalpha/2}\tilde{\bm{V}}_k.
\end{align}

 \item The characteristic function of the asymptotic distribution of $\tilde{\bm{W}}_{k}'$ evaluated at $\bm{t}\in \mathbb{R}^p$ depends on $\bm{t}$, $\upalpha$, $\tilde{\bm{U}}$, and on $\tilde{\bm{\Sigma}}\equiv \lim_{k\rightarrow \infty}\text{cov}[\tilde{\bm{V}}_k|\mathcal{F}_k]$ but is independent of other aspects of the distribution of $\tilde{\bm{V}}_k$ (provided conditions B0$'$--B6$'$ hold).  Therefore, we may assume w.l.o.g. that the vectors $\tilde{\bm{V}}_k$ are i.i.d. $\mathcal{N}(\bm{0},\tilde{\bm{\Sigma}})$.
 \end{enumerate}
 Next we derive the asymptotic distribution of the process $\tilde{\bm{W}}_k'$ in (\ref{eq:utildenew}).

First, note that $\tilde{\bm{\Sigma}}=\bm{S}^{-1}\bm{\Phi\Sigma\Phi}^\top(\bm{S}^{-1})^\top$ by condition B4$'$. Next, by point 2 in the previous paragraph we may assume w.l.o.g. that the vectors $\tilde{\bm{V}}_k$ are i.i.d. $\mathcal{N}(\bm{0},\tilde{\bm{\Sigma}})$. Consequently, the distribution of $\tilde{\bm{W}}_k'$ from (\ref{eq:utildenew}) must be a multivariate normal random variable with mean zero (since $\tilde{\bm{W}}_1'=\bm{0}$) and covariance $\bm{Q}_k\equiv E[\tilde{\bm{W}}_{k}'(\tilde{\bm{W}}_{k}')^\top]$
which satisfies:
\begin{align*}
Q_{k+1(ij)}=&\ \left(1-k^{-\upalpha}\left[\tilde{U}_{ii}+\tilde{U}_{jj}-k^{-\upalpha} \tilde{U}_{ii}\hspace{.2em} \tilde{U}_{jj}\right]\right)Q_{k(ij)}\notag\\
&-k^{-\upalpha}\left[\sum_{\ell=j+1}^{p} \tilde{U}_{j\ell}\hspace{.2em} Q_{k(i\ell)}+\sum_{\ell=i+1}^{p} \tilde{U}_{i\ell}\hspace{.2em} Q_{k(\ell j)}\right]+k^{-\upalpha}\tilde{\Sigma}_{ij}\notag\\
&+k^{-2\upalpha}\sum_{\ell=i}^p \tilde{U}_{i\ell}\sum_{s=j+1}^p\tilde{U}_{js}\hspace{.2em} Q_{\ell s}+k^{-2\upalpha}\sum_{\ell=i+1}^p \tilde{U}_{i\ell}\hspace{.2em} \tilde{U}_{jj}\hspace{.2em} Q_{\ell j}.
\end{align*}
 Here, Lemma 4.2 in Fabian (1967) implies $\bm{Q}\equiv \lim_{k\rightarrow \infty}\bm{Q}_k$ is a matrix whose entries are the unique solution to (\ref{eq:nolookback}).
Therefore, $\tilde{\bm{W}}_k'\xrightarrow{\text{\ dist\ }}\mathcal{N}(\bm{0},\bm{Q})$ which implies ${\bm{W}}_{k}\xrightarrow{\text{\ dist\ }}\mathcal{N}(\bm{S}\bm\upnu,\bm{S}\bm{Q}\bm{S}^\top)$.
\end{proof}

Note that if $\bm\Gamma$ is real symmetric then the terms in square brackets in (\ref{eq:okayfine}) and (\ref{eq:nolookback}) disappear since $\tilde{\bm{U}}$ may be taken to be a diagonal matrix. This implies that when $\bm\Gamma$ is a real and symmetric matrix (as in Fabian's theorem) then:
\begin{align}
\label{eq:fishytailshastrain}
\bm{S}\bm\upnu=\bm{S}\tilde{\bm{U}}^{-1}\tilde{\bm{T}}=\bm{S}[\bm{U}-(\upbeta_+/2)\bm{I}]^{-1}\bm{S}^{-1}\bm{T}=(\bm\Gamma-(\upbeta_+/2)\bm{I})^{-1}\bm{T}.
\end{align}
The mean vector in (\ref{eq:fishytailshastrain}) is the same as the mean of the asymptotic distribution in Fabian's theorem (Theorem \ref{thm:fabian}). Similarly, when $\bm{U}$ is a diagonal matrix, we can assume $\bm{S}=\bm{P}$ for some orthogonal matrix $\bm{P}$ and
 (\ref{eq:nolookback}) reduces to $Q_{ij}=(\bm{P}^\top\bm{\Phi}\bm\Sigma\bm\Phi^\top\bm{P})_{ij}(U_{ii}+U_{jj}-\upbeta_+)^{-1}$ as in Fabian's theorem.

\section{Asymptotic Normality of Algorithm \ref{kirkey}}
\label{sec:upperpotomacshell}
This section gives a set of conditions for the asymptotic normality of the normalized iterates from Algorithm \ref{kirkey} (a special case of GCSA).
%
%
First, following a derivation similar to that of (\ref{eq:buckwild}), Algorithm \ref{kirkey} may be written as:
\begin{align}
 \hat{\bm{\uptheta}}_{k+1}=&\ \hat{\bm\uptheta}_k-\sum_{z=1}^{s}\sum_{\ell=0}^{n(z)-1} {a}_{k}^{(j(z))} \bm{J}^{(j(z))}(\overline{\bm\uptheta}_k)(\hat{\bm{\uptheta}}_k-\bm\uptheta^\ast)\notag\\
 &+\sum_{z=1}^{s}\sum_{\ell=0}^{n(z)-1} {a}_{k}^{(j(z))} \bm{J}^{(j(z))}\left(\overline{\bm\uptheta}_k^{(I_{z,\ell})}\right)[\bm\Delta_{\bm{g}}^{(I_{z,\ell})}+\bm\Delta_{\bm{\upbeta}}^{(I_{z,\ell})}+\bm\Delta_{\bm{\upxi}}^{(I_{z,\ell})}]\notag\\
 &-\sum_{z=1}^{s}\sum_{\ell=0}^{n_(z)-1} {a}_{k}^{(j(z))} {\bm{\upbeta}}_k^{(j(z))}\left(\hat{\bm\uptheta}_k^{\left(I_{z,\ell}\right)}\right)\notag\\
 &-\sum_{z=1}^{s}\sum_{\ell=0}^{n(z)-1} {a}_{k}^{(j(z))} {\bm{\upxi}}_k^{(j(z))}\left(\hat{\bm\uptheta}_k^{\left(I_{z,\ell}\right)}\right),\label{eq:eleafanimalstrees}
\end{align}
where $\hat{\bm{\uptheta}}_k$ denotes an iterate of Algorithm \ref{kirkey} and
\begin{subequations}
\begin{align}
 \bm\Delta_{\bm{g}}^{(I_{z,\ell})}\equiv&\ \sum_{m=1}^{z-1}\sum_{i=0}^{n(m)-1} \left[ {a}_{k}^{(j(m))} {\bm{g}}^{(j(m))}\left(\hat{\bm\uptheta}_k^{(I_{m,i})}\right)\right]+\sum_{i=0}^{\ell-1} \left[ {a}_{k}^{(j(z))}{\bm{g}}^{(j(z))}\left(\hat{\bm\uptheta}_k^{(I_{z,i})}\right)\right],\label{eq:futuramasohs0}\\
 \bm\Delta_{\bm\upbeta}^{(I_{z,\ell})}\equiv&\ \sum_{m=1}^{z-1}\sum_{i=0}^{n_k(m)-1} \left[ {a}_{k}^{(j(m))} {\bm{\upbeta}}_k^{(j(m))}\left(\hat{\bm\uptheta}_k^{(I_{m,i})}\right)\right]+\sum_{i=0}^{\ell-1} \left[ {a}_{k}^{(j(z))} {\bm{\upbeta}}_k^{(j(z))}\left(\hat{\bm\uptheta}_k^{(I_{z,i})}\right)\right],\label{eq:futuramasohs1}\\
 \bm\Delta_{\bm\upxi}^{(I_{z,\ell})}\equiv&\ \sum_{m=1}^{z-1}\sum_{i=0}^{n_k(m)-1} \left[ {a}_{k}^{(j(m))} {\bm{\upxi}}_k^{(j(m))}\left(\hat{\bm\uptheta}_k^{(I_{m,i})}\right)\right]+\sum_{i=0}^{\ell-1} \left[ {a}_{k}^{(j(z))} {\bm{\upxi}}_k^{(j(z))}\left(\hat{\bm\uptheta}_k^{(I_{z,i})}\right)\right].\label{eq:futuramasohs}
\end{align}
\end{subequations}
Then, in a manner analogous to (\ref{eq:shmee0}--c) we let $\upalpha>0$, $\upbeta\geq 0$, $\bm{W}_k=\hat{\bm{\uptheta}}_k=\bm\uptheta^\ast$, and let $\mathcal{F}_k$ be the sigma field generated by $\{\hat{\bm{\uptheta}}_\ell\}_{\ell=0}^k$ as well as by any random variables generated  by the algorithm in the production of $\hat{\bm{\uptheta}}_k$. Additionally,
let 
\begin{align}
\bm{T}_k=& \ - k^{\upalpha+\upbeta/2}\sum_{z=1}^{s}\sum_{\ell=0}^{n(z)-1} {a}_{k}^{(j(z))} {\bm{\upbeta}}_k^{(j(z))}\left(\hat{\bm\uptheta}_k^{\left(I_{z,\ell}\right)}\right)\notag\\
&+k^{\upalpha+\upbeta/2}\sum_{z=1}^{s}\sum_{\ell=0}^{n(z)-1} {a}_{k}^{(j(z))} \bm{J}^{(j(z))}\left(\overline{\bm\uptheta}_k^{(I_{z,\ell})}\right)[\bm\Delta_{\bm{g}}^{(I_{z,\ell})}+\bm\Delta_{\bm{\upbeta}}^{(I_{z,\ell})}]\notag\\
&+k^{\upalpha+\upbeta/2}\sum_{z=1}^{s}\sum_{\ell=0}^{n(z)-1} {a}_{k}^{(j(z))} E\Big[\bm{J}^{(j(z))}\left(\overline{\bm\uptheta}_k^{(I_{z,\ell})}\right) \bm\Delta_{\bm\upxi}^{(I_{z,\ell})}\Big|\mathcal{F}_k\Big],
\label{eq:goingcrazyhim}
\end{align}
let $\bm\Phi_k=\bm{I}$, let $\bm{V}_k$ be given by
\begin{align}
\bm{V}_k=&\ -k^{(\upalpha+\upbeta)/2}\sum_{z=1}^{s}\sum_{\ell=0}^{n(z)-1} {a}_{k}^{(j(z))} {\bm{\upxi}}_k^{(j(z))}\left(\hat{\bm\uptheta}_k^{\left(I_{z,\ell}\right)}\right)\notag\\
&+k^{(\upalpha+\upbeta)/2}\sum_{z=1}^{s}\sum_{\ell=0}^{n(z)-1} {a}_{k}^{(j(z))} \bm{J}^{(j(z))}\left(\overline{\bm\uptheta}_k^{(I_{z,\ell})}\right) \bm\Delta_{\bm\upxi}^{(I_{z,\ell})}\notag\\
&-k^{(\upalpha+\upbeta)/2}\sum_{z=1}^{s}\sum_{\ell=0}^{n(z)-1} {a}_{k}^{(j(z))} E\Big[\bm{J}^{(j(z))}\left(\overline{\bm\uptheta}_k^{(I_{z,\ell})}\right) \bm\Delta_{\bm\upxi}^{(I_{z,\ell})}\Big|\mathcal{F}_k\Big],
\label{eq:othingcoodsd}
\end{align}
and let
\begin{align}
\bm\Gamma_k=k^\upalpha\sum_{z=1}^{s}\sum_{\ell=0}^{n(z)-1} {a}_{k}^{(j(z))} \bm{J}^{(j(z))}(\overline{\bm\uptheta}_k).
\label{eq:pantyponcho}
\end{align}
With this notation, the recursion in (\ref{eq:eleafanimalstrees}) defining an iteration of Algorithm \ref{kirkey} can be written in the form of (\ref{eq:alien}), the recursion of Theorem \ref{thm:generalizefabian}. 

Because (\ref{eq:eleafanimalstrees}) is a special case of (\ref{eq:alien}), conditions B0$'$--B6$'$ would be sufficient for the asymptotic normality of $k^{\upbeta/2}(\hat{\bm{\uptheta}}_k-\bm\uptheta^\ast)$ when $\hat{\bm{\uptheta}}_k$ denotes an iterate of Algorithm \ref{kirkey}. In practice, however, the validity of conditions B0$'$--B6$'$ (or of the slightly stronger conditions B0--B6) can be difficult to verify for {\it{any}} algorithm which may be written in the form of (\ref{eq:alien}). 
%
%
In the case of Algorithm \ref{kirkey} this is complicated by the fact that the terms $\bm\Gamma_k$, $\bm{T}_k$, $\bm{\Phi}_k$, and $\bm{V}_k$ have more complex forms than the corresponding variables for classical SA algorithms (compare the definitions below (\ref{eq:crownuni}) to (\ref{eq:goingcrazyhim})--(\ref{eq:pantyponcho})). 
%
%
 In an effort to obtain a set of conditions that are easier to understand than B0$'$--B6$'$, this section derives a set of conditions which imply B0$'$--B6$'$ hold for Algorithm \ref{kirkey}. To do this, we first present a useful lemma related to condition B5$'$.
%

\begin{lemma}
\label{lem:piccardo}
Let $\bm{V}_k$ be a vector-valued sequence with $\bm{V}_k= \sum_{i=1}^N\bm{\Psi}_k(i)$, where $N$ is an integer such that $2\leq N<\infty$ and $\bm{\Psi}_k(i)$ denotes a vector-valued random variable.
Additionally,
%
%
 for some $\upalpha>0$ and for every $r>0$ let the values $\upsigma_{k,r}^2(i)\equiv E\chi \{\|\bm{\Psi}_k(i)\|^2\geq rk^\upalpha\}\|\bm{\Psi}_k(i)\|^2$ satisfy
$\lim_{k\rightarrow \infty}\upsigma_{k,r}^2(i)=0$ for all $i$.
Then, the sequence $\upsigma_{k,r}^2\equiv E\chi\{\|\bm{V}_k\|^2\geq rk^\upalpha\}\|\bm{V}_k\|^2$ also satisfies $\lim_{k\rightarrow \infty}\upsigma_{k,r}^2=0$.
\end{lemma}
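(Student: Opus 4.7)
The plan is to leverage the elementary inequality $\|\bm{V}_k\|^2 \leq N\sum_{i=1}^N \|\bm{\Psi}_k(i)\|^2$ (Cauchy--Schwarz) together with a simple set inclusion: on the event $\{\|\bm{V}_k\|^2 \geq rk^\upalpha\}$ we have $N\sum_i \|\bm{\Psi}_k(i)\|^2 \geq rk^\upalpha$, so at least one $\|\bm{\Psi}_k(i)\|^2$ must meet $rk^\upalpha/N^2$. Formally,
\begin{align*}
\{\|\bm{V}_k\|^2 \geq rk^\upalpha\} \ \subseteq\ \bigcup_{i=1}^N \{\|\bm{\Psi}_k(i)\|^2 \geq rk^\upalpha/N^2\}.
\end{align*}
These two observations together reduce $\upsigma_{k,r}^2$ to a finite sum of cross terms of the form $E\chi\{\|\bm{V}_k\|^2 \geq rk^\upalpha\}\|\bm{\Psi}_k(j)\|^2$, each of which I would then control separately using only the hypothesis $\upsigma_{k,r}^2(i) \to 0$ for every $r > 0$.

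First I would write
\begin{align*}
\upsigma_{k,r}^2 \ \leq\ N\sum_{j=1}^N E\bigl[\chi\{\|\bm{V}_k\|^2 \geq rk^\upalpha\}\|\bm{\Psi}_k(j)\|^2\bigr],
\end{align*}
and then split each summand according to whether $\|\bm{\Psi}_k(j)\|^2$ itself is above or below the threshold $rk^\upalpha/N^2$. The contribution from $\{\|\bm{\Psi}_k(j)\|^2 \geq rk^\upalpha/N^2\}$ is bounded directly by $\upsigma_{k,\, r/N^2}^2(j)$, which tends to $0$ by assumption. The remaining contribution, from $\{\|\bm{\Psi}_k(j)\|^2 < rk^\upalpha/N^2\}$, is at most $(rk^\upalpha/N^2)\,P(\|\bm{V}_k\|^2 \geq rk^\upalpha)$. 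Using the set inclusion together with Markov's inequality applied to the truncated second moment,
\begin{align*}
P(\|\bm{\Psi}_k(i)\|^2 \geq rk^\upalpha/N^2) \ \leq\ \frac{N^2}{rk^\upalpha}\,\upsigma_{k,\, r/N^2}^2(i),
\end{align*}
the prefactor $rk^\upalpha/N^2$ is cancelled out and this contribution is bounded by $\sum_i \upsigma_{k,\, r/N^2}^2(i)$, which also vanishes in the limit.

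Summing over $j$ and combining the two pieces yields $\upsigma_{k,r}^2 \leq C(N)\sum_i \upsigma_{k,\, r/N^2}^2(i) \to 0$ as $k \to \infty$, which is the claim. The main obstacle is handling the case in which $\|\bm{V}_k\|^2$ is large but no single summand $\|\bm{\Psi}_k(j)\|^2$ is correspondingly large, but as shown above this is resolved by applying Markov's inequality not to the raw second moment (which is not assumed bounded) but to the truncated second moment $\upsigma_{k,\, r/N^2}^2(i)$; this is what allows the argument to go through using only the given hypothesis, without any additional integrability assumption on the $\bm{\Psi}_k(i)$.
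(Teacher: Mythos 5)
Your proof is correct, and it takes a genuinely different (though equally elementary) route from the one in the paper. The paper works at the level of norms rather than squared norms: from the triangle inequality it derives the single pointwise bound $\chi\{\|\bm{V}_k\|^2\geq rk^{\upalpha}\}\|\bm{V}_k\|\leq\sum_{i}2\,\chi\{\|\bm{\Psi}_k(i)\|^2\geq rk^{\upalpha}/4\}\|\bm{\Psi}_k(i)\|$ (stated for $N=2$, with the general case declared ``essentially identical''), then squares both sides and controls the resulting cross terms with the Cauchy--Schwarz inequality applied to the truncated second moments. You instead start from $\|\bm{V}_k\|^2\leq N\sum_i\|\bm{\Psi}_k(i)\|^2$, reduce to mixed terms $E\bigl[\chi\{\|\bm{V}_k\|^2\geq rk^{\upalpha}\}\|\bm{\Psi}_k(j)\|^2\bigr]$, and dispose of the awkward case (``$\|\bm{V}_k\|$ large but $\|\bm{\Psi}_k(j)\|$ small'') via the set inclusion, a union bound, and Markov's inequality applied to the truncated second moment --- the last point being the key observation that lets you avoid assuming $E\|\bm{\Psi}_k(i)\|^2<\infty$. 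Both arguments rely on the same essential mechanism (a large $\|\bm{V}_k\|$ forces some $\|\bm{\Psi}_k(i)\|$ to be proportionally large, after which the hypothesis is invoked at a rescaled threshold, $r/4$ there versus $r/N^2$ here, which is harmless since the hypothesis holds for every $r>0$). What your version buys is a clean treatment of general $N$ with no cross terms to square out; what the paper's version buys is that it never needs the probability bound or Markov's inequality at all. Either proof is acceptable.
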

\begin{proof}
For simplicity, we assume $N=2$ so that $\bm{V}_k=\bm\Psi_k(1)+\bm\Psi_k(2)$ (the proof for $N>2$ is essentially identical).
First,
%
%
 using the triangle inequality we know $\|\bm{V}_k\|\leq \|\bm{\Psi}_k(1)\|+\|\bm{\Psi}_k(2)\|$. Therefore, in order to have $\|\bm{V}_k\|\geq \sqrt{rk^\upalpha}$ at least one of $\|\bm{\Psi}_k(1)\|$ or $\|\bm{\Psi}_k(2)\|$ must be greater or equal to $\sqrt{rk^\upalpha}/2$, which implies:
 \begin{align*}
 \chi\{\|\bm{V}_k\|\geq \sqrt{rk^\upalpha}\}\|\bm{V}_k\|\leq\sum_{i=1}^2 2\chi\{\|\bm{\Psi}_k(i)\|\geq \sqrt{rk^\upalpha}/2\}\|\bm{\Psi}_k(i)\|
 \end{align*}
(recall that $\chi\{\mathcal{E}\}$ is the indicator function of the event $\mathcal{E}$). Equivalently,
  \begin{align}
  \label{eq:masstwilighteffext}
 \chi\{\|\bm{V}_k\|^2\geq {rk^\upalpha}\}\|\bm{V}_k\|\leq\sum_{i=1}^2 2\chi\{\|\bm{\Psi}_k(i)\|^2\geq {rk^\upalpha}/4\}\|\bm{\Psi}_k(i)\|.
 \end{align}
 Taking the square on both sides of (\ref{eq:masstwilighteffext}) gives:
   \begin{align}
& \chi\{\|\bm{V}_k\|^2\geq {rk^\upalpha}\}\|\bm{V}_k\|^2\leq \sum_{i=1}^2 4\chi\{\|\bm{\Psi}_k(i)\|^2\geq {rk^\upalpha}/4\}\|\bm{\Psi}_k(i)\|^2\notag\\
 &+8\chi\{\|\bm{\Psi}_k(1)\|^2\geq {rk^\upalpha}/4\}\chi\{\|\bm{\Psi}_k(2)\|^2\geq {rk^\upalpha}/4\}\|\bm{\Psi}_k(1)\|\|\bm{\Psi}_k(2)\|.
 \label{eq:ffrustratingg}
 \end{align}
Taking the expectation on both sides of (\ref{eq:ffrustratingg}) and using the definition of $\upsigma_{k,r}^2$:
 \begin{align}
&\upsigma_{k,r}^2\leq  \sum_{i=1}^2 4E\Big[\chi\{\|\bm{\Psi}_k(i)\|^2\geq {rk^\upalpha}/4\}\|\bm{\Psi}_k(i)\|^2\Big]\notag\\
&+8E\Big[\chi\{\|\bm{\Psi}_k(1)\|^2\geq {rk^\upalpha}/4\}\chi\{\|\bm{\Psi}_k(2)\|^2\geq {rk^\upalpha}/4\}\|\bm{\Psi}_k(1)\|\|\bm{\Psi}_k(2)\|\Big].
\label{eq:kriganwreckgerudo}
\end{align}
By assumption, however, $\lim_{k\rightarrow \infty}E\left[\chi\{\|\bm{\Psi}_k(i)\|^2\geq {rk^\upalpha}/4\}\|\bm{\Psi}_k(i)\|^2\right]= 0$. Therefore, (\ref{eq:kriganwreckgerudo}) becomes:
  \begin{align}
  \label{eq:cawcawsquak}
\upsigma_{k,r}^2\leq o(1)+8E\Big[\chi\{\|\bm{\Psi}_k(1)\|^2\geq {rk^\upalpha}/4\}\chi\{\|\bm{\Psi}_k(2)\|^2\geq {rk^\upalpha}/4\}\|\bm{\Psi}_k(1)\|\|\bm{\Psi}_k(2)\|\Big],
\end{align}
where $o(\cdot)$ denotes the standard little-$o$ notation. Next, using the Cauchy--Schwarz inequality:
\begin{align}
&E\Big[\chi\{\|\bm{\Psi}_k(1)\|^2\geq {rk^\upalpha}/4\}\chi\{\|\bm{\Psi}_k(2)\|^2\geq {rk^\upalpha}/4\}\|\bm{\Psi}_k(1)\|\|\bm{\Psi}_k(2)\|\Big]\leq\notag\\
&\left(E\Big[\chi\{\|\bm{\Psi}_k(1)\|^2\geq {rk^\upalpha}/4\}\|\bm{\Psi}_k(1)\|^2\Big]\right)^{1/2}\left(E\Big[\chi\{\|\bm{\Psi}_k(2)\|^2\geq {rk^\upalpha}/4\}\|\bm{\Psi}_k(2)\|^2\Big]\right)^{1/2}.
\label{eq:onceagainherewegooooo}
\end{align}
Once again, since $\lim_{k\rightarrow \infty}E\left[\chi\{\|\bm{\Psi}_k(i)\|^2\geq {rk^\upalpha}/4\}\|\bm{\Psi}_k(i)\|^2\right]= 0$, combining (\ref{eq:cawcawsquak}) and (\ref{eq:onceagainherewegooooo}) implies $\upsigma_{k,r}^2=o(1)$, as desired.
\end{proof}

The following theorem gives conditions for the asymptotic normality of the normalized iterates from Algorithm \ref{kirkey}.

\begin{theorem}
\label{thm:fnogg}
Let $\hat{\bm{\uptheta}}_k$ be generated according to Algorithm \ref{kirkey}. Additionally, let $\mathcal{F}_k$ denote the sigma field generated by $\{\hat{\bm{\uptheta}}_\ell\}_{\ell=0}^k$ as well as by any random variables generated  by the algorithm in the production of $\hat{\bm{\uptheta}}_k$. Let $\bm{W}_k=\hat{\bm{\uptheta}}_k-\bm\uptheta^\ast$, let $\upbeta$, $\upalpha$, $\bm\Gamma_k$, $\bm{T}_k$, $\bm\Phi_k$, and $\bm{V}_k$ be defined as in (\ref{eq:goingcrazyhim})--(\ref{eq:pantyponcho}) and assume these variables have real-valued entries. Additionally, assume the following conditions hold:
\begin{DESCRIPTION}
\item[C0] $L(\bm\uptheta)$ is twice continuously differentiable, $\bm{H}(\bm\uptheta)$ (the Hessian of $L(\bm\uptheta)$) has bounded entries (i.e., $\bm{g}(\bm\uptheta)$ is Lipschitz continuous), and $\hat{\bm\uptheta}_k^{(I_{z,i})}\rightarrow \bm\uptheta^\ast$ w.p.1 uniformly over $z$ and $i$.
\item[C1] For each $j$,  $a_k^{(j)}>0$ and $k^\upalpha {a}_k^{(j)}\rightarrow r_j<\infty$.
\item[C2] There exists  an upper triangular matrix $\bm{U}\in \mathbb{R}^{p\times p}$ with strictly positive eigenvalues and a nonsingular matrix $\bm{S}\in \mathbb{R}^{p\times p}$ such that
\begin{align}
\label{eq:macnealhuh}
\bm\Gamma\equiv \sum_{z=1}^{s}\sum_{\ell=0}^{n(z)-1} r_{j(z)} \bm{J}^{(j(z))}(\bm\uptheta^\ast)=\bm{SUS}^{-1},
\end{align}
where $\bm{J}^{(j)}(\bm\uptheta)$ denotes the Jacobian of $\bm{g}^{(j)}(\bm\uptheta)$.
\item[C3] $\upbeta$ and $\upalpha$ satisfy $0<\upbeta\leq \upalpha$. Additionally, for all $j$ there exist a finite vector $\bm{b}^{(j)}\in \mathbb{R}^p$ such that $k^{\upbeta/2}{\bm{\upbeta}}_k^{(j(z))}\big(\hat{\bm\uptheta}_k^{(I_{z,i})}\big)\rightarrow \bm{b}^{(j(z))}$ w.p.1 uniformly over $z$ and $i$.
\item[C4] For all $z$ and $i$,
  $E\left[{\bm{\upxi}}_k^{(j(z))}\big(\hat{\bm\uptheta}_k^{(I_{z,i})}\big)\Big|\hat{\bm\uptheta}_k^{(I_{z,i})}\right]=\bm{0}$.
\item[C5]  For all $z$ there exists a constant $C>0$ and a matrix $\bm\Sigma^{(j)}$ such that one of the following holds w.p.1 (convergence is meant uniformly over $i$):
\begin{enumerate}[label=(\roman*)]
\item $\upbeta=\upalpha$, $C>\|E[{\bm{\upxi}}_k^{(j(z))}\big(\hat{\bm\uptheta}_k^{(I_{z,i})}\big)[{\bm{\upxi}}_k^{(j(z))}\big(\hat{\bm\uptheta}_k^{(I_{z,i})}\big)]^\top|\mathcal{F}_k]-\bm\Sigma^{(j(z))}\|\rightarrow 0$.
\item $\upbeta<\upalpha$, $C>\|k^{\upbeta-\upalpha}E[{\bm{\upxi}}_k^{(j(z))}\big(\hat{\bm\uptheta}_k^{(I_{z,i})}\big)[{\bm{\upxi}}_k^{(j(z))}\big(\hat{\bm\uptheta}_k^{(I_{z,i})}\big)]^\top|\mathcal{F}_k]-\bm\Sigma^{(j(z))}\|\rightarrow 0$.
\end{enumerate}
Additionally, if $\bm{v}_1$ and $\bm{v}_2$ are two different elements of $\{{\bm{\upxi}}_k^{(j(z))}\big(\hat{\bm\uptheta}_k^{(I_{z,i})}\big)\}_{z,i}$, then one of the following holds w.p.1:
\begin{enumerate}[label=(\roman*)]
\setcounter{enumi}{2}
\item C5-(i) holds and $C>\|E[\bm{v}_1\bm{v}_2^\top|\mathcal{F}_k]\|\rightarrow 0$.
\item C5-(ii) holds and $C>\|k^{\upbeta-\upalpha}E[\bm{v}_1\bm{v}_2^\top|\mathcal{F}_k]\|\rightarrow 0$.
\end{enumerate}
%
%
\item[C6] $\bm\Sigma\equiv \sum_{z=1}^{s}\sum_{\ell=0}^{n(z)-1} r_{j(z)}^2 {\bm\Sigma}^{(j(z))}$ is a positive definite matrix.
\item[C7] For each $z=1,\dots, s$ and $i=1,\dots, n(z)$ one of the following holds:
\begin{enumerate}[label=(\roman*)]
\item$\upbeta=\upalpha$ and B5-(i) holds with $\bm{V}_k$ replaced by ${\bm{\upxi}}_k^{(j(z))}\big(\hat{\bm\uptheta}_k^{(I_{z,i})}\big)$.
\item $\upbeta<\upalpha$ and B5-(i) holds with $\bm{V}_k$ replaced by $k^{(\upbeta-\upalpha)/2}{\bm{\upxi}}_k^{(j(z))}\big(\hat{\bm\uptheta}_k^{(I_{z,i})}\big)$.
\end{enumerate}
\item[C8] The same as B6$'$.
\end{DESCRIPTION}
Then, 
the asymptotic distribution of $k^{\upbeta/2}\bm{W}_k$ is a multivariate normal random variable with mean $\bm{S}\bm\upnu$ and covariance matrix $\bm{SQS}^\top$, where the entries of $\bm\upnu$ are the unique solution to (\ref{eq:okayfine})
with $\tilde{\bm{T}}=-\bm{S}^{-1}\left[\sum_{z=1}^{s}\sum_{\ell=0}^{n_(z)-1}r_{j(z)}\bm{b}^{(j(z))}\right]$ and $\tilde{\bm{U}}=\bm{U}-(\upbeta_+/2)\bm{I}$, and the entries of $\bm{Q}$ are the unique solution to (\ref{eq:nolookback}) with $\tilde{\bm{U}}$ defined as above and $\bm\Phi=\bm{I}$.
\end{theorem}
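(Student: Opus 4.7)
The plan is to verify conditions B0$'$--B6$'$ of Theorem \ref{thm:generalizefabian} for the recursion (\ref{eq:alien}) with $\bm{W}_k=\hat{\bm\uptheta}_k-\bm\uptheta^\ast$ and with $\bm\Gamma_k$, $\bm{T}_k$, $\bm\Phi_k=\bm{I}$, and $\bm{V}_k$ as in (\ref{eq:goingcrazyhim})--(\ref{eq:pantyponcho}); the asymptotic distribution of $k^{\upbeta/2}\bm{W}_k$ then follows directly, with $\bm\upnu$ and $\bm{Q}$ read off from (\ref{eq:okayfine})--(\ref{eq:nolookback}) evaluated at $\tilde{\bm{T}}=\bm{S}^{-1}\bm{T}$, $\tilde{\bm{U}}=\bm{U}-(\upbeta_+/2)\bm{I}$, and $\bm\Phi=\bm{I}$. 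Assumptions B0$'$ (measurability by construction) and B2$'$ ($\bm\Phi_k\equiv\bm{I}$) are immediate, and B6$'$ is exactly C8. For B1$'$, continuity of $\bm{J}^{(j)}$ together with $\hat{\bm\uptheta}_k^{(I_{z,i})}\to\bm\uptheta^\ast$ w.p.1 (both from C0) yields $\bm{J}^{(j(z))}(\overline{\bm\uptheta}_k)\to \bm{J}^{(j(z))}(\bm\uptheta^\ast)$ w.p.1; multiplying by $k^\upalpha a_k^{(j(z))}\to r_{j(z)}$ (C1) and summing over the blocks gives $\bm\Gamma_k\to \sum_{z,\ell} r_{j(z)}\bm{J}^{(j(z))}(\bm\uptheta^\ast)$, which C2 identifies with $\bm{SUS}^{-1}$.

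To verify B3$'$, I analyze the three summands of (\ref{eq:goingcrazyhim}) separately. The first, $-k^{\upalpha+\upbeta/2}\sum_{z,\ell}a_k^{(j(z))}\bm\upbeta_k^{(j(z))}(\hat{\bm\uptheta}_k^{(I_{z,\ell})})$, converges w.p.1 to $\bm{T}\equiv -\sum_{z,\ell}r_{j(z)}\bm{b}^{(j(z))}$ by combining C1 with $k^{\upbeta/2}\bm\upbeta_k^{(j(z))}\to\bm{b}^{(j(z))}$ from C3. The second summand is $O(k^{\upbeta/2-\upalpha})$ w.p.1: both $\bm\Delta_{\bm{g}}^{(I_{z,\ell})}$ and $\bm\Delta_{\bm\upbeta}^{(I_{z,\ell})}$ carry an additional factor $a_k^{(j(m))}=O(k^{-\upalpha})$ multiplying a quantity bounded w.p.1 (the gradient by continuity and the convergence in C0, the bias by C3), so the outer scaling produces $k^{\upalpha+\upbeta/2}\cdot k^{-\upalpha}\cdot k^{-\upalpha}=k^{\upbeta/2-\upalpha}$, which vanishes since $\upbeta\leq\upalpha$ in C3. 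For the third summand, the conditional $L^{2}$ norm of $\bm\Delta_{\bm\upxi}^{(I_{z,\ell})}$ given $\mathcal{F}_k$ is $O(k^{-\upalpha})$ in case C5-(i) and $O(k^{-(\upalpha+\upbeta)/2})$ in case C5-(ii); in either case bounding the $L^{1}$ norm of the inner expectation by its $L^{2}$ norm makes the summand $O(k^{-\upalpha/2})$, which also vanishes.

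For B4$'$, the identity $E[\bm{V}_k\mid\mathcal{F}_k]=\bm{0}$ is obtained in three pieces: the last two summands of (\ref{eq:othingcoodsd}) cancel by construction, while the first reduces to zero by the tower property, inserting a conditioning on the intermediate state at which each noise is evaluated and invoking C4. For the conditional covariance, the dominant contribution comes from squaring the first summand of (\ref{eq:othingcoodsd}); the diagonal pieces $(z,\ell)=(z',\ell')$ give $k^{\upalpha+\upbeta}(a_k^{(j(z))})^2 E[\bm\upxi_k^{(j(z))}(\hat{\bm\uptheta}_k^{(I_{z,\ell})})\bm\upxi_k^{(j(z))}(\hat{\bm\uptheta}_k^{(I_{z,\ell})})^\top\mid\mathcal{F}_k]$, which by C1 together with C5-(i) or C5-(ii) converges w.p.1 to $r_{j(z)}^2\bm\Sigma^{(j(z))}$; summing in $(z,\ell)$ reproduces $\bm\Sigma$ of C6 (positive definite by that same condition). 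Off-diagonal pieces vanish via C5-(iii)/(iv), and the cross products between the leading summand of $\bm{V}_k$ and the two $O(a_k^{(j)}\bm\Delta_{\bm\upxi})$ correction summands carry an extra $O(k^{-\upalpha/2})$ factor and so drop out. Condition B5$'$ then reduces by Lemma \ref{lem:piccardo} to its verification on each summand of $\bm{V}_k$: the leading noise contributions satisfy it by C7 (direct if $\upbeta=\upalpha$, after the built-in rescaling of C7-(ii) if $\upbeta<\upalpha$), and the correction summands are small enough that the truncated second moment is trivially $o(1)$.

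The main obstacle is the off-diagonal analysis inside B4$'$: for $(z,\ell)\neq(z',\ell')$ the intermediate iterates $\hat{\bm\uptheta}_k^{(I_{z,\ell})}$ and $\hat{\bm\uptheta}_k^{(I_{z',\ell'})}$ are coupled through noises from earlier blocks within the same outer iteration $k$, so the cross covariance $E[\bm\upxi_k^{(j(z))}(\hat{\bm\uptheta}_k^{(I_{z,\ell})})\bm\upxi_k^{(j(z'))}(\hat{\bm\uptheta}_k^{(I_{z',\ell'})})^\top\mid\mathcal{F}_k]$ is not automatically zero; it must be handled by nested conditioning --- picking the later-block element, enlarging the conditioning sigma-field to include the earlier-block noise and the state at which the later noise is evaluated, applying C4 to annihilate the inner expectation, and then appealing to the magnitude bounds of C5-(iii)/(iv) to show the contribution vanishes after the $k^{\upalpha+\upbeta}(a_k^{(j)})^2$ normalization. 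Once B0$'$--B6$'$ are in hand, Theorem \ref{thm:generalizefabian} delivers the stated multivariate-normal limit for $k^{\upbeta/2}(\hat{\bm\uptheta}_k-\bm\uptheta^\ast)$ with the stated mean and covariance.
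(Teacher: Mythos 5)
Your proposal is correct and follows essentially the same route as the paper's own proof: both verify conditions B0$'$--B6$'$ for the recursion (\ref{eq:alien}) with the variables from (\ref{eq:goingcrazyhim})--(\ref{eq:pantyponcho}) and then invoke Theorem \ref{thm:generalizefabian}, using the same three-way split of $\bm{T}_k$, the same Cauchy--Schwarz/conditional-second-moment bounds for the $\bm\Delta_{\bm\upxi}$ corrections, C5-(iii)/(iv) for the cross terms, and Lemma \ref{lem:piccardo} for B5$'$. Your closing paragraph on the off-diagonal cross-covariances via nested conditioning is a slightly more explicit rendering of what the paper compresses into its final $o(1)$ display, but it is not a different argument.
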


\begin{proof}
We show that conditions B0$'$--B6$'$ hold. The result then follows from Theorem \ref{thm:generalizefabian}. First, note that condition B0$'$ holds by the definition of $\bm\Phi_k$, $\bm{\Gamma}_k$, $\bm{V}_k$, and $\mathcal{F}_k$. Next, we show that B1$'$ holds. First, by C0 and C1:
\begin{align*}
\lim_{k\rightarrow \infty}\bm\Gamma_k&=\lim_{k\rightarrow \infty}k^\upalpha\sum_{z=1}^{s}\sum_{\ell=0}^{n(z)-1} {a}_{k}^{(j(z))} \bm{J}^{(j(z))}(\overline{\bm\uptheta}_k)=\sum_{z=1}^{s}\sum_{\ell=0}^{n(z)-1} r_{j(z)}\bm{J}^{j(z)}(\bm\uptheta^\ast){\text{ w.p.1.}}
\end{align*}
Then, C2 guarantees B1$'$ holds with $\bm\Gamma$ equal to the matrix in (\ref{eq:macnealhuh}). The fact that B2$'$ holds follows immediately from the fact that $\bm\Phi_k=\bm{I}$ for all $k$, so that $\bm\Phi$ in B1$'$ is equal to the identity matrix. Next we show that B3$'$ holds.

First, using C1 and C3 it follows that
\begin{align}
- \lim_{k\rightarrow \infty}k^{\upalpha+\upbeta/2}\sum_{z=1}^{s}\sum_{\ell=0}^{n(z)-1} {a}_{k}^{(j(z))} {\bm{\upbeta}}_k^{(j(z))}\left(\hat{\bm\uptheta}_k^{\left(I_{z,\ell}\right)}\right)=-\sum_{z=1}^{s}\sum_{\ell=0}^{n(z)-1}r_{j(z)}\bm{b}^{(j(z))}{\text{ w.p.1.}}
\label{eq:timetowos}
\end{align}
Next, C0, C1, and C3 imply that
\begin{align}
&\lim_{k\rightarrow \infty}k^{\upalpha+\upbeta/2}\sum_{z=1}^{s}\sum_{\ell=0}^{n(z)-1} {a}_{k}^{(j(z))} \bm{J}^{(j(z))}\left(\overline{\bm\uptheta}_k^{(I_{z,\ell})}\right)[\bm\Delta_{\bm{g}}^{(I_{z,\ell})}+\bm\Delta_{\bm{\upbeta}}^{(I_{z,\ell})}]\notag\\
&=\lim_{k\rightarrow \infty} k^{\upalpha +\upbeta/2}\sum_{z=1}^{s}\sum_{\ell=0}^{n(z)-1} O(k^\upalpha) \bm{J}^{(j(z))}\left(\overline{\bm\uptheta}_k^{(I_{z,\ell})}\right)o(k^\upalpha)\notag\\
&=\bm{0}{\text{ w.p.1.}}
\label{eq:mosestron}
\end{align}
Lastly, by C0 (the boundedness of $\bm{H}(\bm\uptheta)$), C1, C3 (using $0<\upbeta\leq \upalpha$), and C5:
\begin{align}
&\lim_{k\rightarrow \infty}k^{\upalpha+\upbeta/2}\Bigg\|\sum_{z=1}^{s}\sum_{\ell=0}^{n(z)-1} {a}_{k}^{(j(z))} E\Big[\bm{J}^{(j(z))}\left(\overline{\bm\uptheta}_k^{(I_{z,\ell})}\right) \bm\Delta_{\bm\upxi}^{(I_{z,\ell})}\Big|\mathcal{F}_k\Big]\Bigg\|\notag\\
&\leq \lim_{k\rightarrow \infty}k^{\upalpha+\upbeta/2}\sum_{z=1}^{s}\sum_{\ell=0}^{n(z)-1} {a}_{k}^{(j(z))} \left(E\Big[\Big\|\bm{J}^{(j(z))}\left(\overline{\bm\uptheta}_k^{(I_{z,\ell})}\right) \bm\Delta_{\bm\upxi}^{(I_{z,\ell})}\Big\|^2\Big|\mathcal{F}_k\Big]\right)^{1/2}\notag\\
&\leq\lim_{k\rightarrow \infty}k^{\upalpha+\upbeta/2}\sum_{z=1}^{s}\sum_{\ell=0}^{n(z)-1} O(k^{\upalpha})\left(E\left[\Big\|\bm{J}^{(j(z))}\left(\overline{\bm\uptheta}_k^{(I_{z,\ell})}\right)\Big\|^2\Big\| \bm\Delta_{\bm\upxi}^{(I_{z,\ell})}\Big\|^2\Big|\mathcal{F}_k\right]\right)^{1/2}\notag\\
&\leq \lim_{k\rightarrow \infty}k^{\upalpha+\upbeta/2}\sum_{z=1}^{s}\sum_{\ell=0}^{n(z)-1}O(k^{\upalpha})\Big[E\Big\| \bm\Delta_{\bm\upxi}^{(I_{z,\ell})}\Big\|^2\Big|\mathcal{F}_k\Big]^{1/2}\notag\\
&={0}{\text{ w.p.1}},
\label{eq:listeningtozeldeeee}
\end{align}
where the first inequality in (\ref{eq:listeningtozeldeeee}) follows from the triangle inequality and the fact that $\|E[\bm{v}]\|^2\leq E\|\bm{v}\|^2$ for $\bm{v}\in \mathbb{R}^p$ so that $\|E[\bm{v}]\|\leq \sqrt{E\|\bm{v}\|^2}$, the third inequality follows from the submultplicativity of the Frobenius norm ($\|\cdot\|$ applied to a matrix is the Frobenius norm), and the last inequality follows from the boundedness of the the Hessian (which implies the boundedness of $\bm{J}^{(j)}(\bm\uptheta)$). 
%
Thus, the definition of $\bm{T}_k$ in (\ref{eq:goingcrazyhim}) along with (\ref{eq:timetowos})--(\ref{eq:listeningtozeldeeee}) imply
\begin{align}
\bm{T}_k\rightarrow -\sum_{z=1}^{s}\sum_{\ell=0}^{n_(z)-1}r_{j(z)}\bm{b}^{(j(z))}{\text{ w.p.1.}}
\label{eq:nobrothersand}
\end{align}
This implies B3$'$ holds with $\bm{T}$ defined as in the right-hand side of (\ref{eq:nobrothersand}). Next we show B4$'$ holds. 

That $E[\bm{V}_k|\mathcal{F}_k]=\bm{0}$ follows immediately from C4. Next we show that there exists a constant $C'<\infty$ such that $C'>\|E[\bm{V}_k\bm{V}_k^\top|\mathcal{F}_k]-\bm\Sigma\|\rightarrow 0$, where $\bm\Sigma$ is as in C6. 
First, 
by C1, C4, and C5:
\begin{align}
\label{eq:avatarparticulatsfs}
C_1>\Bigg\|\Var{\Bigg[k^{(\upalpha+\upbeta)/2}\sum_{z=1}^{s}\sum_{\ell=0}^{n(z)-1} {a}_{k}^{(j(z))} {\bm{\upxi}}_k^{(j(z))}\left(\hat{\bm\uptheta}_k^{\left(I_{z,\ell}\right)}\right)\Bigg|\mathcal{F}_k\Bigg]}-\bm\Sigma\Bigg\|\rightarrow 0,
\end{align}
for some constant $C_1<\infty$. (Note that the vector inside the variance in the previous equation has mean zero by the first part of condition C4. Therefore, its variance is equal to its second moment matrix.)  Next, by the definition of $\bm\Delta_{\bm\upxi}^{(\cdot)}$ in (\ref{eq:futuramasohs}) and using conditions C0, C1, and C5 it follows that there must exist a constant $C_2<\infty$ such that:
\begin{align}
\label{eq:doughiesloon}
C_2>\Bigg\|\Var{\Bigg[k^{(\upalpha+\upbeta)/2}\sum_{z=1}^{s}\sum_{\ell=0}^{n(z)-1} {a}_{k}^{(j(z))} \bm{J}^{(j(z))}\left(\overline{\bm\uptheta}_k^{(I_{z,\ell})}\right) \bm\Delta_{\bm\upxi}^{(I_{z,\ell})}\Bigg|\mathcal{F}_k\Bigg]}\Bigg\|\rightarrow 0.
\end{align}
%
%
From (\ref{eq:avatarparticulatsfs}) we see that the second moment matrix of the first line of (\ref{eq:othingcoodsd}) is bounded and converges to $\bm\Sigma$. Additionally, (\ref{eq:doughiesloon}) says that the second moment matrix of the last two lines of (\ref{eq:othingcoodsd}) is bounded and converges to the zero matrix. Furthermore, any cross-term expectation involving a summand from the first line of (\ref{eq:othingcoodsd}) and a summand from the second or third line of (\ref{eq:othingcoodsd}) is also bounded and converges to the zero matrix due to the fact that:
\begin{align*}
k^{\upbeta-\upalpha} O(k^{-\upalpha})\left(E\Big\|{\bm{\upxi}}_k^{(j(z))}\Big(\hat{\bm\uptheta}_k^{\left(I_{z,\ell}\right)}\Big)\Big|\mathcal{F}_k\Big\|^2\right)^{1/2}\left(E\Big\|{\bm{\upxi}}_k^{(j(m))}\Big(\hat{\bm\uptheta}_k^{\left(I_{m,i}\right)}\Big)\Big|\mathcal{F}_k\Big\|^2\right)^{1/2}=o(1),
%
\end{align*}
a consequence of C5. In conclusion, $E[\bm{V}_k\bm{V}_k^\top|\mathcal{F}_k]$ is equal to the sum of finite number of bounded matrices, most of which converge to the zero matrix w.p.1 except for one matrix (the matrix in the variance of equation \ref{eq:avatarparticulatsfs}) which converges to $\bm\Sigma$ w.p.1. Therefore, there exists a constant $C'<\infty$ such that $C'>\|E[\bm{V}_k\bm{V}_k^\top|\mathcal{F}_k]-\bm\Sigma\|\rightarrow 0$ and B4$'$ holds.

It remains to show that B5$'$ and B6$'$ hold. First, conditions C0 and C7 imply $\bm{V}_k$ 
satisfies the conditions of Lemma \ref{lem:piccardo}. Therefore, by Lemma \ref{lem:piccardo} the vector $\bm{V}_k$ satisfies B5-(i) (a special case of B5$'$). Lastly, since C8 and B6$'$ are identical, condition B6$'$ is automatically satisfied. Since we have shown that conditions C0--C8 imply B0$'$--B6$'$ are satisfied, the conclusion of Theorem \ref{thm:generalizefabian} holds and the desired result follows from said theorem.
\end{proof}

\section[On the Conditions for Normality]{On the Conditions for Normality}
\label{sec:bonestrailseason}

Theorem \ref{thm:fnogg} stated a set of conditions for the asymptotic normality of the scaled iterates from Algorithm \ref{kirkey} (conditions C0--C8). This section discusses the validity of these conditions. 

{\underline{\it{Condition C0}}}. One of the assumptions of this condition is that the Hessian of $L(\bm\uptheta)$, $\bm{H}(\bm\uptheta)$, must be bounded uniformly over $\bm\uptheta$. This is certainly a stronger assumption than condition A0 which only requires $L(\bm\uptheta)$ to be twice-continuously differentiable. 
 Condition C0 also assumes that $\hat{\bm\uptheta}_k^{(I_{z,i})}\rightarrow \bm\uptheta^\ast$ w.p.1 uniformly over $z$ and $i$, this assumption is satisfied under the conditions of Theorem \ref{thm:hoeshoo} (see Corollary \ref{eq:idamagedittarantatan}). 

{\underline{\it{Condition C1}}}. Note that this condition is a special case of the assumption from A0$''$ requiring $a_k^{(j)}/a_k\rightarrow r_j$ (condition C1 replaces $a_k$ with $k^{-\upalpha}$). As discussed in Section \ref{sec:discussconvergence}, C1 would be satisfied by sequences of the form $a_k^{(j)}=a^{(j)}/(1+k+A^{(j)})^\upalpha$ where $a^{(j)}$ and $A^{(j)}$ are both strictly-positive constants. 

{\underline{\it{Condition C2}}. This condition requires $\bm\Gamma=\sum_{z=1}^{s}\sum_{\ell=0}^{n(z)-1} r_{j(z)} \bm{J}^{(j(z))}(\bm\uptheta^\ast)
$
to be equal to $\bm{SUS}^{-1}$ where $\bm{S}$ and $\bm{U}$ are both real matrices and $\bm{U}$ is positive-definite and upper-triangular. In order to discuss whether this is a reasonable assumption, note that $\bm{J}^{(j)}(\bm\uptheta^\ast)\in \mathbb{R}^{p\times p}$ is a matrix formed by taking $\bm{H}(\bm\uptheta^\ast)$ and replacing the rows in the set $\mathcal{S}_j$, as defined in (\ref{eq:notexclusive}), with zeros. If $\bm{H}(\bm\uptheta^\ast)$ is positive definite, one situation in which C2 holds is when $r_j>0$ for all $j$ and the $\mathcal{S}_j$ are disjoint sets (it is important to note that having this sets be disjoint is not a necessary condition). Then,
\begin{align}
\sum_{z=1}^{s}\sum_{\ell=0}^{n(z)-1} r_{j(z)} \bm{J}^{(j(z))}(\bm\uptheta^\ast)=\sum_{j=1}^d r_j d_j\bm{J}^{(j)}(\bm\uptheta^\ast)
\label{eq:parisorny}
\end{align}
where $d_j= \sum_{z=1}^{s}\sum_{\ell=0}^{n(z)-1}{}\chi\{j(z)=j\}$. The variable $d_j$ represents the (deterministic) number of times the $j$th subvector is updated during the $k$th iteration. Provided $r_jd_j>0$, (\ref{eq:parisorny}) can be rewritten as follows:
\begin{align}
\label{eq:sensorypinetree}
\sum_{j=1}^d r_j d_j\bm{J}^{(j)}(\bm\uptheta^\ast)=\bm{\Lambda}\bm{H}(\bm\uptheta^\ast),
\end{align}
where the matrix $\bm{\Lambda}$ is a positive definite diagonal matrix with $i$th diagonal entry equal to $r_jd_j$ where $j$ satisfies $i\in \mathcal{S}_j$ (recall that in this example $i$ lies in $\mathcal{S}_j$ for exactly one $j$ since the sets $\mathcal{S}_j$ are disjoint). Then, as proven in Proposition \ref{prop:sandpipercrossing}, condition C2 holds. Note that having $r_jd_j>0$ implies that $k^\upalpha a_k^{(j)}\rightarrow r_j>0$ and that each subvector must be updated at least once per iteration.

{\underline{\it{Condition C3}}. In the case where the noisy gradient estimates in Algorithm \ref{kirkey} are obtained in using the SG estimates (see, for example, Section \ref{sec:ay}) the bias term is equal to zero. Therefore, in this case we can assume ${\bm{\upbeta}}^{(j(z))}\big(\hat{\bm\uptheta}_k^{(I_{z,i})}\big)=\bm{0}$ so that C3 holds with $\bm{b}^{(j(z))}=\bm{0}$. When the noisy gradient estimates are obtained in an SPSA manner (see, for example, Section \ref{sect:cspsa}) the bias terms ${\bm{\upbeta}}^{(j(z))}\big(\hat{\bm\uptheta}_k^{(I_{z,i})}\big)$ cannot be assumed to be equal to the zero vector. However, analogous conditions to those of Proposition 2 in Spall (1992)\nocite{spall1992} (after an natural adaptation for the setting of Algorithm \ref{kirkey}) can be used to show that the bias terms satisfy  C3 when the noisy gradient updates are obtained in the manner of (\ref{eq:shmasdfg})--(\ref{eq:splice}) or in the manner of (\ref{eq:howeardspos1})--(\ref{eq:howeardspos2}).

{\underline{\it{Condition C4}}. The mean-zero assumption of the noise terms is a common assumption throughout the SA literature. The assumption essentially requires the conditional mean of the noise term (conditional on $\hat{\bm\uptheta}_k^{(I_{z,i})}$) to be absorbed into the bias term ${\bm{\upbeta}}_k^{(j(z))}\big(\hat{\bm\uptheta}_k^{(I_{z,i})}\big)$. Condition C3 would then require the conditional mean of the noise terms to converge to the zero vector.

{\underline{\it{Condition C5}}. Condition C5 allows for two possible scenarios. The first case, C5-(i), requires the noise vectors to have a bounded conditional covariance matrix which converges w.p.1. A simple scenario in which this holds is when the noise vectors are based on independent multivariate Gaussians with mean vector equal to zero and covariance matrix equal to $\bm{I}$. Condition C5-(ii), on the other hand, allows the noise terms to have a growing (in magnitude) conditional covariance matrix. The amount of growth, however, must be proportional to $k^{\upalpha-\upbeta}$. This condition is well suited for algorithms such as SPSA where the conditional covariance of the noise grows as $k\rightarrow \infty$ (e.g., Spall 1992)\nocite{spall1992}. Conditions C5-(iii) and C5-(iv) are similar to C5-(i) and C5-(ii), respectively. Loosely speaking, C5-(iii) requires any two different noise vectors to be asymptotically uncorrelated while C5-(iv) allows the magnitude of the correlation to increase at a rate proportional to $k^{\upalpha-\upbeta}$. One scenario where conditions C5-(iii) and C5-(iv) are satisfied is when the noise terms are independent of each other. Section \ref{sec:hannahashley} gives an example of where the noise terms are not independent but where C5-(iv) holds.

{\underline{\it{Condition C6}}}. This condition requires $\bm\Sigma=\sum_{z=1}^{s}\sum_{\ell=0}^{n(z)-1} r_{j(z)} {\bm\Sigma}^{(j(z))}$
to be positive definite. Let us give an example of when this condition may hold. If the sets $\mathcal{S}_j$ are disjoint (i.e., that the subvectors to update do not overlap) then with $d_j$ defined as in (\ref{eq:sensorypinetree}): 
\begin{align*}
\bm\Sigma=\sum_{j=1}^d r_j d_j\bm{\Sigma}^{(j)}
\end{align*}
(note that each matrix $\bm{\Sigma}^{(j)}$ has zeros in the columns and rows whose indices are not in $\mathcal{S}_j$). If the submatrix of $\bm{\Sigma}^{(j)}$ formed by taking the rows and columns with indices $\mathcal{S}_j$ is positive definite (i.e., assume the limiting conditional covariance matrix of the nonzero entries of $\bm\upxi_k^{(j)}$ is positive definite, a reasonable assumption) and if we also assume $r_jd_j>0$ then $\bm\Sigma$ would be a block-diagonal positive definite matrix and C6 would be satisfied. Having $r_jd_j>0$ requires each subvector to be updated at least once per iteration and $k^\upalpha a_k^{(j)}\rightarrow r_j>0$.

{\underline{\it{Conditions C7 and C8}}}. Conditions C7 and C8 are assumptions that are often applied to non-cyclic SA algorithms. C7 pertains to the uniform integrability of the squared magnitude of $\{{\bm{\upxi}}_k^{(j(z))}\big(\hat{\bm\uptheta}_k^{(I_{z,i})}\big)\}$ or $k^{(\upbeta-\upalpha)/2}{\bm{\upxi}}_k^{(j(z))}\big(\hat{\bm\uptheta}_k^{(I_{z,i})}\big)$ (see Definition \ref{def:unifintegrable}). A sufficient condition for C7 to hold is if the traces of the covariance matrices of the a noise terms are bounded uniformly over $k$. Condition C8 is impossible to verify in practice although we note that C8 is essentially identical to condition B6 of Fabian's theorem (see p. \pageref{page:scratchless}).

\section{Concluding Remarks}
\label{sec:dontmeanthings}

This chapter presented a generalization of Fabian's theorem (see Theorem 2.2 in Fabian 1968 or Theorem \ref{thm:fabian} in this dissertation)\nocite{fabian1968}. The resulting generalization (Theorem \ref{thm:generalizefabian}) was used to prove the asymptotic normality of the normalized iterates form Algorithm \ref{kirkey} (a few additional applications of Theorem \ref{thm:generalizefabian} are discussed in Appendix \ref{sec:fabiansecgeneralize}). Attempting to use Theorem \ref{thm:generalizefabian} to derive an asymptotic normality result for Algorithm \ref{beastwasdone} gives rise to a critical complication.
 First, note that when $L(\bm\uptheta)$ is twice continuously differentiable then:
%
\begin{align}
\label{eq:noealnavyyard}
 \hat{\bm{\uptheta}}_{k+1}=&\ \hat{\bm\uptheta}_k-\tilde{a}_{k}^{(j_k)} \bm{J}^{(j_k)}(\overline{\bm\uptheta}_k)(\hat{\bm{\uptheta}}_k-\bm\uptheta^\ast)- \tilde{a}_{k}^{(j_k)} {\bm{\upbeta}}_k^{(j_k)}(\hat{\bm{\uptheta}}_k)- \tilde{a}_{k}^{(j_k)} {\bm{\upxi}}_k^{(j_k)}(\hat{\bm{\uptheta}}_k),
 \end{align}
 where $\hat{\bm{\uptheta}}_k$ denotes an iterate from Algorithm \ref{beastwasdone} and where the $i$th row of $\bm{J}^{(j)}(\overline{\bm\uptheta}_k)$ is equal to the $i$th row of $\bm{J}^{(j)}(\bm\uptheta)$ evaluated at $\bm\uptheta=(1-\uplambda_i)\hat{\bm{\uptheta}}_k+\uplambda_i\bm\uptheta^\ast$ for some $\uplambda_i\in[0,1]$ which depends on $\hat{\bm{\uptheta}}_k$. Then, (\ref{eq:noealnavyyard}) may be written in the form of (\ref{eq:alien}) by letting $\bm{W}_k=\hat{\bm{\uptheta}}_k-\bm\uptheta^\ast$, $\bm\Gamma_k=k^\upalpha\tilde{a}_{k}^{(j_k)} \bm{J}^{(j_k)}(\overline{\bm\uptheta}_k)$, $\bm{T}_k=-k^{\upalpha+\upbeta/2}\tilde{a}_{k}^{(j_k)} {\bm{\upbeta}}_k^{(j_k)}(\hat{\bm{\uptheta}}_k)$, $\bm\Phi_k=\bm{I}$, and $\bm{V}_k=-k^{(\upalpha+\upbeta)/2}\tilde{a}_{k}^{(j_k)} {\bm{\upxi}}_k^{(j_k)}(\hat{\bm{\uptheta}}_k)$. Note, however, that $\bm\Gamma_k$ does not converge w.p.1 to any matrix since the random variable $j_k$ does not converge. Therefore, Theorem \ref{thm:generalizefabian} cannot be used to derive an asymptotic normality result for (\ref{eq:noealnavyyard}) for this definition of $\bm\Gamma_k$. Moreover, as discussed in pp. \pageref{eq:anotherpagerefgoeshere}--\pageref{prop:sandpipercrossing}, it is not generally possible to find an alternate definition of $\bm\Gamma_k$ (along with a redefinition of $\bm{T}_k$, $\bm{\Phi}_k$, and $\bm{V}_k$) so that all of the conditions of Fabian's conditions hold without imposing more assumptions on $\hat{\bm{\uptheta}}_k$. Future research could consider combining Algorithm \ref{kirkey} with iterate averaging, which may lead to the asymptotic normality of the normalized iterates.


\chapter{Efficiency of GCSA}
\label{chap:imefficient}

 This chapter attempts to quantify the {{relative efficiency}} between a strictly cyclic version of GCSA and its non-cyclic counterpart, where efficiency is defined in terms of the mean squared error (MSE) after taking into consideration the cost of implementation (the term ``cost'' is used in a very general sense and the precise definition is highly problem-specific). Specifically, if  $\hat{\bm\uptheta}_k^{\text{cyc}}$ denotes the $k$th iterate of a strictly cyclic implementation of GCSA and $\hat{\bm\uptheta}_k^{\text{non}}$ denotes the $k$th iterate of its non-cyclic counterpart, the focus of this chapter is on the ratio:
 \begin{align}
\label{eq:ratio}
\frac{E\|\hat{\bm\uptheta}_{k_1}^{\text{cyc}}-\bm\uptheta^\ast\|^2}{E\|\hat{\bm\uptheta}_{k_2}^{\text{non}}-\bm\uptheta^\ast\|^2},
\end{align}
 where the cumulative cost of implementing the cyclic algorithm up to iteration $k_1$ is equal (or approximately equal) to that of its non-cyclic counterpart up to iteration $k_2$ ($k_2$ is then a function of $k_1$ and vice versa). An analogue to (\ref{eq:ratio}) would be to set the MSEs in the numerator and in the denominator to be equal and then study how $k_1$ and $k_2$ differ (e.g, Spall 1992\nocite{spall1992}). In general, the definition of per-iteration cost  is highly problem-specific and plays an important role in computing (\ref{eq:ratio}). Section \ref{sec:costoport} discusses the per-iteration cost issue in more detail. Section \ref{sec:aseficwiener} then obtains an asymptotic approximation to the ratio in (\ref{eq:ratio}). Section \ref{sec:apsecialcaserelefff} uses the approximation from Section \ref{sec:aseficwiener} to study the relative efficiency between a special case of Algorithm \ref{kirkey} and its non-cyclic counterpart. Lastly, Section \ref{eq:hodgmodgepodge} contains concluding remarks.


\section{Cost of Implementation}
\label{sec:costoport}

In order to perform a fair comparison between any two algorithms  it is necessary to  take the cost of implementation into consideration. Here, the precise definition of ``cost'' varies depending on the specific algorithm at hand. For example, cost could be a measure of the number of iterations, the actual run-time of an algorithm, the number of times a certain simulation has to be run, the amount of storage required, or a combination of these. 
As another example, Knight et al. (2014)\nocite{knightetal2013} consider a simplified machine model with several processors and represents run-time as a linear function of the arithmetic (flop) cost and other variables (e.g., number of messages sent). Thus, if cost is defined as run-time, the cost of implementation will be a function of the flop count as well as of other variables. This section focuses on comparing the cost of implementing the cyclic seesaw SA algorithm (the ideas can easily be generalized to the case where $\bm\uptheta$ is partitioned into more than two subvectors) to the cost of implementing its non-cyclic counterpart when cost is a measure of the arithmetic computations needed to perform a single iteration. Specifically, this section pinpoints the type of arithmetic computations that can result in a significant difference between the per-iteration cost of implementing a cyclic algorithm and the per-iteration cost of implementing its non-cyclic formulation.

 Letting $\hat{\bm{\uptheta}}_k^{\text{cyc}}$ denote an iterate of the cyclic seesaw SA algorithm (Section \ref{sec:keatonmodasd}) and using the notation in (\ref{eq:analgoysDRAGON}) we assume throughout this section that:
 \begin{subequations}
\begin{align}
{{\hat{\bm\uptheta}}}_{k}^{(I)} &=\left[\begin{array}{c}(\hat{\bm{\uptheta}}_{k+1}^{\text{cyc}})^{[\bm{1}]} \\ \relax (\hat{\bm{\uptheta}}_k^{\text{cyc}})^{[\bm{2}]}\end{array}\right]=\hat{\bm{\uptheta}}_k^{\text{cyc}}-\bm\Phi^{(1)}(\hat{\bm{\uptheta}}_k^{\text{cyc}},k,\bm{V}_{k})\in \mathbb{R}^p,\label{eq:huh0}\\
\hat{\bm\uptheta}_{k+1}^{\text{cyc}}&=\left[\begin{array}{c}(\hat{\bm{\uptheta}}_{k+1}^{\text{cyc}})^{[\bm{1}]} \\ \relax (\hat{\bm{\uptheta}}_{k+1}^{\text{cyc}})^{[\bm{2}]}\end{array}\right]=\hat{\bm{\uptheta}}_k^{(I)}-\bm\Phi^{(2)}(\hat{\bm{\uptheta}}_{k}^{(I)},k,\bm{V}_{k}^{(I)})\in \mathbb{R}^p,\label{eq:huh}
\end{align}
\end{subequations}
for a {{known}} vector-valued function satisfying $\bm\Phi(\bm\uptheta,k,\bm{V}){: \mathbb{R}^p\times \mathbb{Z}^+\times\Omega_{\bm{V}}\rightarrow \mathbb{R}^p}$ where $\bm\Phi^{(j)}(\cdot,\cdot,\cdot)$ is defined using the notation from (\ref{eq:electricpulsar}) and $\bm{V}_k$ and $\bm{V}_k^{(I)}$ are random variables with values in $\Omega_{\bm{V}}$.  We refer to the functions  $\bm\Phi^{(1)}(\cdot,\cdot,\cdot)$ and  $\bm\Phi^{(2)}(\cdot,\cdot,\cdot)$ as {\it{update functions}}. In the CSG algorithm (Section \ref{sec:ay}), for example, if $\hat{\bm{g}}^{\text{SP}}_k(\hat{\bm{\uptheta}}_k^{\text{cyc}})$ and $\hat{\bm{g}}^{\text{SP}}_k(\hat{\bm{\uptheta}}_k^{(I)})$ are obtained via the evaluation of an arithmetic function (e.g., the pure infinitesimal perturbation analysis (IPA) algorithm where the update direction is as in (\ref{eq:imanIPA})) then $\bm\Phi^{(1)}(\hat{\bm{\uptheta}}_k^{\text{cyc}},k,\bm{V}_{k})=a_{k}^{(1)}[\hat{\bm{g}}^{\text{SP}}_k(\hat{\bm{\uptheta}}_k^{\text{cyc}})]^{(1)}$ and $\bm\Phi^{(2)}(\hat{\bm{\uptheta}}_{k}^{(I)},k,\bm{V}_{k}^{(I)})=a_{k}^{(2)}[\hat{\bm{g}}^{\text{SG}}(\bm{{\hat{\uptheta}}}_{k}^{(I)})]^{(2)}$.
Similarly, if $\hat{\bm{\uptheta}}_k^{\text{non}}$ denotes an iterate of the basic non-cyclic SA algorithm (see Section \ref{sec:dummieyouask}) we assume:
\begin{align}
\label{eq:herculedherculess}
\hat{\bm{\uptheta}}_{k+1}^{\text{non}}&=\left[\begin{array}{c}(\hat{\bm{\uptheta}}_k^{\text{non}})^{[\bm{1}]} \\ \relax (\hat{\bm{\uptheta}}_k^{\text{non}})^{[\bm{2}]}\end{array}\right]\notag\\
&=\hat{\bm{\uptheta}}_k^{\text{non}}-\bm\Phi(\hat{\bm{\uptheta}}_k^{\text{non}},k,\bm{V}_k)\notag\\
&=\hat{\bm{\uptheta}}_k^{\text{non}}-\bm\Phi^{(1)}(\hat{\bm{\uptheta}}_k^{\text{non}},k,\bm{V}_k)-\bm\Phi^{(2)}(\hat{\bm{\uptheta}}_k^{\text{non}},k,\bm{V}_k)\in \mathbb{R}^p,
\end{align}
 where $\bm{V}_k$ is a random variable in $\Omega_{\bm{V}}$. 
Both (\ref{eq:huh0},b) and (\ref{eq:herculedherculess}) 
may be implemented in a distributed or non-distributed manner, in the sense that the vectors $\bm\Phi^{(1)}(\cdot,\cdot,\cdot)$ and $\bm\Phi^{(2)}(\cdot,\cdot,\cdot)$ are allowed (but not required) to be computed at different locations. Botts et al. (2016), for example, consider a multi-agent surveillance and tracking problem in which agent $j$ updates its state vector (corresponding to the vector $\bm\uptheta^{[\bm{j}]}$) using $\bm\Phi^{(j)}(\cdot,\cdot,\cdot)$.
In this multi-agent problem, each of the vector-valued functions $\bm\Phi^{(j)}(\cdot,\cdot,\cdot)$ is computed at a different location by a different agent. This multi-agent algorithm is, therefore, a distributed algorithm. 
Next we define the cost of implementing (\ref{eq:huh0},b) and (\ref{eq:herculedherculess}).

 The definition of cost to be used in this section is based on the observation that any function, regardless of complexity, can be evaluated by performing a sequence of elementary operations involving one or two arguments at a time (e.g., addition, multiplication, division, the power operation, trigonometric functions, exponential functions, and logarithmic functions). 
 Then, the cost of implementing an iteration of (\ref{eq:herculedherculess}), denoted by $c^{\text{non}}_k$, will be defined as the number of elementary  operations required to evaluate $\bm\Phi^{(1)}(\hat{\bm{\uptheta}}_k^{\text{non}},k,\bm{V}_k)$ and $\bm\Phi^{(2)}(\hat{\bm{\uptheta}}_k^{\text{non}},k,\bm{V}_k)$. Similarly, the cost of implementing an iteration of (\ref{eq:huh0},b), denoted by $c^{\text{cyc}}_k$,  will be defined as the number of elementary  operations required to evaluate both $\bm\Phi^{(1)}(\hat{\bm{\uptheta}}_k^{\text{cyc}},k,\bm{V}_{k})$ and $\bm\Phi^{(2)}(\hat{\bm{\uptheta}}_{k}^{(I)},k,\bm{V}_{k}^{(I)})$. Note that we are omitting the cost of performing the subtraction operations appearing in (\ref{eq:huh0},b) and the subtraction operation appearing in (\ref{eq:herculedherculess}). This was done due to the fact that performing these addition operations takes the same number of elementary operations, namely $p$, for both the cyclic and the non-cyclic algorithms. Therefore, the aforementioned subtraction operations are not significant given the focus of this section on pinpointing the sources of discrepancy between the cost of implementing a cyclic and the cost of implementing its non-cyclic counterpart.

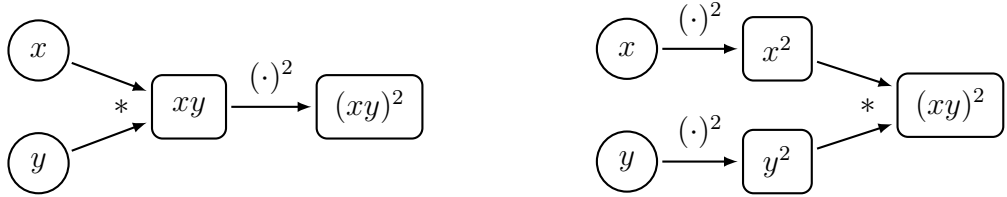
\begin{figure}
\centering
        \begin{subfigure}[b]{0.49\textwidth}
                \centering
\begin{tikzpicture}

\node[draw,circle,minimum size=.8cm,inner sep=0pt, thick] (x) at (0,0) {$x$};
\node[draw,circle,minimum size=.8cm,inner sep=0pt, thick] (y) at (0,-1.5) {$y$};

\node[draw,rectangle, rounded corners, minimum height=2em, minimum width=28pt, thick] (xy) at (2,-0.75) {$xy$};

\node[draw,rectangle, rounded corners, minimum height=2em, minimum width=40pt, thick] (xys) at (4.4,-0.75) {$(xy)^2$};

\draw[->, thick] (x)--(xy) ;
\draw[->,thick] (y)--(xy);
\node at (1.1,-0.75) {$\ast$};
\draw[->,thick] (xy)--(xys) node[above,midway, above=3pt] {$(\cdot)^2$};

\end{tikzpicture}
                \caption{Requires two elementary operations.}
                \label{fig:gull}
        \end{subfigure}\hfill
        \begin{subfigure}[b]{0.49\textwidth}
                \centering
\begin{tikzpicture}

\node[draw,circle,minimum size=.8cm,inner sep=0pt, thick] (x) at (0,0) {$x$};
\node[draw,circle,minimum size=.8cm,inner sep=0pt, thick] (y) at (0,-1.5) {$y$};

\node[draw,rectangle, rounded corners, minimum height=2em, minimum width=26pt, thick] (xs) at (2,0) {$x^2$};

\node[draw,rectangle, rounded corners, minimum height=2em, minimum width=26pt, thick] (ys) at (2,-1.5) {$y^2$};

\draw[->, thick] (x)--(xs) node[above,midway, above=3pt] {$(\cdot)^2$};
\draw[->,thick] (y)--(ys) node[above,midway, above=3pt] {$(\cdot)^2$};
\node at (3.2,-0.75) {$\ast$};

\node[draw,rectangle, rounded corners, minimum height=2em, minimum width=40pt, thick] (xys) at (4.3,-0.75) {$(xy)^2$};
\draw[->,thick] (xs)--(xys);
\draw[->,thick] (ys)--(xys);

\end{tikzpicture}
                \caption{Requires three elementary operations.}
                \label{fig:gull2}
        \end{subfigure}\hfill
        \caption[Two computational graphs (CGs) for computing $f(x,y)=(xy)^2$]{Two CGs for evaluating the function $\uprho(x,y)=(xy)^2$. 
        }\label{fig:oneatmosphere}
\end{figure}
 
 The sequence of elementary operations performed when evaluating any real-valued function can be represented by a computational graph (CG) (e.g., Nocedal and Wright, p. 205) which is generally not unique (see Figure \ref{fig:oneatmosphere}). In this section we will assume that each entry of $\bm\Phi(\cdot,\cdot,\cdot)\in \mathbb{R}^p$ is associated with a predetermined CG whose shape is independent of the arguments of $\bm\Phi(\cdot,\cdot,\cdot)$. In general, the elementary computations performed when evaluating $\bm\Phi(\bm\uptheta,k,\bm{V})$ must fall into one of the following (mutually exclusive) categories:
 \label{catpage}
 \begin{DESCRIPTION}
 \item[Cat. 1:] Computations involving $\bm\uptheta^{[\bm{1}]}$ but not $\bm\uptheta^{[\bm{2}]}$.
 \item[Cat. 2:] Computations involving $\bm\uptheta^{[\bm{2}]}$ but not $\bm\uptheta^{[\bm{1}]}$.
 \item[Cat. 3:]  Computations involving both $\bm\uptheta^{[\bm{1}]}$ and $\bm\uptheta^{[\bm{2}]}$.
 \item[Cat. 4:] Computations not involving $\bm\uptheta$.
 \end{DESCRIPTION}
%
With this in mind, an elementary operation performed when computing any of the entries of $\bm\Phi^{(j)}(\bm\uptheta,k,\bm{V})$ will be denoted by $f_{\ell}^{(j)}(\bm\uptheta^{[\bm{1}]})$, $s_{\ell}^{(j)}(\bm\uptheta^{[\bm{1}]})$, $b_{\ell}^{(j)}(\bm\uptheta^{[\bm{1}]},\bm\uptheta^{[\bm{2}]})$, or $n_{\ell}^{(j)}(k)$ depending on whether it belongs to category 1, 2, 3, or 4 above, respectively. The letter ``$f$'' was chosen to emphasize the fact that an operation depends only on the {\it{first}} part of $\bm\uptheta$. Similarly, the letters ``$s$'', ``$b$'', and ``$n$'', were used to emphasize the fact that an operation depends on the {\it{second}} part of $\bm\uptheta$, on {\it{both}} parts of $\bm\uptheta$, or {\it{neither}} on the first- {\it{nor}} on the second part of $\bm\uptheta$.
 The collection of all elementary operations performed when computing the vectors $\bm\Phi^{(j)}(\bm\uptheta,k,\bm{V})$ can then be summarized by the following sets whose elements can be thought of as labels of the operations to be performed:
%
%
%
\begin{subequations}
 \begin{align}
 &F^{(j)}(\bm\uptheta^{[\bm{1}]})\equiv \left\{ f_{\ell}^{(j)}(\bm\uptheta^{[\bm{1}]})\right\}_{\ell},\ \ \ S^{(j)}(\bm\uptheta^{[\bm{2}]})\equiv \left\{ s_{\ell}^{(j)}(\bm\uptheta^{[\bm{2}]})\right\}_{\ell}, \label{eq:fier0}\\
 &B^{(j)}(\bm\uptheta^{[\bm{1}]},\bm\uptheta^{[\bm{2}]})\equiv \left\{ b_{\ell}^{(j)}(\bm\uptheta^{[\bm{1}]},\bm\uptheta^{[\bm{2}]})\right\}_{\ell},\ \ \ N^{(j)}(k)\equiv \left\{ n_{\ell}^{(j)}(k)\right\}_{\ell}.
 \label{eq:fier}
\end{align}
\end{subequations}
Next we provide insight into the reason why (\ref{eq:huh0},b) and (\ref{eq:herculedherculess}) may have different per-iteration costs (i.e., why $c^{\text{non}}_k$ may not equal $c^{\text{cyc}}_k$).

Consider a simple situation where $\bm\uptheta=[x,y]^\top\in \mathbb{R}^2$ (so that $\bm\uptheta^{[\bm{1}]}=x$ and $\bm\uptheta^{[\bm{2}]}=y$), $V$ is a real-valued random variable whose distribution does not depend on $\bm\uptheta$, and assume that:
\begin{align}
\label{eq:blindmeyyyy}
\bm\Phi(\bm\uptheta,k,{V})=k^{-1}\left[\begin{array}{c}y(xy)^{9}+V \\0\end{array}\right]+k^{-1}\left[\begin{array}{c}0 \\ x (xy)^{9}+V\end{array}\right]
\end{align}
%
so that the first vector, respectively second vector, on the right-hand side of (\ref{eq:blindmeyyyy}) represents $\bm\Phi^{(1)}(\bm\uptheta,k,V)$, respectively $\bm\Phi^{(2)}(\bm\uptheta,k,V)$ (in the context of Section \ref{sec:sgfosa}, $k\bm\Phi(\bm\uptheta,k,{V})$ corresponds to the gradient of the noisy function $Q(\bm\uptheta,V)=(xy)^{10}/10+(x+y)V$).
The non-cyclic algorithm in (\ref{eq:herculedherculess}) with $\bm\Phi(\bm\uptheta,k,{V})$ as in (\ref{eq:blindmeyyyy}) would 
%
be implemented as follows:
\begin{align*}
\hat{\bm{\uptheta}}_{k+1}^{\text{non}}=\hat{\bm{\uptheta}}_k^{\text{non}}-k^{-1}\left[\begin{array}{c}y_k(x_ky_k)^{9}+V_k \\0\end{array}\right]-k^{-1}\left[\begin{array}{c}0 \\ x_k (x_ky_k)^{9}+V_k\end{array}\right],
\end{align*}
where $\hat{\bm{\uptheta}}_k^{\text{non}}=[x_k,y_k]^\top$. Because the possibly non-zero entries of $\bm\Phi^{(1)}(\hat{\bm{\uptheta}}_k^{\text{non}},k,V_k)$ and $\bm\Phi^{(2)}(\hat{\bm{\uptheta}}_k^{\text{non}},k,V_k)$ are very similar, it may be advantageous to share certain computations in order to minimize the overall arithmetic cost. For example, the quantity $(x_ky_k)^9$ appears in both  $\bm\Phi^{(1)}(\hat{\bm{\uptheta}}_k^{\text{non}},k,V_k)$ and $\bm\Phi^{(2)}(\hat{\bm{\uptheta}}_k^{\text{non}},k,V_k)$. In the multi-agent setting, if agents are allowed to communicate then the first agent could compute $(x_ky_k)^{9}$ and share it with the second agent. The ability to share the value of $(x_ky_k)^{9}$ is lost when considering a cyclic implementation (regardless of whether the implementation is distributed or not).\label{page:transferincossst} 

The implementation of the cyclic seesaw algorithm in (\ref{eq:huh0},b) is as follows:
\begin{align*}
\hat{\bm{\uptheta}}_{k+1}^{\text{cyc}}=\hat{\bm{\uptheta}}_{k}^{\text{cyc}}-k^{-1}\left[\begin{array}{c}y_k(x_ky_k)^{9}+V_k \\0\end{array}\right]-k^{-1}\left[\begin{array}{c}0 \\ x_{k+1} (x_{k+1}y_k)^{9}+V_k^{(I)}\end{array}\right],
\end{align*}
where $\hat{\bm{\uptheta}}_k^{\text{cyc}}=[x_k,y_k]^\top$ (in this case, $\hat{\bm{\uptheta}}_k^{(I)}$ would be equal to $[x_{k+1},y_k]^\top$). Since $(x_ky_k)^{9}\neq (x_{k+1}y_k)^{9}$, in a cyclic implementation it would not be possible to reduce cost by sharing the value of $(xy)^9$. However, since $(xy)^9=x^9y^9$, a reduction in the arithmetic cost could be obtained by sharing the values of $x_{k+1}^9$ and $y_k^9$. 
In general, the types of arithmetic computations that can be shared (and the associated reduction in cost) depends on the CGs of the entries of $\bm\Phi(\bm\uptheta,k,\bm{V})$.

In the remainder of this section we derive expressions for $c^{\text{non}}_k$ and $c^{\text{cyc}}_k$ that take into consideration a possible reduction in cost through the sharing of arithmetic computations.
We make the following assumptions:
 \begin{DESCRIPTION}
 \item[D0] The CG corresponding to the $i$th entry of $\bm\Phi(\bm\uptheta,k,\bm{V})$ is the same for both the cyclic- and non-cyclic implementations and does not depend on $\bm\uptheta$, $k$, or $\bm{V}$.
\item [D1] If the algorithm is implemented in a distributed manner, $\bm\uptheta$, $k$ and $\bm{V}$ are known to the agent computing $\bm\Phi^{(j)}(\bm\uptheta,k,\bm{V})$.
\item[D2] During iteration $k$, it is possible to share/reuse computations corresponding to nodes in the CGs which were computed during iterations $1,\dots,k$.
 \end{DESCRIPTION}
Under D0--D1 let $\text{Naive-Cost}^{(1)}$ be the total number of elementary operations required to evaluate the $p'$ CGs  associated with the evaluation of $\bm\Phi^{(1)}(\cdot,\cdot,\cdot)$. Similarly, let $\text{Naive-Cost}^{(2)}$ be the total number of elementary operations required to evaluate the $p'$ CGs  associated with the evaluation of $\bm\Phi^{(2)}(\cdot,\cdot,\cdot)$.
Then, under D0--D2 the following holds for both the cyclic- and non-cyclic algorithms:
\vspace{-.5in}
\begin{subequations}
\begin{align}
c_k^{\text{non}}&= \text{Naive-Cost}^{(1)}+\text{Naive-Cost}^{(2)}- c^{\text{non}}_{\text{share}}(k)- c^{\text{non}}_{\text{carry}}(k),\label{eq:opahhh0}\\
c_k^{\text{cyc}}&= \text{Naive-Cost}^{(1)}+\text{Naive-Cost}^{(2)}- c^{\text{cyc}}_{\text{share}}(k)- c^{\text{cyc}}_{\text{carry}}(k),\label{eq:opahhh}
\end{align}
\end{subequations}
where $c^{\text{non}}_{\text{share}}(k)$ and $c^{\text{cyc}}_{\text{share}}(k)$ denote the cost saved by sharing information on calculations performed during the $k$th iteration, and where $c^{\text{non}}_{\text{carry}}(k)$ and $c^{\text{cyc}}_{\text{carry}}(k)$ denote the cost saved by reusing and/or sharing computations carried from previous iterations.
Next we provide expressions for $c_{\text{share}}^{\text{non}}(k)$, $c_{\text{carry}}^{\text{non}}(k)$, $c_{\text{share}}^{\text{cyc}}(k)$, and $c_{\text{carry}}^{\text{cyc}}(k)$ based on the terminology from (\ref{eq:fier0},b).

%
%
%
 Let $\mathcal{A}_k$ be the set of computations performed during the $k$th iteration that are shared during iteration $k$. 
Similarly, let $\mathcal{B}_k$ be the set of computations performed during iterations $1,\dots, k-1$ that are reused and/or shared during iteration $k$. Additionally, let $\upalpha_{F,\ell}$, $\upalpha_{S,\ell}$, $\upalpha_{B,\ell}$, and $\upalpha_{N,\ell}$ represent the cost saved during the $k$th iteration by sharing $f_{\ell}^{(1)}(\bm\uptheta^{[\bm{1}]})$, $s_{\ell}^{(1)}(\bm\uptheta^{[\bm{2}]})$, $b_{\ell}^{(1)}(\bm\uptheta^{[\bm{1}]},\bm\uptheta^{[\bm{2}]})$, and $n_{\ell}^{(1)}(k)$, respectively. 
%
%
 Then, assuming $\upalpha_{F,\ell}$, $\upalpha_{S,\ell}$, $\upalpha_{B,\ell}$, and $\upalpha_{N,\ell}$ do not depend on $\bm\uptheta$ or $k$:
\begin{align}
c_{\text{share}}^{\text{non}}(k)=&\ \sum_{\ell}\upalpha_{F,\ell}\hspace{.03in} \chi\Big\{f_{\ell}^{(1)}\left((\hat{\bm{\uptheta}}_k^{\text{non}})^{[\bm{1}]}\right)\in F^{(2)}\left((\hat{\bm{\uptheta}}_k^{\text{non}})^{[\bm{1}]}\right)\cap \mathcal{A}_k\Big\}\notag\\
&+\sum_{\ell}\upalpha_{S,\ell}\hspace{.03in}\chi\Big\{s_{\ell}^{(1)}\left((\hat{\bm{\uptheta}}_k^{\text{non}})^{[\bm{2}]}\right)\in S^{(2)}\left((\hat{\bm{\uptheta}}_k^{\text{non}})^{[\bm{2}]}\right)\cap\mathcal{A}_k\Big\}\notag\\
&+\sum_{\ell}\upalpha_{B,\ell}\hspace{.03in}\chi\Big\{b_{\ell}^{(1)}\left((\hat{\bm{\uptheta}}_k^{\text{non}})^{[\bm{1}]},(\hat{\bm{\uptheta}}_k^{\text{non}})^{[\bm{2}]}\right)\in B^{(2)}\left((\hat{\bm{\uptheta}}_k^{\text{non}})^{[\bm{1}]},(\hat{\bm{\uptheta}}_k^{\text{non}})^{[\bm{2}]}\right)\cap\mathcal{A}_k\Big\}\notag\\
&+\sum_{\ell}\upalpha_{N,\ell}\hspace{.03in}\chi\Big\{n_{\ell}^{(1)}(k)\in N^{(2)}(k)\cap\mathcal{A}_k\Big\}.\label{eq:nonshared}
\end{align}

 
 To compute  $c_{\text{carry}}^{\text{non}}(k)$ let $\upbeta_{N,\ell}^{(j)}$ represent the arithmetic cost saved during the $k$th iteration by reusing the elements $n_{\ell}^{(j)}(i)$ for $i=0,\dots,k-1$. Then, assuming $\upbeta_{N,\ell}^{(j)}$ does not depend on $\bm\uptheta$ or $k$:
 %
%
\begin{align}
c_{\text{carry}}^{\text{non}}(k)=&\ \sum_{\ell}\upbeta_{N,\ell}^{(1)}\hspace{.03in}\chi\left\{n_{\ell}^{(1)}(k)\in\bigcup_{i=0}^{k-1}\big( N^{(1)}(i)\cup N^{(2)}(i)\big)\cap\mathcal{B}_k\right\}\notag\\
&+\sum_{\ell}\upbeta_{N,\ell}^{(2)}\hspace{.03in}\chi\left\{n_{\ell}^{(2)}(k)\in\bigcup_{i=0}^{k-1}\big( N^{(1)}(i)\cup N^{(2)}(i)\big)\cap\mathcal{B}_k\right\}\notag\\
&-\sum_{\ell}\upbeta_{N,\ell}^{(1)}\hspace{.03in}\chi\left\{n_{\ell}^{(1)}(k)\in\bigcup_{i=0}^{k-1}\big( N^{(1)}(i)\cup N^{(2)}(i)\big)\cap\mathcal{B}_k\right\}\times\notag\\
&\chi\left\{n_{\ell}^{(2)}\in\bigcup_{i=0}^{k-1}\big( N^{(1)}(i)\cup N^{(2)}(i)\big)\cap\mathcal{B}_k\right\},\label{eq:halfdome}
\end{align}
this follows after noting that the only terms from previous iterations that could be reused during iteration $k$ are terms that do not depend on $\bm\uptheta$.
For the cyclic seesaw algorithm, the cost of sharing computations is different than for the non-cyclic algorithm (as was discussed on p. \pageref{page:transferincossst}). Specifically, 
\begin{align}
c_{\text{share}}^{\text{cyc}}(k)=&\ \sum_{\ell}\upalpha_{S,\ell}\hspace{.03in} \chi\Big\{s_{\ell}^{(1)}\left((\hat{\bm{\uptheta}}_k^{\text{non}})^{[\bm{2}]}\right)\in S^{(2)}\left((\hat{\bm\uptheta}_{k}^{\text{non}})^{[\bm{2}]}\right)\cap \mathcal{A}_k\Big\}\notag\\
&+\sum_{\ell}\upalpha_{N,\ell}\hspace{.03in} \chi\Big\{n_{\ell}^{(1)}(k)\in N^{(2)}(k)\cap \mathcal{A}_k\Big\}.\label{eq:cycshared}
\end{align}
%
Next, letting $\upbeta_{F,\ell}^{(1)}$ represent the arithmetic cost saved during the $k$th iteration by reusing the element $f_{\ell}^{(1)}(\hat{\bm{\uptheta}}_k^{(1)})$ computed during the previous iteration and assuming $\upbeta_{F,\ell}^{(1)}$ does not depend on $\bm\uptheta$ or $k$ it follows that:
%
\begin{align}
&c_{\text{carry}}^{\text{cyc}}( k)=\sum_{\ell}\upbeta_{N,\ell}^{(1)}\hspace{.03in}\chi\left\{n_{\ell}^{(1)}(k)\in\bigcup_{i=0}^{k-1}\big( N^{(1)}(i)\cup N^{(2)}(i)\big)\cap\mathcal{B}_k\right\}\notag\\
&+\sum_{\ell}\upbeta_{N,\ell}^{(2)}\hspace{.03in}\chi\left\{n_{\ell}^{(2)}(k)\in\bigcup_{i=0}^{k-1}\big( N^{(1)}(i)\cup N^{(2)}(i)\big)\cap\mathcal{B}_k\right\}\notag\\
&-\sum_{\ell}\upbeta_{N,\ell}^{(1)}\hspace{.03in}\chi\left\{n_{\ell}^{(1)}(k)\in\bigcup_{i=0}^{k-1}\big( N^{(1)}(i)\cup N^{(2)}(i)\big)\cap\mathcal{B}_k\right\}\times\notag\\
&\chi\left\{n_{\ell}^{(2)}(k)\in\bigcup_{i=0}^{k-1}\big( N^{(1)}(i)\cup N^{(2)}(i)\big)\cap\mathcal{B}_k\right\}\notag\\
&+ \sum_{\ell}\upbeta_{F,\ell}^{(1)}\hspace{.03in}\chi\Big\{f_{\ell}^{(1)}\left((\hat{\bm{\uptheta}}_k^{\text{cyc}})^{[\bm{1}]}\right)\in F^{(2)}\left((\hat{\bm\uptheta}_{k}^{\text{cyc}})^{[\bm{1}]}\right)\cap\mathcal{B}_k\Big\}.\label{eq:tenk}
\end{align}
%
Note that (\ref{eq:tenk}) is different from (\ref{eq:halfdome}) since it contains an extra term (the last line in (\ref{eq:tenk})),
 which originates from the fact that the first $p'$ entries of $\hat{\bm{\uptheta}}_{k-1}^{(I)}$ are the same as the first $p'$ entries of $\hat{\bm{\uptheta}}_k^{\text{cyc}}$. Therefore, calculations involving $(\hat{\bm{\uptheta}}_{k-1}^{(I)})^{[\bm{1}]}$ which were computed during iteration $k-1$ could be used to evaluate the computations involving $(\hat{\bm{\uptheta}}_{k}^{\text{cyc}})^{[\bm{1}]}$ during the $k$th iteration.
The following Proposition compares $c_k^{\text{non}}$ and $c_k^{\text{cyc}}$.

\begin{proposition}
Assume D0--D2 hold along with the following conditions:
\begin{DESCRIPTION}
\item[D3] $\upalpha_{F,\ell}$, $\upalpha_{S,\ell}$, $\upalpha_{B,\ell}$, and $\upalpha_{N,\ell}$ do not depend on $\bm\uptheta$ or $k$.
\item[D4] $\upbeta_{N,\ell}^{(j)}$ does not depend on $\bm\uptheta$ or $k$.
\item[D5] $\upbeta_{F,\ell}^{(j)}$ does not depend on $\bm\uptheta$ or $k$.
\end{DESCRIPTION}
Then,
\begin{align}
\label{eq:core}
c_k^{\text{non}}-c_k^{\text{cyc}}= T_2(k)-T_1(k),
\end{align}
where $T_1(k)\geq 0$ is defined as:
\begin{align*}
T_1(k)&\equiv \sum_{\ell}\upalpha_{F,\ell}\hspace{.03in}\chi\Big\{f_{\ell}^{(1)}\left((\hat{\bm{\uptheta}}_k^{\text{non}})^{[\bm{1}]}\right)\in F^{(2)}\left((\hat{\bm{\uptheta}}_k^{\text{non}})^{[\bm{1}]}\right)\cap \mathcal{A}_k\Big\}\notag\\
&\sum_{\ell}\upalpha_{B,\ell}\hspace{.03in}\chi\Big\{b_{\ell}^{(1)}\left((\hat{\bm{\uptheta}}_k^{\text{non}})^{[\bm{1}]},(\hat{\bm{\uptheta}}_k^{\text{non}})^{[\bm{2}]}\right)\in B^{(2)}\left((\hat{\bm{\uptheta}}_k^{\text{non}})^{[\bm{1}]},(\hat{\bm{\uptheta}}_k^{\text{non}})^{[\bm{2}]}\right)\cap\mathcal{A}_k\Big\}
\end{align*}
and $T_2(k)\geq 0$ is defined as:
\begin{align*}
T_2(k)&\equiv \sum_{\ell}\upbeta_{F,\ell}^{(1)}\hspace{.03in}\chi\Big\{f_{\ell}^{(1)}\left((\hat{\bm{\uptheta}}_k^{\text{cyc}})^{[\bm{1}]}\right)\in F^{(2)}\left((\hat{\bm\uptheta}_{k}^{\text{cyc}})^{[\bm{1}]}\right)\cap\mathcal{B}_k\Big\}.
\end{align*}
\end{proposition}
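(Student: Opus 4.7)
The proof will be a direct algebraic verification using the cost decompositions already established. The plan is to start by subtracting equation (4.4b) from (4.4a), noting that the $\text{Naive-Cost}^{(1)} + \text{Naive-Cost}^{(2)}$ contributions cancel to yield
\[
c_k^{\text{non}} - c_k^{\text{cyc}} = \bigl(c^{\text{cyc}}_{\text{share}}(k) - c^{\text{non}}_{\text{share}}(k)\bigr) + \bigl(c^{\text{cyc}}_{\text{carry}}(k) - c^{\text{non}}_{\text{carry}}(k)\bigr).
\]
Conditions D0--D5 guarantee that the per-operation savings $\upalpha_{\cdot,\ell}$, $\upbeta_{\cdot,\ell}^{(j)}$ appearing in (4.5)--(4.8) are well-defined constants independent of $\bm\uptheta$ and $k$, so the summations may be subtracted term-by-term without interference.

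Next, I would compare the share-cost formulas (4.5) and (4.7) entry-by-entry. Both sums contain the $\upalpha_{S,\ell}$ and $\upalpha_{N,\ell}$ contributions with identical indicator events (these depend only on quantities common to both implementations: the second-subvector state and the iteration index), so these terms cancel. What survives is precisely the $\upalpha_{F,\ell}$ and $\upalpha_{B,\ell}$ rows of (4.5), which are absent from (4.7) because under the cyclic update the two halves of $\bm\uptheta$ are evaluated at different arguments and so category-1 and category-3 computations can no longer be shared within an iteration. This survival block is exactly $T_1(k)$, giving $c^{\text{non}}_{\text{share}}(k) - c^{\text{cyc}}_{\text{share}}(k) = T_1(k)$.

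For the carry costs, I would compare (4.6) and (4.8) line-by-line. The first three $\upbeta_{N,\ell}$ lines are identical (they depend only on the iteration-number-driven sets $N^{(j)}(i)$, not on $\bm\uptheta$), so they cancel. What remains in (4.8) is the extra $\upbeta_{F,\ell}^{(1)}$ line, which accounts for the fact that in the cyclic scheme the first block of coordinates at iteration $k$ inherits the value $(\hat{\bm\uptheta}_{k-1}^{(I)})^{[\bm 1]} = (\hat{\bm\uptheta}_{k}^{\text{cyc}})^{[\bm 1]}$ from the intermediate point of iteration $k-1$, enabling reuse of category-1 computations across iterations. This is precisely $T_2(k)$, so $c^{\text{cyc}}_{\text{carry}}(k) - c^{\text{non}}_{\text{carry}}(k) = T_2(k)$.

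Combining the two identities yields $c_k^{\text{non}} - c_k^{\text{cyc}} = T_2(k) - T_1(k)$. Non-negativity of $T_1(k)$ and $T_2(k)$ follows immediately: each summand is a non-negative savings constant ($\upalpha_{F,\ell}, \upalpha_{B,\ell}, \upbeta_{F,\ell}^{(1)} \geq 0$ by their interpretation as counts of spared elementary operations) multiplied by an indicator. There is no real obstacle here; the main care point is bookkeeping, namely verifying that the indicator events defining the cancelling $\upalpha_{S,\ell}$, $\upalpha_{N,\ell}$, and $\upbeta_{N,\ell}^{(j)}$ terms are genuinely the same in the cyclic and non-cyclic formulas (which they are, because those events depend only on category-2 or category-4 data that is evaluated at the same arguments in both schemes under D0--D2).
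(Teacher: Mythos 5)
Your proof is correct and follows essentially the same route as the paper's own (much terser) argument: subtract the two decompositions in (\ref{eq:opahhh0},b) so the naive costs cancel, then identify $c_{\text{share}}^{\text{non}}(k)-c_{\text{share}}^{\text{cyc}}(k)=T_1(k)$ from (\ref{eq:nonshared}) and (\ref{eq:cycshared}) and $c_{\text{carry}}^{\text{cyc}}(k)-c_{\text{carry}}^{\text{non}}(k)=T_2(k)$ from (\ref{eq:halfdome}) and (\ref{eq:tenk}). The term-by-term cancellation bookkeeping you supply is exactly what the paper leaves implicit.
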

\begin{proof}
First, (\ref{eq:nonshared}) and (\ref{eq:cycshared}) imply $T_1(k)=c_{\text{share}}^{\text{non}}(k)-c_{\text{share}}^{\text{cyc}}(k)$ while (\ref{eq:halfdome}) and (\ref{eq:tenk}) imply $T_2(k)=c_{\text{carry}}^{\text{cyc}}(k)-c_{\text{carry}}^{\text{non}}(k)$. The result then follows from (\ref{eq:opahhh0},b).
\end{proof}

To facilitate the interpretation of (\ref{eq:core}) we assume the following:
\begin{DESCRIPTION}
\item[D6]  $\upalpha_{F,\ell}^{(1)}=\upbeta_{F,\ell}^{(1)}$. 
This assumption is satisfied whenever the cost saved by sharing a computation involving only $\bm\uptheta^{[\bm{1}]}$ is independent of whether the computation was performed during the $k$th iteration or during an earlier iteration.
\end{DESCRIPTION}
Then, (\ref{eq:core}) becomes
\begin{align*}
&c_k^{\text{non}}-c_k^{\text{cyc}}= \sum_{\ell}\upalpha_{F,\ell}^{(1)}\hspace{.03in}\chi\Big\{f_{\ell}^{(1)}\left((\hat{\bm{\uptheta}}_k^{\text{cyc}})^{[\bm{1}]}\right)\in F^{(2)}\left((\hat{\bm\uptheta}_{k}^{\text{cyc}})^{[\bm{1}]}\right)\cap\mathcal{B}_k\Big\}\notag\\
& -\sum_{\ell}\upalpha_{F,\ell}\hspace{.03in}\chi\Big\{f_{\ell}^{(1)}\left((\hat{\bm{\uptheta}}_k^{\text{non}})^{[\bm{1}]}\right)\in F^{(2)}\left((\hat{\bm{\uptheta}}_k^{\text{non}})^{[\bm{1}]}\right)\cap \mathcal{A}_k\Big\}\notag\\
&-\sum_{\ell}\upalpha_{B,\ell}\hspace{.03in}\chi\Big\{b_{\ell}^{(1)}\left((\hat{\bm{\uptheta}}_k^{\text{non}})^{[\bm{1}]},(\hat{\bm{\uptheta}}_k^{\text{non}})^{[\bm{2}]}\right)\in B^{(2)}\left((\hat{\bm{\uptheta}}_k^{\text{non}})^{[\bm{1}]},(\hat{\bm{\uptheta}}_k^{\text{non}})^{[\bm{2}]}\right)\cap\mathcal{A}_k\Big\}.
\end{align*}
Furthermore, provided it is possible to share $f_{\ell}^{(1)}\left(\bm\uptheta^{[\bm{1}]}\right)$ if and only if it is possible to carry $f_{\ell}^{(1)}\left(\bm\uptheta^{[\bm{1}]}\right)$ from the previous iteration, then
 the difference between $c_k^{\text{cyc}}$ and $c_k^{\text{non}}$ under D0--D6 is given by:
\begin{align*}
c_k^{\text{cyc}}-c_k^{\text{non}}=\sum_{\ell}\upalpha_{B,\ell}\hspace{.03in}\chi\Big\{b_{\ell}^{(1)}\left((\hat{\bm{\uptheta}}_k^{\text{non}})^{[\bm{1}]},(\hat{\bm{\uptheta}}_k^{\text{non}})^{[\bm{2}]}\right)\in B^{(2)}\left((\hat{\bm{\uptheta}}_k^{\text{non}})^{[\bm{1}]},(\hat{\bm{\uptheta}}_k^{\text{non}})^{[\bm{2}]}\right)\cap\mathcal{A}_k\Big\}.
\end{align*}
Here, we can see that under conditions D0--D6 the main difference between the arithmetic cost of a cyclic algorithm and that of its non-cyclic counterpart is due to sharing elements that combine information regarding both subvectors of $\bm\uptheta$. Intuitively, this is due to the fact that by implementing an algorithm in a cyclic manner we lose the ability to reuse any computations involving both $\bm\uptheta^{[\bm{1}]}$ and $\bm\uptheta^{[\bm{2}]}$. Therefore, if a significant cost reduction is achieved by sharing this type of element then an iteration of the non-cyclic algorithm will have a significantly lower cost. 

In practice, the per-iteration cost of implementation will depend strongly on the specific algorithm considered. For the cyclic seesaw SPSA algorithm, for example, a more appropriate definition of cost might be the number of noisy loss function evaluations used per-iteration. If cyclic seesaw SPSA is implemented according to (\ref{eq:moumousika}) along with (\ref{eq:howeardspos1}) and (\ref{eq:howeardspos2}) then the per-iteration cost of implementation would be equal to 4 loss function measurements per-iteration. In the case of the regular SPSA algorithm (see (\ref{eq:pocres})), however, the per-iteration cost of implementation would be only 2 loss function measurements per-iteration. The following section presents some general results on the relative asymptotic efficiency in (\ref{eq:ratio}).

\section{Approximating Relative Efficiency}
\label{sec:aseficwiener}

 In the previous chapter (see Section \ref{sec:upperpotomacshell}) it was shown that (under certain assumptions) $k^{\upbeta/2}(\hat{\bm\uptheta}_k^{\text{cyc}}-\bm\uptheta^\ast)\xrightarrow{\text{\ dist\ }}\mathcal{N}({\bm\upmu}^{\text{cyc}},{\bm\Sigma}^{\text{cyc}})$
for some mean vector $\bm\upmu^{\text{cyc}}$ and covariance matrix $\bm\Sigma^{\text{cyc}}$, where $\hat{\bm\uptheta}_k^{\text{cyc}}$ denotes the $k$th iterate of Algorithm \ref{kirkey} and the notation ``$\xrightarrow{\text{\ dist\ }}$'' means convergence in distribution. A similar result is readily obtained for its non-cyclic counterpart. In particular, let $\hat{\bm{\uptheta}}_k^{\text{non}}$ denote the $k$th iterate obtained using the non-cyclic counterpart of Algorithm \ref{kirkey}. Then, under certain assumptions (see Fabian 1968)\nocite{fabian1968} it can be shown that $k^{\upbeta/2}(\hat{\bm\uptheta}_k^{\text{non}}-\bm\uptheta^\ast)\xrightarrow{\text{\ dist\ }} \mathcal{N}({\bm\upmu}^{\text{non}},{\bm\Sigma}^{\text{non}})$
for some mean vector ${\bm\upmu}^{\text{non}}$ and covariance matrix ${\bm\Sigma}^{\text{non}}$. This section shows how results on asymptotic normality can be used to approximate the relative efficiency in (\ref{eq:ratio}). 

 Following the ideas in (Spall, 1992) we assume:
\begin{align}
\label{eq:thressandfive}
\lim_{k\rightarrow \infty}E[k^{\upbeta/2}(\hat{\bm\uptheta}_k^{\text{cyc}}-\bm\uptheta^\ast)]=\bm\upmu^{\text{cyc}}, \ \ \ \lim_{k\rightarrow \infty}E[k^{\upbeta/2}(\hat{\bm\uptheta}_k^{\text{non}}-\bm\uptheta^\ast)]={\bm\upmu}^{\text{non}},
\end{align}
and that:
\begin{align}
\lim_{k\rightarrow \infty}\Var{[k^{\upbeta/2}(\hat{\bm\uptheta}_k^{\text{cyc}}-\bm\uptheta^\ast)]}=\bm\Sigma^{\text{cyc}},\ \ \ \lim_{k\rightarrow \infty}\Var{[k^{\upbeta/2}(\hat{\bm\uptheta}_k^{\text{non}}-\bm\uptheta^\ast)]}=\bm\Sigma^{\text{non}}.
\label{eq:suchawalkschon}
\end{align}
Spall (1992) discusses how (\ref{eq:thressandfive}) and (\ref{eq:suchawalkschon}) hold if the terms $\|k^{\upbeta/2}(\hat{\bm\uptheta}_k^{\text{cyc}}-\bm\uptheta^\ast)\|^2$ and $\|k^{\upbeta/2}(\hat{\bm\uptheta}_k^{\text{non}}-\bm\uptheta^\ast)\|^2$ are uniformly integrable (see also Laha and Rohatgi, pp. 138--140). Equations (\ref{eq:thressandfive}) and (\ref{eq:suchawalkschon}) allow us to obtain the following approximation for large $k$:
\begin{align*}
E\|\hat{\bm\uptheta}_{k}^{\text{cyc}}-\bm\uptheta^\ast\|^2=&\ \trace{\left[E(\hat{\bm\uptheta}_k^{\text{cyc}}-\bm\uptheta^\ast)(\hat{\bm\uptheta}_k^{\text{cyc}}-\bm\uptheta^\ast)^\top\right]}
\approx k^{-\upbeta}\trace{\left[\bm\Sigma^{\text{cyc}}+\bm\upmu^{\text{cyc}}(\bm\upmu^{\text{cyc}})^\top\right]}.
\end{align*}
Similarly, for large $k$ we approximate:
\begin{align*}
E\|\hat{\bm\uptheta}_{k}^{\text{non}}-\bm\uptheta^\ast\|^2
\approx k^{-\upbeta}\trace{\left[\bm\Sigma^{\text{non}}+\bm\upmu^{\text{non}}(\bm\upmu^{\text{non}})^\top\right]}.
\end{align*}
Then, for large $k_1$ and $k_2$ we may approximate (\ref{eq:ratio}) as follows:
\begin{align}
\frac{E\|\hat{\bm\uptheta}_{k_1}^{\text{cyc}}-\bm\uptheta^\ast\|^2}{E\|\hat{\bm\uptheta}_{k_2}^{\text{non}}-\bm\uptheta^\ast\|^2}\approx \left(\frac{k_2}{k_1}\right)^\upbeta\frac{\trace{\left[\bm\Sigma^{\text{cyc}}\right]}+[\bm\upmu^{\text{cyc}}]^\top\bm\upmu^{\text{cyc}}}{\trace{\left[\bm\Sigma^{\text{non}}\right]}+[{\bm\upmu}^{\text{non}}]^\top{\bm\upmu}^{\text{non}}}.
\label{eq:michbuuubl}
\end{align}
Here, we remind the reader that in order to perform a fair comparison between the cyclic and non-cyclic algorithm, the cumulative cost of implementing the cyclic algorithm up to iteration $k_1$ is assumed to be equal (or approximately equal) to that of its non-cyclic counterpart up to iteration $k_2$; an analysis of the asymptotic properties of (\ref{eq:michbuuubl}) would therefore require increasing $k_1$ and $k_2$ at a rate that ensures the comparison is fair at all times. With this in mind, we introduce the following proposition.
\begin{proposition}
\label{prop:omygodsthunder}
Let  $0<c^{\text{cyc}}$ and $0<c^{\text{non}}$ be finite constants such that
 $c^{\text{cyc}}/c^{\text{non}}$ is a rational number. Then, there exists an infinite sequence $\{(k_{1(i)},k_{2(i)})\}_{i=1}^\infty$ in $\mathbb{Z}^+\times\mathbb{Z}^+$ with $k_{1(i-1)}<k_{1(i)}$ and $k_{2(i-1)}<k_{2(i)}$ such that $c^{\text{cyc}}k_{1(i)}=c^{\text{non}}k_{2(i)}$.
\end{proposition}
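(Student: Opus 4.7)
The plan is to produce the sequence explicitly. Since $c^{\text{cyc}}/c^{\text{non}}$ is a rational number and both constants are strictly positive, I can write
\begin{align*}
\frac{c^{\text{cyc}}}{c^{\text{non}}}=\frac{p}{q}
\end{align*}
for some positive integers $p$ and $q$ (for instance by taking the representation in lowest terms). The defining equation $c^{\text{cyc}}k_{1(i)}=c^{\text{non}}k_{2(i)}$ is equivalent to $k_{1(i)}/k_{2(i)}=c^{\text{non}}/c^{\text{cyc}}=q/p$, so the natural choice is $k_{1(i)}\equiv q\,i$ and $k_{2(i)}\equiv p\,i$ for $i=1,2,\dots$.

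Verifying the three required properties is then routine. First, $k_{1(i)},k_{2(i)}\in \mathbb{Z}^+$ since $p,q,i$ are positive integers. Second, both sequences are strictly increasing because $p,q\geq 1$, so $k_{1(i)}-k_{1(i-1)}=q\geq 1$ and $k_{2(i)}-k_{2(i-1)}=p\geq 1$. Third, the cost-equality identity holds for every $i$ by direct substitution:
\begin{align*}
c^{\text{cyc}}k_{1(i)}=c^{\text{cyc}}\, q\, i = c^{\text{non}}\, p\, i = c^{\text{non}}k_{2(i)},
\end{align*}
where the middle equality uses $c^{\text{cyc}} q = c^{\text{non}} p$, which is simply a rearrangement of $c^{\text{cyc}}/c^{\text{non}}=p/q$.

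There is no real obstacle in this result; the rationality hypothesis is precisely what guarantees the existence of an integer pair $(q,p)$ encoding the ratio, and scaling that pair by $i$ produces the desired infinite sequence. The purpose of stating the proposition explicitly is more conceptual than technical: it confirms that for the comparison in \eqref{eq:ratio} one can meaningfully let $k_{1(i)}$ and $k_{2(i)}$ grow together in such a way that the cumulative implementation costs of the cyclic and non-cyclic algorithms remain exactly matched, which justifies taking limits of the form $k_1,k_2\to\infty$ in the efficiency approximation \eqref{eq:michbuuubl}.
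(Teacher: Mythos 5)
Your proof is correct and takes exactly the same approach as the paper: write $c^{\text{cyc}}/c^{\text{non}}=p/q$ with $p,q\in\mathbb{Z}^+$ and set $k_{1(i)}=qi$, $k_{2(i)}=pi$. The only difference is that you spell out the verification of positivity, monotonicity, and the cost identity, which the paper leaves implicit.
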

\begin{proof}
Let $c^{\text{cyc}}/c^{\text{non}}=p/q$ where $p$ and $q$ are strictly positive integers. The desired sequence can be obtained by letting $k_{1(i)}=qi$ and $k_{2(i)}=pi$.
\end{proof}

The significance of Proposition \ref{prop:omygodsthunder} is that if the per-iteration costs $c_k^{\text{cyc}}$ and $c_k^{\text{non}}$ are nonzero, finite, independent of $k$ (so that $c_k^{\text{cyc}}=c^{\text{cyc}}$ and $c_k^{\text{non}}=c^{\text{non}}$), and if $c^{\text{cyc}}/c^{\text{non}}$ is a rational number, then the cost of computing $\hat{\bm{\uptheta}}_{k_{1(i)}}^{\text{cyc}}$ is exactly equal to the cost of computing $\hat{\bm{\uptheta}}_{k_{2(i)}}^{\text{non}}$ for any strictly-positive integer $i$ when $k_{1(i)}$ and $k_{2(i)}$ are as in Proposition \ref{prop:omygodsthunder}. We can then obtain the following result:
\begin{align}
\label{eq:wobble}
\lim_{i\rightarrow \infty}\frac{E\|\hat{\bm\uptheta}_{k_{1(i)}}^{\text{cyc}}-\bm\uptheta^\ast\|^2}{E\|\hat{\bm\uptheta}_{k_{2(i)}}^{\text{non}}-\bm\uptheta^\ast\|^2}= \left(\frac{c^{\text{cyc}}}{c^{\text{non}}}\right)^\upbeta\frac{\trace{\left[{\bm\Sigma}^{\text{cyc}}\right]}+[\bm\upmu^{\text{cyc}}]^\top\bm\upmu^{\text{cyc}}}{\trace{\left[{\bm\Sigma}^{\text{non}}\right]}+[{\bm\upmu}^{\text{non}}]^\top{\bm\upmu}^{\text{non}}}.
\end{align}
The following section uses the limit in (\ref{eq:wobble}) to study the relative efficiency between a special case of Algorithm \ref{kirkey} and its non-cyclic counterpart.

\section{Relative Efficiency: A Special Case}
\label{sec:apsecialcaserelefff}

This section computes the asymptotic relative efficiency between a non-cyclic algorithm and a cyclic algorithm which is a special case of Algorithm \ref{kirkey} in which the sets $\mathcal{S}_j$ do not intersect (this is equivalent to saying that any single entry of $\hat{\bm{\uptheta}}_k$ can only be updated by one processor/agent although each processor/agent may update several entries) and updates are produced in a strictly cyclic manner; this cyclic algorithm is described in more detail in Algorithm \ref{kirkeyspecial}. Given our desire to analyze the effect of a cyclic implementation on asymptotic efficiency, an appropriate non-cyclic counterpart to Algorithm \ref{kirkeyspecial} is given in {\ref{alg:apeparingairshow}}. Here, in order to to isolate the effects of a cyclic implementation on asymptotic efficiency, a diagonal matrix of gains was used in an attempt to make the non-cyclic algorithm as similar to the cyclic algorithm as possible. In the remainder of this section we obtain expressions for the terms $\bm\Sigma^{\text{cyc}}$, $\bm\Sigma^{\text{non}}$, $\bm\upmu^{\text{cyc}}$, and $\bm\upmu^{\text{non}}$ (defined in the opening paragraph of Section \ref{sec:aseficwiener}) in order to estimate the relative efficiency between Algorithms \ref{kirkeyspecial} and \ref{alg:apeparingairshow} in the manner of (\ref{eq:wobble}).

 \begin{algorithm}[!t]                     
\caption{Strictly Cyclic Version of Algorithm \ref{kirkey}}      
\label{kirkeyspecial}                
\begin{algorithmic} [1]                  
\setstretch{1.5} 
\REQUIRE    $\hat{\bm{\uptheta}}_0^{\text{cyc}}$, $\{a_k^{(j)}\}_{k\geq 0}$ for $j=1,\dots,d$, and let $\mathcal{S}_1,\dots, \mathcal{S}_d$ be such that (\ref{eq:scurry}) holds. Set $k=0$.
\WHILE{stopping criterion has not been reached}
\STATE{Set $\hat{\bm\uptheta}_k^{(I_{0})}=\hat{\bm{\uptheta}}_k^{\text{cyc}}$.}
         \FOR{$j=1,\dots,d$}
	\STATE{Define $\hat{\bm\uptheta}_k^{(I_{j})}\equiv \hat{\bm\uptheta}_k^{(I_{j-1})}- {a}_{k}^{(j)}\ \hat{\bm{g}}^{(j)}\left(\hat{\bm\uptheta}_k^{(I_{j-1})}\right)$.
	}
	 \ENDFOR
	 \STATE{Let $\hat{\bm{\uptheta}}_{k+1}^{\text{cyc}}\equiv\hat{\bm{\uptheta}}_k^{(I_{d})}$.
	 }
	\STATE{set $k=k+1$}
\ENDWHILE
\end{algorithmic}
\end{algorithm}

%

The following corollary to Theorem \ref{thm:fnogg} will be used to derive expressions for  $\bm\Sigma^{\text{cyc}}$ and $\bm\upmu^{\text{cyc}}$, the parameters of the asymptotic distribution of $k^{\upbeta/2}(\hat{\bm\uptheta}_k^{\text{cyc}}-\bm\uptheta^\ast)$.

\begin{corollary}
\label{eq:hjointcoffees}
Let $\hat{\bm{\uptheta}}_k^{\text{cyc}}$ be generated according to Algorithm \ref{kirkeyspecial}. Additionally, let $\mathcal{F}_k$ denote the sigma field generated by $\{\hat{\bm{\uptheta}}_\ell^{\text{cyc}}\}_{\ell=0}^k$ as well as by any random variables generated  by the algorithm in the production of $\hat{\bm{\uptheta}}_k^{\text{cyc}}$. 
Also, assume the following conditions hold (all scalars, matrices, and vectors are assumed to have real entries):
%
\begin{description}
\item[C0$'$] $L(\bm\uptheta)$ is twice continuously differentiable, $\bm{H}(\bm\uptheta)$ has bounded entries, and $\hat{\bm\uptheta}_k^{(I_j)}\rightarrow \bm\uptheta^\ast$ w.p.1 for $j=1,\dots, d$.
\item[C1$'$] The same as C1.
\begin{algorithm}[!t]                     
\caption{Non-Cyclic Counterpart to Algorithm \ref{kirkeyspecial}}      
\label{alg:apeparingairshow}                
\begin{algorithmic} [1]                  
\setstretch{1.5} 
\REQUIRE    $\hat{\bm{\uptheta}}_0^{\text{non}}$, $\{a_k^{(j)}\}_{k\geq 0}$ for $j=1,\dots,d$, let $\mathcal{S}_1,\dots, \mathcal{S}_d$ be such that (\ref{eq:scurry}) holds, and let $\bm{A}_k$ be a diagonal matrix with $i$th diagonal entry equal to $a_k^{(j)}$, where $j$ is such that $i\in \mathcal{S}_j$. Set $k=0$.
\WHILE{stopping criterion has not been reached}
	 \STATE{Let $\hat{\bm{\uptheta}}_{k+1}^{\text{non}}=\hat{\bm{\uptheta}}_k^{\text{non}}-\bm{A}_k\hat{\bm{g}}(\hat{\bm{\uptheta}}_k^{\text{non}})$.
	 }
	\STATE{set $k=k+1$}
\ENDWHILE
\end{algorithmic}
\end{algorithm}
\item[C2$'$]  There exists  an diagonal matrix $\bm{\Lambda}\in \mathbb{R}^{p\times p}$ with strictly positive eigenvalues and a nonsingular matrix $\bm{S}\in \mathbb{R}^{p\times p}$ such that
$\bm\Gamma\equiv \sum_{j=1}^{d} r_{j} \bm{J}^{(j)}(\bm\uptheta^\ast)=\bm{AH}(\bm\uptheta^\ast)=\bm{S\Lambda S}^{-1}$ where $\bm{A}$ is the diagonal matrix with $i$th diagonal entry equal to $r_i$, and where $\bm{J}^{(j)}(\bm\uptheta)$ denotes the Jacobian of $\bm{g}^{(j)}(\bm\uptheta)$.
\item[C3$'$]
 $\upbeta$ and $\upalpha$ satisfy $0<\upbeta\leq \upalpha$. Additionally, for all $j$ there exist a finite vector $\bm{b}^{(j)}\in \mathbb{R}^p$ such that 
  $k^{\upbeta/2}{\bm{\upbeta}}^{(j)}(\hat{\bm\uptheta}_k^{(I_{j})})\rightarrow \bm{b}^{(j)}$ w.p.1 for all $j$.
\item[C4$'$] For all $j$,
  $E[{\bm{\upxi}}^{(j)}(\hat{\bm\uptheta}_k^{(I_{j})})|\hat{\bm\uptheta}_k^{(I_{j})}]=\bm{0}$. 
\item[C5$'$]  There exists a constant $C>0$ and a matrix $\bm\Sigma^{(j)}$ such that one of the following holds w.p.1:
\begin{enumerate}[label=(\roman*)]
\item $\upbeta=\upalpha$, $C>\|E[{\bm{\upxi}}^{(j)}\big(\hat{\bm\uptheta}_k^{(I_{j})}\big)[{\bm{\upxi}}^{(j)}\big(\hat{\bm\uptheta}_k^{(I_{j})}\big)]^\top|\mathcal{F}_k]-\bm\Sigma^{(j)}\|\rightarrow 0$.
\item $\upbeta<\upalpha$, $C>\|k^{\upbeta-\upalpha}E[{\bm{\upxi}}^{(j)}\big(\hat{\bm\uptheta}_k^{(I_{j})}\big)[{\bm{\upxi}}^{(j)}\big(\hat{\bm\uptheta}_k^{(I_{j})}\big)]^\top|\mathcal{F}_k]-\bm\Sigma^{(j)}\|\rightarrow 0$.
\end{enumerate}
Additionally, if $\bm{v}_1$ and $\bm{v}_2$ are two different elements of $\{{\bm{\upxi}}_k^{(j)}\big(\hat{\bm\uptheta}_k^{(I_j)}\big)\}_{j}$, then one of the following holds w.p.1:
\begin{enumerate}[label=(\roman*)]
\setcounter{enumi}{2}
\item C5$'$-(i) holds and $C>\|E[\bm{v}_1\bm{v}_2^\top|\mathcal{F}_k]\|\rightarrow 0$.
\item C5$'$-(ii) holds and $C>\|k^{\upbeta-\upalpha}E[\bm{v}_1\bm{v}_2^\top|\mathcal{F}_k]\|\rightarrow 0$.
\end{enumerate}
\item[C6$'$] Let $\bm{M}_B\equiv \sum_{j=1}^d\bm\Sigma^{(j)}$. The matrix $\bm{A}\bm{M}_B\bm{A}$ is positive definite.
\item[C7$'$] One of the following holds:
\begin{enumerate}[label=(\roman*)]
\item $\upbeta=\upalpha$ and B5-(i) holds with $\bm{V}_k$ replaced by ${\bm{\upxi}}^{(j)}\big(\hat{\bm\uptheta}_k^{(I_{j})}\big)$.
\item $\upbeta<\upalpha$ and B5-(i) holds with $\bm{V}_k$ replaced by $k^{(\upbeta-\upalpha)/2}{\bm{\upxi}}^{(j)}\big(\hat{\bm\uptheta}_k^{(I_{j})}\big)$.
\end{enumerate}
\item[C8$'$] The same as B6$'$ with $U_{ii}$ replaced by $\Lambda_{ii}$.
\end{description}
Then, $k^{\upbeta/2}(\hat{\bm\uptheta}_k^{\text{cyc}}-\bm\uptheta^\ast)\xrightarrow{\text{\ dist\ }}\mathcal{N}({\bm\upmu}^\text{cyc},{\bm\Sigma}^\text{cyc})$
 with $\bm\upmu^{\text{cyc}}= -\bm{S}[\bm{\Lambda}-\upbeta_+/2\bm{I}]^{-1} \bm{S}^{-1}\bm{A}\bm{b}^\text{cyc}$ and $\bm\Sigma^{\text{cyc}}=\bm{SQ}^{\text{cyc}}\bm{S}^\top$, where
\begin{align}
\label{eq:jepanuj}
\bm{b}^\text{cyc}\equiv\sum_{j=1}^d \bm{b}^{(j)}, \ \ \ {Q}^\text{cyc}_{ij}&=\frac{\left[\bm{S}^{-1}\bm{A}\bm{M}_B\bm{A}(\bm{S}^{-1})^\top\right]_{ij}}{\Lambda_{ii}+\Lambda_{jj}-\upbeta_+}.
\end{align}
\end{corollary}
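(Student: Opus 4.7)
The plan is to reduce Corollary~\ref{eq:hjointcoffees} to Theorem~\ref{thm:fnogg} by observing that Algorithm~\ref{kirkeyspecial} is the special case of Algorithm~\ref{kirkey} obtained by taking $s=d$, $n(m)=1$, and $j(m)=m$, and then verifying that conditions C0$'$--C8$'$ imply conditions C0--C8. Several of these implications are immediate: C0$'$, C1$'$, C3$'$, C4$'$, C5$'$, and C7$'$ are simply C0, C1, C3, C4, C5, and C7 rewritten under the above choices of $s$, $n(m)$, and $j(m)$. Condition C2$'$ implies C2 trivially since a diagonal matrix with positive diagonal entries is an upper triangular matrix with strictly positive eigenvalues (so $\bm{U}=\bm\Lambda$), and C8$'$ is C8 under the identification $U_{ii}=\Lambda_{ii}$. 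Once these implications are in place, the limit distribution is guaranteed by Theorem~\ref{thm:fnogg}, and it only remains to (i) verify C6 and (ii) simplify the general formulas in Theorem~\ref{thm:fnogg} for the mean vector $\bm{S}\bm\upnu$ and covariance $\bm{SQS}^\top$ to the forms claimed in the corollary.

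The main step requiring care is the passage from C6$'$ to C6. By construction, the noise vector $\bm\upxi_k^{(j)}(\hat{\bm\uptheta}_k^{(I_j)})$ has nonzero entries only in positions indexed by $\mathcal{S}_j$ (see the notation in~(\ref{eq:alonesong})), so its limiting conditional second-moment matrix $\bm\Sigma^{(j)}$ is supported on the $\mathcal{S}_j\times\mathcal{S}_j$ block. Because the sets $\{\mathcal{S}_j\}$ are disjoint and $\bm{A}$ is diagonal with entry $r_j$ on all positions of $\mathcal{S}_j$, one obtains $\bm{A}\bm\Sigma^{(j)}\bm{A}=r_j^2\bm\Sigma^{(j)}$ for every $j$. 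Summing over $j$ gives $\bm\Sigma=\sum_{j=1}^d r_j^2\bm\Sigma^{(j)}=\bm{A}\bm{M}_B\bm{A}$, so positive definiteness of $\bm{A}\bm{M}_B\bm{A}$ (C6$'$) is exactly C6. An analogous observation applied to the bias vectors, namely $r_j\bm{b}^{(j)}=\bm{A}\bm{b}^{(j)}$ since $\bm{b}^{(j)}$ is supported on $\mathcal{S}_j$, yields the identification $\bm{T}=-\sum_{j=1}^d r_j\bm{b}^{(j)}=-\bm{A}\bm{b}^{\text{cyc}}$ with $\bm{b}^{\text{cyc}}\equiv\sum_j\bm{b}^{(j)}$.

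Finally, the simplified expressions follow from the fact that $\tilde{\bm{U}}=\bm\Lambda-(\upbeta_+/2)\bm{I}$ is diagonal, so both off-diagonal bracketed sums in (\ref{eq:okayfine}) and (\ref{eq:nolookback}) vanish. The recursion (\ref{eq:okayfine}) collapses to $\upnu_i=(\tilde{U}_{ii})^{-1}\tilde{T}_i$, i.e.\ $\bm\upnu=[\bm\Lambda-(\upbeta_+/2)\bm{I}]^{-1}\bm{S}^{-1}\bm{T}$, and combining with $\bm{T}=-\bm{A}\bm{b}^{\text{cyc}}$ produces $\bm\upmu^{\text{cyc}}=\bm{S}\bm\upnu=-\bm{S}[\bm\Lambda-(\upbeta_+/2)\bm{I}]^{-1}\bm{S}^{-1}\bm{A}\bm{b}^{\text{cyc}}$. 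Similarly, with $\bm\Phi=\bm{I}$ and $\bm\Sigma=\bm{A}\bm{M}_B\bm{A}$, equation (\ref{eq:nolookback}) collapses to $Q^{\text{cyc}}_{ij}=[\bm{S}^{-1}\bm{A}\bm{M}_B\bm{A}(\bm{S}^{-1})^\top]_{ij}/(\Lambda_{ii}+\Lambda_{jj}-\upbeta_+)$, yielding $\bm\Sigma^{\text{cyc}}=\bm{SQ}^{\text{cyc}}\bm{S}^\top$ as stated in~(\ref{eq:jepanuj}). The most substantive step is the algebraic interaction between the diagonal support structure of the $\bm\Sigma^{(j)}$ and $\bm{b}^{(j)}$ with the gain matrix $\bm{A}$; everything else is bookkeeping or direct substitution into Theorem~\ref{thm:fnogg}.
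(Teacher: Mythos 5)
Your proposal is correct and follows essentially the same route as the paper, whose own proof is the single line ``Since C0$'$--C8$'$ imply C0--C8, the result follows from Theorem \ref{thm:fnogg}.'' Your write-up simply supplies the details the paper leaves implicit --- in particular the observation that the block-support of $\bm\Sigma^{(j)}$ and $\bm{b}^{(j)}$ on $\mathcal{S}_j$ turns $\sum_j r_j^2\bm\Sigma^{(j)}$ into $\bm{A}\bm{M}_B\bm{A}$ and $\sum_j r_j\bm{b}^{(j)}$ into $\bm{A}\bm{b}^{\text{cyc}}$, and that the diagonality of $\tilde{\bm{U}}$ collapses (\ref{eq:okayfine}) and (\ref{eq:nolookback}) --- all of which is accurate.
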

\begin{proof}
Since C0$'$--C8$'$ imply C0--C8, the result follows from Theorem \ref{thm:fnogg}.
\end{proof}

The following corollary to Theorem \ref{thm:fnogg} will be used to derive expressions for  $\bm\Sigma^{\text{non}}$ and $\bm\upmu^{\text{non}}$, the parameters of the asymptotic distribution of $k^{\upbeta/2}(\hat{\bm\uptheta}_k^{\text{non}}-\bm\uptheta^\ast)$.

\begin{corollary}
\label{eq:tarattatarantanand}
Let $\hat{\bm{\uptheta}}_k^{\text{non}}$ be generated according to Algorithm \ref{alg:apeparingairshow}. Additionally, let $\mathcal{F}_k$ denote the sigma field generated by $\{\hat{\bm{\uptheta}}_\ell^{\text{non}}\}_{\ell=0}^k$ as well as by any random variables generated  by the algorithm in the production of $\hat{\bm{\uptheta}}_k^{\text{non}}$. Also, assume the following conditions hold (all scalars, matrices, and vectors are assumed to have real entries):
%
\begin{description}
\item[C0$''$] $L(\bm\uptheta)$ is twice continuously differentiable, $\bm{H}(\bm\uptheta)$ has bounded entries, and $\hat{\bm\uptheta}_k^{\text{non}}\rightarrow \bm\uptheta^\ast$ w.p.1.
\item[C3$''$]  $\upbeta$ and $\upalpha$ satisfy $0<\upbeta\leq \upalpha$. Additionally, there exist a finite vector $\bm{b}^{\text{non}}\in \mathbb{R}^p$ such that  $k^{\upbeta/2}{\bm{\upbeta}}\big(\hat{\bm\uptheta}_k\big)\rightarrow \bm{b}^{\text{non}}$ w.p.1.
\item[C4$''$]  $E[{\bm{\upxi}}\big(\hat{\bm\uptheta}_k\big)|\hat{\bm{\uptheta}}_k]=\bm{0}$.
\item[C5$''$]  There exists a constant $C>0$ and a matrix $\bm{M}$ such that one of the following holds w.p.1:
\begin{enumerate}[label=(\roman*)]
\item $\upbeta=\upalpha$, $C>\|E[{\bm{\upxi}}\big(\hat{\bm\uptheta}_k\big)[{\bm{\upxi}}\big(\hat{\bm\uptheta}_k\big)]^\top|\mathcal{F}_k]-\bm{M}\|\rightarrow 0$.
\item $\upbeta<\upalpha$, $C>\|k^{\upbeta-\upalpha}E[{\bm{\upxi}}\big(\hat{\bm\uptheta}_k\big)[{\bm{\upxi}}\big(\hat{\bm\uptheta}_k\big)]^\top|\mathcal{F}_k]-\bm{M}\|\rightarrow 0$.
\end{enumerate}
\item[C6$''$] $\bm\Sigma\equiv \bm{A}{\bm{M}}\bm{A}$ is a positive definite matrix.
\item[C7$''$] One of the following holds:
\begin{enumerate}[label=(\roman*)]
\item $\upbeta=\upalpha$ and B5-(i) holds with $\bm{V}_k$ replaced by ${\bm{\upxi}}\big(\hat{\bm\uptheta}_k\big)$.
\item $\upbeta<\upalpha$ and B5-(i) holds with $\bm{V}_k$ replaced by $k^{(\upbeta-\upalpha)/2}{\bm{\upxi}}\big(\hat{\bm\uptheta}_k\big)$.
\end{enumerate}
\item[C1$''$, C2$''$, \& C8$''$] The same as C1$'$, C2$'$, and C8$'$, respectively.
\end{description}
Then, $k^{\upbeta/2}(\hat{\bm\uptheta}_k^{\text{non}}-\bm\uptheta^\ast)\xrightarrow{\text{\ dist\ }}\mathcal{N}({\bm\upmu}^\text{non},{\bm\Sigma}^\text{non})$ with ${\bm\upmu}^\text{non}= -\bm{S}[\bm{\Lambda}-\upbeta_+/2\bm{I}]^{-1} \bm{S}^{-1}\bm{A}\bm{b}^\text{non}$ and $\bm\Sigma^{\text{non}}=\bm{SQ}^{\text{non}}\bm{S}^\top$, where
%
\begin{align}
\label{eq:diamonddrone}
{Q}^\text{non}_{ij}=\frac{\left[\bm{S}^{-1}\bm{A}\bm{M}\bm{A}(\bm{S}^{-1})^\top\right]_{ij}}{\Lambda_{ii}+\Lambda_{jj}-\upbeta_+}.
\end{align}
\end{corollary}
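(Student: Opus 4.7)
The plan is to reduce Corollary \ref{eq:tarattatarantanand} to Theorem \ref{thm:generalizefabian}, following the same reduction strategy used in the proof of Corollary \ref{eq:hjointcoffees}. Because Algorithm \ref{alg:apeparingairshow} involves only a single update per iteration with no intermediate vectors $\hat{\bm{\uptheta}}_k^{(I_{j})}$, the verification is considerably cleaner than in the cyclic case, and the off-diagonal bookkeeping in the formulas (\ref{eq:okayfine}) and (\ref{eq:nolookback}) collapses.

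First I would rewrite the recursion in the form of (\ref{eq:alien}). Expanding $\hat{\bm{g}}(\hat{\bm\uptheta}_k^{\text{non}})$ via (\ref{eq:tennisey}) and Taylor-expanding $\bm{g}(\hat{\bm\uptheta}_k^{\text{non}})$ about $\bm\uptheta^\ast$ (justified by C0$''$) as $\tilde{\bm{H}}_k(\hat{\bm\uptheta}_k^{\text{non}}-\bm\uptheta^\ast)$ in the style of the derivation around (\ref{eq:crownuni})--p. \pageref{page:thispagehello}, and letting $\bm{W}_k=\hat{\bm\uptheta}_k^{\text{non}}-\bm\uptheta^\ast$, I obtain
\begin{align*}
\bm{W}_{k+1}=\bigl(\bm{I}-k^{-\upalpha}\bm\Gamma_k\bigr)\bm{W}_k+\frac{\bm{T}_k}{k^{\upalpha+\upbeta/2}}+\frac{\bm\Phi_k\bm{V}_k}{k^{(\upalpha+\upbeta)/2}},
\end{align*}
with the identifications $\bm\Gamma_k\equiv k^{\upalpha}\bm{A}_k\tilde{\bm{H}}_k$, $\bm{T}_k\equiv -k^{\upalpha+\upbeta/2}\bm{A}_k\bm{\upbeta}_k(\hat{\bm\uptheta}_k^{\text{non}})$, $\bm\Phi_k\equiv\bm{I}$, and $\bm{V}_k\equiv -k^{(\upalpha+\upbeta)/2}\bm{A}_k\bm{\upxi}_k(\hat{\bm\uptheta}_k^{\text{non}})$.

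Next I would verify B0$'$--B6$'$ of Theorem \ref{thm:generalizefabian}. B0$'$ is immediate from the choice of $\mathcal{F}_k$. For B1$'$, C1$''$ gives $k^\upalpha\bm{A}_k\to\bm{A}$, C0$''$ gives $\tilde{\bm{H}}_k\to\bm{H}(\bm\uptheta^\ast)$ w.p.1, and C2$''$ identifies $\bm\Gamma=\bm{AH}(\bm\uptheta^\ast)=\bm{S}\bm\Lambda\bm{S}^{-1}$, where $\bm\Lambda$ is diagonal (hence trivially upper-triangular) with strictly positive entries; take $\bm{U}=\bm\Lambda$. B2$'$ is trivial. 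B3$'$ follows from C1$''$ and C3$''$, giving $\bm{T}_k\to\bm{T}=-\bm{A}\bm{b}^{\text{non}}$. B4$'$: the mean-zero part is C4$''$, while C1$''$ together with C5$''$ (either (i) or (ii)) and C6$''$ yields $\|E[\bm{V}_k\bm{V}_k^\top\mid\mathcal{F}_k]-\bm{AMA}\|\to 0$ with $\bm{AMA}$ positive definite. B5$'$ is C7$''$, and B6$'$ is C8$''$.

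Finally, Theorem \ref{thm:generalizefabian} yields $k^{\upbeta/2}\bm{W}_k\xrightarrow{\text{\ dist\ }}\mathcal{N}(\bm{S}\bm{\upnu},\bm{S}\bm{Q}\bm{S}^\top)$. Because $\tilde{\bm{U}}=\bm\Lambda-(\upbeta_+/2)\bm{I}$ is diagonal, the bracketed off-diagonal sums in (\ref{eq:okayfine}) and (\ref{eq:nolookback}) vanish and the system decouples: $\upnu_i=(\Lambda_{ii}-\upbeta_+/2)^{-1}\tilde{T}_i$ with $\tilde{\bm{T}}=-\bm{S}^{-1}\bm{A}\bm{b}^{\text{non}}$, so $\bm{S}\bm\upnu=-\bm{S}[\bm\Lambda-(\upbeta_+/2)\bm{I}]^{-1}\bm{S}^{-1}\bm{A}\bm{b}^{\text{non}}$, while $Q^{\text{non}}_{ij}=[\bm{S}^{-1}\bm{AMA}(\bm{S}^{-1})^\top]_{ij}/(\Lambda_{ii}+\Lambda_{jj}-\upbeta_+)$, matching (\ref{eq:diamonddrone}).

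I do not anticipate a conceptual obstacle; the work is essentially bookkeeping. The one place to be careful is that conditions C0$''$--C8$''$ are stated in terms of the matrix gain $\bm{A}_k$ rather than a scalar gain $a_k^{(j)}$, so the rescaling factors $k^\upalpha$ and $k^{\upbeta/2}$ must be absorbed consistently into $\bm\Gamma_k$, $\bm{T}_k$, and $\bm{V}_k$ so that the limits agree with those required by Theorem \ref{thm:generalizefabian}. Note also that $\bm\Gamma=\bm{AH}(\bm\uptheta^\ast)$ is in general non-symmetric, so the symmetric-$\bm\Gamma$ form of Fabian's original theorem would not suffice; C2$''$ (via Proposition \ref{prop:sandpipercrossing}) is precisely what guarantees the real diagonalizable structure required by B1$'$.
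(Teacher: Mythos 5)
Your proposal is correct, but it takes a somewhat different route from the paper. The paper's own proof is a one-line reduction: it observes that C0$''$--C8$''$ imply C0--C8 and invokes Theorem \ref{thm:fnogg} (treating Algorithm \ref{alg:apeparingairshow} as fitting the framework already analyzed for Algorithm \ref{kirkey}), exactly parallel to the proof of Corollary \ref{eq:hjointcoffees}. You instead bypass Theorem \ref{thm:fnogg} entirely and apply Theorem \ref{thm:generalizefabian} directly, writing the non-cyclic recursion in the form of (\ref{eq:alien}) with $\bm\Gamma_k=k^{\upalpha}\bm{A}_k\tilde{\bm{H}}_k$, $\bm{T}_k=-k^{\upalpha+\upbeta/2}\bm{A}_k\bm{\upbeta}_k$, $\bm{V}_k=-k^{(\upalpha+\upbeta)/2}\bm{A}_k\bm{\upxi}_k$, and verifying B0$'$--B6$'$ by hand. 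Since the proof of Theorem \ref{thm:fnogg} is itself a verification of B0$'$--B6$'$ followed by an appeal to Theorem \ref{thm:generalizefabian}, you are essentially inlining that argument specialized to the degenerate case of a single block with a single update, where the terms $\bm\Delta_{\bm{g}}$, $\bm\Delta_{\bm\upbeta}$, $\bm\Delta_{\bm\upxi}$ all vanish. What your approach buys is transparency: the identification of $\bm\Gamma_k$, $\bm{T}_k$, and $\bm{V}_k$ for the diagonal-gain non-cyclic algorithm is made explicit, and the collapse of the off-diagonal sums in (\ref{eq:okayfine}) and (\ref{eq:nolookback}) when $\bm{U}=\bm\Lambda$ is diagonal is spelled out rather than inherited. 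What the paper's route buys is brevity and reuse of an already-proven result; it also makes the structural point that the non-cyclic algorithm is literally a special case of the cyclic framework. Your verification of the individual conditions is sound, and your closing remarks — that the scaling factors must be absorbed consistently and that the non-symmetry of $\bm{AH}(\bm\uptheta^\ast)$ is why condition B1$'$ rather than Fabian's original symmetric condition is needed — are both accurate and match the paper's motivation for Theorem \ref{thm:generalizefabian}.
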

\begin{proof}
Since C0$''$--C8$''$ imply C0--C8, the result follows from Theorem \ref{thm:fnogg}
\end{proof}

The validity of C0$'$--C8$'$ and C0$''$--C8$''$ is directly related to the validity of C0--C8 (see Section \ref{sec:bonestrailseason} for an in-depth discussion on this topic). Using the expressions for $\bm\Sigma^{\text{cyc}}$, $\bm\upmu^{\text{cyc}}$, $\bm\Sigma^{\text{non}}$, and $\bm\upmu^{\text{non}}$ from Corollaries \ref{eq:hjointcoffees} and \ref{eq:tarattatarantanand} along with the values of $c^\text{non}$ and $c^\text{cyc}$ we can compute the estimate in (\ref{eq:wobble}). In general, however, the study of this ratio is rather complicated. In order to obtain a simple expression for (\ref{eq:wobble}) we will consider a special case.
 
\begin{proposition}
\label{prop:exploringeoss}
Let $\hat{\bm{\uptheta}}_k^{\text{cyc}}$ be obtained according to Algorithm \ref{kirkeyspecial} and let $\hat{\bm{\uptheta}}_k^{\text{non}}$ be obtained according to Algorithm \ref{alg:apeparingairshow}. Assume the following:
\begin{DESCRIPTION}
\item[E0] Conditions C0$'$--C8$'$ and C0$''$--C8$''$ hold.
\item[E1]  $\upbeta>0$ and ${a}_k^{(j)}>0$ for $j=1,\dots,d$ are the same for both Algorithms \ref{kirkeyspecial} and \ref{alg:apeparingairshow}.
\item[E2] $\bm{b}^{\text{cyc}}=\bm{b}^{\text{non}}$ (e.g., as in the SG and CSG algorithms).
\item[E3] At least one of the following holds:
\begin{enumerate}[label=(\roman*)]
\item $\bm{S}=c\bm{I}$ for some constant $c$ (i.e., $\bm{H}(\bm\uptheta^\ast)$ is a diagonal matrix).
\item $\bm{M}_B=\bm{M}$ (e.g., when the entries of the $\bm\upxi_k$ are independent given $\mathcal{F}_k$).
\end{enumerate}
\end{DESCRIPTION}
Then, when $k_{1(i)}$ and $k_{2(i)}$ are as in Proposition \ref{prop:omygodsthunder} the limit in (\ref{eq:wobble}) becomes:
\begin{align}
\label{eq:horsesrrsesriding}
\lim_{i\rightarrow \infty}\frac{E\|\hat{\bm\uptheta}_{k_{1(i)}}^{\text{cyc}}-\bm\uptheta^\ast\|^2}{E\|\hat{\bm\uptheta}_{k_{2(i)}}^{\text{non}}-\bm\uptheta^\ast\|^2}= \left(\frac{c^{\text{cyc}}}{c^{\text{non}}}\right)^\upbeta.
\end{align}
\end{proposition}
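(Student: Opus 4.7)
The plan is to start from (\ref{eq:wobble}) and show that under E0--E3 the remaining MSE-ratio factor $\mathcal{R} \equiv [\trace\bm\Sigma^{\text{cyc}} + (\bm\upmu^{\text{cyc}})^\top\bm\upmu^{\text{cyc}}] / [\trace\bm\Sigma^{\text{non}} + (\bm\upmu^{\text{non}})^\top\bm\upmu^{\text{non}}]$ equals $1$, after which (\ref{eq:horsesrrsesriding}) follows immediately. By E0, Corollaries \ref{eq:hjointcoffees} and \ref{eq:tarattatarantanand} apply and give closed forms for $\bm\upmu^{\text{cyc}}$, $\bm\upmu^{\text{non}}$, $\bm\Sigma^{\text{cyc}}$, $\bm\Sigma^{\text{non}}$. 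Moreover, E1 forces $\upalpha$, the limits $r_j$, the diagonal gain matrix $\bm{A}$, the matrix $\bm\Gamma=\bm{A}\bm{H}(\bm\uptheta^\ast)=\bm{S}\bm\Lambda\bm{S}^{-1}$, the factors $\bm{S}$ and $\bm\Lambda$, and $\upbeta_+$ to be common to both algorithms, so the only quantities in the two asymptotic distributions that can differ are the bias limits $\bm{b}^{\text{cyc}},\bm{b}^{\text{non}}$ and the noise matrices $\bm{M}_B,\bm{M}$.

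First I would dispose of the mean contribution. By Corollaries \ref{eq:hjointcoffees} and \ref{eq:tarattatarantanand} the asymptotic means have the common functional form $-\bm{S}[\bm\Lambda - (\upbeta_+/2)\bm{I}]^{-1}\bm{S}^{-1}\bm{A}\bm{b}$, so E2 gives $\bm\upmu^{\text{cyc}}=\bm\upmu^{\text{non}}$ and the squared-mean terms cancel. Next I would handle the trace via E3. Under E3-(ii), $\bm{M}_B=\bm{M}$ and (\ref{eq:jepanuj})--(\ref{eq:diamonddrone}) collapse to $\bm{Q}^{\text{cyc}}=\bm{Q}^{\text{non}}$, hence $\bm\Sigma^{\text{cyc}}=\bm\Sigma^{\text{non}}$. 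Under E3-(i), $\bm{S}=c\bm{I}$ forces $\bm\Lambda=\bm{A}\bm{H}(\bm\uptheta^\ast)$ to be diagonal; substituting $\bm{S}^{-1}=c^{-1}\bm{I}$ into (\ref{eq:jepanuj})--(\ref{eq:diamonddrone}) yields $[\bm\Sigma^{\text{cyc}}]_{ii} = r_i^2 [\bm{M}_B]_{ii}/(2\Lambda_{ii}-\upbeta_+)$ and $[\bm\Sigma^{\text{non}}]_{ii} = r_i^2 [\bm{M}]_{ii}/(2\Lambda_{ii}-\upbeta_+)$, so only the diagonals of $\bm{M}_B$ and $\bm{M}$ appear in the two traces. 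Because the sets $\{\mathcal{S}_j\}$ partition $\{1,\dots,p\}$ and each $\bm\upxi^{(j)}$ vanishes outside $\mathcal{S}_j$, one has $[\bm{M}_B]_{ii}=[\bm\Sigma^{(j(i))}]_{ii}$, where $j(i)$ denotes the unique index with $i\in\mathcal{S}_{j(i)}$; and since the $i$th coordinate of $\hat{\bm{g}}^{(j(i))}$ is, by construction, the same noisy gradient coordinate produced by the common estimator underlying $\hat{\bm{g}}$ in Algorithm~\ref{alg:apeparingairshow}, its limiting conditional variance equals $[\bm{M}]_{ii}$. Hence $[\bm{M}_B]_{ii}=[\bm{M}]_{ii}$ for every $i$ and the traces agree.

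The hard part will be the E3-(i) branch: off-diagonal entries of $\bm{M}_B$ and $\bm{M}$ generically differ, because a cyclic implementation block-diagonalizes the noise cross-covariance along the partition $\{\mathcal{S}_j\}$ while the non-cyclic noise is unconstrained, so one cannot hope for $\bm\Sigma^{\text{cyc}}=\bm\Sigma^{\text{non}}$ as a matrix; the reduction to diagonals via the diagonality of $\bm\Lambda$ is therefore essential, and the identification $[\bm{M}_B]_{ii}=[\bm{M}]_{ii}$ must be justified from the shared gradient-estimator structure implicit in using a common $\hat{\bm{g}}$ for both algorithms. Once both E3 cases are handled, combining $\mathcal{R}=1$ with (\ref{eq:wobble}) and the cost-matched subsequence of Proposition \ref{prop:omygodsthunder} yields (\ref{eq:horsesrrsesriding}).
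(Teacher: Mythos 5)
Your proof follows the same route as the paper's: E0 activates Corollaries \ref{eq:hjointcoffees} and \ref{eq:tarattatarantanand}, E1 and E2 force $\bm\upmu^{\text{cyc}}=\bm\upmu^{\text{non}}$, E3 forces the traces to coincide, and the conclusion drops out of (\ref{eq:wobble}) along the cost-matched subsequence of Proposition \ref{prop:omygodsthunder}. The paper's own proof is four lines and does not even separate the two E3 cases, so your treatment is strictly more detailed; the E3-(ii) branch and the mean cancellation are exactly as in the paper. The one point to flag is the E3-(i) branch, which you correctly identify as the delicate one: with $\bm{S}=c\bm{I}$, $\trace[\bm{S}\bm{Q}\bm{S}^\top]$ reduces to a weighted sum of the diagonal entries of $\bm{A}\bm{M}_B\bm{A}$ (resp.\ $\bm{A}\bm{M}\bm{A}$) via (\ref{eq:jepanuj}) and (\ref{eq:diamonddrone}), so trace equality needs $[\bm{M}_B]_{ii}=[\bm{M}]_{ii}$ for every $i$. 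That identity is not a formal consequence of E0--E3: conditions C5$'$ and C5$''$ posit the limits $\bm\Sigma^{(j)}$ and $\bm{M}$ independently and never relate them, and the cyclic noise is evaluated at the intermediate iterates $\hat{\bm\uptheta}_k^{(I_j)}$ rather than at $\hat{\bm\uptheta}_k^{\text{non}}$. Your appeal to the shared gradient-estimator structure is the intended reading---the paper makes precisely this assumption explicit in the discussion following the proposition, where $\bm{M}_B$ and $\bm{M}$ are taken to coincide on the block diagonal as in (\ref{eq:timewillshowr})---but as written it is an additional hypothesis rather than a derivation from the stated conditions, and it should be declared as such. Since the paper's proof elides the issue entirely, this is a gap in the proposition's statement that your argument surfaces, not a defect unique to your proof.
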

\begin{proof}
First, note that E0 implies the results of Corollaries \ref{eq:hjointcoffees} and \ref{eq:tarattatarantanand} hold. Then, E1 and E2 imply $\bm\upmu^{\text{cyc}}=\bm\upmu^{\text{non}}$. Next, E2 along with (\ref{eq:jepanuj}) and (\ref{eq:diamonddrone}) implies that $\trace{[{\bm\Sigma}^{\text{cyc}}]}=\trace{[{\bm\Sigma}^{\text{non}}]}$. The relationship in (\ref{eq:horsesrrsesriding}) then follows from (\ref{eq:wobble}).
\end{proof}

An implication of Proposition \ref{prop:exploringeoss} is that the asymptotic relative efficiency is entirely determined by $c^{\text{cyc}}$ and $c^{\text{non}}$. If ${c^{\text{cyc}}}/{c^{\text{non}}}>1$ then the non-cyclic algorithm is more efficient, if ${c^{\text{cyc}}}/{c^{\text{non}}}<1$ then the cyclic algorithm is more efficient, and if ${c^{\text{cyc}}}/{c^{\text{non}}}=1$ then both algorithms are equally efficient. 
%
In general, however, it is not easy to see whether ratio in (\ref{eq:wobble}) is greater than one, less than one, or equal to one. Next we show that when E3 does not hold
 then having ${c^{\text{cyc}}}/{c^{\text{non}}}<1$ does not imply the cyclic algorithm is asymptotically more efficient. Similarly, having ${c^{\text{cyc}}}/{c^{\text{non}}}>1$ does not imply the non-cyclic algorithm is asymptotically more efficient and having ${c^{\text{cyc}}}/{c^{\text{non}}}=1$ does not imply both algorithms are asymptotically equally efficient.


Let us assume that condition E3 does not hold. Specifically, the matrix $\bm{S}$ is not a multiple of the identity matrix and that $\bm{M}_B\neq \bm{M}$. In this case, the limit of the ratio in (\ref{eq:wobble}) may be greater than one even when ${c^{\text{cyc}}}/{c^{\text{non}}}<1$, less than one even when ${c^{\text{cyc}}}/{c^{\text{non}}}>1$, and different from one even when ${c^{\text{cyc}}}/{c^{\text{non}}}=1$. To illustrate this fact we consider the special case where $\bm{b}^{\text{cyc}}$ and $\bm{b}^{\text{non}}$, the asymptotic values of normalized bias vectors (see (\ref{eq:jepanuj}) and conditions C3$'$ and C3$''$ for the formal definitions of these vectors), are identically $\bm{0}$. This is a valid assumption when the update directions are SG-based. Here,
\begin{align}
\label{eq:provewrongnewsjoey}
\lim_{i\rightarrow \infty}\frac{E\|\hat{\bm\uptheta}_{k_{1(i)}}^{\text{cyc}}-\bm\uptheta^\ast\|^2}{E\|\hat{\bm\uptheta}_{k_{2(i)}}^{\text{non}}-\bm\uptheta^\ast\|^2}= \left(\frac{c^{\text{cyc}}}{c^{\text{non}}}\right)^\upbeta\frac{\trace{\left[{\bm\Sigma}^{\text{cyc}}\right]}}{\trace{\left[{\bm\Sigma}^{\text{non}}\right]}}.
\end{align}
%
%
Furthermore, we assume $\bm{A}=\bm{I}$, $\upalpha<1$ (so that $\upbeta_+=0$ in condition B6$'$) and that the block-diagonal matrix $\bm{M}_B$ (appearing in condition C5$'$) coincides with $\bm{M}$ (appearing in condition  C5$''$) on the block-diagonal part of $\bm{M}_B$. Specifically, there exist block matrices $\bm{B}_1,\dots,\bm{B}_p$ such that:
\begin{align}
\label{eq:timewillshowr}
\setstretch{1.8} 
\bm{M}_B=\left[\begin{array}{ccc}\bm{B}_1 & \bm{0} & \bm{0} \\\bm{0} & \ddots & \bm{0} \\\bm{0} & \bm{0} & \bm{B}_d\end{array}\right], \ \ \ \bm{M}=\left[\begin{array}{ccc}\bm{B}_1 & \bm{\ast} & \bm{\ast} \\\bm{\ast} & \ddots & \bm{\ast} \\\bm{\ast} & \bm{\ast} & \bm{B}_d\end{array}\right],
\end{align}
where $\bm\ast$ denote possibly non-zero entries and $\bm{0}$ denotes a matrix of zeros of the appropriate dimension. Let us discuss a situation where the assumption in (\ref{eq:timewillshowr}) is satisfied. First, under conditions C4$'$ and C4$''$ it follows from (\ref{eq:avatarparticulatsfs}) that $\bm{M}_B$ corresponds to the limiting (conditional) covariance of $k^{(\upbeta-\upalpha)/2}[\bm\upxi_k^{(1)}(\hat{\bm{\uptheta}}_k^{(I_1)})+\cdots+\bm\upxi_k^{(d)}(\hat{\bm{\uptheta}}_k^{(I_d)})]$ while $\bm{M}$ represents the limiting (conditional) covariance of $k^{(\upbeta-\upalpha)/2}\bm\upxi_k(\hat{\bm{\uptheta}}_k^{\text{non}})$. Under condition C5$'$-(iii) and C5$'$-(iv), however,
 it follows that $\bm{M}_B$ must be a block diagonal matrix (i.e., $\bm{M}_B$ must have the form in (\ref{eq:timewillshowr})), this contrasts with the matrix $\bm{M}$ which may have extra nonzero entries due to the possible asymptotic correlation between the entries of $\bm\upxi_k$.

  Consider now the following example. Let :
\begin{align}
\label{eq:firstcontactwars}
\setstretch{1.25} 
\bm{H}(\bm\uptheta^\ast)=\left[\begin{array}{cc}2 & -1 \\-1 & 2\end{array}\right],\ \ \ \bm{M}_B=\left[\begin{array}{cc}2 & 0 \\0 & 2\end{array}\right],\ \ \  \bm{M}=\left[\begin{array}{cc}2 & 1 \\1 & 2\end{array}\right].
\end{align}
The positive definite matrix $\bm{H}(\bm\uptheta^\ast)$ (with eigenvalues 1 and 3) might correspond, for example, to the loss function $L(\bm\uptheta)=\bm\uptheta^\top \bm{H}(\bm\uptheta^\ast)\bm\uptheta$. Then $\bm\Gamma=\bm{AH}(\bm\uptheta^\ast)=\bm{H}(\bm\uptheta^\ast)$ is positive definite and satisfies:
\begin{align*}
\setstretch{1.25} 
\bm\Gamma=\bm{S}\left[\begin{array}{cc}1 & 0 \\0 & 3\end{array}\right]\bm{S}^{-1},\text{ with }\bm{S}=\frac{1}{\sqrt{2}}\left[\begin{array}{cc}-1 & -1 \\-1 & 1\end{array}\right].
\end{align*}
Under the above conditions and using the definitions of $\bm\Sigma^{\text{cyc}}$ and $\bm\Sigma^{\text{non}}$ given in Corollaries \ref{eq:hjointcoffees} and \ref{eq:tarattatarantanand}: $\trace{\left[\bm\Sigma^{\text{cyc}}\right]}/\trace{\left[\bm\Sigma^{\text{non}}\right]}=4/5$. Here, (\ref{eq:provewrongnewsjoey}) implies that the {\it{cyclic algorithm is asymptotically more efficient if and only if}} $c^{\text{cyc}}/c^{\text{non}}<(5/4)^{-\upbeta}$. 
 Now, say we modify $\bm{M}_B$ and $\bm{M}$ as follows (while keeping $\bm{H}(\bm\uptheta^\ast)$ as in (\ref{eq:firstcontactwars})):
\begin{align*}
\setstretch{1.25} 
\bm{M}_B=\left[\begin{array}{cc}2 & 0 \\0 & 2\end{array}\right], \ \ \ \bm{M}=\left[\begin{array}{cc}2 & -1 \\-1 & 2\end{array}\right]
\end{align*}
Then, $\trace{\left[\bm\Sigma^{\text{cyc}}\right]}/\trace{\left[\bm\Sigma^{\text{non}}\right]}=4/3$. In this case, (\ref{eq:provewrongnewsjoey}) implies that the {\it{cyclic}} {\it{algorithm is asymptotically more efficient if and only if}} $c^{\text{cyc}}/c^{\text{non}}<(3/4)^{-\upbeta}$. 
%
%
This example illustrates the fact that there is a delicate balance between $\bm{S}$, $\bm\Lambda$, $\bm{A}$, $\bm{M}$, $\upbeta$, and ${c^{\text{cyc}}}/{c^{\text{non}}}$ that determines which algorithm is more efficient. 

%

\section{Concluding Remarks}
\label{eq:hodgmodgepodge}

In order to perform a fair comparison between any two algorithms it is necessary to define the cost of implementing each algorithm. Generally, the definition of ``cost'' is highly problem-specific. This chapter begins by considering the case where cost is a measure of the number of arithmetic operations needed to perform a single iteration. Under certain assumptions, Section \ref{sec:costoport} pinpointed the type of arithmetic operations that result in a difference between the cost of implementing the cyclic seesaw algorithm and the cost of implementing its non-cyclic counterpart. This chapter also provided an approximation to the limiting value of the relative efficiency in (\ref{eq:ratio}), which takes into consideration the per-iteration costs of implementing the cyclic- and non-cyclic algorithms, for comparing Algorithm \ref{kirkeyspecial} to Algorithm \ref{alg:apeparingairshow}. The approximation to the asymptotic relative efficiency was based on the asymptotic normality results from Section \ref{sec:upperpotomacshell}. This chapter also showed that the approximate asymptotic relative efficiency is, under certain conditions, entirely determined by the per-iteration costs of implementing the two algorithms. In general, however, there is a delicate balance between the gain sequences, the covariance matrix of the noise terms, the Hessian matrix of $L(\bm\uptheta)$, and the per-iteration implementation costs that determines which method is more efficient.


\chapter{Numerical Analysis}
\label{chap:numericschap}

Chapter \ref{sec:cyclicseesaw} of this dissertation derived conditions for the convergence of the GCSA algorithm, Chapter \ref{sec:ROC} derived conditions for the asymptotic normality of a special case of GCSA, and Chapter \ref{chap:imefficient} discussed the asymptotic relative efficiency between a special case of GCSA and its non-cyclic counterpart. This chapter contains numerical results that illustrate the theory of Chapters \ref{sec:cyclicseesaw}--\ref{chap:imefficient}. Specifically, Section \ref{sec:convergencenumericos} contains numerical results on the convergence of the GCSA algorithm, Section \ref{sec:normalitycenumericos} contains results on asymptotic normality, and Section \ref{sec:Numericssimples} contains results on relative efficiency.

\section{Numerical Examples on Convergence}
\label{sec:convergencenumericos}

This section provides a few numerical examples on the convergence of Algorithms \ref{beastwasdone} (where the subvector to update is selected at random) and \ref{kirkey} (where the subvector to update is selected following a deterministic pattern). Each of these two algorithms is implemented using SPSA-based noisy update directions (Section \ref{sec:clubsodeashrimp}) and SG-based noisy update directions (Section \ref{sec:omgimanothersectionsssd}). Algorithms \ref{beastwasdone} and \ref{kirkey} are also compared to two non-cyclic variants.

\subsection{Algorithms \ref{beastwasdone} \& \ref{kirkey} with SPSA-Based Gradient Estimates}
\label{sec:clubsodeashrimp}

This section considers the problem of stochastic optimization using Algorithms \ref{beastwasdone} and \ref{kirkey} (special cases of GCSA) with SPSA-based gradient estimates of the form in (\ref{eq:howeardspos1}) and (\ref{eq:howeardspos2}). Here only the subvector to update is perturbed at each subiteration. The loss function to minimize is the {\it{skewed quartic}} function (Spall 2003, Chapter 6):
\begin{align}
\label{func:skewedquartic}
L(\bm\uptheta)=(\bm{B}\bm\uptheta)^\top\bm{B}\bm\uptheta +0.1\sum_{i=1}^p(\bm{B}\bm\uptheta)_i^3+0.01\sum_{i=1}^p(\bm{B}\bm\uptheta)_i^4,
\end{align}
where $p=10$ is the dimension of $\bm\uptheta$, $p\bm{B}\in \mathbb{R}^{p\times p}$ is an upper-triangular matrix of ones, and $\bm{v}_i$ denotes the $i$th entry of the vector $\bm{v}$. $L(\bm\uptheta)$ achieves its minimum at $\bm\uptheta^\ast=\bm{0}$ with $L(\bm\uptheta^\ast)=\bm{0}$. Figure \ref{eq:whatyouwillnotbring} illustrates the function in (\ref{func:skewedquartic}) for the special case where $p=2$. We assume that the loss function is corrupted by i.i.d. Gaussian noise, that is $Q(\bm\uptheta,\bm{V})=L(\bm\uptheta)+\upvarepsilon$
where $\upvarepsilon\sim\mathcal{N}(0,0.1^2)$. 

\begin{figure}[!t]
\centering
\begin{tikzpicture}
  \definecolor{mynewcoloring}{RGB}{102, 128, 153}
\node[inner sep=0pt] (probe) at (0,0)
    {\includegraphics[scale=0.5]{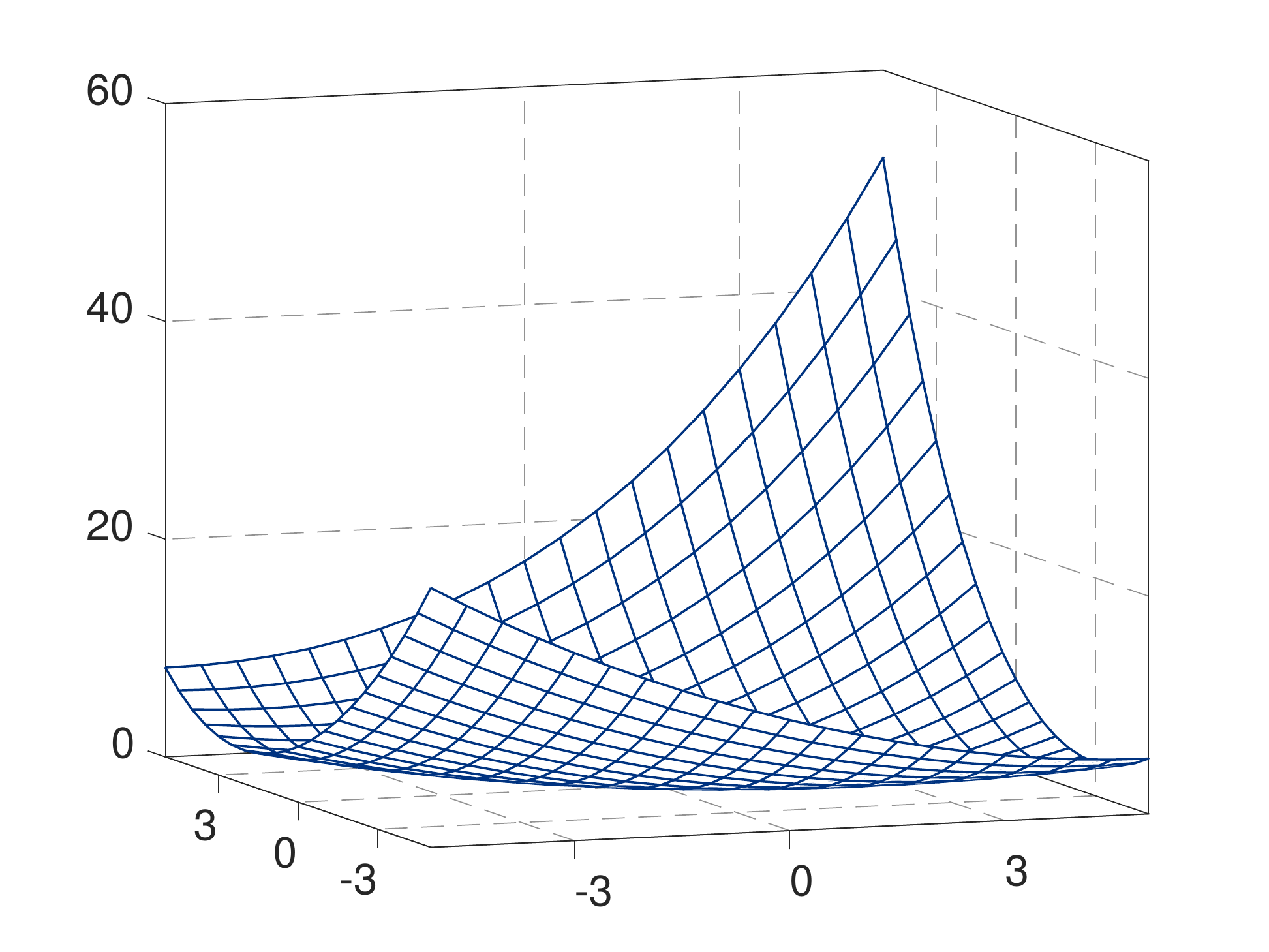}};  
     \node[left= -0.6cm of probe]{$L(\bm\uptheta)$};
      \node at (0.8,-3.7) {$\uptau_1$};
      \node at (-3.6,-3.4) {$\uptau_2$};
\end{tikzpicture}
%
\caption[The skewed quartic function in two dimensions]{The skewed quartic function in two dimensions. In this example, $\bm\uptheta=[\uptau_1,\uptau_2]^\top\in\mathbb{R}^2$ and $L(\bm\uptheta)$ is defined as in (\ref{func:skewedquartic}).}
\label{eq:whatyouwillnotbring}
\end{figure}

In order to implement Algorithms \ref{beastwasdone} and \ref{kirkey} the vector $\bm\uptheta=[\uptau_1,\dots,\uptau_{10}]^\top$ is divided into $d=5$ subvectors of length $2$ where the $j$th subvector is equal to $[\uptau_{2j-1},\uptau_{2j}]^\top$. The values for $a_k^{(j)}$ and $c_k^{(j)}$ are given by $a_k^{(j)}=a/(1+k)^{\upalpha}$ and $c_k^{(j)}=c/(1+k)^{\upgamma}$ for all $j$ with $a=1$, $\upalpha=0.602$, $c=1$, and $\upgamma=0.101$. These values of $\upalpha$ and $\upgamma$ are standard values for the SPSA algorithm while the values of $a$, $A$, and $c$ were {\it{lightly}} tuned. The non-zero entries of the perturbation vectors (the non-zero entries of $\bm\Delta$) are independent  and take the value $\pm1$ with equal probability. For Algorithm \ref{beastwasdone}, all subvectors were equally likely to be selected for updating (i.e., $q(j)=1/5$ for all $j$). We let $\hat{\bm{\uptheta}}_u^{\text{Alg \ref{beastwasdone}}}$ and $\hat{\bm{\uptheta}}_u^{\text{Alg \ref{kirkey}}}$ denote the iterates of Algorithms \ref{beastwasdone} and \ref{kirkey}, respectively, after having performed $u$ subvector updates (in general, the number of subvector updates is not necessarily equal to the number of iterations). Figure \ref{eq:iamapigeon2} presents the values of $\overline{\|\hat{\bm{\uptheta}}_u^{\text{Alg \ref{beastwasdone}}}-\bm\uptheta^\ast\|}/\|\hat{\bm{\uptheta}}_0^{\text{Alg \ref{beastwasdone}}}-\bm\uptheta^\ast\|$ and  $\overline{\|\hat{\bm{\uptheta}}_u^{\text{Alg \ref{kirkey}}}-\bm\uptheta^\ast\|}/\|\hat{\bm{\uptheta}}_0^{\text{Alg \ref{kirkey}}}-\bm\uptheta^\ast\|$ with $\hat{\bm{\uptheta}}_0\equiv\hat{\bm{\uptheta}}_0^{\text{Alg \ref{beastwasdone}}}=\hat{\bm{\uptheta}}_0^{\text{Alg \ref{kirkey}}}=[1,\dots,1]^\top$, where $\bar{\bm\uptheta}_u^{\text{Alg \ref{beastwasdone}}}$ and $\bar{\bm\uptheta}_u^{\text{Alg \ref{kirkey}}}$ denote the mean values of the algorithm iterates over 100 replications. It is seen that the mean distance between $\hat{\bm{\uptheta}}_u^{\text{Alg \ref{beastwasdone}}}$ and $\bm\uptheta^\ast$ (relative to $\|\hat{\bm{\uptheta}}_0^{\text{Alg \ref{beastwasdone}}}-\bm\uptheta^\ast\|$) and the mean distance between $\hat{\bm{\uptheta}}_u^{\text{Alg \ref{kirkey}}}$ and $\bm\uptheta^\ast$ (relative to $\|\hat{\bm{\uptheta}}_0^{\text{Alg \ref{kirkey}}}-\bm\uptheta^\ast\|$) are both decreasing in magnitude (i.e., the iterates approach $\bm\uptheta^\ast$). 

\begin{figure}[t]
\centering
\begin{tikzpicture}
  \definecolor{mynewcoloring}{RGB}{102, 128, 153}

\node[inner sep=0pt] (probe) at (0,0)
    {\includegraphics[scale=0.6]{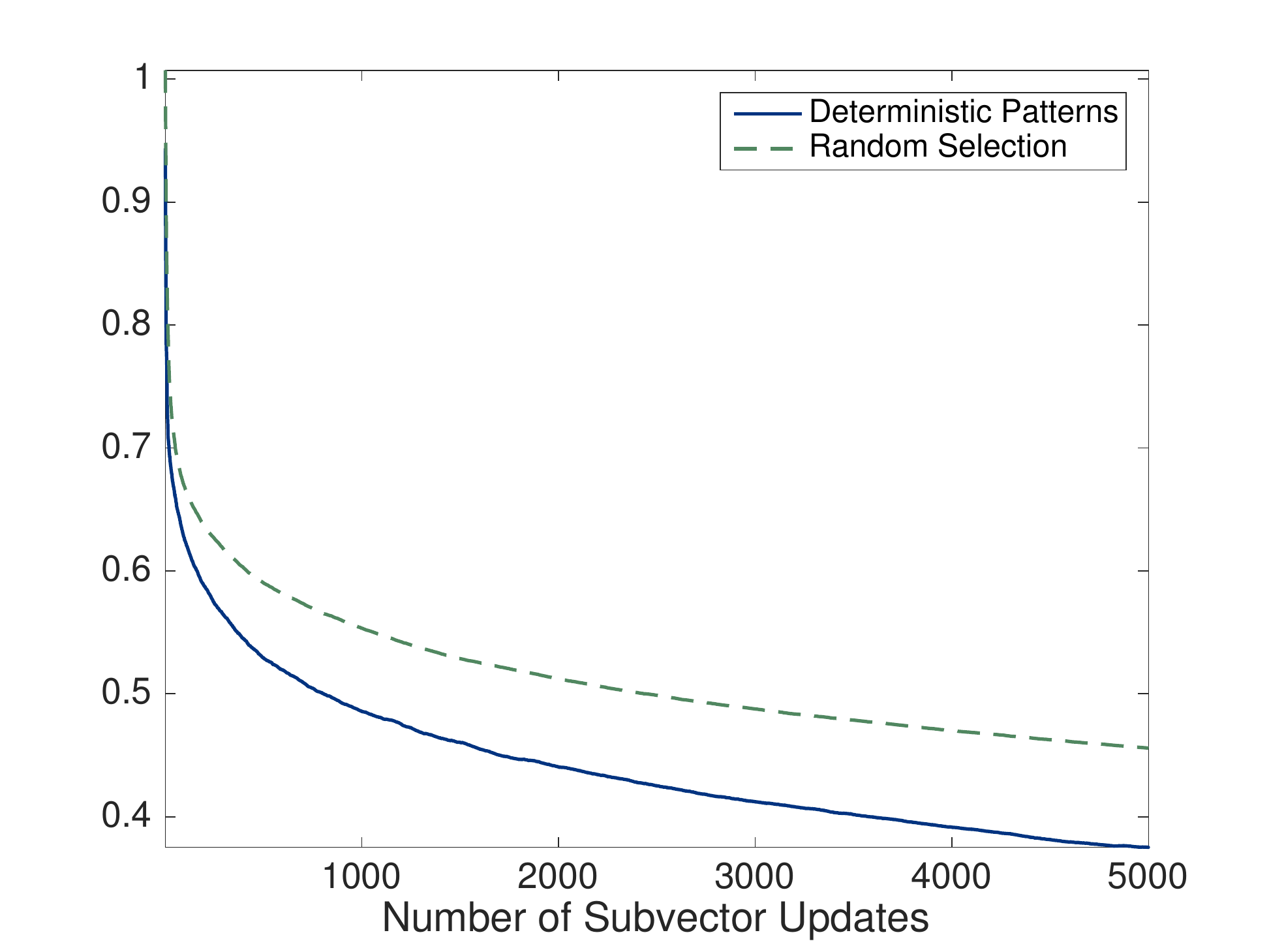}};  
     \node[left= -0.8cm of probe]{$\displaystyle{\frac{\overline{\|\hat{\bm{\uptheta}}_u-\bm\uptheta^\ast\|}}{\|\hat{\bm{\uptheta}}_0-\bm\uptheta^\ast\|}}$};
\end{tikzpicture}
\caption[Performance of SPSA-based GCSA with skewed quartic loss]{An illustration of Algorithms \ref{beastwasdone} (randomized selection) and \ref{kirkey} (deterministic patterns) on the noisy skewed quartic loss function when the gradient estimates are SPSA-based. Here, $\hat{\bm{\uptheta}}_u$ represents either $\hat{\bm{\uptheta}}_u^{\text{Alg \ref{beastwasdone}}}$ or $\hat{\bm{\uptheta}}_u^{\text{Alg \ref{kirkey}}}$.}
\label{eq:iamapigeon2}
\end{figure}

We now consider the same setting as in Figure \ref{eq:iamapigeon2} with two modifications. First, the sequences $a_k^{(1)}$ and $a_k^{(2)}$ are not required to be equal but both are assumed to have the form:
\begin{align*}
a_k^{(j)}=\frac{a^{(j)}}{(1+k+A^{(j)})^{0.602}}
\end{align*}
for $a^{(j)}>0$ and $A^{(j)}\geq 0$. Second, $\bm\uptheta$ is divided into only two subvectors corresponding to the first- and second halves of $\bm\uptheta$ (i.e., $d=2$). 
 We compare the following algorithms:
\begin{DESCRIPTION}
\item[Case 1:] The SPSA algorithm with $a_k=a/(1+k+A)^{0.602}$.
\item[Case 2:] The SPSA algorithm with $a_k$ replaced by a diagonal matrix, $\bm{A}_k$, such that the $i$th diagonal entry of $\bm{A}_k$ is equal to $a_k^{(j)}$ where $i\in \mathcal{S}_j$.\label{pageref:toccasesSPSA}
\item[Case 3:] Algorithm \ref{beastwasdone} (the subvector to update is selected according to a random variable) with SPSA-based gradient estimates. Each of the two subvectors is updated with equal probability.
\item[Case 4:] Algorithm \ref{kirkey} (the subvector to update is selected following a strictly alternating pattern) with SPSA-based gradient estimates.
\end{DESCRIPTION}
 For each case, the algorithms were run for a total of $T=5000$ subvector updates and each realization was initialized at a vector following a uniform distribution on $[-5,5]^{10}$. Thus, there were 2,500 iterations in Cases 1 and 2 (since each iteration requires updating both subvectors) and 5,000 iterations in Cases 3 and 4 (since each iteration requires updating only one subvector). We also set $c_k=c_k^{(j)}=0.1/(1+k)^{0.101}$ and use the same distribution for the perturbation vectors as before. Table \ref{table:omgmyfirsttable} presents the results of part of the tuning process for Cases 1--4. 
 Figure \ref{fig:gainyourownsga} shows the evolution of the loss function (for three replications) as the number of updates increases for Cases 1--4 using the best choice of parameters from Table \ref{table:omgmyfirsttable} (for all cases the same gain sequence yielded the smallest mean terminal loss function value). Table \ref{table:omgmyfirsttable} and Figure \ref{fig:gainyourownsga} indicate that all algorithms had comparable performance. Moreover, the results support the theory on convergence of the GCSA algorithm.

\begin{table}[p]
\centering
\begin{tabular}{SSSSSS} \toprule
    {Algorithm} & {$a^{(1)}$} & {$a^{(2)}$}  & {$A^{(1)}$} & {$A^{(2)}$} & {$\overline{L(\hat{\bm{\uptheta}}_T)}$} \\ \midrule
    {Case 1}  & 1  & {--} & 0  & {--}  & {$5.1781\times 10^{88}$}    \\
    {SPSA}  & 1 & {--} & 100 & {--} & 0.1733\ $\ast$   \\
    {(regular)}  & 0.1  & {--} & 0  & {--} & 0.5291    \\
    {}  & 0.1  & {--} & 100  & {--} &  0.5613       \\ \midrule
    {Case 2}  & 1  & 1 & 0  & 0  & {$6.7502\times 10^{119}$}    \\
    {SPSA}  & 1 & 1 & 100 & 100 & 0.1650\ $\ast$   \\
    {(diagonal gain)}  & 1  & 0.1 & 0  & 0 & 0.4335    \\
    {}  & 1  & 0.1 & 100  & 100 & 0.4236    \\
    {}  & 0.1  & 1 & 0  & 0 & 0.3941    \\
    {}  & 0.1  & 1 & 100  & 100 & 0.3691    \\
   {}  & 0.1  & 0.1 & 0  & 0 & 0.5060    \\
    {}  & 0.1  & 0.1 & 100  & 100 & 0.5727          \\ \midrule
    {Case 3}  & 1  & 1 & 0  & 0  &  0.2282    \\
    {SPSA}  & 1 & 1 & 100 & 100 & 0.1982\ $\ast$   \\
    {(random selection)}  & 1  & 0.1 & 0  & 0 & 0.2191    \\
    {}  & 1  & 0.1 & 100  & 100 & 0.2025    \\
    {}  & 0.1  & 1 & 0  & 0 & 0.6915    \\
    {}  & 0.1  & 1 & 100  & 100 &  0.5772    \\
   {}  & 0.1  & 0.1 & 0  & 0 & 0.6109   \\
    {}  & 0.1  & 0.1 & 100  & 100 & 0.6477          \\ \midrule
    {Case 4}  & 1  & 1 & 0  & 0  & 0.1992    \\
    {SPSA}  & 1 & 1 & 100 & 100 & 0.1732\ $\ast$   \\
    {(deterministic patterns)}  & 1  & 0.1 & 0  & 0 & 0.3406    \\
    {}  & 1  & 0.1 & 100  & 100 & 0.3796    \\
    {}  & 0.1  & 1 & 0  & 0 & 0.3841    \\
    {}  & 0.1  & 1 & 100  & 100 & 0.3750    \\
   {}  & 0.1  & 0.1 & 0  & 0 & 0.4812
    \\
    {}  & 0.1  & 0.1 & 100  & 100 & 0.5286         \\ \bottomrule
\end{tabular}
\caption[Tuning the gain sequence parameters for Cases 1--4 from p. \pageref{pageref:toccasesSPSA} where the noisy gradient updates are SPSA-based]{Tuning the gain sequence parameters. Note that for Case 1 there is only one gain sequence, $a_k$, which we have set equal to $a^{(1)}/(1+k+A^{(1)})^{0.602}$. See p. \pageref{pageref:toccasesSPSA} for a description of each of the four cases above. Here, $\overline{L(\hat{\bm{\uptheta}}_T)}$ represents an estimate of the loss function at the terminal value obtained by averaging the terminal loss values of 100 replications. For comparison, $L(\bm\uptheta^\ast)=0$. An asterisk marks the smallest mean terminal loss function value for each case.}
\label{table:omgmyfirsttable}
\end{table}


\begin{figure}[p]
\centering
  \begin{subfigure}[b]{0.5\textwidth}
 \centering
\begin{tikzpicture}

  \definecolor{mynewcoloring}{RGB}{102, 128, 153}
\node[inner sep=0pt] (probe) at (0,0) 
{\includegraphics[scale=0.35]{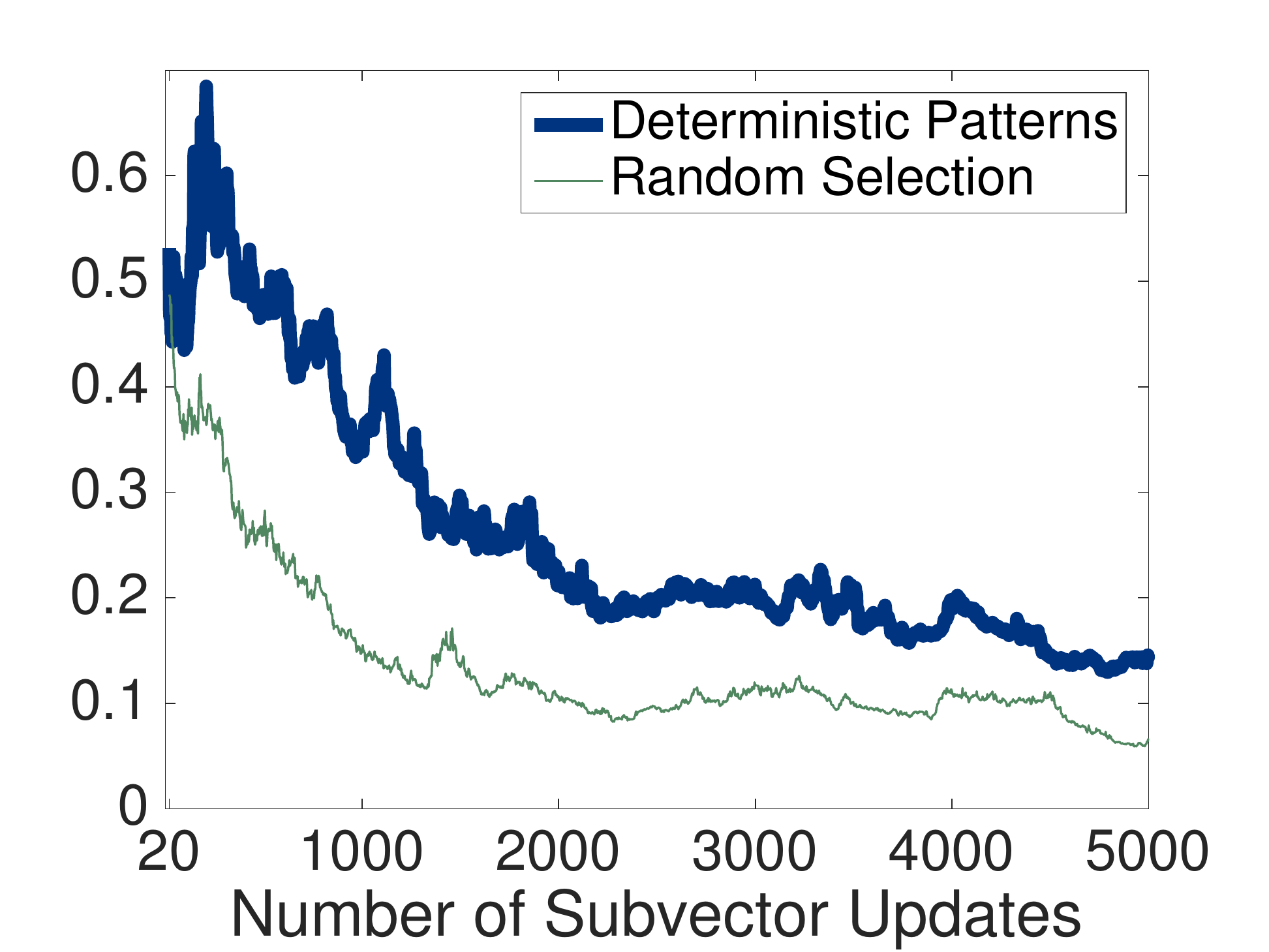}};
 \node[rotate=90] at (-3.5,0) {\small{$L(\hat{\bm{\uptheta}}_u)$}};

\end{tikzpicture}
        \end{subfigure}\hfill
        \begin{subfigure}[b]{0.5\textwidth}
                \centering
\begin{tikzpicture}

  \definecolor{mynewcoloring}{RGB}{102, 128, 153}
\node[inner sep=0pt] (probe) at (0,0)
    {\includegraphics[scale=0.35]{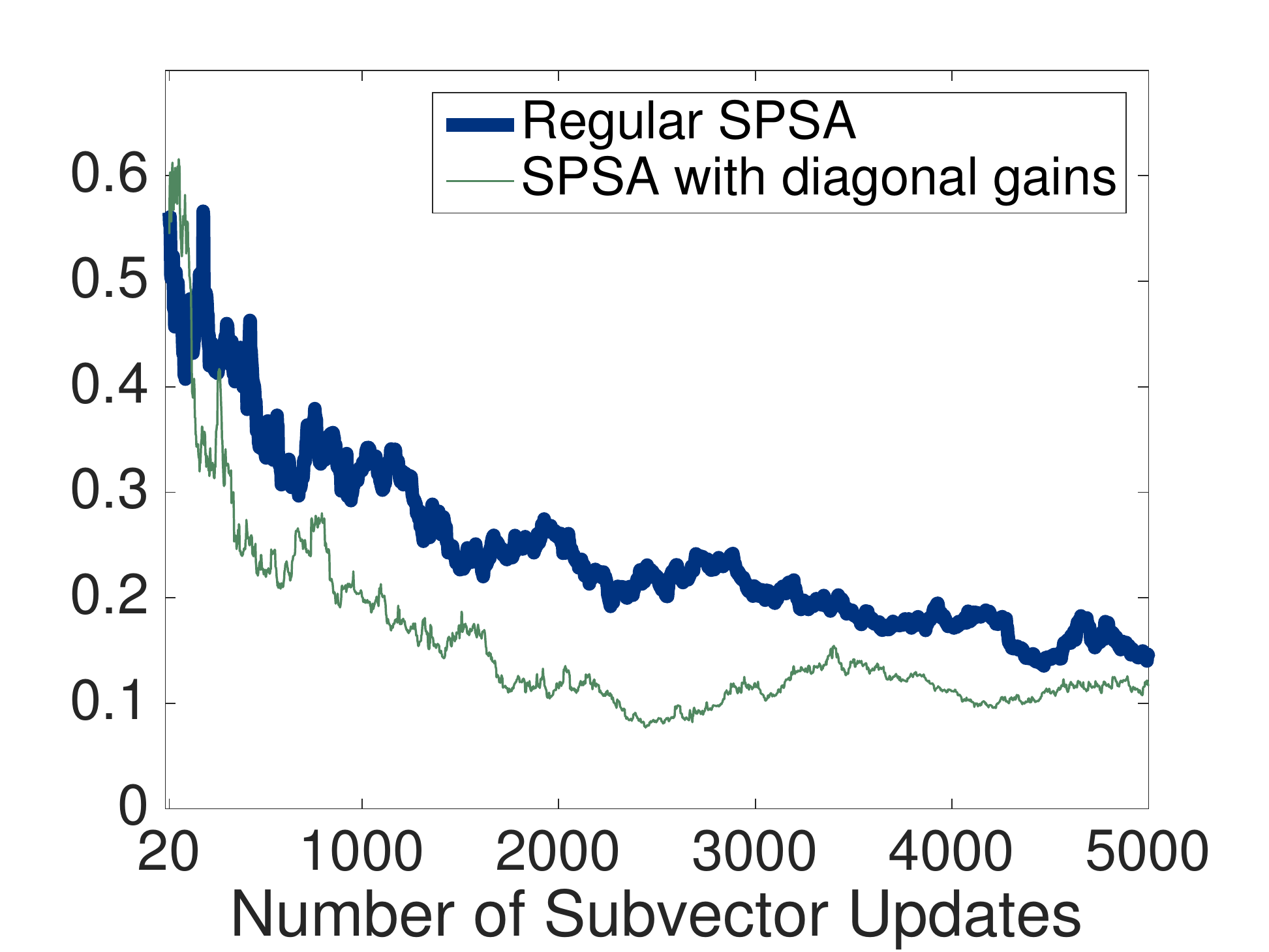}};
     \node[rotate=90] at (-3.5,0) {\small{$L(\hat{\bm{\uptheta}}_u)$}};

\end{tikzpicture}
 \end{subfigure}
 \caption*{Above: the 1st replication. }


 \vspace{.03in}
 
 
    \begin{subfigure}[b]{0.5\textwidth}
 \centering
\begin{tikzpicture}

  \definecolor{mynewcoloring}{RGB}{102, 128, 153}
\node[inner sep=0pt] (probe) at (0,0) 
{\includegraphics[scale=0.35]{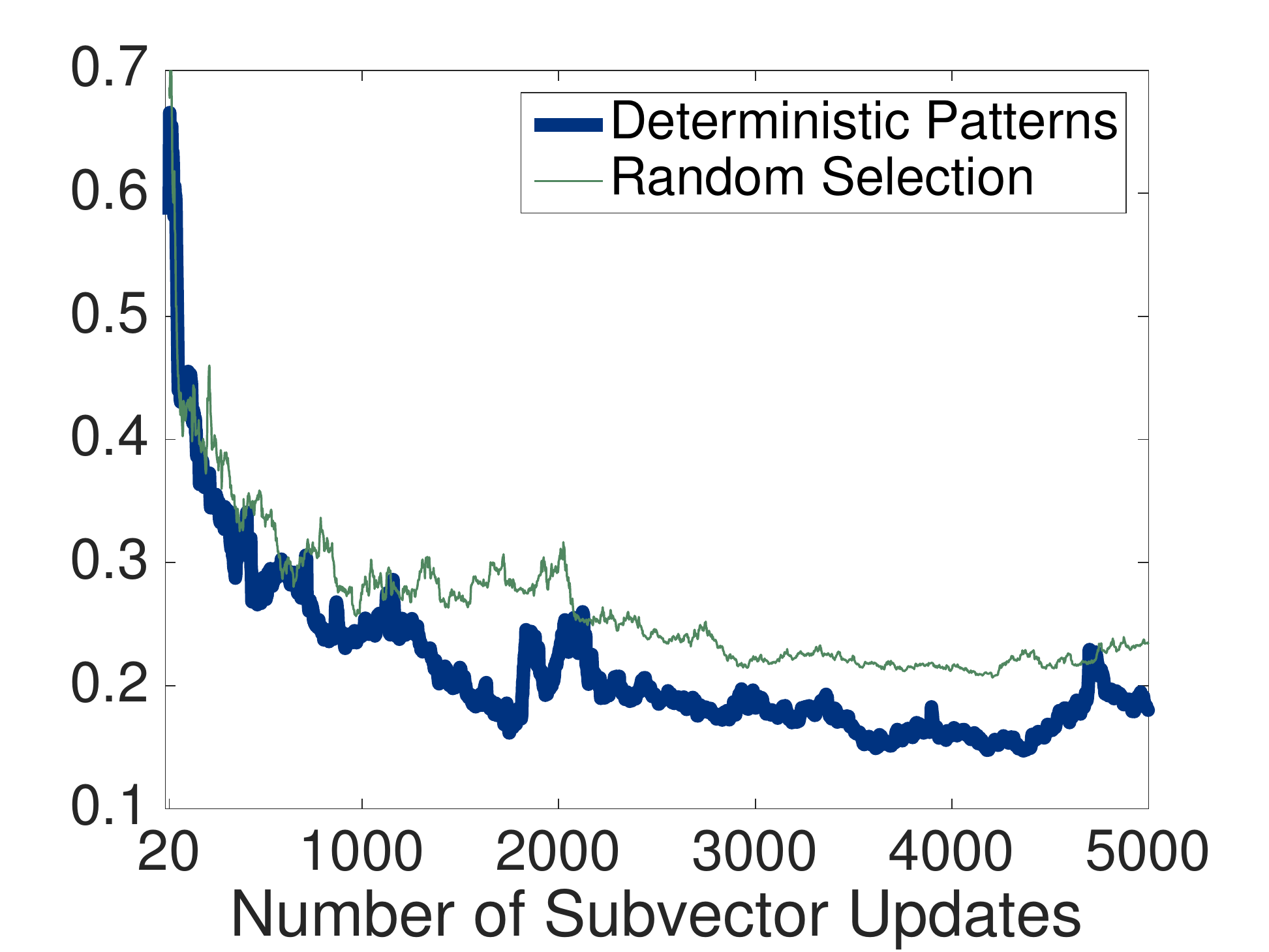}};
 \node[rotate=90] at (-3.5,0) {\small{$L(\hat{\bm{\uptheta}}_u)$}};

\end{tikzpicture}
        \end{subfigure}\hfill
        \begin{subfigure}[b]{0.5\textwidth}
                \centering
\begin{tikzpicture}

  \definecolor{mynewcoloring}{RGB}{102, 128, 153}
\node[inner sep=0pt] (probe) at (0,0)
    {\includegraphics[scale=0.35]{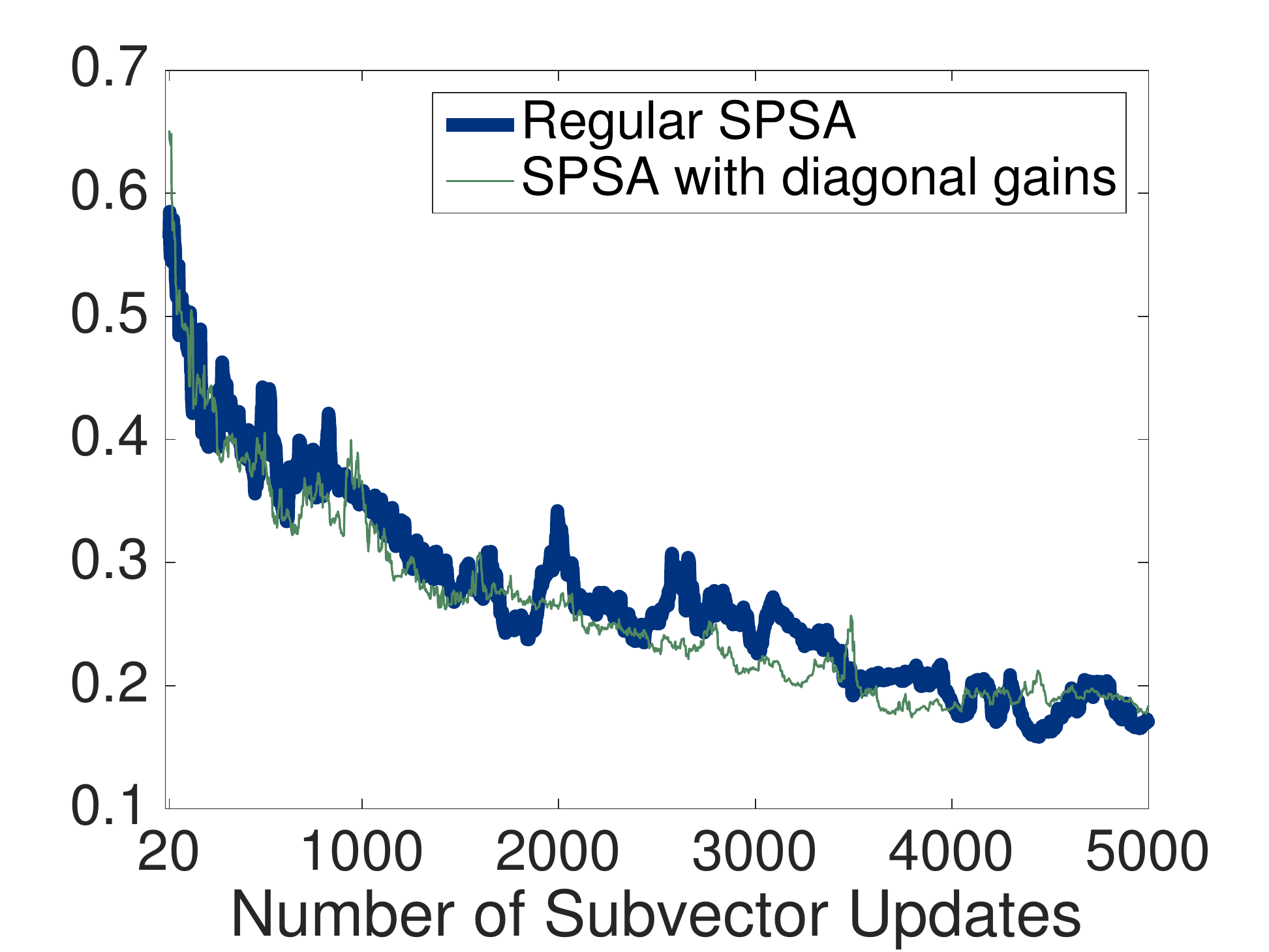}};
     \node[rotate=90] at (-3.5,0) {\small{$L(\hat{\bm{\uptheta}}_u)$}};

\end{tikzpicture}
 \end{subfigure}%
 \caption*{Above: the 2nd replication.}

 
  \vspace{.03in}
 
 
    \begin{subfigure}[b]{0.5\textwidth}
 \centering
\begin{tikzpicture}

  \definecolor{mynewcoloring}{RGB}{102, 128, 153}
\node[inner sep=0pt] (probe) at (0,0) 
{\includegraphics[scale=0.35]{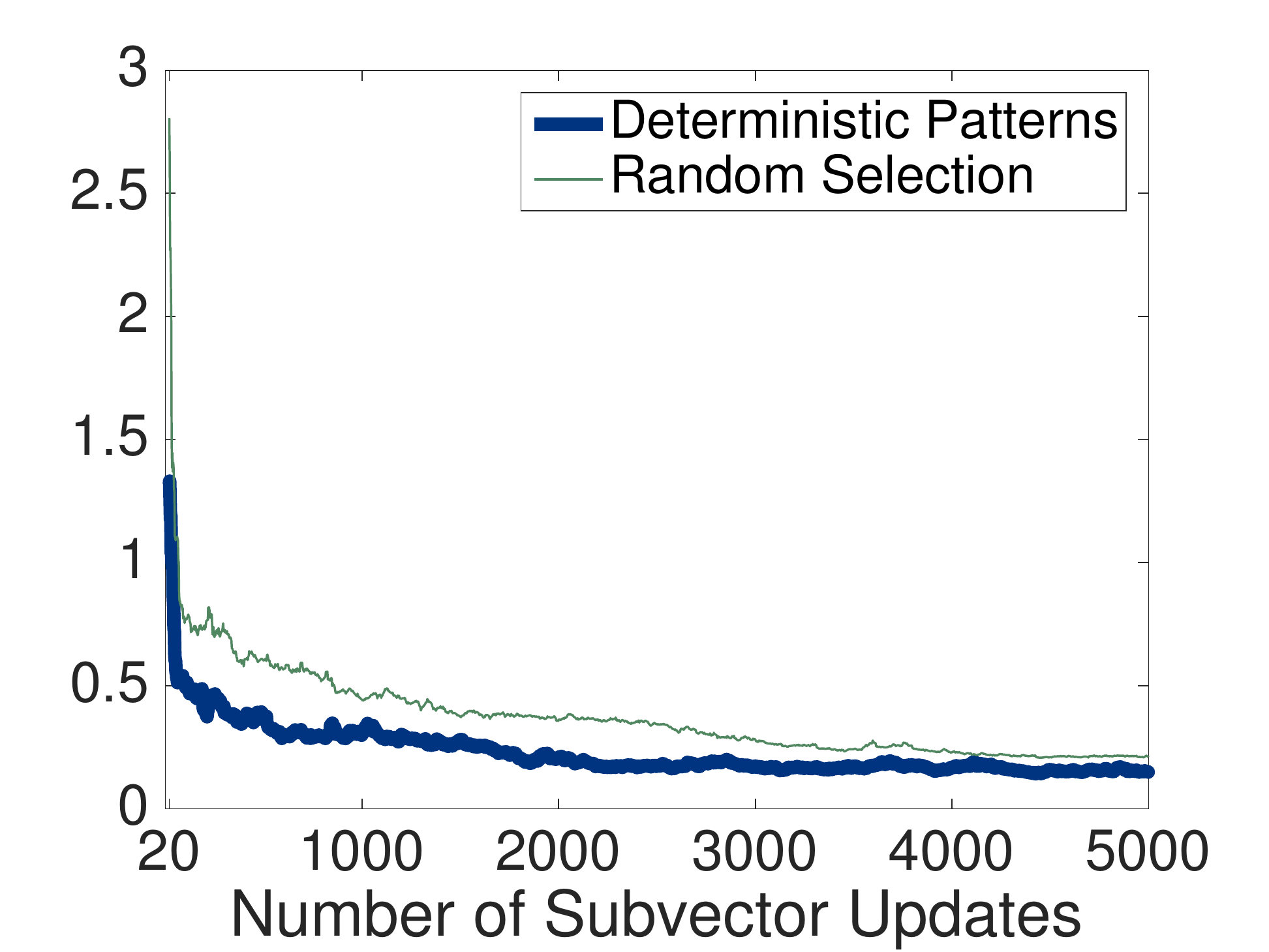}};
 \node[rotate=90] at (-3.5,0) {\small{$L(\hat{\bm{\uptheta}}_u)$}};

\end{tikzpicture}
        \end{subfigure}\hfill
        \begin{subfigure}[b]{0.5\textwidth}
                \centering
\begin{tikzpicture}

  \definecolor{mynewcoloring}{RGB}{102, 128, 153}
\node[inner sep=0pt] (probe) at (0,0)
    {\includegraphics[scale=0.35]{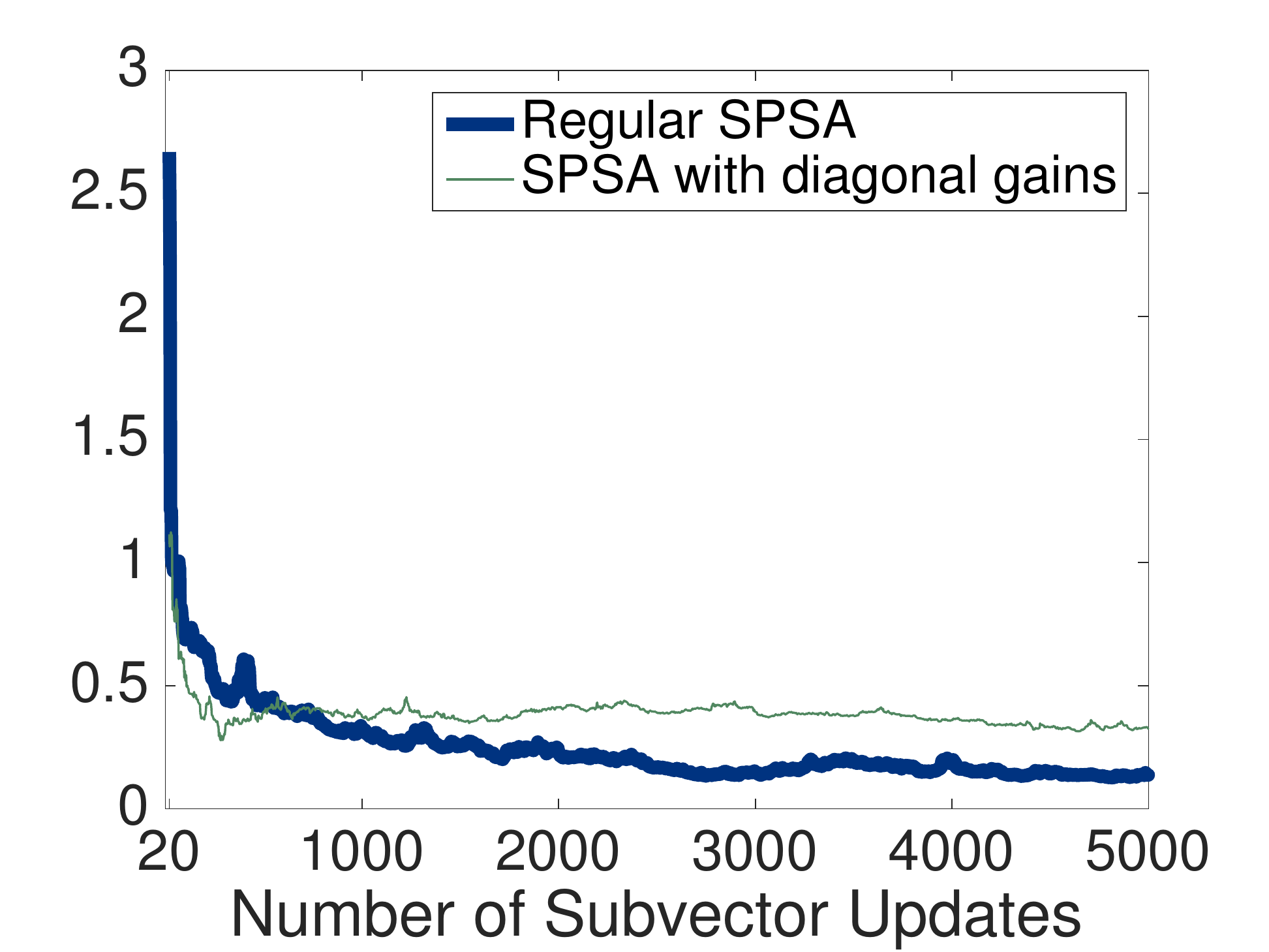}};
     \node[rotate=90] at (-3.5,0) {\small{$L(\hat{\bm{\uptheta}}_u)$}};

\end{tikzpicture}
 \end{subfigure}
  \caption*{Above: the 3rd replication.}
 
 
 \vspace{.03in}
         
        
\caption[Performance of Cases 1--4 from p. \pageref{pageref:toccasesSPSA} where the noisy gradient estimates are SPSA-based]{Performance of Cases 1 (deterministic patterns), 2 (random selection), 3 (regular SPSA), and 4 (SPSA with diagonal gains) for three i.i.d. replications (i.e., three i.i.d. realizations of each of the four cases) using the tuned parameters from Table \ref{table:omgmyfirsttable}. The term $\hat{\bm{\uptheta}}_u$ is a generic vector representing the vector obtained after having performed $u$ subvector updates.}
\label{fig:gainyourownsga}
\end{figure}

\subsection{Algorithms \ref{beastwasdone} \& \ref{kirkey} with SG-Based Gradient Estimates}
\label{sec:omgimanothersectionsssd}
This section is concerned with an implementation of Algorithms \ref{beastwasdone} and \ref{kirkey} where the noisy gradient estimates are SG-based. Here, we consider the problem of linear regression with scalar output. We assume that the output, $z_k$, of some random process is given by $z_k=\bm{h}_k^\top \bm\uptheta^\ast+\upvarepsilon_k$
for a sequence $\bm{h}_k\in\mathbb{R}^p$ of inputs and an i.i.d. sequence of mean-zero random variables $\upvarepsilon_k$. The objective is to recover the true value of $\bm\uptheta$, denoted by $\bm\uptheta^\ast$, using a set of input-output pairs $\{(\bm{h}_k,z_k)\}_{k\geq 1}$. An online approach to solving this problem is the following:
\begin{align}
\label{eq:colorlocalcouncil}
\hat{\bm{\uptheta}}_{k+1}=\hat{\bm{\uptheta}}_k-a_k\left[(\bm{h}_{k+1}^\top \bm\uptheta-z_{k+1})\bm{h}_{k+1}\right]_{\bm\uptheta=\hat{\bm{\uptheta}}_k}.
\end{align}
The algorithm in (\ref{eq:colorlocalcouncil}) is known as the least-mean-squares (LMS) algorithm.

Before proceeding with the numerical experiments let us briefly motivate (\ref{eq:colorlocalcouncil}). First, note that if $\{\upvarepsilon_k\}_{k\geq 0}$ and $\{\bm{h}_{k}\}_{k\geq 0}$ are each i.i.d. sequences (with $\bm{h}_k$ being independent of $\upvarepsilon_k$) then, under certain conditions (e.g., Spall 2003, p. 136), the loss function $L(\bm\uptheta)\equiv (1/2)E[(z_k-\bm{h}_{k}^\top\bm\uptheta)^2]$ is independent of $k$, $L(\bm\uptheta)$ has a unique minimum at $\bm\uptheta^\ast$, and the vector in square brackets in (\ref{eq:colorlocalcouncil}) is an unbiased measurement of the gradient of $L(\bm\uptheta)$. Therefore, under certain conditions the LMS algorithm in (\ref{eq:colorlocalcouncil}) is a special case of the SG algorithm (which requires an unbiased estimate of the gradient of the loss function). Figure \ref{fig:substitutionoperation} illustrates $L(\bm\uptheta)$ for the special case where $\bm\uptheta\in \mathbb{R}^2$, the entries of $\bm{h}_k$ are independent and uniformly distributed in $[-3,3]$, $\upvarepsilon_k\sim\mathcal{N}(0,1)$, and $\bm\uptheta^\ast=[1,1]^\top$.

\begin{figure}[!t]
\centering
\begin{tikzpicture}
  \definecolor{mynewcoloring}{RGB}{102, 128, 153}
\node[inner sep=0pt] (probe) at (0,0)
    {\includegraphics[scale=0.5]{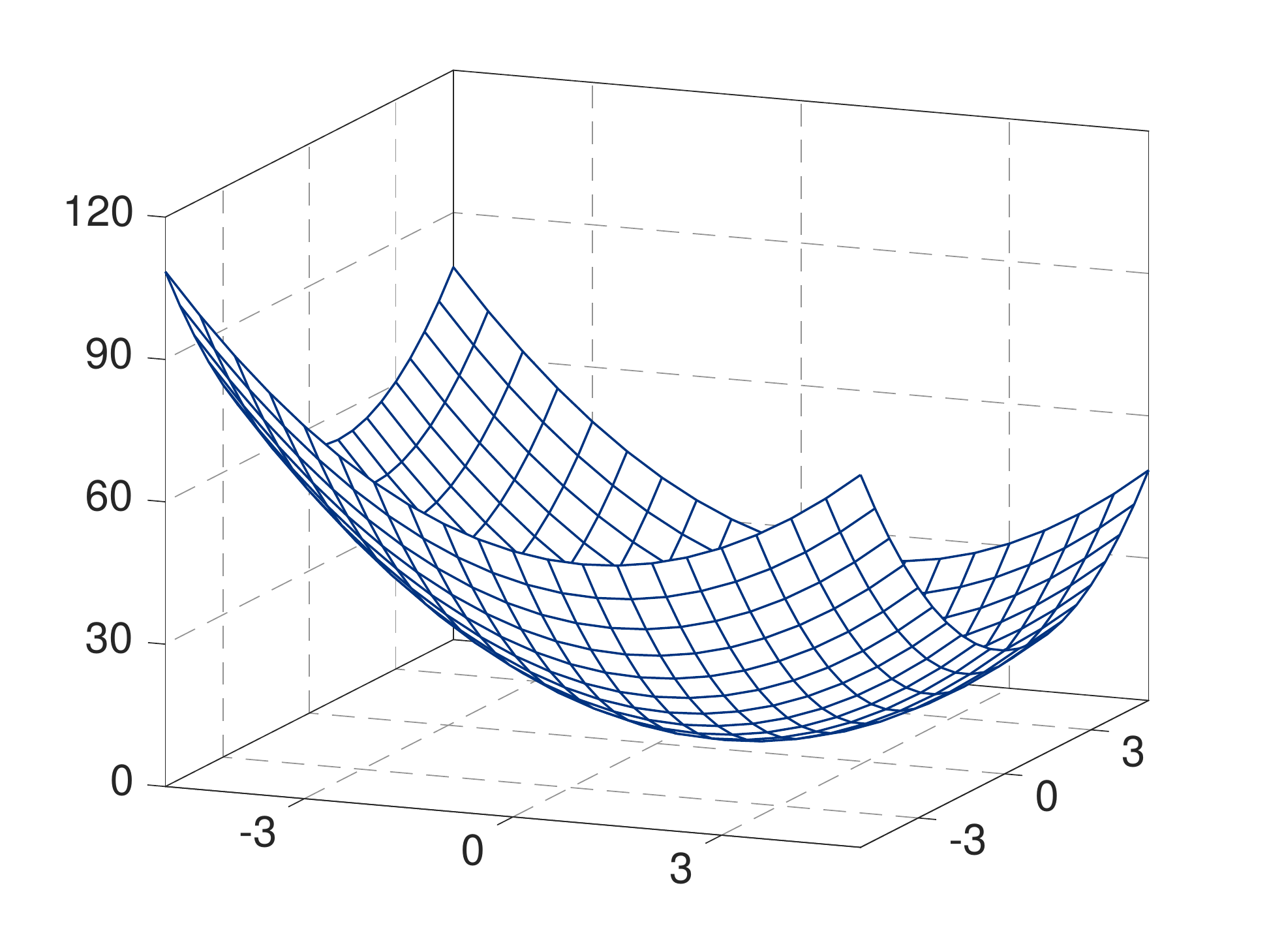}};  
     \node[left= -0.6cm of probe]{$L(\bm\uptheta)$};
      \node at (4.6,-2.5) {$\uptau_2$};
      \node at (-2.1,-3.4) {$\uptau_1$};
\end{tikzpicture}
\caption[The loss function associated with the LMS algorithm in (\ref{eq:colorlocalcouncil}).]{The loss function, $L(\bm\uptheta)= (1/2)E{[}(z_k-\bm{h}_{k}^\top\bm\uptheta)^2{]}$, associated with the least-mean-squares (LMS) algorithm in (\ref{eq:colorlocalcouncil}) for the special case where $\bm\uptheta\in \mathbb{R}^2$,  $\bm\uptheta^\ast=[1,1]^\top$, the entries of $\bm{h}_k$ are uniformly distributed in $[-3,3]$, and $\upvarepsilon_k\sim\mathcal{N}(0,1)$ is independent of $\bm{h}_k$. Here, $L(\bm\uptheta)=(1/2)[1+3(\bm\uptheta-\bm\uptheta^\ast)^\top(\bm\uptheta-\bm\uptheta^\ast)]$.}
\label{fig:substitutionoperation}
\end{figure}

A cyclic implementation of (\ref{eq:colorlocalcouncil}) in the manner of Algorithms \ref{beastwasdone} and \ref{kirkey} would involve defining the subvectors, defining the gains sequences $a_k^{(j)}$, determining how the subvector to update in Algorithm \ref{beastwasdone} is selected, and determining how the data $\{(\bm{h}_k,z_k)\}_{k\geq 1}$ are to be generated and processed. In regards to the subvector definition, we assume $p=10$ and use the same partition as in Section \ref{sec:clubsodeashrimp} (here there are 5 subvectors of length 2 each). In regards to the gain sequences, we set $a_k^{(j)}=a/(1+k+A)^{\upalpha}$ for all $j$ where $a=1$, $A=100$, and $\upalpha=1$. The value of $\upalpha$ is a standard choice for the SG algorithms and the values of $a$ and $A$ were {\it{lightly}} tuned. Next, for Algorithm \ref{beastwasdone} each of the 5 subvectors has equal probability of being selected ($q(j)=q$ for $j=1,\dots,5$). The input-output pair $(\bm{x}_k,z_k)$ was generated letting $\bm\uptheta^\ast=[1,\dots,1]^\top$, $\upvarepsilon_k\sim \mathcal{N}(0,1)$, and letting the entries of $\bm{h}_k$ be independent and uniformly distributed in $[-3,3]$. Finally, each input-output pair was used to perform 5 subvector updates. Once 5 updates have been performed using the same input-output pair, a new pair is generated. Using the notation of Section \ref{sec:clubsodeashrimp}, Figure \ref{fig:feedhimabunchofapples} presents the value of $\|\hat{\bm{\uptheta}}_u-\bm\uptheta^\ast\|/\|\hat{\bm{\uptheta}}_0-\bm\uptheta^\ast\|$ for 5 replications of Algorithms \ref{beastwasdone} and \ref{kirkey} initialized at $-\bm\uptheta^\ast$. It can be seen that the iterates of both algorithms appear to converge to $\bm\uptheta^\ast$.



\begin{figure}[!t]
\centering
\begin{tikzpicture}
  \definecolor{mynewcoloring}{RGB}{102, 128, 153}
\node[inner sep=0pt] (probe) at (0,0)
    {\includegraphics[scale=0.6]{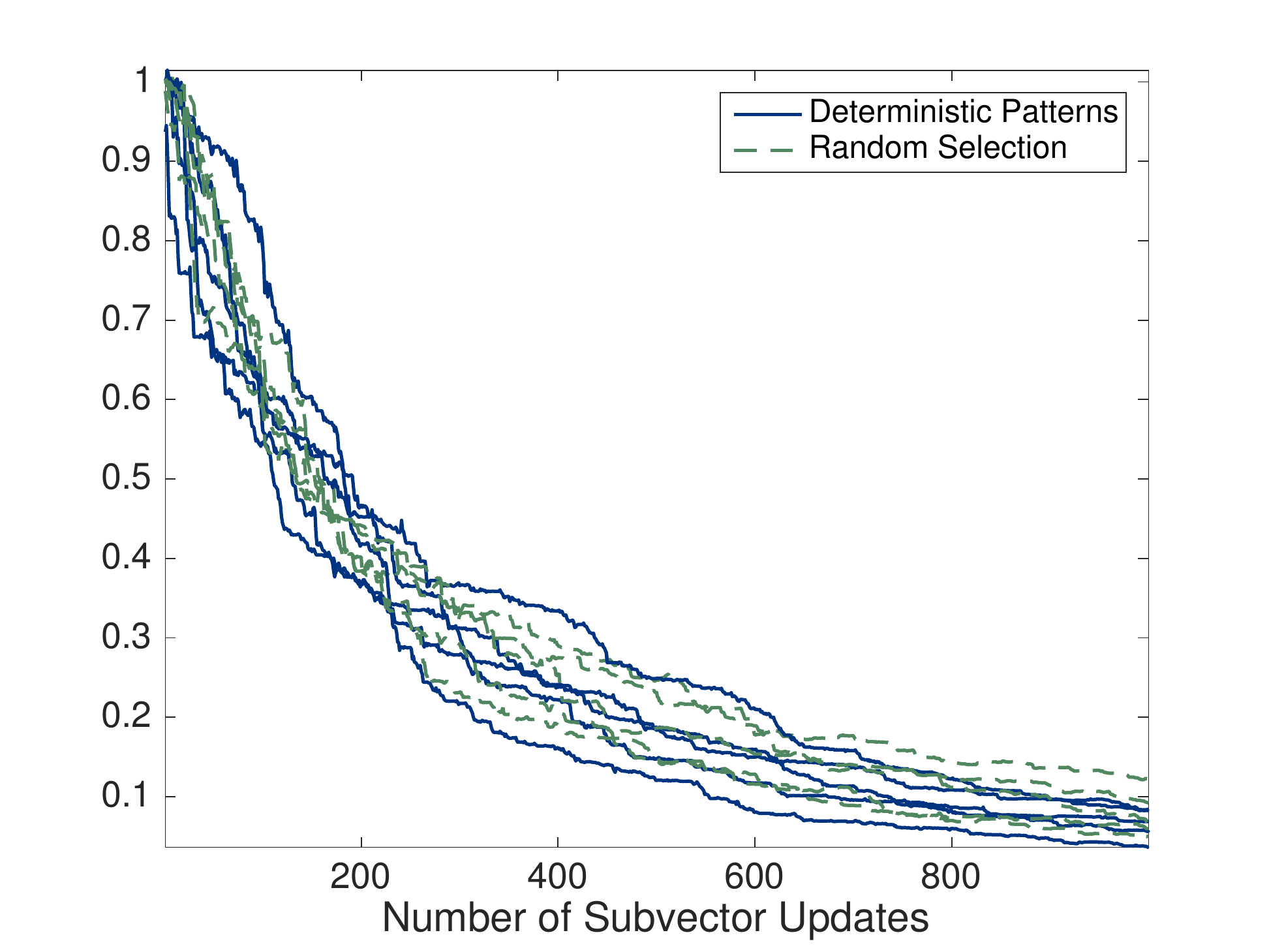}};  
        \node[left= -0.8cm of probe]{$\displaystyle{\frac{\|\hat{\bm{\uptheta}}_u-\bm\uptheta^\ast\|}{\|\hat{\bm{\uptheta}}_0-\bm\uptheta^\ast\|}}$};
\end{tikzpicture}
\caption[Performance of SG-based GCSA implementations of LMS]{Cyclic implementations, in the manner of Algorithms \ref{beastwasdone} (random selection) and \ref{kirkey} (deterministic patterns), of the LMS algorithm in (\ref{eq:colorlocalcouncil}). Pictured are 5 realizations of each algorithm. Here, $\hat{\bm{\uptheta}}_u$ represents either $\hat{\bm{\uptheta}}_u^{\text{Alg \ref{beastwasdone}}}$ (dashed lines) or $\hat{\bm{\uptheta}}_u^{\text{Alg \ref{kirkey}}}$ (solid lines).}
\label{fig:feedhimabunchofapples}
\end{figure}

We now consider the same setting as in Figure \ref{fig:feedhimabunchofapples} with three modifications. First, the gain sequences take the form:
\begin{align*}
a_k^{(j)}=\frac{a^{(j)}}{(1+k+A^{(j)})^{0.501}},
\end{align*}
for $a^{(j)}>0$ and $A^{(j)}\geq 0$. Second, the vector $\bm\uptheta$ is divided into only two subvectors corresponding to the first- and second halves of $\bm\uptheta$ (i.e., $d=2$). Third,  we assume $\upvarepsilon_k\sim\mathcal{N}(0,0.1^2)$. 
 We compare the following algorithms:
\begin{DESCRIPTION}
\item[Case 1:] The SG algorithm from (\ref{eq:colorlocalcouncil}) with $a_k=a/(1+k+A)^{0.501}$.
\item[Case 2:] The SG algorithm from (\ref{eq:colorlocalcouncil}) with $a_k$ replaced by a diagonal matrix, $\bm{A}_k$, such that the $i$th diagonal entry of $\bm{A}_k$ is equal to $a_k^{(j)}$ where $i\in \mathcal{S}_j$.\label{pageref:toccasesSG}
\item[Case 3:] Algorithm \ref{beastwasdone} (the subvector to update is selected according to a random variable) with SG-based gradient estimates. Each of the two subvectors is updated with equal probability.

\item[Case 4:] Algorithm \ref{kirkey} (the subvector to update is selected following a strictly alternating pattern) with SG-based gradient estimates.
\end{DESCRIPTION}
 Each algorithm was run for a total of $T=5000$ subvector updates and each realization was initialized at a vector uniformly distributed on $[-4,6]^{10}$.  Thus, there were 2,500 iterations in Cases 1 and 2 (since each iteration requires updating both subvectors) and 5,000 iterations in Cases 3 and 4 (since each iteration requires updating only one subvector). The entries of  $\bm{h}_k$ were independent and uniformly distributed in $[-3,3]$. 
 Table \ref{table:omgmySECONDtable} presents the results of part of the tuning process for Cases 1--4 and Figure \ref{fig:betterplaybetstts} shows the evolution of the loss function as the number of updates increases. 
Table \ref{table:omgmySECONDtable} and Figure \ref{fig:betterplaybetstts} indicate that all algorithms had a comparable performance (for all cases the same gain sequence yielded the smallest mean terminal loss function value). Moreover, the results support the theory on convergence of the GCSA algorithm. 

\begin{table}[p]
\centering
\begin{tabular}{SSSSSS} \toprule
    {Algorithm} & {$a^{(1)}$} & {$a^{(2)}$}  & {$A^{(1)}$} & {$A^{(2)}$} & {$\overline{L(\hat{\bm{\uptheta}}_T)}$} \\ \midrule
    {Case 1}  & 1  & {--} & 0  & {--}  & 0.0073    \\
    {SG}  & 1 & {--} & 100 & {--} & 0.0072   \\
    {(regular)}  & 0.1  & {--} & 0  & {--} & 0.0052  \\
    {}  & 0.1  & {--} & 100  & {--} & 0.0052\ $\ast$       \\ \midrule
    {Case 2}  & 1  & 1 & 0  & 0  & 0.0072    \\
    {SG}  & 1 & 1 & 100 & 100 & 0.0071   \\
    {(diagonal gain)}  & 1  & 0.1 & 0  & 0 & 2.6669    \\
    {}  & 1  & 0.1 & 100  & 100 & 0.0060    \\
    {}  & 0.1  & 1 & 0  & 0 & 0.7573    \\
    {}  & 0.1  & 1 & 100  & 100 & 0.0060    \\
   {}  & 0.1  & 0.1 & 0  & 0 & 0.0052    \\
    {}  & 0.1  & 0.1 & 100  & 100 & 0.0052\ $\ast$          \\ \midrule
    {Case 3}  & 1  & 1 & 0  & 0  & 0.0064    \\
    {SG}  & 1 & 1 & 100 & 100 & 0.0063   \\
    {(random selection)}  & 1  & 0.1 & 0  & 0 & 0.0258    \\
    {}  & 1  & 0.1 & 100  & 100 & 0.0056    \\
    {}  & 0.1  & 1 & 0  & 0 & 0.0081    \\
    {}  & 0.1  & 1 & 100  & 100 & 0.0056    \\
   {}  & 0.1  & 0.1 & 0  & 0 & 0.0051   \\
    {}  & 0.1  & 0.1 & 100  & 100 & 0.0051\ $\ast$          \\ \midrule
    {Case 4}  & 1  & 1 & 0  & 0  & {$1.3221\times 10^{8}$}    \\
    {SG}  & 1 & 1 & 100 & 100 & 0.0067   \\
    {(deterministic patterns)}  & 1  & 0.1 & 0  & 0 & {$1.5906\times 10^{17}$}    \\
    {}  & 1  & 0.1 & 100  & 100 & 0.0060    \\
    {}  & 0.1  & 1 & 0  & 0 & 0.0060    \\
    {}  & 0.1  & 1 & 100  & 100 & 0.0060 \\
   {}  & 0.1  & 0.1 & 0  & 0 & 0.0051    \\
    {}  & 0.1  & 0.1 & 100  & 100 & 0.0051\ $\ast$         \\ \bottomrule
\end{tabular}
\caption[Tuning the gain sequence parameters for Cases 1--4 from p. \pageref{pageref:toccasesSG} where the noisy gradient updates are SG-based]{Tuning the gain sequence parameters. Note that for Case 1 there is only one gain sequence, $a_k$, which we have set equal to $a^{(1)}/(1+k+A^{(1)})^{0.501}$. See p. \pageref{pageref:toccasesSG} for a description of each of the four cases above. Here, $\overline{L(\hat{\bm{\uptheta}}_T)}$ represents an estimate of the loss function at the terminal value obtained by averaging the terminal loss values of 100 replications. For comparison, $L(\bm\uptheta^\ast)=0$. An asterisk marks the smallest mean terminal loss function value for each case.}
\label{table:omgmySECONDtable}
\end{table}


\begin{figure}[p]
\centering
  \begin{subfigure}[b]{0.5\textwidth}
 \centering
\begin{tikzpicture}

  \definecolor{mynewcoloring}{RGB}{102, 128, 153}
\node[inner sep=0pt] (probe) at (0,0) 
{\includegraphics[scale=0.35]{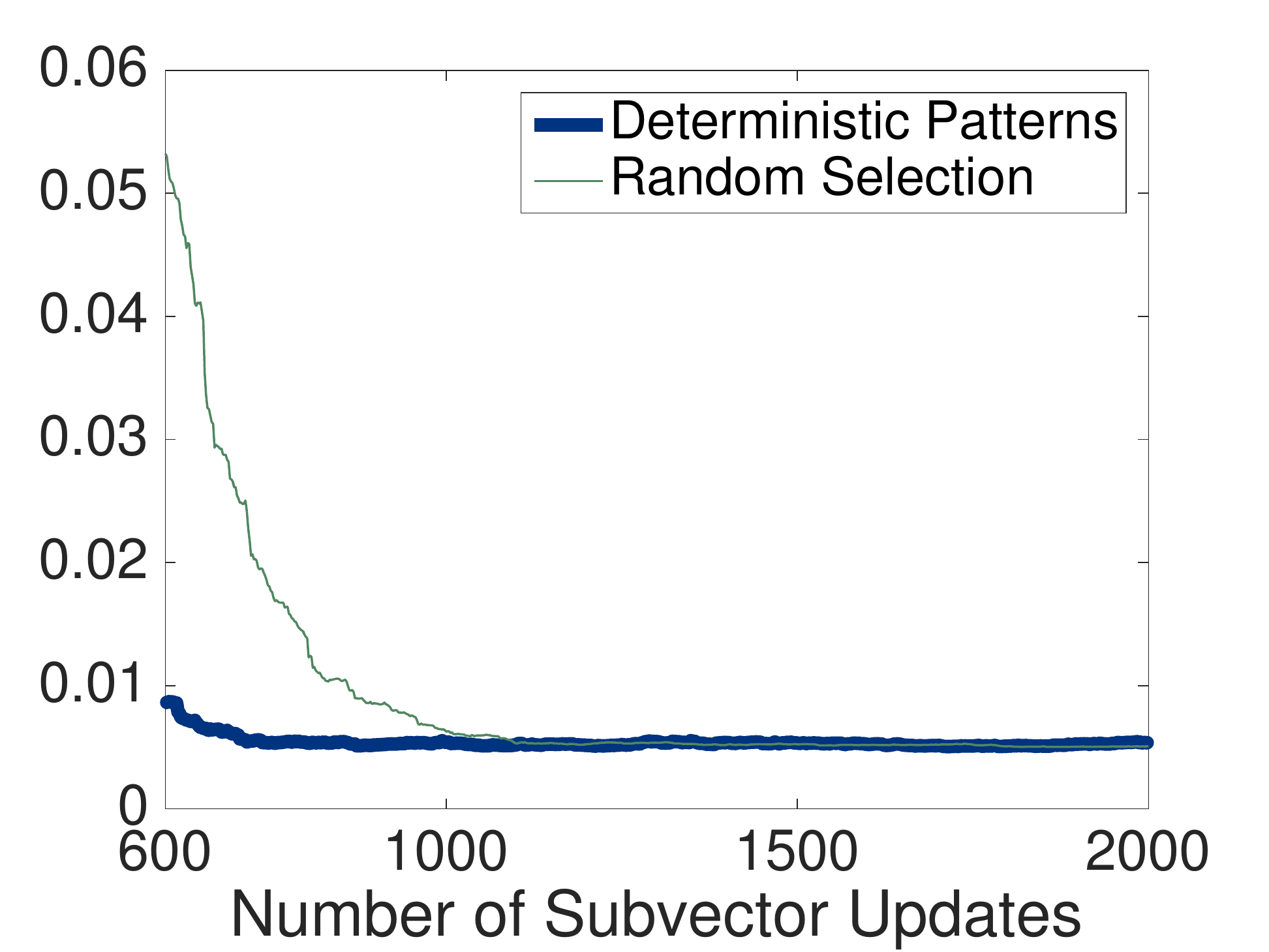}};
\node[rotate=90] at (-3.75,0) {\small{$L(\hat{\bm{\uptheta}}_u)$}};


\end{tikzpicture}
        \end{subfigure}\hfill
        \begin{subfigure}[b]{0.5\textwidth}
                \centering
\begin{tikzpicture}

  \definecolor{mynewcoloring}{RGB}{102, 128, 153}
\node[inner sep=0pt] (probe) at (0,0)
    {\includegraphics[scale=0.35]{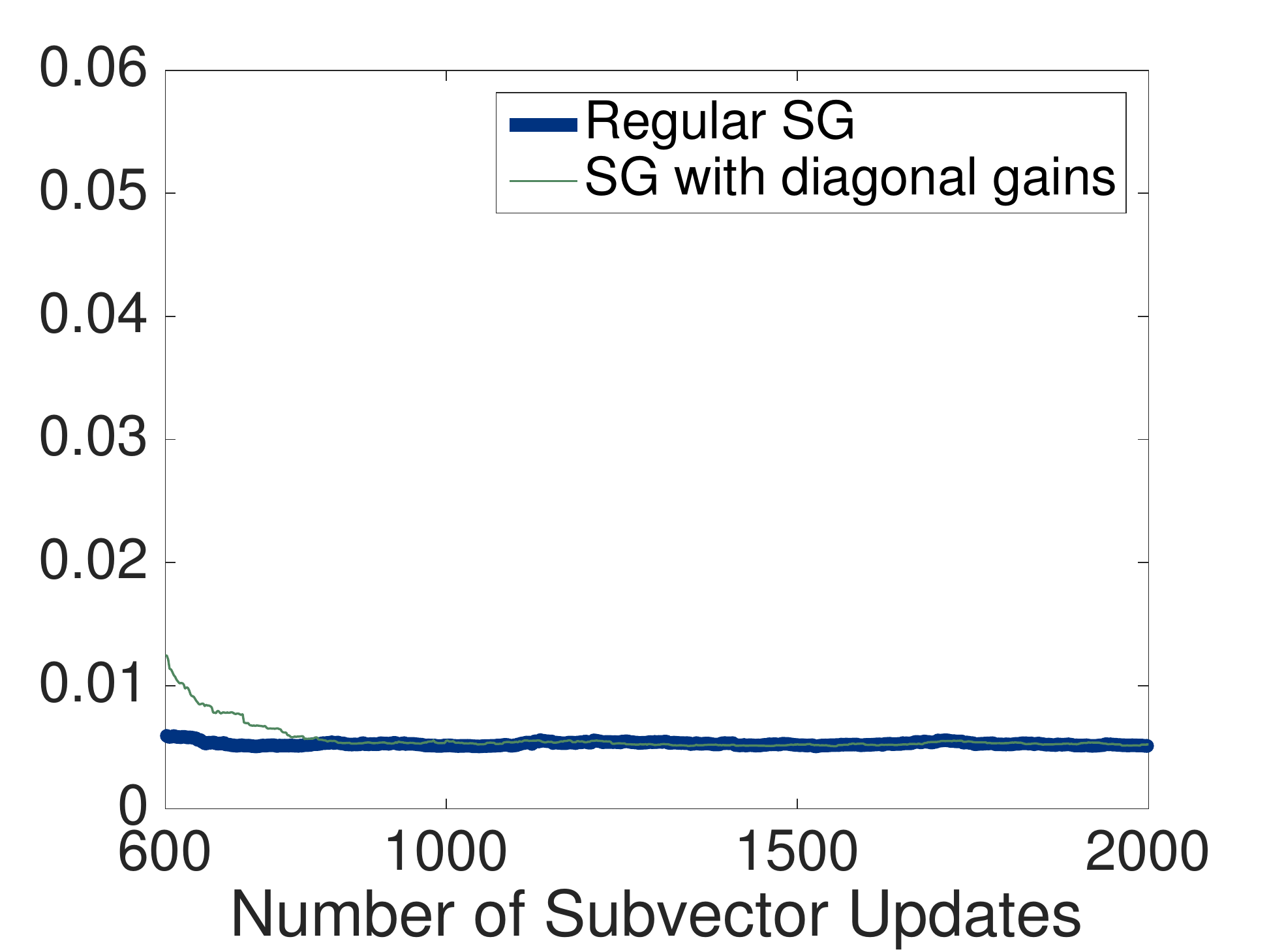}};
    \node[rotate=90] at (-3.75,0) {\small{$L(\hat{\bm{\uptheta}}_u)$}};

\end{tikzpicture}
 \end{subfigure}
 \caption*{Above: the 1st replication.}


 \vspace{.03in}
 
 
    \begin{subfigure}[b]{0.5\textwidth}
 \centering
\begin{tikzpicture}

  \definecolor{mynewcoloring}{RGB}{102, 128, 153}
\node[inner sep=0pt] (probe) at (0,0) 
{\includegraphics[scale=0.35]{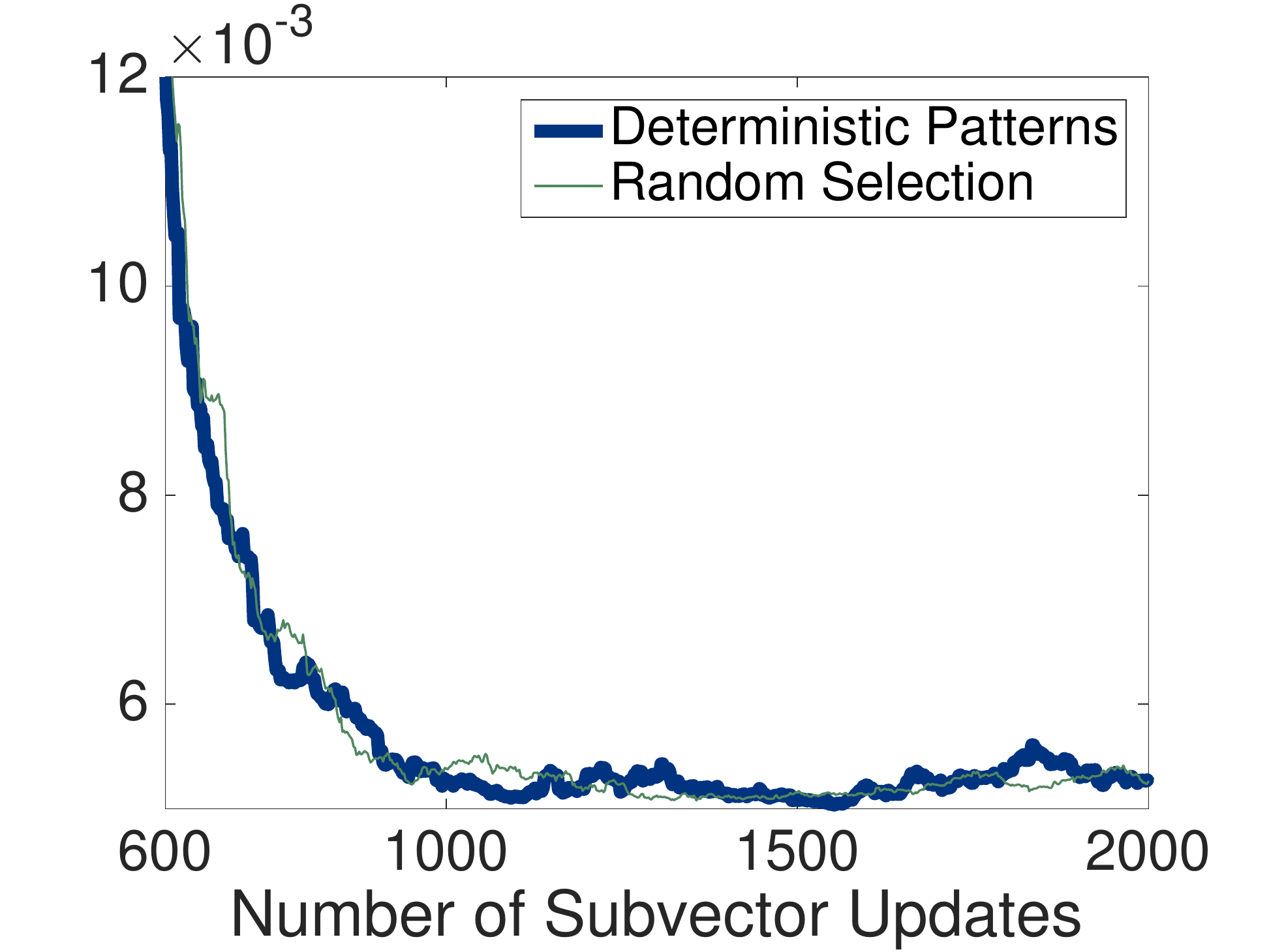}};
\node[rotate=90] at (-3.8,0) {\small{$L(\hat{\bm{\uptheta}}_u)$}};


\end{tikzpicture}
        \end{subfigure}\hfill
        \begin{subfigure}[b]{0.5\textwidth}
                \centering
\begin{tikzpicture}

  \definecolor{mynewcoloring}{RGB}{102, 128, 153}
\node[inner sep=0pt] (probe) at (0,0)
    {\includegraphics[scale=0.35]{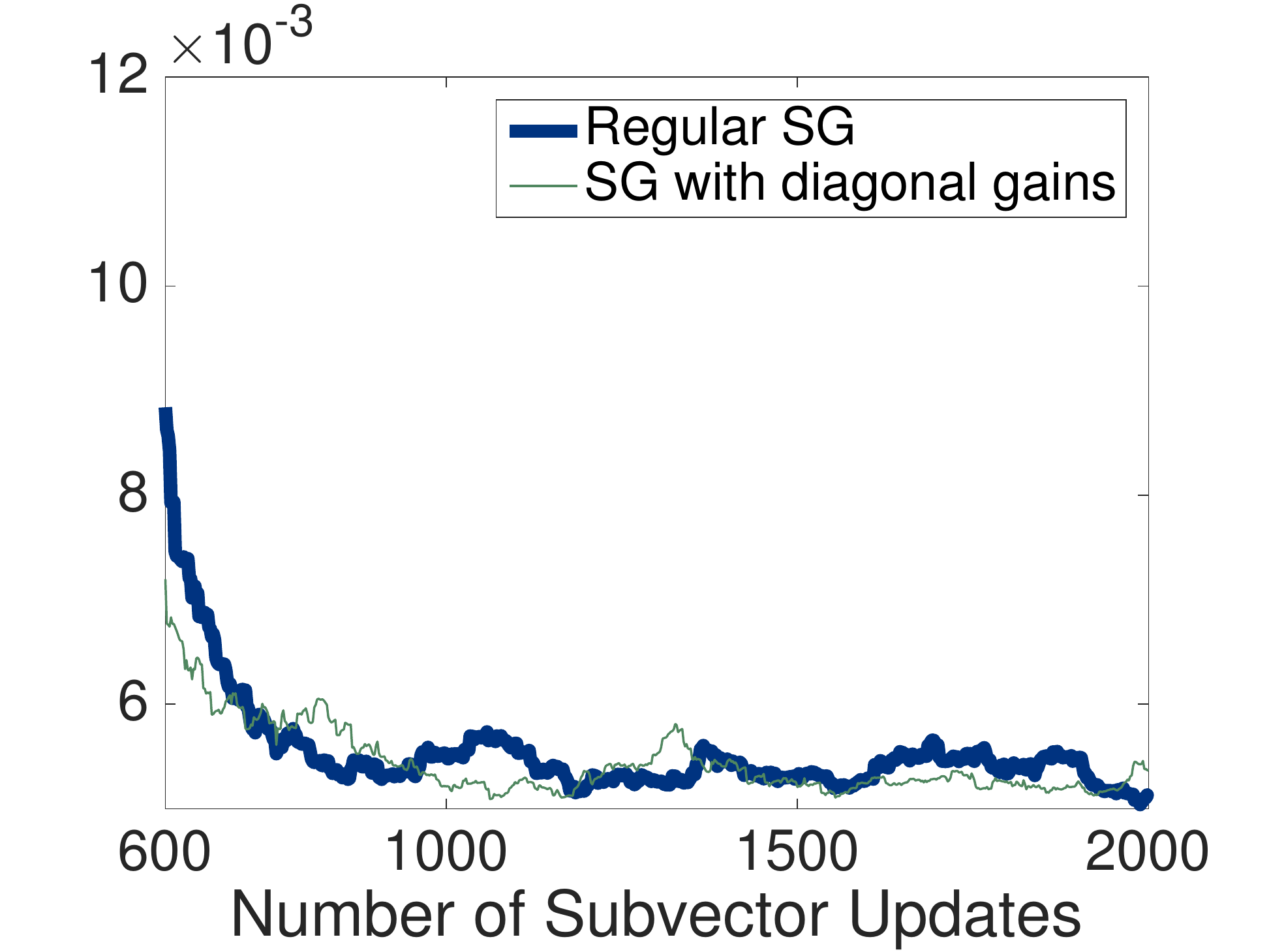}};
    \node[rotate=90] at (-3.35,0) {\small{$L(\hat{\bm{\uptheta}}_u)$}};

\end{tikzpicture}
 \end{subfigure}%
 \caption*{Above: the 2nd replication.}

 
  \vspace{.03in}
 
 
    \begin{subfigure}[b]{0.5\textwidth}
 \centering
\begin{tikzpicture}

  \definecolor{mynewcoloring}{RGB}{102, 128, 153}
\node[inner sep=0pt] (probe) at (0,0) 
{\includegraphics[scale=0.35]{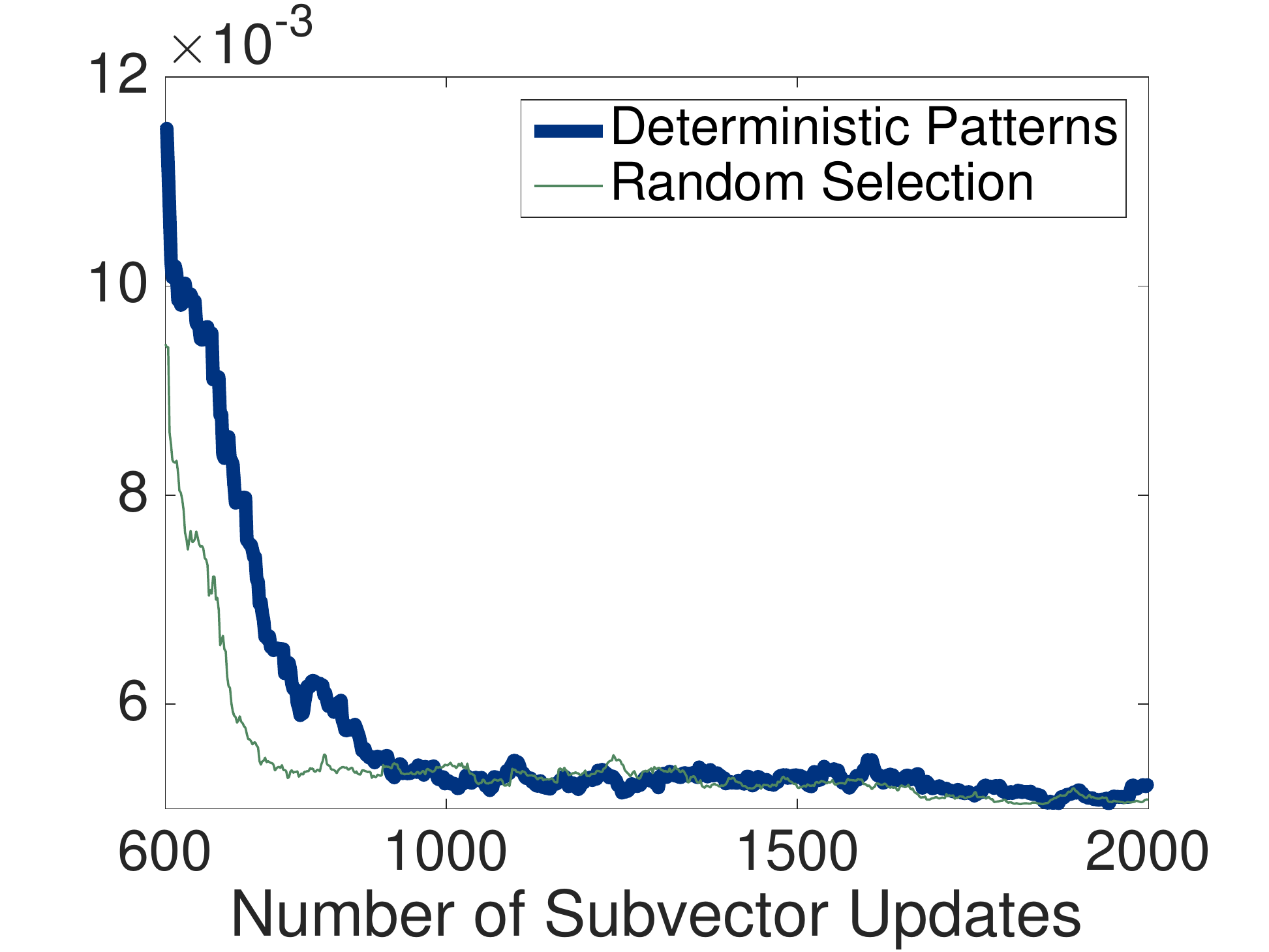}};
\node[rotate=90] at (-3.8,0) {\small{$L(\hat{\bm{\uptheta}}_u)$}};

\end{tikzpicture}
        \end{subfigure}\hfill
        \begin{subfigure}[b]{0.5\textwidth}
                \centering
\begin{tikzpicture}

  \definecolor{mynewcoloring}{RGB}{102, 128, 153}
\node[inner sep=0pt] (probe) at (0,0)
    {\includegraphics[scale=0.35]{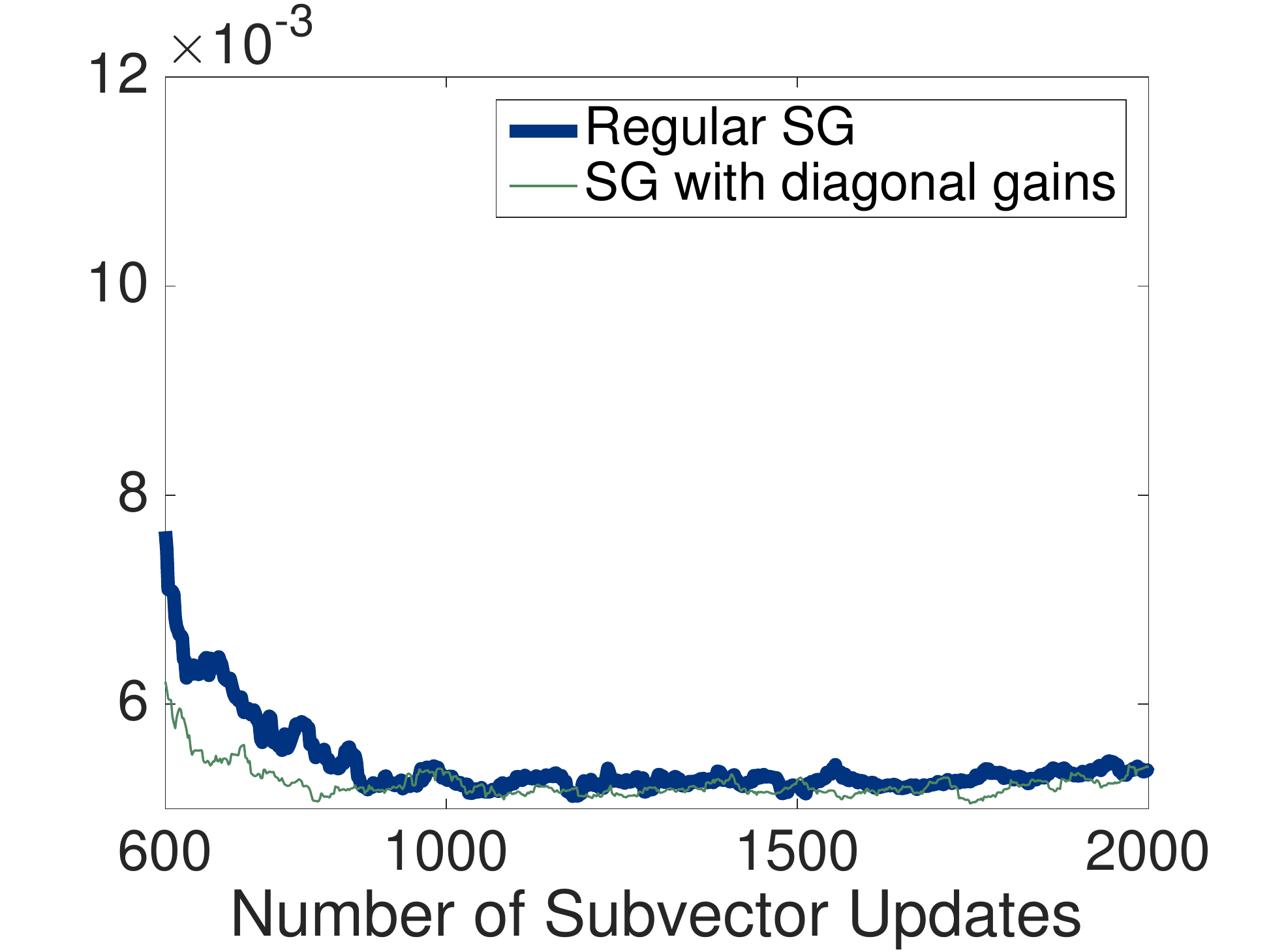}};
    \node[rotate=90] at (-3.5,0) {\small{$L(\hat{\bm{\uptheta}}_u)$}};

\end{tikzpicture}
 \end{subfigure}
  \caption*{Above: the 3rd replication.}
 
 
 \vspace{.03in}
         
        
\caption[Performance of Cases 1--4 from p. \pageref{pageref:toccasesSG} where the noisy gradient updates are SG-based]{Performance of Cases 1 (deterministic patterns), 2 (random selection), 3 (regular SG), and 4 (SG with diagonal gains) for three i.i.d. replications (i.e., three i.i.d. realizations of each of the four cases) using the tuned parameters from Table \ref{table:omgmySECONDtable}. The term $\hat{\bm{\uptheta}}_u$ is a generic vector representing the vector obtained after having performed $u$ subvector updates.}
\label{fig:betterplaybetstts}
\end{figure}

\section{Numerical Examples on Normality}
\label{sec:normalitycenumericos}

This section provides a few simple numerical examples regarding the asymptotic normality of the iterates from Algorithm \ref{kirkey} (where the subvector to updates is selected following a deterministic pattern). This algorithm is implemented using SG-based noisy update directions (Section \ref{subsec:normalitySG}) and SPSA-based noisy update directions (Section \ref{sec:hannahashley}).

\subsection{Algorithm \ref{kirkey} with SG-Based Gradient Estimates}
\label{subsec:normalitySG}

Consider the minimization of $L(\bm\uptheta)=E[Q(\bm\uptheta,\bm{V})|\bm\uptheta]$ where $Q(\bm\uptheta,\bm{V})=\uprho(\bm\uptheta)+\bm\uptheta^\top\bm{V}$ for some real-valued function $\uprho(\bm\uptheta)$ and a vector-valued random variable $\bm{V}$ that is independent of $\bm\uptheta$. Then, $L(\bm\uptheta)=\uprho(\bm\uptheta)+\bm\uptheta^\top E[\bm{V}]$. If the mean of $\bm{V}$ is unknown and $\uprho(\bm\uptheta)$ is continuously differentiable, an SG-based approach to solving this problem is obtained by letting $\hat{\bm{g}}_k(\hat{\bm{\uptheta}}_k)= [\partial \uprho(\bm\uptheta)/\partial \bm\uptheta]_{\bm\uptheta=\hat{\bm{\uptheta}}_k} + \bm{V}$. Here, $\hat{\bm{g}}_k(\hat{\bm{\uptheta}}_k)$ is an unbiased estimate of the gradient of $L(\bm\uptheta)$ at $\hat{\bm{\uptheta}}_k$. If it is possible to sample from $\bm{V}$ then it is possible to obtain $\hat{\bm{g}}_k(\hat{\bm{\uptheta}}_k)$ and update $\hat{\bm{\uptheta}}_k$ using SG:
\begin{align}
\label{eq:firfoes}
\hat{\bm{\uptheta}}_{k+1}=\hat{\bm{\uptheta}}_k-a_k\left(\left[\frac{\partial \uprho(\bm\uptheta)}{\partial \bm\uptheta}\right]_{\bm\uptheta=\hat{\bm{\uptheta}}_k}+\bm{V}_k\right),
\end{align}
where $\bm{V}_k$ denotes a measurement of $\bm{V}$. A strictly cyclic implementation of (\ref{eq:firfoes}) (i.e, an implementation resembling the cyclic seesaw SG algorithm except with $d$ subvectors) with non-overlapping subvectors is as follows: 
 \begin{align}
 \label{eq:tinkertonpinks}
\hat{\bm{\uptheta}}_{k}^{(I_j)}=\hat{\bm{\uptheta}}_k^{(I_{j-1})}-a_k^{(j)}\left(\left[\frac{\partial \uprho(\bm\uptheta)}{\partial \bm\uptheta}\right]_{\bm\uptheta=\hat{\bm{\uptheta}}_k^{(I_{j-1})}}+\bm{V}_k^{(I_{j-1})}\right)^{(j)}
\end{align}
for $1\leq j\leq d$, where $\hat{\bm{\uptheta}}_k^{(I_0)}\equiv \hat{\bm{\uptheta}}_k^{\text{cyc}}$, $\hat{\bm{\uptheta}}_{k+1}^{\text{cyc}}\equiv\hat{\bm{\uptheta}}_k^{(I_d)}$, and $\bm{V}_k^{(I_j)}$ is a vector-valued random variable representing a measurement of $\bm{V}$. Here, Theorem \ref{thm:fnogg} could be used to derive conditions for the asymptotic normality of $k^{\upbeta/2}(\hat{\bm{\uptheta}}_k^{\text{cyc}}-\bm\uptheta^\ast)$ where $\upbeta>0$ is some constant and $\bm\uptheta^\ast$ is a minimizer of $L(\bm\uptheta)$. We do this next.

Let us discuss conditions under which C0--C8, the conditions of Theorem \ref{thm:fnogg}, are satisfied. First note that if $\uprho(\bm\uptheta)$ is twice continuously differentiable with a bounded Jacobian matrix $\bm{J}(\bm\uptheta)$ and if $\hat{\bm{\uptheta}}_k^{(I_j)}\rightarrow \bm\uptheta^\ast$ w.p.1 for all $j$ then condition C0 is satisfied. Next, if $a_k^{(j)}=a_k=a/(1+k+A)^\upalpha$ for some $a, A >0, \upalpha>0$ then C1 is also satisfied with $r_j=a$. Moreover, since we are assuming the $d$ subvectors do not overlap when the matrix $\bm\Gamma$ in (\ref{eq:macnealhuh}) is equal to $a \bm{J}(\bm\uptheta^\ast)$. Therefore, if $\bm{J}(\bm\uptheta^\ast)$ is positive definite then condition C2 would also be satisfied with $a\bm{J}(\bm\uptheta^\ast)=\bm{P\Lambda P}^\top$ where $\bm{P}\in\mathbb{R}$ is an orthogonal matrix and $\bm\Lambda$ is a positive definite diagonal matrix. Next, letting $\mathcal{F}_k$ be the $\upsigma$-field generated by $\{\hat{\bm{\uptheta}}_i^{\text{cyc}}\}_{i=1}^{k}$, the bias term in condition C3 is identically zero (i.e., equal to the zero vector). Therefore, C3 is automatically satisfied. Finally, if the variables $\bm{V}_k^{(I_j)}$ for $j=1,\dots,d$ and $k\geq 0$ are i.i.d. and have (finite) positive definite covariance matrix, then conditions C4, C5-(i), C5-(iii), C6, and C7-(i) hold with $\bm\Sigma=a^2\sum_{j=1}^d \Var{([\bm{V}_k^{(I_j)}]^{(j)})}$, a positive definite block-diagonal matrix. Finally, if $0<\upalpha\leq 1$ (note that $\upbeta=\upalpha$ by C5-(i)), then a {\it{sufficient}} condition for B6 to be satisfied is if  the smallest diagonal entry of $\bm\Lambda$ is strictly greater than $1/2$ (if $\upalpha<1$ then this condition may be relaxed to having the smallest diagonal entry of $\bm\Lambda$ be strictly positive).

The discussion above gave conditions for the asymptotic normality of the vector $k^{\upbeta/2}(\hat{\bm{\uptheta}}_k^{\text{cyc}}-\bm\uptheta^\ast)$ with $\upbeta=\upalpha$. One important condition was the convergence w.p.1 of $\hat{\bm{\uptheta}}_k^{(I_j)}$ to $\bm\uptheta^\ast$. Corollary \ref{thm:hoeshooHarry} could be used to show convergence of $\hat{\bm{\uptheta}}_k^{\text{cyc}}$ to $\bm\uptheta^\ast$. Then, Corollary \ref{eq:idamagedittarantatan} would imply the desired result. Conditions A0$''$, A3$''$, A5$''$,  and A6$''$, are automatically satisfied given the assumptions of the previous paragraph. It remains to discuss the validity of A4$''$, A7$''$, and A8$''$. Condition A4$''$ requires the algorithm's iterates to be bounded w.p.1. Verifying this in practice is impossible (see Section \ref{sec:discussconvergence}). 
Next, A7$''$ requires $\bm{\uptheta^{\ast}}$ to be a locally asymptotically stable (in the sense of Lyapunov) solution of:
\begin{align}
\label{eq:chirrinpinpin4}
\dot{\bm{Z}}(t)=- a \left(\left[\frac{\partial \uprho(\bm\uptheta)}{\partial \bm\uptheta}\right]_{\bm\uptheta=\bm{Z}(t)}+E[\bm{V}]\right).
\end{align}
One example of a situation in which $\bm\uptheta^\ast$ is a locally asymptotically stable solution to (\ref{eq:chirrinpinpin4}) is when $\uprho(\bm\uptheta)=[\bm\uptheta^\top\bm{H}\bm\uptheta]/2$ for some positive definite matrix $\bm{H}$. Throughout the remainder of this section we will assume $\uprho(\bm\uptheta)$ has this form. In this case, $\dot{\bm{Z}}(t)=- a\bm{H}(\bm\uptheta+\bm{H}^{-1}E[\bm{V}])$. Because $\bm\uptheta^\ast=-\bm{H}^{-1}E[\bm{V}]$, $\bm\uptheta^\ast$ is an equilibrium point of $\dot{\bm{Z}}(t)$.
The desired Lyapunov stability follows from Lyapunov's second method for stability (e.g., Cronin 2007\nocite{cronin2007}), and therefore A7$''$ holds. Condition A8$''$ is also impossible to verify in practice. It requires the iterates to be contained within the domain of attraction of (\ref{eq:chirrinpinpin4}) infinitely often w.p.1. Next we conduct a small numerical experiment to illustrate convergence and asymptotic normality under the assumptions made thus far. Aside from A4$''$ and A8$''$, all the conditions for convergence w.p.1 and asymptotic normality are satisfied.

For our numerical experiments we implement (\ref{eq:tinkertonpinks}) assuming $p=2$ (i.e., $\bm\uptheta\in\mathbb{R}^2$) and that the vectors $\bm{V}_k^{(I_j)}$ are i.i.d. with $\bm{V}_k^{(I_j)}\sim\mathcal{N}(\bm\upmu,\bm{I})$, where $\bm{\upmu}=[5, 5]^\top$. In this setting there are two subvectors to update corresponding to the first- and second entries of $\bm\uptheta$. The matrix $\bm{H}$ was set to be equal to the identity matrix. For the gain sequences the values used were $\upalpha=0.501$, $a=1$, and $A=100$.  Under these assumptions, $\bm\uptheta^\ast=-[5,5]^\top$ and $k^{0.501/2}(\hat{\bm{\uptheta}}_k^{\text{cyc}}-\bm\uptheta^\ast)$ should converge in distribution to a random variable with distribution $\mathcal{N}(\bm{0},0.5\bm{I})$. Figure \ref{fig:figgyanotherorbs} gives the results for $\hat{\bm{\uptheta}}_0^{\text{cyc}}=[1,1]^\top$. The results in Figure \ref{fig:lilly1} appear to support the fact that the values of $T^{0.501/2}(\hat{\bm{\uptheta}}_T^{\text{cyc}}-\bm\uptheta^\ast)$ are expected to have a multivariate normal distribution with a diagonal covariance matrix (this can be seen since the cluster of points in Figure \ref{fig:lilly1} is roughly symmetric) for $T=1000$. Additionally, because the sum of the entries of a random variable with distribution $\mathcal{N}(\bm{0},0.5\bm{I})$ should have a normal distribution with mean zero and variance 1, a Q-Q plot could be used to compare the quantiles of a standard normal random variable to the quantiles of the sum of the entries of $T^{0.501/2}(\hat{\bm{\uptheta}}_T^{\text{cyc}}-\bm\uptheta^\ast)$. Figure \ref{fig:lilly2} shows the resulting Q-Q plot. It appears that the sum of the entries of the normalized iterates closely approximates a standard normal random variable as desired.

\begin{figure}
\centering
        \begin{subfigure}[b]{0.5\textwidth}
                \centering
\begin{tikzpicture}

  \definecolor{mynewcoloring}{RGB}{102, 128, 153}
\node[inner sep=0pt] (probe) at (0,0) 
{\includegraphics[scale=0.35]{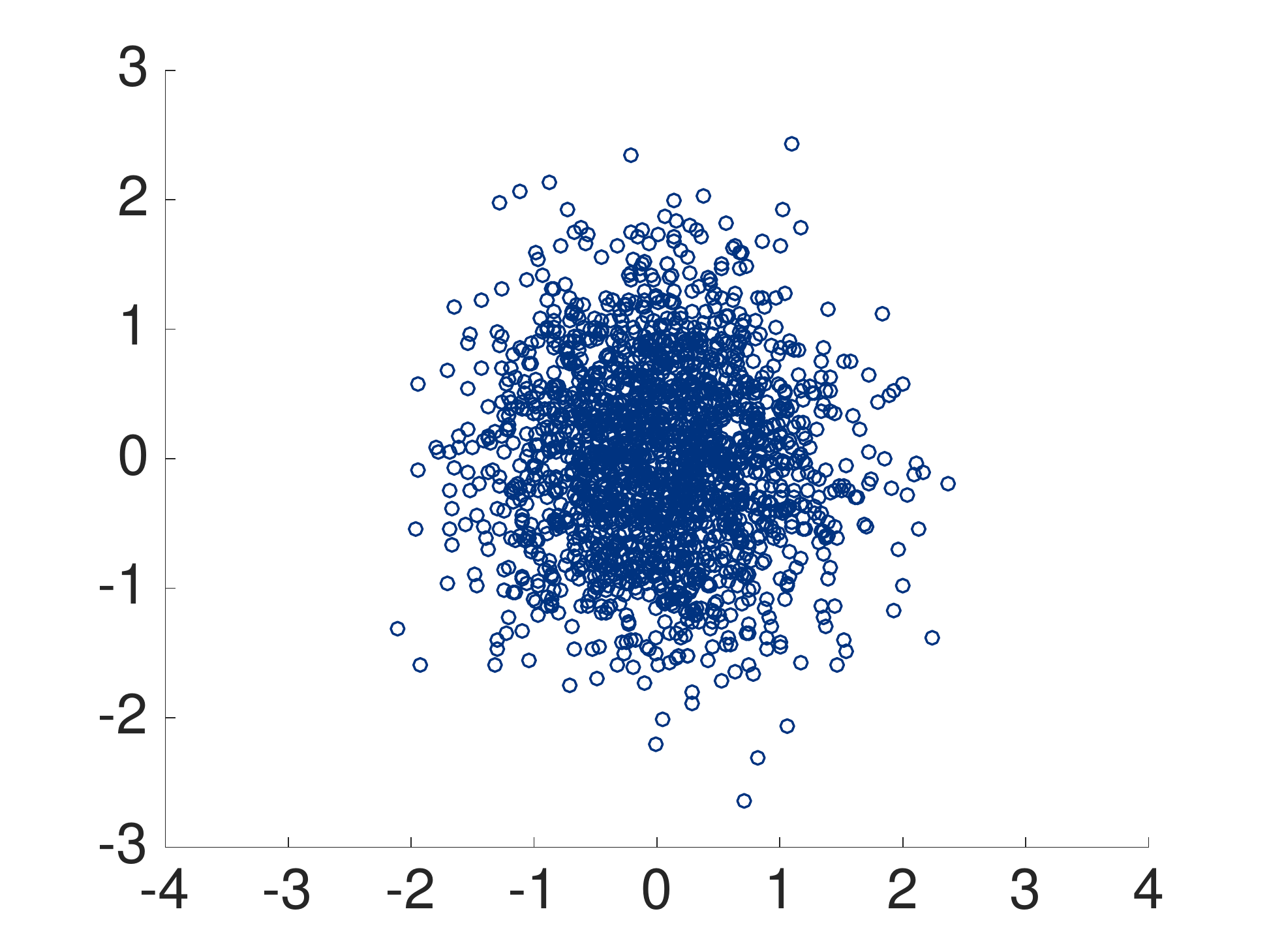}};
\node at (0,-2.6) {\textcolor{white}{$\uptheta_1$}};

\end{tikzpicture}
                \caption{Scatter plot of $T^{\upbeta/2}(\hat{\bm{\uptheta}}_{T}^{\text{cyc}}-\bm\uptheta^\ast)$.}
                \label{fig:lilly1}
        \end{subfigure}\hfill
        \begin{subfigure}[b]{0.5\textwidth}
                \centering
\begin{tikzpicture}

  \definecolor{mynewcoloring}{RGB}{102, 128, 153}
\node[inner sep=0pt] (probe) at (0,0)
    {\includegraphics[scale=0.35]{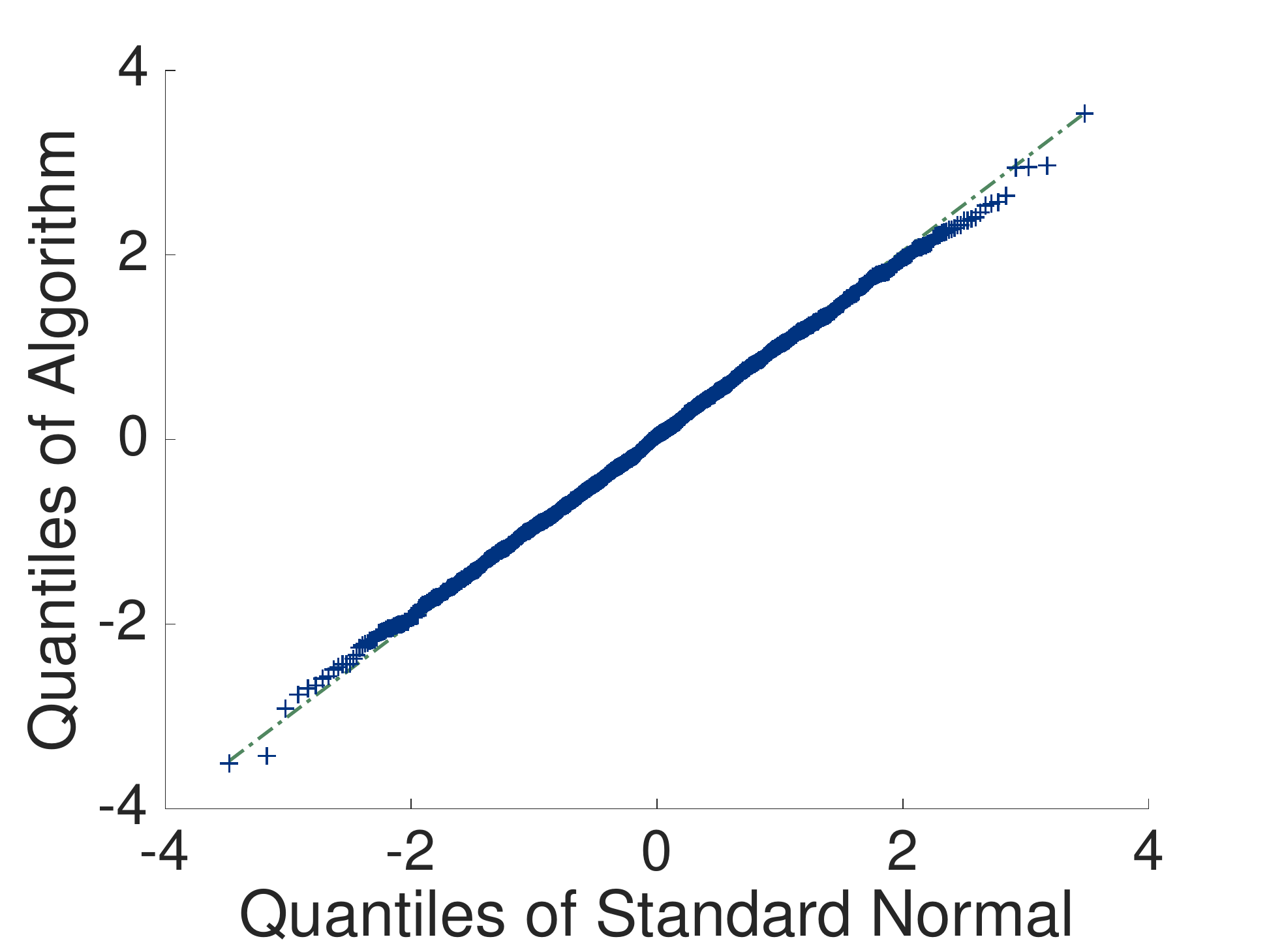}};

\end{tikzpicture}
                \caption{Q-Q plot of $[T^{\upbeta/2}(\hat{\bm{\uptheta}}_{T}^{\text{cyc}}-\bm\uptheta^\ast)]^\top\bm{1}$.}
                \label{fig:lilly2}
        \end{subfigure}\hfill
        \caption[Scatter- and Q-Q plots supporting the asymptotic normality of the normalized cyclic seesaw SG iterates]{In the plots above the term ``$\bm{1}$'' denotes the vector of ones, $\upbeta=0.501$, and $T=1000$. The plots are the result of 2,000 i.i.d. replications.}
        \label{fig:figgyanotherorbs}
\end{figure}

\subsection{Algorithm \ref{kirkey} with SPSA-Based Gradient Estimates}
\label{sec:hannahashley}

This section considers the same problem from Section \ref{subsec:normalitySG}, that is the minimization of $L(\bm\uptheta)=\bm\uptheta^\top\bm{H}\bm\uptheta/2+\bm\uptheta^\top E[\bm{V}]$. However, rather than using noisy gradient measurements of the form  used in (\ref{eq:tinkertonpinks}), this section relies only on noisy measurements, $Q(\bm\uptheta,\bm{V})=\bm\uptheta^\top\bm{H}\bm\uptheta/2+\bm\uptheta^\top\bm{V}$, in the optimization process. This is done using the cyclic seesaw SPSA algorithm from Section \ref{sect:cspsa}. Specifically, an iteration of the algorithm is given by:
\begin{subequations}
\begin{align}
\label{eq;tratarakatnatantan0}
\hat{\bm{\uptheta}}_{k}^{(I)}&=\hat{\bm{\uptheta}}_k^\text{cyc}-a_k^{(1)}\hat{\bm{g}}^{\text{SP}}_k(\hat{\bm{\uptheta}}_k^\text{cyc}),\\
 \hat{\bm{\uptheta}}_{k+1}^{\text{cyc}}&=\hat{\bm{\uptheta}}_k^{(I)}-a_k^{(2)}\hat{\bm{g}}^{\text{SP}}_k(\hat{\bm{\uptheta}}_k^{(I)}),\label{eq;tratarakatnatantan}
\end{align}
\end{subequations}
where $\hat{\bm{g}}^{\text{SP}}_k(\hat{\bm{\uptheta}}_k^\text{cyc})$ and $\hat{\bm{g}}^{\text{SP}}_k(\hat{\bm{\uptheta}}_k^{(I)})$ are as  in (\ref{eq:howeardspos1}) and (\ref{eq:howeardspos2}), respectively. In order to implement the cyclic seesaw SPSA algorithm in (\ref{eq;tratarakatnatantan0},b) it is necessary to specify the values of $a_k^{(j)}$, $c_k^{(j)}$, $\bm\Delta_k$, and $\bm\Delta_k^{(I)}$ (see p. \pageref{eq:howeardspos2} for the definition of the last three terms). In this section's numerical experiment the non-zero entries of $\bm\Delta_k$ and $\bm\Delta_k^{(I)}$ were set to be i.i.d. random variables taking the values $\pm 1$ with equal probability. We also set $a_k^{(j)}=1/(1+k+A)^\upalpha$ and $c_k^{(j)}=1/(1+k)^\upgamma$ for all $j$, where $\upalpha=0.602$, $\upgamma=0.101$, and $A=500$. All measurements of the random variable $\bm{V}$ are assumed to be i.i.d. with distribution $\mathcal{N}(\bm\upmu,0.1\bm{I})$ where $\bm\upmu$ is the vector of fives. Next, we show convergence w.p.1 of $\hat{\bm{\uptheta}}_k^{\text{cyc}}$ to $\bm\uptheta^\ast$ (the minimizer of $L(\bm\uptheta)$) and we derive the asymptotic distribution of $k^{\upbeta/2}(\hat{\bm{\uptheta}}_k^{\text{cyc}}-\bm\uptheta^\ast)$ for $\upbeta=\upalpha-2\upgamma$.

To prove convergence of $\hat{\bm{\uptheta}}_k^{\text{cyc}}$ to $\bm\uptheta^\ast$, we verify that the conditions of Corollary \ref{thm:hoeshooHarry} hold. The convergence of $\hat{\bm{\uptheta}}_k^{(I)}$ to $\bm\uptheta^\ast$ follows from Corollary \ref{eq:idamagedittarantatan}. First, it is easy to see that condition A0$''$ holds with $r_j=1$ for all $j$. Similarly, it is easy to see that condition A3$''$ is satisfied. Condition A4$''$ is impossible to guarantee in practice and, in our numerical experiment, we do not impose any bounds forcing A4$''$ to hold. In order to verify A5$''$ and A6$''$ we must express the noisy gradient vectors $\hat{\bm{g}}^{\text{SP}}_k(\hat{\bm{\uptheta}}_k^\text{cyc})$ and $\hat{\bm{g}}^{\text{SP}}_k(\hat{\bm{\uptheta}}_k^{(I)})$ in terms of their bias- and noise terms given in (\ref{eq:savemebias0},b) and (\ref{eq:savemenoise0},b), respectively. 
In our example ${\bm{\upbeta}}^{(1)}_k(\hat{\bm{\uptheta}}_k^{\text{cyc}})={\bm{\upbeta}}^{(1)}_k(\hat{\bm{\uptheta}}_k^{(I)})=\bm{0}$ (due to the fact that the loss function is quadratic in $\bm\uptheta$) and 
the $m$th entry of the noise term ${\bm{\upxi}}^{(1)}_k(\hat{\bm{\uptheta}}_k^{\text{cyc}})$ is equal to:
\begin{align*}
\left[{\bm{\upxi}}^{(1)}_k(\hat{\bm{\uptheta}}_k^{\text{cyc}})\right]^{[m]}&=\frac{\bm\Delta_k^\top(\bm{V}_k^++\bm{V}_k^-)}{2\Delta_k^{[m]}}+\frac{\bm\Delta_k^\top\bm{H}\hat{\bm{\uptheta}}_k^{\text{cyc}}}{\Delta_k^{[m]}}+\frac{(\hat{\bm{\uptheta}}_k^{\text{cyc}})^\top(\bm{V}_k^+-\bm{V}_k^-)}{2c_k\Delta_k^{[m]}}-\left[\bm{g}(\hat{\bm{\uptheta}}_k^{\text{cyc}})\right]^{[m]}
\end{align*}
for $m\in \mathcal{S}_1$ (see p. \pageref{eq:notexclusive} for the definition of $\mathcal{S}_j$) and $[{\bm{\upxi}}^{(1)}_k(\hat{\bm{\uptheta}}_k^{\text{cyc}})]^{[m]}=0$ otherwise. Similarly, the $m$th entry of the noise term ${\bm{\upxi}}^{(2)}_k(\hat{\bm{\uptheta}}_k^{\text{cyc}})$ is equal to:
\begin{align*}
&\left[{\bm{\upxi}}^{(2)}_k(\hat{\bm{\uptheta}}_k^{(I)})\right]^{[m]}=\notag\\
&\frac{(\bm\Delta_k^{(I)})^\top(\bm{V}_k^{(I)+}+\bm{V}_k^{(I)-})}{2(\bm\Delta_k^{(I)})^{[m]}}+\frac{(\bm\Delta_k^{(I)})^\top\bm{H}\hat{\bm{\uptheta}}_k^{(I)}}{(\bm\Delta_k^{(I)})^{[m]}}+\frac{(\hat{\bm{\uptheta}}_k^{(I)})^\top(\bm{V}_k^{(I)+}-\bm{V}_k^{(I)-})}{2c_k(\bm\Delta_k^{(I)})^{[m]}}-\left[\bm{g}(\hat{\bm{\uptheta}}_k^{(I)})\right]^{[m]}
\end{align*}
for $m\in \mathcal{S}_2$ and $[{\bm{\upxi}}^{(2)}_k(\hat{\bm{\uptheta}}_k^{(I)})]^{[m]}=0$ otherwise.
Given that the bias terms are identically zero, condition A5$''$ is satisfied. Using the independence of the $\bm\Delta$ and $\bm{V}$ and the fact that both these variables have finite variance, condition A6$''$ follows from the discussion on p. \pageref{eq:ranch} (using the fact that $\sum_{k=1}^\infty a_k^2/c_k^2<\infty$). The validity of condition A7$''$ has already been shown in Section \ref{subsec:normalitySG}. Finally, condition A8$''$, like  condition A4$''$, is impossible to verify. Aside from conditions A4$''$ and A8$''$, all the conditions for convergence w.p.1 hold. Next, we verify the conditions for the asymptotic normality of $k^{(\upalpha-2\upgamma)/2}(\hat{\bm{\uptheta}}_k^{\text{cyc}}-\bm\uptheta^\ast)$ and compute the parameters of the asymptotic distribution.

To prove the desired result on asymptotic normality we show that the conditions of Theorem \ref{thm:fnogg} are satisfied. For simplicity, we will assume $\bm\uptheta\in \mathbb{R}^2$. The validity of C0--C3 has already been shown (see Section \ref{subsec:normalitySG} and the previous paragraph). Condition C4 follows immediately by construction. Let us now show that condition C5-(iv) holds.
First, because both noise terms have mean zero (conditionally on $\mathcal{F}_k$, where $\mathcal{F}_k$ was defined in Section \ref{subsec:normalitySG}), the conditional covariance matrix of ${\bm{\upxi}}^{(1)}_k(\hat{\bm{\uptheta}}_k)$ and ${\bm{\upxi}}^{(2)}_k(\hat{\bm{\uptheta}}_k^{(I)})$ is $E[\bm\upxi_k^{(1)}(\hat{\bm{\uptheta}}_k)\bm\upxi_k^{(2)}(\hat{\bm{\uptheta}}_k^{(I)})^\top|\mathcal{F}_k]$. Using the convergence (w.p.1) of the iterates to $\hat{\bm{\uptheta}}_k^\ast$ along with the continuity of the gradient of $L(\bm\uptheta)$  it is easy to show that $E[\bm\upxi_k^{(1)}(\hat{\bm{\uptheta}}_k)\bm\upxi_k^{(2)}(\hat{\bm{\uptheta}}_k^{(I)})^\top|\mathcal{F}_k]=O(1/c_k)$ w.p.1. However, $c_k=O(1/k^{\upgamma})$ which implies $\lim_{k\rightarrow \infty}k^{\upbeta-\upalpha}/c_k= 0$. Therefore, $k^{\upbeta-\upalpha}E[\bm\upxi_k^{(1)}(\hat{\bm{\uptheta}}_k)\bm\upxi_k^{(2)}(\hat{\bm{\uptheta}}_k^{(I)})^\top|\mathcal{F}_k]\rightarrow \bm{0}$ w.p.1. Next we verify the remaining conditins for asymptotic normality.

\begin{figure}
\centering
        \begin{subfigure}[b]{0.5\textwidth}
                \centering
\begin{tikzpicture}

  \definecolor{mynewcoloring}{RGB}{102, 128, 153}
\node[inner sep=0pt] (probe) at (0,0) 
{\includegraphics[scale=0.35]{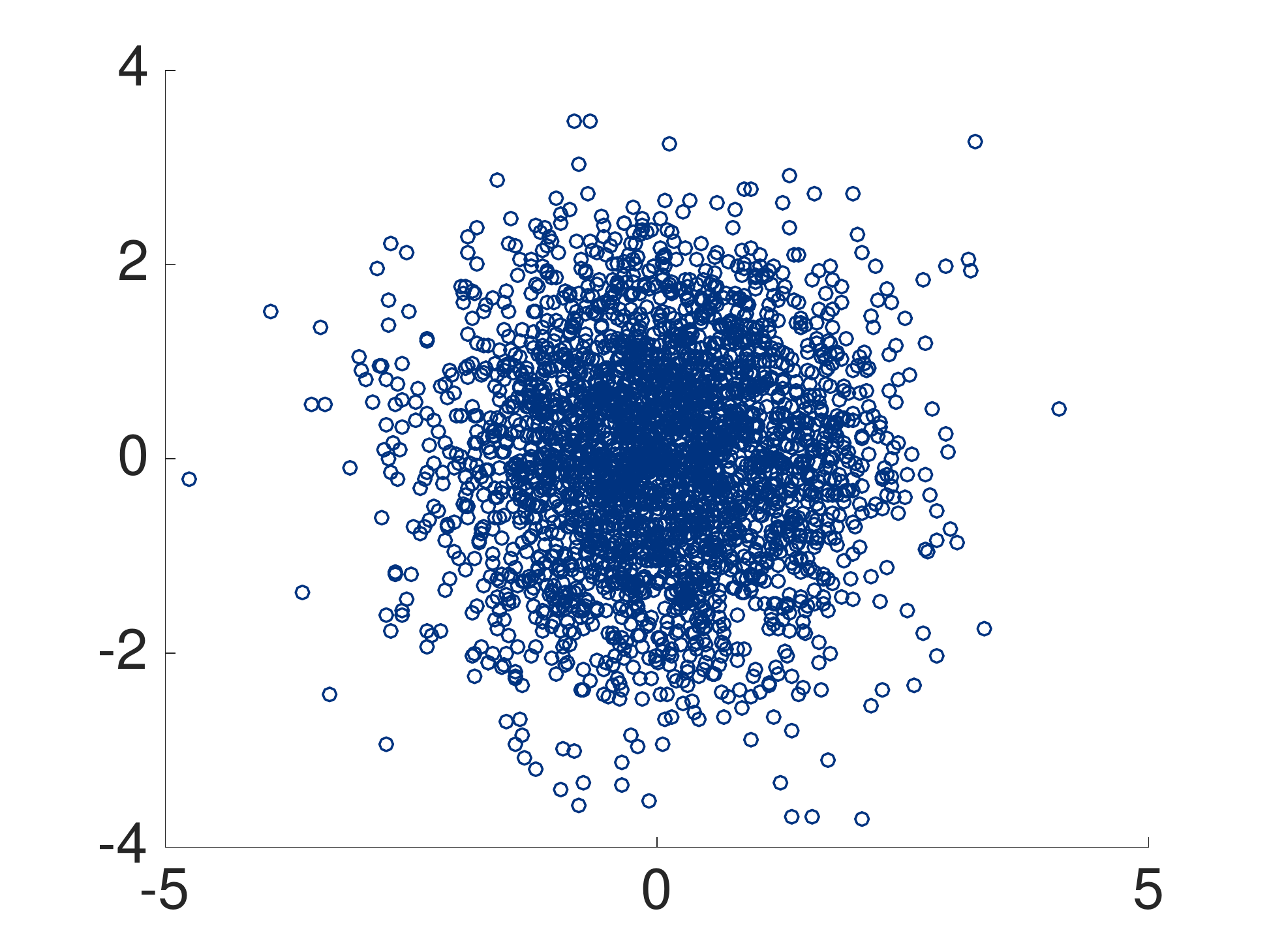}};
\node at (0,-2.6) {\textcolor{white}{$\uptheta_1$}};

\end{tikzpicture}
                \caption{Scatter plot of $T^{\upbeta/2}(\hat{\bm{\uptheta}}_{T}^{\text{cyc}}-\bm\uptheta^\ast)$.}
                \label{fig:lorberton1}
        \end{subfigure}\hfill
        \begin{subfigure}[b]{0.5\textwidth}
                \centering
\begin{tikzpicture}

  \definecolor{mynewcoloring}{RGB}{102, 128, 153}
\node[inner sep=0pt] (probe) at (0,0)
    {\includegraphics[scale=0.35]{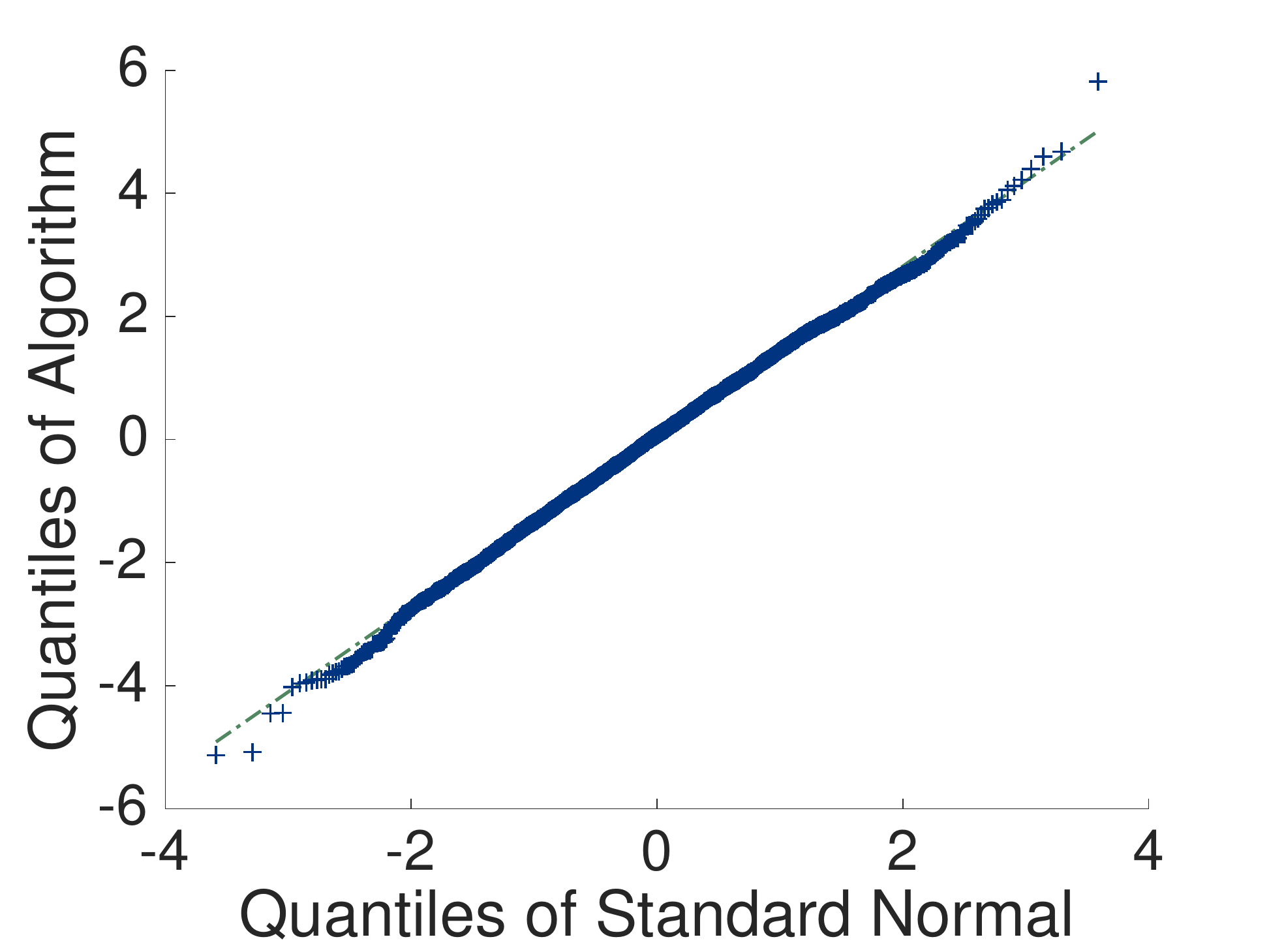}};

\end{tikzpicture}
                \caption{Q-Q plot of $(8/10)^{1/2}[T^{\upbeta/2}(\hat{\bm{\uptheta}}_{T}^{\text{cyc}}-\bm\uptheta^\ast)]^\top\bm{1}$.}
                \label{fig:lorberton2}
        \end{subfigure}\hfill
        \caption[Scatter- and Q-Q plots supporting the asymptotic normality of the normalized cyclic seesaw SPSA iterates]{In the plots above the term ``$\bm{1}$'' denotes the vector of ones, $\upbeta=\upalpha-2\upgamma=0.4$, and $T=3000$. The plots are the result of 3,000 i.i.d. replications.}
        \label{fig:flyingorber}
\end{figure}


After a direct calculation of the conditional covariance matrices of the two noise terms it follows that C5-(ii) and C6 hold with $\bm\Sigma=(0.1/4)(\bm\uptheta^\ast)^\top\bm\uptheta^\ast\bm{I}$. Condition C7--(ii) follows form the uniform integrability of the weighted (by $k^{\upbeta-\upalpha}$) noise terms.
  Condition C8 follows from the fact that $\bm{H}$ is positive definite. If we assume $\bm{H}=\bm{I}$ for simplicity, the asymptotic distribution of $k^{(\upalpha-2\upgamma)/2}(\hat{\bm{\uptheta}}_k^{\text{cyc}}-\bm\uptheta^\ast)$ according to Theorem \ref{thm:fnogg} is then $\mathcal{N}(\bm{0},(5/8)\bm{I})$. Furthermore, $(8/10)^{1/2}[T^{\upbeta/2}(\hat{\bm{\uptheta}}_{T}^{\text{cyc}}-\bm\uptheta^\ast)]^\top\bm{1}\sim \mathcal{N}(0,1)$. Figure \ref{fig:flyingorber} gives the results for $\hat{\bm{\uptheta}}_0^{\text{cyc}}=[1,1]^\top$. The results in Figure \ref{fig:lorberton1} appears to support the fact that the values of $T^{(\upalpha-2\upgamma)/2}(\hat{\bm{\uptheta}}_T^{\text{cyc}}-\bm\uptheta^\ast)$ are expected to have a multivariate normal distribution with a diagonal covariance matrix (this can be seen since the cluster of points in Figure \ref{fig:lorberton1} is roughly symmetric) for $T=3000$. Moreover, Figure \ref{fig:lorberton2} shows that $(8/10)^{1/2}[T^{\upbeta/2}(\hat{\bm{\uptheta}}_{T}^{\text{cyc}}-\bm\uptheta^\ast)]^\top\bm{1}$ appears to have a standard normal distribution as desired.

  \section{Numerical Analysis on Efficiency}
\label{sec:Numericssimples}

This section contains simulation-based estimates of  the relative efficiency ratio (\ref{eq:wobble}) for comparing the SG- and cyclic seesaw SG algorithms when $c^{\text{cyc}}$ and $c^{\text{non}}$ are defined as in Section \ref{sec:costoport} (this is done in Section \ref{eq:Indyiedie1}) and for comparing the SPSA- and cyclic seesaw SPSA algorithms under a few different definitions of $c^{\text{cyc}}$ and $c^{\text{non}}$ (this is done in Section \ref{eq:Indyiedie12}).

\subsection{Efficiency: SG versus Cyclic Seesaw SG}
\label{eq:Indyiedie1}

This section compares the SG algorithm to the cyclic seesaw SG algorithm by estimating the relative efficiency given by the ratio in (\ref{eq:wobble}). The SG algorithm treated is a distributed version of the LMS algorithm in (\ref{eq:colorlocalcouncil}) where measurements of the pair $(\bm{h}_k,z_k)$ with $z_k=\bm{h}_k^\top \bm\uptheta^\ast+\upvarepsilon_k$ are used to estimate $\bm\uptheta^\ast$. The non-cyclic implementation of LMS considered is the following:
\begin{subequations}
\begin{align}
[\hat{\bm{\uptheta}}_{k+1}^{\text{non}}]^{(1)}=[\hat{\bm{\uptheta}}_k^{\text{non}}]^{(1)}-a_k^{(1)}\left[(\bm{h}_{k+1}^\top \hat{\bm{\uptheta}}_k^{\text{non}}-z_{k+1})\bm{h}_{k+1}\right]^{(1)},\label{eq:fafafafafafufahfah0}\\
[\hat{\bm{\uptheta}}_{k+1}^{\text{non}}]^{(2)}=[\hat{\bm{\uptheta}}_k^{\text{non}}]^{(2)}-a_k^{(2)}\left[(\bm{h}_{k+1}^\top \hat{\bm{\uptheta}}_k^{\text{non}}-z_{k+1})\bm{h}_{k+1}\right]^{(2)}, \label{eq:fafafafafafufahfah}
\end{align}
\end{subequations}
where $\bm\uptheta\in \mathbb{R}^p$ for $p=12$ even and $p'=p/2=6$ is the subvector length. In the distributed implementation considered here, (\ref{eq:fafafafafafufahfah0}) and (\ref{eq:fafafafafafufahfah}) are computed at physically different locations. To highlight the distributed nature of the implementation, throughout the remainder of this section we use the multi-agent analogy from Section \ref{sec:costoport} (see p. \pageref{eq:herculedherculess}). In other words, we assume that each of the two lines in (\ref{eq:fafafafafafufahfah0},b) is computed by a different agent.

The cyclic seesaw counterpart to (\ref{eq:fafafafafafufahfah0},b), which is also assumed to be implemented in a distributed manner, is defined as follows:
\begin{subequations}
\begin{align}
\hat{\bm{\uptheta}}_k^{(I)}&=\hat{\bm{\uptheta}}_k^{\text{cyc}}-a_k^{(1)}\left[(\bm{h}_{k+1}^\top \hat{\bm{\uptheta}}_k^{\text{cyc}}-z_{k+1})\bm{h}_{k+1}\right]^{(1)},\label{eq:addalabeltomee0}\\
\hat{\bm{\uptheta}}_{k+1}^{\text{cyc}}&=\hat{\bm{\uptheta}}_k^{(I)}-a_k^{(2)}\left[(\bm{h}_{k+1}^\top \hat{\bm{\uptheta}}_k^{(I)}-z_{k+1})\bm{h}_{k+1}\right]^{(2)}.
\label{eq:addalabeltomee}
\end{align}
\end{subequations}
Once again, we use the multi-agent analogy from Section \ref{sec:costoport} (see p. \pageref{eq:herculedherculess}) to highlight the fact that the cyclic seesaw algorithms is assumed to be implemented in a distributed manner. Note that 
$(\bm{h}_{k+1},z_{k+1})$ is used to perform two subvector updates 
in  (\ref{eq:addalabeltomee0},b): once to obtain $\hat{\bm{\uptheta}}_k^{(I)}$ and once to obtain $\hat{\bm{\uptheta}}_{k+1}^{\text{cyc}}$. The pair $(\bm{h}_{k+1},z_{k+1})$ is also used to perform two subvector updates in 
 (\ref{eq:fafafafafafufahfah0},b). 

\subsubsection{Cost as a Measure of Arithmetic Computations}
\label{sec:SGarithmetic}

The first definition of cost we consider for comparing (\ref{eq:fafafafafafufahfah0},b) to  (\ref{eq:addalabeltomee0},b) via  (\ref{eq:ratio}) is the arithmetic cost of Section \ref{sec:costoport}. Here, the per-iteration costs $c^{\text{non}}$ and $c^{\text{cyc}}$ denote the number of basic arithmetic operations required to obtain $\hat{\bm{\uptheta}}_{k+1}^\text{non}$ from $\hat{\bm{\uptheta}}_{k}^\text{non}$ and to obtain $\hat{\bm{\uptheta}}_{k+1}^\text{cyc}$ from $\hat{\bm{\uptheta}}_{k}^\text{cyc}$, respectively. In order to compute these quantities it is necessary to specify the computational graphs (CGs) associated with the algorithms, these are given in Figures \ref{fig:figgystuff1} and \ref{fig:figgystuff2}. From Figure \ref{fig:figgystuff1} we see that the per-iteration cost associated with the SG algorithm is $c^{\text{non}}=4p+3$ operations ($3p+1$ operations from the first graph and $p+2$ from the second). In contrast, Figure \ref{fig:figgystuff2} implies $c^{\text{cyc}}=5p+3$ operations ($3p+1$ operations from the first graph and $2p+2$ from the second).

In our numerical examples we use $p=12$ so that $c^\text{non}=51$, $c^\text{cyc}=63$, and $c^\text{cyc}/c^\text{non}=21/17\approx 5/4$. The significance of this ratio, as was explained in the discussion following Proposition \ref{prop:omygodsthunder}, is that the cost of computing $\hat{\bm{\uptheta}}_{4i}^{\text{cyc}}$ is approximately equal to the cost of computing $\hat{\bm{\uptheta}}_{5i}^{\text{non}}$ for any strictly-positive integer $i$. Therefore, for every four iterations of the cyclic seesaw SG algorithm in (\ref{eq:addalabeltomee0},b) we may run five iterations of the SG algorithm in (\ref{eq:fafafafafafufahfah0},b) at the same computational cost. Consequently, $k_1=4i$ and $k_2=5i$ are valid values for $k_1$ and $k_2$ in (\ref{eq:ratio}) for any strictly-positive integer $i$. Figure \ref{eq:GOTITPEEPOPAH} estimates the relative efficiency ratio in (\ref{eq:ratio}) with $k_1=4i$ and $k_2=5i$. The estimates were obtained by using $\bm\uptheta^\ast=[1,\dots,1]^\top$, letting the entries of $\bm{h}_k$ be independent and uniformly distributed in $[-3,3]$, $\upepsilon_k\sim \mathcal{N}(0,0.5^2)$, and setting $a_k=a_k^{(j)}=0.1/(1+k+100)$.  The expectations in (\ref{eq:ratio}) were estimated using the average of 100 i.i.d. replications. For each replication, both algorithms were initialized at $-\bm\uptheta^\ast$.

Figure \ref{eq:GOTITPEEPOPAH} shows that the cyclic implementation was asymptotically less efficient (by approximately 15\%).
 While the cyclic algorithm was more efficient for small values of $i$ (small values of $i$ correspond to earlier iterations of the algorithms), the relative efficiency for small $i$ was found to be highly sensitive to the initialization of the algorithms while the asymptotic relative efficiency
 appeared to be more robust to
  initialization; 
this pattern was observed based on 10 other initializations of the algorithms.

\begin{figure}[p]
\centering
  \begin{subfigure}[b]{1\textwidth}
 \centering
\begin{tikzpicture}

\node[draw,circle,minimum size=1.2cm,inner sep=0pt, thick] (x) at (0,0) {$\hat{\bm{\uptheta}}_k^\text{non}$};
\node[draw,circle,minimum size=1.2cm,inner sep=0pt, thick] (y) at (0,-1.5) {$\bm{h}_{k+1}$};
\node[draw,circle,minimum size=1.2cm,inner sep=0pt, thick] (z) at (6.55,1.05) {$z_k$};
\node[draw,circle,minimum size=1.2cm,inner sep=0pt, thick] (a) at (11.4,1.05) {$a_k^{(1)}$};
\node[draw,rectangle, rounded corners, minimum height=2.3em, minimum width=90pt, ultra thick] (newinput) at (2.6,1.05) {$(\hat{\bm{\uptheta}}_k^\text{non})^\top\bm{h}_{k+1}$};
\node [above of=newinput] {Share this Value};

\node[draw,rectangle, rounded corners, minimum height=2.3em, minimum width=74pt, thick] (xy) at (2.6,-0.75) {$(\hat{\bm{\uptheta}}_k^\text{non})^\top\bm{h}_{k+1}$};

\node[draw,rectangle, rounded corners, minimum height=2.3em, minimum width=110pt, thick] (xyz) at (6.55,-0.75) {$(\hat{\bm{\uptheta}}_k^\text{non})^\top\bm{h}_{k+1}-z_{k+1}$};

\node[draw,rectangle, rounded corners, minimum height=2.3em, minimum width=128pt, thick] (xyza) at (11.4,-0.75) {$a_k^{(1)}(\hat{\bm{\uptheta}}_k^\text{non})^\top\bm{h}_{k+1}-z_{k+1}$};

\node[draw,rectangle, rounded corners, minimum height=2.8em, minimum width=168pt, thick] (xyzah) at (11.4,-2.55) {$a_k^{(1)}\left[(\hat{\bm{\uptheta}}_k^\text{non})^\top\bm{h}_{k+1}-z_{k+1}\right]\bm{h}_{k+1}^{(1)}$};

\node[draw,rectangle, rounded corners, minimum height=2.3em, minimum width=55pt, ultra thick] (final) at (11.4,-4.35) {$[\hat{\bm{\uptheta}}_{k+1}^\text{non}]^{(1)}$};

\draw[->, thick] (x)--(xy) ;
\draw[->, thick] (xy)--(newinput) ;
\draw[->,thick] (y)--(xy);
\draw[->,thick] (z)--(xyz);
\draw[->,thick] (xy)--(xyz);
\draw[->,thick] (a)--(xyza);
\draw[->,thick] (xyz)--(xyza);
\draw[->,thick] (xyza)--(xyzah);
\draw[->,thick] (y)--(0,-2.55)--(xyzah);
\draw[->,thick] (xyzah)--(final);
\draw[->,thick] (x)--(-.8,0)--(-.8,-4.35)--(final);

\end{tikzpicture}
 \end{subfigure}
 \caption*{The CG for computing (\ref{eq:fafafafafafufahfah0}).}


 \vspace{.5in}
 
 
    \begin{subfigure}[b]{1\textwidth}
 \centering
\begin{tikzpicture}

\node[draw,circle,minimum size=1.2cm,inner sep=0pt, thick] (x) at (0,0) {$\hat{\bm{\uptheta}}_k^{\text{non}}$};
\node[draw,circle,minimum size=1.2cm,inner sep=0pt, thick] (y) at (0,-1.5) {$\bm{h}_{k+1}$};
\node[draw,circle,minimum size=1.2cm,inner sep=0pt, thick] (z) at (6.55,1.05) {$z_k$};
\node[draw,circle,minimum size=1.2cm,inner sep=0pt, thick] (a) at (11.4,1.05) {$a_k^{(2)}$};
\node[draw,rectangle, rounded corners, minimum height=2.3em, minimum width=90pt, thick] (newinput) at (2.6,1.05) {$(\hat{\bm{\uptheta}}_k^\text{non})^\top\bm{h}_{k+1}$};
\node [above of=newinput] {Borrow this Value};

\node[draw,rectangle, rounded corners, minimum height=2.3em, minimum width=74pt, thick] (xy) at (2.6,-0.75) {$(\hat{\bm{\uptheta}}_k^{\text{non}})^\top\bm{h}_{k+1}$};

\node[draw,rectangle, rounded corners, minimum height=2.3em, minimum width=110pt, thick] (xyz) at (6.55,-0.75) {$(\hat{\bm{\uptheta}}_k^{\text{non}})^\top\bm{h}_{k+1}-z_{k+1}$};

\node[draw,rectangle, rounded corners, minimum height=2.3em, minimum width=128pt, thick] (xyza) at (11.4,-0.75) {$a_k^{(2)}(\hat{\bm{\uptheta}}_k^{\text{non}})^\top\bm{h}_{k+1}-z_{k+1}$};

\node[draw,rectangle, rounded corners, minimum height=2.8em, minimum width=168pt, thick] (xyzah) at (11.4,-2.55) {$a_k^{(2)}\left[(\hat{\bm{\uptheta}}_k^{\text{non}})^\top\bm{h}_{k+1}-z_{k+1}\right]\bm{h}_{k+1}^{(2)}$};

\node[draw,rectangle, rounded corners, minimum height=2.3em, minimum width=55pt, ultra thick] (final) at (11.4,-4.35) {$[\hat{\bm{\uptheta}}_{k+1}^\text{non}]^{(2)}$};

\draw[->, thick] (x)--(xy) ;
\draw[->, thick] (newinput)--(xy) ;
\draw[->,thick] (y)--(xy);
\draw[->,thick] (z)--(xyz);
\draw[->,thick] (xy)--(xyz);
\draw[->,thick] (a)--(xyza);
\draw[->,thick] (xyz)--(xyza);
\draw[->,thick] (xyza)--(xyzah);
\draw[->,thick] (y)--(0,-2.55)--(xyzah);
\draw[->,thick] (xyzah)--(final);
\draw[->,thick] (x)--(-.8,0)--(-.8,-4.35)--(final);

\end{tikzpicture}
 \end{subfigure}%
 \caption*{The CG for computing (\ref{eq:fafafafafafufahfah}).}

 
  \vspace{.03in}
 
    
\caption[CGs for computing the distributed SG update in (\ref{eq:fafafafafafufahfah0},b)]{CGs for computing the distributed SG update in (\ref{eq:fafafafafafufahfah0},b).  In the distributed setting considered (i.e., in the multi-agent setting) the first graph, respectively the second graph, corresponds to computations performed by Agent 1, respectively Agent 2. Note that we are assuming both agents have access to $\hat{\bm{\uptheta}}_k^\text{non}$, $\bm{h}_{k+1}$, and $z_k$. In general these CGs are not unique.}
\label{fig:figgystuff1}
\end{figure}


\begin{figure}[p]
\centering
  \begin{subfigure}[b]{1\textwidth}
 \centering
\begin{tikzpicture}

\node[draw,circle,minimum size=1.2cm,inner sep=0pt, thick] (x) at (0,0) {$\hat{\bm{\uptheta}}_k^\text{cyc}$};
\node[draw,circle,minimum size=1.2cm,inner sep=0pt, thick] (y) at (0,-1.5) {$\bm{h}_{k+1}$};
\node[draw,circle,minimum size=1.2cm,inner sep=0pt, thick] (z) at (6.55,1.05) {$z_k$};
\node[draw,circle,minimum size=1.2cm,inner sep=0pt, thick] (a) at (11.4,1.05) {$a_k^{(1)}$};
\node[draw,rectangle, rounded corners, minimum height=2.3em, minimum width=90pt, ultra thick] (newinput) at (2.6,1.05) {$[\bm{h}_{k+1}^{(2)}]^\top[\hat{\bm{\uptheta}}_k^\text{cyc}]^{(2)}$};
\node [above of=newinput] {Share this Value};

\node[draw,rectangle, rounded corners, minimum height=2.3em, minimum width=74pt, thick] (xy) at (2.6,-0.75) {$(\hat{\bm{\uptheta}}_k^\text{cyc})^\top\bm{h}_{k+1}$};

\node[draw,rectangle, rounded corners, minimum height=2.3em, minimum width=110pt, thick] (xyz) at (6.55,-0.75) {$(\hat{\bm{\uptheta}}_k^\text{cyc})^\top\bm{h}_{k+1}-z_{k+1}$};

\node[draw,rectangle, rounded corners, minimum height=2.3em, minimum width=128pt, thick] (xyza) at (11.4,-0.75) {$a_k^{(1)}(\hat{\bm{\uptheta}}_k^\text{cyc})^\top\bm{h}_{k+1}-z_{k+1}$};

\node[draw,rectangle, rounded corners, minimum height=2.8em, minimum width=168pt, thick] (xyzah) at (11.4,-2.55) {$a_k^{(1)}\left[(\hat{\bm{\uptheta}}_k^\text{cyc})^\top\bm{h}_{k+1}-z_{k+1}\right]\bm{h}_{k+1}^{(1)}$};

\node[draw,rectangle, rounded corners, minimum height=2.3em, minimum width=35pt, ultra thick] (final) at (11.4,-4.35) {$\hat{\bm{\uptheta}}_{k}^{(I)}$};

\draw[->, thick] (x)--(xy) ;
\draw[->, thick] (xy)--(newinput) ;
\draw[->,thick] (y)--(xy);
\draw[->,thick] (z)--(xyz);
\draw[->,thick] (xy)--(xyz);
\draw[->,thick] (a)--(xyza);
\draw[->,thick] (xyz)--(xyza);
\draw[->,thick] (xyza)--(xyzah);
\draw[->,thick] (y)--(0,-2.55)--(xyzah);
\draw[->,thick] (xyzah)--(final);
\draw[->,thick] (x)--(-.8,0)--(-.8,-4.35)--(final);

\end{tikzpicture}
 \end{subfigure}
 \caption*{The CG for computing (\ref{eq:addalabeltomee0}).}


 \vspace{.5in}
 
 
    \begin{subfigure}[b]{1\textwidth}
 \centering
\begin{tikzpicture}

\node[draw,circle,minimum size=1.2cm,inner sep=0pt, thick] (x) at (0,0) {$\hat{\bm{\uptheta}}_k^{(I)}$};
\node[draw,circle,minimum size=1.2cm,inner sep=0pt, thick] (y) at (0,-1.5) {$\bm{h}_{k+1}$};
\node[draw,circle,minimum size=1.2cm,inner sep=0pt, thick] (z) at (6.55,1.05) {$z_k$};
\node[draw,circle,minimum size=1.2cm,inner sep=0pt, thick] (a) at (11.4,1.05) {$a_k^{(2)}$};
\node[draw,rectangle, rounded corners, minimum height=2.3em, minimum width=90pt, thick] (newinput) at (2.6,1.05) {$[\bm{h}_{k+1}^{(2)}]^\top[\hat{\bm{\uptheta}}_k^\text{cyc}]^{(2)}$};
\node [above of=newinput] {Borrow this Value};

\node[draw,rectangle, rounded corners, minimum height=2.3em, minimum width=74pt, thick] (xy) at (2.6,-0.75) {$(\hat{\bm{\uptheta}}_k^{(I)})^\top\bm{h}_{k+1}$};

\node[draw,rectangle, rounded corners, minimum height=2.3em, minimum width=110pt, thick] (xyz) at (6.55,-0.75) {$(\hat{\bm{\uptheta}}_k^{(I)})^\top\bm{h}_{k+1}-z_{k+1}$};

\node[draw,rectangle, rounded corners, minimum height=2.3em, minimum width=128pt, thick] (xyza) at (11.4,-0.75) {$a_k^{(2)}(\hat{\bm{\uptheta}}_k^{(I)})^\top\bm{h}_{k+1}-z_{k+1}$};

\node[draw,rectangle, rounded corners, minimum height=2.8em, minimum width=168pt, thick] (xyzah) at (11.4,-2.55) {$a_k^{(2)}\left[(\hat{\bm{\uptheta}}_k^{(I)})^\top\bm{h}_{k+1}-z_{k+1}\right]\bm{h}_{k+1}^{(2)}$};

\node[draw,rectangle, rounded corners, minimum height=2.3em, minimum width=35pt, ultra thick] (final) at (11.4,-4.35) {$\hat{\bm{\uptheta}}_{k+1}^{\text{cyc}}$};

\draw[->, thick] (x)--(xy) ;
\draw[->, thick] (newinput)--(xy) ;
\draw[->,thick] (y)--(xy);
\draw[->,thick] (z)--(xyz);
\draw[->,thick] (xy)--(xyz);
\draw[->,thick] (a)--(xyza);
\draw[->,thick] (xyz)--(xyza);
\draw[->,thick] (xyza)--(xyzah);
\draw[->,thick] (y)--(0,-2.55)--(xyzah);
\draw[->,thick] (xyzah)--(final);
\draw[->,thick] (x)--(-.8,0)--(-.8,-4.35)--(final);

\end{tikzpicture}
 \end{subfigure}%
 \caption*{The CG for computing (\ref{eq:addalabeltomee}).}

 
  \vspace{.03in}
 
    
\caption[CGs for computing the distributed cyclic SG update in (\ref{eq:addalabeltomee0},b)]{CGs for computing the distributed cyclic seesaw SG update in (\ref{eq:addalabeltomee0},b). In the multi-agent setting the first graph, respectively the second graph, corresponds to computations performed by Agent 1, respectively Agent 2. Note that we are assuming both agents have access to $\bm{h}_{k+1}$ and $z_k$. We are also assuming Agent 1 has access to $\hat{\bm{\uptheta}}_k^\text{cyc}$ and that Agent 2 has access to $\hat{\bm{\uptheta}}_k^{(I)}$. In general these CGs are not unique. }
\label{fig:figgystuff2}
\end{figure}

\begin{figure}[!t]
\centering
\begin{tikzpicture}
  \definecolor{mynewcoloring}{RGB}{102, 128, 153}
\node[inner sep=0pt] (probe) at (0,0)
    {\includegraphics[scale=0.6]{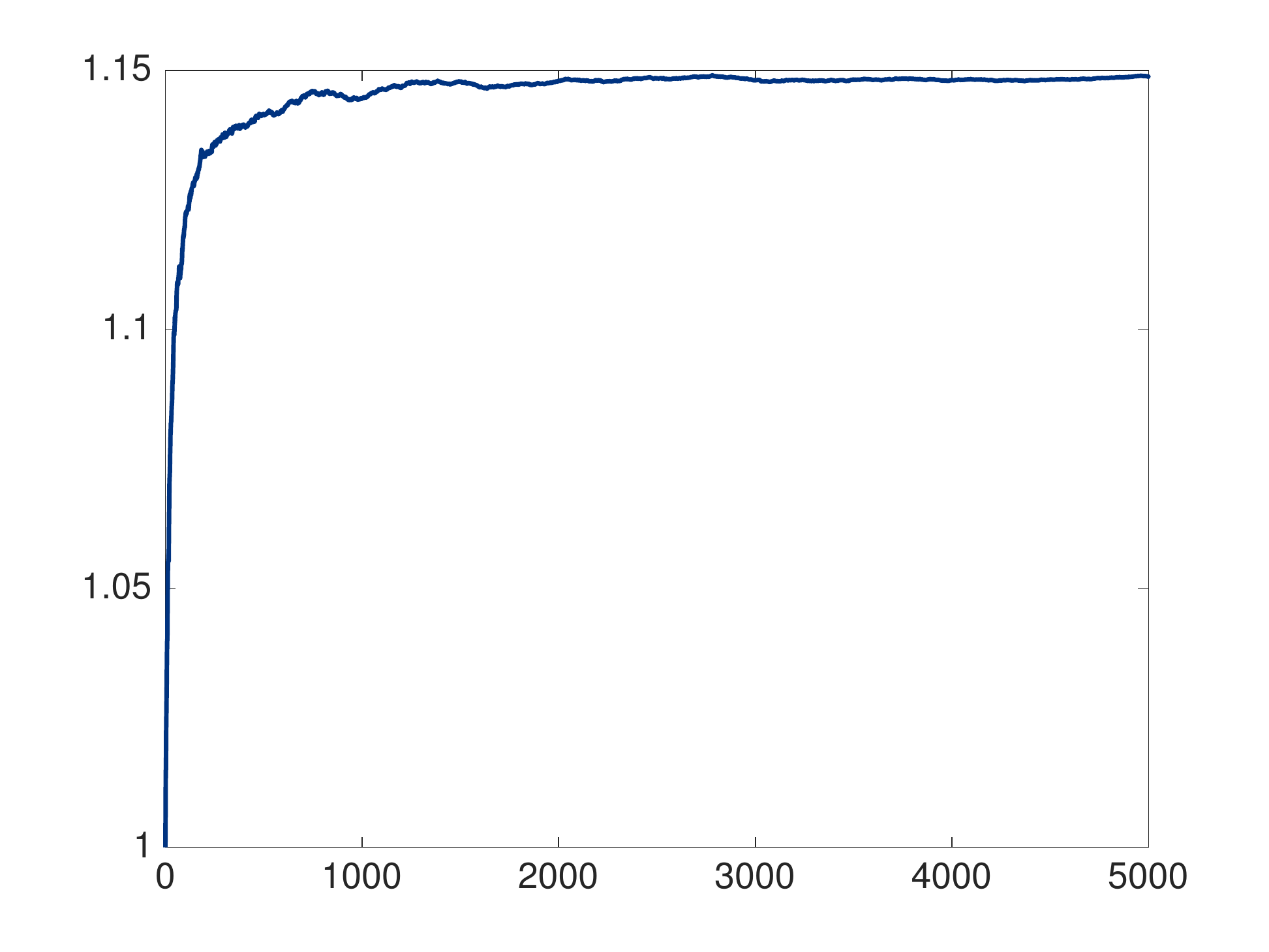}};  
        \node[left= -0.6cm of probe]{$\displaystyle{\frac{\overline{\|\hat{\bm\uptheta}_{4i}^{\text{cyc}}-\bm\uptheta^\ast\|^2}}{\overline{\|\hat{\bm\uptheta}_{5i}^{\text{non}}-\bm\uptheta^\ast\|^2}}}$};
      \node at (0.3,-4.3) {$i$ (a strictly-positive integer)};
\end{tikzpicture}
\caption[Relative efficiency between SG and cyclic seesaw SG when cost is a measure of the number of arithmetic computations]{An estimate of the relative efficiency in (\ref{eq:ratio}) for comparing the cyclic seesaw SG algorithm to the regular (i.e., non-cyclic) SG algorithm when cost is a measure of the number of arithmetic computations.  }
\label{eq:GOTITPEEPOPAH}
\end{figure}

\subsubsection{Cost as the Number of Subvector Updates}

The second type of cost we consider is the number of subvector updates performed. Under this definition of cost the SG algorithm and its cyclic counterpart have exactly the same cost since two subvector updates are performed every iteration. In other words, when cost is defined as the per-iteration number of subvector updates, it follows that $c^\text{non}=c^\text{cyc}$. Therefore, $k_1=i=k_2$ are valid values for $k_1$ and $k_2$ in (\ref{eq:ratio}) for any strictly-positive integer $i$. Figure \ref{fig:2013eura100k} estimates (\ref{eq:ratio}) for multiple values of $i$ using the same settings from Section \ref{sec:SGarithmetic}. The expectations in (\ref{eq:ratio}) were estimated using the average of 100 i.i.d. replications. For each replication, both the cyclic and non-cyclic algorithms were initialized at $-\bm\uptheta^\ast$.
 Asymptotically, 
the cyclic algorithm was 5\% less efficient. Based on 10 other initializations of the algorithms, it was observed that the relative efficiency was more sensitive to initialization for early iterations than for later iterations.
 \begin{figure}[!t]
\centering
\begin{tikzpicture}
  \definecolor{mynewcoloring}{RGB}{102, 128, 153}
\node[inner sep=0pt] (probe) at (0,0)
    {\includegraphics[scale=0.6]{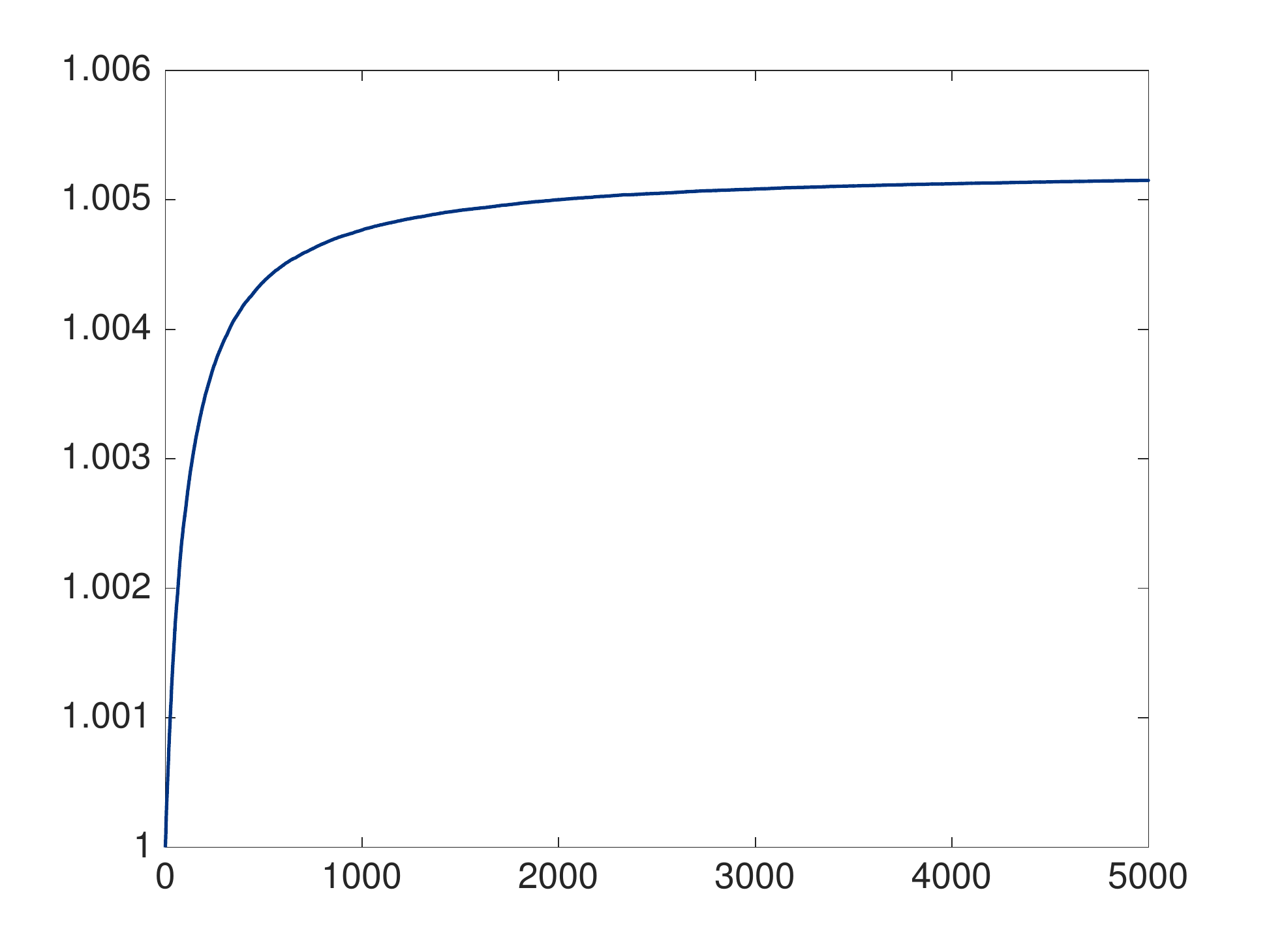}};  
        \node[left= -0.3cm of probe]{$\displaystyle{\frac{\overline{\|\hat{\bm\uptheta}_{i}^{\text{cyc}}-\bm\uptheta^\ast\|^2}}{\overline{\|\hat{\bm\uptheta}_{i}^{\text{non}}-\bm\uptheta^\ast\|^2}}}$};
      \node at (0.3,-4.3) {$i$ (a strictly-positive integer)};
\end{tikzpicture}
\caption[Relative efficiency between SG and cyclic seesaw SG when cost is a measure of the number of subvector updates]{An estimate of the relative efficiency in (\ref{eq:ratio}) for comparing the cyclic seesaw SG algorithm to the regular (i.e., non-cyclic) SG algorithm when cost is a measure of the number of subvector updates. 
}
\label{fig:2013eura100k}
\end{figure}

\subsubsection[Cost as Time allowing for Agent Unavailability]{Cost as Time allowing for Agent Unavailability} 
\label{secsec:originalunavailability}

The efficiency analysis in this section once again relies on drawing parallels between the distributed implementations of (\ref{eq:fafafafafafufahfah0},b) and (\ref{eq:addalabeltomee0},b) and the multi-agent setting. In the multi-agent setting, the first and second subvectors of $\bm\uptheta$ correspond, respectively, to parameters that Agent No. 1 and Agent No. 2 can update. Implementing the SG algorithm in (\ref{eq:fafafafafafufahfah0},b) in a distributed manner would require both agents to be available at the same time. However, in practice it is not uncommon for agents to be tasked with secondary objectives that may make them temporarily unavailable to update their parameter vector.
A slight modification to the cyclic seesaw SG algorithm in (\ref{eq:addalabeltomee0},b) would be to allow a single agent, while available, to continuously update its parameters until the moment when the other agent becomes available. The resulting algorithm would be a special case of Algorithm \ref{findme}. This section constructs a simple model of agent availability in order to estimate  (\ref{eq:ratio}) when cost is a measure of time.


To model agent availability we begin by defining a unit of time as the time it takes to perform a subvector update, this time is assumed to be equal for all subvectors independently of whether we are implementing the SG algorithm or its modified cyclic version and independently of the magnitude of the update (note that for the regular SG algorithm the time needed to perform a single iteration is one time unit since the two subvector updates are performed simultaneously).   We will assume that any agent can become unavailable with probability $q$ after completing its latest subvector update (for simplicity we assume an agent cannot become unavailable while in the middle of performing an update). Furthermore, at the next time unit an agent that was previously unavailable will remain unavailable with probability $q$. Because implementing the SG algorithm in (\ref{eq:fafafafafafufahfah0},b) requires both agents to be available simultaneously, no updates can be performed using the non-cyclic algorithm if at least one of the agents is unavailable. In contrast, the proposed modification to the cyclic algorithm (where an available agent continuously updates its parameters) could still be implemented provided at least one of the agents is available. 

Given our definition of a time unit and our model of agent unavailability, the relative efficiency ratio in (\ref{eq:ratio}) is estimated as the ratio:
\begin{align}
\label{eq:datamissinon}
\frac{E\|\hat{\bm\uptheta}^{\text{cyc}}(t)-\bm\uptheta^\ast\|^2}{E\|\hat{\bm\uptheta}^{\text{non}}(t)-\bm\uptheta^\ast\|^2},
\end{align}
where $t$ denotes time and where $\hat{\bm{\uptheta}}^{\text{cyc}}(t)$ and $\hat{\bm{\uptheta}}^{\text{non}}(t)$ denote algorithm iterate values at time $t$ (we deliberately avoid writing $t$ as a subscript of $\hat{\bm{\uptheta}}$ due to the fact that subscripts in (\ref{eq:fafafafafafufahfah0},b) and (\ref{eq:addalabeltomee0},b) are reserved for the iteration number, which is not the same as $t$). In order to select a value of $q$ for our numerical examples we note the following. At any given time the probability of updating the parameter vector using the SG algorithm is $P_1\equiv (1-q)^2$ whereas the probability of updating the parameter vector using the modified cyclic seesaw algorithm is $P_2\equiv 2(1-q)-(1-q)^2$. For $q\in [0,1]$, $P_2\geq P_1$ and the maximum difference between $P_1$ and $P_2$ occurs at $q=1/2$.  Figure \ref{fig:failureagentsgraphy} estimates (\ref{eq:datamissinon}) using $q=1/2$ 
 with $a_k=a_k^{(j)}=0.1/(1+100+k)^{0.501}$. The expectations in (\ref{eq:datamissinon}) were estimated using the average of 100 i.i.d. replications. For each replication, both algorithms were initialized at $-\bm\uptheta^\ast$. We use the same distributions for  $\bm{h}_k$ and $\upvarepsilon_k$ from Section \ref{sec:SGarithmetic}. It was assumed that a new input-output pair $(\bm{h}_k,z_k)$ became available with the passing of every time unit. Agents were assumed to use the latest input-output pair to perform their updates.  The results of Figure \ref{fig:failureagentsgraphy} indicate that the cyclic variant was approximately 10\% more efficient (asymptotically), likely due to the fact that using a cyclic implementation implies the parameter vector is updated more frequently than when using a non-cyclic implementation.
%
Based on 10 other initializations of the algorithms, it was observed that the relative efficiency was more sensitive to initialization for small values of $t$ than for larger values.
\begin{figure}[!t]
\centering
\begin{tikzpicture}
  \definecolor{mynewcoloring}{RGB}{102, 128, 153}
\node[inner sep=0pt] (probe) at (0,0)
    {\includegraphics[scale=0.6]{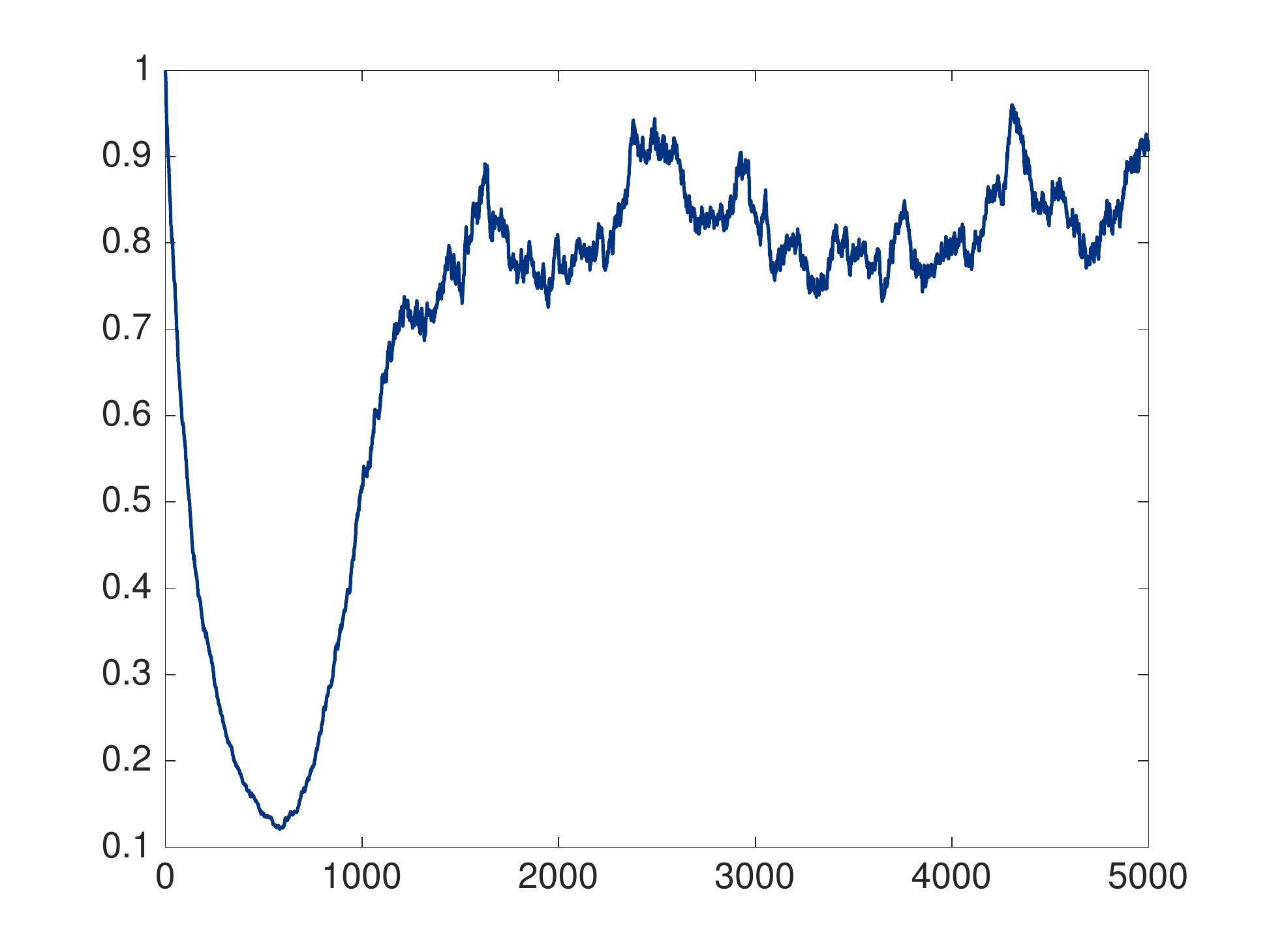}};  
        \node[left= -0.6cm of probe]{$\displaystyle{\frac{\overline{\|\hat{\bm\uptheta}^{\text{cyc}}(t)-\bm\uptheta^\ast\|^2}}{\overline{\|\hat{\bm\uptheta}^{\text{non}}(t)-\bm\uptheta^\ast\|^2}}}$};
      \node at (0.3,-4.3) {$t$};
\end{tikzpicture}
\caption[An estimate of the relative efficiency in (\ref{eq:datamissinon}) for comparing the SG algorithm to a generalized cyclic seesaw SG algorithm where agents may be unavailable to perform updates]{An estimate of the relative efficiency in (\ref{eq:datamissinon}) for comparing the SG to a generalized cyclic seesaw SG algorithm where agents may be unavailable to perform updates with probability $q=1/2$. }
\label{fig:failureagentsgraphy}
\end{figure}

\subsection{Efficiency: SPSA versus Cyclic Seesaw SPSA}
\label{eq:Indyiedie12}

This section compares the SPSA algorithm to the cyclic seesaw SPSA algorithm by estimating the relative efficiency in (\ref{eq:ratio}). The SPSA algorithm treated is the SPSA algorithm from Section \ref{sec:clubsodeashrimp} where the objective is to minimize the skewed quartic loss function with $p=10$.
  We assume $Q(\bm\uptheta,\bm{V})=L(\bm\uptheta)+\upvarepsilon$ where $\upvarepsilon\sim\mathcal{N}(0,0.1^2)$. For the cyclic seesaw implementation we let $p'=5$ 
  and use noisy gradient estimates of the form in (\ref{eq:howeardspos1}) and (\ref{eq:howeardspos2}), SPSA-based noisy gradient estimates where only the subvector to update is perturbed.

\subsubsection{Cost as the Number of Noisy Loss Measurements}
\label{secsec:whennoticreep}

The first definition of cost we consider is the number of noisy loss function measurements required per-iteration. Under this definition $c^\text{non}=2$ for the the regular SPSA algorithm whereas $c^\text{cyc}=4$ for the cyclic seesaw SPSA algorithm. Therefore, in order to conduct a fair comparison between  SPSA and cyclic SPSA we would need to run twice as many iterations for the regular SPSA algorithm.  Therefore, $k_1=i$ and $k_2=2i$ are valid values for $k_1$ and $k_2$ in (\ref{eq:ratio}) for any strictly-positive integer $i$. As such, Figure \ref{fig:sassyconcREG} estimates the relative efficiency in (\ref{eq:ratio}) with $k_1=i$ and $k_2=2i$ for different values of $i$. The expectations in (\ref{eq:ratio}) were estimated using the average of 500 i.i.d. replications. For each replication, both algorithms were initialized at $-[1,\dots,1]^\top$. As suggested by Table \ref{table:omgmyfirsttable}, we use the gain sequences $a_k=a_k^{(j)}=1/(1+k+100)^{0.602}$ and set $c_k=c_k^{(j)}=0.1/(1+k)^{0.101}$. We let the non-zero entries of the $\bm\Delta$ be independent and take the value $\pm1$ with equal probability. We can see that the relative efficiency favors the cyclic algorithm for early iterations (although this behavior was, once again, sensitive to the initialization of the algorithms) while the non-cyclic algorithm becomes more efficient (by about 8\%) in the long run. Based on 10 different initializations of the other, it was observed that the relative efficiency was more sensitive to initialization for small values of $t$ than for larger values. 

\subsubsection{Cost as the Number of Subvector Updates}

\begin{figure}[!t]
\centering
\begin{tikzpicture}
  \definecolor{mynewcoloring}{RGB}{102, 128, 153}
\node[inner sep=0pt] (probe) at (0,0)
    {\includegraphics[scale=0.6]{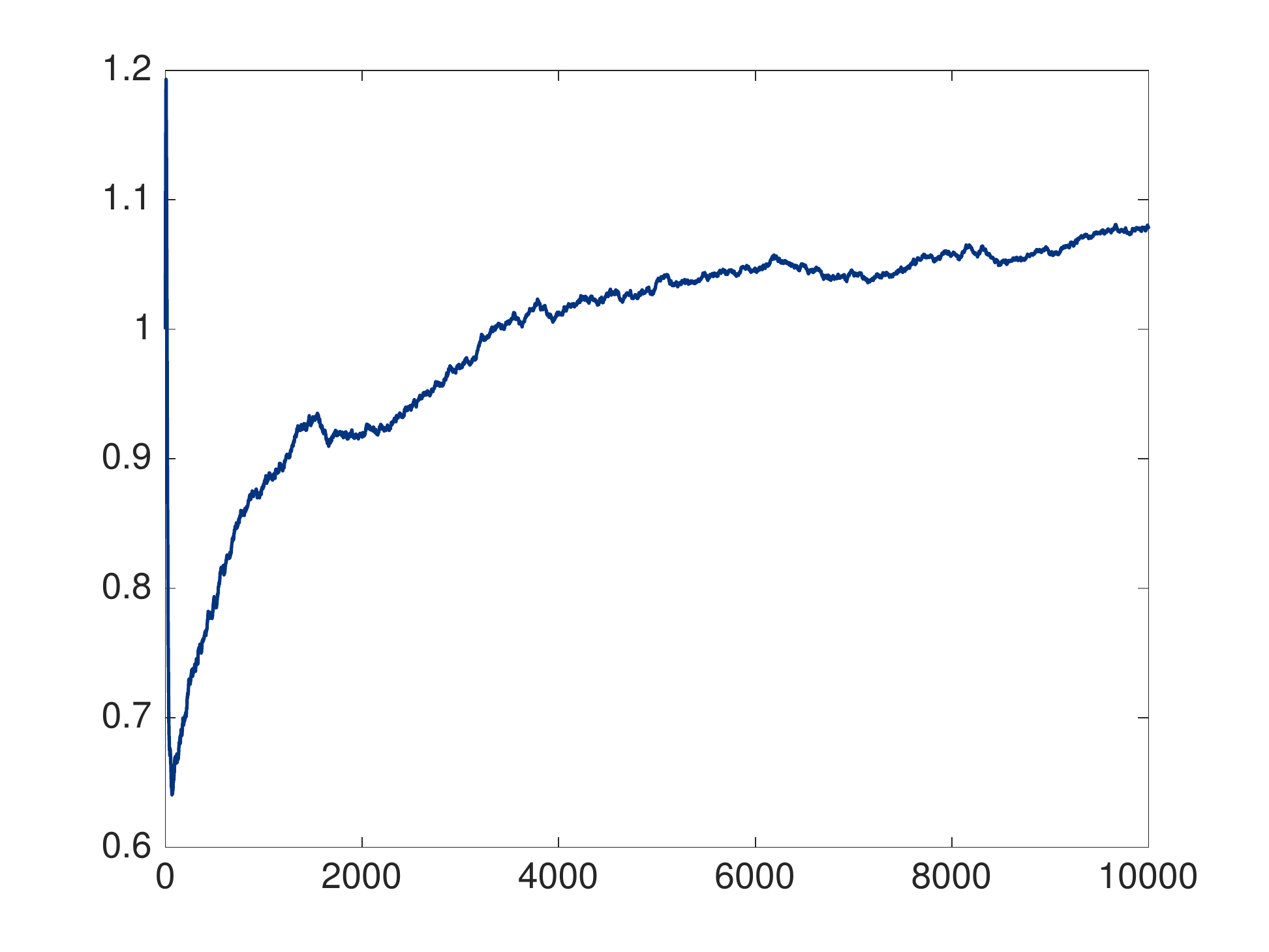}};  
        \node[left= -0.6cm of probe]{$\displaystyle{\frac{\overline{\|\hat{\bm\uptheta}^{\text{cyc}}_i-\bm\uptheta^\ast\|^2}}{\overline{\|\hat{\bm\uptheta}^{\text{non}}_{2i}-\bm\uptheta^\ast\|^2}}}$};
      \node at (0.3,-4.3) {$i$ (a strictly-positive integer)};
\end{tikzpicture}
\caption[Relative efficiency between SPSA and cyclic seesaw SPSA when cost is a measure of the number of noisy loss measurements]{An estimate of the relative efficiency in (\ref{eq:ratio}) for comparing the cyclic seesaw SPSA algorithm to the regular (i.e., non-cyclic) SPSA algorithm when cost is a measure of the number of noisy loss function evaluations.  }
\label{fig:sassyconcREG}
\end{figure} 

\begin{figure}[!t]
\centering
\begin{tikzpicture}
  \definecolor{mynewcoloring}{RGB}{102, 128, 153}
\node[inner sep=0pt] (probe) at (0,0)
    {\includegraphics[scale=0.6]{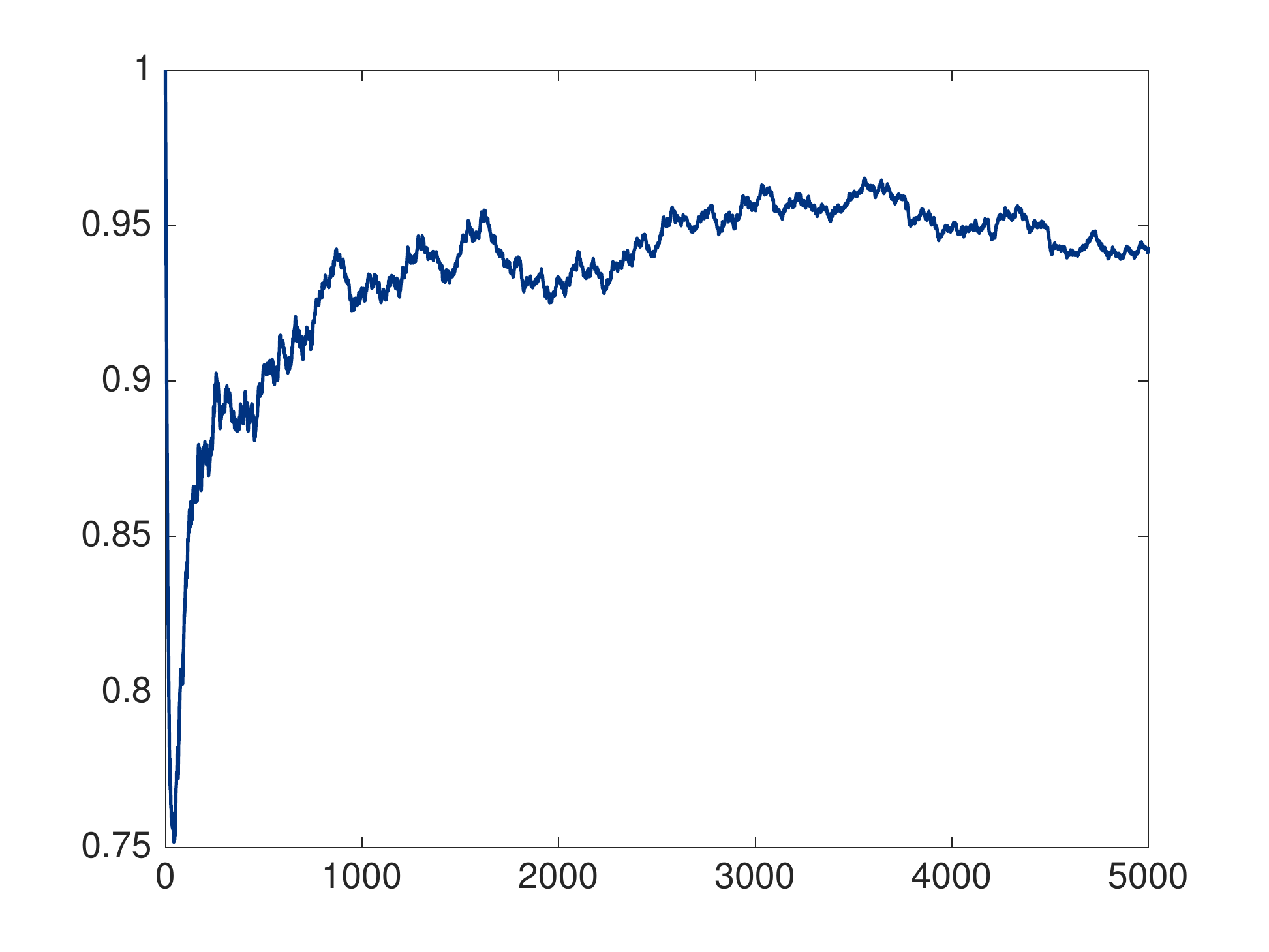}};  
        \node[left= -0.6cm of probe]{$\displaystyle{\frac{\overline{\|\hat{\bm\uptheta}^{\text{cyc}}_i-\bm\uptheta^\ast\|^2}}{\overline{\|\hat{\bm\uptheta}^{\text{non}}_{i}-\bm\uptheta^\ast\|^2}}}$};
      \node at (0.3,-4.3) {$i$ (a strictly-positive integer)};
\end{tikzpicture}
\caption[Relative efficiency between SPSA and cyclic seesaw SPSA when cost is a measure of the number of subvector updates]{An estimate of the relative efficiency in (\ref{eq:ratio}) for comparing the cyclic seesaw SPSA algorithm to the regular (i.e., non-cyclic) SPSA algorithm when cost is a measure of the number of subvector updates. }
\label{fig:sassyconc}
\end{figure} 

This section estimates the ratio in (\ref{eq:ratio}) by viewing cost as the number of subvector updates per-iteration, which implies $c^\text{non}=c^\text{cyc}=2$. Therefore, $k_1=i=k_2$ are valid values for $k_1$ and $k_2$ in (\ref{eq:ratio}) for any strictly-positive integer $i$. Figure \ref{fig:sassyconc} presents the results with $k_1=i=k_2$ using the settings from Section \ref{secsec:whennoticreep}. The expectations in (\ref{eq:ratio}) were estimated using the average of 500 i.i.d. replications. For each replication, both the cyclic and non-cyclic algorithms were initialized at $-[1,\dots,1]^\top$.  It can be seen that cyclic algorithm slightly outperforms its non-cyclic counterpart (by about 6\%) when the number of iterations is large. Based on 10 other initializations of the algorithms, the relative efficiency for small values of $i$ appeared to be more sensitive to initialization  than for large values of $i$.

\subsubsection[Cost as Time allowing for Agent Unavailability]{Cost as Time allowing for Agent Unavailability}

Similarly to Section \ref{secsec:originalunavailability}, this section estimates relative efficiency in a multi-agent setting where agents can be unavailable with probability $q=1/2$. The difference between the experiment in this section and the experiment from Section \ref{secsec:originalunavailability} is that SPSA-based gradient estimates, instead of SG-based estimates, are used to perform the subvector updates. For this reason we refer the reader to Section \ref{secsec:originalunavailability} for a more detailed description of the model for agent unavailability. Figure \ref{fig:eleveatedday} presents the estimate of (\ref{eq:datamissinon}) as a function of $t$, denoting time, using the settings from Section \ref{secsec:whennoticreep}. The expectations in (\ref{eq:datamissinon}) were estimated using the average of 500 i.i.d. replications. For each replication, both algorithms were initialized at the vector $-[1,\dots,1]^\top$. 
\begin{figure}[!t]
\centering
\begin{tikzpicture}
  \definecolor{mynewcoloring}{RGB}{102, 128, 153}
\node[inner sep=0pt] (probe) at (0,0)
    {\includegraphics[scale=0.6]{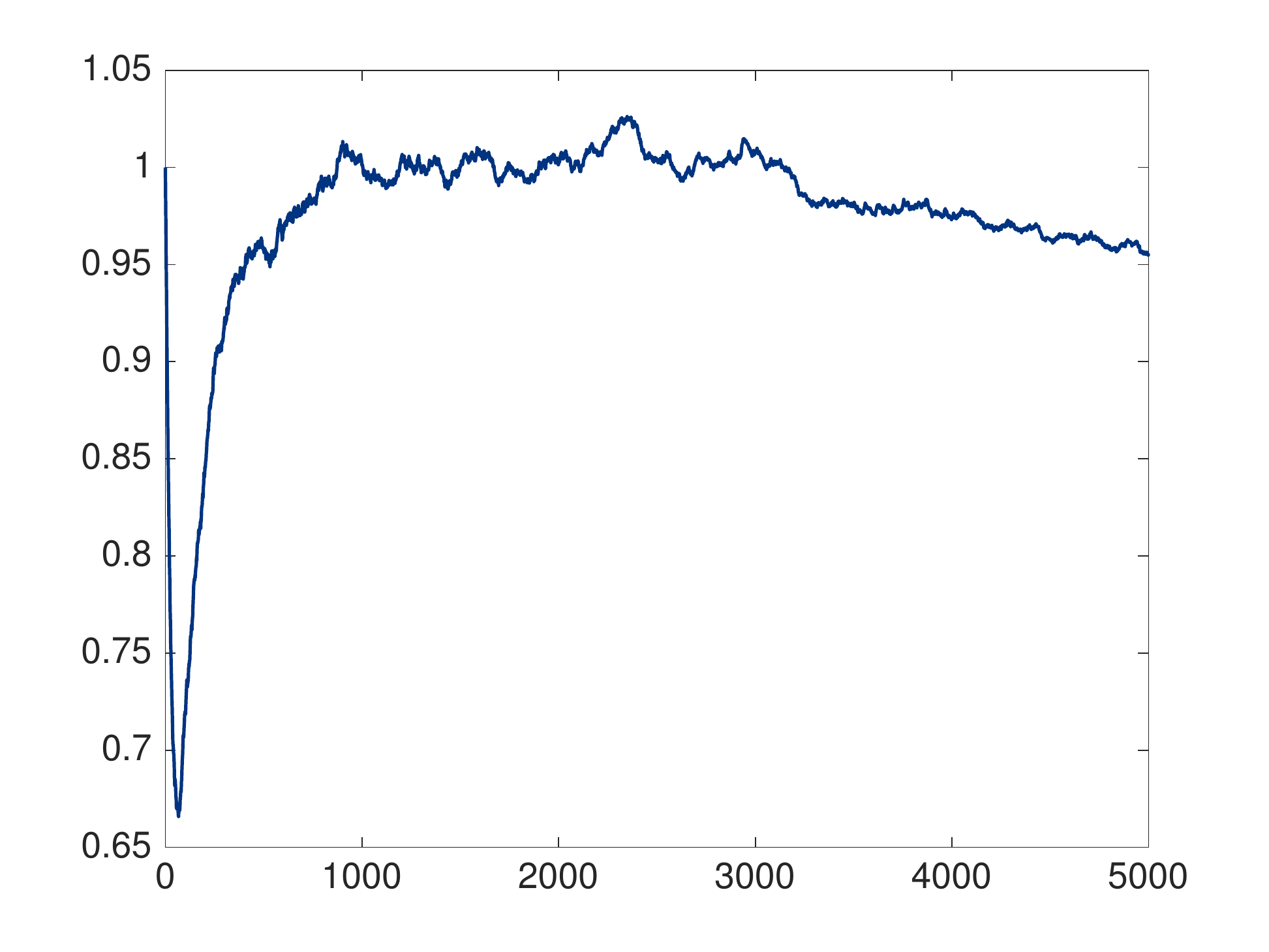}};  
        \node[left= -0.6cm of probe]{$\displaystyle{\frac{\overline{\|\hat{\bm\uptheta}^{\text{cyc}}(t)-\bm\uptheta^\ast\|^2}}{\overline{\|\hat{\bm\uptheta}^{\text{non}}(t)-\bm\uptheta^\ast\|^2}}}$};
      \node at (0.3,-4.3) {$t$};
\end{tikzpicture}
\caption[An estimate of the relative efficiency in (\ref{eq:datamissinon}) for comparing the SPSA algorithm to a generalized cyclic seesaw SPSA algorithm where agents may be unavailable to perform updates]{An estimate of the relative efficiency in (\ref{eq:datamissinon}) for comparing the SPSA to a generalized cyclic seesaw SPSA algorithm where agents may be unavailable to perform updates with probability $q=1/2$. }
\label{fig:eleveatedday}
\end{figure} 
We see that asymptotically
 the cyclic algorithm appeared to be only slightly more efficient than its non-cyclic counterpart (based on 10 other initializations, this appeared to be independent of the initialization of the cyclic and non-cyclic algorithms).

\section{Concluding Remarks}

This chapter contains numerical results illustrating the theory on convergence from Chapter \ref{sec:cyclicseesaw} and the theory on asymptotic normality from Chapter \ref{sec:ROC}. Additionally, numerical experiments were conducted to investigate the relative efficiency (as defined in (\ref{eq:ratio})) between cyclic algorithms and their non-cyclic counterparts. For the numerical experiments that were conducted, the cyclic and non-cyclic algorithms had comparable asymptotic efficiency. However, it is important to keep in mind that there are many factors that affect asymptotic relative efficiency (e.g., the shape of the loss function to minimize, the number of subvectors, the definition of cost) and the results of this Chapter do not imply that the cyclic and non-cyclic algorithms are equally efficient in general. For a theoretical analysis of asymptotic relative efficiency we refer the reader to Sections \ref{sec:aseficwiener} and \ref{sec:apsecialcaserelefff}.


\chapter[A Zero-Communication Multi-Agent Problem]{A Zero-Communication Multi-Agent Problem}
\label{chap:multiagent}
\chaptermark{A Zero-Communication Multi-Agent Problem}

This chapter investigates the numerical performance of cyclic SA applied to a multi-agent optimization problem for tracking and surveillance with no communication between agents. Here, a group of agents equipped with finite-range sensors and no communication abilities can adjust their positions in order to collectively accomplish the following objectives:
\begin{DESCRIPTION}
\label{eq:sandywpusstod}
\item[Objective 1:] Track any detected targets within a certain area.
\item[Objective 2:] Maximize the probability of detecting other targets in the area, that is maximize the collective coverage of the area of interest.
\end{DESCRIPTION}
To address the two points above, it is assumed that each agent can obtain noisy information regarding the current position of nearby agents and targets and, using this noisy information, each agent decides what its next action should be. The outline of this chapter is as follows. Section \ref{sec:probdescription} describes the details behind the multi-agent optimization problem. Section \ref{sec:proposedapproach} then proposes a cyclic SA approach for solving the optimization problem from Section \ref{sec:probdescription}. Lastly, Section \ref{sec:numericsamultiagent} contains numerical results and Section \ref{sec:finalcommentsagent} contains concluding remarks.

\section{Problem Description}
\label{sec:probdescription}

Throughout this chapter we let $\bm{x}_j(t)=[x_j^N(t),x_j^E(t), \dot{x}_j^N(t), \dot{x}_j^E(t)]^\top\in \mathbb{R}^4$ denote the vector of positions (in $\mathbb{R}^2$) and velocity (in $\mathbb{R}^2$) of agent $j$ at time $t$. The four-dimensional $\bm{x}_j(t)$ is said to be the state vector of agent $j$ at time $t$. Here, the superscripts ``E'' and ``N'' are used to denote agent $j$'s position in the east and north directions, respectively, relative to some artificial center (corresponding to the point $(0,0)$ on the Cartesian plane) that is universal to all agents and targets (this notation is used as a proxy for the standard $(x,y)$ coordinate notation due to the fact that the letter ``$\bm{y}$'' will be used to denote the state vector of a target). 
The position and velocity of the $i$th target at time $t$ will be represented by the state vector $\bm{y}_i(t)=[y_i^N(t),y_i^E(t), \dot{y}_i^N(t), \dot{y}_i^E(t)]^\top\in \mathbb{R}^4$ 
(the range of $i$ is unspecified since the number of targets is unknown and may change with time). Each agent is assumed to be equipped with a sensor capable of estimating the positions of nearby agents and targets. The sensor model is described next.

\subsection{The Sensor Model}

When the target location is unknown, each agent is allowed to take a readings from its surroundings and collect estimated positions of any detected agents and targets. To model a sensor's probability of detection we use a variant of the sensor model from Kim et al. (2005)\nocite{kimsensormodel2005}. Here, it is assumed that there exists a constant $r>0$, referred to as the range of the sensor, such that an agent cannot detect any target/agent located at a distance greater than $r$ from the sensor. It is also assumed that there exists a constant $r'>0$ with $r'<r$ such that any target/agent located within a distance of $r'$ from the sensor's location will be detected (w.p.1). A target/agent located within the range of the sensor but at a distance larger than $r'$ will be detected with a probability that decreases linearly as the distance to the target approaches $r$. Specifically, if $d$ is the distance from a target/agent to a sensor, then the probability of detecting said target/agent is given by: 
\begin{align}
{\text{Probability of Detection}} = \begin{cases} 1 &\mbox{if } d \leq  r', \\
1-\frac{d-r'}{r-r'} &\mbox{if } r'<d \leq  r,\\
 0 & \mbox{if } d>r. \end{cases}
 \label{eq:sensorequationmodel}
\end{align}
\afterpage{%
\begin{figure}[!t]
\centering
\begin{tikzpicture}
  \definecolor{mynewcoloring}{RGB}{242, 242, 242}
   \definecolor{mynewcoloring2}{RGB}{217, 217, 217}
\draw[thick, draw=white, fill=mynewcoloring2] (0,0) circle (2.5cm);
\draw[thick, draw=white, fill=white] (0,0) circle (1.5cm);

\draw[->, dotted, thick] (0,0)--(0,1.5);
\draw[->, dotted, thick] (0,0)--(-1.76,-1.76);
\node at (-1.6, -1.1) {$r$};
\node at (-0.35,0.75) {$r'$};

\node[inner sep=0pt] (probe) at (0,0.1) {\includegraphics[scale=0.2]{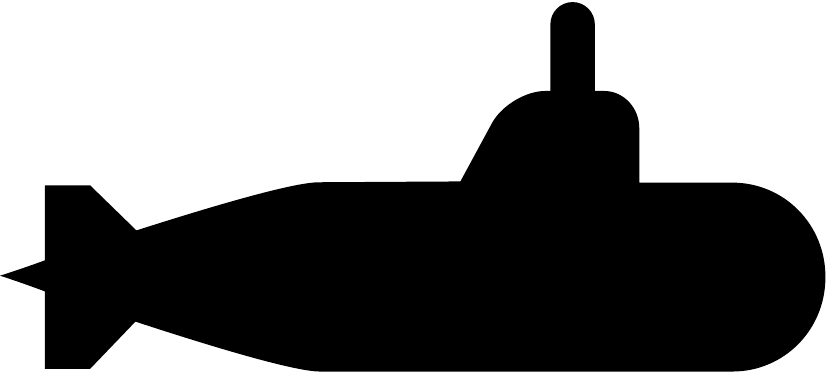}};

\end{tikzpicture}
\caption[The sensor model]{The probability of detection is 1 for agents/targets located within a radius of $r'$. The probability of detection in the gray annulus decreases as the distance to the agent's sensor increases.  An agent cannot detect any agent/target located at a radius greater than $r$, the sensor's range.}
\label{fig:sensormodel}
\end{figure} 
}
Figure \ref{fig:sensormodel} illustrates (\ref{eq:sensorequationmodel}).{\footnote{The submarine graphic in Figure \ref{fig:sensormodel} was created by Freepik (www.freepik.com) and is licensed under the Attribution 3.0 Unported (CC BY 3.0) license.}} The difference between the sensor model in sensor model in (\ref{eq:sensorequationmodel}) and the sensor model in Kim et al. (2005)\nocite{kimsensormodel2005} is that (\ref{eq:sensorequationmodel}) assumes the probability of detection decreases linearly within the gay annulus in Figure \ref{fig:sensormodel}, whereas Kim et al. (2005)\nocite{kimsensormodel2005} allow this decrease to be nonlinear.

When an agent's sensor detects a target or another agent, we assume the sensor can also collect noisy information regarding the position of said target/agent. In the case where target $i$ is detected by agent $j$ at time $t$, then agent $j$ 
%
%
can also obtain noisy measurements of
\begin{align}
\upvarphi(\bm{x}_j(t),\bm{y}_i(t))\equiv&\ \text{tan}^{-1}{\left[\frac{y_i^{N}(t)-x_j^{N}(t)}{y_i^{E}(t)-x_j^{E}(t)}\right]}\notag\\
&+\chi\{y_i^{E}(t)-x_j^{E}(t)<0\} \uppi,
\label{eq:badabada1}
\end{align}
 the angle to the target (here $\chi\{\mathcal{E}\}$ is the indicator function of the event $\mathcal{E}$), and
 \begin{align}
 \label{eq:badabada2}
 \uprho(\bm{x}_j(t),\bm{y}_i(t))\equiv\sqrt{(y_i^{N}(t)-x_j^{N}(t))^2+(y_i^{E}(t)-x_j^{E}(t))^2},
 \end{align}
  the Euclidean distance to the target. The noisy measurements regarding the position of target $i$ that are available to agent $j$ are assumed to be of the form:
\begin{align}
\label{eq:elevatormusic}
\bm{z}_{j:i}(t)\equiv \left[\begin{array}{c}\upvarphi(\bm{x}_j(t),\bm{y}_i(t)) \\\uprho(\bm{x}_j(t),\bm{y}_i(t))\end{array}\right]+\bm{v}\in \mathbb{R}^2,
\end{align}
where $\bm{v}\sim \mathcal{N}(\bm{0},\bm{R})$ for a diagonal covariance matrix $\bm{R}$. A similar model to  (\ref{eq:elevatormusic}) is assumed for an agent detecting another agent. 
In the sensor model above, agents do now know the true position or velocity of the target(s) or of any other agent; only noisy observations of the distance and angle to any detected agent/target are available. However, it is assumed that an agent can correctly classify any detected individual as either an agent or a target. It is also assumed that an agent can temporarily assign names or ``tags'' to detected agents/targets  order to match an agent/target's previous observed position to its current observed position. If an agent ever loses track (i.e., fails to detect) a previously detected agent/target, it is assumed that the tag for that agent/target resets, that is the agent can no longer match the previously detected agent/target to any agent/target detected in the future.

\subsection{The Loss Function to Minimize}
\label{sec:idealloss}

In order to motivate the choice of loss function to minimize, let us begin by considering a simplified setting where there is only one agent and one target. Here, if the agent detects the target at time $t$, a natural strategy for the agent to implement is to attempt to have $[x_1^E(t+\updelta t),x_1^N(t+\updelta t)]$, the agent's position at time $t+\updelta t$ with $\updelta t>0$, resemble $[y_1^E(t+\updelta t),y_1^N(t+\updelta t)]$, the target's position at time $t+\updelta t$. If $\bm{y}_1(t+\updelta t)$ were known, the agent could attempt to minimize the function $\|[x_1^E(t+\updelta t),x_1^N(t+\updelta t)]-[y_1^E(t+\updelta t),y_1^N(t+\updelta t)]\|^2/2$ with respect to $[x_1^E(t+\updelta t),x_1^N(t+\updelta t)]$. 
 Consider now the case where there are $n$ agents and one target and assume agent $j$ detects the target at time $t$. Furthermore, let us temporarily assume that agent $j$ knows the values of $[y_1^E(t+\updelta t),y_1^N(t+\updelta t)]$ and  $[x_i^E(t+\updelta t),x_i^N(t+\updelta t)]$ for $i\neq j$, that is agent $j$ knows the future positions of all agents and targets (this assumption will be weakened later on). If agent $j$ determines that several agents will be close to the target at time $t+\updelta t$ then, in light of Objective 2 on p. \pageref{eq:sandywpusstod}, it may no longer be desirable for agent $j$ to move towards the target's position. Instead, a more appropriate strategy for agent $j$ might be to move away from the target's position in an attempt to maximize the overall probability of detecting other targets that may be in the area of interest (AOI). Even in this simplified setting where we are assuming that the future positions of the target and of all other agents are known to agent $j$, it is unclear {\it{where specifically}} agent $j$ should move if it decides to move away from the target's position. Motivated by the paper by Lee et al. (2015)\nocite{leeetal2015robotics}, we propose to let agent $j$ move to the center of mass of its estimated Voronoi cell. This approach is based on the idea of attempting to have agents ``spread out'' when not actively tracking a target. Details are given next.

\afterpage{%
\begin{figure}[!t]
\centering
\begin{tikzpicture}
  \definecolor{mynewcoloring}{RGB}{102, 128, 153}
\node[inner sep=0pt] (probe) at (0,0)
    {\includegraphics[scale=0.6]{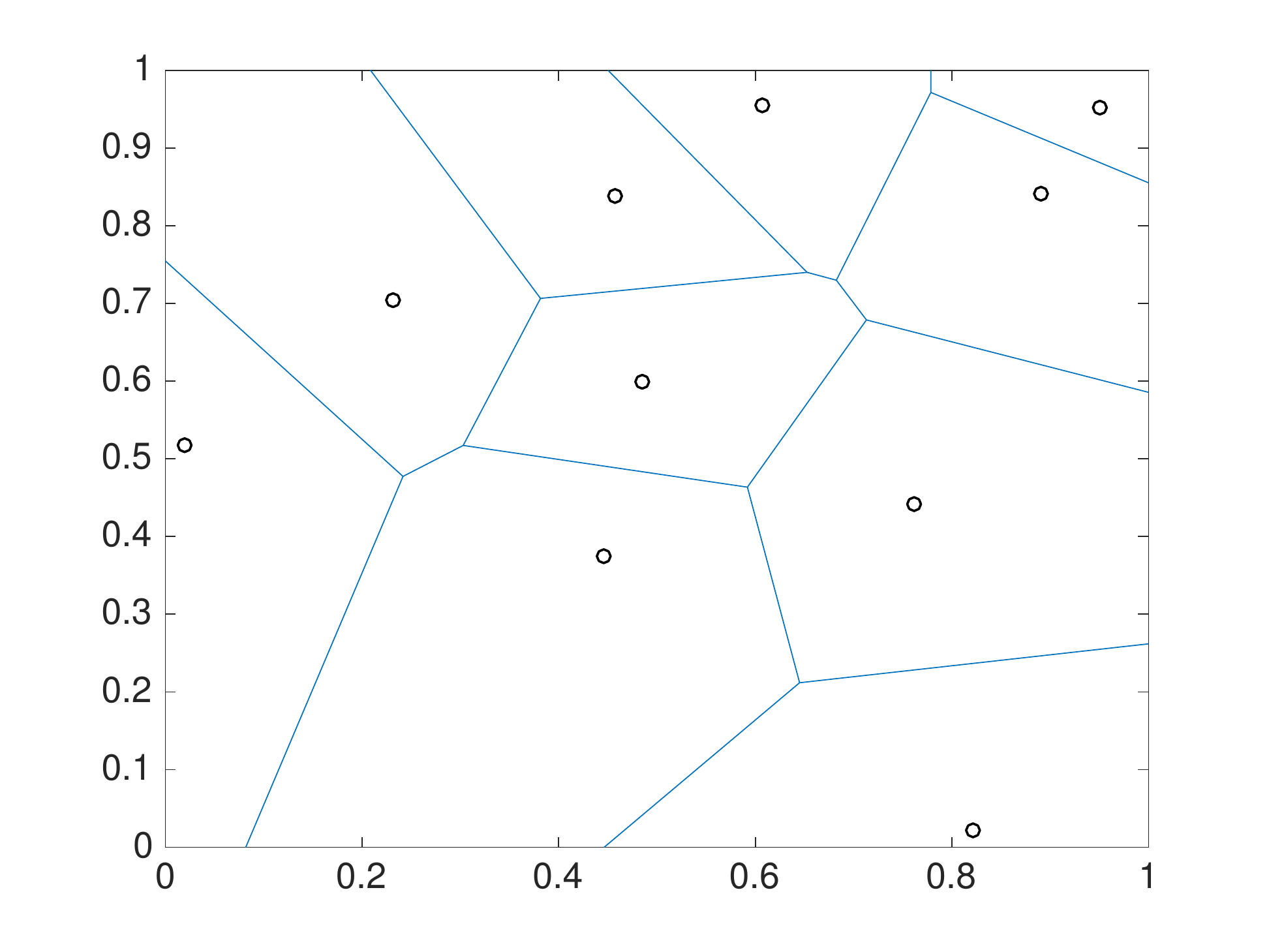}};  
\end{tikzpicture}
\caption[A Voronoi tesselation based on the positions of the agents]{Voronoi tessellation where circles mark the positions of the agents. The Voronoi tessellation is unique. Cells depend on the positions of the agents.}
\label{fig:voronoi}
\end{figure} 
}

When agent $j$ is not actively tracking a target, that is when the agent is not addressing Objective 1, the agent should be attempting to address Objective 2. In this light, agent $j$ should take action to increase the overall coverage of the AOI (loosely speaking, we want the agents to ``spread out'' when not actively tracking a target). For this we define the following coverage function:
\begin{align}
\label{eq:coveragefunc}
H(P_1,\dots,P_n,[x_1^E,x_1^N]^\top,\dots, [x_n^E,x_n^N]^\top)\equiv \sum_{j=1}^n\int_{P_j}\|\bm{q}-[x_j^E,x_j^N]^\top\|^2\  d\bm{q},
\end{align}
where $P_1,\dots, P_n$ are subsets of $\mathbb{R}^2$ that form a partition of the AOI (a partition of the AOI is a collection of disjoint cells whose union is equal to the entire AOI). Minimizing (\ref{eq:coveragefunc}) is equivalent to maximizing the coverage of the AOI. Moreover, it is known (see, for example, Lee et al. 2015)\nocite{leeetal2015robotics} that minimizing (\ref{eq:coveragefunc}) is equivalent to minimizing the following function:
\begin{align}
\label{eq:minimizemeImhard}
F([x_1^E,x_1^N]^\top,\dots, [x_n^E,x_n^N]^\top)\equiv  \sum_{j=1}^n\int_{V_j}\|\bm{q}-[x_j^E,x_j^N]^\top\|^2\  d\bm{q},
\end{align}
where $V_j\subset \mathbb{R}^2$ is the $j$th Voronoi cell defined as the set of points that are at least as close (in Euclidean distance) to agent $j$ than to any other agent (Figure \ref{fig:voronoi} gives an example of a Voronoi tessellation, a partition where the cells are Voronoi cells). While minimizing (\ref{eq:minimizemeImhard}) is difficult, a critical point of (\ref{eq:minimizemeImhard}) is obtained when $[x_j^E,x_j^N]^\top=\bm{c}_j$, where
\begin{align*}
\bm{c}_j=[c_j^E,c_j^N]^\top\equiv \frac{\int_{\bm{q}\in V_j}\bm{q}\ d\bm{q}}{\text{Area of $V_j$}}
\end{align*}
is the center of mass of $V_j$. This suggests that 
a possible strategy for agent $j$ to implement is to move towards $\bm{c}_j$ when not actively tracking a target.

So far, the discussion in this section suggests that if $[y_1^E(t+\updelta t),y_1^N(t+\updelta t)]$ and  $\{[x_i^E(t+\updelta t),x_i^N(t+\updelta t)]\}_{i\neq j}$ are known to agent $j$ at time $t$ (a generally unreasonable assumption), then agent $j$ should adjust its vector of velocities to attempt to minimize the function:
\begin{align}
& L_j(\dot{x}_j^E(t),\dot{x}_j^N(t))\equiv\notag\\
& \chi\{\upgamma_1(t+\updelta t)< \uptau\}\left[\frac{\left\|[{x}_j^E(t+\updelta t),{x}_j^N(t+\updelta t)]-[{y}_1^E(t+\updelta t),{y}_1^E(t+\updelta t)]\right\|^2}{2}\right]\notag\\
&+\chi\{\upgamma_1(t+\updelta t)\geq \uptau\}\left[\frac{\left\|[{x}_j^E(t+\updelta t),{x}_j^N(t+\updelta t)]-\bm{c}_j(t)^\top\right\|^2}{2}\right],
\label{eq:losssunknown}
\end{align}
where $\upgamma_1(t+\updelta t)$ is the number of agents (not counting agent $j$) that will be close  to target 1 at time $t+\updelta t$ (the definition of ``close'' is flexible), $\uptau$ is a predetermined nonnegative integer, $\bm{c}_j(t)$ is the center of mass of the Voronoi cell, $V_j$, computed based on $\{[{x}_i^E(t+\updelta t),{x}_i^N(t+\updelta t)]\}_{i\neq j}$ and $[{x}_j^E(t),{x}_j^N(t)]$, and
\begin{align}
\label{eq:azucarmoreno}
\left[\begin{array}{c}{x}_j^E(t+\updelta t) \\{x}_j^N(t+\updelta t)\end{array}\right]=\left[\begin{array}{c}{x}_j^E(t) \\{x}_j^N(t)\end{array}\right]+\updelta t \left[\begin{array}{c}\dot{x}_j^E(t) \\\dot{x}_j^N(t)\end{array}\right].
\end{align} 
The function:
\begin{align}
&L_j(\dot{x}_j^E(t),\dot{x}_j^N(t))\equiv \Bigg[\sum_{i=1}^{\text{\# Targets}} \chi\{i=i^\ast\}\chi\{\upgamma_i(t+\updelta t)< \uptau\}\times\notag\\
&\frac{\left\|[{x}_j^E(t+\updelta t),{x}_j^N(t+\updelta t)]-[{y}_i^E(t+\updelta t),{y}_i^N(t+\updelta t)]\right\|^2}{2}\Bigg]\notag\\
&+\Bigg[ \chi\{\upgamma_i(t+\updelta t)\geq \uptau \text{ for all } i{\text{ or }} \text{\# Targets} = 0\}\times \notag\\
&\frac{\left\|[{x}_j^E(t+\updelta t),{x}_j^N(t+\updelta t)]-\bm{c}_j(t)^\top\right\|^2}{2}\Bigg]
\label{eq:proactivefeet}
\end{align}
provides a natural generalization to (\ref{eq:losssunknown}) for the case where there may be zero or multiple targets; in (\ref{eq:proactivefeet}), $\upgamma_i(t+\updelta t)$ is the number of agents (other than agent $j$) that will be close  to target $i$ at time $t+\updelta t$, and $i^\ast\in S\equiv \{i{\text{ such that }} \upgamma_i(t+\updelta t)< \uptau\}$ is such that out of all $i\in S$, $i^\ast$ is the label/name of the target whose future position (at time $t+\updelta t$) is closest to agent $j$'s current position (at time $t$).\label{page:defineS} If multiple targets in the set $S$ are to be equally close to agent $j$ at time $t+\updelta t$, select one of these targets at random and let the selected target be target $i^\ast$. The indicator functions in (\ref{eq:proactivefeet}) indicate whether agent $j$ believes target $i$ is already sufficiently tracked/covered by other agents. 


 Let us comment on the loss function from (\ref{eq:proactivefeet}). First, note that this loss function is time-varying and possibly random since it depends on the time-varying position of the target and of all other agents. Second, $L_j(\dot{x}_j^E(t),\dot{x}_j^N(t))$ is agent-specific in that each agent is minimizing a different function (this can be seen by noting the presence of $\bm{c}_j(t)$ in the loss function, a term that is agent-dependent).  Another thing to note is that although we have used the number of agents near the target as a measure of how well the target's location is covered, other measurements of target coverage could be used such as the probability that the target will be detected at time $t+\updelta t$. The last observation we make pertains to the fact that the values of $[y_1^E(t+\updelta t),y_1^N(t+\updelta t)]$ and  $\{[x_i^E(t+\updelta t),x_i^N(t+\updelta t)]\}_{i\neq j}$ are unknown to agent $j$ at time $t$, as are the velocities of all other agents and targets. Therefore, the value of the function (\ref{eq:proactivefeet}) is unknown to agent $j$ (recall that the variable $\upgamma_i(t+\updelta t)$ appearing in (\ref{eq:proactivefeet}) depends on the state vectors of the other agents). The following section uses Kalman Filter-based estimates of agent- and target positions to approximate (\ref{eq:proactivefeet}). 
 
 Using estimated values of the other agents' positions introduces an important issue. Suppose that agent $j$ were to predict that $\upgamma_i(t+\updelta t)$ other agents would be located near target $i$ at time $t+\updelta t$ where $\upgamma_i(t+\updelta t)\geq \uptau$. Then,the decision of agent $j$ will be to move towards the center of mass of its Voronoi cell which could result in the agent moving away from the target. Note, however, that if the other $n-1$ agents were to also predict that at least $\uptau$ agents would be located near the target at time $t+\updelta t$, then all $n$ agents would move towards the centers of their Voronoi cells and away from the target, which  could result in a problematic situation where the target is left ``unguarded''. To avoid this problem, we consider a setting in which agents update their velocity vectors in a strictly cyclic manner.
 As a result, when the time comes for agent $j$ to update its velocity, the other agents will have already begun moving towards- or away from the target, giving agent $j$ information about their intentions.
 %
 %
 %


\section{Proposed Cyclic SA-Based Approach}
\label{sec:proposedapproach}
This section describes the precise way in which an agent estimates the future positions of detected agents and targets and how the agent uses this information to adjust its state vector based on the loss function in (\ref{eq:proactivefeet}). Specifically, this section answers the following questions:
\begin{enumerate}
\item How does agent $j$ estimate $\{\bm{x}_i(t+\updelta t)\}_{i\neq j}$ and $\bm{y}_i(t+\updelta t)$ (i.e., the future state vectors of other agents and the state vectors of targets)?
\item How does agent $j$ use the estimates of $\{\bm{x}_i(t+\updelta t)\}_{i\neq j}$ and $\bm{y}_i(t+\updelta t)$ to update its own state vector (position and velocity)?
\end{enumerate}
 Section \ref{sec:estimatingstate} addresses the first question and Section \ref{sec:algooutline} addresses the second question.

\subsection{Extended Kalman Filter for Estimating State Vectors}
\label{sec:estimatingstate}

At time $t$, agent $j$ uses the extended Kalman filter (EKF) to obtain an estimate for $\bm{y}_i(t+\updelta t)$, the true state vector of target $i$ at time $t+\updelta t$. The EKF is used due to the fact that (\ref{eq:elevatormusic}) is nonlinear in the state vector. To use the EKF, agent $j$ first models the dynamics of target $i$ as follows:
\begin{align}
\label{eq:qbert}
\bm{y}_i(t+\updelta t)=\bm\Phi\bm{y}_i(t)+\bm{w}(t)
\end{align}
for $t$ in a discrete set of points $\updelta t$ time units apart, where 
\begin{align}
\label{eq:qbert2mass}
\setstretch{1.25} 
\bm{\Phi}\equiv \left[\begin{array}{cccc}1 & 0 & \updelta t & 0 \\0 & 1 & 0 & \updelta t \\0 & 0 & 1 & 0 \\0 & 0 & 0 & 1\end{array}\right]
\end{align}
and where $\bm{w}$ is a vector of zero-mean Gaussian white noise with covariance matrix $\bm{Q}$. 
 The EKF algorithm used by agent $j$ for estimating the target's state vector is illustrated in Algorithm \ref{alg:EKF}. Loosely speaking, at time $t-\updelta t$ agent $j$ predicts the position of target $i$ at time $t$ (Line \ref{line:predictline} of Algorithm \ref{alg:EKF}). The predicted state estimate is denoted by $\hat{\bm{y}}_i(t|t-\updelta t)$. Then, at time $t$, agent $j$ uses the latest noisy information on the position of target $i$ to correct its previous estimate (Line \ref{line:correctline} of Algorithm \ref{alg:EKF}) . The corrected state estimate is denoted by $\hat{\bm{y}}_i(t|t)$ (although $\bm{H}$, $\bm{K}$, and $\bm{\upeta}$ are functions of time, this dependance has been omitted for simplicity).  We assume that agent $j$ also estimates the state vector of another detected agent, say agent $i$, using Algorithm \ref{alg:EKF} after a natural modification to replace $\hat{\bm{y}}_i(t|t)$ and $\hat{\bm{y}}_i(t|t-\updelta t)$ with $\hat{\bm{x}}_i(t|t)$ and $\hat{\bm{x}}_i(t|t-\updelta t)$, respectively.
 
   \begin{algorithm}[!t]                     
\caption{The Extended Kalman Filter (EKF) for Estimating $\bm{y}_i(t)$}      
\label{alg:EKF}                
\begin{algorithmic} [1]                  
\setstretch{1.5} 
\REQUIRE    $\hat{\bm{y}}_i(t-\updelta t|t-\updelta t)\in \mathbb{R}^4$ and $\bm{P}_{t-\updelta t|t-\updelta t}\in \mathbb{R}^{4\times 4}$.
\STATEx{\bf{Predict}} (execute at time $t-\updelta t$):
\STATE{$\hat{\bm{y}}_i(t|t-\updelta t)= \bm\Phi\hat{\bm{y}}_i(t-\updelta t|t-\updelta t)$, where $\bm\Phi$ is defined in (\ref{eq:qbert2mass}).}\label{line:predictline}
\STATE{$\bm{P}_{t|t-\updelta t}=\bm\Phi \bm{P}_{t-\updelta t|t-\updelta t}\bm\Phi^\top+\bm{Q}$, where $\bm{Q}$ is defined below (\ref{eq:qbert2mass}).}
\STATEx{\bf{Correct}} (execute at time $t$):
\STATE{$\bm\upeta = \bm{z}_{j:i}(t)-[{\upvarphi}(\bm{x}_j(t),\hat{\bm{y}}_i(t|t-\updelta t)), {\uprho}(\bm{x}_j(t),\hat{\bm{y}}_i(t|t-\updelta t))]^\top$, where ${\upvarphi}(\cdot,\cdot)$ and ${\uprho}(\cdot,\cdot)$ are defined in (\ref{eq:badabada1}) and (\ref{eq:badabada2}).}
\STATE{$\bm{S}=\bm{H}\bm{P}_{t|t-\updelta t}\bm{H}^\top+\bm{R}$, where $\bm{R}$ is defined below (\ref{eq:elevatormusic}) and:\label{eq:linedefineH}
\begin{align*}
\setstretch{3} 
\bm{H}\equiv \left[\begin{array}{cccc}\displaystyle{\frac{x^N-y^N}{\uprho^2(\bm{x},\bm{y})}}  & \displaystyle{\frac{-(x^E-y^E)}{\uprho^2(\bm{x},\bm{y})}}  & 0 & 0 \\ \displaystyle{\frac{-(x^E-y^E)}{\uprho(\bm{x},\bm{y})}}  & \displaystyle{\frac{-(x^N-y^N)}{\uprho(\bm{x},\bm{y})}}  & 0 & 0\end{array}\right] {\text{ with $\bm{x}=\bm{x}_j(t)$ and $\bm{y}=\hat{\bm{y}}_i(t|t-\updelta t)$.}}
\end{align*}
}
\STATE{$\bm{K}=\bm{P}_{t|t-\updelta t}\bm{H}^\top\bm{S}^{-1}$.}
\STATE{$\hat{\bm{y}}_i(t|t)=\hat{\bm{y}}_i(t|t-\updelta t)+\bm{K}\bm\upeta$.}\label{line:correctline}
\STATE{$\bm{P}_{t|t}=(\bm{I}-\bm{K}\bm{H})\bm{P}_{t|t-\updelta t}$.}
\end{algorithmic}
\end{algorithm}

  One important thing to note is that the EKF algorithm defines $\bm{H}$ (appearing in Line \ref{eq:linedefineH} of Algorithm \ref{alg:EKF}) as the Jacobian matrix of the vector:
  \begin{align}
\left[\begin{array}{c}\upvarphi(\bm{x}_j(t),\hat{\bm{y}}_i(t|t-\updelta t)) \\\uprho(\bm{x}_j(t),\hat{\bm{y}}_i(t|t-\updelta t))\end{array}\right]
\label{eq:imighthaveto}
  \end{align}
   with respect to $\bm{x}_j(t)$. However, the vector in (\ref{eq:imighthaveto}) is not differentiable if and only if at least one of the following holds:
   \begin{enumerate}
\item $x_j^E(t)=\hat{y}_j^E(t|t-\updelta t)$ and $x_j^N(t)>\hat{y}_j^N(t|t-\updelta t)$, where the vector $[\hat{y}_j^E(t|t-\updelta t),\hat{y}_j^N(t|t-\updelta t)]^\top$ corresponds to the first two entries of the vector $\hat{\bm{y}}_i(t|t-\updelta t)$.
 \item $\uprho(\bm{x}_j(t),\hat{\bm{y}}_i(t|t-\updelta t))=0$.
 \end{enumerate}
 In other words, a successful implementation of the EKF algorithm requires avoiding scenarios 1 and 2 above, possibly by perturbing $\hat{\bm{y}}_i(t|t-\updelta t)$.

\subsection{Algorithm Description}
\label{sec:algooutline}

We consider a setting in which the $n$  agents update their directional velocities in a strictly cyclic manner. 
At time $t_0$, agent 1 uses its sensor to detect any nearby agents and targets. Agent 1 then estimates the future state vectors (at time $t_0+\updelta t$) of all detected agents and targets using the EKF algorithm (Algorithm \ref{alg:EKF} ). Next, using the  estimated future states of detected agents and targets, agent 1 updates its velocity vector (more details about this step will be given in the sequel). In the same manner, agent 2 updates its velocity vector at time $t_0+\updelta t/n$. In general, agent $j$ updates its directional velocity at the times $\{t_0+(j-1)\updelta t/n+k \updelta t \}_{k\geq0}$ using the predicted positions of agents and targets $\updelta t$ time units into the future. 
Before implementing the algorithm, it is necessary to specify exactly how agent $j$ updates its state vector. This is done next.

The objective of Agent $j$ is to minimize the loss function in (\ref{eq:proactivefeet}). As discussed in Section \ref{sec:idealloss}, however, agent $j$ does not have access to a closed form expression for the loss function because the function depends on the unknown future state vectors of targets and of other agents. In our proposed approach, agent $j$ uses the predicted agent- and target state vectors to estimate the gradient of the loss function (the differentiability of the loss function is addressed in Section \ref{sec:connectiontocyclicSA}). Agent $j$ then updates its state vector using the noisy gradient estimate in a steepest-descent manner. Specifically, agent $j$ updates its state vector at times $t\in \{t_0 + (j-1)\updelta t/n+k\updelta t\}_{k\geq0}$ by following the steps below.
\begin{DESCRIPTION}
\item[Step 1:] Use sensor to detect nearby agents and targets.
\item[Step 2:] Estimate the future (at time $t+\updelta t$) state vectors of all detected agents and targets using the EKF (Algorithm \ref{alg:EKF}).
\item[Step 3:] Let $\uplambda\geq 0$ be a number such that an agent is considered to be close to the target if the distance between the agent and the target is less than $\uplambda$ (n general $\uplambda\neq r$). For each detected target, estimate $\upgamma_i(t+\updelta t)$ (defined below (\ref{eq:proactivefeet})), based on $\uplambda$ and
denote the estimate by $\hat{\upgamma}_i(t+\updelta t)$.
\item[Step 4:] Find the set of targets for which $\hat{\upgamma}_i(t+\updelta t)<\uptau$ (with $\uptau>0$). 
Denote this set by $\hat{S}$ ($\hat{S}$ is therefore an estimate of the set $S$ defined on p. \pageref{page:defineS}).
\item[Step 5:] If $\hat{S}=\emptyset$ (here $\emptyset$ denotes the empty set), use the estimated future positions (at time $t+\updelta t$) of the other agents along with the  current position (at time $t$) of agent $j$ to estimate $\bm{c}_j(t)$. Denote the estimate by $\hat{\bm{c}}_j(t)$. If $S\neq \emptyset$, pick the target in $\hat{S}$ whose predicted position is closest to agent $j$ (in the event of a tie, choose a target at random). Denote that target ``target $\hat{i}^\ast$''  ($\hat{i}^\ast$ is an estimate for the integer $i^\ast$ defined on p. \pageref{page:defineS}).
\item[Step 6:] Select $a>0$ and update agent $j$'s velocity vector as follows:
\begin{align}
&[\dot{x}_j^E(t), \dot{x}_j^N(t)]^\top=(1-a)[\dot{x}_j^E(t-\updelta t),\dot{x}_j^N(t-\updelta t)]^\top\notag\\
 &+a\Bigg[ \chi\{\hat{\upgamma}_i(t+\updelta t)\geq \uptau \text{ for all } i {\text{ or \# Detected Targets}}=0\} \times\notag\\
&\frac{\hat{\bm{c}}_j(t)-[x_j^E(t),x_j^N(t)]^\top}{\updelta t}\Bigg]+ \sum_{i=1}^{\text{\# Detected Targets}} a \Bigg[\chi\{\hat{\upgamma}_i(t+\updelta t)< \uptau\}\chi\{i=\hat{i}^\ast\}\times \notag\\
& \frac{[\hat{y}_i^E(t+\updelta t|t),\hat{y}_i^N(t+\updelta t|t)]^\top-[x_j^E(t),x_j^N(t)]^\top}{\updelta t} \Bigg].
\label{eq:step6}
\end{align}
\item[Step 7:] Update agent $j$'s position using (\ref{eq:azucarmoreno}) and (\ref{eq:step6}).
\end{DESCRIPTION}
%

\subsection{Connection to Cyclic SA}
\label{sec:connectiontocyclicSA}

First, note that the loss function in (\ref{eq:proactivefeet}) is differentiable
%
 with gradient:
\begin{align}
\frac{\partial L_j(\dot{x}_j^E,\dot{x}_j^N)}{\partial [\dot{x}_j^E,\dot{x}_j^N])^\top}=&\ \Bigg[\chi\{\upgamma_i(t+\updelta t)\geq \uptau \text{ for all } i {\text{ or \# Targets}}=0\}\updelta t\times\notag\\
& \left([x_j^E,x_j^N]^\top+\updelta t[\dot{x}_j^E,\dot{x}_j^N]^\top-\bm{c}_j(t)^\top\right) \Bigg]\notag\\
&+ \sum_{i=1}^{\text{\# Targets}} \Bigg[\chi\{\upgamma_i(t+\updelta t)< \uptau\}\chi\{i=i^\ast\}\updelta t\times \notag\\
& \left([x_j^E,x_j^N]^\top+\updelta t[\dot{x}_j^E,\dot{x}_j^N]^\top-[{y}_i^E(t+\updelta t),{y}_i^N(t+\updelta t)]^\top\right) \Bigg],
\label{eq:truegradientIam}
\end{align}
(assuming the future positions of the target do not depend on the positions of the agents, a simplifying consequence of (\ref{eq:qbert})). 
The update in (\ref{eq:step6}) is based on the idea that the gradient in (\ref{eq:truegradientIam}) can be estimated using the values of $\hat{\bm{x}}_i(t+\updelta t|t)$, and $\hat{\bm{y}}_i(t+\updelta t|t)$. Specifically, define the stochastic gradient:
\begin{align}
&\hat{\bm{g}}_j(\dot{x}_j^E(t-\updelta t),\dot{x}_j^N(t-\updelta t))\equiv\notag\\
 &\Bigg[ \chi\{\hat{\upgamma}_i(t+\updelta t)\geq \uptau \text{ for all } i {\text{ or \# Detected Targets}}=0\}\updelta t \times\notag\\
&\left([x_j^E(t),x_j^N(t)]+\updelta t[\dot{x}_j^E(t-\updelta t),\dot{x}_j^N(t-\updelta t)]-\hat{\bm{c}}_j(t)^\top\right)\Bigg]^\top\notag\\
&+ \sum_{i=1}^{\text{\# Detected Targets}} \Bigg[\chi\{\hat{\upgamma}_i(t+\updelta t)< \uptau\}\chi\{i=\hat{i}^\ast\}\updelta t\times \notag\\
& \left([x_j^E(t),x_j^N(t)]+\updelta t[\dot{x}_j^E(t-\updelta t),\dot{x}_j^N(t-\updelta t)]-[\hat{y}_i^E(t+\updelta t|t),\hat{y}_i^N(t+\updelta t|t)]\right) \Bigg]^\top.
\label{eq:kettoout}
\end{align}
The vector in (\ref{eq:kettoout}) can be seen as a stochastic estimate of the gradient in (\ref{eq:truegradientIam}). Moreover, the update in (\ref{eq:step6}) can be rewritten as:
\begin{align*}
[\dot{x}_j^E(t), \dot{x}_j^N(t)]^\top=[\dot{x}_j^E(t-\updelta t), \dot{x}_j^N(t-\updelta t)]^\top-a(\updelta t)^{-2}\hat{\bm{g}}_j(\dot{x}_j^E(t-\updelta t),\dot{x}_j^N(t-\updelta t)).
\end{align*}
Thus, the update in (\ref{eq:step6}) has the general form of the SA update in (\ref{eq:youknownothing}) with $a_k= a(\updelta t)^{-2}$. A constant gain sequence was used due to the time-varying nature of the loss function. One important thing to note is that the noisy gradient estimate in (\ref{eq:kettoout}) is not an unbiased estimate of  the true gradient in (\ref{eq:truegradientIam}). The bias is due to the fact that the EKF does not generally produce unbiased predictions of the future positions of any detected agents and targets. The magnitude of the bias will increase or decrease as the bias in the EKF estimates increases or decreases, respectively.

Because agents take turns performing the update in (\ref{eq:step6}), the resulting algorithm resembles a distributed implementation of Algorithm \ref{kirkey}, a cyclic algorithm which is a special case of the GCSA algorithm (Algorithm \ref{findme}). However, the multi-agent algorithm from this chapter does not fit perfectly into the framework of Algorithm \ref{findme} due to the fact that each agent is optimizing a different loss function and each of these loss functions is time-varying. As a result, the theory from Chapters \ref{sec:cyclicseesaw}--\ref{chap:imefficient} is not directly applicable. The following section investigates the {\it{numerical}} performance of the 
algorithm 
in Section \ref{sec:algooutline}.

\section{Numerical Analysis}
\label{sec:numericsamultiagent}

This section investigates the numerical performance of the algorithm from Section \ref{sec:algooutline}. 
The numerical results assume that the AOI is always a $\text{6} \times \text{6}$ square centered at the origin and that the value of $a$ in (\ref{eq:step6}) is such that $0<a<1$. The reason for forcing $a$ to be strictly less than 1 stems from the fact that setting $a=1$ implies that the distance measure $\uprho(\bm{x}_j(t),\hat{\bm{y}}_i(t|t-\updelta t))=0$, which implies that the matrix $\bm{H}$ (see Line \ref{eq:linedefineH} of Algorithm \ref{alg:EKF}) as well as the angle to the target's predicted position are not well-defined. Having $a<1$ guarantees that $\uprho(\bm{x}_j(t),\hat{\bm{y}}_i(t|t-\updelta t))\neq0$, avoiding the aforementioned complications. 

\subsection{Software}

The numerical results in this Section are all performed in MATLAB. The MATLAB Multi-Parametric Toolbox 3.0 (Herceg et all. 2013) \nocite{MPT3} was used to compute the vertices defining the Voronoi cells, based on the current positions of the $n$ agents, while simultaneously constraining the cells to be contained within the AOI. Then, the centers of mass of the resulting Voronoi cells (i.e., the $\hat{\bm{c}}_j(t)$s) were computed using the function {\verb|polygeom|} by Sommer (2016)\nocite{polygeom}. 


\subsection{Effect of the Sensor Range}
\label{subsec:effsensorrange}

\begin{figure}
\centering
\begin{subfigure}[b]{0.5\textwidth}
 \centering
\begin{tikzpicture}

\node[inner sep=0pt] (probe) at (0,0) 
{\includegraphics[scale=0.35]{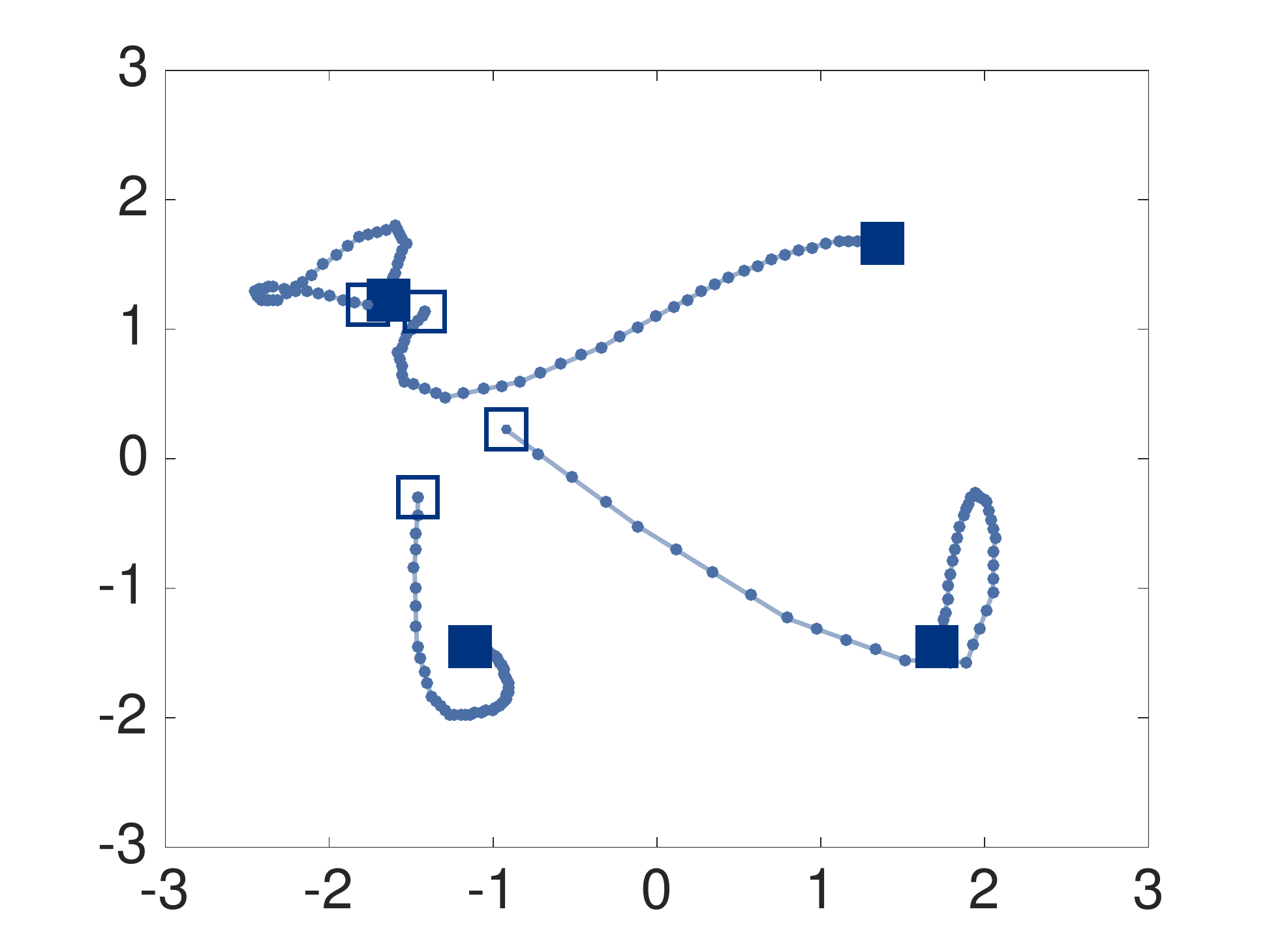}};

\end{tikzpicture}
 \caption{$r=4$, $r'=3.8$.}
 \label{fig:sensorrange1}
\end{subfigure}\hfill
\begin{subfigure}[b]{0.5\textwidth}
\centering
\begin{tikzpicture}

\node[inner sep=0pt] (probe) at (0,0)
{\includegraphics[scale=0.35]{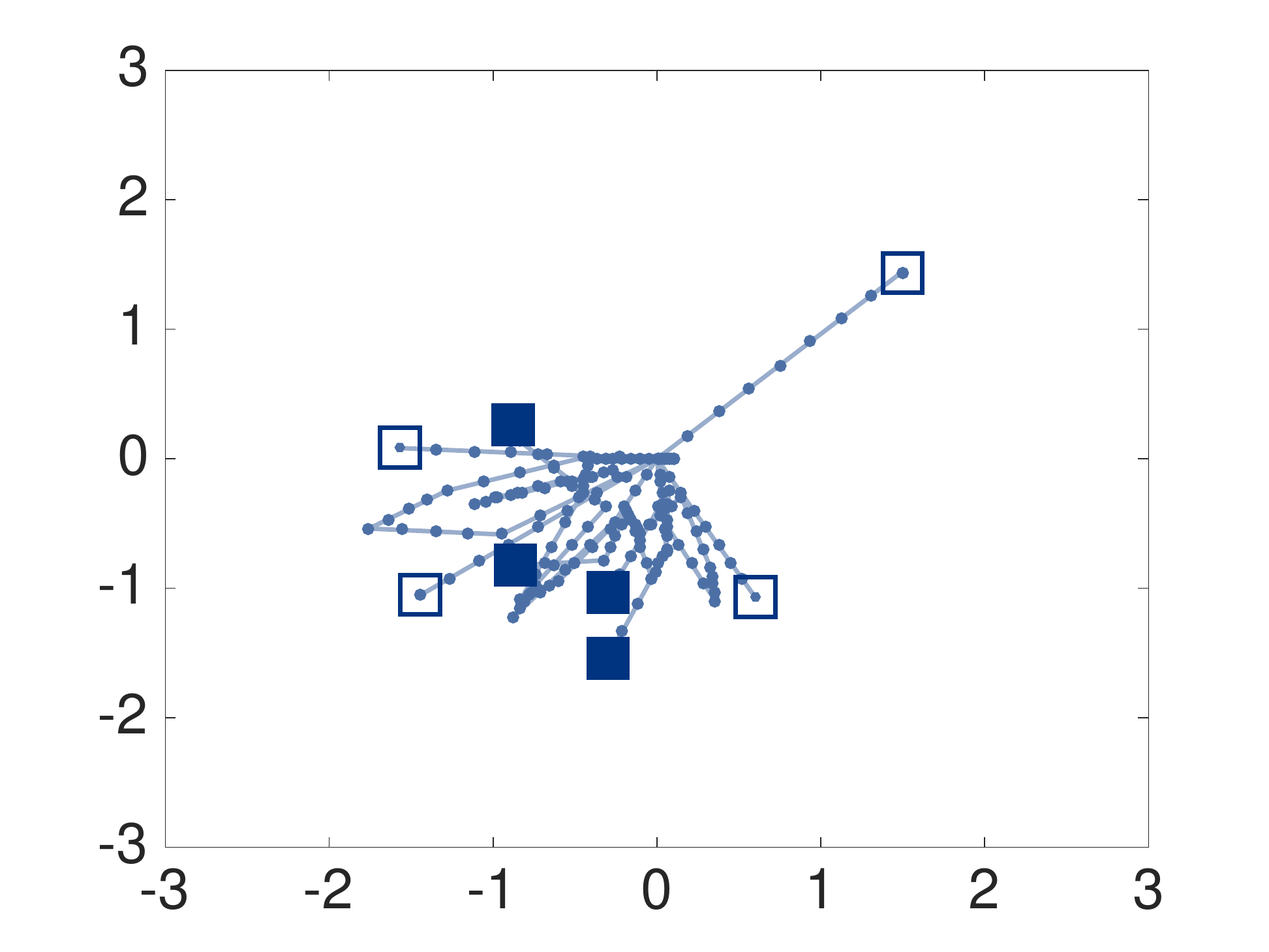}};
\end{tikzpicture}
\caption{$r=0.5$, $r'=0.4$.}
\label{fig:sensorrange2}
\end{subfigure}\hfill
\caption[The effect of the sensor range on area coverage]{The effect of the sensor range on area coverage for the case where there are 4 agents and 0 targets. Each empty square denotes the starting position of an agent. Similarly, each filled square denotes the final position of an agent. Note that in Figure \ref{fig:sensorrange2} agents fail to spread out.}
\label{fig:effectofrange}
\end{figure}

Because the algorithm in Section \ref{sec:algooutline} involves zero communication between agents, the quality of the agents' sensors directly affects the performance of the algorithm. Figure \ref{fig:effectofrange} presents two cases illustrating the effect of sensor range on area coverage for a setting where there are four agents and zero targets (both Figures \ref{fig:sensorrange1} and \ref{fig:sensorrange2} are the result of the agents updated their velocity 50 times). The first case, presented in Figure \ref{fig:sensorrange1}, shows the evolution of four agents' positions when the radii $r=2$ and $r'=1.5$ (see (\ref{eq:sensorequationmodel}) for the definitions of $r$ and $r'$). Here, the four agents are frequently able to detect each other and can therefore spread out to maximize area coverage (i.e., minimize (\ref{eq:minimizemeImhard})). In contrast, Figure \ref{fig:sensorrange2} shows the evolution of four agents' positions when $r=0.5$ and $r'=0.4$. Here, the agents start relatively far apart and fail to detect each other due to the limited range of the sensors. Consequently, the agents tend to gravitate towards the center of the AOI, an action that is not optimal when attempting to maximizing area coverage. In summary, Figure \ref{fig:effectofrange} illustrates the fact that a larger sensor range provides agents with more of the information they need to distribute their positions optimally relative to the objective function in (\ref{eq:minimizemeImhard}). Figure \ref{fig:effectofrange} was produced using $\bm{R}=0.05\bm{I}$, $\updelta t=0.4$, and $a=0.5$.

\begin{figure}
\centering
\begin{subfigure}[b]{0.5\textwidth}
 \centering
\begin{tikzpicture}

\node[inner sep=0pt] (probe) at (0,0) 
{\includegraphics[scale=0.35]{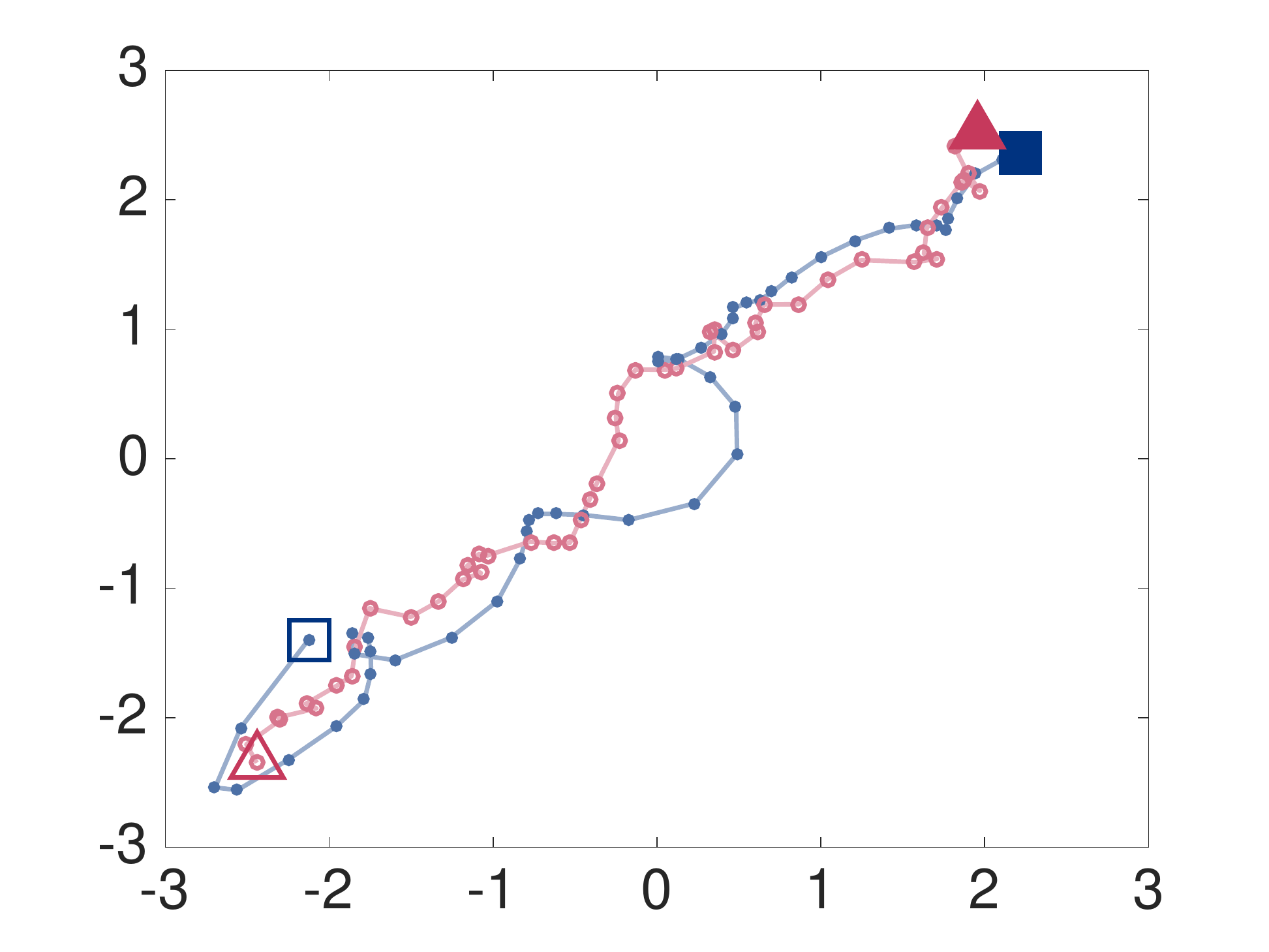}};

\end{tikzpicture}
 \caption{$r=2$, $r'=1.5$.}
 \label{fig:rangetrack1}
\end{subfigure}\hfill
\begin{subfigure}[b]{0.5\textwidth}
\centering
\begin{tikzpicture}

\node[inner sep=0pt] (probe) at (0,0)
{\includegraphics[scale=0.35]{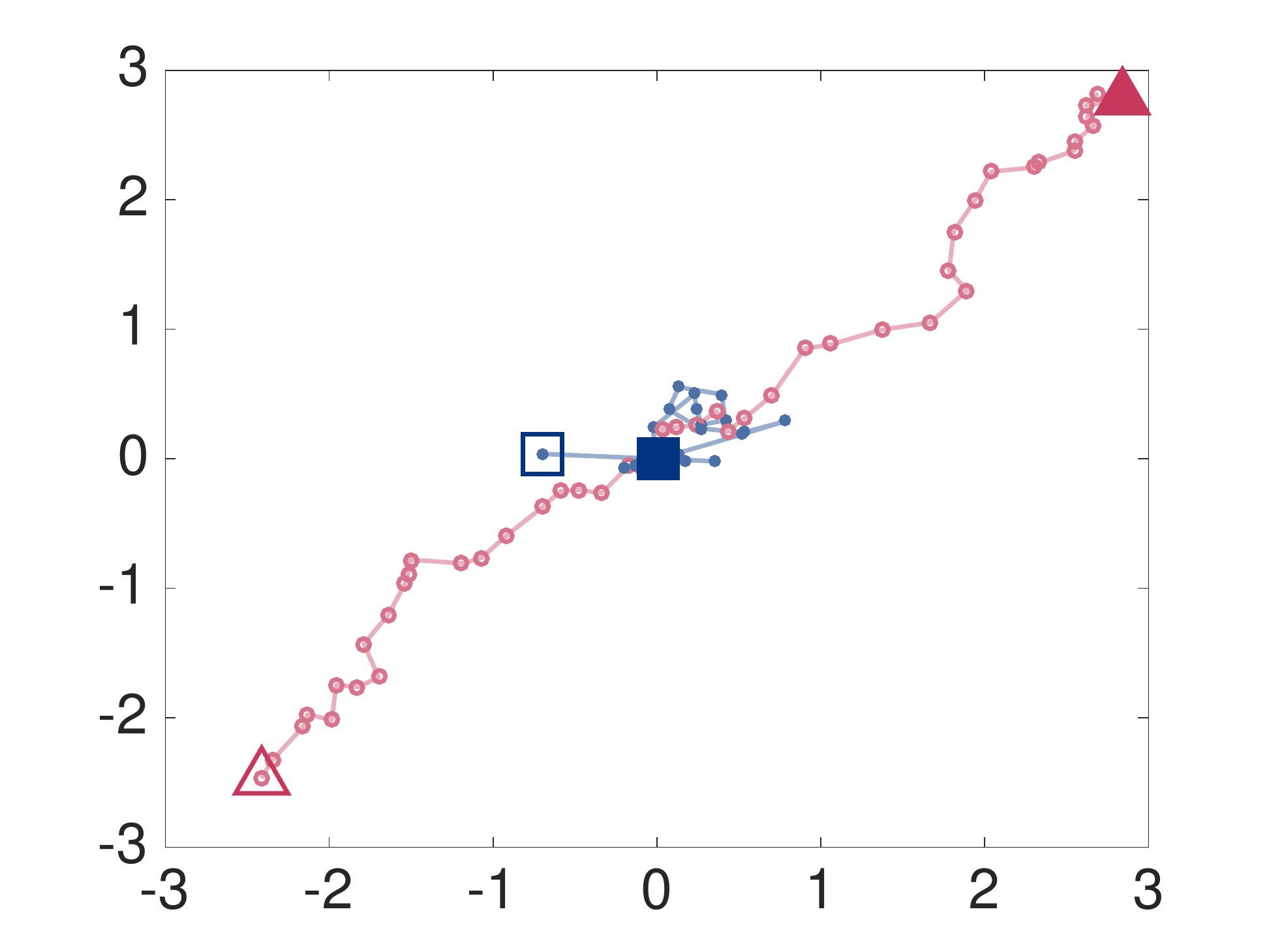}};
\end{tikzpicture}
\caption{$r=0.5$, $r'=0.4$.}
\label{fig:rangetrack2}
\end{subfigure}\hfill
\caption[The effect of the sensor range on target tracking]{The effect of sensor range on target tracking for the case where there is 1 agent and 1 target. An empty square (respectively empty triangle) denotes the starting position of the agent (respectively target). The filled square (respectively filled triangle) denotes the final position of the agent (respectively target). Note that in Figure \ref{fig:rangetrack2} the agent fails to track the target.}
\label{fig:rangetrack}
\end{figure}

Aside from improving area coverage, a larger sensor range has the (not surprising) benefit of also improving target tracking. In Figure \ref{fig:rangetrack1}, for example, a single agent whose sensor has a large range is able to track a single target fairly well. When the sensor's range is significantly decreased, however, Figure \ref{fig:rangetrack2} shows that the agent is no longer able to continuously detect and keep track of the target. The simulations in Figure \ref{fig:rangetrack} were produced using $\bm{R}=0.05\bm{I}$, $\updelta t =0.1$, and $a=0.5$; and the target was assumed to move according to the linear model in (\ref{eq:qbert}) with $\bm{Q}=0.01\bm{I}$ and velocity equal to 1 (i.e., the last two entries of $\bm{y}_i(t)$ are equal to 1). Note that Figure \ref{fig:effectofrange}  uses $\updelta t=0.4$ while Figure \ref{fig:rangetrack} uses $\updelta t=0.1$. This was done with the objective of simulating a setting which the agents begin updating their parameter vectors at times that are 0.1 time units apart, which implies $\updelta t=0.n$ when there are $n$ agents.

\begin{figure}
\centering
\begin{subfigure}[b]{0.5\textwidth}
 \centering
\begin{tikzpicture}

\node[inner sep=0pt] (probe) at (0,0) 
{\includegraphics[scale=0.35]{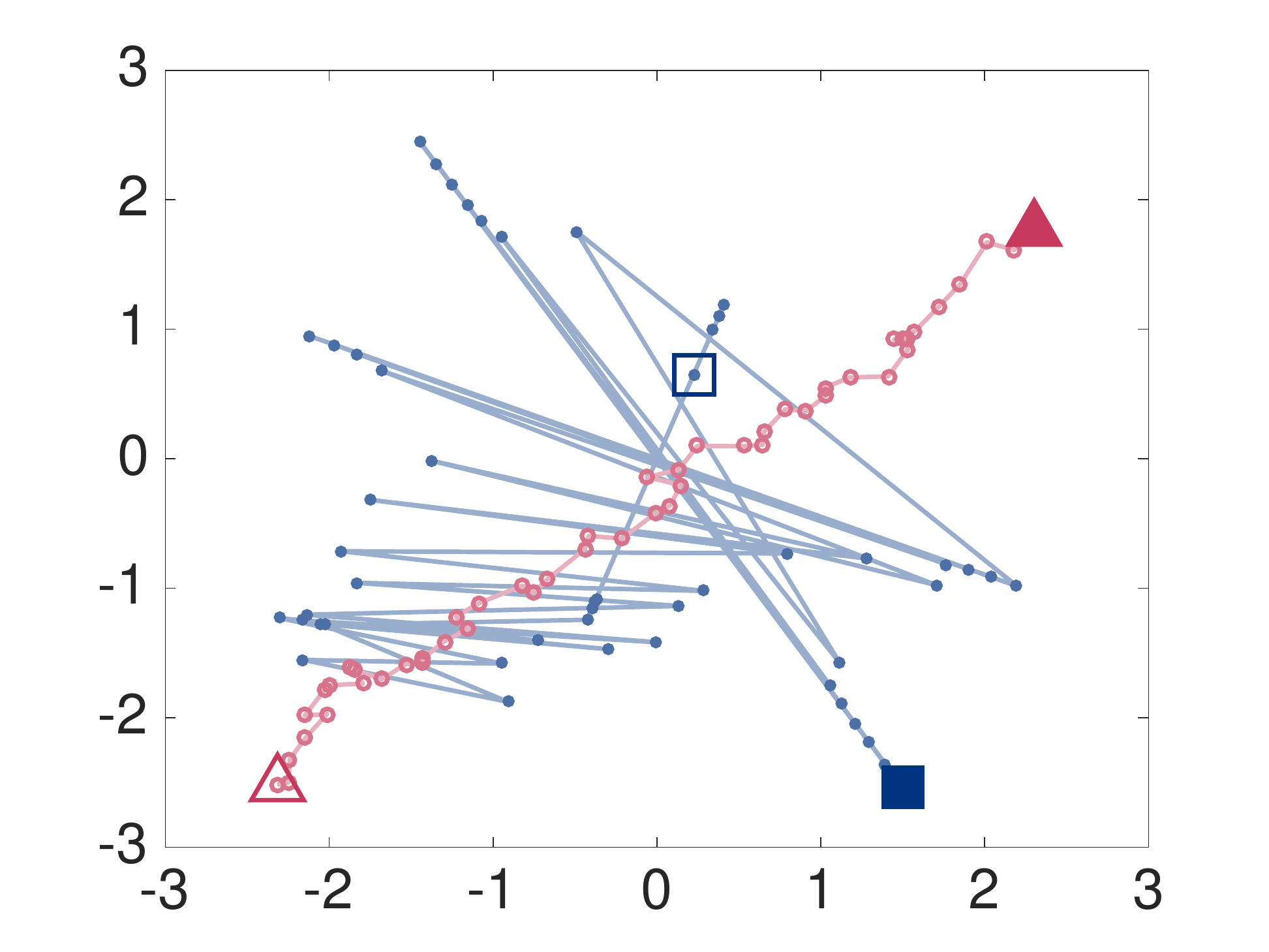}};

\end{tikzpicture}
 \caption{$a=1.3$.}
 \label{fig:gaintoolarge}
\end{subfigure}\hfill
\begin{subfigure}[b]{0.5\textwidth}
\centering
\begin{tikzpicture}

\node[inner sep=0pt] (probe) at (0,0)
{\includegraphics[scale=0.35]{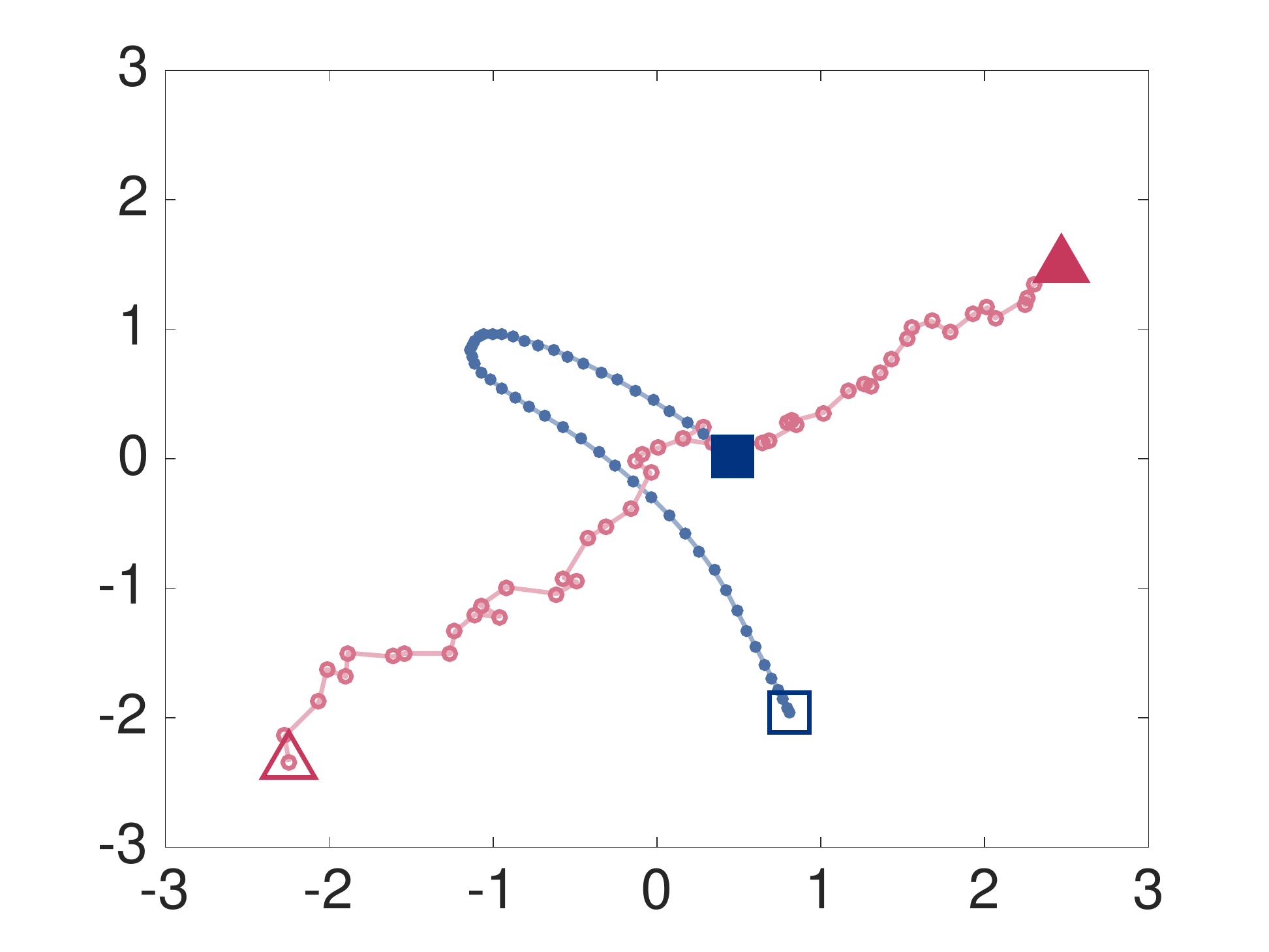}};
\end{tikzpicture}
\caption{$a=0.01$.}
\label{fig:gaintoosmall}
\end{subfigure}\hfill
\caption[The effect of the gain sequence on target tracking]{The effect of the gain sequence on target tracking for the case where there is 1 agent and 1 target. An empty square (respectively empty triangle) denotes the starting position of the agent (respectively target). The filled square (respectively filled triangle) denotes the final position of the agent (respectively target). }
\label{fig:magnitudepfthegain}
\end{figure}

\subsection{Effect of the Gain Sequence}
\label{subsec:effofgan}

Like the sensor range, the value of $a$ (the gain appearing in Step 6 of the multi-agent algorithm) plays a crucial role in the performance of the algorithm. Figure \ref{fig:gaintoolarge}, for example, shows an agent attempting to track a target when the gain sequence is too large. Here, the agent's velocity vector is highly sensitive to small changes in the target's predicted position, this eventually results in the agent moving too far from the target to the point where the agent is no longer able to detect it. Similarly, Figure \ref{fig:gaintoosmall} shows how using a gain sequence that is too small also results in the agent not being able to remain close enough the target. Consequently, the agent eventually loses track of the target. The simulations in Figure \ref{fig:magnitudepfthegain} were produced using $\bm{R}=0.05\bm{I}$,  $\updelta t =0.1$, $a=0.5$, $r=2$, and $r'=1.5$; and the target was assumed to move according to the linear model in (\ref{eq:qbert}) with $\bm{Q}=0.01\bm{I}$ and velocity equal to 1 (i.e., the last two entries of $\bm{y}_i(t)$ are equal to 1). The agent and target updated their velocities 50 times.

Unfortunately, there is no general way to determine the best value of $a$ in practice. One observation regarding the update in (\ref{eq:step6}) that can be helpful in selecting the value of $a$ is the following: setting $a=1$ implies that $[{x}_j^{E}(t+\updelta t),{x}_j^{N}(t+\updelta t)]^\top$, the position of agent $j$ at time $t+\updelta t$, will be exactly equal to one of $[\hat{y}_i^E(t+\updelta t|t),\hat{y}_i^N(t+\updelta t|t)]^\top$ and $\hat{\bm{c}}_j(t)$, depending on whether the agent decides to move towards the target or towards the center of mass of its Voronoi cell. Therefore, when an agent is actively tracking a target, setting $a=1$ implies that $\uprho(\bm{x}_j(t+\updelta t),\hat{\bm{y}}_i(t+\updelta t|t))=0$. This observation implies that setting $a=1$ is not an ideal strategy for two reasons: 
\begin{enumerate}
\item The EKF algorithm is not well-defined when $\uprho(\bm{x}_j(t+\updelta t),\hat{\bm{y}}_i(t+\updelta t|t))=0$ (see the discussion following (\ref{eq:imighthaveto})).
\item In reality, two bodies cannot occupy the same space so that having $\uprho(\bm{x}_j(t+\updelta t),\hat{\bm{y}}_i(t+\updelta t|t))=0$ is not physically possible when $\hat{\bm{y}}_i(t+\updelta t|t)$ is close to the target's true position at time $t+\updelta t$.
\end{enumerate}
In light of the discussion above, a general guideline for choosing $a$ is to pick a value that is only slightly less than 1 if the target's predicted state is expected to be somewhat accurate. For our numerical experiments we select values of $a$ that are strictly less than one in an attempt to ensure that the algorithm is not too sensitive to changes in the EKF's predictions.

\subsection{Maximum Spread and Minimum Distance to Target}
\label{sec:spreadanddistance}

Sections \ref{subsec:effsensorrange} and \ref{subsec:effofgan} explained the significance of $r$, $r'$, and $a$ when a single agent is attempting to track a target. It was observed that if the sensor range is too small or if the value of $a$ is either too small or too large, the terminal distance between the agent and the target tends to be large. When there are multiple agents in the AOI, two other variables that govern the algorithm's performance are $\uplambda$,  the distance at which a target is assumed to be close to an agent (see Step 3 of the algorithm), and $\uptau$,  the number of agents that need to be near a target for that target to be considered to be well-guarded (see Step 4 of the algorithm). A large value of $\uptau$ combined with a small value of $\uplambda$ implies that an agent will only move away from a detected target if it predicts that several agents will be extremely close to the target. In contrast, a small value of $\uptau$ combined with a large value of $\uplambda$ implies that agents very easily move away from a detected target. Thus, a poor choice of $\uptau$ and $\uplambda$ could result in either all agents moving away from the target or all agents gathering near the target; neither of these scenarios is ideal given objectives 1 and 2 on p. \pageref{eq:sandywpusstod}. With this in mind,
Tables \ref{eq:jonnsm1}--\ref{eq:jonnsm4} compute: 1) the mean terminal distance between the target and the nearest agent divided by the initial value of this distance (see the column  ``Dist. to Nearest Agent''), and 2) the mean terminal maximum distance between agents divided by the initial value of this distance (see the column ``Max. Dist. Between Agents''), for different combinations of $r$, $r'$, $\uptau$, and $\uplambda$ when there are 4 agents. Small values in the first column indicate agents are tracking the target while large values of the second column indicate agents are spreading out. The means in Tables \ref{eq:jonnsm1}--\ref{eq:jonnsm4} are computed from 20 i.i.d. replications, each consisting of 50 velocity updates per-agent. Each replication was produced by initializing the positions of agents and targets uniformly at random within the AOI; using $\bm{R}=0.05\bm{I}$, $\updelta t =0.1$, and $a=0.5$; and the target was assumed to move according to the linear model in (\ref{eq:qbert}) with $\bm{Q}=0.01\bm{I}$, velocity equal to 1 (i.e., the last two entries of $\bm{y}_i(t)$ are equal to 1).

\begin{table}[p]
\centering
\begin{tabular}{c c c c c c} \toprule
   {$\uptau$} & {$\uplambda$}  & {Dist. to Nearest Agent} & {Max. Dist. Between Agents}  \\ \midrule
     $1$  & $0.1$ & $0.1569\ (\hat{\upsigma}^2=0.0124)$  & $0.9537\ (\hat{\upsigma}^2=0.1967)$      \\
     $1$ & $0.5$ &  $0.3348\ (\hat{\upsigma}^2=0.1029)$ & $1.1006\ (\hat{\upsigma}^2=0.3250)$   \\
     $1$  & $2$ & $0.3048\ (\hat{\upsigma}^2=0.0546)$  & $1.4699\ (\hat{\upsigma}^2=0.1375)$     \\
   $1$  & $3$ & $0.4875\ (\hat{\upsigma}^2=0.1947)$  & $1.4973\ (\hat{\upsigma}^2=0.5830)$       \\ \midrule
   $2$  & $0.1$ & $ 0.2964\ (\hat{\upsigma}^2=0.0592)$  & $0.7556\ (\hat{\upsigma}^2=0.1740)$      \\
    $2$ & $0.5$ & $0.1976 \ (\hat{\upsigma}^2=0.0265)$ & $1.1087\ (\hat{\upsigma}^2=0.4836)$    \\
    $2$  & $2$ & $0.3080\ (\hat{\upsigma}^2=0.0471)$  & $ 1.2940\ (\hat{\upsigma}^2=0.3126)$     \\
    $2$  & $3$ & $0.2400\ (\hat{\upsigma}^2=0.0308)$  & $1.5787\ (\hat{\upsigma}^2=0.6409)$      \\ \midrule
    $3$  & $0.1$ & $0.1949\ (\hat{\upsigma}^2=0.0191)$  & $0.7750\ (\hat{\upsigma}^2=0.2952)$    \\
    $3$  & $0.5$ & $0.2330\ (\hat{\upsigma}^2=0.0648)$  & $1.1551\ (\hat{\upsigma}^2=1.0617)$     \\
    $3$  & $2$ & $0.2703\ (\hat{\upsigma}^2=0.0347)$  & $1.1701\ (\hat{\upsigma}^2=0.3006)$   \\
    $3$  & $3$ & $0.3261\ (\hat{\upsigma}^2=0.1056)$  & $1.0590\ (\hat{\upsigma}^2=0.1192)$          \\ \bottomrule
\end{tabular}
\caption[Normalized mean terminal distance from the target to the nearest agent and normalized mean terminal maximum distance between agents when there are four agents, one target, $r=3$, and $r'=2.5$]{Mean terminal distance from the target to the nearest agent and mean terminal maximum distance between agents when there are four agents, one target, $r=3$, and $r'=2.5$ (distances are divided by their values at the beginning of the replication prior to averaging). $\hat{\upsigma}^2$ is the sample variance. The variables $\uptau$ and $\uplambda$ have a significant effect over the behavior of the agents.}
\label{eq:jonnsm1}
\end{table}

\begin{table}[p]
\centering
\begin{tabular}{c c c c c c} \toprule
   {$\uptau$} & {$\uplambda$}  & {Dist. to Nearest Agent} & {Max. Dist. Between Agents}  \\ \midrule
     $1$  & $0.1$ & $0.3557\ (\hat{\upsigma}^2=0.0860)$  & $0.9675\ (\hat{\upsigma}^2=0.2911)$      \\
     $1$ & $0.5$ &  $0.3261\ (\hat{\upsigma}^2= 0.0597)$ & $1.2445\ (\hat{\upsigma}^2=0.2934)$   \\
     $1$  & $2$ & $0.6423\ (\hat{\upsigma}^2=0.3227)$  & $1.0706\ (\hat{\upsigma}^2=0.1149)$     \\
   $1$  & $3$ & $0.7195\ (\hat{\upsigma}^2=0.3410)$  & $0.9462\ (\hat{\upsigma}^2=0.0800)$       \\ \midrule
   $2$  & $0.1$ & $ 0.3513\ (\hat{\upsigma}^2=0.1028)$  & $1.1958\ (\hat{\upsigma}^2=0.3893)$      \\
    $2$ & $0.5$ & $0.2993 \ (\hat{\upsigma}^2=0.0726)$ & $1.2100\ (\hat{\upsigma}^2=0.1166)$    \\
    $2$  & $2$ & $0.3120\ (\hat{\upsigma}^2=0.0771)$  & $ 1.3101\ (\hat{\upsigma}^2=0.2050)$     \\
    $2$  & $3$ & $0.5354\ (\hat{\upsigma}^2=0.3261)$  & $1.3315\ (\hat{\upsigma}^2=0.4669)$      \\ \midrule
    $3$  & $0.1$ & $0.3448\ (\hat{\upsigma}^2=0.3936)$  & $1.3020\ (\hat{\upsigma}^2=0.3143)$    \\
    $3$  & $0.5$ & $0.2593\ (\hat{\upsigma}^2=0.0673)$  & $0.8850\ (\hat{\upsigma}^2=0.1520)$     \\
    $3$  & $2$ & $0.2373\ (\hat{\upsigma}^2=0.0310)$  & $1.2419\ (\hat{\upsigma}^2=0.1599)$   \\
    $3$  & $3$ & $0.3159\ (\hat{\upsigma}^2=0.1042)$  & $1.2420\ (\hat{\upsigma}^2=0.1427)$          \\ \bottomrule
\end{tabular}
\caption[Normalized mean terminal distance from the target to the nearest agent and normalized mean terminal maximum distance between agents when there are four agents, one target, $r=2$, and $r'=1.5$]{Mean terminal distance from the target to the nearest agent and mean terminal maximum distance between agents when there are four agents, one target, $r=2$, and $r'=1.5$ (distances are divided by their values at the beginning of the replication prior to averaging). $\hat{\upsigma}^2$ is the sample variance. The variables $\uptau$ and $\uplambda$ have a significant effect over the behavior of the agents.}
\label{eq:jonnsm2}
\end{table}

\begin{table}[p]
\centering
\begin{tabular}{c c c c c c} \toprule
   {$\uptau$} & {$\uplambda$}  & {Dist. to Nearest Agent} & {Max. Dist. Between Agents}  \\ \midrule
     $1$  & $0.1$ & $1.0161\ (\hat{\upsigma}^2=0.6779)$  & $0.7132\ (\hat{\upsigma}^2=0.1144)$      \\
     $1$ & $0.5$ &  $1.2612\ (\hat{\upsigma}^2=0.7190)$ & $0.6005\ (\hat{\upsigma}^2=0.1207)$   \\
     $1$  & $2$ & $1.1480\ (\hat{\upsigma}^2=0.8477)$  & $0.6960\ (\hat{\upsigma}^2=0.1133)$     \\
   $1$  & $3$ & $1.0841\ (\hat{\upsigma}^2=0.3906)$  & $0.6978\ (\hat{\upsigma}^2=0.2923)$       \\ \midrule
   $2$  & $0.1$ & $ 0.8769\ (\hat{\upsigma}^2=0.4911)$  & $0.7907\ (\hat{\upsigma}^2=0.1680)$      \\
    $2$ & $0.5$ & $1.3092 \ (\hat{\upsigma}^2=0.8296)$ & $0.6923\ (\hat{\upsigma}^2=0.1428)$    \\
    $2$  & $2$ & $1.2736\ (\hat{\upsigma}^2=0.3907)$  & $ 0.6780\ (\hat{\upsigma}^2=0.1095)$     \\
    $2$  & $3$ & $1.2974\ (\hat{\upsigma}^2=1.2430)$  & $0.6500\ (\hat{\upsigma}^2=0.0818)$      \\ \midrule
    $3$  & $0.1$ & $0.9303\ (\hat{\upsigma}^2=0.6797)$  & $0.5529\ (\hat{\upsigma}^2= 0.0540)$    \\
    $3$  & $0.5$ & $0.9133\ (\hat{\upsigma}^2=0.4332)$  & $0.6480\ (\hat{\upsigma}^2=0.1503)$     \\
    $3$  & $2$ & $1.0092\ (\hat{\upsigma}^2=0.4750)$  & $0.6979\ (\hat{\upsigma}^2=0.0554)$   \\
    $3$  & $3$ & $1.0755\ (\hat{\upsigma}^2=0.5873)$  & $0.6411\ (\hat{\upsigma}^2=0.0706)$          \\ \bottomrule
\end{tabular}
\caption[Normalized mean terminal distance from the target to the nearest agent and normalized mean terminal maximum distance between agents when there are four agents, one target, $r=1$, and $r'=0.5$]{Mean terminal distance from the target to the nearest agent and mean terminal maximum distance between agents when there are four agents, one target, $r=1$, and $r'=0.5$ (distances are divided by their values at the beginning of the replication prior to averaging). $\hat{\upsigma}^2$ is the sample variance. The variables $\uptau$ and $\uplambda$ have a significant effect over the behavior of the agents.}
\label{eq:jonnsm3}
\end{table}

\begin{table}[p]
\centering
\begin{tabular}{c c c c c c} \toprule
   {$\uptau$} & {$\uplambda$}  & {Dist. to Nearest Agent} & {Max. Dist. Between Agents}  \\ \midrule
     $1$  & $0.1$ & $1.4262\ (\hat{\upsigma}^2=0.5683)$  & $0.4505\ (\hat{\upsigma}^2=0.0398)$      \\
     $1$ & $0.5$ &  $1.0955\ (\hat{\upsigma}^2=0.3439)$ & $0.4335\ (\hat{\upsigma}^2=0.0288)$   \\
     $1$  & $2$ & $1.4668\ (\hat{\upsigma}^2=0.1747)$  & $0.4351\ (\hat{\upsigma}^2=0.0359)$     \\
   $1$  & $3$ & $1.3079\ (\hat{\upsigma}^2=0.5449)$  & $0.4665\ (\hat{\upsigma}^2=0.0306)$       \\ \midrule
   $2$  & $0.1$ & $ 1.3213\ (\hat{\upsigma}^2=0.5002)$  & $0.4293\ (\hat{\upsigma}^2=0.0227)$      \\
    $2$ & $0.5$ & $1.4793 \ (\hat{\upsigma}^2=0.9726)$ & $0.3979\ (\hat{\upsigma}^2=0.0312)$    \\
    $2$  & $2$ & $1.4770\ (\hat{\upsigma}^2=0.3656)$  & $ 0.3648\ (\hat{\upsigma}^2=0.0234)$     \\
    $2$  & $3$ & $1.2614\ (\hat{\upsigma}^2=0.6410)$  & $0.4079\ (\hat{\upsigma}^2=0.0158)$      \\ \midrule
    $3$  & $0.1$ & $ 1.2997\ (\hat{\upsigma}^2=0.3756)$  & $0.4262\ (\hat{\upsigma}^2=0.0330)$    \\
    $3$  & $0.5$ & $1.4249\ (\hat{\upsigma}^2=0.8190)$  & $0.4505\ (\hat{\upsigma}^2=0.0338)$     \\
    $3$  & $2$ & $1.0037\ (\hat{\upsigma}^2=0.1506)$  & $0.4659\ (\hat{\upsigma}^2=0.0946)$   \\
    $3$  & $3$ & $ 1.4076\ (\hat{\upsigma}^2=0.7358)$  & $0.4224\ (\hat{\upsigma}^2=0.0145)$          \\ \bottomrule
\end{tabular}
\caption[Normalized mean terminal distance from the target to the nearest agent and normalized mean terminal maximum distance between agents when there are four agents, one target, $r=0.5$, and $r'=0.4$]{Mean terminal distance from the target to the nearest agent and mean terminal maximum distance between agents when there are four agents, one target, $r=0.5$, and $r'=0.4$ (distances are divided by their values at the beginning of the replication prior to averaging). $\hat{\upsigma}^2$ is the sample variance. The variables $\uptau$ and $\uplambda$ have a significant effect over the behavior of the agents.}
\label{eq:jonnsm4}
\end{table}

An overall trend that can be observed in Tables \ref{eq:jonnsm1}--\ref{eq:jonnsm4} is that larger values of $r$ are correlated with a decrease in the mean terminal distance between the target and the nearest agent. 
Additionally, larger values of $r$ are associated with larger values of the mean maximum distance between agents. 
In contrast, small values of $r$ are correlated with a large mean terminal distance between the target and the nearest agent and a small mean terminal maximum distance between agents. 
Larger values of $r$ allow agents to minimize the distance between the target and the nearest agent while simultaneously maximizing the maximum distance between agents.

For the case where $r=2$ and $r'=1.5$, $\uptau=1$, and $\uplambda=0.5$, Figure \ref{fig:toysstory1} shows the evolution of the mean distance between the target and the nearest agent relative to the initial value of this distance using the settings from Tables \ref{eq:jonnsm1}--\ref{eq:jonnsm4}.  The result in Figure \ref{fig:toysstory1} is consistent with the results in Table \ref{eq:jonnsm2}. Figure \ref{fig:toysstory2} repeats the experiment of Figure \ref{fig:toysstory1} for the case where there is one agent and one target. The mean terminal distance in Figure \ref{fig:toysstory2} seems to be approximately the same as the mean terminal distance in Figure \ref{fig:toysstory1}, although it is important to note the high variance of the individual realizations

\begin{figure}[!p]
\centering
\begin{subfigure}[b]{1\textwidth}
 \centering
\begin{tikzpicture}

\node[inner sep=0pt] (probe) at (0,0) 
{\includegraphics[scale=0.6]{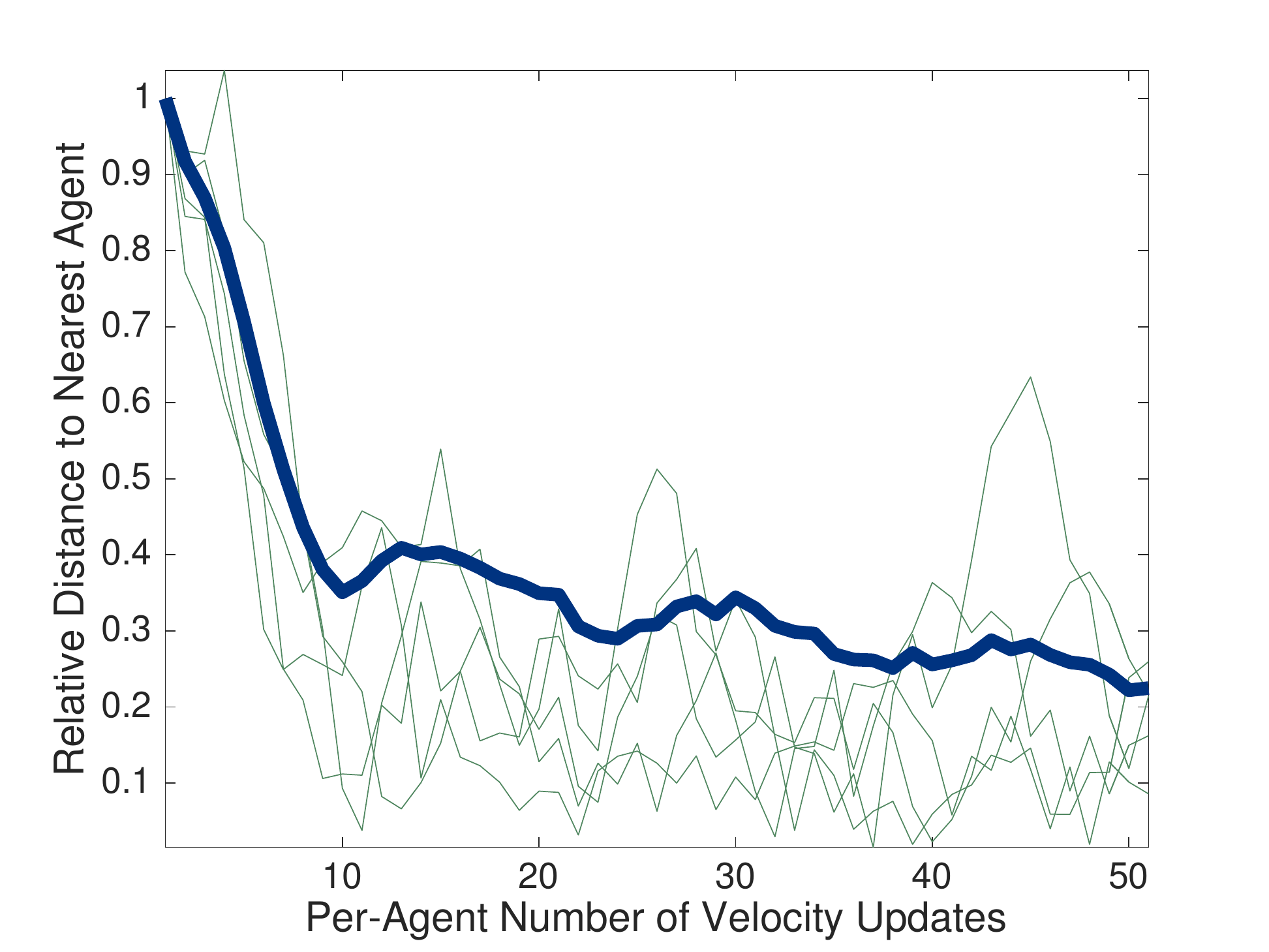}};

\end{tikzpicture}
 \caption{Four agents, one target.}
 \label{fig:toysstory1}
\end{subfigure}\hfill

\begin{subfigure}[b]{1\textwidth}
\centering
\begin{tikzpicture}

\node[inner sep=0pt] (probe) at (0,0)
{\includegraphics[scale=0.6]{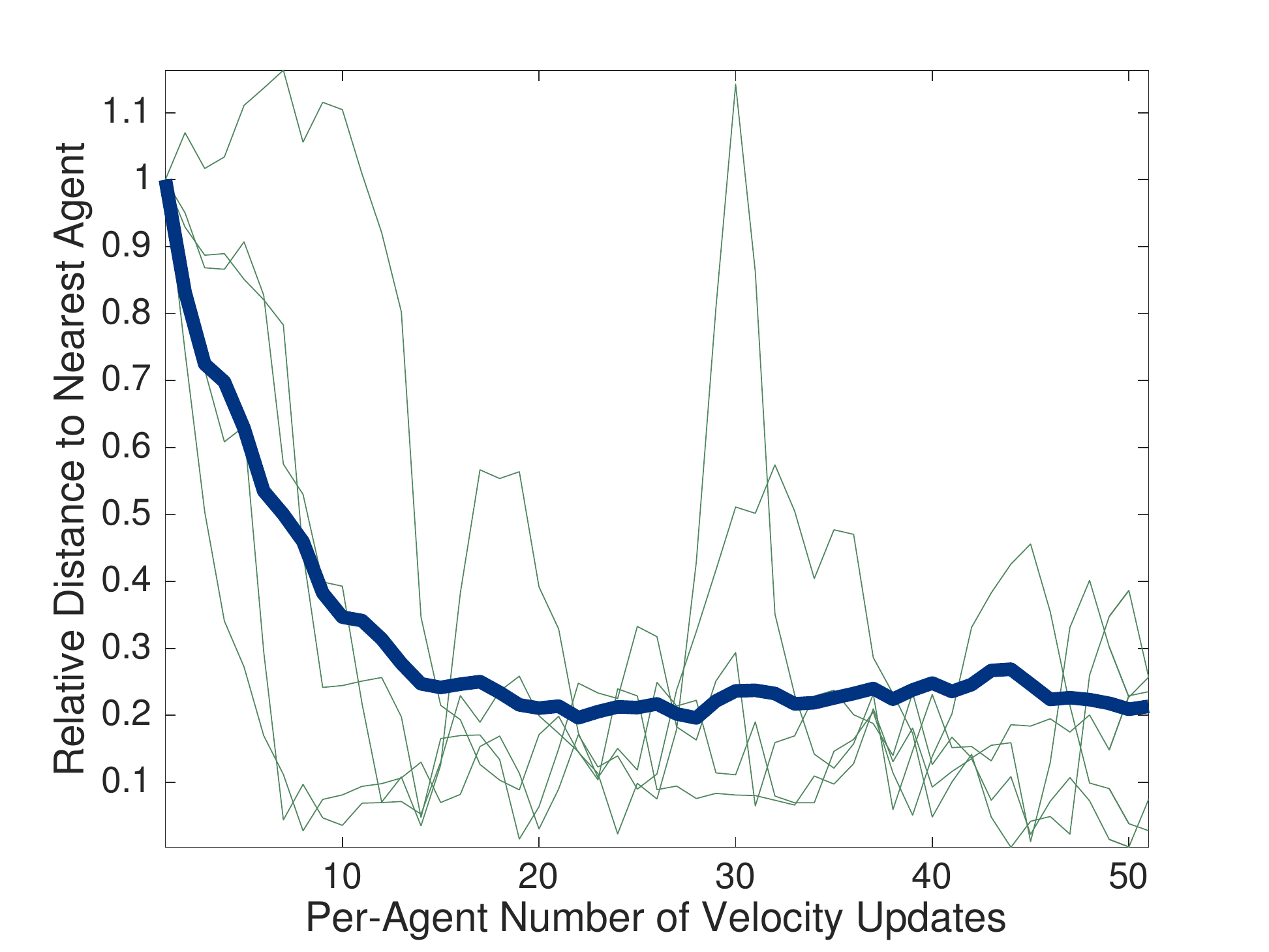}};
\end{tikzpicture}
\caption{One agent, one target.}
\label{fig:toysstory2}
\end{subfigure}\hfill
\caption[Mean distance from the target to the nearest agent divided by the initial value of this distance]{Mean distance from the target to the nearest agent divided by the initial value of this distance. The thick line represents the mean relative distance obtained by averaging 20 i.i.d. realizations while each of the thinner lines represents one of the first 5 realizations. }
\label{fig:toysstory}
\end{figure}

\section{Concluding Remarks}
\label{sec:finalcommentsagent}

This chapter investigates the numerical performance of cyclic SA applied to a multi-agent optimization problem for tracking and surveillance. An attractive feature of the resulting multi-agent optimization algorithm is that it can be implemented when agents are not allowed to communicate with each other, although it is entirely possible to extend the algorithm to include communication between agents. One assumption made throughout this section is that each agent knows their own position and velocity at any point in time. Through a natural modification, the algorithm in this chapter could also be implemented in the same manner when these variables are estimated. The numerical results indicate that, when the sensor radius is sufficiently large, the proposed algorithm allows agents to track the target while simultaneously increasing the distance between agents. 
%

A few directions for future research regarding this chapter's multi-agent algorithm include investigating other methods for predicting target/agent state positions (aside from the EKF), considering adaptive gain sequences (instead of the using the constant gain $a$), and considering other objective functions. With regards to this last topic, we remind the reader that  in this chapter each agent is minimizing a different loss function, which is a different setting than the setting from Chapters \ref{chap:descriptionofGCSA}--\ref{chap:numericschap} where there is a universal (i.e., common to all agents) loss function to minimize. A universal loss function can be constructed for the multi-agent problem in this chapter by simply adding the individual loss functions. Specifically, the universal loss function could be defined as $L(\bm\uptheta)\equiv \sum_{j=1}^n L_j(\dot{x}_j^E(t),\dot{x}_j^N(t))$, where $\bm\uptheta$ is a vector containing the velocity vectors of all $n$ agents. Note that the value of $[\dot{x}_j^E(t),\dot{x}_j^N(t)]^\top$ affects the value of $L_i(\dot{x}_i^E(t),\dot{x}_i^N(t))$ even when $i\neq j$ (through the terms $\bm{c}_i(t)$ and $\upgamma_\ell(t+\updelta t)$ for $\ell=1,\dots,{\text{\# Agents}}$).
Furthermore, if agent $j$ is located at a distance of exactly $\uplambda$ from any target (under the usual Gaussian state model this occurs with probability zero) then $L(\bm\uptheta)$ is not differentiable with respect to $[\dot{x}_j^E(t),\dot{x}_j^N(t)]^\top$. Future work could focus on studying this more complex scenario.


\chapter[Overall Concluding Remarks and Future Work]{Overall Concluding Remarks\\ and Future Work}

This dissertation investigated the asymptotic properties of the generalized cyclic stochastic approximation (GCSA) algorithm (Algorithm \ref{findme}) including the convergence (w.p.1) of the iterates, the asymptotic normality of the normalized iterates, and the asymptotic efficiency of GCSA relative to its non-cyclic counterpart. Next we review some of the contributions of this dissertation. 

The main theorem on convergence, Theorem \ref{thm:hoeshoo}, provided sufficient conditions for the convergence w.p.1 of the GCSA iterates to a zero of the gradient of the function to minimize. Although some of the convergence conditions closely resemble well-known conditions for the convergence of the standard (i.e., non-cyclic) SA algorithms, a few conditions are inherent only to the GCSA algorithm (see Section \ref{sec:discussconvergence} for a discussion on the validity of the convergence conditions in Theorem \ref{thm:hoeshoo}). Section \ref{sec:convergencenumericos} contains numerical examples illustrating the convergence of GCSA using both SG- and SPSA-based gradient estimates.

With the goal of computing the rate of convergence of the GCSA algorithm, a generalization to Theorem 2.2 in Fabian (1968)\nocite{fabian1968} (regarding the asymptotic normality of SA procedures) was provided in Chapter \ref{sec:ROC}  (see Theorem \ref{thm:generalizefabian}). Theorem \ref{thm:fnogg} then used the generalization from Theorem \ref{thm:generalizefabian} to derive conditions for the asymptotic normality of the normalized iterates of a special case of GCSA (which helps define the rate of convergence of the algorithm) as well as to obtain expressions for the parameters of its asymptotic distribution (see Section \ref{sec:bonestrailseason} for a discussion on the conditions of Theorem \ref{thm:fnogg} for asymptotic normality).  A few other applications of our generalization to Theorem 2.2 in Fabian (1968)\nocite{fabian1968} are discussed in Appendix \ref{sec:fabiansecgeneralize}, including an application to an adaptive SA algorithm.  Numerical examples supporting the theory on asymptotic normality are given in Section \ref{sec:normalitycenumericos}.  

Using the result on asymptotic normality from Theorem \ref{thm:fnogg}, Chapter \ref{chap:imefficient} provided an analytical estimate for the asymptotic efficiency (in the MSE sense) of a special case of GCSA relative to the efficiency of its non-cyclic counterpart after taking into consideration the cost of implementing each algorithm. It was shown that, in general, either algorithm may be (asymptotically) more efficient than the other (it is important to keep in mind, however, that the relative efficiency in Chapter \ref{chap:imefficient} is only one possible way to compare the cyclic and non-cyclic algorithms). Section \ref{sec:Numericssimples} contains numerical experiments estimating the aforementioned relative efficiency under different definitions of cost when the update directions are either SG- or SPSA-based.

 While the numerical examples in Chapter \ref{chap:numericschap} focused on cases where the conditions for convergence and/or asymptotic normality were met, Chapter \ref{chap:multiagent} investigated the numerical performance of a cyclic SA algorithm applied to a zero-communication multi-agent optimization problem where the loss function is time-varying. As a result, the theory of Chapters \ref{sec:cyclicseesaw}--\ref{chap:imefficient} (which assumes that the loss function is not time-varying) does not apply. It was observed that the performance of the cyclic SA approach had a strong correlation with the quality of the agents' sensors: the better the sensors' quality, the better the cyclic algorithm performed (this behavior is not surprising and, more importantly, is not specific to the cyclic algorithm).
 
 We end this chapter by giving a few topics for future research.
%
One natural direction for future research pertains to the convergence of a constrained variant of GCSA. A possible approach to constructing a constrained version of GCSA could involve projecting $\hat{\bm{\uptheta}}_k$ at every step (we refer the reader to the comment on p. \pageref{page:onconditionA4444} regarding condition A4)\nocite{ISSO}. Another approach could consist on constructing a variant of GCSA for which the constraints are satisfied asymptotically (e.g., Wang and Spall 2011, where the iterates of the algorithm asymptotically satisfy the requirement of lying in a discrete set)\nocite{dspsa}. The work of Wang and Spall (2008)\nocite{otherwang} presents an SA algorithm based on the penalty functions method  for solving stochastic optimization problems with general inequality constraints, the ideas of this paper may also be of interest when considering constrained versions of the GCSA algorithm.

 A second avenue for future research involves the generalization of the GCSA algorithm to an asynchronous setting (i.e., parallelizing GCSA), this could possibly be done using ideas similar to those in Tsitsiklis (1994)\nocite{tsitsiklis1994}. Additionally, it would also be interesting to study the behavior of cyclic procedures for time-varying loss functions (many multi-agent optimization problems, such as the problem in Chapter \ref{chap:multiagent}, have a time-varying loss function). 

A few other topics for future research are the following: 
\begin{enumerate}
\item Can the boundedness assumption of condition A1 (see p. \pageref{cond:conditiona1}) be weakened? (Answering this question may prove useful in when obtaining practical conditions for the convergence of an asynchronous variant of GCSA.)
\item   Borkar and Meyn (2000) show that:
\begin{quote}
. . . the ODE method can be extended to establish both the stability and convergence of the stochastic approximation method, as opposed to only the latter. (Borkar and Meyn 2000)
\end{quote}
Might the ideas of the paper by Borkar and Meyn (2000)\nocite{borkarmeyn2000} be used to extend the theory of convergence of the GCSA iterates in an analogous manner?
\item Could iterate averaging be combined with Algorithm \ref{beastwasdone} (see p. \pageref{beastwasdone}) so that the iterates of the resulting algorithm are asymptotically normally distributed (after an appropriate centering and scaling)?
\item What finite-time performance guarantees can be obtained for the GCSA algorithm (e.g., in terms of bounds on the MSE as a function of the iteration number)?
\item As discussed in Sections \ref{sec:costoport} and \ref{sec:aseficwiener}, the cost of implementation is an important aspect to consider when comparing a cyclic algorithm to its non-cyclic counterpart. Future work might consist of a more detailed investigation into how the cost of implementing a cyclic algorithm compares to the cost of implementing a non-cyclic algorithm.
\end{enumerate}
In summary, this dissertation addressed several important (and previously unanswered) questions regarding cyclic implementations of SA procedures. Still, several interesting questions remain unanswered (as indicated in the discussion above). It is expected that some of the directions for future work mentioned above will make heavy use of the results and ideas in this dissertation.

\begin{appendices}

\chapter{A Generalization of Fabian (1968) and Applications}
\label{sec:fabiansecgeneralize}
\chaptermark{A Generalization of Fabian and Applications}

Stochastic approximation (SA) is a general framework for analyzing the convergence of a large collection of stochastic root-finding algorithms. The Kiefer--Wolfowitz and stochastic gradient algorithms are two well known (and widely used) examples of SA. Because of their applicability to a wide range of problems, many results have been obtained regarding the convergence properties of SA procedures. One important reference in the literature, Fabian (1968)\nocite{fabian1968}, derives general conditions for the asymptotic normality of the SA iterates. Since then, many results regarding the asymptotic normality of SA procedures have relied heavily on Theorem 2.2  in Fabian (1968)\nocite{fabian1968} (which we refer to as ``Fabian's theorem''). Unfortunately, some of the assumptions of Fabian's theorem are not applicable to some modern implementations of SA in control and learning (Section  \ref{subsec:primadonna}, for example, explained why Fabian's theorem could not be applied to the GCSA algorithm). This chapter explains in detail the nature of this incompatibility and shows how Fabian's theorem can be generalized to address the issue.{\footnote{This chapter is largely based on the paper by Hernandez and Spall (2017)\nocite{hernandeznspall2017}.}} While the main theorem in this chapter has already been presented in Chapter \ref{sec:ROC} (the main result in this chapter is the same as Theorem \ref{thm:generalizefabian} in Chapter \ref{sec:ROC}), the theorem was previously presented in the context of cyclic SA. This chapter discusses other applications of the theorem for a more general class of SA algorithms (due to an effort to make this chapter somewhat self-contained, there is some overlap between this chapter and Chapter \ref{sec:ROC}). Furthermore, this appendix contains a proof of the generalization to Fabian's theorem that is more detailed than the proof provided in Section \ref{sec:genfabiangenfabian}.

 The main result in Fabian (1968)\nocite{fabian1968}, which we will refer to as ``Fabian's theorem,'' is concerned with the following recursion:
\begin{align}
\label{eq:moon8}
\bm{W}_{k+1}=(\bm{I}-k^{-\upalpha}\bm\Gamma_k)\bm{W}_k+\frac{\bm{T}_k}{k^{\upalpha+\upbeta/2}}+\frac{\bm\Phi_k\bm{V}_k}{k^{(\upalpha+\upbeta)/2}},
\end{align}
for $\upalpha, \upbeta>0$, random random vectors $\bm{W}_k, \bm{T}_k, \bm{V}_k \in \mathbb{R}^p$, and random matrices $\bm\Phi_k, \bm\Gamma_k\in \mathbb{R}^{p\times p}$. Under certain conditions, Fabian (1968)\nocite{fabian1968} shows that $k^{\upbeta/2}\bm{W}_k$ is asymptotically normally distributed (with mean vector and covariance matrix determined by $\upalpha$, $\upbeta$, $\bm{T}_k$, $\bm{V}_k$, $\bm\Phi_k$, and $\bm\Gamma_k$. Undoubtedly, Fabian's theorem is already applicable to a variety of SA algorithms (see, for example, Zhou and Hu 2014\nocite{zhounhu2014},  Kar et al. 2013\nocite{karetal2013}, Hu et al. 2012\nocite{huetal2012}, and Zorin et al. 2000\nocite{zorinetal200} to name a few applications). However, a critical assumption in Fabian's theorem is that $\bm\Gamma_k\rightarrow \bm\Gamma$ with probability one (w.p.1) for some real, positive definite matrix $\bm\Gamma$. This introduces an important limitation, the nature of which we review next.
 
  Let us begin by describing the standard SA recursion. Given a vector $\bm\uptheta\in \mathbb{R}^p$ and a vector-valued function $\bm{f}(\bm\uptheta)\in \mathbb{R}^n$ the basic SA algorithm for solving $\bm{f}(\bm\uptheta)=\bm{0}$ is given by the recursion:
\begin{align}
\label{eq:formof8}
\hat{\bm{\uptheta}}_{k+1}=\hat{\bm{\uptheta}}_k-a_k{\bm{Y}}_k(\hat{\bm{\uptheta}}_k),
\end{align}
where ${\bm{Y}}_k(\bm\uptheta)$ is a vector-valued random variable representing a noisy observation of $\bm{f}(\hat{\bm{\uptheta}}_k)$, and $a_k>0$ with $a_k\rightarrow 0$ is the {{gain sequence}} of the algorithm. Under certain conditions (which vary depending on the specific SA algorithm considered) we have $\hat{\bm{\uptheta}}_k\rightarrow \bm\uptheta^\ast$ w.p.1, where $\bm\uptheta^\ast$ is a zero of $\bm{f}(\bm\uptheta)$. Letting $\bm{W}_k=\hat{\bm{\uptheta}}_k-\bm\uptheta^\ast$ in (\ref{eq:moon8}), Fabian's theorem can be used to derive conditions under which $k^{\upbeta/2}(\hat{\bm{\uptheta}}_k-\bm\uptheta^\ast)$ has a limiting multivariate normal distribution. 
For some SA algorithms, however, requiring $\bm\Gamma$ to be {\it{symmetric}} (which is necessary in order for $\bm\Gamma$ to be real and positive definite) is incompatible with Fabian's other assumptions regarding $\upalpha$, $\upbeta$, $\bm{T}_k$, $\bm{V}_k$, and $\bm\Phi_k$. Specifically, for some SA algorithms it impossible to find a parametrization of (\ref{eq:moon8}) such that $\bm\Gamma$ is symmetric and all of Fabian's assumptions on $\upalpha$, $\upbeta$, $\bm{T}_k$, $\bm{V}_k$, $\bm\Phi_k$, and $\bm\Gamma_k$ also hold. Consequently, Fabian's theorem cannot be applied. The following are examples of such algorithms:
\begin{enumerate}
\item SA algorithms for root-finding where $\bm{f}(\bm\uptheta)$ does not represent a gradient 
so that the Jacobian of $\bm{f}(\bm\uptheta)$ is typically non-symmetric (e.g., Blum 1954)\nocite{blum1954}.
\item Second order or adaptive SA algorithms for stochastic optimization where $\bm{f}(\bm\uptheta)$ is the gradient of a function to minimize and $a_k$ is replaced by a diagonal matrix that is not necessarily a multiple of the identity matrix (e.g., Renotte and Wouwer 2003)\nocite{renottenwouwer2003}.
\item The Generalized Cyclic SA (GCSA) algorithm in which only a subset of the parameter vector is updated at any given time. The subvector to update can be selected following a deterministic pattern (Hernandez and Spall 2014\nocite{hernandeznspall2014} and 2016\nocite{hernandeznspall2016}) or according to a random variable (Hernandez 2016)\nocite{hernandez2016}.
\item Randomized coordinate descent algorithms where some coordinates may be updated with a higher probability than others (e.g., Nesterov 2010).
\end{enumerate}
 The main contribution of this work is to provide a theorem that relaxes Fabian's restriction that $\bm\Gamma$ must be symmetric. The generalization makes the theorem more applicable to some modern applications of SA in control and learning including special cases of examples 1--4 above.

It is important to note that there do exist results showing the asymptotic normality of certain classes of SA procedures for which $\bm\Gamma$ may not be a symmetric matrix. In general, however, explicit expressions for the parameters of the asymptotic distribution are only available for specific SA algorithms.  For the special case where $a_k=1/k$ with $k\geq 1$, for example, Nevel'son and Has'minskii (1973, Chapter 6)\nocite{nevelsonandhas1973} show asymptotic normality for the Robbins--Monro procedure (Robbins and Monro 1951)\nocite{robbinsmonro1951} where $\bm\Gamma$ is not required to be symmetric and is allowed to have complex eigenvalues. The authors give a closed-form expression for the mean vector and covariance matrix of the asymptotic distribution.  

One common class of results closely related to asymptotic normality are the stochastic differential equation (SDE)-based definitions of rate of convergence for SA algorithms (e.g., Chapter 10 in Kushner and Yin 1997,\nocite{kushnyin1997}  Chapter 4 in part II of Benveniste et al. 1990\nocite{benveniste1990}, and Chapter 7 in Kushner and Clark 1978)\nocite{kushnclark1978}. Here, the authors first normalize the SA iterates and rewrite the resulting normalized iterates as a continuous time-dependent processes. The authors then show that the normalized continuous process converges weakly to an SDE and the rate of convergence is defined as the inverse of the scaling coefficient when normalizing the SA iterates. In certain cases, such as when the resulting SDE corresponds to a stationary Gauss--Markov process, these results imply the asymptotic normality of the normalized SA iterates and $\bm\Gamma$ is not required to be symmetric. Generally, however, the connection between asymptotic normality and the SDE-based definition of rate of convergence is not explicitly made. Moreover, in the instances where a connection to asymptotic normality is made, the parameters of the asymptotic distribution are not typically provided. Benveniste et al. (1990, Part II, Section 4.5, Theorem 13)\nocite{benveniste1990}, for example, use the SDE-based method to show asymptotic normality of normalized SA iterates where $\bm\Gamma$ is not required to be symmetric and may have complex eigenvalues. However,  to find the covariance matrix of the asymptotic distribution one must solve the Lyapunov equation on p. 334 of the same reference. We also note that Benveniste's result
 is limited to the case where the mean of the asymptotic distribution is zero (e.g., does not include the algorithms from Section \ref{sec:coffee100}).

 A few attractive features of our proposed generalization are that 1) the result is not restricted to any specific SA algorithm,  
 2) aside from guaranteeing asymptotic normality, we also present the explicit solution for the asymptotic mean and covariance matrix that is not generally provided in the SDE-based approach above, and 3) our generalization makes Fabian's theorem applicable to some SA algorithms for which asymptotic normality 
 has not been proven.


\section{Preliminaries}
\label{sec:prems}

While we refer the reader to Theorem \ref{thm:fabian} of this dissertation for the full set of conditions of Fabian's theorem, a few key assumptions are that $\bm\Gamma_k$ must converge to a real positive definite (and therefore symmetric) matrix $\bm\Gamma$ w.p.1, $\bm{T}_k$ must converge to some vector $\bm{T}$ either w.p.1 (with probability one) or in expectation, $\bm{\Phi}_k$ must converge to $\bm\Phi$ w.p.1, $\bm{V}_k$ must have mean zero (conditionally on $\mathcal{F}_k$), and the covariance matrix of $\bm{V}_k$ (conditional on $\mathcal{F}_k$) must be uniformly bounded over $k$ and $\mathcal{F}_k$ and must converge (w.p.1) to some positive definite matrix $\bm\Sigma$. When $\bm{f}(\bm\uptheta)=\bm{g}(\bm\uptheta)$ is the gradient of a function to minimize, say $L(\bm\uptheta)$, the assumption that $\bm\Gamma=\lim_{k\rightarrow \infty}\bm\Gamma_k$ may be reasonable. To see this, note that if $\bm{g}(\bm\uptheta)$ is continuously differentiable then (\ref{eq:formof8}) may be written in the form of (\ref{eq:moon8}) by letting $\bm\Gamma_k=k^\upalpha a_k\tilde{\bm{H}}_k$, where the $i$th row of $\tilde{\bm{H}}_k$ is equal to the $i$th row of the Hessian matrix, $\bm{H}(\bm\uptheta)$,  of $L(\bm\uptheta)$ evaluated at $\bm\uptheta=(1-\uplambda_i)\hat{\bm{\uptheta}}_k+\uplambda_i\bm\uptheta^\ast$ for some $\uplambda_i\in[0,1]$ that depends on $\hat{\bm{\uptheta}}_k$.  If $\bm{H}(\bm\uptheta)$ is continuous at $\bm\uptheta^\ast$, $\hat{\bm{\uptheta}}_k\rightarrow \bm\uptheta^\ast$ w.p.1, and $k^\upalpha a_k\rightarrow a>0$, then requiring $\bm\Gamma$ to be real and positive definite translates into $a\bm{H}(\bm\uptheta^\ast)$ being real and positive definite, a common assumption in many minimization problems.
Suppose now that a modification is made to (\ref{eq:moon8}) in which $\bm{f}(\bm\uptheta)=\bm{g}(\bm\uptheta)$ and $a_k$ is replaced by a diagonal matrix $\bm{A}_k$ (e.g., Renotte and Wouwer 2003 where $\bm{A}_k$ is taken to be a diagonal matrix with the same diagonal entries as an estimate of $a_k\bm{H}(\hat{\bm{\uptheta}}_k)^{-1}$). Here, since $\bm{Y}_k(\hat{\bm{\uptheta}}_k)$ denotes a noisy estimate of the gradient it is common to replace the notation $\bm{Y}_k(\hat{\bm{\uptheta}}_k)$ in (\ref{eq:formof8}) with $\hat{\bm{g}}_k(\hat{\bm{\uptheta}}_k)$.
 This modification gives rise to the algorithm:
\begin{align}
\label{eq:willser8}
\hat{\bm{\uptheta}}_{k+1}=\hat{\bm{\uptheta}}_k-\bm{A}_k\hat{\bm{g}}_k(\hat{\bm{\uptheta}}_k).
\end{align}
It is easy to verify that  (\ref{eq:willser8}) is a special case of (\ref{eq:moon8}) in which $\bm\Gamma_k=k^\upalpha \bm{A}_k\tilde{\bm{H}}_k$. Therefore, if $\bm{H}(\bm\uptheta)$ is continuous, $\hat{\bm{\uptheta}}_k\rightarrow \bm\uptheta^\ast$ w.p.1, and $k^\upalpha\bm{A}_k\rightarrow \bm{A}$ w.p.1 then $\bm\Gamma_k\rightarrow\bm\Gamma=\bm{A}\bm{H}(\bm\uptheta^\ast)$ w.p.1. Generally the matrix $\bm{A}\bm{H}(\bm\uptheta^\ast)$ would not be symmetric and, therefore, Fabian's theorem would not be applicable. Because there is no {\it{unique}} way to define the variables $\bm\Gamma_k$, $\bm{T}_k$, $\bm{\Phi}_k$, and $\bm{V}_k$ in (\ref{eq:moon8}), it may be tempting to think that it is always possible to redefine these variables so that $\bm\Gamma$ is symmetric. Next we show that such a redefinition often leads to very strong assumptions on $\hat{\bm{\uptheta}}_k$.

Suppose an SA algorithm can be written in the form of (\ref{eq:moon8}) and that all of the conditions of Fabian's theorem are satisfied {\it{with the exception that $\bm\Gamma$ is not symmetric}}. For simplicity, we consider a special case of (\ref{eq:moon8}) where $\upbeta\neq 0$, $\bm{T}_k=\bm{T}$, $\bm\Phi_k=\bm{I}$, and $\bm\Gamma_k=\bm\Gamma$. Now, assume the matrices $\bm\Gamma_k'$, $\bm\Phi_k'$ and vectors $\bm{T}_k'$, $\bm{V}_k'$ provide an alternative way to write the SA algorithm in the form of (\ref{eq:moon8}) and assume $\bm\Gamma_k'$ converges w.p.1 to a symmetric matrix. Then, since
\begin{align*}
\bm{W}_{k+1}=&\ (\bm{I}-k^{-\upalpha}\bm\Gamma_k')\bm{W}_k+\frac{\bm{T}}{k^{\upalpha+\upbeta/2}}+\frac{\bm{V}_k}{k^{(\upalpha+\upbeta)/2}}+k^{-\upalpha}(\bm\Gamma_k'-\bm\Gamma)\bm{W}_k,
\end{align*}
 it is known that either $\bm{T}_k'$ must depend on $(\bm\Gamma'_k-\bm\Gamma)\bm{W}_k$ or $\bm\Phi_k'\bm{V}_k'$ must depend on $(\bm\Gamma_k'-\bm\Gamma)\bm{W}_k$. However, the assumptions on $\bm{T}_k'$, $\bm\Phi_k'$, and $\bm{V}_k'$ are typically incompatible with the term $k^{\upbeta/2}(\bm\Gamma'_k-\bm\Gamma)\bm{W}_k$. Say, for example, that we let $\bm\Phi_k'=\bm{I}$, $\bm{V}_k'=\bm{V}_k$, and $\bm{T}_k'=\bm{T}+k^{\upbeta/2}(\bm\Gamma'_k-\bm\Gamma)\bm{W}_k$. Here, having $\bm{T}_k'$ converge to some finite vector $\bm{T}'$ w.p.1 or in expectation (as required by Fabian's theorem) would impose a priori conditions on the stochastic rate at which $\hat{\bm{\uptheta}}_k$ converges to $\bm\uptheta^\ast$.  However, such a condition violates the very purpose of Fabian's theorem, which is to establish such a rate of convergence. Alternatively, having $\bm\Phi_k'\bm{V}_k'=\bm{V}_k+k^{(\upbeta-\upalpha)/2}(\bm\Gamma_k'-\bm\Gamma)\bm{W}_k$ does not lead to an appropriate definition of $\bm\Phi_k'$ and $\bm{V}_k'$ given the restriction that $\bm{V}_k'$ must have mean zero conditionally on $\mathcal{F}_k$. By generalizing Fabian's conditions to relax the symmetry condition on $\bm\Gamma$ we avoid the need for imposing any additional restrictions on $\hat{\bm{\uptheta}}_k$. Next we give a few other examples of algorithms for which $\bm\Gamma$ cannot be assumed to be symmetric.
 
Another algorithm for which $\bm\Gamma$ may not be symmetric is a variant of the adaptive stochastic approximation method for stochastic optimization in Spall (2000). Given a function $L(\bm\uptheta)$ to minimize, the algorithm in Spall (2000) is as follows:
\begin{align}
&\hat{\bm{\uptheta}}_{k+1}=\hat{\bm{\uptheta}}_k-a_k\overline{\overline{\bm{H}}}_k^{-1}\hat{\bm{g}}_k(\hat{\bm{\uptheta}}_k),\notag\\
&\overline{\bm{H}}_k\equiv\frac{k}{k+1}+\overline{\bm{H}}_{k-1}+\frac{1}{k+1}\hat{\bm{H}}_k,\notag\\
 &\overline{\overline{\bm{H}}}_k\equiv \bm{\uppi}_k(\overline{\bm{H}}_k),\label{eq:presidents8}
\end{align}
where $\hat{\bm{H}}_k$ is an estimate of the Hessian of $L(\bm\uptheta)$ evaluated at $\hat{\bm{\uptheta}}_k$ and $\bm{\uppi}_k({\overline{\bm{H}}}_k)$ maps ${\overline{\bm{H}}}_k$ onto the set of positive definite matrices. When the dimension of $\overline{\bm{H}}_k$ is high, it may be advantageous to let $\overline{\overline{\bm{H}}}_k$ be a diagonal matrix. However, since $\bm\Gamma_k=k^\upalpha a_k\overline{\overline{\bm{H}}}_k^{-1}\tilde{\bm{H}}_k$ for algorithm (\ref{eq:presidents8}) (see Spall 2000, Theorems 3a and 3b) then  the requirement that $\bm\Gamma$ must be symmetric generally prevents us from using Fabian's theorem when $\bm\uppi_k$ is a mapping onto the set of diagonal matrices (here we note that the conditions for asymptotic normality given in Spall 2000, which are based on Fabian's theorem, require that $\overline{\overline{\bm{H}}}_k$ converge to $\bm{H}(\bm\uptheta^\ast)$ precisely so that $\bm\Gamma$ is symmetric and this condition also prevents $\overline{\overline{\bm{H}}}_k$ from being a diagonal matrix unless $\bm{H}(\bm\uptheta^\ast)$ is also a diagonal matrix).
%
%
 Requiring $\bm\Gamma$ to be symmetric is also an unreasonable assumption for the cyclic seesaw SA algorithm where $\bm\Gamma_k=k^\upalpha \bm{A}_k\tilde{\bm{H}}_k$ as in the case of algorithm (\ref{eq:willser8}) (see (6) in Hernandez 2016). In cases where SA is used for general root-finding, the symmetry of $\bm\Gamma$ is an especially unreasonable assumption. Here, 
  $\bm\Gamma_k=k^\upalpha a_k\tilde{\bm{J}}_k$ where the $i$th row of $\tilde{\bm{J}}_k$ is equal to the $i$th row of the Jacobian of $\bm{f}(\bm\uptheta)$ evaluated at $\bm\uptheta=(1-\uplambda_i)\hat{\bm{\uptheta}}_k+\uplambda_i\bm\uptheta^\ast$ for some $\uplambda_i\in[0,1]$.
 
 In the proof of Fabian's theorem, the symmetry of $\bm\Gamma$ is used to write $\bm\Gamma=\bm{P\Lambda P}^\top$ for a real orthogonal matrix $\bm{P}$ and a real diagonal matrix $\bm\Lambda$ with strictly positive eigenvalues. Both $\bm{P}$ and $\bm\Lambda$ affect the parameters of the limiting distribution of $k^{\upbeta/2}(\hat{\bm{\uptheta}}_k-\bm\uptheta^\ast)$. When $\bm\Gamma$ is not a symmetric matrix, writing $\bm\Gamma=\bm{P\Lambda P}^\top$ is clearly not possible. This is a problem for all previous examples where $\bm\Gamma$ can reasonably be assumed to satisfy $\bm\Gamma=\bm{A}\bm{M}$ for a positive definite diagonal matrix $\bm{A}$ and a matrix $\bm{M}$ which represented either a Hessian or a Jacobian. Although this $\bm\Gamma$ is not generally symmetric, it may still be real diagonalizable and have strictly positive eigenvalues (i.e., $\bm\Gamma=\bm{S}\bm{\Lambda}\bm{S}^{-1}$ for a nonsingular, not necessarily orthogonal, real matrix $\bm{S}$ and a positive definite diagonal matrix $\bm{\Lambda}$). Such is the case, for example, if $\bm{M}=\bm{H}(\bm\uptheta^\ast)$ is real and positive definite. 
  For nonlinear root-finding SA algorithms, however, $\bm{M}=\bm{J}(\bm\uptheta^\ast)$ represents the Jacobian of $\bm{f}(\bm\uptheta)$ evaluated at $\bm\uptheta^\ast$. Here, it is possible for $\bm\Gamma=\bm{A}\bm{J}(\bm\uptheta^\ast)$ not to be diagonalizable even if $\bm{J}(\bm\uptheta^\ast)$ is a square matrix. In the special case where $\bm\Gamma$ is a square matrix, however, $\bm\Gamma=\bm{S}\bm{U}\bm{S}^{-1}$ for a nonsingular real matrix $\bm{S}$ and a real upper triangular matrix $\bm{U}$ (here it is said that $\bm\Gamma$ is real upper-triangularizable) with strictly positive diagonal entries if and only if $\bm\Gamma$ is real and has strictly positive eigenvalues (see, for example, Horn and Johnson p. 82)\nocite{hornnjohnson2010}. Generalizing Fabian's result to allow $\bm\Gamma$ to be any real upper-triangularizable matrix with strictly-positive eigenvalues would then allow for treatment of the aforementioned ``non-standard'' algorithms, which is precisely the generalization we introduce here. 

\section{The Generalized Theorem}
\label{sec:ourgeneralization}
This section contains a generalization to Fabian's theorem derived by replacing Fabian's Assumption 2.2.1 with a weaker assumption. Following the proof of Fabian's theorem (Fabian 1968)\nocite{fabian1968}, we begin by showing that $k^{\upbeta/2}\bm{W}_k$ is asymptotically normally distributed if and only if a much simpler process is also asymptotically normally distributed. After showing that the simpler process does, in fact, converge in distribution to a multivariate normal random variable, the parameters of its asymptotic distribution will uniquely determine the parameters of the asymptotic distribution of $k^{\upbeta/2}\bm{W}_k$.

  Throughout this Chapter $M_{ij}$ and $M_{k(i,j)}$ denote the $(i,j)$th entries of the matrices $\bm{M}$ and $\bm{M}_k$, respectively, and ${v}_i$ and $v_{k(i)}$ denote the $i$th entries of the vectors $\bm{v}$ and $\bm{v}_k$, respectively. Furthermore, $k\geq 1$ denotes a strictly positive integer; $\xrightarrow{\text{\ dist\ }}$ means convergence in distribution; $\bm{V}_k$, $\bm{W}_k$, $\bm{T}_k$, and $\bm{T}$ are vectors in $\mathbb{R}^p$; $\bm\Gamma_k$, $\bm\Phi_k$, $\bm\Sigma$, $\bm\Gamma$, $\bm\Phi$, and $\bm{P}$ are matrices in $\mathbb{R}^{p\times p}$; and $\mathcal{N}(\bm\upmu,\bm{M})$ denotes a multivariate normal random variable with mean $\bm\upmu$ and covariance $\bm{M}$. Furthermore, throughout this section we assume the recursion for ${\bm{W}}_k$ given in (\ref{eq:moon8}) satisfies the following conditions:
\begin{DESCRIPTION}
\item[B0$'$]   $\bm\Gamma_k$, $\bm\Phi_{k-1}$, and $\bm{V}_{k-1}$ are $\mathcal{F}_k$-measurable where $\mathcal{F}_k$ is a non-decreasing sequence of $\upsigma$-fields with $\mathcal{F}_k\subset \mathcal{S}$ for some $\upsigma$-field $\mathcal{S}$.
\item[B1$'$] There exists an upper triangular $\bm{U}\in \mathbb{R}^{p\times p}$ with strictly positive eigenvalues and a nonsingular $\bm{S}\in \mathbb{R}^{p\times p}$ such that $\bm\Gamma_k\rightarrow \bm\Gamma=\bm{S}\bm{U}\bm{S}^{-1}$ w.p.1.
\item[B2$'$] $\bm\Phi_k\rightarrow \bm\Phi$ w.p.1.
\item[B3$'$] Either $\bm{T}_k\rightarrow \bm{T}$ w.p.1 or $E\|\bm{T}_k-\bm{T}\|\rightarrow 0$.
\item[B4$'$] $E[\bm{V}_k|\mathcal{F}_k]=\bm{0}$, there exists a constant $C>0$ such that $C>\|E[\bm{V}_k\bm{V}_k^\top|\mathcal{F}_k]-\bm\Sigma\|$, and $\|E[\bm{V}_k\bm{V}_k^\top|\mathcal{F}_k]-\bm\Sigma\|\rightarrow 0$ w.p.1.
\item[B5$'$] For some $0< \upalpha\leq 1$ define $\upsigma_{k,r}^2\equiv E\chi\{\|\bm{V}_k\|^2\geq rk^\upalpha\}\|\bm{V}_k\|^2$ where $\chi\{\mathcal{E}\}$ is the indicator function of the event $\mathcal{E}$. For every $r>0$ either:
\begin{align*}
{\text{$\lim_{k\rightarrow \infty}\upsigma_{k,r}^2=0$ or $\upalpha=1$ and $\lim_{n\rightarrow \infty}n^{-1}\sum_{k=1}^n\upsigma^2_{k,r}=0$.}}
\end{align*}
\item[B6$'$] Define $\uplambda\equiv \min_i{\{U^{[i,i]}\}}$. Let $\upalpha$ and $\upbeta$ be constants such that $0<\upalpha\leq 1$ and $0\leq \upbeta$. Define $\upbeta_+\equiv\upbeta$ if $\upalpha=1$ and $\upbeta_+\equiv 0$ otherwise. Then, $\upbeta_+<2\uplambda$.
\end{DESCRIPTION}
Conditions B0$'$ and B2$'$--B6$'$ are identical to the conditions of Fabian's theorem (note that B6$'$ is obtained by rewriting B6 in the notation of B1$'$). On the other hand, B1$'$ is the relaxed version of Fabian's  corresponding condition (condition 2.2.1 in Fabian 1968) requiring symmetry of $\bm\Gamma$. As discussed in the comment following Proposition \ref{prop:sandpipercrossing}, any real square matrix with real eigenvalues satisfies B1$'$. 
Next we use conditions B0$'$--B6$'$ to relate the asymptotic distribution of $k^{\upbeta/2}\bm{W}_k$ to that of a much simpler process. 

In order to compute the asymptotic distribution of $k^{\upbeta/2}\bm{W}_k$ we begin by constructing a slightly different process, $\tilde{\bm{W}}_k$, defined as follows:
\begin{align}
\label{eq:linkssfton8}
\tilde{\bm{W}}_{k}=(k-1)^{\upbeta/2}\bm{S}^{-1}\bm{W}_k,
\end{align}
where $\bm{S}$ is the matrix from B1$'$. It is easy to see that $\bm{W}_1=\bm{0}$. Moreover, using Slutsky's theorem it follows that if $\tilde{\bm{W}}_k\xrightarrow{\text{\ dist\ }}\mathcal{N}(\bm\upmu,\bm{M})$ 
then $k^{\upbeta/2}\bm{W}_k\xrightarrow{\text{\ dist\ }}\mathcal{N}(\bm{S}\bm\upmu,\bm{SM S}^\top)$.
Thus, proving that the process $\tilde{\bm{W}}_k$ is asymptotically normally distributed (with certain mean vector and covariance matrix) is sufficient for computing the asymptotic distribution of $k^{\upbeta/2}\bm{W}_k$.
%
Moreover, after some algebraic manipulation it can be shown that $\tilde{\bm{W}}_k$ is a special case of (\ref{eq:moon8}):
\begin{align}
\label{eq:graham38848}
 \tilde{\bm{W}}_{k+1}=(\bm{I}-k^{-\upalpha}\tilde{\bm{\Gamma}}_k)\tilde{\bm{W}}_k+k^{-\upalpha}\bm{S}^{-1}\bm{T}_k+k^{-\upalpha/2}\bm{S}^{-1}\bm\Phi_k\bm{V}_k,
\end{align}
where $\tilde{\bm{W}}_{1}=\bm{0}$, 
\begin{align*}
\tilde{\bm\Gamma}_k\equiv \left(\frac{k}{k-1}\right)^{\upbeta/2}\bm{S}^{-1}\bm\Gamma_k\bm{S}-\left[\left(\frac{k}{k-1}\right)^{\upbeta/2}-1\right]k^\upalpha\bm{I},
\end{align*}
and $\tilde{\bm\Gamma}_k\rightarrow \tilde{\bm{U}}\equiv \bm{U}-(\upbeta_+/2)\bm{I}$ w.p.1. Using the same arguments as those in Fabian (1968, proof of Theorem 2.2\nocite{fabian1968}) it can be shown that replacing ${\bm{T}}_k$ with ${\bm{T}}$ and $\bm\Phi_k$ with $\bm\Phi$  in (\ref{eq:graham38848}) does not change the asymptotic distribution of $\tilde{\bm{W}}_k$ (note that the recursive nature of $\tilde{\bm{W}}_k$ means this is not immediate). Therefore, in order to show that $\tilde{\bm{W}}_k$ is asymptotically normally distributed we may assume, without loss of generality, that ${\bm{T}}_k={\bm{T}}$ and $\bm\Phi_k=\bm\Phi$ so that:
\begin{align}
\label{eq:jaredboothsssd8}
 \tilde{\bm{W}}_{k+1}=(\bm{I}-k^{-\upalpha}\tilde{\bm{\Gamma}}_k)\tilde{\bm{W}}_k+k^{-\upalpha}\tilde{\bm{T}}+k^{-\upalpha/2}\tilde{\bm{V}}_k,
\end{align}
where $\tilde{\bm{T}}\equiv \bm{S}^{-1}\bm{T}$ and $\tilde{\bm{V}}_k\equiv \bm{S}^{-1}\bm{\Phi V}_k$. The following lemma facilitates future analysis by relating the asymptotic distribution of $\tilde{\bm{W}}_k$, as described by (\ref{eq:jaredboothsssd8}), to that of an even simpler process. 

\begin{lemma}
\label{claim:coughsyrup8}
Consider the following process:
\begin{align}
\label{eq:norahjoness7728}
 \tilde{\bm{W}}_{k+1}'=(\bm{I}-k^{-\upalpha}\tilde{\bm{\Gamma}}_k)\tilde{\bm{W}}_k'+k^{-\upalpha/2}\tilde{\bm{V}}_k,
\end{align}
where $\tilde{\bm{W}}_1'\equiv\bm{0}$. Assume the process  ${\bm{W}}_k$ in (\ref{eq:moon8}) satisfies B0$'$--B6$'$. If $\tilde{\bm{W}}_{k}'\xrightarrow{\text{\ dist\ }}\mathcal{N}(\bm\upmu,\bm{M})$ 
then $\tilde{\bm{W}}_{k}\xrightarrow{\text{\ dist\ }}\mathcal{N}(\bm\upmu+\bm\upnu,\bm{M})$,
where
 \begin{align}
 \label{eq:okayfine8}
 \upnu_i\equiv (\tilde{{U}}_{ii})^{-1}\tilde{{T}}_i-\Bigg[(\tilde{{U}}_{ii})^{-1}\sum_{j=i+1}^p \tilde{U}_{ij}\upnu_j\Bigg]
 \end{align}
\end{lemma}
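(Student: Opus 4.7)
The plan is to analyze the deterministic-offset difference process $\bm{D}_k \equiv \tilde{\bm{W}}_k - \tilde{\bm{W}}_k'$ and show that it converges to the vector $\bm\upnu$ w.p.1, after which Slutsky's theorem delivers the lemma. Subtracting (\ref{eq:norahjoness7728}) from (\ref{eq:jaredboothsssd8}), and using $\tilde{\bm{W}}_1 = \tilde{\bm{W}}_1' = \bm{0}$, one obtains
\begin{align*}
\bm{D}_{k+1} = (\bm{I} - k^{-\upalpha}\tilde{\bm\Gamma}_k)\bm{D}_k + k^{-\upalpha}\tilde{\bm{T}}, \qquad \bm{D}_1 = \bm{0}.
\end{align*}
This is purely a deterministically-driven linear recursion whose coefficient matrix $\tilde{\bm\Gamma}_k$ converges w.p.1 to the upper triangular matrix $\tilde{\bm{U}}=\bm{U}-(\upbeta_+/2)\bm{I}$ whose diagonal entries $\tilde{U}_{ii} = U_{ii} - \upbeta_+/2$ are strictly positive by condition B6$'$.

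The key step is to exploit the upper triangular limit structure and perform a back-substitution argument on the components of $\bm{D}_k$, proceeding from $i = p$ down to $i = 1$. For the bottom row ($i = p$), only the diagonal entry of $\tilde{\bm\Gamma}_k$ appears and the recursion
\begin{align*}
D_{k+1,p} = (1 - k^{-\upalpha}\tilde{\Gamma}_{k,(p,p)})D_{k,p} + k^{-\upalpha}\tilde{T}_p
\end{align*}
is a scalar instance of the setting covered by Lemma 4.2 in Fabian (1967), which yields $D_{k,p}\to \tilde{T}_p/\tilde{U}_{pp}$ w.p.1; this matches (\ref{eq:okayfine8}) since the sum in (\ref{eq:okayfine8}) is empty for $i = p$. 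Inductively, suppose $D_{k,j}\to\upnu_j$ w.p.1 for all $j > i$. For row $i$ the recursion reads
\begin{align*}
D_{k+1,i} = \bigl(1 - k^{-\upalpha}\tilde{\Gamma}_{k,(i,i)}\bigr)D_{k,i} + k^{-\upalpha}\Bigl[\tilde{T}_i - \sum_{j=i+1}^p \tilde{\Gamma}_{k,(i,j)}D_{k,j}\Bigr],
\end{align*}
and the bracketed driving term converges w.p.1 to $\tilde{T}_i - \sum_{j=i+1}^p \tilde{U}_{ij}\upnu_j$ by the inductive hypothesis together with B1$'$. Applying Lemma 4.2 in Fabian (1967) once more (in its version allowing the forcing term to converge only in the limit) gives $D_{k,i}\to (\tilde{U}_{ii})^{-1}\bigl[\tilde{T}_i - \sum_{j>i}\tilde{U}_{ij}\upnu_j\bigr]$ w.p.1, which is precisely $\upnu_i$ as defined in (\ref{eq:okayfine8}). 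Iterating $p$ times yields $\bm{D}_k \to \bm\upnu$ w.p.1, hence in probability.

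Finally, writing $\tilde{\bm{W}}_k = \tilde{\bm{W}}_k' + \bm{D}_k$ and invoking Slutsky's theorem (using the assumed distributional convergence $\tilde{\bm{W}}_k'\xrightarrow{\text{dist}}\mathcal{N}(\bm\upmu,\bm{M})$ and the in-probability convergence $\bm{D}_k \to \bm\upnu$) delivers $\tilde{\bm{W}}_k \xrightarrow{\text{dist}}\mathcal{N}(\bm\upmu+\bm\upnu,\bm{M})$, which is the claim. The main technical obstacle I anticipate is verifying that Lemma 4.2 of Fabian (1967) applies cleanly when the coefficient sequence $\tilde{\bm\Gamma}_k$ is random and when the forcing term itself converges only in the limit (rather than being constant); the triangular back-substitution is essential precisely because it reduces each step to a scalar recursion in which the non-diagonal contributions have already been absorbed into a convergent forcing term, thereby matching the hypotheses of that lemma.
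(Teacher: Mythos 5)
Your overall strategy—forming the difference process $\bm{D}_k = \tilde{\bm{W}}_k - \tilde{\bm{W}}_k'$, doing a triangular back-substitution from row $p$ up to row $1$ via Fabian's (1967) Lemma 4.2, and finishing with Slutsky—is the same as the paper's. But there is a genuine gap in the back-substitution as you have set it up. Condition B1$'$ only gives $\tilde{\bm\Gamma}_k \to \tilde{\bm{U}}$ w.p.1 with $\tilde{\bm{U}}$ upper triangular; $\tilde{\bm\Gamma}_k$ itself is a full matrix for each finite $k$. Your row-$p$ recursion $D_{k+1,p} = (1 - k^{-\upalpha}\tilde{\Gamma}_{k,(p,p)})D_{k,p} + k^{-\upalpha}\tilde{T}_p$ is therefore not what the recursion actually says: the true forcing term also contains $-\sum_{j<p}\tilde{\Gamma}_{k,(p,j)}D_{k,j}$, and likewise your row-$i$ recursion omits $-\sum_{j<i}\tilde{\Gamma}_{k,(i,j)}D_{k,j}$. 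Those coefficients converge to zero, but they multiply components $D_{k,j}$ with $j<i$ that your downward induction has not yet reached and that you have not shown to be bounded. Without boundedness, a product of a vanishing coefficient and an uncontrolled component need not vanish, so the forcing term cannot be shown to converge and Lemma 4.2 does not apply.

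The paper closes exactly this gap before the back-substitution begins: taking norms in the difference recursion and using B6$'$ gives
\begin{align*}
\| \bm{D}_{k+1}\|\leq\bigl(1-k^{-\upalpha}[\uplambda-\upbeta_+/2+o(1)]\bigr)\|\bm{D}_k\|+k^{-\upalpha}\|\tilde{\bm{T}}\|,
\end{align*}
whence Lemma 4.2 of Fabian (1967) yields $\|\bm{D}_k\|=O(1)$ w.p.1. Only then is the recursion rewritten as $\bm{D}_{k+1}=(\bm{I}-k^{-\upalpha}\tilde{\bm{U}})\bm{D}_k+k^{-\upalpha}[\tilde{\bm{T}}+(\tilde{\bm{U}}-\tilde{\bm\Gamma}_k)\bm{D}_k]$, where the boundedness of $\bm{D}_k$ together with $\tilde{\bm\Gamma}_k\to\tilde{\bm{U}}$ makes the extra term $o(1)$ w.p.1; with the coefficient matrix now genuinely upper triangular, your component-wise induction from $i=p$ down to $i=1$ goes through exactly as you describe. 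Adding this preliminary boundedness step would make your argument complete.
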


\begin{proof}
 The result of the lemma holds if $\tilde{\bm{W}}_k-\tilde{\bm{W}}_k'\rightarrow \bm\upnu$ w.p.1. We show this next. 
First, using the triangle inequality it follows that
\begin{align}
\label{eq:glights8}
\| \tilde{\bm{W}}_{k+1}-\tilde{\bm{W}}_{k+1}'\|&\leq(1-k^{-\upalpha}[\lambda-\upbeta_+/2+o(1)])\|\tilde{\bm{W}}_k-\tilde{\bm{W}}_k'\|+\|k^{-\upalpha}\tilde{\bm{T}}\|,
\end{align}
where $o(\cdot)$ is the standard little-{\it{o}} notation.
Then, (\ref{eq:glights8}) and Lemma 4.2 in Fabian (1967)\nocite{fabian1967} imply $\limsup_{k\rightarrow \infty} \|\tilde{\bm{W}}_k-\tilde{\bm{W}}_k'\|<\infty$ w.p.1 (we write $\|\tilde{\bm{W}}_k-\tilde{\bm{W}}_k'\|=O(1)$ w.p.1 where $O(\cdot)$ is the standard big-{\it{O}} notation). Next, note that
\begin{align}
\label{eq:asfjjjtrees8}
 \tilde{\bm{W}}_{k+1}-\tilde{\bm{W}}_{k+1}'=(\bm{I}-k^{-\upalpha}\tilde{\bm{U}})(\tilde{\bm{W}}_k-\tilde{\bm{W}}_k')+k^{-\upalpha}[\tilde{\bm{T}}+(\tilde{\bm{U}}-\tilde{\bm\Gamma}_k)(\tilde{\bm{W}}_k-\tilde{\bm{W}}_k')].
\end{align}
Then, since $\|\tilde{\bm{W}}_k-\tilde{\bm{W}}_k'\|=O(1)$ w.p.1 and $\tilde{\bm\Gamma}_k\rightarrow \tilde{\bm{U}}$ w.p.1 we may assume each entry of the term $(\tilde{\bm{U}}-\tilde{\bm\Gamma}_k)(\tilde{\bm{W}}_k-\tilde{\bm{W}}_k')$ is $o(1)$ w.p.1. Applying Lemma 4.2 (Fabian 1967)\nocite{fabian1967} to the last entry of $\tilde{\bm{W}}_k-\tilde{\bm{W}}_k'$ as determined by recursion (\ref{eq:asfjjjtrees8}) implies:
\begin{align*}
\tilde{\bm{W}}_{k(p)}-\tilde{\bm{W}}_{k(p)}'\rightarrow \upnu_p=  (\tilde{{U}}_{pp})^{-1}\tilde{{T}}_p{\text{ w.p.1}}.
\end{align*}
This is consistent with (\ref{eq:okayfine8}). In general, Lemma 4.2 in Fabian (1967) can be used to show that $\tilde{\bm{W}}_{k(i)}-\tilde{\bm{W}}_{k(i)}'\rightarrow\upnu_i$ w.p.1 where $\upnu_i$ is as in (\ref{eq:okayfine8}). Thus,  $\upnu_i$ can be computed once the values $\{\upnu_\ell\}_{\ell=i+1}^p$ have been computed (i.e., once the last $p-i$ entries of $\bm\upnu$ have been computed).
\end{proof}

 Lemma \ref{claim:coughsyrup8} implies that deriving the asymptotic distribution of $\tilde{\bm{W}}_{k}'$ is sufficient for deriving the asymptotic distribution of $\tilde{\bm{W}}_{k}$ and, therefore, of $k^{\upbeta/2}\bm{W}_k$.  At this point, the same arguments as those in Fabian (1968, proof of Theorem 2.2)\nocite{fabian1968} can be used to show the following two results under B0$'$--B6$'$:
 \begin{enumerate}
 \item Replacing $\tilde{\bm{\Gamma}}_k$ with $\tilde{\bm{U}}$ in (\ref{eq:norahjoness7728}) does not change the asymptotic distribution of $\tilde{\bm{W}}_k'$ which depends on the limit (w.p.1) of $\tilde{\bm{\Gamma}}_k$ but not on $\tilde{\bm{\Gamma}}_k$ itself. Therefore, without loss of generality we may assume that $\tilde{\bm{\Gamma}}_k=\tilde{\bm{U}}$ so that:
 \begin{align}
\label{eq:utildenew8}
 \tilde{\bm{W}}_{k+1}'=(\bm{I}-k^{-\upalpha}\tilde{\bm{U}})\tilde{\bm{W}}_k'+k^{-\upalpha/2}\tilde{\bm{V}}_k.
\end{align}

 \item The characteristic function of the asymptotic distribution of $\tilde{\bm{W}}_{k}'$ evaluated at $\bm{t}\in \mathbb{R}^p$ depends on $\bm{t}$, $\upalpha$, $\tilde{\bm{U}}$, and on $\tilde{\bm{\Sigma}}\equiv \lim_{k\rightarrow \infty}\text{var}{[\tilde{\bm{V}}_k|\mathcal{F}_k]}$ but does not depend on other aspects of the distribution of $\tilde{\bm{V}}_k$ (provided B0$'$--B6$'$ hold).  Therefore, we may assume that the vectors $\tilde{\bm{V}}_k$ are i.i.d. $\mathcal{N}(\bm{0},\tilde{\bm{\Sigma}})$.
 \end{enumerate}
 The following Lemma derives the asymptotic distribution of $\tilde{\bm{W}}_k'$.

 \begin{lemma}
\label{eq:losorto8}
Assume $\bm{W}_k$ (defined in Theorem \ref{thm:fabian}) satisfies B0$'$--B6$'$. Then, $\tilde{\bm{W}}_k'$ converges in distribution to a multivariate normal random variable with mean zero and covariance matrix $\bm{Q}$ whose entries are the unique solution to:
\begin{align}
\label{eq:nolookback88}
Q_{ij}=\frac{[\bm{S}^{-1}\bm{\Phi\Sigma\Phi}^\top(\bm{S}^{-1})^\top]_{ij}}{\tilde{U}_{ii}+\tilde{U}_{jj}}-\Bigg[\frac{\sum_{\ell=j+1}^{p} \tilde{U}_{j\ell}\ Q_{i\ell}+\sum_{\ell=i+1}^{p} \tilde{U}_{i\ell}\ Q_{\ell j}}{\tilde{U}_{ii}+\tilde{U}_{jj}}\Bigg].
\end{align}
\end{lemma}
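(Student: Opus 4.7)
The plan is to exploit the fact that the recursion in (\ref{eq:utildenew8}) is linear in $\tilde{\bm{W}}_k'$, driven by i.i.d.\ Gaussian noise, and initialized deterministically at $\bm{0}$. An immediate induction on $k$ then shows that $\tilde{\bm{W}}_k'$ is itself Gaussian with mean $\bm{0}$ at every step, so the only remaining work is to identify the limiting covariance matrix $\bm{Q}\equiv\lim_{k\to\infty}\bm{Q}_k$ where $\bm{Q}_k\equiv E[\tilde{\bm{W}}_k'(\tilde{\bm{W}}_k')^\top]$.

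To get a recursion for $\bm{Q}_k$, I would take the outer product of both sides of (\ref{eq:utildenew8}), use independence of $\tilde{\bm{V}}_k$ from $\tilde{\bm{W}}_k'$ (a consequence of B0$'$), and use that $\mathrm{cov}(\tilde{\bm{V}}_k)=\tilde{\bm{\Sigma}}\equiv\bm{S}^{-1}\bm{\Phi\Sigma\Phi}^\top(\bm{S}^{-1})^\top$. This gives
\begin{align*}
\bm{Q}_{k+1}=(\bm{I}-k^{-\upalpha}\tilde{\bm{U}})\bm{Q}_k(\bm{I}-k^{-\upalpha}\tilde{\bm{U}})^\top+k^{-\upalpha}\tilde{\bm{\Sigma}}.
\end{align*}
Next I would exploit the upper triangular structure of $\tilde{\bm{U}}$ (so that $\tilde{U}_{i\ell}=0$ for $\ell<i$) to extract the $(i,j)$ entry of the recursion:
\begin{align*}
Q_{k+1(ij)}=&\ \left(1-k^{-\upalpha}\left[\tilde{U}_{ii}+\tilde{U}_{jj}\right]\right)Q_{k(ij)}\notag\\
&-k^{-\upalpha}\left[\sum_{\ell=i+1}^{p}\tilde{U}_{i\ell}\,Q_{k(\ell j)}+\sum_{\ell=j+1}^{p}\tilde{U}_{j\ell}\,Q_{k(i\ell)}\right]\notag\\
&+k^{-\upalpha}\tilde{\Sigma}_{ij}+O(k^{-2\upalpha}).
\end{align*}

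To solve the limiting system I would proceed by reverse lexicographic induction on $(i,j)$, starting with $(i,j)=(p,p)$, where the sums in square brackets are empty and the scalar recursion reduces to $Q_{k+1(pp)}=(1-2k^{-\upalpha}\tilde{U}_{pp})Q_{k(pp)}+k^{-\upalpha}\tilde{\Sigma}_{pp}+O(k^{-2\upalpha})$. Condition B6$'$ guarantees $2\tilde{U}_{pp}>0$, so Lemma 4.2 in Fabian (1967) yields $Q_{k(pp)}\to\tilde{\Sigma}_{pp}/(2\tilde{U}_{pp})$, which matches (\ref{eq:nolookback88}) at $(p,p)$. Inductively, for a general $(i,j)$, the cross-term sums only involve entries $Q_{k(\ell j)}$ with $\ell>i$ and $Q_{k(i\ell)}$ with $\ell>j$ — entries already known to converge by the induction hypothesis — so they can be absorbed into an $o(1)$-perturbed ``constant'' term in a scalar recursion of exactly the form handled by Lemma 4.2 in Fabian (1967). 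Applying that lemma once more with the decay rate $\tilde{U}_{ii}+\tilde{U}_{jj}>0$ (again guaranteed by B6$'$) yields convergence of $Q_{k(ij)}$ to the value $Q_{ij}$ defined in (\ref{eq:nolookback88}).

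Having established that $\bm{Q}_k\to\bm{Q}$, together with the mean-zero Gaussianity of $\tilde{\bm{W}}_k'$ established at the outset, the limiting distribution is $\mathcal{N}(\bm{0},\bm{Q})$ as claimed. The main obstacle I anticipate is the careful bookkeeping in the induction: the upper triangular structure forces a specific ordering in which the entries of $\bm{Q}$ can be resolved, and one must verify that the remainder terms arising from $\tilde{\bm{\Gamma}}_k\ne\tilde{\bm{U}}$ (should we not have invoked point 1 preceding the lemma to replace $\tilde{\bm{\Gamma}}_k$ by $\tilde{\bm{U}}$) and from the quadratic-in-$k^{-\upalpha}$ terms are genuinely negligible in the sense required by Lemma 4.2 in Fabian (1967). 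Uniqueness of the solution to (\ref{eq:nolookback88}) is automatic from this constructive procedure, since each entry is determined explicitly once entries with strictly larger indices are known.
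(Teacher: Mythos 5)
Your proposal is correct and follows essentially the same route as the paper: reduce to i.i.d.\ Gaussian noise so that $\tilde{\bm{W}}_k'$ is exactly mean-zero Gaussian, derive the entry-wise covariance recursion using the upper-triangular structure of $\tilde{\bm{U}}$, and resolve the limits $Q_{ij}$ by backward induction from $(p,p)$ via Lemma 4.2 of Fabian (1967), with B6$'$ supplying $\tilde{U}_{ii}+\tilde{U}_{jj}>0$. The only cosmetic difference is that the paper writes out the $k^{-2\upalpha}$ remainder terms explicitly whereas you absorb them into an $O(k^{-2\upalpha})$ error, which is equivalent for the application of Fabian's lemma.
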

\begin{proof}
First, since $\tilde{\bm{V}}_k= \bm{S}^{-1}\bm{\Phi V}_k$ then $\tilde{\bm{\Sigma}}=\bm{S}^{-1}\bm{\Phi\Sigma\Phi}^\top(\bm{S}^{-1})^\top$ by condition B4$'$. Next, by the discussion immediately preceding this lemma we may assume (without loss of generality) that the vectors $\tilde{\bm{V}}_k$ are i.i.d. $\mathcal{N}(\bm{0},\tilde{\bm{\Sigma}})$.
Consequently, for each $k$ the distribution of the random vector $\tilde{\bm{W}}_k'$ from (\ref{eq:utildenew8}) must be a multivariate normal random variable with mean zero and covariance 
 $\bm{Q}_k\equiv E[\tilde{\bm{W}}_{k}\tilde{\bm{W}}_{k}^\top]$. Using (\ref{eq:utildenew8}), the entries of $\bm{Q}_k$ can be shown to satisfy:
\begin{align}
Q_{k+1(ij)}=&\ \left(1-k^{-\upalpha}\left[\tilde{U}_{ii}+\tilde{U}_{jj}-k^{-\upalpha} \tilde{U}_{ii}\hspace{.2em} \tilde{U}_{jj}\right]\right)Q_{k(ij)}\notag\\
&-k^{-\upalpha}\left[\sum_{\ell=j+1}^{p} \tilde{U}_{j\ell}\hspace{.2em} Q_{k(i\ell)}+\sum_{\ell=i+1}^{p} \tilde{U}_{i\ell}\hspace{.2em} Q_{k(\ell j)}\right]+k^{-\upalpha}\tilde{\Sigma}_{ij}\notag\\
&+k^{-2\upalpha}\sum_{\ell=i}^p \tilde{U}_{i\ell}\sum_{s=j+1}^p\tilde{U}_{js}\hspace{.2em} Q_{\ell s}+k^{-2\upalpha}\sum_{\ell=i+1}^p \tilde{U}_{i\ell}\hspace{.2em} \tilde{U}_{jj}\hspace{.2em} Q_{\ell j}.
\label{eq:shortybrowlowsky8}
\end{align}
Note that for the special case where $i=j=p$ we have:
\begin{align*}
Q_{k+1(pp)}=&\ \left(1-k^{-\upalpha}\left[\tilde{U}_{pp}+\tilde{U}_{pp}-o(1)\right]\right)Q_{k(pp)}+k^{-\upalpha}\tilde\Sigma_{pp}.
\end{align*}
Then, Lemma 4.2 (Fabian 1967) implies that $Q_{pp}=\tilde{\Sigma}_{pp}/[\tilde{U}_{pp}+\tilde{U}_{pp}]$. More generally, assume $Q_{mn}$ has already been computed for all tuples $(m, n)$ such that either $m\geq i+1$ and $n\geq j+1$, $m=i$ and $n\geq j+1$, or $m\geq i+1$ and $n=j$. Then, (\ref{eq:shortybrowlowsky8}) implies:
\begin{align*}
Q_{k+1(ij)}=&\ (1-k^{-\upalpha}[\tilde{U}_{ii}+\tilde{U}_{jj}-o(1)])Q_{k(ij)}\notag\\
&-k^{-\upalpha}\left[\sum_{\ell=j+1}^{p} \tilde{U}_{j\ell}\hspace{.2em} Q_{k(i\ell)}+\sum_{\ell=i+1}^{p} \tilde{U}_{i\ell}\hspace{.2em} Q_{k(\ell j)}-o(1)-\tilde{\Sigma}_{ij}\right].
\end{align*}
 Then, Lemma 4.2 in Fabian (1967) implies $\bm{Q}=\lim_{k\rightarrow \infty}\bm{Q}_k$ satisfies (\ref{eq:nolookback88}). Therefore, the characteristic function of $\bm{W}_k'$ converges to the characteristic function of a multivariate normal random variable with mean zero and covariance $\bm{Q}$. This implies the desired result.
 \end{proof}
The following theorem uses the result of Lemmas \ref{claim:coughsyrup8} and \ref{eq:losorto8} to compute the asymptotic distribution of $k^{\upbeta/2}\bm{W}_k$.

  \begin{theorem}
[This is identical to Theorem \ref{thm:generalizefabian}]
\label{thm:generalizefabian8}
Assume the recursion for ${\bm{W}}_k$ given in (\ref{eq:moon8}) satisfies conditions B0$'$--B6$'$.  Then, the asymptotic distribution of $k^{\upbeta/2}\bm{W}_k$ is a multivariate normal random variable with mean $\bm{S}\bm\upnu$ and covariance matrix $\bm{SQS}^\top$, where the entries of $\bm\upnu$ are the unique solution to (\ref{eq:okayfine8}) and the entries of $\bm{Q}$ are the unique solution to (\ref{eq:nolookback88}).
 Entry $Q_{ij}$ can be computed once $Q_{mn}$ has been computed for all tuples $(m, n)$ such that either $m\geq i+1$ and $n\geq j+1$, $m=i$ and $n\geq j+1$, or $m\geq i+1$ and $n=j$. Therefore, the entries of $\bm{Q}$ can be computed beginning with $Q_{pp}$ and using the symmetry of $\bm{Q}$. Similarly, the entries of $\bm\upnu$ can be computed beginning with $\upnu_p$. 
\end{theorem}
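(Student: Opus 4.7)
The plan is to mimic Fabian's original argument but replace the spectral decomposition step (which relied on $\bm\Gamma$ being symmetric) with a similarity reduction to upper triangular form, and then exploit that triangular structure to compute the mean shift and covariance sequentially from the last coordinate backward. Specifically, I would begin by introducing the auxiliary process $\tilde{\bm{W}}_k \equiv (k-1)^{\upbeta/2}\bm{S}^{-1}\bm{W}_k$, where $\bm{S}$ is the similarity matrix from B1$'$. A routine calculation, using $(k/(k-1))^{\upbeta/2} = 1 + (\upbeta/2)k^{-1} + o(k^{-1})$, shows that $\tilde{\bm{W}}_k$ satisfies a recursion of the same form as (\ref{eq:moon8}) with gain matrix $\tilde{\bm\Gamma}_k \to \tilde{\bm{U}} = \bm{U} - (\upbeta_+/2)\bm{I}$ w.p.1. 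By Slutsky's theorem it suffices to obtain the limiting law of $\tilde{\bm{W}}_k$ and then push it through the linear map $\bm{v}\mapsto \bm{S}\bm{v}$.

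Next, I would borrow verbatim two reduction steps from Fabian (1968): first, that replacing $\bm{T}_k$ and $\bm{\Phi}_k$ by their limits $\bm{T}$ and $\bm{\Phi}$ in the recursion for $\tilde{\bm{W}}_k$ does not affect the asymptotic distribution, and second, that replacing $\tilde{\bm\Gamma}_k$ by its limit $\tilde{\bm{U}}$ in the centered process $\tilde{\bm{W}}_k'$ (defined by removing the $\bm{T}$ term) does not affect its asymptotic distribution. Both reductions carry over unchanged because they only use B2$'$--B6$'$, none of which involve symmetry. Then I would separate the mean from the fluctuation: let $\tilde{\bm{W}}_k'$ satisfy (\ref{eq:norahjoness7728}); the difference $\tilde{\bm{W}}_k - \tilde{\bm{W}}_k'$ satisfies a deterministic-like recursion driven by $k^{-\upalpha}\tilde{\bm{T}}$, and a triangle-inequality bound plus Lemma 4.2 of Fabian (1967) gives $\limsup\|\tilde{\bm{W}}_k - \tilde{\bm{W}}_k'\|<\infty$ w.p.1. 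Feeding this $O(1)$ bound back into the exact recursion and applying Lemma 4.2 coordinate-by-coordinate starting from the $p$-th entry (where $\tilde{\bm{U}}$ acts as the scalar $\tilde{U}_{pp}$), the triangular structure forces the limit $\upnu_p = \tilde{T}_p/\tilde{U}_{pp}$, and then each earlier coordinate $\upnu_i$ is determined by the $\upnu_\ell$ with $\ell>i$ via (\ref{eq:okayfine8}). This backward solve is the natural generalization of inverting the diagonal matrix in Fabian's proof and is exactly the place where triangularity (rather than diagonality) is used.

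Finally, I would derive the covariance of the limit of $\tilde{\bm{W}}_k'$. By the second reduction above, I may assume $\tilde{\bm\Gamma}_k \equiv \tilde{\bm{U}}$; and by the characteristic-function argument in Fabian (1968) (which depends only on the conditional covariance and Lindeberg condition B4$'$--B5$'$), I may further assume the noise $\tilde{\bm{V}}_k = \bm{S}^{-1}\bm{\Phi}\bm{V}_k$ is i.i.d.\ $\mathcal{N}(\bm{0},\tilde{\bm\Sigma})$ with $\tilde{\bm\Sigma}=\bm{S}^{-1}\bm{\Phi\Sigma\Phi}^\top(\bm{S}^{-1})^\top$. Then $\tilde{\bm{W}}_k'$ is exactly Gaussian with mean zero and covariance $\bm{Q}_k = E[\tilde{\bm{W}}_k'(\tilde{\bm{W}}_k')^\top]$, and expanding (\ref{eq:utildenew8}) entrywise gives the recursion for $Q_{k(ij)}$ displayed in the proof of Lemma \ref{eq:losorto8}. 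Applying Lemma 4.2 of Fabian (1967) to $(i,j) = (p,p)$ first and then back-solving using the already-computed $Q_{mn}$ with indices satisfying the ordering in the theorem statement, I obtain the unique solution (\ref{eq:nolookback88}). Mapping everything back through $\bm{S}$ yields $k^{\upbeta/2}\bm{W}_k \xrightarrow{\text{dist}} \mathcal{N}(\bm{S}\bm\upnu,\bm{SQS}^\top)$.

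The hard part will be verifying that the back-solve for $\bm{Q}$ is well-posed and yields a consistent symmetric matrix: each recursion for $Q_{k(ij)}$ involves the off-diagonal entries $\tilde{U}_{i\ell}$ and $\tilde{U}_{j\ell}$ coupling into strictly later indices, and one must check that the coefficient $\tilde{U}_{ii}+\tilde{U}_{jj}$ is strictly positive (which follows from B6$'$, since each diagonal of $\tilde{\bm{U}}$ exceeds $\uplambda - \upbeta_+/2 > 0$) and that the $O(k^{-2\upalpha})$ remainder terms in the recursion for $Q_{k(ij)}$ genuinely vanish after application of Lemma 4.2, so that the limit of $\bm{Q}_k$ is indeed characterized by (\ref{eq:nolookback88}) alone. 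Once this bookkeeping is handled, the rest of the argument is a direct adaptation of Fabian's proof with ``diagonal plus orthogonal'' replaced throughout by ``upper triangular plus invertible.''
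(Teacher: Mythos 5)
Your proposal is correct and follows essentially the same route as the paper's proof: the same auxiliary process $\tilde{\bm{W}}_k=(k-1)^{\upbeta/2}\bm{S}^{-1}\bm{W}_k$, the same two reductions borrowed from Fabian (1968), the same use of Lemma 4.2 of Fabian (1967) to get the $O(1)$ bound on $\tilde{\bm{W}}_k-\tilde{\bm{W}}_k'$ and then the backward coordinate-by-coordinate solve for $\bm\upnu$ and $\bm{Q}$ exploiting the upper-triangular structure of $\tilde{\bm{U}}$. The well-posedness concern you flag at the end (positivity of $\tilde{U}_{ii}+\tilde{U}_{jj}$ via B6$'$ and the vanishing of the $O(k^{-2\upalpha})$ terms) is handled in the paper exactly as you anticipate, by a second application of the same lemma.
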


\begin{proof}
First, Lemma \ref{eq:losorto8} shows $\tilde{\bm{W}}_k'\xrightarrow{\text{\ dist\ }}\mathcal{N}(\bm{0},\bm{Q})$.
 Next, Lemma \ref{claim:coughsyrup8} implies that $\tilde{\bm{W}}_k\xrightarrow{\text{\ dist\ }}\mathcal{N}(\bm\upnu,\bm{Q})$. 
Finally, by the discussion following (\ref{eq:linkssfton8}) we see that
$\bm{W}_k\xrightarrow{\text{\ dist\ }}\mathcal{N}(\bm{S}\bm\upnu,\bm{S}\bm{Q}\bm{S}^\top)$ as desired.
\end{proof}

Note that if $\bm\Gamma$ is real and symmetric then the terms in square brackets in (\ref{eq:okayfine8}) and (\ref{eq:nolookback88}) disappear since $\tilde{\bm{U}}$ may be taken to be a diagonal matrix.

\chapter[System Identification for Multi-Sensor Data Fusion]{System Identification\\ for Multi-Sensor Data Fusion}
\label{appen:systemid}
\chaptermark{System Identification for Sensor Data Fusion}

This chapter considers the problem of determining the presence and location of a static object within an area of interest (AOI) by combining information from multiple sensors.{\footnote{This chapter is largely based on the paper by Hernandez (2015)\nocite{hernandez2015CISS}. \copyright \ 2015 IEEE. Personal use of this material is permitted. Permission from IEEE must be
obtained for all other uses, in any current or future media, including
reprinting/republishing this material for advertising or promotional purposes, creating new
collective works, for resale or redistribution to servers or lists, or reuse of any copyrighted
component of this work in other works.}} A simple maximum-likelihood (ML) approach is investigated. We consider a setting in which there exist two main types of sensors, namely: ``small'' and ``large'' sensors. Essentially, it is assumed that small sensors can inspect an area that is relatively small in comparison to that which the large sensor can inspect. By deriving a relationship between small and large sensor measurements we combine data using the aforementioned ML-based approach. In particular, each detection problem is initially formulated as a system identification problem. Here, the large sensor collects data on the full system while small sensors collect data on subsystems. By establishing a connection of this identification problem to existing literature, we can obtain asymptotic convergence and asymptotic normality results.  It is important to mention that the terms ``object'' and ``AOI'' will be used in a very general sense. Two examples will be considered. The first example is the search for a static object within a given area (e.g., Chung and Burdick 2012)\nocite{chungnburdick2012}. Here, the terms object and AOI can be taken literally. The second example, however, is the detection of faulty tanks in a three-tank system (TTS) (e.g., Zhou et al. 2012)\nocite{zhouLeakage2012}. In this case, detecting a faulty tank within the TTS can also be thought of as detecting an object within an AOI. 

  \begin{figure}[!t]
 \centering
\begin{tikzpicture}

  \definecolor{newblue}{RGB}{41, 67, 86}
 
  \definecolor{mycoloring}{RGB}{183,208,225}
   \definecolor{mynewcoloring}{RGB}{215,230,244}
     \definecolor{mygrey}{RGB}{179, 179, 179}

\draw[thick, dashed] (0,0) circle (2cm);

\draw[thick, fill=mygrey] (-1,-0.5) circle (0.15cm);

\draw[thick, fill=mygrey] (1,1) circle (0.15cm);

\draw[thick, fill=mygrey] (0.7,-0.5) circle (0.15cm);

\draw[thick, fill=mygrey] (0.-0.3,0.5) circle (0.15cm);

\end{tikzpicture}
\caption[An illustration of the fact that the area of interest (AOI) can be seen as the union of the $C_i$s]{The large sensor is capable of inspecting the entire area within the dotted region (the AOI) while each of the small sensors inspects one of the shaded regions. In the notation of this chapter, each of the shaded regions corresponds to a $C_i$. Without loss of generality we assume $p=5$ with one cell left unexplored (the complement of the small sensor search areas within the AOI) which is denoted by $C_5$. Therefore, the $\text{AOI}=\cup_{i=1}^5 C_i$.}
\label{eq:soupsonsoupydelicious}
\end{figure}
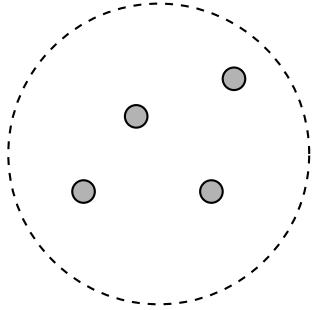

An important assumption in our proposed approach is that there are two classes of sensors: a large sensor $S$ capable of searching the entire AOI for an object and a set of small sensors $\{S_1,\dots,S_p\}$. It is assumed that the small sensors can (collectively) search only a subset of the AOI for the object (Figure \ref{eq:soupsonsoupydelicious}). In general, the measurements collected from $S$ are allowed to follow any exponential family distribution while measurements from a small sensor $S_i$ will be required to follow a Bernoulli distribution. 
The idea of sensor output following a Bernoulli distribution can be a natural assumption. Some sensors produce measurements with a natural Bernoulli interpretation. EGO sensors, for example, measure the air-to-fuel ratio in the exhaust gas and determine when this ratio crosses a threshold. As another example, Kim et al. (2005)\nocite{kimsensormodel2005} consider binary proximity sensors with varying ranges. Other times, sensor output is quantized and transmitted to a data fusion center in the form of binary data (Shen et al. 2014)\nocite{shenetal2014robust}.

To combine the information obtained from all sensors we propose a ML-based approach. In particular, it is assumed that $n_i$ i.i.d. measurements are collected from $S_i$ and that $n$ i.i.d. measurements are collected from $S$. Estimating the parameters governing measurement distributions will give us an indication of whether an object is present or not; details are given in Section \ref{sec:imthemoethossecs}. Based on the theory by Spall (2014)\nocite{spall2014sensor}, Maranzano and Spall (2011)\nocite{maranzanonspall2011}, and Spall (2009)\nocite{spall2009}, convergence and asymptotic normality results are readily available. It is important to mention that the theory does not require sample sizes to be the same for all sensors (nor to increase at the same rate).

In practice, the existence of small and large sensors may be a reasonable assumption to make. For example, there exist sensors with different ranges (e.g., Kim et al. 2005)\nocite{kimsensormodel2005} which allows for a natural interpretation of small vs. large sensors. As another (more abstract) example, Duffield (2005)\nocite{duffield2006} considers a network tomography problem where the objective is to detect faulty links between nodes. Sampling from the large sensor here could correspond to collecting measurements of global network performance (i.e., whether the network can be classified as faulty or not). For example, it might be possible to detect whether there is an unusual mount of messages being transmitted between two (not necessarily adjacent) nodes. A faulty system might then correspond to one where too many messages are reaching a given node. The state of the full system could then be modeled using a Bernoulli random variable. Furthermore, each link in the network could also be tested to determine whether it is functioning properly; tests on individual links would correspond to small sensor measurements. Similar to the fault detection problem in an TTS, several applications to system reliability could fit into this small-- and large sensor setting (e.g., Spall 2009)\nocite{spall2009}.

In the remainder of this chapter: Section \ref{sec:rubbisssssshhhh} introduces the general detection problem and the two motivating applications, Section \ref{sec:imthemoethossecs} discusses how how sensor information is combined and used for detection, Section \ref{sec:sensornumerics} contains numerical results, and Section \ref{sec:discussionsensosrs} contains some final comments.

\section{Preliminaries}
\label{sec:rubbisssssshhhh}

 Two main detection examples will be treated. The first is a search problem where the objective is to test for the presence and location of a static object. The second problem is a fault detection problem for an TTS. It is important to note that the main detection problem is more general than either of these examples and is described next.

\subsection{The General Detection Problem}
\label{sec:youre2a}

 In general AOI can be thought of as a grid with $p$ disjoint cells $\{C_1,\dots,C_p\}$. Cell $C_i$ may be inspected by sensor $S_i$ while the large sensor $S$ may inspect the entire AOI. Additionally, one of the assumptions made is that measurements from $S_i$ follow a Bernoulli distribution. Letting $\uprho_i$ be the parameter governing said distribution we define the vector $\bm\uptheta\equiv[\uprho_1,\dots,\uprho_p]^\top$. Similarly, if $Y$ is a measurement from sensor $S$ then $\uprho\equiv E[Y]$. It is assumed that $n_i$ i.i.d. samples are collected from $S_i$ and $n$ i.i.d. samples are collected from $S$.   Furthermore, samples ontained from different sensors are independent of each other. We let $X_i$ denote the sum of the $n_i$ measurements from $S_i$ and $Y_k$ denote the $k$th measurement from $S$ (for $k=1,\dots,n$).  The goal is to detect the presence (or absence) of a static object within the AOI. Specifically, we would like to know whether an object is present in any of the given cells. The two detection examples are described next.

  \subsection{The Search Problem}
  \label{sec:searchproblemsection}

Here the objective is to determine whether certain static object is present within a given search area. In this scenario the AOI is a physical space (for simplicity we let it be two-dimensional). It should be noted that the grid representation of the AOI is merely conceptual and the location of the different sensors may resemble
Figure \ref{eq:soupsonsoupydelicious}. It is, however, assumed that cells do not intersect; a
reasonable assumption when the small sensors are placed far
enough apart. Setting sensors so that they are situated away
from each other leads to situations where some cell is never
inspected; this is permitted by the asymptotic theory in Spall
(2014)\nocite{spall2014sensor}. Without loss of generality we can then assume:
\begin{align*}
\bigcup_{i=1}^{p} C_i=\text{AOI}.
\end{align*}
The case where the union of all cells is not equal to the AOI will not be considered. Observe that if an object is located within the AOI, then it is located in at least one cell and vice versa. Here the distribution of $Y$ (a measurement from the large sensor $S$) is also assumed to have a Bernoulli distribution.

The second example to consider is the detection of leaks in an TTS. The TTS considered by Zhou et al. (2012)\nocite{zhouLeakage2012} is as follows: three fluid-filled tanks ($T_1, T_2,$ and  $T_3$) are connected in series via a set of pipes. Water flows from $T_1$ to $T_2$ to $T_3$ and finally exits into a reservoir. All tanks are $T_1$ and $T_3$ are equipped with pressure sensors whose measurements are converted into liquid levels. Furthermore, two pumps, $Q_1$ and $Q_3$, are fitted into the same two tanks and provide the flow rate. All tanks are fitted with adjustable valves to simulate clogs and leaks. The problem is then that of leakage-fault diagnosis (the detection of leaky tanks in the system) based on liquid level observations. The task is designing a test for fault detection that is sensitive to the fault and yet tolerant to noise and errors in the system's dynamics. 

%
%
%
%
%
%
%
%
%

\subsection{The Leakage Fault Diagnosis Problem}
\label{sec:fluxybuxy}

Liquid levels alone are not sufficient for detecting leaky tanks; the dynamics governing the levels must be modeled. Specifically, let $h_k^{(i)}$ denote the liquid level of tank $T_i$ at time $k$ and define:
\begin{align*}
\setstretch{1.5} 
\bm{x}_k\equiv \left[\begin{array}{c}h_k^{(1)} \\h_k^{(2)} \\h_k^{(3)}\end{array}\right], \ \ \ \bm{y}_k\equiv  \left[\begin{array}{ccc}1 & 0 & 0 \\0 & 0 & 1\end{array}\right]\bm{x}_k
\end{align*}
where $\bm{y}_k$ denotes the observable liquid levels (only $T_1$ and $T_3$ are equipped with pressure sensors). The discrete-time dynamics of the state vector $\bm{x}_k$ are modeled according to:
\begin{equation}
\begin{split}
\label{eq:model}
\bm{x}_{k+1}=\bm{A}\bm{x}_k+\bm{B}_u\bm{u}_k+ \bm{B}_d\bm{d}_k+\bm{B}_f\bm{f}_k
\end{split}
\end{equation}
where $\bm{A}, \bm{B}_u,\bm{B}_d, \bm{B}_f$ are matrices, $\bm{u}_k$ is a set of controls (essentially this controls the flow rate of $Q_1$ and $Q_3$), $\bm{d}_k$ is a disturbance caused by liquid fluctuations and $\bm{f}_k$ is a possible leakage fault (if $\bm{f}_k=\bm{0}$ then there are no faults in the system). $\bm{x}_k, \bm{d}_k$ and $\bm{f}_k$ are three-dimensional vectors while $\bm{u}_k$ is two-dimensional (there are only two pumps). Theoretically, one could compare the observed state vector $\bm{x}_{k+1}$ to a predicted $\tilde{\bm{x}}_{k+1}$ generated using (\ref{eq:model}) with $\bm{f}_k=\bm{0}$. If the error between the predicted and observed state-vectors is too large it would give an indication that there exists a fault somewhere in the system. This is the general idea behind the work in Zhou et al. (2012)\nocite{zhouLeakage2012}. The authors give a precise definition if the error between predicted and observed states. This error is referred to as the {\it{residual}}. The authors also provide specific thresholds that determine when the residual is too large. Furthermore, it is assumed that at most one tank is faulty at any given time and a precise threshold-based methodology for detecting the faulty tank is given.

The relationship between the work by Zhou et al. (2012)\nocite{zhouLeakage2012} and our search problem can now be illustrated. At a given time the residual is tested against a threshold. If the residual exceeds the threshold then the system is classified as faulty (this is analogous to a large sensor measurement). Furthermore, when attempting to locate the faulty tank a three-dimensional vector $\bm{v}$ of Bernoulli responses is produced. For example, if $T_1$ is classified as faulty then $\bm{v}=[v^{(1)},v^{(2)},v^{(3)}]^\top=[1,0,0]^\top$. Note, however, that the $v^{(i)}$ are not independent of each other due to the assumption that at most one tank is faulty. Recall that in this paper we require samples from different small sensor to be independent. For this reason, we assume any single tank may be faulty independently of whether the other two tanks are faulty. 

To illustrate our proposed methodology for combining sensor information we consider a simplified problem. We will assume that all tanks are equipped with pressure sensors and that the state-vector $\bm{x}_k$ evolves according to (\ref{eq:model}) with $\bm{d}_k=\bm{0}$:
\begin{equation}
\begin{split}
\label{eq:model2}
\bm{x}_{k+1}=\bm{A}\bm{x}_k+\bm{B}_u\bm{u}_k+\bm{B}_f\bm{f}_k.
\end{split}
\end{equation}
 In addition, we let $\bm{z}_k$ be an observable random variable such that:
 \begin{align*}
 \bm{z}_k=\left[\begin{array}{c}z_k^{(1)} \\z_k^{(2)} \\z_k^{(3)}\end{array}\right]\equiv\bm{x}_k+\bm\upvarepsilon_k
 \end{align*}
where $\bm\upvarepsilon_k\sim \mathcal{N}(\bm{0},\bm\Sigma)$. Lastly, $\bm{x}_{k}$ is not observable.

If we assume that $\bm{f}_k=\bm{0}$ and that $\bm{u}_k$ is known and constant during a given time period (constant for $k\in T\equiv \{t,\dots,t+\uptau\}$ for some $t, \uptau\geq 1$) it is easy to obtain confidence intervals for $z_{k+1}^{(i)}$ and for $\ell_{k+1}\equiv\sum_{i=1}^3z_{k+1}^{(i)}$. If $\ell_{k+1}$ lies outside its confidence interval we conclude $\bm{f}_k\neq \bm{0}$ and a fault is detected. Furthermore, if $h_{k+1}^{(i)}$ lies outside its confidence interval then a fault is detected in tank $T_i$. Note that for all $k\in T$ the probability of setting of the fault detection alarm is constant whenever $\bm{f}_k=\bm{0}$. However, the same cannot be said when $\bm{f}_k\neq \bm{0}$. For this reason, we cannot assume that $\uprho_i$ is independent of $k$. Motivated by the fact that the final goal is that of classification and not estimation, we choose to ignore this fact; our numerical experiments are performed assuming $\uprho_i$ is constant for $k\in T$.

In the notation of the general detection problem (Section \ref{sec:youre2a}) we define $Y_k$ as the indicator of whether $\ell_k$ is outside its confidence interval and let $X_i$ be the number of times a fault was detected in $T_i$ for $k\in T$. Therefore, $n=n_i=|T|$.

\section{Methodology}
\label{sec:imthemoethossecs}
The ML based approach for detection consists of estimating the true parameters $\bm\uptheta^\ast$ and $\uprho^\ast$ and then using these estimates for detection. Establishing a  relationship between estimation and object detection relies on an important assumption: $\uprho_i>0.5$ whenever the object is present in $C_i$ and $\uprho_i \leq 0.5$ otherwise. This bound is an arbitrary but intuitive choice. The bound implies that if a sensor appears to detect an object more often then not then it must be that an object is actually present. Without any other information about $S_i$ this is the least we can hope for. If $Y$ also follows a Bernoulli distribution (e.g., the search problem from Section \ref{sec:searchproblemsection} and the detection problem from Section \ref{sec:fluxybuxy}) then the same logic applies to $\uprho$. In the two examples to be considered here, the false positive (FP) and true positive (TP) rates are assumed known for all sensors. In this, case an object is said to be detected in $C_i$ if the estimated value of $\uprho_i$ is closer to the TP than to the FP rate of $S_i$.

To further illustrate the basic idea behind our classification task, consider two coins, the first coin has a known probability of heads equal to 0.8 while the second has a known probability of heads equal to 0.3. If one is to choose a single coin with equal probability then the unconditional probability of heads after a single flip is 0.55. However, when collecting data from sample tosses it is implicit we must first choose a coin. As sample sizes increase the MLE of the data will converge to either 0.8 or 0.3. Therefore, once the data has been collected we can use this information to infer which coin has been chosen. Here, choosing the first coin may be analogous to having an object be present while choosing the second is the complementary situation.
 
 Given the collection of measurements obtained from the different sensors, estimating $\bm\uptheta$ and $\uprho$ could be done using two fundamentally different approaches:
 \begin{itemize}
\item {\bf{Independent Estimation:}} Here $\uprho$ as well as each of the parameters $\{\uprho_1,\dots,\uprho_p\}$ are estimated independently.
\item{\bf{Joint Estimation:}} A relationship between $\uprho$ and $\bm\uptheta$ is exploited so that joint parameter estimation can be used.
\end{itemize}
 Here we will focus on the case of joint parameter estimation. Joint estimation will allow us to obtain detection decisions that are consistent between the small and large sensors. As an example, consider the search problem (Section \ref{sec:searchproblemsection}) where $Y\sim \text{Bernoulli}(\uprho)$ and with $n_i=n=1$ for all $i$. Furthermore, assume all of the small sensors failed to detect an object while the large sensor succeeded in detecting it (i.e., $X_i=0$ for all $i$ and $Y_1=1$). The ML estimates when using independent estimation would be $\hat{\bm\uptheta}_{\text{ind}}=[0,\dots,0]^\top$ and $\hat{\uprho}_{\text{ind}}=1$ (the subindex ``$\text{ind}$'' is used to indicate that independent estimation was used). This might lead us to conclude that an object is present within the AOI but not located within any of the cells which is a contradiction. Using joint estimation implies we wish to find and exploit a relationship between $\uprho$ and $\bm\uptheta$. In particular, we will assume there exists a function $h$ such that $\uprho=h(\bm\uptheta)$. We will derive this function for two special cases. Next we give the details behind the joint estimation approach.
 
  Since the distribution $p(Y=y|\uprho)$ of $Y$ belongs to the exponential family, we have $p(Y=y|\uprho)=\exp[a(\uprho)+b(\uprho)+c(y)]$
for some functions $a(\uprho)$, $b(\uprho)$ and $c(y)$.  In addition, the log-likelihood function with respect to $\bm\uptheta$ is given by:
\begin{align*}
&\mathcal{L}(\bm\uptheta)=\sum_{k=1}^n [a(\uprho)Y_k+b(\uprho)]+\sum_{i=1}^p[X_i\log (\uprho_i)+(n_i-X_i)\log(1-\uprho_i)]+ {\textnormal{constant}},
\end{align*}
where the constant term does not depend on $\bm\uptheta$. In some situations the MLE for $\bm\uptheta$ can be obtained by finding the roots of the score function (the score function is the gradient of the log-likelihood function):
\begin{align}
\label{eq:score}
\frac{\mathcal{L}(\bm\uptheta)}{\partial \bm\uptheta}=\left[\sum_{k=1}^na'(\uprho)(Y_k-\uprho)\right]\bm{h}'(\bm\uptheta)+\left[\begin{array}{c}\frac{X_1}{\uprho_1}-\frac{n_1-X_1}{1-\uprho_1} \\\vdots \\\frac{X_p}{\uprho_p}-\frac{n_p-X_p}{1-\uprho_p}\end{array}\right].
\end{align}
However, it is not generally true that the root of the score equation coincides with the MLE or that the root necessarily exists nor is unique. Still, it is common practice to solve the score equation to attempt to obtain the MLE. This is the approach taken here. Next we briefly review some of the existing theory regarding a solution to (\ref{eq:score}).


Define $N\equiv n+n_1+\cdots+n_p$ and let $\hat{\bm\uptheta}^{(N)}$ be the value of $\bm\uptheta$ that solves (\ref{eq:score}) assuming it equals the MLE; the value of $\hat{\bm\uptheta}^{(N)}$ can be obtained by using the relationship $\uprho=h(\bm\uptheta)$. Let $\bm\uptheta^\ast$ and $\uprho^\ast$ be the true parameters to be estimated. Furthermore, define $\hat{\uprho}^{(N)}\equiv h(\hat{\bm\uptheta}^{(N)})$. Spall (2014)\nocite{spall2014sensor} shows that under some regularity conditions we have the following results: Spall (2014 Theorem 3.1)\nocite{spall2014sensor}  defines $n_s$ as the slowest increasing sample size among the small sensors and shows that if $n+n_s\rightarrow \infty$ then $\hat{\uprho}^{(N)}\rightarrow \uprho^\ast$ w.p.1. A similar result (Spall 2014, Theorem 3.2)\nocite{spall2014sensor} establishes the convergence w.p.1 of $\hat{\bm\uptheta}^{(N)}$ to $\bm\uptheta^\ast$ whenever $n_i\rightarrow \infty$ for at least $p-1$ of the small sensors. Asymptotic normality results are also presented in (Spall 2014, Theorems 4.1 and 4.2)\nocite{spall2014sensor}. Specifically,
$\sqrt{n^+}(\hat{\uprho}^{(N)}-\uprho^\ast)\xrightarrow{\text{\ dist\ }} N\left(0,\sigma(n_{.s}^\ast)^2\right)$ as $N\rightarrow \infty$ where $n^+\equiv n+n_s$;
we refer the reader to (Spall 2014)\nocite{spall2014sensor} for the definition of $\sigma(\cdot)$ and $n_{.s^\ast}$. Similarly:
\begin{align*}
\bm{S}_{N}(\hat{\bm\uptheta}^{(N)}-\bm\uptheta^\ast)\xrightarrow{\text{\ dist\ }}N(\bm{0},{\bm{\Sigma}_{\bm\uptheta^\ast}})
\end{align*}
as $N\rightarrow \infty$ where $\bm{S}_N$ is a matrix depending on the $n_i$ and $\bm\Sigma_{\bm\uptheta^\ast}$ is a limiting function of $\bm{S}_N$ and of the Fisher information matrix at $\bm\uptheta^\ast$. The reason we include these results here is to highlight the dependence on the sample sizes (which may vary among sensors) and the rate at which they increase. 


An important part in implementing ML estimation via (\ref{eq:score}) is the identification of the function $h$ relating $\bm\uptheta$ to $\uprho$. This function is problem dependent. Next we show how to obtain $h$ for the search-  and fault detection problems introduced in Sections (Section \ref{sec:searchproblemsection}) and (Section \ref{sec:fluxybuxy}), respectively.

\subsection[Determining $h$ for the search problem]{Determining $h$ for the search problem (Section \ref{sec:searchproblemsection})}

Here, the large sensor $S$ has a ${\textnormal{Bernoulli}}(\uprho)$ distribution where $0<\uprho<1$. Let $\uprho_i^{\text{FP}}, \uprho_i^{\text{TP}},\uprho^{\text{FP}}$ and $\uprho^{\text{TP}}$ denote the FP and TP rates for sensors $S_i$ and $S$. We have the following relationships:
\begin{subequations}
\begin{align}\label{eq:eeples}
\uprho_i&=\uppi_i \uprho_i^{\textnormal{TP}}+(1-\uppi_i)\uprho_i^{\textnormal{FP}},\\
\label{eq:ese}
\uprho&= \uppi\uprho^{\textnormal{TP}}+(1-\uppi)\uprho^{\textnormal{FP}}.
\end{align}
\end{subequations}
where $\uppi_i$ is the indicator function of whether an object is located in $C_i$ and $\uppi$ is the indicator function of whether and object is located somewhere within the AOI (see Table \ref{tab:notation}). The TP and FP rates are assumed known, an assumption often satisfied when there exists a probabilistic model (such as for the aforementioned acoustic proximity sensors). Alternatively, these rates may also be obtained via experimentation. Niu et al. (2005)\nocite{niuetal2005} address the issue of estimating false positive detection rates for a particular data fusion technique.
\begin{table}
\centering
\label{tab:onlytable}
\begin{tabular}{r c p{10cm} }
\toprule
$\{\uprho_i^{\text{FP}}, \uprho_i^{\text{TP}}\}$ & $\equiv$ & False positive \& true positive rates of $S_i$.\\
$\uppi_i$ &$\equiv$& Indicator of whether an object is present in $C_i$.\\
$\{\uprho^{\text{FP}}, \uprho^{\text{TP}}\}$ &$\equiv$& False positive \& true positive rates of $S$.\\
$\uppi$ &$\equiv$& Indicator of whether an object is present in the AOI.\\
$\bm\uptheta$ &$\equiv$& $[\uprho_1,\dots,\uprho_p]^\top$.\\
$\uprho_i$ &$\equiv$& Mean of the distribution of measurements from $S_i$.\\
$\uprho$ &$\equiv$& Mean of the distribution of measurements from $S$.\\
$\bm\uptheta^\ast$ &$\equiv$& True small sensor parameter vector.\\
$\uprho^\ast$ &$\equiv$& True large sensor parameter.\\
\bottomrule
\end{tabular}
\caption[A list of useful notation for Appendix \ref{appen:systemid}]{A list of useful notation.}
\label{tab:notation}
\end{table}

Via a simple manipulation of (\ref{eq:eeples}) we have:
\begin{align}
\label{eq:savy}
\uppi_i=\frac{\uprho_i-\uprho_i^{\textnormal{FP}}}{\uprho_i^{\textnormal{TP}}-\uprho_i^{\textnormal{FP}}};
\end{align}
we will also assume that $\uprho_i^{\textnormal{TP}}\neq\uprho_i^{\textnormal{FP}}$ and that $\uprho^{\textnormal{TP}}\neq\uprho^{\textnormal{FP}}$, otherwise sensors would not be providing the information we require for our final classification task.

Next, the indicator function of an object being in the entire search area is equal to $\uppi=1-\prod_{i=1}^{p}(1-\uppi_i)$
so that using (\ref{eq:savy}) we obtain the following relationship between $\uppi$, $\uprho_i^{\textnormal{TP}}$, and $\uprho_i^{\textnormal{FP}}$:
\begin{align*}
\uppi&=1-\prod_{i=1}^p\left(1-\frac{\uprho_i-\uprho_i^{\textnormal{FP}}}{\uprho_i^{\textnormal{TP}}-\uprho_i^{\textnormal{FP}}}\right).
\end{align*}
Consequently, combining the previous result with (\ref{eq:ese}) gives the desired $h$:
\begin{equation}
\begin{split}
\label{eq:htheta1}
\uprho
&={\underbrace{\uprho^{\textnormal{TP}}+(\uprho^{\textnormal{FP}}-\uprho^{\textnormal{TP}})\prod_{i=1}^p\left(1-\frac{\uprho_i-\uprho_i^{\textnormal{FP}}}{\uprho_i^{\textnormal{TP}}-\uprho_i^{\textnormal{FP}}}\right)}_{\equiv h(\bm\uptheta)}}.
\end{split}
\end{equation}

\subsection[Determining $h$ for the fault detection problem]{Determining $h$ for the fault detection problem (Section \ref{sec:fluxybuxy})}

Finding a function to relate $\uprho$ to $\bm\uptheta$ is complicated. However, if the TP rates are small, (\ref{eq:htheta1}) yields the approximation:
\begin{equation}
\begin{split}
\label{eq:htheta2}
\uprho\approx{\underbrace{\uprho^{\textnormal{FP}}\prod_{i=1}^p\left(1+\frac{\uprho_i-\uprho_i^{\textnormal{FP}}}{\uprho_i^{\textnormal{FP}}}\right)}_{\equiv h(\bm\uptheta)}}.
\end{split}
\end{equation}
Therefore, (\ref{eq:htheta2}) is the approximation to be used in the numerical experiments. It is important to notice that the true positive rate may not be small enough for the approximation to hold. This is something we investigate numerically.

\section{Numerical Analysis}
\label{sec:sensornumerics}

For the search problem two disjoint cells constituted the AOI.  ($p=2$). Each of the cells corresponds to one of the $C_i$s. An object was assumed to be located within cell $C_1$. Furthermore, the FP and FN rates were assumed equal for all of the small sensors. Sample sizes of $n=n_i=5,10,15,20$ were used and four estimates were produced: $\hat{\bm\uptheta}^{(N)}$, $\hat{\uprho}^{(N)}$, $\hat{\bm\uptheta}_{\text{ind}}$, and $\hat{\uprho}_{\text{ind}}$. The first two variables are estimates for $\bm\uptheta^\ast$ and $\uprho^\ast$ obtained using joint estimation while the last two variables are estimates obtained via independent estimation. To compare performance in estimating $\uprho^\ast$ a relative error was employed:
\begin{align*}
&e^{(1)}_\uprho\equiv \frac{|\hat{\uprho}^{(N)}-\uprho^\ast|}{\uprho^\ast}, \ e^{(2)}_\uprho\equiv \frac{|\hat{\uprho}_{\text{ind}}-\uprho^\ast|}{\uprho^\ast}.
\end{align*}
Similarly, the error in estimating $\bm\uptheta^\ast$ was measured using the errors:
\begin{align*}
&e^{(1)}_{\bm\uptheta}\equiv \frac{\|\hat{\bm\uptheta}^{(N)}-\bm\uptheta^\ast\|}{\|\bm\uptheta^\ast\|}, \ e^{(2)}_{\bm\uptheta}\equiv \frac{\|\hat{\bm\uptheta}_{\text{ind}}-\bm\uptheta^\ast\|}{\|\bm\uptheta^\ast\|}.
\end{align*}
Table \ref{tab:poorhealthzelda} compares the mean (taken over 100 replications) and standard deviation (SD) of $e^{(1)}_\uprho$ and $e^{(2)}_\uprho$ for $\uprho^\ast=0.8, \uprho_1^\ast=0.6$ and $\uprho_i^\ast=0.4$ for $i\neq 1$. Furthermore, the FP and TP rates used were $\uprho^{\text{FP}}=0.2$, $\uprho_i^{\text{FP}}=0.4$, $\uprho^{\text{TP}}=0.8$ and $\uprho_i^{\text{TP}}=0.6$. It is shown that joint parameter estimation outperforms independent estimation. Table \ref{tab:poorhealthzelda} also compares the mean and SD of $e^{(1)}_{\bm\uptheta}$ and $e^{(2)}_{\bm\uptheta}$ for the same parameter settings. Once again it appears joint estimation outperforms independent estimation. Still, more extensive numerical experiments are required.
\begin{table}[t]
\centering
\begin{tabular}{SSSSSS} \toprule
    {$n=n_i$} &   { Mean of $e^{(1)}_{\uprho}$}  & { Mean of $e^{(2)}_{\uprho}$} & { SD of $e^{(1)}_{\uprho}$} & { SD of $e^{(2)}_{\uprho}$} \\ \midrule
    {5}    & {$0.09$} & {$0.18$}  & {$0.13$} & {$0.15$}\\
    {10}  & {$0.06$} & {$0.12$} & {$0.08$} &  {$0.10$}\\
    {15}    & {$0.05$}  & {$0.09$} & {$0.06$} & {$0.08$}\\
    {20}    & {$0.05$}  & {$0.09$} & {$0.06$} & {$0.08$}      \\ \midrule
    {$n=n_i$} & {Mean of $e^{(1)}_{\bm\uptheta}$}  & {Mean of $e^{(2)}_{\bm\uptheta}$} & {SD of $e^{(1)}_{\bm\uptheta}$} & {SD of $e^{(2)}_{\bm\uptheta}$} \\ \midrule
    {5}    & {$0.16$} & {$0.30$} & {$0.13$} & {$0.18$}\\
    {10}  & {$0.12$} & {$0.24$} & {$0.11$} & {$0.13$}\\
    {15}    & {$0.11$}  & {$0.20$} & {$0.12$} & {$0.11$}\\
    {20}    & {$0.10$}  & {$0.18$} & {$0.10$} & {$0.10$}     \\ \bottomrule
\end{tabular}
\caption[Mean and standard devition of $e^{(1)}_{\uprho}$, $e^{(2)}_{\uprho}$, $e^{(1)}_{\bm\uptheta}$, and $e^{(2)}_{\bm\uptheta}$]{Mean and SD of $e^{(1)}_{\uprho}$, $e^{(2)}_{\uprho}$, $e^{(1)}_{\bm\uptheta}$, and $e^{(2)}_{\bm\uptheta}$.}
\label{tab:poorhealthzelda}
\end{table}

Because estimating the true parameters is important in determining if the object is present or not it appears that joint parameter estimation would improve the classification task. However, it turns out that the number of misclassified cells was (on average) comparable for both methods (i.e., joint- and independent estimation) whenever the small sensors were ``good enough''. For the TTS leak detection problem the classification and estimation errors were comparable for both methods.

\section{Concluding Remarks}
\label{sec:discussionsensosrs}

We have presented a general idea of how to combine information from small and large sensors to aid detection. For the search area problem the numerical experiments indicate that estimation of the parameters governing the sensors was improved using our methodology. Still, the task of determining whether an object was present or not in the AOI presented a similar behavior whenever the small sensors were informative enough (i.e., when the small sensors had high TP and low FP rates). However, one important thing to take into consideration is that jointly estimating the measurement parameters, as in our proposed methodology, could help infer whether an object is present in regions that are never inspected by any small sensor.  For the fault-detection problem the numerical experiments did not indicate improved estimation- or classification performance. The most likely explanation is that there is only a significant advantage when sample sizes are very small and the error rates for many sensors are large. However, for small sample sizes the score function may have multiple solutions. Further analysis could include theoretical analysis regarding whether combining information can significantly improve classification, especially when the large sensor measurements are not binary.

\newpage

\end{appendices}

\fancyhead{} 
\fancyhead[L]{FREQUENTLY USED NOTATION} 
\chapter*{Frequently Used Notation} 
\setcounter{chapter}{2}
\renewcommand{\thechapter}{\Alph{chapter}}%
\addcontentsline{toc}{chapter}{Frequently Used Notation} 
\label{chap:fun}

\noindent A list of the most commonly used notation arranged by category. The notation in this list is consistent throughout the entire dissertation (with the exception of Section \ref{sec:literaturesurveyrev} where some of the notation was borrowed from the references being surveyed).

\vspace{.2in}
\noindent{\bf{\large{General Notation}}}

\noindent $\equiv$ is the symbol for ``defined as''.

\noindent $A \cup B$ denotes the union of the sets $A$ and $B$.

\noindent $A \cap B$ denotes the intersection of the sets $A$ and $B$.

\noindent $S^c$ denotes the complement of the set $S$.

\noindent $\emptyset$ denotes the empty set.

\noindent $\Box$ denotes the end of a proof.

\noindent $a\choose{b}$ represents the quantity ${a!}/[{(a-b)!b!}]$.

\noindent $\mathbb{Z}^+$ denotes the set of strictly positive integers.

\noindent $\mathbb{R}^p$ denotes the Euclidean space of dimension $p$.

\noindent w.l.o.g. means ``without loss of generality''.

\noindent $\ceil{a}$ is the ceiling function applied to a real number $a$.

\noindent $\floor{a}$ is the floor function applied to a real number $a$.

\noindent $I_T$ denotes the interval $[-T,T]$ for $T> 0$.



\noindent $O(\cdot)$ denotes the standard big-{\it{O}} notation.

\noindent $o(\cdot)$ denotes the standard little-{\it{o}} notation.

\vspace{.2in}
\noindent{\bf{\large{Matrices and Vectors}}}

\noindent Bolded variables represent either vectors or matrices.

\noindent $\bm{A}^\top$ denotes the transpose of the matrix $\bm{A}$.

%
%

\noindent $\trace{(\bm{A})}$ is the trace of the matrix $\bm{A}$.

%


\noindent $\|\cdot\|$ represents the Frobenius norm.


\noindent $\bm{I}$ denotes the identity matrix of unspecified dimensions.


\noindent ${M}_{ij}$ (also $(\bm{M})_{ij}$ or $[\bm{M}]_{ij}$) denotes the $(i,j)$th entry of the matrix $\bm{M}$.

\noindent ${M}_{k(ij)}$ denotes the $(i,j)$th entry of the matrix $\bm{M}_k$.

\noindent ${v}_j$ (also $(\bm{v})_j$ or $[\bm{v}]_j$) denotes the $j$th entry of the vector $\bm{v}$.

\noindent ${v}_{k(j)}$ denotes the $j$th entry of the vector $\bm{v}_k$.

\noindent $p$ is reserved for the dimension of $\bm\uptheta$ so that $\bm\uptheta=[\uptau_1,\dots,\uptau_p]^\top\in \mathbb{R}^p$.

\vspace{.2in}
\noindent{\bf{\large{Probability}}}

\noindent $\chi\{\mathcal{E}\}$ denotes the indicator function of the event $\mathcal{E}$.

\noindent $P(\mathcal{E})$ denotes the probability of the event $\mathcal{E}$.

\noindent $E[\mathcal{X}]$ represents the expectation of the random variable $\mathcal{X}$.

\noindent $\Var{(\mathcal{X})}$ is the variance or covariance matrix of $\mathcal{X}$.

\noindent $\mathcal{N}(\bm\upmu,\bm\Sigma)$ is a normal random variable with mean $\bm\upmu$ and variance/covariance $\bm\Sigma$.

\noindent $\text{Bernoulli}(p)$ denotes a Bernoulli random variable with parameter $p$.

\noindent i.i.d. means ``independent and identically distributed''.

\noindent w.p.1 means ``with probability one''.

\noindent $\xrightarrow{\text{\ dist\ }}$ means convergence in distribution.

\noindent $\mathcal{X} \sim$ is used to indicate that the random variable $\mathcal{X}$ has certain distribution.

\noindent $(\Omega, \mathcal{F}, P)$ is a probability space with sample space $\Omega$, $\upsigma$-field $\mathcal{F}$, and measure $P$.

\vspace{.2in}
\noindent{\bf{\large{Sequences and Sums}}}

\noindent $g_n=o(h_n)$ if $h_n^{-1}g_n\rightarrow 0$ as $n\rightarrow \infty$.

\noindent $G_n=O(h_n)$ if $\|h_n^{-1}G_n\|\leq c$ for some $c\in \mathbb{R}$ and all $n$.


\noindent $\sum_{i=a}^b(\cdot)_i = 0$ whenever $b<a$.

\vspace{.2in}
\noindent{\bf{\large{Stochastic Optimization}}}

\noindent $L(\bm\uptheta)$ denotes a loss function to minimize.

\noindent $\bm\uptheta$ denotes the vector of parameters being estimated.

\noindent $\bm{g}(\bm\uptheta)$ denotes the gradient vector of $L(\bm\uptheta)$.

\noindent $\bm{H}(\bm\uptheta)$ denotes the Hessian matrix of $L(\bm\uptheta)$.

\noindent $\bm\uptheta^\ast$ denotes a minimizer of $L(\bm\uptheta)$ or a solution to $\bm{g}(\bm\uptheta)=\bm{0}$.

\noindent $\hat{\bm{\uptheta}}_k$ is the iterate produced in the $k$th iteration of an algorithm searching for $\bm\uptheta^\ast$.

\noindent $\hat{\bm{g}}_k(\bm\uptheta)$ denotes a noisy gradient measurement obtained during iteration $k+1$.

\noindent $Q(\bm\uptheta,\bm{V})$ denotes a noisy measurement of $L(\bm\uptheta)$.

\vspace{.2in}
\noindent{\bf{\large{GCSA-Specific Notation}}}


\noindent $\mathcal{S}_j$ for $j=1,\dots, d$ is a set of coordinates defining a subvector of $\bm\uptheta$ (see p. \pageref{eq:notexclusive}).

\noindent $j_k(m)$ is a random variable in $\{1,\dots,d\}$. 

\noindent $\bm{v}^{(j)}$ for $\bm{v}\in\mathbb{R}^p$ forces certain entries of $\bm{v}$ to be zero according to (\ref{eq:alonesong}).

\noindent $\bm{\hat{\bm{\uptheta}}_k}^{(I_{m,i})}$ denotes an intermediate step within an iteration (see p. \pageref{def:intermediate}).

\noindent $\bm{J}^{(j)}(\bm\uptheta)$ denotes the Jacobian of $\bm{g}^{(j)}(\bm\uptheta)$.

\noindent $a_k^{(j)}$ for $j=1,\dots,d$ denotes a gain sequence.\label{faq:GCSAspecific}


\noindent $s_k$ is the random number of blocks in the $(k+1)$st iteration.

\noindent $n_k(m)$ is the number of updates in the $m$th block of the $(k+1)$st iteration.


\noindent $\tilde{A}_{k}(m)=\sum_{j=1}^d{\mathcal{X}}\{j_k(m)=j\}\tilde{a}_k^{(j)}$ (see p. \pageref{eq:attaritapo}).

\noindent $\tilde{a}_{k+1}^{(j)}=a_0^{(j)}+\sum_{i=0}^k {\mathcal{X}}\{\upvarphi_k^{(j)}\geq i\}(a_{i+1}^{(j)}-a_{i}^{(j)})$ (see p. \pageref{eq:saropian}).

\noindent $\upvarphi_k^{(j)}=\sum_{i=0}^k {\mathcal{X}}\left\{\left(\sum_{m=1}^{s_i}{\mathcal{X}}\{j_i(m)=j\}\right)>0\right\}-1$ (see p. \pageref{eq:wheretheasaredefinedd}).

\noindent $x_k(j)= ({\tilde{a}_{k}^{(j)} }/{a_{k}})\sum_{m=1}^{s_k}{\mathcal{X}}\{j_k(m)=j\}n_k(m)$ (see p. \pageref{eq:tinkelpan}).

\noindent $\upmu_k(j) =E[x_k(j)]$ (see p. \pageref{eq:tinkelpan}).

\noindent $\upmu(j)=\lim_{k\rightarrow \infty} \upmu_k(j)$ (see p. \pageref{eq:tinkelpan}).

\noindent $X_k= \sum_{j=1}^dx_k(j)$ (see p. \pageref{eq:apoint}).

\noindent $\bm{h}_k(\bm\uptheta)= \sum_{j=1}^d \upmu_k(j)\bm{g}^{(j)}(\bm\uptheta)$ (see p. \pageref{eq:unrealantartica}).

\noindent $\bm{h}(\bm\uptheta)= \sum_{j=1}^d \upmu(j)\bm{g}^{(j)}(\bm\uptheta)$ (see p. \pageref{eq:unrealantartica}).\label{def:lastpage}

\cleardoublepage

\phantomsection

\addcontentsline{toc}{chapter}{Bibliography}

\fancyhead[L]{BIBLIOGRAPHY} 

\newcommand{\newblock}{}

\bibliographystyle{apa-good}
\bibliography{thesis} 

\cleardoublepage

\fancyhead[L]{BIBLIOGRAPHY} 



\end{document}